\documentclass[a4paper, 10pt]{amsart}

\usepackage{amsmath,amsthm,amssymb,graphicx,rotating,
pinlabel,yhmath,tikz,tikz-cd,
listings,xcolor,enumerate,braket,amsfonts,subfiles}

\usepackage[normalem]{ulem}
\usepackage[all]{xy}

\usetikzlibrary{arrows.meta, decorations.pathmorphing, positioning}
\usetikzlibrary{decorations.pathreplacing}

\usepackage{hyperref}
\hypersetup{colorlinks=true,citecolor=brown,
linkcolor=brown,urlcolor=black,filecolor=black}

\allowdisplaybreaks
\numberwithin{equation}{section} 
\SelectTips{cm}{}

\theoremstyle{definition}
\newtheorem{definition}{Definition}[section]
\newtheorem{remark}[definition]{Remark}
\newtheorem{example}[definition]{Example}

\theoremstyle{plain}
\newtheorem{theorem}[definition]{Theorem}
\newtheorem{proposition}[definition]{Proposition}
\newtheorem{lemma}[definition]{Lemma}
\newtheorem{corollary}[definition]{Corollary}

\newcommand{\up}{\vspace{-0.5cm}}

\newcommand{\by}[1]{\stackrel{\eqref{#1}}{=}}

\newcommand{\Q}{\mathbb{Q}}
\newcommand{\Z}{\mathbb{Z}}

\newcommand{\frakm}{\mathfrak{m}}

\newcommand{\frakL}{\mathfrak{L}}

\newcommand{\calA}{\mathcal{A}}
\newcommand{\calB}{\mathcal{B}}
\newcommand{\calI}{\mathcal{I}}
\newcommand{\calIC}{\mathcal{IC}}
\newcommand{\calM}{\mathcal{M}}

\newcommand{\calP}{\mathcal{P}}
\newcommand{\calQ}{\mathcal{Q}}
\newcommand{\calR}{\mathcal{R}}
\newcommand{\calS}{\mathcal{S}}

\newcommand{\calT}{\mathcal{T}}

\newcommand{\fr}{\operatorname{fr}}
\newcommand{\Gr}{\operatorname{Gr}}

\newcommand{\Hom}{\operatorname{Hom}}
\newcommand{\Lie}{\operatorname{Lie}}

\newcommand{\Sp}{\operatorname{Sp}}

\newcommand{\id}{\operatorname{id}}
\newcommand{\Ker}{\operatorname{Ker}}
\newcommand{\Tr}{\operatorname{Tr}}
\newcommand{\Tors}{\operatorname{Tors}}
\newcommand{\Der}{\operatorname{Der}}
\renewcommand{\Im}{\operatorname{Im}}

\newcommand{\Aut}{\mathrm{Aut}}

\newcommand{\bqm}{\operatorname{``}\!}
\newcommand{\eqm}{\!\operatorname{''}}

\definecolor{myblue}{RGB}{0, 136, 170}
\definecolor{myred}{RGB}{191, 59, 59}

\newcommand{\Yrev}{\rotatebox{180}{$Y$}}

\newcommand{\bead}{\tikz{
\draw[black, fill=lightgray] (0,0) circle (0.5ex);} }

\newcommand{\threetree}[3]{\tikz[baseline=8pt ,scale = 0.5]{
\draw [thick,color=black] (1,1.5)-- (1,0.75);
\draw [thick,color=black] (1,0.75)-- (1-0.866*0.75,0.75-.37);
\draw [thick,color=black] (1,0.75)-- (1+0.866*0.75,0.75-.37);
\draw [black, fill = black] (1,0.75) circle (.5ex);
\draw[color=black] (1,1.5+0.2) node {\small $#1$};
\draw[color=black] (1-0.866*0.75-0.2,0.75-.37-0.2) node {\small $#2$};
\draw[color=black] (1+0.866*0.75+0.2,0.75-.37-0.2) node {\small $#3$};}}

\newcommand{\fourtree}[4]{\tikz[baseline=8pt, scale = 0.5]{
\draw [thick,color=black] (0,2-1.25)-- (2,2-1.25);
\draw [thick,color=black] (0,1.5)-- (0,0);
\draw [thick,color=black] (2,1.5)-- (2,0);
\draw [black, fill = black] (0,0.75) circle (.5ex);
\draw [black, fill = black] (2,0.75) circle (.5ex);
\draw[color=black] (0,3-1.25) node[yshift=2pt] {\small $#1$};
\draw[color=black] (0,1.1-1.25) node[yshift=-2pt] {\small $#2$};
\draw[color=black] (2,1.1-1.25) node[yshift=-2pt] {\small $#3$};
\draw[color=black] (2,3-1.25) node[yshift=2pt] {\small $#4$};}}

\newcommand{\fivetree}[5]{\tikz[baseline=8pt ,scale = 0.5]{
\draw [thick,color=black] (0,1.25)-- (0,0.25);
\draw [thick,color=black] (0,0.75)-- (2,0.75);
\draw [thick,color=black] (2,1.25)-- (2,0.25);
\draw [thick,color=black] (1,0.75)-- (1,1.5);
\draw [black, fill = black] (0,0.75) circle (.5ex);
\draw [black, fill = black] (1,0.75) circle (.5ex);
\draw [black, fill = black] (2,0.75) circle (.5ex);
\draw[color=black] (0,0) node {\small $#1$};
\draw[color=black] (0,1.6) node {\small $#2$};
\draw[color=black] (1,1.75) node {\small $#3$};
\draw[color=black] (2,1.6) node {\small $#4$};
\draw[color=black] (2,0) node {\small $#5$};}}

\newcommand{\loopedtwo}[2]{\tikz[baseline=8pt ,scale = 0.5]{
\draw [thick,color=black] (1,0.75) circle (0.5);
\draw [thick,color=black] (0.5,0.75)-- (0,0.75);
\draw [thick,color=black] (1.5,0.75)-- (2,0.75);
\draw [black, fill = black] (1.5,0.75) circle (.5ex);
\draw [black, fill = black] (0.5,0.75) circle (.5ex);
\draw[color=black] (-0.3,0.75) node {\small $#1$};
\draw[color=black] (2.5,0.75) node {\small $#2$};
}}

\newcommand{\Odiag}[3]{\tikz[baseline=8pt ,scale = 0.5]{
\draw [thick,color=black] (1,0.75) circle (0.5);
\draw [thick,color=black] (1,0.75+0.5)-- (1,0.75+1);
\draw [thick,color=black] (1+0.707*0.5,0.75 + 0.707*0.5)-- (1+0.707,0.75 + 0.707);
\draw [thick,color=black] (1-0.707*0.5,0.75 + 0.707*0.5)-- (1-0.707,0.75 + 0.707);
\draw [black, fill = black] (1,0.75+0.5) circle (.5ex);
\draw [black, fill = black] (1-0.707*0.5,0.75 + 0.707*0.5) circle (.5ex);
\draw [black, fill = black] (1+0.707*0.5,0.75 + 0.707*0.5) circle (.5ex);
\draw[color=black] (1-0.707*1.3,0.75 + 0.707*1.3) node {\small $#1$};
\draw[color=black] (1,0.75+1.3) node {\small $#2$};
\draw[color=black] (1+0.707*1.3,0.75 + 0.707*1.3) node {\small $#3$};
\node at (1+0.707*0.65,0.75 + 0.707*1.55) [rotate=-25] {\scriptsize $<$};
\node at (1-0.707*0.65,0.75 + 0.707*1.4) [rotate=30] {\scriptsize $<$};
}}

\newcommand{\loopedfour}[4]{\tikz[baseline=8pt ,scale = 0.5]{
\draw [thick,color=black] (1,0.75) circle (0.5);
\draw [thick,color=black](1+0.707*0.5,0.75+0.707*0.5)-- (1+0.707*0.85,0.75+0.707*0.85);
\draw [thick,color=black](1+0.707*0.5,0.75-0.707*0.5)-- (1+0.707*0.85,0.75-0.707*0.85);
\draw [thick,color=black](1-0.707*0.5,0.75-0.707*0.5)-- (1-0.707*0.85,0.75-0.707*0.85);
\draw [thick,color=black](1-0.707*0.5,0.75+0.707*0.5)-- (1-0.707*0.85,0.75+0.707*0.85);
\draw [black, fill = black] 
(1+0.707*0.5,0.75+0.707*0.5) circle (.5ex);
\draw [black, fill = black] (1+0.707*0.5,0.75-0.707*0.5) circle (.5ex);
\draw [black, fill = black] (1-0.707*0.5,0.75+0.707*0.5) circle (.5ex);
\draw [black, fill = black] (1-0.707*0.5,0.75-0.707*0.5) circle (.5ex);
\draw[color=black] (1+0.707*1.25,0.75+0.707*1.25) node {\small $#1$};
\draw[color=black] (1-0.707*1.25,0.75+0.707*1.25) node {\small $#2$};
\draw[color=black] (1-0.707*1.25,0.75-0.707*1.25) node {\small $#3$};
\draw[color=black] (1+0.707*1.25,0.75-0.707*1.25) node {\small $#4$};
}}

\newcommand{\ltritree}[3]{\tikz[baseline=8pt, scale = 0.8]{
\draw[thick,color=black] (0.5,0)--(0.5,0.25);
\draw[thick,color=black] (0.5,0.25)--(0,0.75);
\draw[thick,color=black] (0.25,0.5)--(0.5,0.75);
\draw[thick,color=black] (0.5,0.25)--(1,0.75);
\draw [black, fill = black] (0.5,0.25) circle (.3ex);
\draw [black, fill = black] (0.25,0.5) circle (.3ex);
\draw[color=black] (0,1) node {\small $#1$};
\draw[color=black] (0.5,1) node {\small $#2$};
\draw[color=black] (1,1) node {\small $#3$};}}

\newcommand{\rooteddottritree}[1]{\tikz[baseline=8pt, scale = 0.8, thick]{
\draw (0.5,0)--(0.5,0.25);
\draw (0.5,0.25)--(0,0.75);
\draw (0.25,0.5)--(0.5,0.75);
\draw (0.5,0.25)--(1,0.75);
\begin{scope}[dotted]
\draw (-0.25,1)--(0,0.75);
\draw (0.5,0.75)--(0.78,1.03);
\draw (1.25,1)--(1,0.75);
\end{scope}
\draw [fill] (0.5,0.25) circle (.3ex);
\draw [fill] (0.25,0.5) circle (.3ex);
\draw[color=black] (0.5,-0.25) node {\small $#1$};
}}

\newcommand{\revltritree}{\tikz[baseline=8pt, scale = 0.8]{
\begin{scope}[yscale=1,xscale=-1]
\draw[thick,color=black] (0.5,0)--(0.5,0.25);
\draw[thick,color=black] (0.5,0.25)--(0,0.75);
\draw[thick,color=black] (0.25,0.5)--(0.5,0.75);
\draw[thick,color=black] (0.5,0.25)--(1,0.75);
\draw [black, fill = black] (0.5,0.25) circle (.3ex);
\draw [black, fill = black] (0.25,0.5) circle (.3ex);
\end{scope}}}

\newcommand{\ltritreebis}{\tikz[baseline=8pt, scale = 0.8]{
\draw[thick,color=black] (0.5,0)--(0.5,0.25);
\draw[thick,color=black] (0.5,0.25)--(0,0.75);
\draw[thick, color=black]
  (0.25,0.5) to[out=30, in=-110] (1,0.75);
\fill [white] (0.58,0.55) circle [radius=0.12];
\draw[thick,color=black] (0.5,0.25)--(0.65,0.75);
\draw [black, fill = black] (0.5,0.25) circle (.3ex);
\draw [black, fill = black] (0.25,0.5) circle (.3ex);}}

\newcommand{\forktree}[5]{ \tikz[baseline = 1.75cm]{
\draw[thick, color = black](4.5,1.75) .. controls (4.5,2) .. (5,2);
\draw[thick, color = black](4.5,1.75) .. controls (4.5,1.5) .. (5,1.5);
\draw[thick, color = black](5,2) -- (5.4,2);
\draw[thick, color = black](5,1.5) -- (5.5,1.5);
\draw[thick, color = black](5,2) -- (5,2.25);
\draw[thick, color = black](5.5,1.5) -- (5.5,2.25);
\draw[thick, color = black](5.5,1.5) .. controls (6.5,1.5) .. (6.5,2.25);
\draw[thick, color = black](5.6,2) .. controls (6,2) .. (6,2.25);
\draw[thick, color = black](4.5,1.75) .. controls (4.25,1.75) .. (4.25,2.25);
\draw [black, fill = black] (4.5,1.75) circle (.4ex);
\draw [black, fill = black] (5,2) circle (.4ex);
\draw [black, fill = black] (5.5,1.5) circle (.4ex);
\draw[color=black] (4.25,2.45) node {\small $#1$};
\draw[color=black] (5,2.45) node {\small $#2$};
\draw[color=black] (5.5,2.45) node {\small $#3$};
\draw[color=black] (6,2.45) node {\small $#4$};
\draw[color=black] (6.5,2.45) node {\small $#5$};
\draw[color=black] (5.25,2.425) node {\scriptsize $<$};
\draw[color=black] (5.75,2.425) node {\scriptsize $<$};
\draw[color=black] (6.25,2.425) node {\scriptsize $<$};
\draw[color=black] (4.6,2.425) node {\scriptsize $<$};
}}

\newcommand{\thetagraph}{\tikz[baseline=-2pt, scale=0.5]{
  \draw[thick, color=black] (0,0) circle (0.5);
  \draw[thick, color=black] (-0.5,0) -- (0.5,0);
  \draw[black, fill=black] (-0.5,0) circle (0.5ex);
  \draw[black, fill=black] (0.5,0) circle (0.5ex);
}}

\def\clasper{
\begin{tikzpicture}[xscale=0.6, yscale=0.55, baseline={(0,0.8)}]
\def\dx{0.5}
\def\dy{0.9}
\def\W{13.8}
\pgfmathsetmacro{\xA}{\W/2-5.4}
\draw (0,0) -- (\W,0) -- (\W+\dx,\dy) -- (\dx,\dy) -- cycle;
\draw (0,2) -- (\W,2) -- (\W+\dx,2+\dy) -- (\dx,2+\dy) -- cycle;
\draw (0,0) -- (0,2);
\draw (\dx,\dy) -- (\dx,2+\dy);
\draw (\W,0) -- (\W,2);
\draw (\W+\dx,\dy) -- (\W+\dx,2+\dy);
\foreach\aa in {0,1.6,3.2,4.8,6.4,8,9.6,11.2} {
\draw[line width=2pt,white] (\xA-1+\dx+\aa,2+\dy) -- (\xA+\dx+\aa,2+\dy);
\draw[line width=2pt,white] ( \xA-0.7+\dx+\aa, \dy) -- ( \xA-0.3+\dx+\aa, \dy);
  \draw (\xA+\aa-1+\dx,2.4) arc[start angle=180,end angle=360,x radius=0.5,y radius=0.15];
  \draw [thick,dotted] (\xA+\aa+\dx,2.4) arc[start angle=0,end angle=180,x radius=0.5,y radius=0.15];
  \draw (\xA+\aa-1+\dx,2.4) -- (\xA+\aa-1+\dx,3.1);
  \draw (\xA+\aa+\dx,2.4) -- (\xA+\aa+\dx,3.1);
  \draw (\xA+\aa-0.7+\dx,0.5) arc[start angle=180,end angle=360,x radius=0.2,y radius=0.1];
  \draw[thick,dotted] (\xA+\aa-0.3+\dx,0.5) arc[start angle=0,end angle=180,x radius=0.2,y radius=0.1];
  \draw (\xA+\aa-0.7+\dx,0.5) -- (\xA+\aa-0.7+\dx,3.1);
  \draw (\xA+\aa-0.3+\dx,0.5) -- (\xA+\aa-0.3+\dx,3.1);
}
\foreach\aa in {0,3.2,6.4,9.6} {
\draw (\xA-1+\dx+\aa,3.1) arc[start angle=180,end angle=0,radius=1.3];
\draw (\xA+\dx+\aa,3.1) arc[start angle=180,end angle=0,radius=0.3];
\draw (\xA-0.7+\dx+\aa,3.1) arc[start angle=180,end angle=0,radius=1.0];
\draw (\xA-0.3+\dx+\aa,3.1) arc[start angle=180,end angle=0,radius=0.6];
}
\begin{scope}[myred, very thick]
\draw (\xA+0.3,3.05) arc[start angle=180,end angle=0,radius=0.5];
\draw (\xA+0.3,2.55) arc[start angle=-180,end angle=0,radius=0.5];
\draw (\xA+0.3,3.05) -- (\xA+0.3,2.55);
\draw (\xA+1.3,3.05) -- (\xA+1.3,2.55);
\draw (\xA+3.5,3.05) arc[start angle=180,end angle=0,radius=0.5];
\draw (\xA+3.5,1.05) arc[start angle=-180,end angle=0,radius=0.5];
\draw (\xA+3.5,3.05) -- (\xA+3.5,1.05);
\draw (\xA+4.5,3.05) -- (\xA+4.5,1.05);
\draw[line width=2pt,white] ( \xA+4.5, 0.95) -- ( \xA+4.5, 1.20);
\draw (\xA+0.8,2.05) -- (\xA+0.8,0.5);
\draw (\xA+0.3,1.1) -- (\xA+0.8,0.5);
\draw (\xA+3.5,1.1) -- (\xA+0.8,0.5);
\fill (\xA+0.8,0.5) circle [x radius=0.25,y radius=0.2];
\end{scope}
\draw[line width=2pt,white] ( \xA+1.9, 0.73) -- ( \xA+2.2, 0.8);
\begin{scope}[shift={(1.2,1.1)},myred, very thick, scale=0.3]
 \draw (45:0.5) .. controls +(1,1) and +(0,1) .. (2,0);
 \draw (2,0) .. controls +(0,-1) and +(1,-1) .. (-45:0.5);
 \draw (-45:0.5) -- (135:0.5);
 \draw (135:0.5) .. controls +(-1,1) and +(0,1) .. (-2,0);
 \draw (-2,0) .. controls +(0,-1) and +(-1,-1) .. (-135:0.5);
\end{scope}
\begin{scope}[myblue, very thick]
\foreach\aa in {0,6.4,9.6} {
\draw (\xA+1.86+\aa,1.4) arc[start angle=50,end angle=-230,x radius=0.4,y radius=0.2];
}
\draw (\xA+5.16,1.4) arc[start angle=50,end angle=-230,x radius=0.6,y radius=0.2];
\fill (\xA+8.9,0.3) circle [x radius=0.25,y radius=0.2];
\fill (\xA+5.6,0.3) circle [x radius=0.25,y radius=0.2];
\draw (\xA+8.9,0.3) -- (\xA+5.6,0.3);
\draw (\xA+8.9,0.3) -- (\xA+10.9,1.1);
\draw (\xA+8.9,0.3) -- (\xA+8.1,1.05);
\draw (\xA+5.6,0.3) -- (\xA+4.9,1.05);
\draw (\xA+5.6,0.3) parabola bend (\xA+3.5, 0.3) (\xA+1.7,1.05);
\end{scope}
\end{tikzpicture}
}

\begin{document}

\title[On the Sp-structure   
of the Lie algebra of homology cylinders]{
On the Sp-structure of the torsion of\\
the Lie algebra of homology cylinders}

\author{Quentin Faes}
\address{Institut de Recherche en Mathématique et Physique, Université catholique de
Louvain, Chemin du Cyclotron 2, B-1348 Louvain-la-Neuve, Belgium}
\email{quentin.faes@uclouvain.be}

\author{Gw\'ena\"el Massuyeau}
\address{Université Bourgogne Europe, 
CNRS, IMB (UMR 5584), F-21000 Dijon, France}
\email{gwenael.massuyeau@ube.fr}

\author{Masatoshi Sato}
\address{
Department of Mathematics and Data Science,
Tokyo Denki University,
5 Senjuasahi-cho, Adachi-ku, Tokyo 120-8551,
Japan}
\email{msato@mail.dendai.ac.jp}

\date{}

\keywords{}
\thanks{}

\begin{abstract}
Let $\Sigma$ be a compact oriented surface. 
We investigate the graded Lie algebra of homology cylinders over $\Sigma$,
as introduced by M.~Goussarov and K.~Habiro. 
To analyze its torsion, we refine a strategy initiated
by Y.~Nozaki, M.~Suzuki, and the third author, 
which relies on the reduction modulo $1$ of the LMO functor 
and on clasper calculus. 
Specifically, we develop general tools for studying, 
under the standard action of the symplectic group,
the $\Sp$-module structure of the torsion 
in the odd-degree component of this Lie algebra. 
We show that  this torsion part 
surjects onto an $2$-torsion $\Sp$-module,
which is explicitly described in terms of Jacobi diagrams.
As an application, we provide an intrinsic description
of the $\Sp$-module given  by the degree-three component
of the Lie algebra of homology cylinders.
A further motivation is to understand torsion phenomena 
in the associated graded  of the lower central series 
of the Torelli group of $\Sigma$:
in this direction, we exhibit an explicit Sp-module 
onto which the torsion of the Torelli Lie algebra surjects in degree~three.
\end{abstract}

\maketitle

\vspace{-1.1cm}
{\small \tableofcontents}

\section{Introduction}

\label{sec:intro}

Let $\Sigma$ be a compact, connected, oriented surface of genus $g\geq 0$.
For simplicity, we  assume here
that~$\Sigma$ has exactly one boundary component.
A \emph{homology cylinder} over $\Sigma$ is a compact, oriented
$3$-manifold $M$ with ``the same'' boundary 
and ``the same'' homology type as the unit cylinder 
$U:=\Sigma \times [-1,+1]$:
to be more specific, we assume that $M$ comes 
with an orientation-preserving diffeomorphism 
$m:\partial U \to \partial M$
(the \emph{boundary parametrization}) 
and there exists an isomorphism 
$H_*(U; \Z) \simeq H_*(M;\Z)$
compatible with the map $m_*$ induced by $m$ in homology.
Two homology cylinders $M$ and $M'$ are \emph{diffeomorphic} 
if there is an orientation-preserving diffeomorphism
$h: M \to M'$ such that $m'= h\vert_{\partial M} \circ m$.

Let $\mathcal{IC}:= \mathcal{IC}(\Sigma)$
be the set of diffeomorphism classes of homology cylinders over~$\Sigma$.
It has a natural structure of a monoid,
where the multiplication $M' \circ M$ of two elements $M,M' \in \calIC$
is defined by gluing the \emph{bottom boundary} $m(\Sigma \times \{-1\})$ of $M$
to the \emph{top boundary} $m'(\Sigma \times \{+1\})$ of $M'$; 
the unit element of $\calIC$ is the unit cylinder $U$
(with its obvious boundary parametrization).

The study of the monoid $\calIC$ by surgery techniques 
and  finite-type invariants was  initiated 
by M. Goussarov and K. Habiro  \cite{Gou00,Hab00}.
The reader is referred to the survey paper \cite{HM12} 
for an introduction to these topics and further references.
There are several reasons to be interested 
in the theory of  finite-type invariants for homology cylinders.
First of all, when $g = 0$, the surface $\Sigma$ is a disk, and  
$\mathcal{IC}$ consists of homology $3$-balls.  
In this case,  $\mathcal{IC}$ 
is essentially the monoid of homology $3$-spheres:
the class of $3$-manifolds for which the theory of finite-type invariants  
has been most thoroughly developed.  
In contrast, for $g > 0$, 
the monoid~$\mathcal{IC}$ provides an archetypal class  
of homologically non-trivial $3$-manifolds,  
revealing to what extent the theory of finite-type invariants  
is sensitive to the homology type.  

In addition, the monoid $\mathcal{IC}$ 
offers a natural framework for applying $3$-manifold invariants 
to the study of the \emph{Torelli group}  
$\mathcal{I} := \mathcal{I}(\Sigma)$,  
which is the subgroup of the mapping class group  
$\mathcal{M} := \mathcal{M}(\Sigma)$ acting trivially  
on the homology $H := H_1(\Sigma; \Z)$.  
Indeed, the ``mapping cylinder'' construction defines a monoid homomorphism  
\begin{equation}
\label{eq:mcc}
\mathbf{c} : \mathcal{I} \longrightarrow \mathcal{IC}    
\end{equation}
which is injective, and whose image  consists 
of the invertible elements of~$\mathcal{IC}$.
The conjugation action of $\calM$ on $\calI$ extends to an action of $\calM$ on $\calIC$: indeed, an $f\in \calM$ acts on $M\in \calIC$ by changing its boundary parametrization $m$ to $m\circ (f^{-1} \times \id)\vert_{\partial U}$.

An important tool in the study of the Torelli group $\mathcal{I}$  
is its lower central series  
$
\mathcal{I} = \Gamma_1 \mathcal{I}
\supset \Gamma_2 \mathcal{I}
\supset \Gamma_3 \mathcal{I} \supset \cdots
$
defined inductively by  
$
\Gamma_{k+1} \mathcal{I} := [\Gamma_k \mathcal{I}, \mathcal{I}].
$
It is well known that $\mathcal{I}$ is residually nilpotent,  
i.e.\ $\bigcap_{k \ge 1} \Gamma_k \mathcal{I} = \{1\}$.  
The \emph{Torelli Lie algebra} is the associated graded object  
\begin{equation} \label{eq:TLA}
\operatorname{Gr}^\Gamma \mathcal{I} := \bigoplus_{k \ge 1}
 \frac{\Gamma_k \mathcal{I}}{\Gamma_{k+1} \mathcal{I}}    
\end{equation}
of this filtration.
It has the structure of a graded Lie $\mathbb{Z}$-algebra
and is endowed with the action  of the symplectic group  
$\Sp(H) \simeq \mathcal{M}/\mathcal{I}$  
induced by the conjugation action of $\mathcal{M}$ on~$\mathcal{I}$.  
(Here the symplectic form 
is the intersection form $\omega:H \times H \to \Z$ of the surface.) 
The degree-one component of $\operatorname{Gr}^\Gamma \mathcal{I}$
 is the abelianization $\mathcal{I}_{\mathrm{ab}}$ of $\mathcal{I}$,  
which, as an $\Sp(H)$-module, 
was computed by D.~Johnson  (in genus $g \ge 3$)
in one of his seminal works on the Torelli group~\cite{Joh85}.  
In particular, his result revealed the presence of $2$-torsion,  
detected by the Birman--Craggs homomorphism  
(the $2$-dimensional incarnation of the  
Rokhlin invariant for homology $3$-spheres).  
All torsion vanishes after tensoring with $\mathbb{Q}$  
and, in his fundamental work~\cite{Hai97},  
R.~Hain obtained (in genus $g \ge 4$) 
an $\Sp(H \otimes \mathbb{Q})$-presentation  
of the graded Lie $\mathbb{Q}$-algebra  
$\big(\!\operatorname{Gr}^\Gamma \mathcal{I}\big) \otimes \mathbb{Q}$.

Of course, since it is not a group,  
the monoid $\mathcal{IC}$ does not possess a lower central series.  
Nevertheless, it carries a filtration by submonoids  
which plays a similar role:
this is the \emph{$Y$-filtration} 
\begin{equation} \label{eq:Y-filtration}
\mathcal{IC} = Y_1 \mathcal{IC} \supset Y_2 \mathcal{IC} 
\supset Y_3 \mathcal{IC} \supset \cdots
\end{equation}
which is defined by  
$Y_k \mathcal{IC} :=  \big\{\, M \in \mathcal{IC} :
M \text{ is $Y_k$-equivalent to } U \big\}$.
Here, the \emph{$Y_k$-equivalence} relation  
is generated by a specific type of surgery along
embedded graphs in $3$-manifolds, 
which are called \emph{graph clasper} of degree $k$;
the $Y_k$-equivalence relation becomes finer and finer as $k$ increases.
(See~\cite{HM12} and the references therein for the precise definition;  
a brief overview is also given in \S\ref{subsec:claspers}.)  
Conjecturally, it is expected that two homology cylinders are diffeomorphic
if and only if they are $Y_k$-equivalent for every $k \ge 1$,  
which would parallel the residual nilpotency of~$\mathcal{I}$.  
This conjecture is equivalent to asserting that  
finite-type invariants classify homology cylinders up to diffeomorphism~\cite{Mas07}.

Similarly to the Torelli Lie algebra $\Gr^\Gamma \calI$
with respect to the lower central series of $\calI$,
the \emph{Lie algebra of homology cylinders}  
is defined as the associated graded of the $Y$-filtration of $\calIC$:
\begin{equation} \label{eq:LAHC}
\operatorname{Gr}^Y\! \mathcal{IC} := 
\bigoplus_{k \ge 1} \frac{Y_k \mathcal{IC}}{Y_{k+1}}.
\end{equation}
It has the structure of a graded $\mathbb{Z}$-Lie algebra,
and it also carries a natural action of $\Sp(H)$  
via the canonical action of $\mathcal{M}$ on~$\mathcal{IC}$.
Since the mapping cylinder construction~$\mathbf{c}$ 
preserves the filtrations, 
it induces an $\Sp(H)$-equivariant homomorphism
$$
\Gr \mathbf{c} : \operatorname{Gr}^\Gamma \mathcal{I} 
\longrightarrow \operatorname{Gr}^Y\! \mathcal{IC} 
$$
of graded $\Z$-Lie algebras.
The degree-one component $\mathcal{IC}/Y_2$ 
of $\operatorname{Gr}^Y\! \mathcal{IC}$,
as an $\Sp(H)$-module, has been determined in~\cite{MM03}:
it turns out to be isomorphic 
to~$\mathcal{I}_{\mathrm{ab}}$  (in genus $g \ge 3$).  
Furthermore, a combinatorial description of the $\mathbb{Q}$-Lie algebra  
$\big(\operatorname{Gr}^Y\! \mathcal{IC}\big) \otimes \mathbb{Q}$,  
including its $\Sp(H\otimes\mathbb{Q})$-action,  
was obtained in~\cite{HM09} in terms of \emph{Jacobi diagrams},
and it was also related to Hain’s description of the  
rational Torelli Lie algebra.

To be more specific about this diagrammatic description 
of the Lie algebra of homology cylinders, let us recall that it needs
the clasper calculus developed by M.~Goussarov and K.~Habiro,
and the LMO functor~$\widetilde{Z}$ constructed in \cite{CHM08}.  
The construction of this TQFT-like invariant of cobordisms involves
the choice of a system of meridians and parallels
$(\alpha_1,\dots,\alpha_g,\beta_1,\dots,\beta_g)$
on the surface~$\Sigma$
\begin{equation}
\label{eq:surface}
\begin{array}{c}
\includegraphics[scale=0.45]{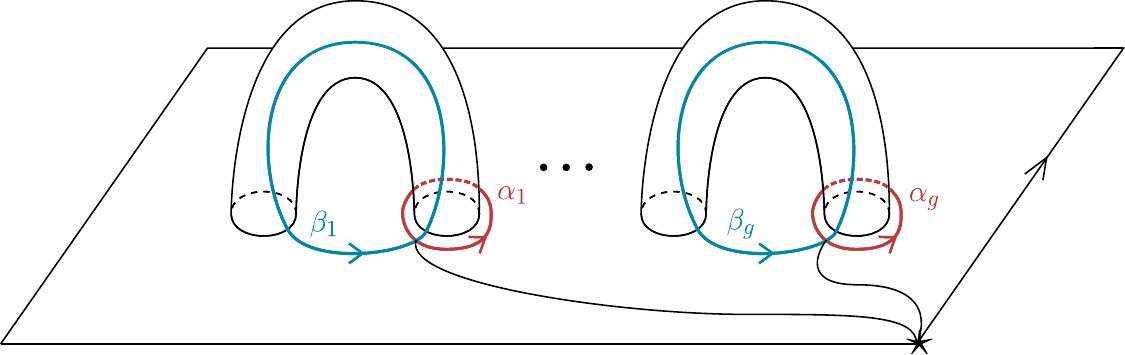}
\end{array}
\end{equation}

\noindent
as well as the choice of a Drinfeld associator. 
Note that this system of based curves induces a basis of $\pi:=\pi_1(\Sigma,\star)$,
and a basis $S=(a_1,\dots,a_g,b_1,\dots,b_g)$ of $H=H_1(\Sigma;\Z)$.
Thus, the representation
\begin{equation} \label{eq:raw_LMO}
\widetilde{Z}^Y: \calIC \longrightarrow \calA(S)\otimes \Q    
\end{equation}
that is \emph{directly} derived from the LMO functor 
for homology cylinders,
turns out to be highly dependent on these two choices;
it takes values in the algebra of Jacobi diagrams colored 
by the finite set $S$.
To partly control this double dependency,
it is better to work with two variations of $\widetilde{Z}^Y$,
which are introduced in  \cite{HM09}
by post-composing $\widetilde{Z}^Y$ with appropriate isomorphisms:
\begin{itemize}
\item the \emph{LMO homomorphism} $Z^<:  \calIC \to\calA^<(H)\otimes \Q$
with values in a diagrammatic analogue of the Weyl algebra,
where Jacobi diagrams are colored by~$H$
and  equipped with a certain order;
\item the \emph{symmetrized LMO homomorphism} $Z:  \calIC \to \calA(H)\otimes \Q$
with values in a diagrammatic analogue of the Moyal--Weyl deformation of the symmetric algebra, 
where Jacobi diagrams are only colored by $H$;
\end{itemize}
see \S \ref{subsec:LMO} and \S \ref{subsec:sym_LMO} 
for a very brief review of those constructions. 
While $Z^<$ is well-suited to clasper calculus, 
the homomorphism $Z$ seems to be more adapted for comparison with classical invariants.
Both homomorphisms are filtration preserving, 
and they induce $\Sp(H\otimes\Q)$-isomorphisms of $\Q$-Lie algebras
\begin{equation} \label{eq:isos}
\big(\Gr^Y\!\calIC\big)\otimes \Q \simeq \calA^{<,c}(H)\otimes \Q
\simeq \calA^{c}(H)\otimes \Q
\end{equation}
 at the graded level. (Here, the superscript ``c'' indicates modules
of \emph{connected} Jacobi diagrams.)
We observe that, 
as a consequence of the results of \cite{KRW23,NW26}, the map 
$(\Gr \mathbf{c})\otimes \Q: (\operatorname{Gr}^\Gamma \mathcal{I})\otimes \Q 
\to (\operatorname{Gr}^Y \mathcal{IC})\otimes \Q$
is injective in the stable range; its image corresponds 
to the Lie subalgebra of $\calA^{c}(H)\otimes \Q$ 
generated by its degree-one component
$\calA^{c}_1(H)\otimes \Q\simeq \Lambda^3 H\otimes \Q$ through the isomorphisms \eqref{eq:isos}.

With integral coefficients, the principal unresolved issues 
regarding the Torelli Lie algebra 
and the Lie algebra of homology cylinders can be formulated as follows, in each degree $k \ge 1$:\\[-0.3cm]
\begin{enumerate}
\item Describe  both
$\Gamma_k \mathcal{I} / \Gamma_{k+1} \mathcal{I}$ and $Y_k \mathcal{IC} / Y_{k+1}$  
as  $\Sp(H)$-modules.\\[-0.3cm]
\item Decide whether the $\Sp(H)$-homomorphism
$\Gr_k \mathbf{c}:\Gamma_k \mathcal{I} / \Gamma_{k+1} \mathcal{I} 
\to Y_k \mathcal{IC} / Y_{k+1}$
induced by~\eqref{eq:mcc}  is injective, and determine its image.\\[-0.3cm]
\end{enumerate}
As noted earlier, the degree $k=1$ case is completely understood \cite{Joh85, MM03}, 
and the same holds in degree $k=2$ \cite{Mor91,Hai97,MM13, FMS26}. 
Observe, moreover, that these two situations are markedly different: in degree $1$ there is $2$-torsion, whereas in degree $2$ no torsion occurs.

Recently, the LMO functor $\widetilde{Z}$ has also been shown 
to be effective in detecting the presence of previously unknown torsion 
in both $\Gr^\Gamma \calI$ and $\Gr^Y\!\calIC$, 
and thus determining
the isomorphism type of the abelian group $Y_k \mathcal{IC} / Y_{k+1}$ 
for new low values of~$k$~\cite{NSS22a, NSS24}. 
In these works, the central tool is the group homomorphism
\begin{equation} \label{eq:zYmod1}
\big(\widetilde{Z}^Y_{k+1}\!\!\! \mod 1\big): Y_k \mathcal{IC} / Y_{k+1} 
\longrightarrow \calA^c_{k+1}(S)\otimes \Q/\Z
\end{equation}
and other variants induced by the ``raw'' LMO homomorphism \eqref{eq:raw_LMO}. 
Thus, this approach differs in two essential respects 
from the use of the functor $\widetilde{Z}$ 
in proving the rational isomorphism \eqref{eq:isos}: 
(i) the functor is not applied at the  graded level 
(but with the degree shifted by $1$), 
and (ii) only the reduction modulo $1$ of $\widetilde{Z}$ is used.
It turns out that the descriptions obtained in \cite{NSS22a, NSS24}
depend on the choices inherent to the construction of $\widetilde{Z}$.\\

In this article, we refine  the approach 
of the aforementioned works
by \emph{explicitly incorporating the $\Sp(H)$-actions}. 
A principal aim is to \emph{derive intrinsic characterizations} 
of the $\Sp(H)$-modules 
$\Gamma_k \mathcal{I} / \Gamma_{k+1} \mathcal{I}$ 
and $Y_k \mathcal{IC} / Y_{k+1}$, 
with particular emphasis on their torsion submodules.

Thus, we begin
by setting in \S \ref{sec:surgery} and \S \ref{sec:Z^<}
the general constructions to reach these goals.
First of all, we review in \S \ref{sec:surgery}
the clasper surgery maps considered in \cite{Hab00,MM03,HM09,HM12},
which combine here into a surjective Lie map
$$
\psi:\calA^{<,c}(P) \longrightarrow \Gr^Y\!\calIC
$$
where $P:=H_1(\hbox{U}(\Sigma);\Z)$ 
is the first homology group of the unit tangent bundle of~$\Sigma$,
and $\calA^{<,c}(P)$ is a refinement of the above-mentioned
$\Z$-Lie algebra $\calA^{<,c}(H)$.
In fact, we give a more general treatment
by considering the refinement $\calA^{<}(P)$
of the entire $\Z$-algebra $\calA^<(H)$,  mapping it
by clasper surgery
to the larger \emph{algebra  of homology cylinders},
which needed slight additions with respect to what can already be
found in the literature 
(see Theorems \ref{th:psi} and \ref{th:psi_connected}).
Next, in place of the map \eqref{eq:zYmod1} which was the main tool of \cite{NSS22a},
we consider in \S \ref{sec:Z^<} the homomorphism
\begin{equation} \label{eq:z<mod1}
\zeta_{k+1}^< := \big({Z}^<_{k+1}\!\!\! \mod 1\big): Y_k \mathcal{IC} / Y_{k+1} 
\longrightarrow \calA_{k+1}^<(H)\otimes \Q/\Z .
\end{equation}
We introduce a degree $1$ homomorphism
$\delta^<:\calA^<(P) \to \calA^<(H)\otimes \Q/\Z$
(Proposition~\ref{prop:delta}) 
and, by combining its definition with the main result of \cite{NSS22a}, we prove that
$\zeta^<_{k+1}\circ \psi_k=\delta^<_k$
(Theorem~\ref{th:psi_delta_Z}):
said equivalently,
we compute the variation of $\zeta^<_{k+1}$
under \emph{arbitrary} clasper surgery.
We emphasize that, although explicit,
the definition of the map $\delta^<$ \emph{is not intrinsic}, 
in that it involves  the truncation to degrees $\leq 2$ 
of an appropriate expansion $\theta$ of $\pi$ 
into the (degree-completed) tensor algebra $\hbox{T}(H)\otimes \Q$: 
this truncation agrees with the corresponding truncation of the \emph{symplectic expansion}  considered in \cite{Mas12}, 
which clarifies the manner in which $\zeta^<_{k+1}$ 
depends on the two  choices involved in the construction of $\widetilde{Z}$.
As an application, we investigate structural properties of $\delta^<$ and, in particular, 
control the lack of  $\Sp(H)$-equivariance of $\zeta_{k+1}^<$ (Corollary \ref{cor:equivariance}).

Starting from \S \ref{sec:odd_degrees}, 
we mainly focus on the $2$-torsion
of the odd-degree component of $\Gr^Y\!\calIC$
that is produced by surgery along graph claspers 
with  order 2 symmetry.
For this purpose, we introduce a new module $\calB^<(P^{(2)})$
of Jacobi diagrams \emph{with beads} 
colored by $P^{(2)}:=H_1(\hbox{U}(\Sigma);\Z/2\Z)$: 
the ``beads'' on such a Jacobi diagram  are meant 
to be counted modulo 2, so that they encode a 2-fold cover of the Jacobi diagram,
which can thereafter be realized as a graph clasper in $U$.
In this way, 
we construct a new surgery map 
$$
\Psi: \calB^<_n\big(P^{(2)}\big) 
\longrightarrow 
\Tors_2\Big(\frac{Y_{2n+1} \calIC}{Y_{2n+2}}\Big),
$$
with values in the 2-torsion subgroup of $\Gr_{2n+1}^Y\calIC$ 
(see Theorem \ref{th:Psi}),
which is related to the ``usual'' surgery map $\psi$
through an ``expanding'' homomorphism
$
E:\calB^<_n\big(P^{(2)}\big)
\to \Tors_2\big(\calA^{<,c}_{2n+1}(P)\big)
$
(see Proposition \ref{prop:Psi_psi}).
Then, as an application of the constructions and results of 
\S\ref{sec:surgery}--\S \ref{sec:Z^<}, we deduce the following:\\

\noindent
\textbf{Theorem A.} (See Theorem \ref{th:Delta_zeta} and Corollary \ref{cor:lower_bound}.)
\emph{Let $n>0$.
There is an explicit  degree $1$ homomorphism 
$\Delta$ which fits into the following commutative diagram
of $\Sp(H)$-modules}:
$$
\xymatrix{
 \calB^<_n\big(P^{(2)}\big) 
 \ar[rrd]^-\Delta \ar[d]_-\Psi 
 &&  \\
\Tors_2\Big({\displaystyle\frac{Y_{2n+1} \calIC}{Y_{2n+2}}}\Big) \ar[rr]^-{\zeta^<_{2n+2}} 
&&  \Tors_2\big( \calA^{<,c}_{2n+2}(H) \otimes \Q/\Z\big).
}    
$$
\emph{In particular, the $\Sp(H)$-module 
$\Tors_2\big({Y_{2n+1}\calIC}/{Y_{2n+2}}\big)$
surjects onto the $\Sp(H)$-submodule  
$\Delta\big(\calB^<_n\big(P^{(2)}\big)\big)$
of $\calA^{<,c}_{2n+2}(H) \otimes \Q/\Z$.}\\[-0.2cm]

\noindent
The problem of determining 
the structure of the $\Sp(H)$-module $\Delta\big(\calB^<_n\big(P^{(2)}\big)\big)$
 seems to be very challenging in general.
However, the subsequent sections address this issue 
by either establishing lower bounds on its size,
or, by restricting attention to small values of~$n$.

Thus, we consider in \S \ref{sec:loop} the \emph{loop filtration}
on modules of Jacobi diagrams (and its variant, the
\emph{$s$-loop filtration}, for Jacobi diagrams with beads).
We compute the homomorphism $\Gr^L\! \Delta$ 
induced by $\Delta$ at the graded level
(Proposition \ref{prop:Gr_Delta}), 
and obtain the following lower bound.\\[-0.2cm]

\noindent
\textbf{Theorem B.}
(See Propositions \ref{prop:gr_lower_bound}, \ref{prop:t_0}
and \ref{prop:t_1}.)
\emph{For every $n>0$, we have} 
$$
\left\vert 
\Tors_2\Big(\frac{Y_{2n+1}\calIC}{Y_{2n+2}}\Big)\right\vert \;\geq\; 
\sum_{k\geq 0} t_k(n)
$$
\emph{where $t_k(n)$ is the cardinality
of the image of $\Gr^L\! \Delta$  
on $\Gr^L_k \calB^<_n\big(P^{(2)}\big)$,
and is  computed in loop degrees $k=0$ and $k=1$.}\\[-0.2cm]

In \S \ref{sec:symLMO}, we study the symmetrized LMO homomorphism
$Z:\calIC \to \calA(H)\otimes \Q$
together with its  ``modulo~$1$'' reduction
$
\zeta: \calIC \to {\big(\calA(H)\otimes \Q\big)}/{\calR(H)}.
$
Here, $\calR(H)$ denotes a suitable lattice in $\calA(H)\otimes \Q$, for which we provide several equivalent descriptions. 
In particular, we define inductively 
a family $\Tr^{(r)}$ (for $r\geq 0$)  
of   \emph{trace homomorphisms}:
defined on torsion-free quotients of $\calA^c(H)$,
these maps  may be regarded as ``multi-loop'' analogues 
of Morita’s trace operators \cite{Mor93b}, 
taking values in dyadic torsion abelian groups. 
We  use those operators $\Tr^{(r)}$ 
to characterize the $\Sp(H)$-submodule
$\calR(H)$ in $\calA(H)\otimes \Q$ (see Proposition \ref{prop:P2}),
and this characterization  has two types of applications. 
First, they yield congruence relations among the loop-degree components of $Z$ (Proposition~\ref{prop:ZZ}). 
In low loop degrees, we thus obtain congruence relations 
involving two classical invariants: 
the \emph{Johnson homomorphisms $\tau_k$} (for $k\geq 1$)
and the \emph{non-commutative Reidemeister torsion~$\widetilde{\alpha}$} 
introduced in \cite{NSS23} 
(see Corollaries~\ref{cor:0_1} and~\ref{cor:0_1_2}). 
Second, returning to the odd-degree $2$-torsion subgroup $\Tors_2\big(Y_{2n+1}\calIC/Y_{2n+2}\big)$, 
we construct from the loop-degree components of $Z$ a sequence of homomorphisms $R_{2n+2}^{(r)}$ (for $r\geq 0$) 
which compute~$\zeta_{2n+2}^<$ on the associated graded 
 of the loop filtration  (Proposition~\ref{prop:R_2n+2}). 
For small values of $r$, the homomorphisms $R_{2n+2}^{(r)}$
can be written explicitly 
in relation with the above-mentioned classical invariants
$\tau_{2n+2}$ and $\widetilde{\alpha}_{2n+2}$
(see Corollaries~\ref{cor:R'_2n+2} and~\ref{cor:R''_2n+2}).

The low-degree cases provide testing grounds for the general framework developed in the paper. As recalled above from the literature, 
the Torelli Lie algebra 
and the Lie algebra of homology cylinders 
are already thoroughly understood in degree one. 
Nevertheless, we apply in \S \ref{sec:deg_1}
the  methods of the preceding sections 
for the study of $\Tors_2\big(Y_{2n+1}\calIC/Y_{2n+2}\big)$ 
by considering the special case $n:=0$. This yields 
the following description  of $\Tors\big(\calIC/Y_{2}\big)$
in terms of $P^{(2)}=H_1(\hbox{U}(\Sigma);\Z/2\Z)$ 
and $H^{(2)}:=H_1(\Sigma;\Z/2\Z)$:\\[-0.2cm]

\noindent
\textbf{Theorem C.}
(See Theorem~\ref{th:Tors(IC/Y_2)}.)
{\it There is an explicit degree $1$ homomorphism $\Delta$ fitting into the following commutative diagram of $\Sp(H)$-modules}:
$$
\xymatrix{
 S^2 P^{(2)}/H^{(2)} \ar[d]_-\Psi
 \ar[rrd]^-\Delta  &&
 \\
\Tors\Big({\displaystyle\frac{ \calIC}{Y_{2}}}\Big) 
\ar[rr]_-{\zeta^<_{2}}  
&&   \calA^{<,c}_{2}(H) \otimes \Q/\Z,
}
$$
{\it Moreover, $\Psi$ is an isomorphism, 
while $\zeta_2^<$ and $\Delta$ are injective.}\\[-0.2cm]

\noindent
Thus, the LMO homomorphism induces an isomorphism of $\Sp(H)$-modules 
between $\Tors(\calIC/Y_2)$ and a copy of $S^2 P^{(2)}/H^{(2)}$ 
inside the module of Jacobi diagrams $\calA^{<,c}_{2}(H)\otimes \Q/\Z$. 
The ``classical'' description of $\Tors(\calIC/Y_2)$ 
involves the \emph{Birman--Craggs homomorphism~$\beta$},
with values in the space of quadratic 
functions on the space of spin structures of $\Sigma$ \cite{MM03}:
this can be recovered 
from Theorem C (see Proposition~\ref{prop:CvsQ}). 
From this perspective, the maps $R_2^{(r)}$ for $r\in\{0,1,2\}$ 
introduced in \S \ref{sec:symLMO} correspond precisely 
to the successive approximations of $\beta$ 
defined by formal differentiation (see Remark \ref{rem:R_BC}). 
Finally, a description 
of the full $\Sp(H)$-module $\calIC/Y_2$ 
is obtained by additionally considering the degree $1$ part of the LMO homomorphism, or equivalently, in the ``classical'' framework, the first Johnson homomorphism $\tau_1$ (see Proposition \ref{prop:deg_1}).

In \S \ref{sec:deg_3}, we develop an analogous approach for the degree-three components of the Torelli Lie algebra and of the Lie algebra of homology cylinders.\\[-0.2cm]

\noindent
\textbf{Theorem D.} (See Theorem~\ref{th:Tors(Y_3IC/Y_4)}.)
\emph{We have the commutative diagram of $\Sp(H)$-modules}
$$
\xymatrix{
 \big(H^{(2)} \otimes \Lambda^2 P^{(2)}\big)\big/\Lambda^3 P^{(2)}  \ar[d]_-\Psi \ar[drr]^-\Delta
 &&
 \\
\Tors\Big({\displaystyle\frac{ Y_3\calIC}{Y_{4}}}\Big)
\ar[rr]_-{\zeta^<_{4}} 
&&   \calA^{<,c}_{4}(H) \otimes \Q/\Z 
}
$$
\emph{in which $\Psi$ is an isomorphism, 
and $\zeta_4^<,\Delta$ are injective.}\\[-0.2cm]

\noindent
Consequently, for $n:=1$, the torsion subgroup of $Y_{2n+1}\calI/Y_{2n+2}$
is purely $2$-torsion 
and coincides with the $\Sp(H)$-submodule $\Delta\big(\calB^<_n\big(P^{(2)}\big)\big)$ 
of $\calA^{<,c}_{2n+2}(H) \otimes \Q/\Z$ singled out by Theorem A. 
Moreover, a description of the full $\Sp(H)$-module $Y_3\calIC/Y_4$ 
is obtained by incorporating the degree $3$ component of 
the LMO homomorphism (Corollary \ref{cor:Y_3(IC)/Y_4}). 
By means of  the maps $R_4^{(r)}$ for $r\in\{0,1\}$, 
introduced in \S \ref{sec:symLMO}, 
we further connect this description with classical invariants (Propositions \ref{prop:tf3} and \ref{prop:tors3}). 
As an application, we derive a criterion 
for the triviality of the $Y_4$-equivalence relation in $\calIC$:
this criterion involves the variation of the Casson invariant,
(a portion of)  the action of $\calIC$ on $\pi/\Gamma_6\pi$, 
and (a portion of) the non-commutative Reidemeister torsion 
truncated to degrees $\leq 4$ (see Corollary~\ref{cor:Y_4}). 
We note that the isomorphism class 
of the underlying abelian group $Y_3\calIC/Y_4$ 
was already determined in \cite{NSS22a} 
using the ``raw'' LMO homomorphism \eqref{eq:raw_LMO}; 
however, the $\Sp(H)$-module structure and the connection 
with classical invariants for the torsion subgroup 
were not addressed there.

The article concludes with an application to the Torelli group. Using the above description of $\Tors\big({\displaystyle{ Y_3\calIC}/{Y_{4}}}\big)$, we establish the following statement.\\[-0.2cm]

\noindent
\textbf{Theorem E.} (See Theorem \ref{th:onto}.)
{\it Assume that $g\geq 4$.
The mapping cylinder construction induces, in degree $3$, a surjective homomorphism}
$$
\operatorname{Gr}_3 \mathbf{c}: 
\Tors(\Gamma_3 \calI /\Gamma_4 \calI)
\longrightarrow \Tors(Y_3 \calIC / Y_4).
$$
{\it In particular, the $\Sp(H)$-module $\Tors(\Gamma_3 \calI /\Gamma_4 \calI)$ surjects onto ${\displaystyle
\frac{H^{(2)} \otimes \Lambda^2 P^{(2)}}{\Lambda^3 P^{(2)}}}$.}\\

\noindent
We do not address here the injectivity of $\operatorname{Gr}_3 \mathbf{c}$; nevertheless, we provide several equivalent reformulations (Proposition \ref{prop:injectivity}). 
By analogy with the degree $1$ situation, 
a further natural problem in degree $3$ would be
to relate the homomorphisms~$R_4^{(r)}$ 
to the variations of an appropriate reduction 
of the degree $4$ part of the LMO invariant 
of integral homology $3$-spheres or, equivalently, 
of the second Ohtsuki invariant 
(in the same way the Birman--Craggs homomorphism 
is defined by variations of the Rokhlin invariant).\\

\noindent
\textbf{Acknowledgment.} 
The authors are grateful to Takuya Sakasai
for suggesting the strategy of proof for Lemma \ref{lem:injectivity_tau_3}.
The first author was supported by the FNRS grant 1.B.176.24F.
The second author's institute receives support 
from the EIPHI Graduate School (ANR-17-EURE-0002).
The work of the third author was partly supported by JSPS KAKENHI Grant Numbers JP22K03298, JP26K06789, and JP26H01994.
\hfill $\blacksquare$\\

\noindent
\textbf{Use of AI.} 
While preparing this manuscript, the authors used Overleaf’s ``AI writing tools'' to rephrase certain portions of the text and enhance clarity. 
(In addition, ChatGPT was employed to assist in revising some TikZ figures.) 
Following each of these steps, 
the authors carefully reviewed and edited the text (and figures) and therefore assume full intellectual responsibility for the final manuscript.
\hfill $\blacksquare$\\

\noindent
\textbf{Conventions.} 
The equivalence class in a quotient set $S/\!\!\sim$
represented by an $s\in S$ is denoted by $\{s\}$.
The cyclic group of order $r\geq 1$ is denoted by $\Z_r:= \Z/r\Z$.

If not specified, the ground ring for linear algebra 
and homological algebra is $\Z$. 
Thus, a ``module'' means a $\Z$-module 
(i.e., an abelian group),
and (co)homology groups are taken  with coefficients in $\Z$. 

The torsion submodule of a module $M$ 
is denoted by $\Tors(M)$ and, for $r\geq 1$, 
its $r$-torsion part is denoted by $\Tors_r(M)$.
We also denote by $\bqm M \eqm$ the natural image of $M$ 
in $M \otimes \Q$,
which is canonically isomorphic to  $M/\Tors(M)$;
thus, we have $M\otimes \Q/\Z \simeq (M\otimes \Q)/\!\bqm M \eqm$.
\hfill $\blacksquare$

\section{The surgery map}
\label{sec:surgery}

In this section, we review the surgery map
which approximates the Lie algebra
of homology cylinders in terms of Jacobi diagrams.
We also extend the definition of this map
on a ``larger'' algebra.

\subsection{The $Y_k$-equivalence relations}
\label{subsec:claspers}

The so-called ``clasper calculus'' 
has been introduced independently 
by Goussarov \cite{Gou00} and Habiro \cite{Hab00}:
we follow here the terminology and conventions of \cite{Hab00}.
A brief overview specialized to the setting of homology cylinders 
can  be found  in \cite[\S 5]{HM12}.
Let us simply recall here a few definitions
for the reader's convenience.

A \emph{graph clasper} $G$ 
in a homology cylinder  $M$ is a compact surface
 in the interior of $M$, 
which is decomposed into 
\emph{leaves} (diffeomorphic to $D^1\times S^1$), 
\emph{edges} (diffeomorphic to $D^1\times D^1$) 
and \emph{nodes} (diffeomorphic to $D^2$):
edges (viewed as $1$-handles) 
connect leaves and nodes, in such a way
that every leaf is incident to a single edge,
and every node (viewed as a $0$-handle) 
is incident to three half-edges.

The graph clasper $G$ is said to be \emph{allowable}
if every connected component of $G$ has at least one node.
A leaf of $G$ is \emph{special}
if it is $(-1)$-framed and unknotted in~$M$.

The \emph{shape} of a graph clasper $G$
is the unitrivalent abstract graph 
that one obtains by deleting
the leaves and by collapsing edges and nodes
(viewed as handles) to their cores.
The \emph{(internal) degree} of~$G$ is defined 
as the number of nodes, i.e.,
the number of trivalent vertices of its shape.

\begin{example}
A \emph{$Y$-graph} is a $Y$-shaped graph clasper, 
and an \emph{$H$-graph} is an $H$-shaped graph clasper.
For instance, 
the bottom figure of Example \ref{ex:Jacobi_to_clasper}
shows a graph clasper consisting of a $Y$-graph 
(with one special leaf) and an $H$-graph. 
\hfill $\blacksquare$
\end{example}

A graph clasper $G$ in $M$ carries instructions for \emph{surgery}:
the result of the surgery 
is denoted by $M_G$ and, 
if $G$ is allowable,
then $M_G$ is also a homology cylinder.
Conversely, any homology cylinder can be obtained from the
unit cylinder $U=\Sigma\times [-1,+1]$
by surgery along an allowable graph clasper.

The \emph{$Y_k$-equivalence} is the relation on $\calIC$
generated by surgeries along connected graph claspers of degree $k\geq 1$.
This  definition is
especially suited to analysis of the $Y_k$-equivalence via clasper calculus:
see \cite[\S 5]{HM12} and references therein.
It turns out that
two homology cylinders $M$ and $M'$ are $Y_k$-equivalent if,
and only if,
there exists a compact, oriented, connected surface
with one boundary component $S\subset M$ and an element
$s\in \Gamma_k \calI(S)$ such that $M'$ is diffeomorphic to
the homology cylinder obtained from $M$ by cutting along $S$
and regluing with $s$.

\subsection{The algebra of homology cylinders}

As recalled in the introduction, 
the $Y$-filtration \eqref{eq:Y-filtration} of $\calIC$ 
is defined by considering, for all $k\geq 1$,
the submonoid $Y_k \calIC$
of homology cylinders that are $Y_k$-equivalent
to the unit cylinder $U$.
Clasper calculus shows that 
the quotient monoid $\mathcal{IC}/Y_{k+1}$ 
is in fact a group for every $k \ge 1$,  
and that the inclusion  of commutators
\begin{equation} \label{eq:centrality}
  [\, Y_i \mathcal{IC}/Y_{k+1},\, Y_j \mathcal{IC}/Y_{k+1} \,] 
\subset Y_{i+j} \mathcal{IC}/Y_{k+1}  
\end{equation}
holds in this group for all $i, j \ge 1$.
Then, the associated graded 
$$
\operatorname{Gr}^Y\!\!\mathcal{IC} = 
\bigoplus_{k \ge 1} \frac{Y_k \mathcal{IC}}{Y_{k+1}}
$$
of the  $Y$-filtration has the structure of a graded Lie algebra over $\Z$, and we call it the \emph{Lie algebra of homology cylinders}.
Furthermore, the natural  action 
of the mapping class group $\calM$ on $\calIC$ 
preserves the $Y$-filtration;
as a consequence of \eqref{eq:centrality},
this action factorizes through $\Sp(H)\simeq \calM/\calI$ 
at the graded level:  so $\operatorname{Gr}^Y \mathcal{IC}$ is an $\Sp(H)$-module.

Similarly, we can consider the monoid algebra $\Z[\calIC]$
endowed with the \emph{Goussarov--Habiro filtration}
$$
\Z[\calIC] = F_0 \Z[\calIC] 
\supset F_1 \Z[\calIC] \supset F_2 \Z[\calIC] 
\supset \cdots 
$$
which is defined as follows:
for every $k\geq 0$,
the submodule $F_k \Z[\calIC]$ is generated 
by the linear combinations of the form
$$
[M;G]
:= \sum_{G'\subset G}
(-1)^{\vert G\setminus G'\vert} \cdot  M_{G'}
$$
for all $M\in \calIC$
and any allowable graph clasper $G$ in $M$ of degree $k$,
where the sum is over all ways $G'$
of selecting some connected components  of $G$
and $\vert G\setminus G'\vert$ 
is the number of remaining components.
Recall that an invariant $I:\calIC \to A$ with values
in an abelian group $A$ is said to be of
\emph{finite-type} of \emph{degree} at most $d$ 
if $\Z[I]$ vanishes on $F_{d+1} \Z[\calIC]$.

Clearly, we have
$F_i \Z[\calIC] \cdot F_j \Z[\calIC]  \subset F_{i+j} \Z[\calIC]$
for all $i,j \geq 0$: hence the associated graded
$$
\Gr^F\!\Z[\calIC] := 
\bigoplus_{k\geq 0} \frac{F_k \Z[\calIC]}{F_{k+1} \Z[\calIC]}
$$
of the Goussarov--Habiro filtration
is a graded (associative, unital) algebra,
called the \emph{algebra of homology cylinders}.
Furthermore, the canonical action 
of $\calM$ on $\Z[\calIC]$ 
preserves the Goussarov--Habiro filtration, and it
factorizes through $\Sp(H)\simeq \calM/\calI$ at the graded level
(by the same arguments of clasper calculus that lead 
to \eqref{eq:centrality}):  
so $\Gr^F \Z[\calIC]$ is an $\Sp(H)$-module.

The relationship between the 
algebra of homology cylinders and its Lie version
is given by the following result, 
which is essentially contained in  \cite{Mas07} 
and based on a result of \cite{Hab00}.

\begin{theorem} \label{th:iota}
The  map $\iota: \Gr^Y\!\!\calIC \to \Gr^F \Z[\calIC]$ 
defined by 
$$
Y_k \calIC/Y_{k+1} \ni  \{M\}   \ \stackrel{\iota}{\longmapsto} 
\ \{M-U\} \in {F_k \Z[\calIC]}/{F_{k+1} \Z[\calIC]}
$$
induces an $\Sp(H)$-equivariant algebra homomorphism
$$
\operatorname{U}(\iota): 
\operatorname{U}\big(\Gr^Y\!\!\calIC\big) 
\longrightarrow \Gr^F \Z[\calIC]
$$
on the universal enveloping algebra.
Furthermore, $\operatorname{U}(\iota)$ is surjective.
\end{theorem}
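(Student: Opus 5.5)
Let me first check that $\iota$ is well defined and additive in each degree, then deduce the statements about $\operatorname{U}(\iota)$.

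\emph{Well-definedness.} Let $M\in Y_k\calIC$. Then $M$ is obtained from $U$ by a finite sequence of surgeries along connected graph claspers of degree $\geq k$. Each step contributes $M_G-M=-[M;G]\in F_k\Z[\calIC]$, so $M-U\in F_k\Z[\calIC]$ by a telescoping sum.

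If $M'$ is $Y_{k+1}$-equivalent to $M$, the same argument gives $M'-M\in F_{k+1}\Z[\calIC]$. Hence $\{M-U\}$ depends only on the class of $M$ in $Y_k\calIC/Y_{k+1}$.

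\emph{Additivity.} For $M,M'\in Y_k\calIC$ we have
$$
M'\circ M-U=(M'-U)+(M-U)+(M'-U)\circ(M-U),
$$
and the last term lies in $F_{2k}\subset F_{k+1}$. So $\iota$ is a group homomorphism in each degree.

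\emph{Brackets.} For $M\in Y_i$ and $M'\in Y_j$, write $[M,M']=M\circ M'\circ M^{-1}\circ M'^{-1}$, computed in the group $\calIC/Y_{i+j+1}$. Set $x=M-U$ and $y=M'-U$. Expanding gives $[M,M']-U\equiv xy-yx$ modulo $F_{i+j+1}$.

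To make this rigorous, I would work in $\Z[\calIC]/F_{i+j+1}$. The key fact needed is that $Y_{i+j+1}$-equivalent cylinders are congruent modulo $F_{i+j+1}$, which we already showed. This lets us compute with inverses inside the group $\calIC/Y_{i+j+1}$, pulled back along the algebra map $\Z[\calIC/Y_{i+j+1}]\to\Z[\calIC]/F_{i+j+1}$.

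In the group ring of a group $\Gamma$ with $g\in\Gamma_i$ and $h\in\Gamma_j$, we have $[g,h]-1\equiv(g-1)(h-1)-(h-1)(g-1)$ modulo the $(i+j+1)$-st power of the augmentation-type filtration. The same identity holds here, because $g^{-1}-1=-(g-1)g^{-1}$ preserves filtration degree.

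Thus $\iota$ is a Lie homomorphism into the commutator Lie algebra of $\Gr^F\Z[\calIC]$. By the universal property, it extends to an algebra map $\operatorname{U}(\iota)$.

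\emph{Equivariance.} This is immediate, since the $\calM$-action on $\calIC$ and on $\Z[\calIC]$ are compatible and $U$ is fixed.

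\emph{Surjectivity.} This is the main point, and it rests on Habiro's result \cite{Hab00}. Let $G=G_1\sqcup\dots\sqcup G_r$ be an allowable graph clasper in $M$ of degree $k$. We may assume each component $G_l$ is connected, of degree $k_l$, with $\sum k_l=k$.

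Since $M=U_C$ for some clasper $C$ disjoint from $G$, and $F_1$-considerations apply, I would first show $[M;G]\equiv[U;G]$ modulo $F_{k+1}$. Here $G$ is regarded in $U$ after moving it off $C$. The justification is Habiro's move lemmas: crossing changes and leaf slides of $G$ relative to $C$ cost higher-degree terms.

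Next, by clasper calculus, $G_1,\dots,G_r$ can be arranged in $U$ to be vertically stacked in disjoint slabs $\Sigma\times[t_{l},t_{l+1}]$, again modulo $F_{k+1}$. Then
$$
[U;G]=\pm\prod_l\bigl(U_{G_l}-U\bigr),
$$
a product in the monoid algebra. Since $U_{G_l}\in Y_{k_l}\calIC$, this equals $\pm\prod_l\iota\{U_{G_l}\}$. Hence $\Gr^F\Z[\calIC]$ is generated as an algebra by $\iota(\Gr^Y\calIC)$ together with degree $0$, where $F_0/F_1\simeq\Z$ because $M-U\in F_1$.

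The main obstacle is justifying the two reductions modulo $F_{k+1}$, namely replacing $M$ by $U$ and separating the components into slabs. I would invoke Habiro's results that crossing changes between edges or leaves of claspers of total degree $k$ change $[M;G]$ only by elements of $F_{k+1}$, as in \cite{Hab00} and \cite[\S5]{HM12}.
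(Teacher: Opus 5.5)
Your treatment of well-definedness, additivity, brackets (pulling back along the algebra map $\Z[\calIC/Y_{i+j+1}]\to\Z[\calIC]/F_{i+j+1}\Z[\calIC]$) and equivariance is sound and amounts to the paper's computation. There is one harmless sign slip: for connected $G$ one has $[M;G]=M_G-M$.

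The gap is in the surjectivity step, where you stack $G_1,\dots,G_r$ into disjoint slabs ``modulo $F_{k+1}$''. The fact you invoke is false for \emph{leaf--leaf} crossings between different components. Changing a crossing between a leaf of $G_a$ and a leaf of $G_b$ costs a surgery along the clasper $H$ obtained by fusing $G_a$ and $G_b$ along these two leaves; this is the mechanism behind Lemma~\ref{lem:generalized_STU-like}. The clasper $H$ has degree $k_a+k_b$, not more.

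For instance, take two $Y$-graphs in two slabs. The two possible orders give $[U;G]$ equal to $\iota(x)\iota(y)$ and $\iota(y)\iota(x)$ respectively, which differ by $\pm\iota([x,y])$. This is the STU-like relation, and it is in general nonzero in $\Gr^F_2\Z[\calIC]$ (it is detected by $Z^<_2$).

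Nor can such crossing changes be avoided. If a leaf of $G_a$ and a leaf of $G_b$ form a Hopf link in a ball, their linking number obstructs any isotopy separating $G_a$ from $G_b$ by a horizontal surface. So $[U;G]$ is not, in general, congruent modulo $F_{k+1}$ to a single product $\pm\prod_l(U_{G_l}-U)$.

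The argument can be repaired, and one part of it simplifies.

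First, your reduction $[M;G]\equiv[U;G]$ needs no crossing-change lemma. If $M=U_C$ with $G$ pushed off $C$, inclusion--exclusion over the components of $C$ gives $[U_C;G]=\sum_{C''\subseteq C}[U;G\cup C'']$. Every term with $C''\neq\varnothing$ lies in $F_{k+1}$.

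Second, the stacking step can be fixed by induction on the number $r$ of components. Let $G^\sharp=K\cup R$ be the result of a leaf--leaf crossing change, with $R:=G\setminus(G_a\cup G_b)$ and $K:=G_a^\sharp\cup G_b^\sharp$. Only the terms of the alternating sum containing both $G_a$ and $G_b$ change, so modulo $F_{k+1}$ one gets $[U;G]\equiv[U;G^\sharp]\pm[U_K;H\cup R]\equiv[U;G^\sharp]\pm[U;H\cup R]$. The last congruence uses the same inclusion--exclusion. Here $H\cup R$ has degree $k$ but only $r-1$ components. Edge crossings do cost only $F_{k+1}$. Together with $[U;G]=\iota\{U_G\}$ for $r=1$, this shows that $\Gr^F_k\Z[\calIC]$ is spanned by products of elements of $\iota(\Gr^Y\calIC)$.

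The paper sidesteps all of this by quoting Habiro's identity~\eqref{eq:Habiro} \cite[Prop.~6.10]{Hab00}, \cite[Lemma~3.8]{Mas07}, from which the surjectivity of $\operatorname{U}(\iota)$ is immediate.
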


\begin{proof}
The map $\iota$ is linear since we have
\begin{align*}
\{MN-U\} &=\{(M-U)(N-U)+ (M-U)+(N-U)\} \\&= \{M-U\} +\{N-U\} \quad \in \calIC/Y_{k+1}
\end{align*}
for any $M,N\in Y_k \calIC$, 
and  $\iota$ is a Lie map since we have
\begin{eqnarray*}
&& \{MNM'N'-U\} \\
&=& \{(M-U)(N-U)M'N' - (N-U)(M-U)M'N' + NMM'N' -U\} \\
&=& \{(M-U)(N-U) - (N-U)(M-U) + N(MM'-U)N' + (NN'-U)  \} \\
&=& \{(M-U)(N-U) - (N-U)(M-U) \} 
\quad \in \calIC/Y_{i+j+1}
\end{eqnarray*}
for any  $M,M'\in Y_i \calIC$ and for any  $N,N'\in Y_j \calIC$ 
such that $M\circ M'\sim_{Y_{i+j+1}} U$ 
and $N\circ N'\sim_{Y_{i+j+1}} U$.
Therefore, $\iota$ induces an algebra homomorphism
$\operatorname{U}(\iota): 
\operatorname{U}\big(\Gr^Y\!\!\calIC\big) 
\to \Gr^F \Z[\calIC]$.
Clearly, $\iota$ is $\Sp(H)$-equivariant,
and so is $\operatorname{U}(\iota)$.

The map $\operatorname{U}(\iota)$ is surjective, 
because the Goussarov--Habiro filtration on the monoid algebra
$\Z[\calIC]$ is induced by the $Y$-filtration 
on the monoid $\calIC$, meaning that
\begin{equation} \label{eq:Habiro}
F_k \Z[\calIC] = \sum_{l=1}^k
\sum_{\substack{k_1,\dots, k_l\geq 1 \\ k_1+\cdots + k_l=k}}
(Y_{k_1}\calIC -U) \cdots (Y_{k_l}\calIC -U).
\end{equation}
This identity is due to Habiro
in the analogous case of string-links 
\cite[Prop$.$ 6.10]{Hab00}:
see  \cite[Lemma 3.8]{Mas07}.
\end{proof}

\begin{remark}
The injectivity of the map $\operatorname{U}(\iota)$
does not seem to be known.
According to \cite[\S 5.2]{Mas07}
and as an analogue of Quillen's result \cite{Qui68},
$\operatorname{U}(\iota \otimes \mathbb{F})$ is an isomorphism 
if coefficients are taken in a field $\mathbb{F}$ instead of $\Z$
(provided the universal enveloping algebra functor
$\operatorname{U}(-)$ is understood in the restricted sense
when the characteristic of $\mathbb{F}$ is positive).

The injectivity of the map $\iota$ itself
is equivalent to requiring that, for every $k\geq 1$,
we have $Y_k \calIC = \calIC \cap \big(U + F_k \Z[\calIC]\big)$
in $\Z[\calIC]$. This is another way to state 
the \emph{Goussarov--Habiro conjecture}:
for every $d\geq 0$, two homology cylinders 
are not distinguished by finite-type invariants 
of degree at most $d$ if, and only if,
they are $Y_{d+1}$-equivalent. \hfill $\blacksquare$
\end{remark}

Recall that the monoid algebra $\Z[G]$ of a monoid $G$
has a  structure of cocommutative bialgebra,
whose coproduct $\Delta$ (resp$.$ counit $\varepsilon$)
is given by $\Delta(g)= g \otimes g $
(resp$.$ $\varepsilon(g)=1$) for every $g\in G$.
Besides, the universal enveloping algebra 
$\operatorname{U}(\mathfrak{g})$ of a Lie algebra $\mathfrak{g}$
also has 
a structure of cocommutative bialgebra,
whose coproduct $\Delta$ (resp$.$ counit $\varepsilon$)
is given by $\Delta(g)= g \otimes 1 + 1 \otimes g $
(resp$.$ $\varepsilon(g)=0$) for every $g$
in the image of $\mathfrak{g}$
in  $\operatorname{U}(\mathfrak{g})$.

\begin{corollary}
The bialgebra structure on $\Z[\calIC]$ induces 
a bialgebra structure on $\Gr^F \Z[\calIC]$.
Furthermore, 
$\operatorname{U}(\iota): 
\operatorname{U}\big(\Gr^Y\!\!\calIC\big) 
\to \Gr^F \Z[\calIC]$
preserves the bialgebra structures.
\end{corollary}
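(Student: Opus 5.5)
The plan is to show that the coproduct $\Delta$ and counit $\varepsilon$ of $\Z[\calIC]$ are compatible with the Goussarov--Habiro filtration, in the sense that
$$
\Delta\big(F_k \Z[\calIC]\big) \subset \sum_{i+j=k} F_i \Z[\calIC]\otimes F_j \Z[\calIC]
\quad\text{and}\quad \varepsilon\big(F_1\Z[\calIC]\big)=0 .
$$
Once this holds, $\Delta$ and $\varepsilon$ induce graded maps $\Gr^F\Z[\calIC]\to \Gr^F\Z[\calIC]\otimes \Gr^F\Z[\calIC]$ and $\Gr^F\Z[\calIC]\to\Z$. Here I use the natural map from $\Gr^F(A)\otimes\Gr^F(A)$ to the associated graded of the tensor filtration, composed with the projection. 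The bialgebra axioms (coassociativity, counitality, multiplicativity of $\Delta$ and $\varepsilon$) then pass to the graded level, because they hold upstairs and all maps involved respect filtrations. Strictly, one should define the graded coproduct with values in the associated graded of $\Z[\calIC]^{\otimes 2}$ for the tensor filtration. I will note that the image of $\Delta$ of a filtered element lands in the image of $\bigoplus_{i+j=k} F_i\otimes F_j$, which surjects from $\bigoplus \Gr_i\otimes \Gr_j$ onto the graded piece. This is the standard convention, and I would adopt it explicitly.

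To verify the filtration compatibility, I will use the description \eqref{eq:Habiro} of $F_k\Z[\calIC]$ from the proof of Theorem~\ref{th:iota}: it is spanned by products $(M_1-U)\cdots(M_l-U)$ with $M_r\in Y_{k_r}\calIC$ and $\sum k_r=k$. For $M\in\calIC$ we have
$$
\Delta(M-U)=M\otimes M-U\otimes U=(M-U)\otimes M+U\otimes(M-U),
$$
and, for $M\in Y_{k_r}\calIC$, this lies in $F_{k_r}\otimes F_0+F_0\otimes F_{k_r}$. Since $\Delta$ is multiplicative and the filtration is multiplicative, the product lands in $\sum_{i+j=k}F_i\otimes F_j$. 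Also $\varepsilon(M-U)=0$, so $\varepsilon$ kills $F_k$ for every $k\geq1$. This gives the first claim.

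For the second claim, $\operatorname{U}(\iota)$ is already known to be an algebra homomorphism, so it suffices to check compatibility with $\Delta$ and $\varepsilon$ on algebra generators, namely on $\iota(\{M\})=\{M-U\}$ for $M\in Y_k\calIC$. The expansion above shows that in the associated graded $\Delta\{M-U\}=\{M-U\}\otimes\{M\}+\{U\}\otimes\{M-U\}$ with $\{M\}$ read in degree $0$. Since $M-U\in F_1$, the class of $M$ in $F_0/F_1$ equals that of $U$, which is the unit $1$. Hence $\Delta\{M-U\}=\{M-U\}\otimes 1+1\otimes\{M-U\}$, so $\iota$ lands in the primitives, and $\varepsilon\{M-U\}=0$. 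Both coproducts are algebra maps and agree on generators, so they agree everywhere, and likewise for the counits.

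The only delicate point is the bookkeeping of the graded tensor product when torsion is present, since $\Gr(A)\otimes\Gr(A)\to\Gr(A\otimes A)$ need not be an isomorphism. I expect to handle this by stating the induced structure with respect to the tensor-product filtration, or by observing that the relevant degree-$0$ part is $F_0/F_1\cong\Z$, generated by $\{U\}$. The rest is formal.
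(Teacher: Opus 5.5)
Your proposal is correct and is essentially the paper's proof: compatibility of $\varepsilon$ and $\Delta$ with the filtration via \eqref{eq:Habiro} and the expansion of $\Delta(M-U)$, then primitivity of $\iota(\{M\})$ and generation of $\operatorname{U}(\Gr^Y\!\calIC)$ by $\Gr^Y\!\calIC$. The only cosmetic difference is that the paper writes $\Delta(M-U)=(M-U)\otimes U+U\otimes(M-U)+(M-U)\otimes(M-U)$ and drops the last term for degree reasons, which is equivalent to your replacing the class of $M$ in $F_0/F_1$ by $\{U\}$. The paper passes silently over the torsion issue for $\Gr^F\Z[\calIC]\otimes\Gr^F\Z[\calIC]\to\Gr\big(\Z[\calIC]\otimes\Z[\calIC]\big)$ that you flag, so your convention (coproduct valued in the associated graded of the tensor filtration) is what is implicitly used there; just note that $F_0/F_1\cong\Z$ only splits off the summands $\Gr_k\otimes\Gr_0$ and $\Gr_0\otimes\Gr_k$, which is all the primitivity argument needs, and not the mixed ones.
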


\begin{proof}
The counit $\varepsilon: \Z[\calIC]\to \Z$
is filtration-preserving, since we clearly have 
$\varepsilon(F_k \Z[\calIC])=0  $ for every $k\geq 1$.
Moreover, the coproduct 
$\Delta:  \Z[\calIC] \to \Z[\calIC] \otimes \Z[\calIC] $
is filtration-preserving too, since we have
$$
\Delta(F_k \Z[\calIC]) \subset 
\bigoplus_{i+j=k} F_i \Z[\calIC] \otimes F_j \Z[\calIC]
$$
 for any  $k\geq 1$, as can be deduced
 from \eqref{eq:Habiro} and the following fact:
\begin{equation} \label{eq:Delta}
\Delta(M-U)= (M-U)\otimes U  + U  \otimes (M-U)
+ (M-U) \otimes (M-U)
\hbox{ for any $M\in \calIC$.}
\end{equation}
Thus, $\Gr^F \Z[\calIC]$ inherits from $\Z[\calIC]$
a bialgebra structure.

The identity \eqref{eq:Delta}  shows
that the image of $\iota: \Gr^Y\!\!\calIC \to \Gr^F \Z[\calIC]$ 
consists of primitive elements.
Since the algebra $\operatorname{U}\big(\Gr^Y\!\!\calIC\big)$
is generated by the image of $\Gr^Y\!\!\calIC$,
we deduce that the algebra map $\operatorname{U}(\iota)$
is also a coalgebra map.
\end{proof}

\subsection{The abelian group $P$}
\label{subsec:P}

We consider the oriented frame bundle
$p:\operatorname{F}(U) \to U$ of the unit cylinder $U= \Sigma \times [-1,+1]$, 
which is a principal $\operatorname{GL}_+(3;\mathbb{R})$-bundle.
As a refinement of the surface homology group
$$
H =H_1(\Sigma) \simeq H_1(U),
$$
we shall need
$$
P:= H_1(\operatorname{F}(U))
$$
in the next subsections. Denoting by  $s$
the generator of 
$H_1(\operatorname{GL}_+(3;\mathbb{R})) \simeq \Z_2$,
we deduce the following 
from the Serre exact sequence in homology:
\begin{equation} \label{eq:Serre}
\xymatrix{
0\ar[r] &\Z_2 \, s \ar[r]&P\ar[r]^-{p_*}& H\ar[r]&0.
}    
\end{equation}

The next lemma reviews a few well-known facts 
about the abelian group $P$.

\begin{lemma} \label{lem:P}
(a) There is a group isomorphism
\[
t :  
\big(\{\text{framed oriented knots in }U\} /\!\sim \,, \sharp \big)
\longrightarrow P
\]
which adds to any framed oriented knot an extra $(+1)$-twist
and takes the homology class of its lift to the frame bundle;
here,
two oriented framed knots $K_1$ and $K_2$ 
are $\sim$-\emph{equivalent}
if $(-K_1)\sqcup K_2$ bounds a compact oriented surface
with respect to which the framings of $K_1$ and $K_2$
differ by an even integer.\\[0.2cm]
(b) $P$ is isomorphic to $\Z_2 \rtimes_\omega H$, 
which denotes the set $\Z_2 \times H$ with the internal law
$(z,h)\cdot(z',h') = \big(z+z'+(\omega(h,h')\!\!\mod 2), h+h'\big)$;
specifically, the group extension~\eqref{eq:Serre}
has a setwise section $\sigma:H \to P$ whose associated $2$-cocycle 
is the mod $2$ reduction of $\omega:H \times H \to \Z$.\\[0.2cm]
(c) The canonical action of $\calM$ on $P$ factorizes through $\Sp(H)$.
\end{lemma}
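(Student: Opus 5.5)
Since $U=\Sigma\times[-1,+1]$ is parallelizable, I will fix a trivialization of its tangent bundle coming from a framing of $T\Sigma$ (surfaces with boundary are parallelizable) and the $[-1,+1]$-direction. This gives a splitting $\operatorname{F}(U)\simeq U\times \operatorname{GL}_+(3;\mathbb{R})$, so $P\simeq H\oplus \Z_2$ as groups. The point of the lemma is to describe $P$ canonically rather than through this splitting, so I will use the splitting only for computations.

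For (a), the first step is to lift a framed oriented knot $K$ to a loop in $\operatorname{F}(U)$. The lift uses the tangent vector, the framing vector and their cross product. I will then check that adding a full twist to the framing changes the lift by the generator $s$, since a full rotation is nontrivial in $\pi_1(\operatorname{SO}(3))$. Hence the class depends only on the framing modulo $2$.

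To see that $t$ is well defined on $\sim$-classes, I will argue as follows. Suppose $(-K_1)\sqcup K_2$ bounds a surface $F$. The tangent/normal frame along $\partial F$ is null-homologous in $\operatorname{F}(U)$ up to a correction. This correction is measured by the framings relative to $F$ together with the Euler characteristic $\chi(F)$ (via a Poincaré--Hopf count on $F$). The extra $(+1)$-twist is exactly what makes the formula additive, and it makes $t$ a homomorphism for $\sharp$; this is Johnson's observation for spin structures/quadratic forms.

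Surjectivity of $t$ is clear, because every class of $H$ is represented by a knot and the framing can be changed. For injectivity, I will compare both sides as extensions of $H$ by $\Z_2$ and apply the five lemma. On the knot side, the kernel of the map to $H$ consists of null-homologous knots, and for these the framing relative to a Seifert surface modulo $2$ is the only invariant.

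For (b), I define $\sigma(h)$ as $t$ of a simple closed curve in $\Sigma\times\{0\}$ representing $h$, with the surface framing. This choice is well defined by (a), since different representatives cobound surfaces in $U$ compatibly with the blackboard framing modulo $2$.

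The cocycle is computed by resolving a union of two such curves meeting $\omega(h,h')$ times (mod $2$) into a single curve. Each crossing resolution changes the lifted class by $s$. This is essentially Johnson's formula $q(a+b)=q(a)+q(b)+a\cdot b$ for the quadratic form attached to the spin structure, now read in the homology of the frame bundle.

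I expect this cocycle computation, including the sign and orientation bookkeeping and the role of the $(+1)$-twist, to be the main obstacle. My first attempt would be to reduce it via (a) to a local model of two curves crossing once on a torus.

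For (c), a mapping class $f$ acts on $P$ through $f\times\id$ on $\operatorname{F}(U)$. By (a) this action is described on knots, and it preserves $s$. If $f\in\calI$, then $f$ fixes $H$, so $f_*(\sigma(h))=\sigma(h)+\lambda_f(h)s$ with $\lambda_f\in\Hom(H,\Z_2)$ by (b).

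To show $\lambda_f=0$, I will check it on generators of $\calI$, namely BP-twists and separating twists. Any curve $\gamma$ meets the twisting curve $c$ with algebraic intersection $0$ since $c$ is null-homologous (for a BP-twist, the two bounding curves are homologous, so the intersections cancel). Then $T_c(\gamma)$ is obtained from $\gamma$ by banding copies of $c$ with total homology zero. Using (a), the framing change along a null-homologous combination is even.

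Equivalently, I can phrase this via (b): the associated quadratic forms agree because the action on $H$ is trivial and $\omega$ is preserved. Hence the action factors through $\calM/\calI\simeq\Sp(H)$.
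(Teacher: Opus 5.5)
Your outline of (a) reproves what the paper simply quotes from \cite[Lemma 2.7.b]{MM03}: Poincar\'e--Hopf on a cobounding surface, where $\chi(F)$ is even when $F$ has two boundary components, and the pair of pants with $\chi=-1$ accounts for the extra $(+1)$-twist. That part is fine as an outline.

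The genuine gap is in (b), and (c) inherits it.

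First, you only define $\sigma$ on classes represented by a simple closed curve, i.e.\ on $0$ and on primitive classes. A class such as $2a_1$ has no such representative, so you do not obtain a section $H\to P$, and the cocycle identity cannot even be formulated for pairs like $(h,h)$.

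Second, the sentence ``different representatives cobound surfaces in $U$ compatibly with the blackboard framing modulo $2$'' restates what has to be proved. It is precisely Johnson's theorem that the spin quadratic form on simple closed curves depends only on the homology class. You give no way to compute the framing of $\gamma\times\{-\frac12\}$ relative to a surface joining it to $\gamma'\times\{\frac12\}$.

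Third, the cocycle computation does not work as described. Oriented smoothing of the crossings of $\gamma\cup\gamma'$ yields in general a multicurve, not a single curve; for instance, two crossings of opposite signs always leave two components. What is actually counted is the parity of the number of components, so you would need $\sigma$ on multicurves, and homology invariance there too.

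The missing ingredient, which is what the paper uses, is an integral framing correction defined for \emph{every} framed knot in $U$. This is the framing number $\operatorname{Fr}(K)=\operatorname{Lk}(K,K^{\mathrm{fr}})$ of \cite[\S B]{MM13}, where $\operatorname{Lk}=\frac12(\operatorname{Lk}_+ + \operatorname{Lk}_-)$ averages the linking numbers in $S^3$ for the two standard embeddings of $U$.
\begin{itemize}
\item Because $\operatorname{Lk}(K^{\mathrm{fr}},\partial F)=K^{\mathrm{fr}}\cdot F$, part (a) shows that $\sigma(h):=t(\{K_h\})+\operatorname{Fr}(K_h)\,s$ is independent of the framed knot $K_h$ representing $h$.
\item The cocycle then follows in one line from additivity of $t$, the formula $\operatorname{Fr}(K\sharp K')=\operatorname{Fr}(K)+\operatorname{Fr}(K')+2\operatorname{Lk}(K,K')$, and $2\operatorname{Lk}(K,K')\equiv\omega([K],[K'])$ mod $2$.
\item Since $\operatorname{Fr}=0$ on surface-framed simple closed curves, this extends your $\sigma$.
\end{itemize}

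For (c) no generators of $\calI$ are needed. The map $f\times\id$ preserves $t$ and $\operatorname{Fr}$, hence $f_*\sigma=\sigma f_*$ for every $f\in\calM$. Under $x\mapsto(x-\sigma(p_*x),p_*x)$ the action of $f$ becomes $\id\times f_*$. With your definition the same naturality is immediate, since $f\times\id$ carries surface-framed simple closed curves to surface-framed simple closed curves. So the banding argument is redundant once homology invariance is proved.

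Your ``equivalent'' formulation of (c) is not an argument. Sections with cocycle $\omega$ mod $2$ form a torsor under $\Hom(H,\Z_2)$. Triviality on $H$ and preservation of $\omega$ therefore only give $\lambda_f\in\Hom(H,\Z_2)$, which you already had.
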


\begin{proof}
(a) is proved in  \cite[Lemma 2.7.b]{MM03}.
To prove (b), recall that there is a notion
of \emph{framing number} $\operatorname{Fr}(K)$
for framed knots $K$ in $U$
(see, for instance, \cite[\S B]{MM13}).
Then, according to (a), 
there is a map $\sigma:H \to P$ 
defined by 
\begin{equation} \label{eq:sigma}
\sigma(h) := t(\{K_h\})+ \operatorname{Fr}(K_h)\, s
\end{equation}
where $K_h$ is any framed oriented knot in $U$
whose homology class is $h\in H_1(U)$.
Clearly, $\sigma$ is a right section of \eqref{eq:Serre}. 
(Its mod 2 reduction
$H_1(\Sigma;\Z_2) \to H_1(\operatorname{F}(U);\Z_2)$ 
coincides with Johnson's construction \cite{Joh80}.)
Moreover, 
the associated {$2$-cocycle} $H \times H \to \Z_2\, s\,$ of $\sigma$
maps any $(h,h')$ to
\begin{eqnarray*}
&& \sigma(h+h')-\sigma(h)-\sigma(h') \\
&=&     t(\{K_h \sharp K_{h'} \})
+ \operatorname{Fr}(K_h\sharp K_{h'})\, s 
- t(\{K_h\}) - \operatorname{Fr}(K_h)\, s 
-t(\{K_{h'}\})- \operatorname{Fr}(K_{h'})\, s \\
&=& 2\operatorname{Lk}(K_h,K_{h'}) s \ = \ \omega(h,h')\, s.
\end{eqnarray*}  
(Here we used \cite[Lemma B.4]{MM13}
and the notion of {linking number} 
$\operatorname{Lk}(-,-)$ in $U=\Sigma \times [-1,+1]$.)
Consequently, there is an  isomorphism 
$\rho:P \to \Z_2\, s \rtimes_\omega H$
defined by 
$\rho(x) := \big(x-\sigma(p_*(x)),p_*(x)\big)$.

We now prove (c). Let $f\in \calM$
inducing $f_*:P \to P$.
For any $h\in H$, we have
\begin{eqnarray}
\label{eq:action_sigma} f_* \sigma(h) &= & 
f_* t(\{K_h\})+ \operatorname{Fr}(K_h)\, s\\ 
\notag &=& t(\{(f\times \id)(K_h)\})
+ \operatorname{Fr}((f\times \id) (K_h))\, s    \\
\notag &=&  t(\{K_{f_*(h)}\})+ \operatorname{Fr}(K_{f_*(h)}) 
\ = \ \sigma f_*(h).
\end{eqnarray}
It follows that, for any $x\in P$, we have 
\begin{eqnarray*}
\rho(f_*(x)) &=& \big(f_*(x)-\sigma(p_*f_*(x)),p_*f_*(x)\big)\\
&=& \big(f_*(x)-\sigma(f_*p_*(x)),f_*p_*(x)\big)\\
&=& \big(f_*(x-\sigma(p_*(x))),f_*p_*(x)\big) 
\ = \ (\id \times f_*) (\rho(x)).
\end{eqnarray*}
This computation shows that the action of $f$ on $P$
corresponds to $\id \times f_*$ on $\Z_2 s \times H$ 
via the isomorphism $\rho$: 
hence, the action of $f$ on $P$ only depends on $f_* \in \Sp(H)$.
\end{proof}

We call the quantity $t(\{K\})\in P$, defined in Lemma \ref{lem:P}.(a),
the \emph{type} of a framed oriented knot $K$ in $U$. 
For instance, for a simple oriented closed curve $\delta \subset \Sigma$ 
representing $d\in H$,
we deduce from \eqref{eq:sigma} that the oriented knot $D:=\delta \times \{0\}$ 
(with the surface framing) has type $t(\{D\})=\sigma(d)$.

Besides, observe from Lemma \ref{lem:P}.(b) that 
the abelian group $P$ has the primary decomposition
\begin{equation} \label{eq:primary}
P=
\Z_2\, s \oplus 
\Z\, \sigma(a_1) \oplus \cdots \oplus \Z\, \sigma(a_g) \oplus
\Z\, \sigma(b_1) \oplus \cdots \oplus \Z\, \sigma(b_g),
\end{equation}
where $(a_1,\dots,a_g,b_1,\dots,b_g)$
is any symplectic basis of $H$
(for instance, arising from the choice of 
a system of curves as in \eqref{eq:surface}).

\begin{remark} \label{rem:spin}
There is yet another description of $P$,
which involves the set    
$\Omega:= \operatorname{Spin}(\Sigma)$ 
of spin structures on  $\Sigma$.
This set can be identified to $\operatorname{Spin}(U)$, hence
$$
\Omega = 
\big\{ y\in H^1(\operatorname{F}(U);\Z_2): 
\langle y, s \rangle \neq 0 \big\}.
$$
Recall that $\Omega$
is a $\Z_2$-affine space over the $\Z_2$-vector space
$H^1(\Sigma;\Z_2)$.
Then, $P$ is isomorphic to the fibered product
$H \times_{H^{(2)}} \operatorname{Aff}(\Omega)$
where $\operatorname{Aff}(\Omega)$
is the space of affine functions 
on  $\Omega$ and $H^{(2)} := H_1(\Sigma;\Z_2)$;
specifically, according to \cite[Lemma~2.7.a]{MM03},
we have the pull-back diagram
\[
\begin{tikzcd}
P \arrow[r,"e"] \arrow[d,"p_*"'] 
\arrow[dr, phantom, "\lrcorner", very near start] &  \operatorname{Aff}(\Omega) 
\arrow[d, "\kappa"] \\
H \arrow[r, "\bmod\! 2"'] & H^{(2)}
\end{tikzcd}
\]
where $e$ maps any $x\in P$ 
to the Kronecker evaluation 
${\langle -,x\rangle\colon H^1(F(U);\Z_2)\to\Z_2}$ at $x$,
and $\kappa$ maps any element of
$\operatorname{Aff}(\Omega)$
to the corresponding $\Z_2$-linear form in
$\Hom(H^1(\Sigma;\Z_2),\Z_2) \simeq H^{(2)}$.
\hfill $\blacksquare$
\end{remark}

\subsection{The module $\calA^<(P)$ of Jacobi diagrams}
\label{subsec:Jacobi_diagrams}

We now define a module of Jacobi diagrams, which simultaneously generalizes two earlier constructions 
and involves the group~$P$ of the previous subsection.

Recall that a \emph{Jacobi diagram} is a finite
graph whose vertices are either univalent (and called \emph{external}), 
or trivalent (and called \emph{internal}).
Furthermore, every internal vertex is \emph{oriented}
in that its incident half-edges are cyclically ordered.
(When Jacobi diagrams are drawn,
the convention is that internal vertices 
are oriented counterclockwise.)
A Jacobi diagram is \emph{allowable}
if each connected component has at least one internal vertex.
The \emph{(internal) degree} of a {Jacobi diagram}
is the number of its internal vertices.

\begin{example}\hphantom{.}\\
\begin{minipage}{0.23\textwidth}
\begin{tikzpicture}[thick,color=black,
baseline=-0.5em, x=1em, y=1em]
\footnotesize
\draw (-1,1) -- (0,0) -- (1,1);
\draw (0,0) -- (0,-1.5);
\draw [fill = black] (0,0) circle (.3ex);
\end{tikzpicture} \qquad 
\begin{tikzpicture}[thick,color=black,baseline=-0.2em, x=1.5em, y=1.5em]
\draw (-1/2,0) -- (1/2,0);
\draw (1/2,0)--({sqrt(3)/2},1);
\draw (-1/2,0)--({-sqrt(3)/2},1);
\draw (1/2,0)--({sqrt(3)/2},-1);
\draw (-1/2,0)--({-sqrt(3)/2},-1);
\draw [fill = black] (1/2,0) circle (.3ex);
\draw [fill = black] (-1/2,0) circle (.3ex);
\end{tikzpicture}
\end{minipage}
\begin{minipage}{0.75\textwidth}
A \emph{$Y$-diagram} is a Jacobi diagram  
which is $Y$-shaped (it has degree $1$),
and a \emph{$H$-diagram} is a Jacobi diagram  
which is $H$-shaped (it has degree $2$).
\hfill $\blacksquare$
\end{minipage}
\end{example}

Then,
we define the following module of Jacobi diagrams:
\[
\calA^<(P) := \frac{\Z\left\{\parbox{2.6in}{\centering 
\text{allowable Jacobi diagrams  whose external} 
\text{ vertices are $P$-colored and totally ordered}}\right\}}{\text{AS, IHX, multilinearity, special, STU-like, slide}}
\]
Here the relations are 
\vspace{4mm}

{\centering
AS:
$\begin{tikzpicture}[thick,color=black,baseline=-0.5em, x=1.5em, y=1.5em]
\draw (0,-0.2)--(0,-0.8);
\draw (0,-0.2) to[out=150, in=210, out looseness=2](0.866,1/2);
\fill [white] (0,0.25) circle [radius=0.12];
\draw (0,-0.2) to[out=30, in=-30, out looseness=2](-0.866,1/2);
\begin{scope}[dotted]
\draw ({sqrt(3)/2},1/2) -- ({sqrt(3)/2*1.5},1.5/2);
\draw (-{sqrt(3)/2},1/2) -- (-{sqrt(3)/2*1.5},1.5/2);
\draw (0,-0.8) -- (0,-1.3);
\end{scope}
\draw [fill = black] (0,-0.2) circle (.3ex);
\end{tikzpicture}
= -\begin{tikzpicture}[thick,color=black,baseline=-0.5em, x=1.5em, y=1.5em]
\draw (0,-0.1)--(0,-0.8);
\draw (0,-0.1)--({sqrt(3)/2},1/2);
\draw (0,-0.1)--({-sqrt(3)/2},1/2);
\begin{scope}[dotted]
\draw ({sqrt(3)/2},1/2) -- ({sqrt(3)/2*1.5},1.5/2);
\draw (-{sqrt(3)/2},1/2) -- (-{sqrt(3)/2*1.5},1.5/2);
\draw (0,-0.8) -- (0,-1.3);
\end{scope}
\draw [fill = black] (0,-0.1) circle (.3ex);
\end{tikzpicture}$, \quad 
IHX: $\begin{tikzpicture}[thick,color=black,baseline=-0.2em, x=1.5em, y=1.5em]
\draw (0,-1/2) -- (0,1/2);
\draw [fill = black] (0,1/2) circle (.3ex);
\draw [fill = black] (0,-1/2) circle (.3ex);
\begin{scope}[shift={(0,1/2)}]
\draw (0,0)--(30:1);
\draw (0,0)--(150:1);
\draw [dotted] (30:1)--(30:3/2);
\draw [dotted] (150:1)--(150:3/2);
\end{scope}
\begin{scope}[shift={(0,-1/2)}]
\draw (0,0)--(-150:1);
\draw (0,0)--(-30:1);
\draw [dotted] (-150:1)--(-150:3/2);
\draw [dotted] (-30:1)--(-30:3/2);
\end{scope}
\end{tikzpicture}
\quad - \quad 
\begin{tikzpicture}[thick,color=black,baseline=-0.2em, x=1.5em, y=1.5em]
\draw (-1/2,0) -- (1/2,0);
\draw [fill = black] (1/2,0) circle (.3ex);
\draw [fill = black] (-1/2,0) circle (.3ex);
\begin{scope}[shift={(1/2,0)}]
\draw (0,0)--(60:1);
\draw (0,0)--(-60:1);
\draw [dotted] (60:1)--(60:3/2);
\draw [dotted] (-60:1)--(-60:3/2);
\end{scope}
\begin{scope}[shift={(-1/2,0)}]
\draw (0,0)--(120:1);
\draw (0,0)--(-120:1);
\draw [dotted] (120:1)--(120:3/2);
\draw [dotted] (-120:1)--(-120:3/2);
\end{scope}
\end{tikzpicture}
\quad + \quad 
\begin{tikzpicture}[thick,color=black,baseline=-0.2em, x=1.5em, y=1.5em]
\draw (-1/2,0) -- (1/2,0);
\draw [fill = black] (1/2,0) circle (.3ex);
\draw [fill = black] (-1/2,0) circle (.3ex);
\begin{scope}[shift={(1/2,0)}]
\draw (0,0)--(145:1.7);
\draw (0,0)--(-60:1);
\draw[dotted] (145:2.2)--(145:1.7);
\draw[dotted] (-60:1.5)--(-60:1);
\end{scope}
\draw [white] (0,0.35) circle (.4ex);
\begin{scope}[shift={(-1/2,0)}]
\draw (0,0)--(35:1.7);
\draw (0,0)--(240:1);
\draw[dotted] (35:2.2)--(35:1.7);
\draw[dotted] (240:1.5)--(240:1);
\end{scope}
\end{tikzpicture}=0$,\\
 \vspace{2mm}multilinearity:
$\begin{tikzpicture}[thick,color=black,baseline=-0.9em, x=2em, y=2em]
\footnotesize
\draw[dotted] (0,0.5) -- (0,0);
\draw (0,0) -- (0,-1) node[below=1pt]{$x+y$};
\end{tikzpicture}
=
\begin{tikzpicture}[thick,color=black,baseline=-0.9em, x=2em, y=2em]
\footnotesize
\draw[dotted] (0,0.5) -- (0,0);
\draw (0,0) -- (0,-1) node[below=1pt]{$x$};
\end{tikzpicture}
+
\begin{tikzpicture}[thick,color=black,baseline=-0.9em, x=2em, y=2em]
\footnotesize
\draw[dotted] (0,0.5) -- (0,0);
\draw (0,0) -- (0,-1) node[below=1pt]{$y$};
\end{tikzpicture}$, \quad 
special:\!\!\!
$\rooteddottritree{\cdots\! < s <\! \cdots}\!\!=0$.\\[0.2cm]
STU-like: 
$\begin{tikzpicture}[thick,color=black,baseline=-0.9em, x=2em, y=2em]
\footnotesize
\draw[dotted] (0,0.5) -- (0,0);
\draw (0,0) -- (0,-1);
\draw[dotted] (1,0.5) -- (1,0);
\draw (1,0) -- (1,-1);
\node[below=1pt] at (0.5,-1) {$\cdots<x<y<\cdots$};
\end{tikzpicture}
-
\begin{tikzpicture}[thick,color=black,baseline=-0.9em, x=2em, y=2em]
\footnotesize
\draw[dotted] (0,0.5) -- (0,0);
\draw[dotted] (1,0.5) -- (1,0);
\draw (0,0) to [out=-90,in=90] (1,-1);
\fill [white] (0.5,-0.5) circle [radius=0.2];
\draw (1,0) to [out=-90,in=90] (0,-1);
\node[below=1pt] at (0.5,-1) {$\cdots<y<x<\cdots$};
\end{tikzpicture}
=\omega(p_*(x),p_*(y))
\begin{tikzpicture}[thick,color=black,baseline=-0.9em, x=2em, y=2em]
\draw[dotted] (0,0.5) -- (0,0);
\draw[dotted] (1,0.5) -- (1,0);
\draw (0,0) -- (0,-0.5);
\draw (1,0) -- (1,-0.5);
\draw (0,-0.5) to [out=-90,in=-90] (1,-0.5) ;
\node[below=1pt] at (-0.3,-0.8) {$\cdots$};
\node[below=1pt] at (1.4,-0.8) {$\cdots$};
\end{tikzpicture}$,\\
\qquad \qquad slide: \!\!\!
$
\begin{tikzpicture}[thick,color=black,baseline=-0.5em, x=1.5em, y=1.5em]
\footnotesize
\draw (-0.5,-1) -- (0,0) -- (0.5,-1);
\draw (0,0) -- (0,0.7);
\draw [dotted] (0,1.3) -- (0,0.7);
\draw [fill = black] (0,0) circle (.3ex);
\node[below=1pt] at (0,-1) {$\cdots\!<x<s<\!\cdots$};
\end{tikzpicture}
\! = \! 
\begin{tikzpicture}[thick,color=black,baseline=-0.5em, x=1.5em, y=1.5em]
\footnotesize
\draw (-0.5,-1) -- (0,0) -- (0.5,-1);
\draw (0,0) -- (0,0.7);
\draw [dotted] (0,1.3) -- (0,0.7);
\draw [fill = black] (0,0) circle (.3ex);
\node[below=1pt] at (0,-1) {$\cdots\!<x<x<\!\cdots$};
\end{tikzpicture}
\! = \! \begin{tikzpicture}[thick,color=black,baseline=-0.5em, x=1.5em, y=1.5em]
\footnotesize
\draw (-0.5,-1) -- (0,0) -- (0.5,-1);
\draw (0,0) -- (0,0.7);
\draw [dotted] (0,1.3) -- (0,0.7);
\draw [fill = black] (0,0) circle (.3ex);
\node[below=1pt] at (0,-1) {$\cdots\!<s<x<\!\cdots$};
\end{tikzpicture}$\!\!,}\\[0.2cm]

\noindent
where $x,y\in P$ are arbitrary
and $s\in P$ is the special element defined by the fiber.

\begin{remark} \label{rem:few}
The defining relations of $\calA^<(P)$ have the following consequences:
\begin{itemize}
\item We have the extra relation
stating that any Jacobi diagram containing a looped edge 
must be zero:
$$
\mathrm{self\text{-}loop}: \qquad
\begin{tikzpicture}[thick,color=black,baseline=-0.2em, x=2em, y=2em]
\draw[dotted] (-2.5,0) -- (-2,0);
\draw (-2,0) -- (-1,0);
\draw (-0.5,0) circle [radius=0.5];
\draw [fill = black] (-1,0) circle (.3ex);
\end{tikzpicture}
=0
$$
For connected components with at least two internal vertices, this follows from ``IHX'', 
while for the remaining components it is a consequence of 
``STU-like''  with ``AS'', ``multilinearity'', and ``slide''.
\item The special and multilinear relations imply
\rooteddottritree{\dots < x < \dots} $=$  \rooteddottritree{\dots < y < \dots} whenever $p_*(x) = p_*(y)\in H$.
So, the relevance of $P$-coloring
with respect to $H$-coloring 
is only justified by the possibility
of $Y$-diagram components.
\item It follows from the STU-like relation that
the order of external vertices is not relevant for $s$-colored vertices.
\hfill $\blacksquare$
\end{itemize}
\end{remark}

Since all the relations are homogeneous
with respect to the  (internal) 
degree of Jacobi diagrams, 
the module $\calA^<(P)$ is graded:
for $n\geq 1$, the degree $n$ submodule 
is denoted by  $\calA^<_n(P)$.
Besides, let $\calA^{<,c}_n(P)$ be the submodule of $\calA^<_n(P)$
generated by connected Jacobi diagrams.

As explained in the next proposition,
the module of Jacobi diagrams $\calA^<(P)$ should be regarded
as the synthesis of two earlier constructions:
\begin{itemize}
\item[(i)] As in  \cite[\S 2]{MM03}, 
observing that ``AS'' is implied by the other relations in degree $1$, we define
$$
\calA_1(P) := \frac{\Z\left\{\parbox{3.1in}{\centering 
\text{$Y$-diagrams  whose external vertices are $P$-colored}}\right\}}{\text{multilinearity, slide}}.
$$
\item[(ii)] As in \cite[\S 8.5]{Hab00} 
and \cite[\S 1.8]{HM09}, we define
$$
\qquad \calA^<(H) := \frac{\Z\left\{\parbox{2.7in}{\centering 
\text{allowable Jacobi diagrams  whose external }
\text{vertices are $H$-colored  and totally ordered}}\right\}}{\text{AS, IHX, multilinearity, STU-like, slide}},
$$
where the first three relations are the same as before, 
and the last two relations are written as follows:\\[-0.2cm]

STU-like: \!\!\!\!\!\!
$\begin{tikzpicture}[thick,color=black,baseline=-0.9em, x=2em, y=2em]
\footnotesize
\draw[dotted] (0,0.5) -- (0,0);
\draw (0,0) -- (0,-1);
\draw[dotted] (1,0.5) -- (1,0);
\draw (1,0) -- (1,-1);
\node[below=1pt] at (0.5,-1) {$\cdots<x<y<\cdots$};
\end{tikzpicture}
\!\!\!\!-\!\!\!\!
\begin{tikzpicture}[thick,color=black,baseline=-0.9em, x=2em, y=2em]
\footnotesize
\draw[dotted] (0,0.5) -- (0,0);
\draw[dotted] (1,0.5) -- (1,0);
\draw (0,0) to [out=-90,in=90] (1,-1);
\fill [white] (0.5,-0.5) circle [radius=0.2];
\draw (1,0) to [out=-90,in=90] (0,-1);
\node[below=1pt] at (0.5,-1) {$\cdots<y<x<\cdots$};
\end{tikzpicture}
\!\!\!\!=\omega(x,y) \!\!
\begin{tikzpicture}[thick,color=black,baseline=-0.9em, x=2em, y=2em]
\draw[dotted] (0,0.5) -- (0,0);
\draw[dotted] (1,0.5) -- (1,0);
\draw (0,0) -- (0,-0.5);
\draw (1,0) -- (1,-0.5);
\draw (0,-0.5) to [out=-90,in=-90] (1,-0.5) ;
\node[below=1pt] at (-0.3,-0.8) {$\cdots$};
\node[below=1pt] at (1.4,-0.8) {$\cdots$};
\end{tikzpicture}\!\!$ 
($x,y\in  H$),

\qquad \qquad \qquad slide: \!\!\!
$
\begin{tikzpicture}[thick,color=black,baseline=-0.5em, x=1.5em, y=1.5em]
\footnotesize
\draw (-0.5,-1) -- (0,0) -- (0.5,-1);
\draw (0,0) -- (0,0.7);
\draw [dotted] (0,1.3) -- (0,0.7);
\draw [fill = black] (0,0) circle (.3ex);
\node[below=1pt] at (0,-1) {$\cdots\!<x<x<\!\cdots$};
\end{tikzpicture}
\! = \! 0$ \quad ($x\in  H$),

\noindent
Note that  ``slide'' can be removed 
if one takes coefficients in~$\Q$, or, 
if one restricts oneself to $\calA_n^{<,c}(H)$ with $n\geq 2$.
Besides,  $\calA^<_1(H)$ can be identified to $ \Lambda^3 H$
through the map 
$\begin{tikzpicture}[thick,color=black,baseline=-1.0em, x=1em, y=1em]
\footnotesize
\draw (0,0) -- (1.5,-1);
\draw (0,0) -- (0,-1);
\draw (0,0) -- (-1.5,-1);
\draw [fill = black] (0,0) circle (.3ex);
\node[below=1pt] at (0,-1) {$x < y < z $};
\end{tikzpicture}\longmapsto x\wedge y \wedge z$.
\end{itemize}       

\begin{proposition} \label{prop:before}
We have the following $\Sp(H)$-isomorphisms:
\begin{itemize}
\item[(i)] \!\!\! The map $\calA_1^<(P) = \calA_1^{<,c}(P)
\!\to\!\calA_1(P)$ that
forgets the order of external vertices is an isomorphism.
\item[(ii)]  The  map $p_*:\calA^<(P)\to \calA^<(H)$
that applies $p_*:P \to H$ to all external vertices 
induces an isomorphism 
$\calA^{<}(P)\otimes \Q\simeq  \calA^{<}(H)\otimes \Q$,
as well as an isomorphism 
$\calA^{<,c}_n(P)\simeq  \calA^{<,c}_n(H)$ for $n\geq 2$.
\end{itemize}
\end{proposition}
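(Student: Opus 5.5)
For (i), the plan is to construct an explicit inverse. In degree $1$, an allowable Jacobi diagram is a single $Y$-diagram, so $\calA_1^<(P)=\calA_1^{<,c}(P)$. The forgetful map is well defined: in degree $1$ it kills the relations AS and IHX (IHX is empty), multilinearity and slide, and it kills the special relation because in $\calA_1(P)$ a $Y$ with one leg colored $s$ equals, by slide and multilinearity, a $Y$ with two equal legs, which vanishes. The STU-like relation for two legs of the same $Y$ reads $Y(\dots x<y\dots)-Y(\dots y<x\dots)=\omega(p_*x,p_*y)\cdot(\text{diagram with a self-loop or a strut})$. The right-hand diagram has a looped edge on a vertex whose third leg is $P$-colored. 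Its image in $\calA_1(P)$ is not defined, so I would first treat it inside $\calA_1^<(P)$ using Remark~\ref{rem:few}, which says this self-loop diagram is zero. The STU-like relation then just says that reordering legs does nothing, and the forgetful map is well defined.

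The inverse sends an unordered $Y$ to the same $Y$ with any ordering of its legs. By the STU-like relation combined with the self-loop relation this is independent of the ordering, and it respects multilinearity and slide, since these exist in both modules. Hence the two maps are mutually inverse. Equivariance is clear, because $\Sp(H)$ acts on colors by Lemma~\ref{lem:P}.(c).

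For (ii), the map $p_*$ is well defined. Multilinearity, AS, IHX and STU-like all go to their $H$-analogues, with $\omega(p_*x,p_*y)$ matching. The special relation maps to a diagram with a leg colored $p_*(s)=0$, which is zero. The slide relation maps to a $Y$ with legs $x,x$ (and $x,0$), which is zero by the $H$-slide relation. Surjectivity follows from surjectivity of $p_*:P\to H$ and multilinearity.

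For injectivity, I would build an inverse using the setwise section $\sigma$ of Lemma~\ref{lem:P}.(b), coloring each leg $h$ by $\sigma(h)$. This choice is not additive: $\sigma(h+h')-\sigma(h)-\sigma(h')=\omega(h,h')s$. The inverse is therefore well defined exactly when diagrams containing an $s$-colored leg vanish.

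\begin{itemize}
\item For connected diagrams of degree $n\ge2$, an $s$-colored leg is attached to an internal vertex. If that vertex has two external neighbours, the special relation applies directly. Otherwise the leg sits on a vertex joined to two internal edges, so I would use the remark that only the class in $H$ matters away from $Y$-components, applying the special relation after IHX moves that bring $s$ next to a univalent pair. Rather than relying on this, I expect it is cleaner to note that a diagram with an $s$-colored leg equals twice something by the slide relation ($s$ versus $x,x$). I would then show that such diagrams vanish via the special relation applied at a vertex adjacent to another leg, obtained by IHX, which is available when $n\ge2$.
\item With $\Q$ coefficients the argument is simpler: $2s=0$ makes every $s$-colored diagram $2$-torsion, hence zero. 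Therefore $\sigma$ becomes additive modulo zero, and the relations in $\calA^<(H)$ (including slide, which is redundant over $\Q$) lift.
\end{itemize}

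The main obstacle is the connected degree-$\ge2$ case over $\Z$: showing that every diagram with an $s$-leg vanishes, and checking that the lifted $H$-slide relation, which is redundant there, lifts correctly.
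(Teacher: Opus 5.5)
Your rational argument for (ii) is fine, and so are the well-definedness and surjectivity of $p_*$. The problems are in the integral parts.

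\textbf{The special relation is misread.} Its picture shows an $s$-colored leg on an internal vertex adjacent to a \emph{second internal vertex}. So it kills every diagram with an $s$-leg on a component of degree $\geq 2$, and it never applies to a $Y$-component. This is why Remark~\ref{rem:few} says $P$-colors matter only on $Y$-components.

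In (i) the misreading leads to a false claim. In $\calA_1(P)$, a $Y$ with two equal legs does not vanish: $Y(x,x,z)=Y(x,s,z)$ is a $2$-torsion element which is in general nonzero. For example $Y(\sigma(a_2),\sigma(a_2),\sigma(a_1))\neq 0$; together with $Y(s,s,s)$, such elements span $\Tors\calA_1(P)$ (cf.\ the proof of Theorem~\ref{th:Tors(IC/Y_2)}). Luckily the special relation is vacuous in degree $1$, so nothing needs checking, and the rest of your argument for (i) goes through.

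In (ii), the step you leave open as the ``main obstacle'' is immediate once the relation is read correctly. In a connected diagram with at least two internal vertices, every leg hangs on a vertex with an internal neighbour, so any diagram with an $s$-leg vanishes by the special relation. Likewise, the lift $\sigma(x)<\sigma(x)$ of the $H$-slide relation equals $\sigma(x)<s$ by the $P$-slide relation, hence is zero. The IHX manipulations and the ``twice something'' argument you sketch neither establish this nor are needed.

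\textbf{The connected submodule needs a justification you skip.} This point is also left implicit in the paper's one-line proof. $\calA^{<,c}_n(H)$ is the \emph{submodule} of $\calA^<_n(H)$ spanned by connected diagrams. STU-like relations between legs of different components tie connected diagrams to disconnected ones, which may contain $Y$-components. There the $\sigma$-lift is not additive: the defect $\omega(h,h')\,s$ lands on a $Y$, where $s$-legs survive. So your lift is not defined on $\calA^<_n(H)$, and restricting it to connected generators presupposes that $\calA^{<,c}_n$ is presented by connected diagrams alone.

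A clean fix uses two isomorphisms:
\begin{enumerate}
\item Proposition~\ref{prop:varphi}, restricted to connected parts as in Remark~\ref{rem:bialgebra};
\item its $H$-analogue $\varphi':\calA'(S)\to\calA^<(H)$, $D\mapsto(-1)^{\chi(D)}D^<$, from the proof of Proposition~\ref{prop:Gr^LA^<} (cf.\ \cite[Lemma 8.4]{CHM08}).
\end{enumerate}
Both isomorphisms and their inverses send connected diagrams to sums of connected diagrams. The relations of $\calA(S)$ and $\calA'(S)$ act on one component at a time. In degree $n\geq 2$, the swap and slide relations, which only involve $Y$-diagrams, play no role. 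Hence the connected parts of $\calA_n(S)$ and $\calA'_n(S)$ are the same module: connected diagrams modulo AS, IHX and self-loop. Since $p_*\circ\varphi=\varphi'$, the restriction of $p_*$ to $\calA^{<,c}_n(P)$ equals $\varphi'\circ\varphi^{-1}$, which is an isomorphism.
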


\begin{proof}
Statements (i) and (ii) are easily checked from the definitions, and the above observations 
on the defining relations of $\calA^<(P)$. 
\end{proof}
    
Let us also mention
some further structures on the module $\calA^<(P)$.
On the one hand, it is an $\Sp(H)$-module
thanks to the canonical action of $\Sp(H)$ on $P$
(see Lemma \ref{lem:P}.(c)).
On the other hand,  $\calA^<(P)$
is a cocommutative bialgebra:
the product is given by 
the ordered disjoint union $\stackrel{<}{\sqcup}$ of diagrams,
the unit is the empty diagram $\varnothing$, 
the coproduct $\Delta$ is defined on any diagram $D$ by 
$$
\Delta(D) = \sum_{D=D' \sqcup D''} D' \otimes D''
$$
where the sum is over all the decompositions of $D$ into two families of connected components $D', D''$
(their external vertices being ordered  as they were in $D$), 
and the counit $\varepsilon$  is defined by
$\varepsilon(D) = \delta_{D,\varnothing}$.

Finally, we draw the reader’s attention to some upcoming abuses of notation that will be used in handling Jacobi diagrams in $\mathcal{A}^<(P)$ and in related modules:
\begin{equation} \label{eq:lifts}
\parbox{0.8\textwidth}{
\it Given $x \in P^{(2)}:=H_1(\operatorname{F}(U);\Z_2)$ 
(resp$.$, $x\in H^{(2)}=H_1(\Sigma;\Z_2)$), 
the choice of a lift to $P$ (resp$.$, to $H$) 
is denoted by the same symbol~$x$
if the choice of this lift is irrelevant in our formula.
}
\end{equation}
\begin{equation} \label{eq:colors}
\parbox{0.8\textwidth}{
\it We color the external vertices of a Jacobi diagram
indifferently  with $P$ or~$H$ if the diagram is connected of degree at least 2,
or, if we are working with rational coefficients
(see Proposition \ref{prop:before}).
}
\end{equation}

\subsection{The surgery map $\psi$}
\label{subsec:surgery}

By merging the constructions of \cite{MM03}
(which deal with the degree-one case)
and those of \cite{Hab00,HM09,HM12}
(which deal with the higher-degree connected case),
we shall now define a \emph{surgery map}
$$
\psi :\calA^<(P) \longrightarrow \operatorname{Gr}^{F}\Z[\mathcal{IC}].
$$

For this, we describe a recipe to associate to
every Jacobi diagram $D$ among the generators of $\calA^<(P)$,
a graph clasper \( C(D) \) in $U=\Sigma \times [-1,+1]$
which is a  “topological realization”  of  \( D \).
(In particular, $C(D)$ has the same shape as $D$.)
This recipe  is divided into two steps:
\begin{enumerate}
  \item[\bf Step 1:] We thicken \( D \) to a compact oriented surface (using its vertex-orientation), so that vertices of $D$ are thickened to disks, and edges of $D$ to bands. For each disk produced in this way from an external vertex, we cut a smaller disk in the interior, so as to produce an oriented compact surface \( S(D) \), decomposed into disks, bands and annuli. The orientation of \( S(D) \) also induces an orientation on the core of each annulus 
  (namely, the orientation parallel to the outer oriented boundary of the annulus).
  \item[\bf Step 2:] We embed \( S(D) \) into the interior of \( U \) in such a way that each annulus $A$ of \( S(D) \), viewed as a framed oriented knot $A\subset U$, has the type $t(A)\in P$
   specified by the color of the corresponding external vertex of \( D \).
  Furthermore, the   annuli of $S(D)$ should be embedded 
  in disjoint ``horizontal slices'' of \( U \), 
  in such a way that  
  their ``vertical height'' along \( [-1,+1] \) 
  respect the total ordering 
  of the corresponding external vertices of~\( D \). 
  The result is a graph clasper \( C(D) \) in \( U \).
\end{enumerate}
Note that the surface $S(D)$ produced in ``Step 1''
is univocal, and it does not depend on the coloring 
nor on the ordering of the external vertices of $D$.
On the contrary, the graph clasper $G(D)$
produced in ``Step 2'' depends on the coloring and the ordering,
and it is equivocal in the sense that one has to make
choices for the embedding of $S(D)$ into $U$.

\begin{example} \label{ex:Jacobi_to_clasper}
The following is an example of a Jacobi diagram $D$ 
of degree $3$,
together with the corresponding ``abstract'' surface $S(D)$
and a graph clasper $C(D) \subset U$ realizing $D$:
$$
D=
\begin{tikzpicture}[thick,color=black,baseline=-1.0em, x=1.5em, y=1.5em]
\footnotesize
\begin{scope}[myred]
\coordinate (R) at (4.5,1);
\draw (R) -- (-1.2,-1);
\draw (R) -- (0.2,-1);
\draw (R) -- (10.5,-1);
\draw [fill = myred] (R) circle (.3ex);
\end{scope}
\begin{scope}[myblue]
\coordinate (Q) at (6.7,0);
\coordinate (P) at (4,0);
\draw (P) -- (2.1,-1);
\draw (P) -- (4,-1);
\draw (Q) -- (6.7,-1);
\draw (Q) -- (8.5,-1);
\draw (P) -- (Q);
\draw [fill = myblue] (P) circle (.3ex);
\draw [fill = myblue] (Q) circle (.3ex);
\node[black, below=1pt] at (5,-1) {$s<\sigma(b_2)<\sigma(a_4)<\sigma(a_3)<\sigma(a_2)<\sigma(a_1)<\sigma(b_1)$};
\end{scope}
\end{tikzpicture}
$$
$$
S(D)=
\begin{tikzpicture}[myred, very thick, scale=0.45, line width=1pt, baseline={(0,-0.3)}]
  \draw (0,0) circle (0.6);
\foreach \angle in {190,270,350}{
  \begin{scope}[myred, rotate=\angle]  
    \draw [very thick]
    (0.57,0.2) -- (1.1,0.2);
    \draw [very thick]
    (0.57,-0.2) -- (1.1,-0.2); 
  \draw (1.7,0) circle (0.6);
  \draw (1.7,0) circle (0.4);
  \end{scope}
}
\end{tikzpicture} \ 
\begin{tikzpicture}[myblue,very thick, scale=0.45, line width=1pt, baseline={(0,-0.3)}]
\draw (0,0) circle (0.6);
\draw (2,0) circle (0.6);
    \draw (0.57,0.2) -- (1.4,0.2);
    \draw (0.57,-0.2) -- (1.4,-0.2); 
\foreach \angle in {190,270}{
  \begin{scope}[rotate=\angle]  
    \draw (0.57,0.2) -- (1.1,0.2);
    \draw (0.57,-0.2) -- (1.1,-0.2); 
    \draw (1.7,0) circle (0.6);
    \draw (1.7,0) circle (0.4);
  \end{scope}
}
  \foreach \angle in {270,350}{
  \begin{scope}[shift={(2,0)},rotate=\angle]  
    \draw 
    (0.57,0.2) -- (1.1,0.2);
    \draw 
    (0.57,-0.2) -- (1.1,-0.2); 
    \draw (1.7,0) circle (0.6);
     \draw (1.7,0) circle (0.4);
  \end{scope}
}
\end{tikzpicture} \qquad
$$
$$
\clasper
$$
(Here we use the
elements $s,\sigma(a_i),\sigma(b_i)$ 
of a basis  \eqref{eq:primary} of $P$.)
\hfill $\blacksquare$
\end{example}

The following is a natural generalization
of \cite[Th$.$ 2.11]{MM03} 
(which deals with the degree $1$ case)
and \cite[Th$.$ 6.5]{HM12}
(which deals with the higher-degree connected case).

\begin{theorem} \label{th:psi}
Using the above recipe, we define a graded homomorphism
$\psi :\calA^<(P) \to 
\operatorname{Gr}^{F}\Z[\mathcal{IC}]$
by setting
$$
\psi(D) := \big\{ [U;C(D)] \big\} \in F_k \Z[\mathcal{IC}]/
F_{k+1}\Z[\mathcal{IC}]
$$
for any Jacobi diagram $D \in \calA^<(P)$ of degree $k$.
Furthermore, 
$\psi$ is surjective, $\Sp(H)$-equivariant
and it respects the bialgebra structures.
\end{theorem}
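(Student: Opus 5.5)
Well-definedness comes first: show that $\{[U;C(D)]\}\in F_k\Z[\calIC]/F_{k+1}\Z[\calIC]$ depends only on $D$, then that every defining relation of $\calA^<(P)$ is respected.

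\textbf{Independence of choices.} The embedding of $S(D)$ into $U$ is determined, up to isotopy, by the homotopy classes of the edges and by the framed knot types of the annuli; the leaves live in horizontal slices ordered as prescribed.
\begin{itemize}
\item \emph{Crossing changes of edges with edges or leaves.} Standard clasper calculus (cf. \cite[\S 5]{HM12}, \cite{Hab00}) shows that such a change alters $[U;C(D)]$ by a term of the form $[U;G']$ with $G'$ of degree $\geq k+1$. Such a term lies in $F_{k+1}$.
\item \emph{Leaves with the same type.} Two leaves of equal type $t(A)\in P$ are $\sim$-equivalent in the sense of Lemma~\ref{lem:P}(a). They are related by homology/cobordism moves and by changes of framing by even integers.
\item \emph{Framing changes.} Changing a framing by $2$ produces, via the twist-splitting lemma of clasper calculus, two copies of a graph with a special leaf. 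This contributes either a higher-degree term or a term that is $\equiv 0$ modulo $F_{k+1}$.
\item \emph{Homology changes of a leaf.} Moving a leaf within its homology class is handled by leaf-splitting along surfaces, exactly as in \cite[Th.~2.11]{MM03} for degree~$1$ and \cite[Th.~6.5]{HM12} for connected diagrams.
\end{itemize}
For a disconnected $D$, I would use that $[U;C(D)]$ is multiplicative modulo $F_{k+1}$ with respect to vertically stacking components. This reduces everything to the connected case. The only genuinely new ingredient is the interplay between the orderings of leaves belonging to different components.

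\textbf{Relations.}
\begin{itemize}
\item \emph{AS, IHX.} These follow from the corresponding clasper identities \cite{Hab00}; orientation reversal of a node changes the sign, and IHX holds modulo $F_{k+1}$.
\item \emph{Multilinearity.} This is the leaf-splitting (band-sum) lemma: a leaf of type $x+y$ is a band sum, and $[U;C]$ splits as a sum modulo higher degree.
\item \emph{Special relation.} A $Y$-graph with a special $(-1)$-framed unknotted leaf is trivial, so its surgery does nothing and the alternating sum vanishes. I would use here the observation that the type $s$ is realized by an unknot with framing making $t=s$.
\item \emph{Slide.} Sliding relations come from \cite[\S 2]{MM03} in degree~$1$; for higher degree they are consequences of the other relations.
\item \emph{STU-like.} Exchanging the heights of two leaves from possibly different components amounts to passing one leaf through the other. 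By Habiro's move this yields the original term, plus $\omega(p_*x,p_*y)$ times the graph obtained by connecting the two edges, plus higher-order terms. The linking number between the leaves in $U$ is computed by $\omega$.
\end{itemize}
I expect the \textbf{main obstacle} to be exactly this STU-like step, together with the independence of the embedding: carefully tracking signs and the degree bookkeeping, since the merged graph has degree $k$ while having one fewer connected component.

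\textbf{Remaining properties.} \emph{Surjectivity} follows from \eqref{eq:Habiro}: $F_k$ is generated by products of $(M_i-U)$ with $M_i\in Y_{k_i}\calIC$. Each $M_i$ is obtained from $U$ by surgery on connected degree-$k_i$ graph claspers, and modulo $F_{k+1}$ such a product is a sum of $[U;G]$ with $G$ of degree $k$. Any such $G$ is, up to higher-order terms, of the form $C(D)$ for a suitable $D$, after isotoping its leaves into horizontal slices.

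\emph{Equivariance}: an element $f\in\calM$ maps $C(D)$ to a clasper realizing $f_*D$, since types are transported by $f_*$ (Lemma~\ref{lem:P}(c)). \emph{Multiplicativity}: stacking $C(D)$ over $C(D')$ realizes the ordered disjoint union $D\stackrel{<}{\sqcup}D'$. \emph{Comultiplicativity}: the coproduct of $\Z[\calIC]$ applied to $[U;G]=\sum_{G'}\pm U_{G'}$ gives $\sum_{G=G_1\sqcup G_2}[U;G_1]\otimes[U;G_2]$, matching the coproduct of $\calA^<(P)$. The counit is compatible trivially.
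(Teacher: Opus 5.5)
Your overall route is the same as the paper's: well-definedness and surjectivity by the clasper calculus of \cite{MM03,HM09,HM12}, equivariance by transporting $C(D)$ with $f\times\id$, and the bialgebra property by stacking. However, your treatment of the special relation contains a genuine error.

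That relation is not about $Y$-diagrams. The picture shows an $s$-colored vertex attached to a node that is adjacent to a second internal vertex, so it only kills $s$-colored vertices lying in connected components of degree $\geq 2$. Moreover, the fact you invoke is false: a $Y$-graph with a special leaf is not trivial. A $Y$-graph with three special leaves realizes the connected sum with the Poincar\'e sphere. $Y$-graphs with one special leaf generate the nonzero torsion of $\calIC/Y_2$ (this is the isomorphism $\Psi$ of Theorem~\ref{th:Tors(IC/Y_2)}). Correspondingly, $Y$-diagrams with $s$-colored vertices are nonzero elements of order $2$ in $\calA^<(P)$ (Proposition~\ref{prop:varphi}).

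What you actually need is \cite[Lemma 4.9]{GGP01} (see also \cite[Lemma A.5]{MM13}): surgery along a connected graph clasper of degree $r\geq 2$ with a special leaf does not change the $Y_{r+1}$-class. Hence $[U;C(D)]\in F_{k+1}\Z[\calIC]$ as soon as a component of degree $\geq 2$ of $C(D)$ carries a special leaf.

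The same confusion affects two other steps.
\begin{itemize}
\item \emph{Framing change.} When the component is a $Y$-graph, the two copies with a special leaf produced by a framing change of $\pm 2$ do not vanish individually. They cancel because such a $Y$-graph yields an element of order $2$, the topological counterpart of $2s=0$ in $P$.
\item \emph{Slide.} For a $Y$-component inside a disconnected diagram of higher total degree, ``slide'' is not a consequence of the other relations. You must run the degree-one argument of \cite{MM03} in the presence of the other components.
\end{itemize}

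Two smaller points.
\begin{itemize}
\item \emph{Coproduct.} Your identity holds only modulo higher filtration. Exactly, $\Delta[U;G]=\sum_{G_1\cup G_2=G}[U;G_1]\otimes[U;G_2]$ over possibly overlapping pairs. The overlapping terms have total filtration degree $>k$, and only they disappear in $\Gr^F\Z[\calIC]\otimes\Gr^F\Z[\calIC]$.
\item \emph{Surjectivity.} The leaves of an arbitrary graph clasper cannot in general be \emph{isotoped} into disjoint horizontal slices, since they may be linked. You need leaf/leaf crossing changes, as in the proof of Lemma~\ref{lem:generalized_STU-like}. Each costs a degree-$k$ clasper with two fewer leaves, so an induction on the number of leaves is required.
\end{itemize}
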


\begin{proof}
That $\psi$ is well-defined and surjective
is proved by combining the arguments of
\cite[\S 2.3]{MM03} 
as well as \cite[\S 2.2]{HM09} and \cite[\S 6.2]{HM12},
which are  based  on clasper calculus.
In particular, note that the special relation 
is implied by \cite[Lemma 4.9]{GGP01},
and the STU-like relation is verified
following  Remark \ref{rem:STU-like}  below.

The $\Sp(H)$-equivariance of $\psi$
is verified as follows.
Let $f\in \calM$ inducing $f_*\in \Sp(H)\simeq \calM/\calI$.
Using the canonical action of $\calM$ on 
$\calIC$, we obtain
$$
\psi(f_*\cdot D)=
\big\{ \big[U; (f\times [-1,+1])(C(D))\big] \big\} 
= \big\{ f \cdot  [U;C(D)]  \big\} 
= f_* \cdot \psi(D).
$$
That $\psi$ preserves the bialgebra structures
is easily checked 
from the definitions of the product and coproduct.
\end{proof}

\begin{example} \label{ex:genus_0}
In genus $g=0$ (i.e., for $\Sigma = D^2$), 
$P= \Z_2\, s$ is the order $2$ group generated by $s$.
Hence we have a canonical isomorphism of graded modules
\begin{equation} \label{eq:genus_0}
    \mathcal{A}(\varnothing) \otimes \Z[Y]/\langle 2Y \rangle
    \stackrel{\simeq}{\longrightarrow} \calA^<(P) 
\end{equation}
where $\mathcal{A}(\varnothing)$ denotes the graded module 
of purely-trivalent Jacobi diagrams (modulo ``AS'' and ``IHX''), 
and $\Z[Y]/\langle 2Y \rangle$ is the quotient of the polynomial
algebra by the ideal generated by $2Y$.
This isomorphism maps the generator $Y$ 
to the $Y$-diagram with  vertices colored by $s$.
Then Theorem \ref{th:psi} specializes 
to \cite[Th$.$ 4.13]{GGP01}. \hfill $\blacksquare$
\end{example}

There is also a variant of Theorem \ref{th:psi}
for the Lie version of the algebra of homology cylinders.
To state this, we consider the submodule $\calA^{<,c}(P)$ 
of $\calA^<(P)$ generated by connected Jacobi diagrams.
The commutator of the product in the algebra 
$\calA^{<}(P)$  restricts to a Lie bracket on $\calA^{<,c}(P)$:
specifically, for any two connected Jacobi diagrams $D$ and $E$,
we have
\begin{equation} \label{eq:bracket}
[D,E] = \sum_{u,v} \omega(u,v) \cdot D\cup_{u=v} E,
\end{equation}
where the sum is  over all external vertices $u$ and $v$ of $D$ and $E$, respectively,
we denote by $\omega(u,v)$ the evaluation of 
$\omega\circ(p_* \times p_*)$ 
on the pair of colors of $(u,v)$,
and  $D\cup_{u=v} E$ is obtained 
by gluing  $D$ and $E$  at $u=v$ and
by ordering the external vertices as follows:
first, the vertices of $E$ lower than $v$ 
(in their given order);
second, the vertices of $D$ other than $u$ 
(in their given order);
third, the vertices of $E$ upper than $v$
(in their given order).

\begin{theorem} \label{th:psi_connected}
We define a surjective $\Sp(H)$-equivariant graded
homomorphism
$\psi:\calA^{<,c}(P) \to \Gr^Y\!\!\calIC$ of Lie algebras by setting
$$
\psi(D) := \big\{ U_{C(D)} \big\} 
\in Y_k \calIC/Y_{k+1}
$$
for any connected Jacobi diagram 
$D \in \calA^{<,c}_k(P)$ of degree $k\geq 1$.
Furthermore, we have the following commutative diagram:
\begin{equation} \label{eq:psi_psi}
\xymatrix{
\calA^{<,c}(P) \ar@{^{(}->}[d] \ar[r]^-\psi & \Gr^Y\calIC \ar[d]^-\iota \\
\calA^{<}(P) \ar[r]^-\psi & \Gr^F \Z[\calIC]
}    
\end{equation}
\end{theorem}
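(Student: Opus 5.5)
The plan is to deduce everything from Theorem~\ref{th:psi} together with Theorem~\ref{th:iota}, falling back on clasper calculus only where the group structure (not just the graded algebra) is involved.

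\emph{Well-definedness and additivity.} For a connected Jacobi diagram $D$ of degree $k$, the clasper $C(D)$ is connected of degree $k$, so $U_{C(D)}\in Y_k\calIC$. Its class modulo $Y_{k+1}$ does not depend on the choices made in Step~2 of the recipe. Indeed, two realizations differ by crossing changes between edges or leaves, by leaf twists, and by isotopies in $U$. By the standard clasper moves (\cite[\S 6.2]{HM12}, and \cite[\S 2.3]{MM03} for $k=1$), each such change alters the result only up to $Y_{k+1}$-equivalence, or replaces the colors of leaves within their type $t(A)\in P$, which is all we prescribe.

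Next I will check that the assignment respects the relations AS, IHX, multilinearity, special, STU-like and slide inside $Y_k\calIC/Y_{k+1}$. These are exactly the identities verified in the cited works (\cite[Lemma~4.9]{GGP01} gives the special relation). Since $Y_k\calIC/Y_{k+1}$ is abelian by \eqref{eq:centrality}, sums of diagrams can be realized by disjoint clasper unions, so $\psi$ extends linearly. Surjectivity holds because $Y_k\calIC$ is generated modulo $Y_{k+1}$ by surgeries on connected degree-$k$ claspers in $U$. By clasper calculus, such a clasper can be modified up to $Y_{k+1}$ into some $C(D)$: leaves are replaced by sums of basic leaves with prescribed types, and framings are adjusted via $s$. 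Equivariance follows as in Theorem~\ref{th:psi}.

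\emph{Commutativity of \eqref{eq:psi_psi}.} For connected $D$ we have $\iota\psi(D)=\{U_{C(D)}-U\}=\{[U;C(D)]\}$, because a connected clasper has a single component. This is exactly $\psi(D)$ in $\Gr^F\Z[\calIC]$, so the diagram commutes on generators and hence everywhere.

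\emph{Lie map.} I will verify that $\psi[D,E]=[\psi(D),\psi(E)]$ by the commutator computation of \cite[\S 6.2]{HM12}. Take $C(D)$ and $C(E)$ in disjoint slabs, so that the product $U_{C(D)}\circ U_{C(E)}$ is surgery on their union. The commutator then amounts to moving $C(E)$ past $C(D)$ vertically. Each crossing of a leaf of $D$ colored $u$ with a leaf of $E$ colored $v$ contributes, by the leaf-crossing move, the clasper obtained by gluing at $u=v$, with coefficient the linking number, which equals $\omega(u,v)$. The vertex ordering described after \eqref{eq:bracket} comes from the heights. The main obstacle is bookkeeping these heights and signs. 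As a shortcut I would try to argue through \eqref{eq:psi_psi}: the lower map is an algebra map, so $\iota\psi[D,E]=\iota[\psi D,\psi E]$. This shortcut, however, requires injectivity of $\iota$, which is unknown. So the direct clasper computation, following \cite[\S 6.2]{HM12} with $P$-colors in degree~$1$ handled as in \cite{MM03}, is the route I would actually carry out.
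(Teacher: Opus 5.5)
Your proposal is correct and follows the paper's own proof. Well-definedness, surjectivity and $\Sp(H)$-equivariance are delegated to the clasper calculus of \cite{MM03,HM09,HM12}, the square \eqref{eq:psi_psi} commutes because $[U;C(D)]=U_{C(D)}-U$ for a connected clasper, and the Lie property comes from swapping the two claspers at the cost of the degree-$(k+l)$ leaf/leaf correction claspers realizing $[D,E]$; you were right to discard the shortcut through $\iota$. The only difference is that you read the commutator as $mn(nm)^{-1}$ with a central correction term in $\calIC/Y_{k+l+1}$, whereas the paper inserts explicit inverse claspers $C(D)'\cup G$ and $C(E)'\cup H$ before swapping $C(E)$ with $C(D)'$, so your version is slightly more economical.
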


\begin{proof}
The well-definedness and  surjectivity of $\psi$
is justified by clasper calculus,  
exactly as for the proof of Theorem \ref{th:psi},
appealing to \cite{MM03,HM09}. 
(Again, the most noteworthy relation to verify is ``STU-like’’, 
which follows from Remark \ref{rem:STU-like}  below.)
The $\Sp(H)$-equivariance is also justified 
as in the proof of Theorem \ref{th:psi},
and the commutativity of \eqref{eq:psi_psi} is obvious.

That $\psi$ is a Lie map is also well-known
in some analogous situations but, since the argument 
does not seem to be detailed anywhere, we include it here.
Let $D\in \calA_k^{<,c}(P)$ and $E\in \calA_l^{<,c}(P)$ be connected Jacobi diagrams:
then, $[\psi_k(D),\psi_l(E)]\in Y_{k+l}\calIC/Y_{k+l+1}$ 
is represented by 
$$
U_{C(D)\cup C(E) \cup C(D)' \cup G \cup C(E)' \cup H}
\in \calIC
$$
where the graph claspers $C(D), C(E), C(D)', G, C(E)', H$ are located
in disjoint horizontal slices of $U = \Sigma \times [-1,+1]$ in this order from bottom to top,
$C(D)'$ differs from $C(D)$  by a half-twist on an edge,
$G$ is a disjoint union of graph claspers of degrees $\geq k+1$
 such that $U_{C(D') \cup G}$ is an inverse of $U_{C(D)}$ 
in $\calIC/Y_{k+l+1}$, and $C(E)',H$ 
play similar roles with respect to $E$.
By clasper calculus, we can swap $C(E)$ and $C(D)'$
at the price of surgeries along the graph claspers that realize $[D,E]$ in \eqref{eq:bracket}:
since any of these graph claspers has degree $k+l$,
they can be displaced to the top slice 
up to $Y_{k+l+1}$-equivalence.
Hence we have
$$
[\psi_k(D),\psi_l(E)] = 
\big\{ U_{C(D)\cup  C(D)' \cup C(E)  \cup G \cup C(E)' \cup H} \big\} 
\circ \psi_{k+l}([D,E]) \in {\calIC}/{Y_{k+l+1}};
$$
since $U_{C(E)}$ and $U_{G}$ commute
in ${\calIC}/{Y_{k+l+1}}$ (for $E$ being of degree $l$ 
and $G$ being of degree $\geq k+1$),
we deduce that $[\psi_k(D),\psi_l(E)]= \psi_{k+l}([D,E])$.
\end{proof}

We now recall how the surgery map $\psi$
of Theorem \ref{th:psi_connected} relates
to an approximation of the graded Lie algebra 
associated to the lower central series 
of the Torelli group $\calI$.
The determination of 
the abelianized Torelli group
$\calI_{\operatorname{ab}}$
by Johnson \cite{Joh85} is formulated in \cite{MM03} 
as a group isomorphism $J: \calA_1(P) \to \calI_{\operatorname{ab}}$. 
This induces a surjective  Lie algebra homomorphism
$$
J:\Lie(\calA_1(P))\longrightarrow \Gr^\Gamma \calI
$$
from the Lie algebra freely generated 
by the module $\calA_1(P)$ to the associated graded 
of the lower central series of $\calI$.
Then, it follows from \cite[Th$.$ A]{FMS26} that
$J$ factorizes in every degree $k\geq 2$
through $p_*: \Lie_k(\calA_1(P)) \to \Lie_k(\Lambda^3 H)$
to a homomorphism
$$
J_k: \Lie_k\big(\Lambda^3 H\big) \longrightarrow 
\Gr^\Gamma_k \calI.
$$
Besides, by Proposition \ref{prop:before},
the map $\psi_k$ 
can be viewed as a homomorphism
$$
\psi_k: \calA^{<,c}_k(H)  \longrightarrow 
\Gr^Y_k \calIC.
$$

\begin{proposition} \label{prop:bracketing}
For every $k\geq 2$, we have the 
commutative diagram
$$
\xymatrix{
\Lie_k\big(\Lambda^3 H\big)\ar[d]_{B_k}  \ar@{->>}[r]^-{J_k} &
\Gamma_k \calI/\Gamma_{k+1} \calI \ar[d]^-{\Gr_k\!\mathbf{c}}\\
\calA_k^{<,c}(H) \ar@{->>}[r]_-{\psi_k} &
Y_k \calIC/ Y_{k+1}
}
$$
where the left vertical map is induced by the isomorphism
$\Lambda^3 H\simeq \calA_1^{<,c}(H)$
and the right vertical map
is given by the 
mapping cylinder construction \eqref{eq:mcc}.
\end{proposition}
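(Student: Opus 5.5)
Since both $J_k$ and $B_k$ are defined on the free Lie algebra in degree $k$, it suffices to check commutativity on brackets of degree-one generators. Every element of $\Lie_k(\Lambda^3 H)$ is a $\Z$-linear combination of left-normed brackets $[x_1,[x_2,\dots,[x_{k-1},x_k]\dots]]$ with $x_i \in \Lambda^3 H$. Here $B_k$ is by definition the Lie algebra map $\Lie(\Lambda^3 H)\to \calA^{<,c}(H)$ induced by $\Lambda^3H\simeq\calA^{<,c}_1(H)$, restricted to degree $k$. By construction, $J_k$ is the degree-$k$ part of a Lie map $J$ factorized through $p_*$. All four maps in the square are therefore restrictions of graded Lie algebra homomorphisms, and it is enough to compare the two composite Lie homomorphisms $\Lie(\calA_1(P)) \to \Gr^Y\calIC$ in degree one.

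The plan is to work at the level of $\calA_1(P)$ rather than $\Lambda^3 H$, so as to avoid the non-injectivity of $p_*$ in degree one. I would consider the two Lie maps $\Gr\mathbf{c}\circ J$ and $\psi\circ \Lie(\mathrm{id})$ from $\Lie(\calA_1(P))$ to $\Gr^Y\calIC$. The second one is the Lie map sending a generator $D\in\calA_1(P)\simeq \calA_1^{<,c}(P)$ (Proposition \ref{prop:before}(i)) to $\psi_1(D)$; it is a Lie map by Theorem \ref{th:psi_connected}. The first is a Lie map because $\mathbf{c}$ is a monoid homomorphism preserving the filtrations, so $\Gr\mathbf{c}$ is a Lie homomorphism. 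Both are determined by their values on degree one, so the key step is the degree-one identity
$$\Gr_1\mathbf{c}\circ J_1 = \psi_1 : \calA_1(P)\longrightarrow \calIC/Y_2 .$$
This is exactly how the isomorphism $J:\calA_1(P)\to\calI_{\mathrm{ab}}$ is constructed in \cite{MM03}. There, $J(D)$ is the mapping class obtained by surgery on the $Y$-graph $C(D)$ in $U$: a $Y$-graph with unknotted, suitably framed leaves in a product $\Sigma\times[-1,1]$ can be realized as a product of Johnson's BP/genus-one maps, and its surgery yields the mapping cylinder of that element. So $\mathbf{c}(J(D))=U_{C(D)}$, at least modulo $Y_2$. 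I expect this identification to be the main obstacle. One must check the conventions of \cite{MM03} (the Torelli element attached to a $Y$-graph, orientations, and the $(+1)$-twist in the type $t$) so that the identity holds on the nose rather than up to sign or an automorphism of $\calA_1(P)$. If needed, I would verify it directly on generators $Y$-diagrams colored by $\sigma(a_i),\sigma(b_j),s$, using the fact that surgery on a $Y$-graph whose leaves bound disjoint surfaces in a slice is a Torelli element of that slice.

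Once the degree-one identity holds, the two Lie maps coincide on $\Lie(\calA_1(P))$. In degree $k\ge 2$ we have $\Gr_k\mathbf{c}\circ J = \Gr_k\mathbf{c}\circ J_k\circ p_*$. For the other map, $\psi$ applied to the bracket of degree-one diagrams in $\calA^{<,c}(P)$ is, via Proposition \ref{prop:before}(ii), $\psi_k\circ B_k\circ p_*$. This uses that $p_*:\calA^{<,c}(P)\to\calA^{<,c}(H)$ is a Lie map, which is clear from \eqref{eq:bracket} since the bracket only involves $\omega\circ(p_*\times p_*)$, and that it is an isomorphism in degree $\geq 2$. Finally, $p_*:\Lie_k(\calA_1(P))\to\Lie_k(\Lambda^3H)$ is surjective because $\calA_1(P)\to\Lambda^3H$ is, so we may cancel it and obtain $\Gr_k\mathbf{c}\circ J_k=\psi_k\circ B_k$.
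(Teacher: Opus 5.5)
Your proposal is correct and follows the paper's own proof: the paper likewise takes the degree-one triangle $\Gr_1\mathbf{c}\circ J=\psi_1$ on $\calA_1(P)$ from \cite{MM03}, extends it to a commutative square of Lie algebras out of $\Lie(\calA_1(P))$ (with $\psi$ a Lie map by Theorem~\ref{th:psi_connected}), and then passes to $\Lie_k(\Lambda^3 H)$ for $k\geq 2$. Your descent step, which uses the surjectivity of $\Lie_k(p_*)$ and the isomorphism $\calA^{<,c}_k(P)\simeq\calA^{<,c}_k(H)$, simply writes out what the paper leaves as ``follows immediately''.
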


\begin{proof}
We have the commutative triangle
$$
\xymatrix{
\calA_1(P) \ar[r]^-J \ar[rd]_-{\psi_1} & 
\calI_{\operatorname{ab}}\ar[d]^-{\Gr_1\!\mathbf{c}} \\
& \calIC/Y_2,
}
$$
which implies the commutative square of Lie algebras
$$
\xymatrix{
\Lie\big(\calA_1(P)\big)\ar[d]  \ar@{->>}[r]^-{J} &
\Gr^\Gamma \calI \ar[d]^-{\Gr\mathbf{c}}\\
\calA^{<,c}(P) \ar@{->>}[r]_-{\psi} &
\Gr^Y\!\!\calIC
}
$$
where the left vertical map is induced by the isomorphism
$\calA_1(P) \simeq \calA_1^{<,c}(P)$.
The proposition follows immediately.
\end{proof}

\subsection{Another description of $\calA^<(P)$}
\label{subsec:another_description}

We conclude this section with a simpler, 
\emph{but} non-intrinsic, description  of  $\calA^<(P)$. 
It depends on the choice of 
a symplectic basis 
$$S:=(a_1,\dots,a_g,b_1,\dots,b_g)$$ 
of $H$.
Let 
\begin{equation} \label{eq:A(S)}
\calA(S):=
 \frac{\Z\left\{\parbox{2.1in}{\centering 
\text{allowable Jacobi diagrams whose} 
\text{external vertices are $S$-colored}}\right\}}{\text{AS, IHX,  self-loop, swap}}
\end{equation}
where ``AS, ``IHX'' and ``self-loop'' are as before, 
and the last relation is  
$$
\text{swap: \quad}
\begin{tikzpicture}[thick,color=black,
baseline=-0.5em, x=1em, y=1em]
\footnotesize
\draw (-1,1) -- (0,0) -- (1,1);
\draw (0,0) -- (0,-1.5);
\draw [fill = black] (0,0) circle (.3ex);
\node[below=1pt] at (0,-1.5) {$y$};
\node[right=0.5pt] at (1,1) {$x$};
\node[left=0.5pt] at (-1,1) {$x$};
\end{tikzpicture}
=\begin{tikzpicture}[thick,color=black,
baseline=-0.5em, x=1em, y=1em]
\footnotesize
\draw (-1,1) -- (0,0) -- (1,1);
\draw (0,0) -- (0,-1.5);
\draw [fill = black] (0,0) circle (.3ex);
\node[below=1pt] at (0,-1.5) {$x$};
\node[right=0.5pt] at (1,1) {$y$};
\node[left=0.5pt] at (-1,1) {$y$};
\end{tikzpicture}
\quad \text{for all $x,y\in S$}.
$$

\begin{remark} \label{rem:before}
With rational coefficients, 
the self-loop and swap relations are consequences of ``AS''.
Hence, $\calA(S)\otimes \Q$ corresponds 
to the  space that is denoted by 
$\calA^Y(\lfloor g \rceil^+\cup \lfloor g \rceil^-) $ 
in \cite{CHM08}. \hfill $\blacksquare$
\end{remark}

Any Jacobi diagram $D$ with $S$-colored external vertices
defines a  Jacobi diagram $D^<$ 
with totally-ordered and $P$-colored vertices,
by transforming every color $u\in S\subset H$ 
to  $\sigma(u) \in P$ and declaring 
that all the vertices colored 
by $\{\sigma(a_1),\dots, \sigma(a_g)\}$
are lower than the vertices colored 
by $\{\sigma(b_1),\dots, \sigma(b_g)\}$.
Then, the following proposition generalizes \eqref{eq:genus_0}
to the genus $g>0$ case, 
and, given Remark \ref{rem:before},
it is a refinement of  \cite[Lemma~8.4]{CHM08}.

\begin{proposition} \label{prop:varphi}
The homomorphism 
$\varphi: \calA(S) \otimes \Z[Y]/\langle 2Y \rangle 
\to \calA^<(P)$ 
that is defined, for every Jacobi diagram $D$
and every $k\geq 0$, by 
\begin{equation} \label{eq:varphi}
\varphi(D \otimes Y^k) :=  (-1)^{\chi(D)} \cdot
D^< \ \underbrace{\stackrel{< }{\sqcup}  
\begin{tikzpicture}[thick,color=black,baseline=-1.0em, x=1em, y=1em]
\footnotesize
\draw (0,0) -- (1,-1);
\draw (0,0) -- (0,-1);
\draw (0,0) -- (-1,-1);
\draw [fill = black] (0,0) circle (.3ex);
\node[below=1pt] at (0,-1) {$s < s < s$};
\end{tikzpicture}
\stackrel{< }{\sqcup}   \cdots \stackrel{< }{\sqcup}  
\begin{tikzpicture}[thick,color=black,baseline=-1.0em, x=1em, y=1em]
\footnotesize
\draw (0,0) -- (1,-1);
\draw (0,0) -- (0,-1);
\draw (0,0) -- (-1,-1);
\draw [fill = black] (0,0) circle (.3ex);
\node[below=1pt] at (0,-1) {$s < s < s$};
\end{tikzpicture}}_{\hbox{\scriptsize $k$ times}}
\end{equation}
is an isomorphism of graded abelian groups.
\end{proposition}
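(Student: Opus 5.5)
We first check that $\varphi$ is well defined. We then build an inverse $\varphi'$ by a normal-form argument, on the model of \cite[Lemma~8.4]{CHM08}, carrying the $2$-torsion by hand.

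\emph{Well-definedness.} The relations of $\calA(S)$ must go to relations of $\calA^<(P)$.
\begin{itemize}
\item ``AS'' and ``IHX'' are preserved because $D\mapsto D^<$ does not change the internal structure. The sign $(-1)^{\chi(D)}$ is constant on the terms of these relations, since they all have the same Euler characteristic.
\item ``Self-loop'' holds in $\calA^<(P)$ by Remark~\ref{rem:few}.
\item ``Swap'' concerns $Y$-components. Applying ``STU-like'' to them in $\calA^<(P)$ gives relations of the form $x<x<y = x<y<x = \dots$, up to terms $\omega(\sigma(x),\sigma(y))$ times a diagram with a self-loop, and those vanish. Combined with ``slide'' ($x<x \equiv x<s$) and ``multilinearity'', both sides of swap reduce to the $Y$-diagram colored $(y,s,x)$, possibly up to a difference $\omega(x,y)\cdot(\text{self-loop})=0$.
\item The relation $2Y=0$ holds because $2s=0$ in $P$ and by multilinearity.
\end{itemize}
The map is homogeneous for the degree, since $Y$ has degree $1$.

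\emph{Surjectivity.} Take a generator of $\calA^<(P)$. We color each external vertex by an element of the basis \eqref{eq:primary}, using multilinearity. By ``STU-like'' we reorder the vertices so that all $\sigma(a_i)$-colored vertices lie below all $\sigma(b_j)$-colored ones. The correction terms are diagrams with fewer external vertices and the same degree, so induction on the number of external vertices applies.

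The $s$-colored vertices can be placed anywhere (Remark~\ref{rem:few}). Each of them has one of three fates:
\begin{itemize}
\item If it lies on a component with at least two internal vertices, it vanishes by the ``special'' relation.
\item If it lies on a $Y$-component with a non-$s$ leg, the ``slide'' relation turns it into a repeated color.
\item Otherwise it lies on a component $Y_{sss}$.
\end{itemize}
Finally, any remaining order among the $a$'s alone, or among the $b$'s alone, is irrelevant because $\omega$ vanishes on those pairs. So every generator lies in the image of $\varphi$.

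\emph{Injectivity.} This is the main obstacle. We define $\varphi'\colon \calA^<(P)\to \calA(S)\otimes\Z[Y]/\langle 2Y\rangle$ directly on generators, as follows.
\begin{enumerate}
\item Expand all colors in the basis \eqref{eq:primary}.
\item Send any component carrying the color $s$ to $0$, unless it is a $Y$-component.
\item Replace a $Y$-component with colors $(x,s,s)$ by the $Y$-diagram colored $(x,x,x)$. Replace one with colors $(x,y,s)$ by $(x,y,y)$ or by $(x,x,y)$; swap shows these are equal. Replace $Y_{sss}$ by the variable $Y$.
\item Rewrite the diagram into $a$-below-$b$ order using the STU-like relation. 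Concretely, we use the formula from \cite[Lemma~8.4]{CHM08}, which sums over partial pairings of $a$-vertices with $b$-vertices that are out of order, weighted by $\omega$.
\item Multiply by $(-1)^{\chi}$.
\end{enumerate}

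We must then check that $\varphi'$ respects every relation, namely AS, IHX, multilinearity, special, STU-like and slide. The delicate ones are the following.
\begin{itemize}
\item STU-like: this is exactly what the CHM08 pairing formula is built to satisfy. The sign $(-1)^{\chi}$ absorbs the sign arising from gluing, since each gluing lowers $\chi$ by one.
\item Multilinearity involving $s$: here $\sigma$ is only a section with cocycle $\omega \bmod 2$. So expanding $x+y$ produces extra $s$-terms, $\omega(x,y)$ copies of $s$. These must match the slide and swap identities integrally, and this is checked on $Y$-components only, where everything is explicit.
\end{itemize}
The check is integral rather than rational, which is why self-loop and swap must be tracked carefully. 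Once $\varphi'$ is well defined, $\varphi'\circ\varphi=\id$ holds on generators. Hence $\varphi$ is injective, and therefore an isomorphism.
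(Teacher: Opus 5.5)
Your strategy is the paper's. You check the relations for $\varphi$, get surjectivity from STU-like, slide and special, and build a left inverse by passing to basis colours, killing $s$-legs on components of degree $\geq 2$, turning $s$-legs of $Y$-components into repeated colours, sending $Y_{sss}$ to $Y$, and applying the connection formula with the sign $(-1)^{\chi}$.

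\textbf{The gap: step (3) does not say where the new leg goes.} This is the only non-routine point of the proof. Step (4) depends on the total order, so you must say where the new copy of $x$ (or $y$) is inserted, and you do not. If you simply recolour the $s$-vertex in place, the resulting map is not compatible with the fact that $s$-vertices move freely in $\calA^<(P)$. Here is an example:
\begin{itemize}
\item Let $F$ be a $Y$-component coloured $\sigma(b_1),\sigma(b_2),\sigma(b_3)$, and $C$ a $Y$-component coloured $s,\sigma(a_1),\sigma(a_2)$.
\item The orders $s<b_1<b_2<b_3<a_1<a_2$ and $b_1<b_2<b_3<a_1<s<a_2$ (with $s,a_1,a_2\in C$) give the same element of $\calA^<(P)$. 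That element is $2$-torsion.
\item In the first order, the recoloured copy of $a_1$ cannot be joined to $b_1$, but the original $a_1$ can. This leaves an $H$-diagram with legs $a_1,a_2\,|\,b_2,b_3$, plus a one-loop term, which has infinite order.
\item In the second order, everything collapses to the $2$-torsion element $\pm F\sqcup Y(a_1,a_1,a_2)$.
\end{itemize}
So the map is not well defined, and not even a homomorphism.

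\textbf{The fix.} Use the paper's convention: insert the copy immediately next to the vertex it duplicates. Even then, ``swap shows these are equal'' is not yet a proof. Swap only applies to isolated $Y$-components, and step (4) may join the modified $Y$ to the rest of the diagram. You also need the following observation about the pairings that touch that $Y$, once the equal legs are adjacent:
\begin{itemize}
\item Pairings that use one of the equal legs cancel in pairs, because exchanging the two equal legs reverses the orientation at the vertex.
\item Pairings that use only the remaining leg vanish. A vertex with two equal univalent legs whose third edge goes to another internal vertex is zero by IHX and AS, since $[[b,b],z]=[b,[b,z]]-[b,[b,z]]=0$.
\end{itemize}
Hence the modified $Y$ survives only as an isolated component, where swap applies. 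The same argument is what makes STU-like compatible with your map, both for moving $s$ and for moving a leg of a modified $Y$.

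\textbf{A minor point on multilinearity.} Your concern here is misplaced. Expanding in the basis \eqref{eq:primary} uses a direct-sum decomposition of $P$, so it is additive and no $\omega(x,y)\,s$ terms appear. Such terms would only arise if you expanded through $p_*$ and $\sigma$. What does need checking is that the $s$-coordinate is only defined mod $2$. This is harmless, because every diagram with an $s$-leg is sent to a $2$-torsion element.
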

    
\begin{proof} 
The right-hand side of \eqref{eq:varphi}
is well-defined in $\calA^<(P)$
as a consequence of the STU-like relation.
Hence we get a group homomorphism 
$$ 
{\Z\left\{\parbox{2in}{\centering 
\text{allowable Jacobi diagrams} 
\text{with $S$-colored external vertices}}\right\}} \otimes \Z[Y]/\langle 2Y \rangle
\longrightarrow  \calA^<(P).
$$
Clearly, this maps  ``AS'',  ``IHX'', ``self-loop'' and ``swap''
to  ``AS'', ``IHX'', ``self-loop'' and ``slide'', respectively.
So, we get  a map 
$\varphi: \calA(S) \otimes \Z[Y]/\langle 2Y \rangle \to \calA^<(P)$.

It is easily checked from the STU-like, slide and special relations 
in  $\calA^<(P)$ that the map $\varphi$ is surjective.
To prove the injectivity, we shall construct 
a homomorphism $\rho: \calA^<(P) \to \calA(S) \otimes \Z[Y]/\langle 2Y \rangle $
such that $\rho  \circ \varphi=\id$.
Since $P$ has the primary decomposition \eqref{eq:primary},
the abelian group $\calA^<(P)$ can be identified to
$$
\calA^<(S \sqcup \{s\}) := \frac{\Z\left\{\parbox{2.8in}{\centering 
\text{allowable Jacobi diagrams}  \text{whose external vertices are $S\cup\{s\}$-colored} 
\text{and $S$-colored vertices are totally ordered}}\right\}}{\text{AS, IHX, special, STU-like, slide}}
$$
where ``AS'', ``IHX''  and ``special'' are as before, 
the STU-like relation involves only the $S$-colored vertices:\\[-0.1cm] 

\noindent
STU-like: 
$\begin{tikzpicture}[thick,color=black,baseline=-0.9em, x=2em, y=2em]
\footnotesize
\draw[dotted] (0,0.5) -- (0,0);
\draw (0,0) -- (0,-1);
\draw[dotted] (1,0.5) -- (1,0);
\draw (1,0) -- (1,-1);
\node[below=1pt] at (0.5,-1) {$\cdots<x<y<\cdots$};
\end{tikzpicture}
-
\begin{tikzpicture}[thick,color=black,baseline=-0.9em, x=2em, y=2em]
\footnotesize
\draw[dotted] (0,0.5) -- (0,0);
\draw[dotted] (1,0.5) -- (1,0);
\draw (0,0) to [out=-90,in=90] (1,-1);
\fill [white] (0.5,-0.5) circle [radius=0.2];
\draw (1,0) to [out=-90,in=90] (0,-1);
\node[below=1pt] at (0.5,-1) {$\cdots<y<x<\cdots$};
\end{tikzpicture}
=\omega(x,y)
\begin{tikzpicture}[thick,color=black,baseline=-0.9em, x=2em, y=2em]
\draw[dotted] (0,0.5) -- (0,0);
\draw[dotted] (1,0.5) -- (1,0);
\draw (0,0) -- (0,-0.5);
\draw (1,0) -- (1,-0.5);
\draw (0,-0.5) to [out=-90,in=-90] (1,-0.5) ;
\node[below=1pt] at (-0.3,-0.8) {$\cdots$};
\node[below=1pt] at (1.4,-0.8) {$\cdots$};
\end{tikzpicture}$ for $x,y\in S$,

\noindent
and the slide relation writes as follows:\\[0.2cm] 

\noindent
\qquad \qquad \qquad slide:
\begin{tikzpicture}[thick,color=black,baseline=-0.5em, x=1.5em, y=1.5em]
\footnotesize
\draw (0,-1) -- (0,0) -- (1,0);
\draw (0,0) -- (0,0.7);
\draw[dotted] (0,0.7) -- (0,1.3);
\draw [fill = black] (0,0) circle (.3ex);
\node[right=1pt] at (1,0) {$s$};
\node[below=1pt] at (0,-1) {$\cdots <x < \cdots$};
\end{tikzpicture}
\! = \! 
\begin{tikzpicture}[thick,color=black,baseline=-0.5em, x=1.5em, y=1.5em]
\footnotesize
\draw (-0.5,-1) -- (0,0) -- (0.5,-1);
\draw (0,0) -- (0,0.7);
\draw[dotted] (0,0.7) -- (0,1.3);
\draw [fill = black] (0,0) circle (.3ex);
\node[below=1pt] at (0,-1) {$\cdots\!<x<x<\!\cdots$};
\end{tikzpicture}
\! = \! \begin{tikzpicture}[thick,color=black,baseline=-0.5em, x=1.5em, y=1.5em]
\footnotesize
\draw (0,-1) -- (0,0) -- (-1,0);
\draw (0,0) -- (0,0.7);
\draw[dotted] (0,0.7) -- (0,1.3);
\draw [fill = black] (0,0) circle (.3ex);
\node[left=1pt] at (-1,0) {$s$};
\node[below=1pt] at (0,-1) {$\cdots <x < \cdots$};
\end{tikzpicture}  for $x\in S$.

\noindent
(Note that ``AS'' and ``slide'' imply that  any 
diagram with an $s$-colored vertex has order $2$ 
in $\calA^<(S \sqcup \{s\})$.)
Then, for every Jacobi diagram $E$
among the generators of $\calA^<(S \sqcup \{s\})$.
we define
\[
\rho(E):=(-1)^{\chi(E)} \cdot 
\left(\begin{array}{c}
\text{sum of all ways of connecting  } \\ 
\text{some $a_i$-colored vertices of $E'$} \\
\text{ to some lower $b_i$-colored vertices}
\end{array}\right)_{
\!\!\!\hbox{\& forget the order}} \otimes Y^e
\]
where $E'$ is the diagram without $s$-colored vertex
obtained from $E$ as follows:
every component of $E$ of degree $\geq 2$
with an $s$-colored vertex is set to $0$;
any $Y$-diagram component
with all three vertices colored by $s$ 
is erased from $E$,
and we denote by $e$ the number of such components 
originally in $E$;
every $Y$-diagram component  of $E$ 
with one or two $s$-colored vertices is transformed as follows:
$$
\begin{tikzpicture}[thick,color=black,baseline=-1.0em, x=1em, y=1em]
\footnotesize
\draw (0,0.5) -- (0,1.5);
\draw (0,0.5) -- (2,-1);
\draw (0,0.5) -- (-2,-1);
\draw [fill = black] (0,0.5) circle (.3ex);
\node[above=1pt] at (0,1.5) {$s$} ;
\node[below=1pt] at (0,-1) {$\cdots  < x < \cdots< y < \cdots $};
\end{tikzpicture} \leadsto
\begin{tikzpicture}[thick,color=black,baseline=-1.0em, x=1em, y=1em]
\footnotesize
\draw (0,0.5) -- (2.5,-1);
\draw (0,0.5) -- (-2.5,-1);
\draw (0,0.5) -- (-1,-1);
\draw [fill = black] (0,0.5) circle (.3ex);
\node[below=1pt] at (0,-1) {$\cdots  < x < x < \cdots< y < \cdots $};
\end{tikzpicture},
$$
$$
\begin{tikzpicture}[thick,color=black,baseline=-1.0em, x=1em, y=1em]
\footnotesize
\draw (0,1) -- (0,-0);
\draw (0,1) -- (2,-1);
\draw (0,1) -- (-2,-1);
\draw [fill = black] (0,1) circle (.3ex);
\node[below=1pt] at (0,-0) {$s$} ;
\node[below=1pt] at (0,-1) {$\cdots  < x < \cdots< y < \cdots $};
\end{tikzpicture} \leadsto
\begin{tikzpicture}[thick,color=black,baseline=-1.0em, x=1em, y=1em]
\footnotesize
\draw (0,0.5) -- (2.5,-1);
\draw (0,0.5) -- (-2.5,-1);
\draw (0,0.5) -- (1,-1);
\draw [fill = black] (0,0.5) circle (.3ex);
\node[below=1pt] at (0,-1){$\cdots  < x < \cdots< y < y<\cdots $};
\end{tikzpicture}
,
$$
$$
\begin{tikzpicture}[thick,color=black,baseline=-1.0em, x=1em, y=1em]
\footnotesize
\draw (0,0) -- (0,-1);
\draw (0,0) -- (1,1);
\draw (0,0) -- (-1,1);
\draw [fill = black] (0,0) circle (.3ex);
\node[left=1pt] at (-1,1) {$s$} ;
\node[right=1pt] at (1,1) {$s$} ;
\node[below=1pt] at (0,-1) {$\cdots  < x < \cdots$};
\end{tikzpicture}
\leadsto 
\begin{tikzpicture}[thick,color=black,baseline=-1.0em, x=1em, y=1em]
\footnotesize
\draw (0,0) -- (1.5,-1);
\draw (0,0) -- (0,-1);
\draw (0,0) -- (-1.5,-1);
\draw [fill = black] (0,0) circle (.3ex);
\node[below=1pt] at (0,-1) {$\cdots <x < x < x< \cdots $};
\end{tikzpicture}
$$
It is straightforward to verify 
that each of the  defining relations of 
$\calA^<(S \sqcup \{s\})$ is mapped by $\rho$ to zero;
hence we get a  map 
$\rho:\calA^<(S \sqcup \{s\}) \to \calA(S) \otimes \Z[Y]/\langle 2Y \rangle$. Since the computation of $\rho$ on $\varphi(D \otimes Y^k)$ does not involve any connection of vertices, it follows that $\rho  \circ \varphi=\id$.
\end{proof}

\begin{remark} \label{rem:bialgebra}
As in \cite[\S 8]{CHM08}, we define on $\calA(S)$ a bialgebra structure, with product $\star$ given by 
\begin{equation} \label{eq:star_product}
D \star E 
= \left(\begin{array}{c}
\text{sum of all ways of connecting
some $b_i$-colored vertices of $D$}\\
\text{to some $a_i$-colored vertices of $E$,
for all $i\in\{1,\dots,g\}$}
\end{array} \right).
\end{equation}
Combining this with the usual bialgebra structure on $\Z[Y]/\langle 2Y \rangle$,
we obtain  that the map
$\varphi: \calA(S) \otimes \Z[Y]/\langle 2Y \rangle
\to \calA^<(P)$
is a bialgebra map. 

Restricting to the submodules generated by connected diagrams, we obtain a Lie algebra isomorphism
$\varphi: \calA^c(S) \oplus \Z_2 Y \to \calA^{<,c}(P)$.
The Lie bracket on the source is given 
by the commutator of the product $\star$,
and is such that $Y$ is central.
\hfill $\blacksquare$
\end{remark}

\section{The LMO homomorphism and the ``delta'' maps}

\label{sec:Z^<}

After a very brief review of the LMO homomorphism \cite{CHM08,HM09,HM12},
and a study of symplectic expansions up to degree $2$ \cite{Mas12},
we define the map $\delta^<$
which is an alternative version of the map $\delta$
introduced in \cite{NSS22a} to compute 
the ``mod 1 reduction'' of the LMO homomorphism.

\subsection{The LMO homomorphism $Z^<$ and its mod $1$ reduction $\zeta^<$}

\label{subsec:LMO}

The LMO functor can be restricted to the setting of homology cylinders, thereby yielding a monoid homomorphism
$$
\widetilde{Z}^Y: 
\calIC \longrightarrow 
{\calA(S) \otimes \Q}
$$
called the \emph{LMO homomorphism}. (Strictly speaking, $\widetilde{Z}^Y$ actually takes values in the degree-completion of $\calA(S) \otimes \Q$, but we shall keep the same notation for the degree-completed modules.) We refer the reader to \cite{CHM08,HM09,HM12}, and recall that the construction of $\widetilde{Z}^Y$ involves two choices:
\begin{itemize}
\item[(i)] a Drinfeld associator, used to define the Kontsevich integral of tangles, on which the LMO functor is built via surgery presentations of cobordisms;
\item[(ii)] a system of curves
$(\alpha_1,\dots,\alpha_g,\beta_1,\dots,\beta_g)$
as in \eqref{eq:surface}, giving rise to the symplectic basis $S:=(a_1,\dots,a_g,b_1,\dots,b_g)$ of $H$.
\end{itemize}

\begin{remark}
The target $\calA(S) \otimes \Q$ of $\widetilde{Z}^Y$
is denoted by 
$\calA^Y(\lfloor g \rceil^+\cup \lfloor g \rceil^-) $  in \cite{CHM08}, 
where rational coefficients are taken by default.
\hfill $\blacksquare$
\end{remark}

To partly “control’’ how 
$\widetilde{Z}^Y$ depends on the above choices (i) and (ii),
we look at the following compositions:
$$
\xymatrix{
\calIC \ar[r]^-{\widetilde{Z}^Y} 
\ar@/_2pc/@{-->}[rr]_(.8){{Z^{<}}}
\ar@/_3pc/@{-->}[rrr]_-{\zeta^<\, :=\, (Z^<\!\!\!\!\mod 1)}
& \calA(S) \otimes \Q
\ar[r]^-{\varphi^\Q}_-\simeq
& \calA^<(H)\otimes \Q \ar@{->>}[r]
& \calA^<(H)\otimes \Q/\Z.
}
$$
In this diagram, $\varphi^\Q$ denotes the rational version 
of the map $\varphi$ from Proposition~\ref{prop:varphi};
more precisely, for any Jacobi diagram $D$ among the generators of $\calA(S)$, we have
$
\varphi^\Q(D) = (-1)^{\chi(D)} \cdot D^<,
$
where $D^< \in \calA^<(H)$ is obtained by imposing an order 
on the external vertices of $D$ such that 
every vertex colored by some $a_i$
lies below every vertex colored by some $b_j$.

\begin{remark}
This variant $Z^<$ of the LMO homomorphism 
is equal to the map that is simply denoted by ``LMO''
in \cite[(2.3)]{HM09}.
$\hfill$ $\blacksquare$
\end{remark}

For every $k\geq 1$, the degree $k$  part $\widetilde{Z}^Y_k$ 
of $\widetilde{Z}^Y$ is a finite-type invariant of degree $k$, 
and \cite[Cor$.$ 8.6]{CHM08} computes the induced homomorphism 
$
\widetilde{Z}^Y_k: F_k \Q[\calIC] / F_{k+1} \Q[\calIC] 
\to \calA_k(S) \otimes \Q
$
in terms of clasper surgery. As a result,
we obtain the commutative diagram
\begin{equation} \label{eq:psi_Z}
\xymatrix{
\calA_k^<(H) \otimes \Q \ar[r]^{\psi^\Q} \ar@{=}[rd]
& {\displaystyle \frac{F_k\Q[\calIC]}{F_{k+1}\Q[\calIC]} } 
\ar[d]^-{Z^<_k}  \\
& \calA_k^<(H) \otimes \Q.
}
\end{equation}
Here, the map $\psi^\Q$ is the rationalization of 
the map $\psi$ in Theorem~\ref{th:psi}; 
specifically,  we have
$
\psi^\Q(D) = \big\{ [U;C(D)] \big\} 
$
for any Jacobi diagram $D$ among the generators of $\calA^<(H)$,
where $C(D)$ is a graph clasper in $U$ ``realizing'' $D$
by the same  recipe  as in \S \ref{subsec:surgery}, 
except that we only have to care here about  the  homology classes 
of the leaves of $C(D)$ (instead of their types).
As a consequence,  
$Z_k^<: \Gr_k^F \Q[\calIC] \to \calA_k^<(H) \otimes \Q$  
does not depend on the above choices (i) and (ii);
moreover,  it is $\Sp(H)$-equivariant. 

More recently, it has been shown that $(\widetilde{Z}^Y_{k+1} \bmod 1)$ is itself a finite-type invariant of degree $k$, and \cite[Th$.$ 1.1]{NSS22a} describes the induced homomorphism
$$
\big(\widetilde{Z}^Y_{k+1}\!\!\bmod 1\big): 
{\displaystyle F_k \Z[\calIC] / F_{k+1} \Z[\calIC] }
\to \calA_{k+1}(S) \otimes \Q/\Z
$$
in terms of clasper surgery.
The purpose of the rest of this section is to give an alternative formulation of this result by explicitly computing the composition
\begin{equation} \label{eq:tbc}
\xymatrix{
\calA_k^<(P) \ar[r]^-{\psi}
& {\displaystyle \frac{F_k\Z[\calIC]}{F_{k+1}\Z[\calIC]} }
\ar[r]^-{\zeta^<_{k+1}}  
& \calA_{k+1}^<(H) \otimes \Q/\Z.
}
\end{equation}
In particular, our calculation will make explicit to what extent the map
$\zeta_{k+1}^<$  
depends on the above choices (i) and (ii), and will measure its failure to be $\Sp(H)$-equivariant.

\begin{remark} \label{rem:Q/Z}
Since $p_*: \calA^<(P) \to \calA^<(H)$ is surjective and, moreover, yields an isomorphism
$p_*: \calA^<(P)\otimes \Q \to \calA^<(H)\otimes \Q$
(see Proposition~\ref{prop:before}), it follows that it also induces an isomorphism
$p_*: \calA^<(P)\otimes \Q/\Z \to \calA^<(H)\otimes \Q/\Z$. \hfill $\blacksquare$
\end{remark}

\subsection{Symplectic expansions up to degree 2}

In order to compute the composition \eqref{eq:tbc},
we need the notion of 
``symplectic expansions'' and their ``truncations''.

Let $\pi:=\pi_1(\Sigma,\star)$ be the fundamental group
of $\Sigma$ based at $\star \in \partial \Sigma$.
Recall from \cite{Mas12} that  a \emph{symplectic expansion} 
is a monoid map $\theta \colon \pi \to \widehat{T}^\Q$,
with group-like values in the complete tensor algebra $\widehat{T}^\Q:=\widehat{T}(H^\Q)$
generated by  $H^\Q$,  satisfying:
\begin{itemize}
\item $\theta(x) = 1+[x] +(\deg \geq 2)$
for all $x\in \pi$ representing $[x]\in H$,
\item $\theta(\partial \Sigma)=\exp(-\omega)$
where $\omega \in \Lambda^2 H \subset H^{\otimes 2}$
is the intersection form.
\end{itemize} 
Sometimes, it is more convenient to work 
with the corresponding \emph{symplectic logansion} 
$\ell^\theta := \log \circ\, \theta : \pi \to \widehat{\mathfrak{L}}^\Q$,
where $\widehat{\mathfrak{L}}^\Q 
:= \widehat{\mathfrak{L}}(H^\Q)$ is the complete free Lie algebra generated by $H^\Q$.

The \emph{truncation} (to degrees at most $2$)
of a symplectic expansion $\theta$ 
is the   map 
$$
\theta_{\leq 2}:\pi \longrightarrow 
\widehat{T}^\Q/\widehat{T}^\Q_{\geq 3}\simeq 
\big(\Z \oplus H \oplus H^{\otimes 2}\big)\otimes \Q.
$$
Observe that it is determined 
by the map $\theta_2:\pi\to H^{\otimes 2}\otimes \Q$.
We say that $\theta$ is 
\emph{even} (up to degrees at most $2$)
if $\theta_2(\pi) \subset \frac{1}{2}  H^{\otimes 2}$
or, equivalently, 
if $\ell^\theta_2(\pi) \subset \frac{1}{2}  \mathfrak{L}_2(H)$.

\begin{lemma} \label{lem:expansion} 
(a) The set of truncated symplectic expansions is
a torsor over $\Lambda^3 H^\Q$.
Specifically, 
for any symplectic expansions $\theta$ and $\tilde \theta$, 
there is a unique
\begin{equation} \label{eq:symp_cond}
d\in \Ker\Big(\Hom_\Q(H^\Q,\mathfrak{L}^\Q_2) \simeq H^\Q \otimes_\Q  \mathfrak{L}_2^\Q
\xrightarrow{[-,-]}\mathfrak{L}_3^\Q\Big) \simeq \Lambda^3 H^\Q
\end{equation}
such that 
$\tilde \theta_{\leq 2}(x) 
= \theta_{\leq 2}(x) + d([x])
\ \hbox{for all x $\in \pi$}$.\\[0.2cm]
(b) For any symplectic expansion $\theta$, 
the map $\theta_2: \pi \to H^{\otimes 2} \otimes \Q$ 
induces a homomorphism 
$\vartheta: H \to H^{\otimes 2} \otimes \Q/\Z$.
Equivalently, $\ell^\theta_2: \pi 
\to \mathfrak{L}_2(H) \otimes \Q$ induces a quadratic map 
$\lambda: H \to \mathfrak{L}_2(H) \otimes \Q/\Z$.\\[0.2cm]
(c) The set of truncated even symplectic expansions is
a torsor over $\frac{1}{2} \Lambda^3H$.
Moreover, for any even symplectic expansion $\theta$, 
 we have
\begin{itemize}
\item[(i)]  $u' \otimes u'' = u'' \otimes u' 
\in H^{\otimes 2} \otimes \Q/\Z,$ 
for all $u \in H$, hence $\omega(u',u'')=0\in \Q/\Z$,
\item[(ii)]  
$\omega(u',v)\, u'' = \omega(v',u)\, v'' \in H\otimes \Q/\Z$, 
for all $u,v \in H$,
\item[(iii)]   $\omega(u',u)\, u''  =  u/2 \in H\otimes \Q/\Z$, for all $u \in H$,
\end{itemize}
where we use Sweedler's notation 
$\vartheta(h)= \sum_i  h'_i\otimes h''_i 
=: h'\otimes h''$ for all $h\in H$.
\end{lemma}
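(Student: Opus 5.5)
To prove (a), I use that $\pi$ is free on the based loops $\alpha_1,\dots,\alpha_g,\beta_1,\dots,\beta_g$. A truncated group-like expansion is therefore fixed by the values $\theta_{\le 2}$ on these generators. Group-likeness modulo degree $3$ amounts to saying that $\ell^\theta_{\le 2}$ has degree-one part $[x]$ and degree-two part in $\frakL_2^\Q$. Given two symplectic expansions $\theta,\tilde\theta$, their degree-two logansion parts on generators differ by elements of $\frakL^\Q_2$. Since $\ell_2$ is additive on products up to $\frac12[\,[x],[y]\,]$, and this correction is common to $\theta$ and $\tilde\theta$, the difference $x\mapsto \tilde\ell_2(x)-\ell_2(x)$ is a homomorphism $\pi\to\frakL_2^\Q$. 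It thus factors through a linear map $d:H^\Q\to \frakL_2^\Q$, and $\tilde\theta_2-\theta_2=d\circ[\cdot]$ because $\theta_2=\ell_2+\frac12[x]^{\otimes2}$.

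The boundary condition $\theta(\partial\Sigma)=\exp(-\omega)$ in degree $\le 3$ constrains $d$. Here $\partial\Sigma=\prod_i[\alpha_i,\beta_i]$ up to convention, and the degree-$3$ part of $\log\theta$ of a commutator $[x,y]$ is $[[x],\ell_2(y)]+[\ell_2(x),[y]]$ plus terms independent of the expansion. Comparing $\theta$ and $\tilde\theta$ therefore gives $\sum_i [a_i,d(b_i)]-[b_i,d(a_i)]=0$, i.e. $d$ lies in the kernel of the bracket $H^\Q\otimes\frakL^\Q_2\to\frakL^\Q_3$. Conversely, any such $d$ preserves the boundary condition, since degree $\ge4$ is irrelevant for truncation. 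Existence comes from \cite{Mas12}. The identification of the kernel with $\Lambda^3H^\Q$ is the standard one, via $x\wedge y\wedge z\mapsto x\otimes[y,z]+y\otimes[z,x]+z\otimes[x,y]$, with a dimension count.

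For (b), I prove that $\theta_2(xy)=\theta_2(x)+\theta_2(y)+[x]\otimes[y]$, so modulo $H^{\otimes2}$ the map $\theta_2$ is additive and vanishes on commutators. It therefore descends to $\vartheta:H\to H^{\otimes2}\otimes\Q/\Z$. The statement about $\lambda$ follows from $\ell_2=\theta_2-\frac12[x]^{\otimes 2}$, which is a quadratic correction.

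For (c), I first establish existence of an even expansion. I construct one explicitly on generators, e.g. $\theta(\alpha_i)=\exp(a_i)$ and $\theta(\beta_i)=\exp(b_i)$ corrected in degree $2$ by half-integral Lie elements so as to fix the boundary condition in degree $\le 3$. The torsor statement then follows from (a), because evenness requires $d(H)\subset\frac12\frakL_2$. Properties (i)--(iii) are congruences in $\Q/\Z$, which I plan to check on a basis using the explicit even expansion together with the compatibility of $\vartheta$ under the torsor action. For (i), I use that $\vartheta(u)$ is symmetric modulo $H^{\otimes2}$: indeed $\ell_2(u)\in\frac12\frakL_2$ is antisymmetric and half-integral, so it is symmetric mod $\Z$ up to sign, while $\frac12 u\otimes u$ is symmetric. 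Contracting with $\omega$ gives the second half. Properties (ii) and (iii) follow by contracting $\vartheta$ with $\omega$ and using bilinearity, where (iii) reduces, via $\ell_2(u)$ antisymmetric, to $\omega(u,u)u/2=0$ plus the term from $\frac12u\otimes u$. I expect the main obstacle to be keeping track of the quadratic (non-linear) corrections in these mod-$1$ identities, particularly in (ii) and (iii), and verifying that the evenness hypothesis is exactly what makes them hold.
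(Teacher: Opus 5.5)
Parts (a), (b) and (c)(i) of your proposal are sound. For (a) you take a genuinely different and more elementary route than the paper: you compute directly on the free generators of $\pi$ with the degree-$\le 3$ BCH formula. The paper instead writes $\theta=\psi\circ\tilde\theta$ for a unique Hopf automorphism $\psi$ fixing $\omega$, and truncates the symplectic derivation $\log\psi$.

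One step of your (a) is unjustified. For the converse you only obtain a multiplicative map with $\theta(\partial\Sigma)\equiv\exp(-\omega)$ modulo degree $\geq 4$. The remark that ``degree $\geq 4$ is irrelevant'' does not by itself produce a genuine symplectic expansion with that truncation. The paper's viewpoint settles this immediately. Take the derivation $\delta$ of $\widehat{\frakL}^\Q$ with $\delta|_{H^\Q}=d$; then $\delta(\omega)=0$ precisely because $d\in\Ker[-,-]$, so $\exp(\delta)\circ\theta$ is symplectic with the required truncation. The same remark applies to your existence argument for an even expansion. There you should simply use the explicit expansion \eqref{eq:even_theta} from \cite{Mas12}. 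It has exactly the shape you describe, with $\ell_2(\alpha_i)=\ell_2(\beta_i)=-\tfrac12[a_i,b_i]$, and you will need its exact values below.

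The genuine gap is in (c)(ii)--(iii). Your one-sentence plan (check on a basis for an explicit even expansion, then transport along the torsor) is the paper's, but you supply none of its ingredients, and the mechanism you do sketch is wrong.

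Write $\vartheta(u)=\tfrac12 u\otimes u+\tfrac12 L_u$ with $L_u\in\frakL_2(H)$. The symmetric part contributes $\tfrac12\omega(u,u)u=0$ to $\omega(u',u)u''$. So the whole of $u/2$ in (iii) must come from $L_u$, and antisymmetry does not kill that contribution. For $\ell_2(\alpha_1)=-\tfrac12[a_1,b_1]$ one gets $\omega(a_1',a_1)\,a_1''=-\tfrac12\,\omega(a_1,b_1)\,a_1\equiv\tfrac12 a_1$, whereas your reduction would give $0$, contradicting (iii).

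Likewise, (ii) is not a formal consequence of contracting with $\omega$ and bilinearity. For $(u,v)=(a_1,b_1)$ the symmetric parts give $\tfrac12 a_1$ and $\tfrac12 b_1$ modulo $H$, which differ. Only the specific $L_u$ imposed by the boundary condition compensates.

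Three ingredients are missing.
\begin{enumerate}
\item A direct check of (ii) on pairs of basis vectors for \eqref{eq:even_theta}.
\item Before checking (iii) on a basis, the additivity of $y(u):=\omega(u',u)\,u''$. This follows from (ii) together with evenness: $y(u+v)-y(u)-y(v)=\omega(u',v)u''+\omega(v',u)v''=2\,\omega(v',u)\,v''=0$, because $\vartheta(v)$ is represented in $\tfrac12 H^{\otimes 2}$.
\item Invariance under the torsor action. Take $d=\tfrac12\big(\omega(a,-)[b,c]+\omega(b,-)[c,a]+\omega(c,-)[a,b]\big)$ and write $d(u)=[\dot u,\ddot u]$. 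The resulting correction $F(u,v):=\omega(\dot u,v)\ddot u-\omega(\ddot u,v)\dot u\in\tfrac12 H$ satisfies $F(v,u)=-F(u,v)$, hence $F(v,u)\equiv F(u,v)$ modulo $H$, and $F(u,u)=0$.
\end{enumerate}
This is exactly where evenness enters, and it is the part your proposal leaves open.
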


\begin{proof}
(a) That the set  of truncated symplectic expansions
is non-empty follows from the existence of symplectic expansions.
For any two symplectic expansions $\theta$ and $\tilde\theta$, 
there exists a unique automorphism $\psi$ of the complete Hopf algebra $\widehat{T}^\Q$,
which induces the identity at the graded level 
and fixes $\omega\in \mathfrak{L}_2(H)$,
such that $\theta = \psi \circ \tilde\theta$. 
Then, 
$\delta := \log \psi$ is a (co-)derivation 
of $\widehat{T}^\Q$,
which strictly increases degrees and vanishes on $\omega$.
Denoting by $d$ the restriction of $\delta$ to $H^\Q$, truncated to degrees  $\leq 2$, we obtain
\begin{eqnarray*}
\tilde\theta(x) 
&=& (\id+ \delta + \delta^2/2 + \cdots )(\theta(x))\\
&=& \theta(x) + \delta(\theta(x)) + (\deg\geq 3)
\ = \ \theta(x) + d([x]) + (\deg\geq 3)    
\end{eqnarray*}
for every $x\in \pi$.
That $d$ belongs to the kernel of the bracketing map written at \eqref{eq:symp_cond} follows 
from the  condition $\delta(\omega)=0$.
This kernel is well-known to be isomorphic to $\Lambda^3 H^\Q$
by the map
\begin{equation} \label{eq:expand}
\Lambda^3 H^\Q \longrightarrow 
H^\Q \otimes \mathfrak{L}_2(H^\Q), \quad
a\wedge b \wedge c \longmapsto 
a \otimes [b,c] + c \otimes [a,b] + b\otimes [c,a].
\end{equation}
(b) For all $x, y \in \pi$, we have 
\begin{align*}
\theta (xy) = \theta (x) \cdot \theta (y)
   & = \big(1 + [x] + \theta_2 (x) + (\deg \geq 3)\big) \cdot
   \big(1 + [y] + \theta_2 (y) + (\deg \geq 3) \big) \\
&= 1 + [x] + [y] + {[x][y] + \theta_2 (x) 
+ \theta_2 (y)} + (\deg \geq 3).
\end{align*}
which shows that
$\theta_2(xy) =  \theta_2 (x)  + \theta_2 (y)+ [x][y] $.
Hence, 
$\theta_2: \pi \to H^\Q \otimes_\Q H^\Q$ induces 
a group homomorphism
$\vartheta: H  \to (H^\Q \otimes_\Q H^\Q)/ (H \otimes H)$.

The previous computation also shows  that 
$\ell_2^\theta(xy) = \ell_2^\theta(x)+\ell_2^\theta(y) 
+\big[[x],[y]\big]/2$, for all $x,y \in \pi$.
Thus, 
$\ell^\theta_2: \pi \to \mathfrak{L}_2(H^\Q)$ induces a map 
$\lambda: H \to \mathfrak{L}_2(H^\Q)/ \mathfrak{L}_2(H)$
such that
\begin{equation} \label{eq:l2}
\lambda(u+v) = \lambda(u)+\lambda(v)+ \frac{[u,v]}{2}, 
\quad \hbox{for all $u,v\in H$.}
\end{equation}
(c) 
According to \cite[Ex$.$~2.19]{Mas12},
for every choice of a  basis $(\alpha,\beta)$ of $\pi$
inducing the basis $(a,b)$ of $H$,
there exists a symplectic  
expansion $\tilde\theta$  such that
\[
\begin{cases}
 \tilde\theta(\alpha_i) = 1 + a_i + \frac{1}{2} a_i^2 -\frac{1}{2}[a_i,b_i]
+ (\deg \geq 3),   \\
\tilde\theta(\beta_i) = 1 + b_i + \frac{1}{2} b_i^2 -\frac{1}{2}[a_i,b_i]
+ (\deg \geq 3).
\end{cases}
\]
Hence,  
the set  of truncated even symplectic expansions is non-empty:
it is easily deduced from (a) that
it is a torsor over $\frac{1}{2}\Lambda^3 H$.

Let $\theta$ be an even symplectic expansion.
Since we have 
\begin{equation} \label{eq:vartheta}
\vartheta([x])=\frac{[x]^2}{2}+\ell_2^\theta(x)\in H^{\otimes 2} \otimes \Q/\Z, 
\quad \hbox{for all $x\in \pi$,}
\end{equation}
and since $\ell_2^\theta(x)$ is
in $\frac{1}{2} \mathfrak{L}_2(H)$,
the condition (i) is clearly satisfied
(using the fact that the form $\omega$ is alternate). 
We now prove~(ii).

A straightforward computation on $u,v\in \{a_1,\dots,a_g, b_1,\dots, b_g\}$
shows that (ii) is satisfied by the particular expansion $\tilde \theta$
that has been mentioned above. 
Let  $d\in \Hom(H^\Q,\mathfrak{L}_2^\Q)$ 
be given by the difference $\tilde\theta-\theta$.
Then $\tilde \vartheta = \vartheta + (d \mod 1)$.
Thus, to deduce that $\vartheta$ satisfies  (ii) 
just as $\tilde \vartheta$ does, it suffices to prove that
\begin{equation} \label{eq:4_omegas}
\omega(\dot u,v)\ddot u - \omega(\ddot u,v)\dot u 
\equiv \omega(\dot v,u)\ddot v - \omega(\ddot v,u)\dot v \mod H,
\quad \hbox{for all $u,v\in H$},
\end{equation}
where we write here $d(u)$ in Sweedler's notation:
$$
d(u) = \sum_i [\dot u_i, \ddot u_i ] =: 
[\dot u, \ddot u ] = \dot u \otimes \ddot u - \ddot u \otimes \dot u \in H^\Q \otimes_\Q H^\Q.
$$
Since $d(H) \subset \frac{1}{2}\mathfrak{L}_2(H)$,
 \eqref{eq:4_omegas} can be equivalently written as 
\begin{equation} \label{eq:4_omegas_bis}
\omega(\dot u,v) \ddot u 
+\omega(\ddot u,v) \dot u + \omega(\dot v,u) \ddot v
+  \omega(\ddot v,u)\dot v  \equiv 0 \!\!\mod H,
\  \hbox{for all $u,v\in H$}.
\end{equation}
Since $d$ is in the image of $\frac{1}{2}\Lambda^3 H$
by the map \eqref{eq:expand}, we can assume that 
there exist $a,b,c \in H$ such that 
\begin{equation} \label{eq:d_form}
    d= \frac{1}{2} \omega(a,-)[b,c] +\frac{1}{2} \omega(b,-)[c,a] 
+ \frac{1}{2} \omega(c,-)[a,b].
\end{equation}
Hence, \eqref{eq:4_omegas_bis}
easily follows for any $u,v\in H$.

To prove (iii), we first deduce from (ii) that
the map $y: H \to H^\Q/H$ defined by $y(u):=\omega(u',u)\,u''$ is linear.
Then, a straightforward computation with $u\in \{a_1,\dots,a_g, b_1,\dots, b_g\}$
shows that (iii) is satisfied by the particular expansion $\tilde \theta$.
Thus, as in the previous paragraph,  letting $d:= \tilde\theta-\theta$, 
we are reduced to showing that
\begin{equation}
\label{eq:oo}
\omega(\dot u, u)\, \ddot u - \omega(\ddot u,u)\, \dot u \equiv 0 \mod H.
\end{equation}
Again, we can assume without loss of generality that $d$ is of the form \eqref{eq:d_form},
and \eqref{eq:oo} easily follows.
\end{proof}

We shall also need the following technical lemma 
about symplectic expansions.
Let $\fr:P \to \Z_2$ be the projection 
given by Lemma \ref{lem:P}.(b), which is defined by the identity 
\begin{equation} \label{eq:fr}
\fr(x)\, s = x-\sigma(p_*(x))\in P, \quad \hbox{for all $x\in P$}.
\end{equation}
(Thus, by \eqref{eq:sigma},
$\fr(t(K))$ is the mod 2 reduction of the 
framing number $\operatorname{Fr}(K)\in \Z$
for any framed oriented knot $K$ in $U$.)
Besides, let
$\varpi: \mathfrak{L}_2(H) \otimes \Q/\Z \to \Q/\Z$
be the homomorphism defined by $\varpi([u,v]):=\omega(u,v)$
for all $u,v\in H^\Q$. In the sequel, for any $l=\{L\}\in \Z_2$,
we denote 
${l}/{2} := \{L/2\} \in \Q/\Z$.

\begin{lemma}\label{lem:upsilon} 
For any symplectic expansion $\theta$, the map
\[
\upsilon:=\frac{\fr}{2}
+ \varpi \circ \lambda \circ p_* \colon P\longrightarrow \Q/\Z
\]
is a homomorphism.
If $\theta$ is even, then $\upsilon$ is valued in
$\big(\frac{1}{2} \Z\big)/\Z$.
\end{lemma}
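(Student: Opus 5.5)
We verify the homomorphism property directly from the group law on $P$ in Lemma~\ref{lem:P}.(b), combined with the quadratic identity \eqref{eq:l2} for $\lambda$. Let $x,y\in P$ with $u:=p_*(x)$ and $v:=p_*(y)$. First we compute $\fr(x+y)$ from \eqref{eq:fr}:
$$
x+y=\sigma(u)+\sigma(v)+(\fr(x)+\fr(y))\,s .
$$
Since the $2$-cocycle of $\sigma$ is $\omega \bmod 2$, we have $\sigma(u)+\sigma(v)=\sigma(u+v)+\omega(u,v)\,s$, hence
$$
\fr(x+y)=\fr(x)+\fr(y)+(\omega(u,v)\bmod 2).
$$
Dividing by $2$ in the convention $l/2\in\Q/\Z$ gives
$$
\frac{\fr(x+y)}{2}=\frac{\fr(x)}{2}+\frac{\fr(y)}{2}+\frac{\omega(u,v)}{2}.
$$
Next we apply $\varpi$ to \eqref{eq:l2}. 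We obtain $\varpi\lambda(u+v)=\varpi\lambda(u)+\varpi\lambda(v)+\varpi([u,v])/2$, and $\varpi([u,v])/2=\omega(u,v)/2$. The well-definedness of $\varpi$ on $\mathfrak{L}_2(H)\otimes\Q/\Z$ is clear: $\varpi$ is defined rationally and maps $\mathfrak{L}_2(H)$ to $\Z$. Adding the two identities, the cross terms give $2\cdot\omega(u,v)/2=\omega(u,v)\in\Z$, which vanishes in $\Q/\Z$. Hence $\upsilon(x+y)=\upsilon(x)+\upsilon(y)$.

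One sign point needs care. We must check that the cocycle really enters as $+\omega(u,v)$, but modulo $2$ the sign is irrelevant. Likewise the quadratic term of \eqref{eq:l2} is $+[u,v]/2$, whose sign also does not matter modulo $\Z$ after doubling. So the cancellation is robust.

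For the second claim, assume $\theta$ is even. Then $\ell^\theta_2(\pi)\subset\frac12\mathfrak{L}_2(H)$, so $\lambda$ takes values in the image of $\frac12\mathfrak{L}_2(H)$. Applying $\varpi$ gives values in $\frac12\Z/\Z$. Trivially, $\fr/2$ also lies in $\frac12\Z/\Z$, and therefore so does $\upsilon$.

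The only potential obstacle is bookkeeping: the identification of $\fr$ and the convention $l/2$ must be consistent. In particular, $\fr(x)+\fr(y)$ is taken in $\Z_2$, while the halving map $\Z_2\to\Q/\Z$, $l\mapsto l/2$, is additive. Since this map is a homomorphism, that step is fine.
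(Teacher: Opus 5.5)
Your proof is correct and follows essentially the same route as the paper. You derive the identity $\fr(x+y)=\fr(x)+\fr(y)+\omega(p_*(x),p_*(y))$ in $\Z_2$ from the $2$-cocycle of $\sigma$, halve it, and cancel it against the defect $\omega(p_*(x),p_*(y))/2$ coming from \eqref{eq:l2}; the evenness claim is then immediate from $\ell_2^\theta(\pi)\subset\frac{1}{2}\mathfrak{L}_2(H)$.
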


\begin{proof}
The second statement is obvious.
To prove the first statement, let $x,y\in P$ and 
set  $\bar{x}:=p_*(x),\bar{y}:=p_*(y)\in H$.
Since the associated $2$-cocycle of the section $\sigma$ 
of $p_*:P \to H$ is $(\omega$ mod 2), 
it follows from \eqref{eq:fr} that 
\begin{equation} \label{eq:fr2}
\fr(x+y)-\fr(x)-\fr(y)=\omega(\bar{x},\bar{y})\in \Z_2.
\end{equation}
On the other hand, we deduce from \eqref{eq:l2} that
\[
\varpi\lambda(\bar x+\bar y)-\varpi\lambda(\bar x)
-\varpi\lambda(\bar y)
=\frac{\omega(\bar{x},\bar{y})}{2}\in \Q/\Z.
\]
We conclude that $\upsilon(x+y)-\upsilon(x)-\upsilon(y)=0$.
\end{proof}

\subsection{The map $\delta^<$}
\label{subsec:delta^<}

We shall define a degree 1 linear map
$$\delta^<=\delta^<_\theta: \calA^<(P) \longrightarrow \calA^<(P)\otimes \Q/\Z$$
for every choice of an even symplectic expansion $\theta$.
(Recall from  Remark \ref{rem:Q/Z} 
that $\calA^<(P)\otimes \Q/\Z$ is isomorphic to $\calA^<(H)\otimes \Q/\Z $: so, at the target of $\delta^<$, considering colors in $P$ is equivalent to considering colors in $H$.)

Let $n\geq 1$ and let $D$ be a Jacobi diagram 
among the generators of $\mathcal A_n^<(P)$.
In order to define
$
\delta^<(D) \in \mathcal A_{n+1}^<(P)\otimes \Q/\Z,
$
we first introduce the following Jacobi diagrams, 
constructed for each external vertex $i$ of $D$
with color $c_i\in P$:

\begin{itemize}
\item For $u,v\in H$, we obtain $D_i^{II}(u,v)$ from $D$
by duplicating the edge attached to $i$ 
and assigning colors as indicated:
\[
D= \begin{array}{c} \begin{tikzpicture}[thick,baseline=-3.0em, x=1em, y=1em]
\footnotesize
 \draw ([shift={(0,5)}]-80:6) arc [radius=6, start angle = -80, end angle=-100];
 \draw [dotted]([shift={(0,5)}]-80:6) arc [radius=6, start angle = -80, end angle= -70];
 \draw [dotted]([shift={(0,5)}]-100:6) arc [radius=6, start angle = -100, end angle= -110];
 \draw [fill = black] (0,-1) circle (.3ex);
 \draw (0,-1)-- (0,-2.5) node [below] {$\cdots<c_i<\cdots$};
\end{tikzpicture} \end{array} 
\leadsto
\begin{array}{c}  \begin{tikzpicture}[thick,baseline=-3.0em, x=1em, y=1em]
\footnotesize
 \draw ([shift={(0,5)}]-80:6) arc [radius=6, start angle = -80, end angle=-100];
 \draw [dotted]([shift={(0,5)}]-80:6) arc [radius=6, start angle = -80, end angle= -70];
 \draw [dotted]([shift={(0,5)}]-100:6) arc [radius=6, start angle = -100, end angle= -110];
 \draw [fill = black] (-0.8,-1) circle (.3ex);
 \draw [fill = black] (0.8,-1) circle (.3ex);
 \draw (-0.8,-1)-- (-0.8,-2.4);
 \node [below] at (0,-2.4) {$\,\cdots<u<v<\cdots$};
 \draw (0.8,-1)-- (0.8,-2.4);
\end{tikzpicture} \end{array} =: D_i^{II}(u,v)
\]
\item We define $D_i^U$ from $D$ by again doubling the edge incident to $i$
, but this time joining the two new external vertices by an additional edge:
\[
D= \begin{array}{c} \begin{tikzpicture}[thick,baseline=-3.0em, x=1em, y=1em]
\footnotesize
 \draw ([shift={(0,5)}]-80:6) arc [radius=6, start angle = -80, end angle=-100];
 \draw [dotted]([shift={(0,5)}]-80:6) arc [radius=6, start angle = -80, end angle= -70];
 \draw [dotted]([shift={(0,5)}]-100:6) arc [radius=6, start angle = -100, end angle= -110];
 \draw [fill = black] (0,-1) circle (.3ex);
 \draw (0,-1)-- (0,-2.5) node [below] {$\cdots<c_i<\cdots$};
\end{tikzpicture} \end{array}
\leadsto
\begin{array}{c}  \begin{tikzpicture}[thick,baseline=-3.0em, x=1em, y=1em]
\footnotesize
 \draw ([shift={(0,5)}]-80:6) arc [radius=6, start angle = -80, end angle=-100];
 \draw [dotted]([shift={(0,5)}]-80:6) arc [radius=6, start angle = -80, end angle= -70];
 \draw [dotted]([shift={(0,5)}]-100:6) arc [radius=6, start angle = -100, end angle= -110];
 \draw [fill = black] (-0.5,-1) circle (.3ex);
 \draw [fill = black] (0.5,-1) circle (.3ex);
 \draw (-0.5,-1)-- (-0.5,-2.);
 \draw (0.5,-2.) -- (-0.5,-2.);
 \draw (0.5,-1)-- (0.5,-2.);
 \node[below] at (-1.3,-2.0) {$\cdots$};
 \node[below] at (1.4,-2.0) {$\cdots$};
\end{tikzpicture} \end{array} =: D_i^U
\]
\end{itemize}
With these preparations, we set
\begin{equation} \label{eq:delta_i}
\delta_i(D) := D_i^{II}(c'_i,c_i'') +
\upsilon(c_i)\, D_i^U\in \calA^{<}_{n+1}(P)\otimes \Q/\Z    
\end{equation}
where $\vartheta(p_*(c_i))=c'_i \otimes c''_i
\in H^{\otimes 2}\otimes \Q/\Z$
is defined in  Lemma \ref{lem:expansion}.(b)
and is written here using Sweedler's notation,
and  $\upsilon(c_i) \in\Q/\Z$ is 
introduced in Lemma \ref{lem:upsilon}.

Besides, for any $Y$-diagram component $C$ of $D$,
with external vertices colored by $x,y,z\in P$,
we define $\delta_C(D)\in \calA^{<}_{n+1}(P)\otimes \Z_2$ 
 by ``duplicating''  $C$ in $D$ as follows:
\[
D= \begin{tikzpicture}[thick,baseline=-1.0em, x=2em, y=2em]
\hspace{-4mm}
\footnotesize
\node at (0.5,0.2){$C$};
\node[below=1pt] at (0,-1){$\cdots x<\cdots<y<\cdots <z\cdots$};
 \draw [fill = black] (0,0) circle (.3ex);
\draw (0,0)-- (-1.8,-1);
\draw (0,0)-- (0,-1);
\draw (0,0)-- (1.8,-1);
\end{tikzpicture}  \leadsto
\begin{tikzpicture}[thick,baseline=-1.0em, x=2em, y=2em]
\hspace{-4mm}
\footnotesize
\node[below=1pt] at (0,-1){\qquad$\cdots x\parallel x<\cdots <y\parallel y<\cdots <z \parallel z\cdots$};
 \draw [fill = black] (0,0) circle (.3ex);
 \draw (0,0)-- (-2.6,-1);
\draw (0,0)-- (0,-1);
\draw (0,0)-- (2.6,-1);
\fill [white] (0.4,-0.12) circle [radius=0.1];
\fill [white] (0.0,-0.3) circle [radius=0.1];
\fill [white] (0.8,-0.3) circle [radius=0.1];
\draw [fill = black] (0.8,0) circle (.3ex);
\draw (0.8,0)-- (-1.8,-1);
\draw (0.8,0)-- (0.8,-1);
\draw (0.8,0)-- (3.4,-1);
\end{tikzpicture} =: \delta_C(D)
\]
or \[D= \begin{tikzpicture}[thick,baseline=-1.0em, x=2em, y=2em]
\footnotesize
\node[below=1pt] at (0,-1){$\cdots x<\cdots<y<\cdots <z\cdots$};
 \draw [fill = black] (0,0) circle (.3ex);
		\node  (2) at (0, 0) {};
		\node  (3) at (0, -1) {};
		\node  (4) at (-1.75, -1) {};
		\node  (5) at (1.75, -1) {};
		\node  (6) at (0, 0) {};
		\node  (7) at (0, 0.25) {};
		\node  (8) at (-0.5, 0.75) {};
		\node  (9) at (-1, 0.25) {};
		\draw (2.center) to (5.center);
		\draw (6.center) to (7.center);
        \draw (2.center) to (4.center);
        \fill [white] (-0.7,-0.4) circle [radius=0.1];
        \draw [in=0, out=90] (7.center) to (8.center);
		\draw [in=90, out=-180] (8.center) to (9.center);
		\draw [in=150, out=-90, looseness=0.75] (9.center) to (3.center);
\end{tikzpicture} \leadsto
\begin{tikzpicture}[thick,baseline=-1.0em, x=2em, y=2em]
\hspace{-4mm}
\footnotesize
\node[below=1pt] at (-0.35,-1){\qquad$\cdots x\parallel x<\cdots <y\parallel y<\cdots <z \parallel z\cdots$};
		\node (2) at (-0.25, 0) {};
		\node (3) at (0.3, -1) {};
		\node (4) at (-2.8, -1) {};
		\node (5) at (2.25, -1) {};
		\node (6) at (-0.25, 0) {};
		\node (7) at (-0.25, 0.25) {};
		\node (8) at (-0.75, 0.75) {};
		\node (9) at (-1.25, 0.25) {};
		\node (13) at (-0.25, -1) {};
		\node (14) at (0.25, 0) {};
		\node (15) at (2.9, -1) {};
		\node (16) at (-2.2, -1) {};
		\node (17) at (0.25, 0.5) {};
		\node (18) at (-0.75, 1.25) {};
		\node (19) at (-1.75, 0.25) {};
        \draw [fill = black] (-0.25,0) circle (.3ex);
        \draw [fill = black] (0.25,0) circle (.3ex);
		\draw (2.center) to (5.center);
		\draw  (2.center) to (4.center);
		\draw (6.center) to (7.center);
		\draw [in=0, out=90] (7.center) to (8.center);
		\draw [in=90, out=-180] (8.center) to (9.center);
        \fill [white] (0,-0.1) circle [radius=0.1];
        \draw (16.center) to (14.center);
        \fill [white] (-0.85,-0.25) circle [radius=0.1];
        \fill [white] (-0.95,-0.5) circle [radius=0.1];
        \fill [white] (-1.2,-0.36) circle [radius=0.1];
        \fill [white] (-0.65,-0.36) circle [radius=0.1];
		\draw [in=120, out=-90, looseness=0.75] (9.center) to (3.center);
		\draw (14.center) to (15.center);
		\draw [in=270, out=90] (14.center) to (17.center);
		\draw [in=0, out=90] (17.center) to (18.center);
		\draw [in=90, out=-180] (18.center) to (19.center);
		\draw [in=120, out=-90, looseness=0.75] (19.center) to (13.center);
\end{tikzpicture} =: \delta_C(D) \]
Here, the expression on the right-hand side represents a sum of $2^3$ Jacobi diagrams, written according to the 
notational convention
\begin{equation} \label{eq:parallel_notation}
\qquad 
\begin{tikzpicture}[thick,baseline=-1.5em, x=1em, y=1em]
\footnotesize
\draw[dotted] (0.7,-0.8)-- (0.7,0);
\draw[dotted] (-0.7,-0.8)-- (-0.7,0);
\draw (0.7,-2)-- (0.7,-0.8);
\draw (-0.7,-2)-- (-0.7,-0.8);
\node[below] at (0,-2) {$\cdots<c \parallel c<\cdots$};
\end{tikzpicture}
:= 
\begin{tikzpicture}[thick,baseline=-1.5em, x=1em, y=1em]
\footnotesize
\draw[dotted] (0.7,-0.8)-- (0.7,0);
\draw[dotted] (-0.7,-0.8)-- (-0.7,0);
\draw (0.7,-2)-- (0.7,-0.8);
\draw (-0.7,-2)-- (-0.7,-0.8);
\node[below] at (0,-2) {$\cdots<c<c<\cdots$};
\end{tikzpicture}
+ {\fr(c)}\quad
\begin{tikzpicture}[thick,baseline=-1.5em, x=1em, y=1em]
\footnotesize]
\draw[dotted] (0.7,-0.8)-- (0.7,0);
\draw[dotted] (-0.7,-0.8)-- (-0.7,0);
 \draw (0.7,-0.8) to [out=-90, in=-90, looseness=3] (-0.7,-0.8);
\end{tikzpicture}\, \ \in \calA^<(P) \otimes \Z_2.
\end{equation}

\begin{remark}
This notation is inspired from an identity in clasper calculus,
when considering the surgery along  a graph clasper 
with two parallel leaves of type $c$ in~$U$.
(See the last part of the proof of Proposition \ref{prop:Psi_psi}.)
\hfill $\blacksquare$
\end{remark}

We now proceed with the construction of the map $\delta^<$.
This map is a variation (and a mild generalization)
of the map $\delta$ introduced in \cite[\S 3]{NSS22a},
and it is better suited for the analysis of certain properties.

\begin{proposition}
\label{prop:delta}
Let $\theta$ be an even symplectic expansion of $\pi$.
There is a well-defined  
homomorphism of degree $1$
\[
\delta^<  = \delta^<_\theta :\calA^< (P) \longrightarrow \calA^<(P) \otimes \Q/\Z
\]
which maps every Jacobi diagram with a looped edge to zero,
and maps any Jacobi diagram $D$ without looped edge to
\begin{equation} \label{eq:delta}
\delta^<(D) := 
\sum_{i} \delta_i(D)
+ \sum_{C} \frac{\delta_C (D)}{2},  
\end{equation} 
where the first sum ranges 
over external vertices $i$ of $D$,
and the second sum  over  $Y$-diagram  components $C$ of $D$.
\end{proposition}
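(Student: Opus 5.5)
Since $\calA^<(P)$ is a quotient of the free abelian group on allowable Jacobi diagrams with $P$-colored, totally ordered external vertices, the plan is to regard \eqref{eq:delta} as a map on this free abelian group and check that every defining relation goes to zero in $\calA^<_{n+1}(P)\otimes\Q/\Z$. First one checks that each term is well defined at the level of generators. In $\delta_i(D)$, the Sweedler expression $c_i'\otimes c_i''$ lives in $H^{\otimes 2}\otimes\Q/\Z$, and $D_i^{II}(u,v)$ is bilinear in $(u,v)$; hence $D_i^{II}(c_i',c_i'')$ makes sense in $\calA^<(P)\otimes\Q/\Z\simeq \calA^<(H)\otimes\Q/\Z$ (Remark~\ref{rem:Q/Z}). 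Similarly, $\delta_C(D)/2$ makes sense because $\delta_C(D)\in\calA^<(P)\otimes\Z_2$ and $\Z_2\cdot\frac12\subset\Q/\Z$. One must also note that $\delta_C(D)$ does not depend on the lift of the $\parallel$ notation \eqref{eq:parallel_notation}, since $\fr$ is defined on $P$.

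Next one verifies the relations one by one.
\begin{itemize}
\item \emph{AS, IHX.} These are local at internal vertices, and the operations $D\mapsto D_i^{II}, D_i^U,\delta_C(D)$ commute with them. For IHX one uses the fact that a Y-component never occurs inside an IHX relation of degree $\ge2$.
\item \emph{Multilinearity in a color $c_i=x+y$.} The vertex $i$ contributes $D_i^{II}(\vartheta(x+y))+\upsilon(x+y)D_i^U$. Lemma~\ref{lem:expansion}(b) gives $\vartheta(x+y)=\vartheta(x)+\vartheta(y)+x\otimes y$. Lemma~\ref{lem:upsilon} says $\upsilon$ is additive, so the $D_i^U$-parts match. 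The remaining defect $D_i^{II}(x,y)$, which is not divisible, has to be compensated. Here we use that the diagrams for colors $x$ and $y$ attached at the same position differ from the $x+y$ diagram only through the ordering. The STU-like relation in $\calA^<(P)$ gives $D_i^{II}(x,y)-D_i^{II}(y,x)=\omega(x,y)D_i^U$, and together with symmetry (i) of Lemma~\ref{lem:expansion}(c) this should make the defect vanish modulo integers. This point needs care.
\item \emph{Special relation.} Since $\vartheta(p_*s)=\vartheta(0)=0$ and $\upsilon(s)=1/2$, $\delta_i$ at an $s$-vertex of a component of degree $\ge2$ yields $\frac12 D_i^U$. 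This must cancel against $\delta^<$ applied to the other vertices of the zero diagram, which are themselves killed by the special relation; it remains to check that $\frac12 D_i^U=0$, using self-loop/IHX.
\item \emph{Slide.} Compare $x<s$ with $x<x$. The second contains $D^{II}$-terms coming from two vertices, and Lemma~\ref{lem:expansion}(c)(ii)--(iii) supplies exactly the identities $\omega(u',u)u''=u/2$ needed to convert these into the $s$-version. For Y-components the $\delta_C/2$ term also changes, via the $\fr$-correction in $\parallel$.
\item \emph{Looped edges.} The extension by zero is consistent, because all the relations preserve the presence of a looped edge or produce one (cf.\ Remark~\ref{rem:few}).
\end{itemize}

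The main obstacle is the STU-like relation. Applying $\delta^<$ to $D_{x<y}-D_{y<x}-\omega(x,y)D_\cup$, the $\delta_j$ terms for vertices other than the swapped two cancel by STU in degree $n+1$. The terms $\delta_i$ at the swapped vertices produce new crossings between the doubled leaves $c_i',c_i''$ and the other swapped leg. Resolving these by STU-like introduces coefficients $\omega(x',y)x''$ and $\omega(y',x)y''$ times glued diagrams. These must equal $\delta^<(\omega(x,y)D_\cup)$, whose contribution only involves the new internal vertices. Lemma~\ref{lem:expansion}(c)(ii) is exactly the symmetry making the two gluing terms agree. When $x$ or $y$ lies on a Y-component, one additionally tracks how $\delta_C$ changes, using $\upsilon$ and the mod-2 correction terms. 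The plan is to first carry out the computation for connected $D$ of degree $\ge2$ (where $P$ and $H$ colors agree), then handle Y-components separately, following the structure of the map $\delta$ in \cite[\S 3]{NSS22a}, whose well-definedness argument we expect to transport.
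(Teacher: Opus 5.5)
Your strategy of checking each defining relation on generators is the right one, and your handling of the special relation (reducing it to $\tfrac12 D_i^U=0$ via \eqref{eq:bubble}) is fine. But there is a genuine gap exactly where the terms $\delta_C(D)/2$ and the $\fr$-correction in \eqref{eq:parallel_notation} do their work.

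Your bullet on looped edges is not an argument. Extension by zero is consistent only if every relation in which some term has a looped edge is respected, and the STU-like relation at two consecutive leaves $i<j$ of one $Y$-component $T$ (with third leaf $t$) is of that kind. There the glued diagram $G$ has a looped edge, so $\delta^<(G)=0$ by definition, and you must prove $\delta^<(D)=\delta^<(E)$. The other vertices and $Y$-components contribute nothing, by STU-like and self-loop in the target. The contributions of $i$ and $j$ cancel each other, since $\delta_i(D)-\delta_i(E)=\delta_j(D)-\delta_j(E)$ by Lemma~\ref{lem:expansion}.(c.i)--(c.ii). Contrary to what you write, these two contributions do not produce $\delta^<(G)$; the term $\omega(c_i,c_j)\,\delta^<(G)$ comes from the remaining vertices and $Y$-components. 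What is left is $\big(\delta_t(D)-\delta_t(E)\big)+\tfrac12\big(\delta_T(D)-\delta_T(E)\big)$. Both summands equal $\tfrac{\omega(c_i,c_j)}{2}$ times the diagram obtained by replacing $T$ with a circle carrying two legs $c_t\parallel c_t$, so they vanish only by cancelling each other.

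The same issue arises when exactly one of $i,j$ lies on a $Y$-component $C$. Then $C$ is no longer a $Y$-component of $G$, so you must show $\delta_C(D)=\delta_C(E)$ in $\calA^<(P)\otimes\Z_2$: after the two $\fr(c_i)$-terms cancel, STU-like reduces the difference to eight terms which cancel in pairs. The map $\upsilon$ plays no role there.

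The slide relation rests on the same mechanism. It reduces to comparing a $Y$-component $V$ with consecutive leaves $i<j<k$ colored $s<x<y$ (in $D$) with the component $W$ colored $x<x<y$ (in $E$). The change $\tfrac12\big(\delta_W(E)-\delta_V(D)\big)=\tfrac12E_k^{II}(y,y)+\tfrac{\fr(y)}{2}E_k^U$ is cancelled by the third-leaf term $\delta_k(E)$. Meanwhile $\delta_i(E)+\delta_j(E)=\tfrac12D_i^U=\delta_i(D)$ by STU-like and Lemma~\ref{lem:expansion}.(c).

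Deferring all this to \cite{NSS22a} does not close the gap. The map $\delta^<$ is defined for an arbitrary even $\theta$ on ordered $P$-colored diagrams, whereas a comparison with $\delta+\mathbb{Y}$ is only available on $\varphi(\calA(S))$ and for the special expansion \eqref{eq:even_theta}.

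Two other steps are misstated.

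First, in the multilinearity check there is no defect at the vertex $i$. By Lemma~\ref{lem:expansion}.(b), $\vartheta$ is a homomorphism into $H^{\otimes 2}\otimes\Q/\Z$. Your extra term $x\otimes y$ is integral, so $D_i^{II}(x,y)$ is already zero in $\calA^<(P)\otimes\Q/\Z$; and $\upsilon$ is additive by Lemma~\ref{lem:upsilon}, so $\delta_i$ is additive at once. What you omit is the term $\delta_C$ for the $Y$-component $C$ containing $i$. Expanding $(x+y)\parallel(x+y)$ produces, besides $x\parallel x$ and $y\parallel y$, two cross terms with the copies colored $x<y$ and $y<x$. By STU-like, their sum is congruent modulo $2$ to $\omega(x,y)$ times the diagram with these two legs joined. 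This is cancelled by $\fr(x+y)-\fr(x)-\fr(y)=\omega(x,y)$ from \eqref{eq:fr2}.

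Second, AS and IHX do not simply commute with $D\mapsto D_i^{II}$ when they involve the internal vertex adjacent to $i$. Reversing its orientation interchanges the two new legs, so AS needs the symmetry (c.i) of $\vartheta$ together with STU-like. For IHX one should use the form \eqref{eq:delta_i_bis}:
\begin{itemize}
\item the $D_i^{\Yrev}$-part commutes with IHX;
\item the $D_i^U$-part vanishes by \eqref{eq:bubble}, since its component has degree $\geq 3$;
\item for $\tfrac12D_i^{II}(c_i,c_i)$, one pushes the two new legs through the vertex (Kirchhoff form of IHX) and discards the two cross terms modulo $2\Z$ by STU-like.
\end{itemize}
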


\noindent
Before establishing the proposition,  
we further discuss formula \eqref{eq:delta}:
\begin{itemize}
\item Since $\theta$ is assumed to be even, 
the map $\delta^<$ actually lands in the 2-torsion subgroup of $ \calA^<(P) \otimes \Q/\Z$. 
More precisely, for any Jacobi diagram $D$, each summand 
$\delta_i(D)$ and $\delta_C(D)/2$ appearing in \eqref{eq:delta} is an element of order $2$; furthermore, every term in the formula \eqref{eq:delta_i} for $\delta_i(D)$ has order $2$. 
\item Assume that the Jacobi diagram $D$ is connected and of degree $n\geq 2$. Then we have
\begin{equation} \label{eq:connected}
\delta^<(D) = \sum_i D_i^{II}(c'_i,c_i'') \in \calA^<_{n+1}(P) \otimes \Q/\Z
\end{equation}
where $\vartheta(p_*(c_i))=c'_i \otimes c''_i \in H^{\otimes 2}\otimes \Q/\Z$,  
and the sum runs over all external vertices $i$ colored by $c_i\in P$.
This simpler expression is a consequence of the fact that any connected Jacobi diagram of degree $\geq 3$ containing a
\begin{tikzpicture}[thick,color=black,baseline=-0.2em, x=0.8em, y=0.8em]
\draw [dotted] (-2.0,0) -- (-1.6,0);
\draw (-1.6,0) -- (-1,0);
\draw (-0.5,0) circle [radius=0.5];
\draw (0,0) -- (0.6,0);
\draw [dotted] (1,0) -- (0.6,0);
\draw [fill = black] (-1,0) circle (.2ex);
\draw [fill = black] (0,0) circle (.2ex);
\end{tikzpicture}
subdiagram is divisible by $2$ in $\calA^<(P)$:
\begin{equation} \label{eq:bubble}
\begin{tikzpicture}[thick,color=black,baseline=-0.2em, x=1.5em, y=1.5em]
\draw[dotted] (-2.5,0) -- (-2,0);
\draw (-2,0) -- (-1,0);
\draw (-1.5,0) -- (-1.5,0.9);
\draw [dotted] (-1.5,0.9) -- (-1.5,1.4);
\draw (-0.5,0) circle [radius=0.5];
\draw (0,0) -- (0.6,0);
\draw [dotted] (0.6,0) -- (1.1,0);
\draw [fill = black] (-1,0) circle (.3ex);
\draw [fill = black] (0,0) circle (.3ex);
\draw [fill = black] (-1.5,0) circle (.3ex);
\end{tikzpicture}
\ = \
\begin{tikzpicture}[thick,color=black,baseline=-0.2em, x=1.5em, y=1.5em]
\draw (-1.5,1) to[out=-90,in=90] (-0.5,0.5);
\draw [dotted] (-1.5,1.4) -- (-1.5,1);
\draw[dotted] (-2.5,0) -- (-2,0);
\draw (-2,0) -- (-1,0);
\draw (-0.5,0) circle [radius=0.5];
\draw (0,0) -- (0.6,0);
\draw [dotted] (0.6,0) -- (1.1,0);
\draw [fill = black] (-1,0) circle (.3ex);
\draw [fill = black] (0,0) circle (.3ex);
\draw [fill = black] (-0.5,0.5) circle (.3ex);
\end{tikzpicture}
\ + \
\begin{tikzpicture}[thick,color=black,baseline=-0.2em, x=1.5em, y=1.5em]
\draw (-1.5,1) to[out=-90,in=90,looseness=2] (-0.5,-0.5);
\draw [dotted] (-1.5,1.4) -- (-1.5,1);
\fill [white] (-0.95,0.2) circle [radius=0.14];
\draw [dotted] (-2.5,0) -- (-2,0);
\draw (-2,0) -- (-1,0);
\draw (-0.5,0) circle [radius=0.5];
\draw (0,0) -- (0.6,0);
\draw [dotted] (0.6,0) -- (1.1,0);
\draw [fill = black] (-1,0) circle (.3ex);
\draw [fill = black] (0,0) circle (.3ex);
\draw [fill = black] (-0.5,-0.5) circle (.3ex);
\end{tikzpicture}
\ = \ 2 \ 
\begin{tikzpicture}[thick,color=black,baseline=-0.2em, x=1.5em, y=1.5em]
\draw (-1.5,1) to[out=-90,in=90] (-0.5,0.5);
\draw [dotted] (-1.5,1.4) -- (-1.5,1);
\draw[dotted] (-2.5,0) -- (-2,0);
\draw (-2,0) -- (-1,0);
\draw (-0.5,0) circle [radius=0.5];
\draw (0,0) -- (0.6,0);
\draw [dotted] (0.6,0) -- (1.1,0);
\draw [fill = black] (-1,0) circle (.3ex);
\draw [fill = black] (0,0) circle (.3ex);
\draw [fill = black] (-0.5,0.5) circle (.3ex);
\end{tikzpicture}
\end{equation}

\item For an arbitrary Jacobi diagram $D$,  
the term $\delta_i(D)$ appearing in \eqref{eq:delta} can be rewritten as
\begin{equation} \label{eq:delta_i_bis}
\delta_i(D) = 
\frac{1}{2} D_i^{II}(c_i,c_i) + D_i^{\Yrev}(\dot c_i, \ddot c_i) +
\frac{\fr(c_i)}{2}\, D_i^U\in \calA^{<}_{n+1}(P)\otimes \Q/\Z,
\end{equation}
where $\lambda(p_*(c_i))=[\dot c_i , \ddot c_i ]
\in \mathfrak{L}_2(H)\otimes \Q/\Z$ 
is defined in Lemma \ref{lem:expansion}.(b) 
and is written here in Sweedler's notation, and,
for any $x,y\in H$, the diagram $D_i^{\Yrev}(x,y)$ is obtained from $D$ by the following local modification:
\begin{equation} \label{eq:Yrev}
\qquad 
D= \begin{array}{c} \begin{tikzpicture}[thick,baseline=-3.0em, x=1em, y=1em]
\footnotesize
 \draw ([shift={(0,5)}]-80:6) arc [radius=6, start angle = -80, end angle=-100];
 \draw [dotted]([shift={(0,5)}]-80:6) arc [radius=6, start angle = -80, end angle= -70];
 \draw [dotted]([shift={(0,5)}]-100:6) arc [radius=6, start angle = -100, end angle= -110];
 \draw [fill = black] (0,-1) circle (.3ex);
 \draw (0,-1)-- (0,-2.5) node [below] {$\cdots<c_i<\cdots$};
\end{tikzpicture} \end{array}
 \ \leadsto \
\begin{array}{c}
\begin{tikzpicture}[thick,baseline=-2.5em, x=1em, y=1em]
\footnotesize]
 \draw ([shift={(0,5)}]-80:6) arc [radius=6, start angle = -80, end angle=-100];
 \draw [dotted]([shift={(0,5)}]-80:6) arc [radius=6, start angle = -80, end angle= -70];
 \draw [dotted]([shift={(0,5)}]-100:6) arc [radius=6, start angle = -100, end angle= -110];
 \draw [fill = black] (0,-1) circle (.3ex);
 \draw [fill = black] (0,-1.5) circle (.3ex);
 \draw (0,-1)-- (0,-1.5);
 \draw (0.4,-2) -- (0,-1.5) -- (-0.4,-2);
\node[below] at (0,-2) {$\cdots<x<y<\cdots$};
\end{tikzpicture} \end{array} =: D_i^{\Yrev}(x,y).
\end{equation}
(Using the IHX and STU-like relations,
the identity \eqref{eq:delta_i_bis}
follows from (\ref{eq:vartheta})
which, in Sweedler's notation, writes
$c'_i \otimes c''_i = \frac{1}{2} c_i \otimes c_i+ [\dot c_i,\ddot{c}_i]$.)

\item It is immediate that the definition of $\delta^<_\theta$
depends only on the truncation of~$\theta$ in degrees $\leq 2$.
If $\tilde{\theta}$ is another even symplectic expansion, we obtain a corresponding map $\delta^<_{\tilde \theta}$. In that case,  
$\delta^<_{\tilde\theta}-\delta^<_{\theta}$ can be explicitly determined from the element of $\frac{1}{2} \Lambda^3 H$ that encodes the change from $\theta$ to $\tilde\theta$.
\end{itemize}

\begin{proof}[Proof of Proposition \ref{prop:delta}]
We need to check that $\delta^<$ is compatible 
with each of the defining relations of $\calA^<(P)$.  
As already observed from the assumption on $\theta$, every term that occurs in the formulas for $\delta^<(D)$, for a Jacobi diagram $D$, has order $2$ in $\calA^<(P)\otimes \Q/\Z$.  
Hence, in the computations that follow, we can disregard all sign considerations.

We begin with  \emph{the AS relation}
applied to a Jacobi diagram $D$.  
If this relation occurs away from the external vertices, there is nothing to verify. Otherwise,  
we have to examine the two contributions $\delta_i(D)$ and $\delta_C(D)$
appearing in the expression \eqref{eq:delta} for $\delta^<(D)$.
The identity
\[
\begin{tikzpicture}[baseline=-0.5em, x=1.85em, y=1.85em,thick]
\draw (-0.5,-0.2) to[out=180, in=210, out looseness=2](0.366,1/2);
\fill [white] (0,0.3) circle [radius=0.15];
\draw (0.5,-0.2) to[out=0, in=-30, out looseness=2](-0.366,1/2);
\draw (0.5,-0.2) -- (-0.5,-0.2);
\draw[dotted] ({-0.5+sqrt(3)/2},1/2) -- ({-0.5+sqrt(3)/2*1.5},1.5/2);
\draw[dotted] ({0.5-sqrt(3)/2},1/2) -- ({0.5-sqrt(3)/2*1.5},1.5/2);
\draw (-0.4,-0.2)--(-0.4,-0.8);
\draw (0.4,-0.2)--(0.4,-0.8);
\draw [fill = black] (-0.4,-0.2) circle (.3ex);
\draw [fill = black] (0.4,-0.2) circle (.3ex);
\node at (0,-1.1) {\footnotesize$\cdots<x<x<\cdots$};
\end{tikzpicture}
=
\begin{tikzpicture}[baseline=-0.5em, x=1.85em, y=1.85em,thick]
\draw (0.5,-0.2) to[out=0, in=210, out looseness=2](0.366,1/2);
\fill [white] (0,0.3) circle [radius=0.15];
\draw (-0.5,-0.2) to[out=180, in=-30, out looseness=2](-0.366,1/2);
\draw (0.5,-0.2) -- (-0.5,-0.2);
\draw[dotted] ({-0.5+sqrt(3)/2},1/2) -- ({-0.5+sqrt(3)/2*1.5},1.5/2);
\draw[dotted] ({0.5-sqrt(3)/2},1/2) -- ({0.5-sqrt(3)/2*1.5},1.5/2);
\draw (0.4,-0.2)--(-0.4,-0.8);
\fill [white] (0,-0.5) circle [radius=0.15];
\draw (-0.4,-0.2)--(0.4,-0.8);
\draw [fill = black] (-0.4,-0.2) circle (.3ex);
\draw [fill = black] (0.4,-0.2) circle (.3ex);
\node at (0,-1.1) {\footnotesize$\cdots<x<x<\cdots$};
\end{tikzpicture}
=
\begin{tikzpicture}[baseline=-0.5em, x=1.85em, y=1.85em,thick]
\draw (0.5,-0.2) to[out=0, in=210, out looseness=2](0.366,1/2);
\fill [white] (0,0.3) circle [radius=0.15];
\draw (-0.5,-0.2) to[out=180, in=-30, out looseness=2](-0.366,1/2);
\draw (0.5,-0.2) -- (-0.5,-0.2);
\draw[dotted] ({-0.5+sqrt(3)/2},1/2) -- ({-0.5+sqrt(3)/2*1.5},1.5/2);
\draw[dotted] ({0.5-sqrt(3)/2},1/2) -- ({0.5-sqrt(3)/2*1.5},1.5/2);
\draw (-0.4,-0.2)--(-0.4,-0.8);
\draw (0.4,-0.2)--(0.4,-0.8);
\draw [fill = black] (-0.4,-0.2) circle (.3ex);
\draw [fill = black] (0.4,-0.2) circle (.3ex);
\node at (0,-1.1) {\footnotesize$\cdots<x<x<\cdots$};
\end{tikzpicture}\in \calA^<(P)
\]
shows that the first summand of 
$\delta_i(D)\in \calA^<(P)\otimes \Q/\Z$ in \eqref{eq:delta_i_bis} respects the AS relation. (The first equality is an application of ``AS”, and the second arises from ``STU-like''.)  
The second and third summands of $\delta_i(D)$ in \eqref{eq:delta_i_bis} are handled analogously
(by using the AS relation in the target).
Moreover, from its  definition,
it is immediate that 
the term $\delta_C(D)\in \calA^<(P)\otimes \Z_2$ 
also respects the AS relation.

To check \emph{the IHX relation}, 
we only have to consider the terms $\delta_i(D)$
in the formula \eqref{eq:delta} for $\delta^<(D)$. Note that we have
\[
\begin{tikzpicture}[baseline=-0.5em, x=1.5em, y=1.5em,thick]
\draw[dotted] (-2.5,0) -- (-2,0);
\draw (-2,0) -- (-0.7,0);
\draw [fill = black] (-0.7,0) circle (.3ex);
\draw (-0.5,0.3) to [out =180, in = 180] (-0.5,-0.3);
\draw (-0.5,0.3) -- (0.8,0.3);
\draw (-0.5,-0.3) -- (0.8,-0.3);
\draw[dotted] (1.3,0.3) -- (0.8,0.3);
\draw[dotted] (1.3,-0.3) -- (0.8,-0.3);
\draw [fill = black] (-1.6,0) circle (.3ex);
\draw [fill = black] (-1.2,0) circle (.3ex);
\draw (-1.9,-1) to [out=90,in=-90] (-1.6,0);
\draw (-0.9,-1) to [out=90,in=-90] (-1.2,0);
\node at (-1.3,-1.3) {\footnotesize$\cdots<x<x<\cdots$};
\end{tikzpicture}
=\hspace{1mm}\begin{tikzpicture}[baseline=-0.5em, x=1.5em, y=1.5em,thick]
\draw[dotted] (-2.5,0) -- (-2,0);
\draw (-2,0) -- (-0.7,0);
\draw [fill = black] (-0.7,0) circle (.3ex);
\draw (-0.5,0.3) to [out =180, in = 180] (-0.5,-0.3);
\draw (-0.5,0.3) -- (0.8,0.3);
\draw (-0.5,-0.3) -- (0.8,-0.3);
\fill [white] (-0.2,-0.3) circle [radius=0.15];
\fill [white] (0.4,-0.3) circle [radius=0.15];
\draw[dotted] (1.3,0.3) -- (0.8,0.3);
\draw[dotted] (1.3,-0.3) -- (0.8,-0.3);
\draw [fill = black] (-0.1,0.3) circle (.3ex);
\draw [fill = black] (0.3,0.3) circle (.3ex);
\draw (-0.4,-1) to [out=90,in=-90] (-0.1,0.3);
\draw (0.6,-1) to [out=90,in=-90] (0.3,0.3);
\node at (0.2,-1.3) {\footnotesize$\cdots<x<x<\cdots$};
\end{tikzpicture}+\hspace{2mm}
\begin{tikzpicture}[thick, baseline=-0.5em, x=1.5em, y=1.5em]
\draw[dotted] (-2.5,0) -- (-2,0);
\draw (-2,0) -- (-0.7,0);
\draw [fill = black] (-0.7,0) circle (.3ex);
\draw (-0.5,0.3) to [out =180, in = 180] (-0.5,-0.3);
\draw (-0.5,0.3) -- (0.8,0.3);
\draw (-0.5,-0.3) -- (0.8,-0.3);
\draw[dotted] (1.3,0.3) -- (0.8,0.3);
\draw[dotted] (1.3,-0.3) -- (0.8,-0.3);
\draw [fill = black] (-0.1,-0.3) circle (.3ex);
\draw [fill = black] (0.3,-0.3) circle (.3ex);
\draw (-0.4,-1) to [out=90,in=-90] (-0.1,-0.3);
\draw (0.6,-1) to [out=90,in=-90] (0.3,-0.3);
\node at (0.2,-1.3) {\footnotesize$\cdots<x<x<\cdots$};
\end{tikzpicture}
\in \calA^<(P)\otimes(\Q/2\Z)
\]
by ``pushing'' the vertical edges to the right via the IHX relation and then applying the STU-like relation to discard the other two terms modulo $2\Z$.
This verifies that the first term of $\delta_i(D)$
 in \eqref{eq:delta_i_bis} is compatible 
 with the IHX relation in the Kirchhoff form of \cite[\S 5.2.7]{CDM12}.
 The second and third terms of $\delta_i(D)$
 in \eqref{eq:delta_i_bis} are treated 
 using ``IHX'' in the target and \eqref{eq:bubble}, respectively.

Next, we examine \emph{the multilinearity relation} 
for a Jacobi diagram $D(x)$ with respect 
to a fixed external vertex $i$ colored by $x\in P$.
Let $c_i,d_i \in P$; our goal is to prove that
\begin{equation} \label{eq:delta_ml}
\delta^<(D(c_i+d_i))=\delta^<(D(c_i))+\delta^<(D(d_i)).
\end{equation}
Let $C$ be a $Y$-diagram component of $D$: 
if $i$ belongs to $C$,
then by using the multilinearity and STU-like relations, 
together with (\ref{eq:fr2}), we obtain
\[
\delta_C(D(c_i+d_i))=\delta_C(D(c_i))+\delta_C(D(d_i))
\in \calA^<(P)\otimes\Z_2;
\]
if $i$ does not lie in $C$, the same identity follows immediately
from  ``multilinearity'' in $\calA^<(P)\otimes\Z_2$.
Now let $j$ be an external vertex of $D$: 
if $j\neq i$, then for the same reason, we clearly have
\[
\delta_j(D(c_i+d_i))=\delta_j(D(c_i))+\delta_j(D(d_i)) 
\in \calA^<(P)\otimes\Q/\Z;
\]
if instead $j=i$, the same equality
holds true by \eqref{eq:delta_i}
and the fact that both $\vartheta$ and $v$ are homomorphisms.
This establishes \eqref{eq:delta_ml}.

The compatibility of $\delta^<$ with \emph{the special relation} follows directly from the expression of $\delta_i$ in \eqref{eq:delta_i_bis} for an external vertex $i$ using the observation \eqref{eq:bubble}, and from the fact that $\delta_C$ leaves all components of degree $\geq 2$ unchanged for any $Y$-diagram component $C$.

We now address \emph{the STU-like relation}.
Let $D$ be a Jacobi diagram 
with two consecutive external vertices $i<j$.
Define $E$ (respectively, $G$) 
to be the diagram obtained by exchanging the order of $i$ and $j$ 
(respectively, by gluing the vertices $i$ and $j$).
Our goal is to show that
\begin{equation} \label{eq:aim_STU-like}
\delta^<(D)-\delta^<(E) = \omega(c_i,c_j) \, \delta^<(G)
\end{equation}
where $c_i$ and $c_j$ are the colors of $i$ and $j$, respectively.
This equality holds immediately if $D$ contains a looped edge,
so from now on we assume that $D$ has no looped edge.
First of all, we claim that
\begin{equation} \label{eq:ij}
\delta_i(D)- \delta_i(E) = \delta_j(D) -\delta_j(E).
\end{equation}
The vertex $i$ (respectively $j$)
is connected to an internal vertex by an edge $I$ (respectively $J$).
Let us assume that $I\cap J=\varnothing$, 
since the case $I\cap J\neq \varnothing$ can be done similarly.
We can depict $D$ in a neighborhood of $I \cup J$ as follows:
\[
D=
\begin{tikzpicture}[thick,baseline=-1em, x=1.5em, y=1.5em]
\footnotesize
\draw [dotted] (-1.5,0.5) -- (-1.5,0);
\draw [dotted] (-0.5,0.5) -- (-0.5,0);
\draw (-1.5,0) to [out=-90,in=-90] (-0.5,0);
\draw [fill = black] (-1,-0.3) circle (.3ex);
\draw [dotted] (0.5,0.5) -- (0.5,0);
\draw [dotted] (1.5,0.5) -- (1.5,0);
\draw (0.5,0) to [out=-90,in=-90] (1.5,0);
\draw [fill = black] (1,-0.3) circle (.3ex);
\draw (-1,-0.3) -- (-1,-1.5);
\draw (1,-0.3) -- (1,-1.5);
\node[below=1pt] at (-0.3,-1.5){\quad$\cdots <\hspace{1mm}c_i \hspace{1mm} \ < \ \hspace{1mm}c_j \hspace{1mm}<\cdots$};
\end{tikzpicture}.
\]
In this local picture, we compute
\begin{align*}
\delta_i(D)-\delta_i(E)&=D^{II}_i(c'_i,c''_i)-E^{II}_i(c'_i,c''_i)\\
&=
\hspace*{-3mm}
\begin{tikzpicture}[thick, baseline=-1em, x=1.5em, y=1.5em]
\footnotesize
\draw [dotted] (-1.5,0.5) -- (-1.5,0);
\draw [dotted] (-0.2,0.5) -- (-0.2,0);
\draw (-1.5,0) to [out=-90,in=-90] (-0.2,0);
\draw (-1.2,-0.35) to (-1.4,-1.5);
\draw (-0.5,-0.35) to (-0.3,-1.5);
\draw [fill = black] (-1.2,-0.35) circle (.3ex);
\draw [fill = black] (-0.5,-0.35) circle (.3ex);
\draw (1,-0.3) -- (1,-1.5);
\draw [dotted] (0.5,0.5) -- (0.5,0);
\draw [dotted] (1.5,0.5) -- (1.5,0);
\draw (0.5,0) to [out=-90,in=-90] (1.5,0);
\draw [fill = black] (1,-0.3) circle (.3ex);
\node[below=1pt] at (-0.2,-1.4) {$\scalebox{0.8}{$\cdots \hspace{1mm} c'_i \hspace{0.5mm}<\hspace{0.5mm}c''_i \ < \ c_j\cdots$}%
$};
\end{tikzpicture}
-
\begin{tikzpicture}[ thick, baseline=-1em, x=1.5em, y=1.5em]
\footnotesize
\draw [dotted] (-1.5,0.5) -- (-1.5,0);
\draw [dotted] (-0.2,0.5) -- (-0.2,0);
\draw [dotted] (0.5,0.5) -- (0.5,0);
\draw [dotted] (1.5,0.5) -- (1.5,0);
\draw (0.5,0) to [out=-90,in=-90] (1.5,0);
\draw [fill = black] (1,-0.3) circle (.3ex);
\draw (-1.5,0) to [out=-90,in=-90] (-0.2,0);
\draw (-1.2,-0.35) to (-0.2,-1.5);
\draw (-0.5,-0.35) to (1,-1.5);
\draw [fill = black] (-1.2,-0.35) circle (.3ex);
\draw [fill = black] (-0.5,-0.35) circle (.3ex);
\fill [white] (-0.65,-1) circle [radius=0.18];
\fill [white] (0.2,-0.85) circle [radius=0.18];
\draw (1,-0.3) to [out=-90,in=90] (-1.2,-1.5);
\node[below=1pt] at (-0.1,-1.4) {$\scalebox{0.8}{$\cdots\hspace{1mm} c_j\hspace{0.5mm}<\hspace{0.5mm}c'_i\hspace{0.5mm}<\hspace{0.5mm}c''_i\cdots$}$};
\end{tikzpicture}
\hspace*{-3mm}\\
&=\omega(c''_i,c_j)
\begin{tikzpicture}[thick, baseline=-1em, x=1.5em, y=1.5em]
\footnotesize
\draw [dotted] (-1.5,0.5) -- (-1.5,0);
\draw [dotted] (-0.2,0.5) -- (-0.2,0);
\draw [dotted] (0.5,0.5) -- (0.5,0);
\draw [dotted] (1.5,0.5) -- (1.5,0);
\draw (0.5,0) to [out=-90,in=-90] (1.5,0);
\draw [fill = black] (1,-0.3) circle (.3ex);
\draw (-1.5,0) to [out=-90,in=-90] (-0.2,0);
\draw [fill = black] (-1.2,-0.35) circle (.3ex);
\draw [fill = black] (-0.5,-0.35) circle (.3ex);
\draw (-1.5,0) to [out=-90,in=-90] (-0.2,0);
\draw (-0.5,-0.35) to [out=-90,in=-90] (1,-0.3);
\draw (-1.2,-0.35) to [out=-90,in=90] (0,-1.5);
\node[below=1pt] at (0,-1.4) {$\scalebox{0.8}{$\cdots <c'_i<\cdots$}$};
\end{tikzpicture}
+\omega(c'_i,c_j)
\begin{tikzpicture}[thick, baseline=-1em, x=1.5em, y=1.5em]
\footnotesize
\draw [dotted] (-1.5,0.5) -- (-1.5,0);
\draw [dotted] (-0.2,0.5) -- (-0.2,0);
\draw [dotted] (0.5,0.5) -- (0.5,0);
\draw [dotted] (1.5,0.5) -- (1.5,0);
\draw (0.5,0) to [out=-90,in=-90] (1.5,0);
\draw [fill = black] (1,-0.3) circle (.3ex);
\draw (-1.5,0) to [out=-90,in=-90] (-0.2,0);
\draw [fill = black] (-1.2,-0.35) circle (.3ex);
\draw [fill = black] (-0.5,-0.35) circle (.3ex);
\draw (-0.5,-0.35) to [out=-90,in=90] (0,-1.5);
\fill [white] (-0.2,-0.9) circle [radius=0.18];
\draw (-1.2,-0.35) to [out=-90,in=-90] (1,-0.3);
\node[below=1pt] at (0.1,-1.4) {$\scalebox{0.8}{$\cdots <c''_i<\cdots$}$};
\end{tikzpicture}\\
&=\omega(c'_i,c_j)
\begin{tikzpicture}[thick, baseline=-1em, x=1.5em, y=1.5em]
\footnotesize
\draw [dotted] (-1.5,0.5) -- (-1.5,0);
\draw [dotted] (-0.2,0.5) -- (-0.2,0);
\draw [dotted] (0.5,0.5) -- (0.5,0);
\draw [dotted] (1.5,0.5) -- (1.5,0);
\draw (0.5,0) to [out=-90,in=-90] (1.5,0);
\draw [fill = black] (1,-0.3) circle (.3ex);
\draw (-1.5,0) to [out=-90,in=-90] (-0.2,0);
\draw [fill = black] (-1.2,-0.35) circle (.3ex);
\draw [fill = black] (-0.5,-0.35) circle (.3ex);
\draw (-0.5,-0.35) to [out=-90,in=-90] (1,-0.3);
\draw (-1.2,-0.35) to [out=-90,in=90] (0,-1.5);
\node[below=1pt] at (0.1,-1.4) {$\scalebox{0.8}{$\cdots <c''_i<\cdots$}$};
\end{tikzpicture}
+\omega(c'_i,c_j)
\begin{tikzpicture}[thick, baseline=-1em, x=1.5em, y=1.5em]
\footnotesize
\draw [dotted] (-1.5,0.5) -- (-1.5,0);
\draw [dotted] (-0.2,0.5) -- (-0.2,0);
\draw [dotted] (0.5,0.5) -- (0.5,0);
\draw [dotted] (1.5,0.5) -- (1.5,0);
\draw (0.5,0) to [out=-90,in=-90] (1.5,0);
\draw [fill = black] (1,-0.3) circle (.3ex);
\draw (-1.5,0) to [out=-90,in=-90] (-0.2,0);
\draw [fill = black] (-1.2,-0.35) circle (.3ex);
\draw [fill = black] (-0.5,-0.35) circle (.3ex);
\draw (-0.5,-0.35) to [out=-90,in=90] (0,-1.5);
\fill [white] (-0.2,-0.9) circle [radius=0.18];
\draw (-1.2,-0.35) to [out=-90,in=-90] (1,-0.3);
\node[below=1pt] at (0.1,-1.4) {$\scalebox{0.8}{$\cdots <c''_i<\cdots$}$};
\end{tikzpicture}
\ = \ \omega(c'_i,c_j)\, \begin{tikzpicture}[thick, baseline=-0.5em, x=1.5em, y=1.5em]
\footnotesize
\draw [dotted] (-1.5,0.5) -- (-1.5,0);
\draw [dotted] (-0.2,0.5) -- (-0.2,0);
\draw [dotted] (0.5,0.5) -- (0.5,0);
\draw [dotted] (1.5,0.5) -- (1.5,0);
\draw (0.5,0) to [out=-90,in=-90] (1.5,0);
\draw [fill = black] (1,-0.3) circle (.3ex);
\draw (-1.5,0) to [out=-90,in=-90] (-0.2,0);
\draw [fill = black] (-0.85,-0.4) circle (.3ex);
\draw (-0.85,-0.4) to [out=-90,in=-90] (1,-0.3);
\draw [fill = black] (0,-0.9) circle (.3ex);
\draw (0,-0.9) to [out=-90,in=90] (0,-1.5);
\node[below=1pt] at (0.1,-1.4) {$\scalebox{0.8}{$\cdots <c''_i<\cdots$}$};
\end{tikzpicture},
\end{align*}
where the first identity follows from \eqref{eq:delta_i}
using that $D_i^U=E_i^U$, and the fourth identity
uses Lemma \ref{lem:expansion}.(c.i).
In the same way, we obtain
$$
\delta_j(D)-\delta_j(E)=
\omega(c'_j,c_i)\, \begin{tikzpicture}[thick, baseline=-0.5em, x=1.5em, y=1.5em]
\footnotesize
\draw [dotted] (-1.5,0.5) -- (-1.5,0);
\draw [dotted] (-0.2,0.5) -- (-0.2,0);
\draw [dotted] (0.5,0.5) -- (0.5,0);
\draw [dotted] (1.5,0.5) -- (1.5,0);
\draw (0.5,0) to [out=-90,in=-90] (1.5,0);
\draw [fill = black] (1,-0.3) circle (.3ex);
\draw (-1.5,0) to [out=-90,in=-90] (-0.2,0);
\draw [fill = black] (-0.85,-0.4) circle (.3ex);
\draw (-0.85,-0.4) to [out=-90,in=-90] (1,-0.3);
\draw [fill = black] (0,-0.9) circle (.3ex);
\draw (0,-0.9) to [out=-90,in=90] (0,-1.5);
\node[below=1pt] at (0.1,-1.4) {$\scalebox{0.8}{$\cdots <c''_j<\cdots$}$};
\end{tikzpicture}.
$$
Thus, claim \eqref{eq:ij} is deduced  
from  Lemma~\ref{lem:expansion}.(c.ii).
We now distinguish two cases:
\begin{itemize}
\item \emph{Assume that $i,j$ do not belong to a same $Y$-diagram component of $D$.}
Hence $G$ has no looped edge neither. On the one hand, 
the STU-like relation in the target implies that
$\delta_k(D)-\delta_k(E)=\omega(c_i,c_j)\, \delta_k(G)$
for any external vertex $k$ of $G$. Then, \eqref{eq:ij} implies that
\begin{eqnarray} \label{eq:delta_1/3}
&&\Big( \delta_i(D)+ \delta_j(D) + \sum_k\delta_k(D)\Big)
- \Big( \delta_i(E)+ \delta_j(E) + \sum_k\delta_k(E)\Big) \\
\notag 
&=&\omega(c_i,c_j) \sum_k\delta_k(G) \in \calA^<(P)\otimes \Q/\Z
\end{eqnarray}
where the sums are indexed by external vertices $k$ of $G$.
On the other hand, 
for any $Y$-diagram component $C$ of $D$ 
that contains neither $i$ nor $j$,
we obtain
\begin{equation} \label{eq:delta_2/3}
\delta_C(D)-\delta_C(E)= \omega(c_i,c_j)\, \delta_C(G)
\in \calA^<(P) \otimes \Z_2
\end{equation}
by application of the STU-like relation;
besides, if only one  of $i$ or $j$ belongs to $C$, say $i$, 
then $C$ does not count anymore as a $Y$-diagram of $G$ 
and we get
\begin{align} \label{eq:delta_3/3}
\delta_C(D)-\delta_C(E)&=
\begin{tikzpicture}[thick, baseline=-1.6em, x=1.5em, y=1.5em]
\footnotesize
\draw [dotted] (-1.8,0.5) -- (-1.8,0);
\draw [dotted] (-1.1,0.5) -- (-1.1,0);
\draw [dotted] (0.8,0.5) -- (0.8,0);
\draw [dotted] (0.1,0.5) -- (0.1,0);
\draw [dotted] (1.4,0.5) -- (1.4,0);
\draw (-1.8,0) to [out=-90,in=-90] (0.1,0);
\fill [white] (-0.55,-0.55) circle [radius=0.3];
\draw (-1.1,0) to [out=-90,in=-90] (0.8,0);
\draw [fill = black] (-1.4,-0.5) circle (.3ex);
\draw [fill = black] (0.4,-0.5) circle (.3ex);
\draw (-1.4,-0.5) -- (-1.4,-1.5);
\draw (0.4,-0.5) to [out=-90,in=90, looseness =1.5]  (0,-1.5);
\draw (1.4,0) -- (1.4,-1.5);
\node[below=1pt] at (0,-1.4) {$\scalebox{0.9}{$\cdots \hspace{1mm} c_i \ < \ c_i \ < c_j \cdots$}$};
\end{tikzpicture}
-\begin{tikzpicture}[thick, baseline=-2.2em, x=1.5em, y=1.5em]
\footnotesize
\draw [dotted] (-1.8,0.5) -- (-1.8,0);
\draw [dotted] (-1.1,0.5) -- (-1.1,0);
\draw [dotted] (0.8,0.5) -- (0.8,0);
\draw [dotted] (0.1,0.5) -- (0.1,0);
\draw [dotted] (1.4,0.5) -- (1.4,0);
\draw (-1.8,0) to [out=-90,in=-90] (0.1,0);
\fill [white] (-0.55,-0.55) circle [radius=0.3];
\draw (-1.1,0) to [out=-90,in=-90] (0.8,0);
\draw [fill = black] (-1.4,-0.5) circle (.3ex);
\draw [fill = black] (0.4,-0.5) circle (.3ex);
\draw (-1.4,-0.6) -- (-1.4,-0.8);
\draw (0.4,-0.6) -- (0.4,-0.8);
\draw (1.4,0) -- (1.4,-0.6);
\draw (0.4,-0.8) to [out=-90,in=90]  (1.4,-1.8);
\draw (-1.4,-0.8) to [out=-90,in=90]  (0,-1.8);
\fill [white] (-0.65,-1.28) circle [radius=0.28];
\fill [white] (0.65,-1.22) circle [radius=0.24];
\draw (1.4,-0.6) to [out=-90,in=90]  (-1.2,-1.8);
\node[below=1pt] at (0,-1.8) {$\scalebox{0.9}{$\cdots \hspace{1mm} c_j \ < \ c_i \ < \ c_i \cdots$}$};
\end{tikzpicture}\\
\notag &=
\omega(c_i,c_j)\left(
\begin{tikzpicture}[thick, baseline=-1.6em, x=1.5em, y=1.5em]
\footnotesize
\draw [dotted] (-1.8,0.5) -- (-1.8,0);
\draw [dotted] (-1.1,0.5) -- (-1.1,0);
\draw [dotted] (0.8,0.5) -- (0.8,0);
\draw [dotted] (0.1,0.5) -- (0.1,0);
\draw [dotted] (1.4,0.5) -- (1.4,0);
\draw (-1.8,0) to [out=-90,in=-90] (0.1,0);
\fill [white] (-0.55,-0.55) circle [radius=0.3];
\draw (-1.1,0) to [out=-90,in=-90] (0.8,0);
\draw [fill = black] (-1.4,-0.5) circle (.3ex);
\draw [fill = black] (0.4,-0.5) circle (.3ex);
\draw (1.4,-0.5) -- (1.4,0);
\draw (0.4,-0.5) to [out=-90,in=90]  (0,-1.9);
\fill [white] (0.1,-1.35) circle [radius=0.22];
\draw (-1.4,-0.5) to [out=-90,in=-90] (1.4,-0.5);
\node[below=1pt] at (0.1,-1.8) {$\scalebox{0.9}{$\cdots< c_i <\cdots$}$};
\end{tikzpicture}
+
\begin{tikzpicture}[thick, baseline=-1.6em, x=1.5em, y=1.5em]
\footnotesize
\draw [dotted] (-1.8,0.5) -- (-1.8,0);
\draw [dotted] (-1.1,0.5) -- (-1.1,0);
\draw [dotted] (0.8,0.5) -- (0.8,0);
\draw [dotted] (0.1,0.5) -- (0.1,0);
\draw [dotted] (1.4,0.5) -- (1.4,0);
\draw (-1.8,0) to [out=-90,in=-90] (0.1,0);
\fill [white] (-0.55,-0.55) circle [radius=0.3];
\draw (-1.1,0) to [out=-90,in=-90] (0.8,0);
\draw [fill = black] (-1.4,-0.5) circle (.3ex);
\draw [fill = black] (0.4,-0.5) circle (.3ex);
\draw (1.4,0) -- (1.4,-0.5);
\draw (1.4,-0.5) to [out=-90,in=-90] (0.4,-0.5);
\draw (-1.4,-0.5) to [out=-90,in=90]  (0,-1.8);\node[below=1pt] at (0.1,-1.8) {$\scalebox{0.9}{$\cdots< c_i <\cdots$}$};
\end{tikzpicture}
\right) \ = \ 0 ,
\end{align} 
where the first identity is obtained by canceling the two terms involving $\fr(c_i)$, 
the second identity uses the STU-like relation, and the third one
is obtained by canceling pairwise the $2\cdot 2^2=8$ terms arising from 
the other two  external vertices of $C$ (again, thanks to ``STU-like'').
Thus, \eqref{eq:aim_STU-like} 
is deduced from \eqref{eq:delta_1/3}--\eqref{eq:delta_3/3} 
using the formula \eqref{eq:delta}.
\item \emph{Assume that $i,j$  belong 
to a same $Y$-diagram component $T$  of $D$}, 
and denote by $t$ the third external vertex of $T$.
Hence $G$ has a looped edge, so that 
the identity \eqref{eq:aim_STU-like} to be shown reduces
to $\delta^<(D)=\delta^<(E)$.
For any external vertex $k$ of $D\setminus T$, we have
$\delta_k(D)=\delta_k(E)$ by applying ``STU-like'' and ``self-loop'';
similarly, for any $Y$-diagram component $C$ of $D\setminus T$, 
we have $\delta_C(D)=\delta_C(E)$. So, using the formula \eqref{eq:delta},
we deduce from \eqref{eq:ij} that 
\begin{equation} \label{eq:D_E}
\delta^<(D)-\delta^<(E) = \big(\delta_t(D) -\delta_t(E)\big)+
\frac{\delta_T(D) -\delta_T(E)}{2}
\ \in \calA^<(P)\otimes \Q/\Z.
\end{equation}
Without loss of generality, we can assume that $j<t$
so that, in a neighborhood of $T$, we have 
\[
\qquad \quad \quad
D=\begin{tikzpicture}[thick,baseline=-1.5em, x=1em, y=1em]
\footnotesize
\draw (-2.2,-0.5) to [out=30,in=90] (2.7,-2);
 \draw [fill = black] (-2.2,-0.5) circle (.3ex);
 \draw (-3,-2) -- (-2.2,-0.5) -- (-1.3,-2);
\node[below] at (0,-2){$\cdots<c_i<c_j<\cdots<c_t<\cdots$};
\end{tikzpicture}
\quad \hbox{and} \quad
E=\begin{tikzpicture}[thick,baseline=-1.5em, x=1em, y=1em]
\footnotesize
\draw (-2.2,-0.5) to [out=30,in=90] (2.7,-2);
 \draw [fill = black] (-2.2,-0.5) circle (.3ex);
 \draw (-3.1,-2) to [out=60,in=-45,looseness=2] (-2.2,-0.5);
 \fill [white] (-2.2,-1.25) circle [radius=0.2];
  \draw (-1.3,-2) to [out=120,in=225,looseness=2] (-2.2,-0.5);
\node[below] at (0.1,-2){$\cdots<c_j<c_i<\cdots<c_t<\cdots$};
\end{tikzpicture}.
\]
Using \eqref{eq:delta_i_bis} and, next, 
``STU-like'' and ``self-loop'', we get
\begin{eqnarray*}
&& \delta_t(D)-\delta_t(E) \\
&=&
\frac{1}{2}\begin{tikzpicture}[thick,baseline=-1.5em, x=1em, y=1em]
\footnotesize
\draw (-2.0,-0.5) to [out=60,in=90] (2.5,-2);
\draw (-2.8,-0.5) to [out=80,in=90,looseness=1] (3.7,-2);
\draw [fill = black] (-2.0,-0.5) circle (.3ex);
\draw [fill = black] (-2.8,-0.5) circle (.3ex);
\draw (-3.3,-2) to [out=90,in=90,looseness=3] (-1.5,-2);
\node[below] at (0.3,-2){$\cdots<c_i<c_j<\cdots<c_t||c_t<\cdots$};
\end{tikzpicture}
-
\frac{1}{2}\begin{tikzpicture}[thick,baseline=-1.5em, x=1em, y=1em]
\footnotesize
\draw (-2.0,-0.5) to [out=60,in=90] (2.5,-2);
\draw (-2.8,-0.5) to [out=80,in=90,looseness=1] (3.7,-2);
\draw [fill = black] (-2.0,-0.5) circle (.3ex);
\draw [fill = black] (-2.8,-0.5) circle (.3ex);
\draw (-2.0,-0.5) to [out=155,in=25,looseness=1.3] (-2.8,-0.5);
\draw (-2.0,-0.5) to [out=-25,in=70,looseness=1.6] (-3.3,-2);
\fill [white] (-2.4,-1.15) circle [radius=0.2];
\draw (-2.8,-0.5) to [out=205,in=110,looseness=1.6] (-1.5,-2);
\node[below] at (0.3,-2){$\cdots<c_j<c_i<\cdots<c_t||c_t<\cdots$};
\end{tikzpicture}\\
&=&
\frac{\omega(c_i,c_j)}{2}\begin{tikzpicture}[thick,baseline=-1.5em, x=1em, y=1em]
\footnotesize
\draw (0,0) circle (1 and 0.6);
\draw (0.9,-1.6) to (0.4,-0.55);
\draw (-0.9,-1.6) to (-0.4,-0.55);
\draw [fill = black] (0.4,-0.55) circle (.3ex);
\draw [fill = black] (-0.4,-0.55) circle (.3ex);
\node[below] at (-1,-1.5){$\cdots<\cdots<c_t\,\,||\,\, c_t<\cdots$};
\end{tikzpicture};
\end{eqnarray*}
similarly, we obtain
\begin{eqnarray*}
&& \delta_T(D)-\delta_T(E) \\
&=&
\begin{tikzpicture}[thick,baseline=-1.5em, x=0.8em, y=0.8em]
\scriptsize
\node[below] at (0.1,-2){$\cdots<c_i<c_i<c_j<c_j<\cdots<c_t||c_t<\cdots$};
\draw (-1.8,-0.5) to [out=30,in=110] (6.4,-2);
\draw [fill = black] (-1.8,-0.5) circle (.3ex);
\draw (-4.2,-2) -- (-1.8,-0.5) -- (0,-2);
\fill [white] (-2.9,-1.3) circle [radius=0.2];
\fill [white] (1.7,1.0) circle [radius=1.0];
\draw (-3.8,-0.5) to [out=30,in=110] (5,-2);
\draw [fill = black] (-3.8,-0.5) circle (.3ex);
\draw (-6.2,-2) -- (-3.8,-0.5) -- (-2,-2);
\end{tikzpicture}
-
\begin{tikzpicture}[thick,baseline=-1.5em, x=0.8em, y=0.8em]
\scriptsize
\node[below] at (0.1,-2){$\cdots<c_j<c_j<c_i<c_i<\cdots<c_t||c_t<\cdots$};
\draw (-2.3,0.5) to [out=30,in=110] (6.3,-2);
\draw [fill = black] (-2.3,0.5) circle (.3ex);
\draw (-4.2,-2) to [out=30,in=-20,looseness=1.8] (-2.3,0.5);
\draw (-6.4,-2) to [out=30,in=-20,looseness=1.8] (-3.5,0.5);
\fill [white] (0.0,1.0) circle [radius=1.0];
\fill [white] (-2.9,0.2) circle [radius=0.2];
\fill [white] (-2.9,-0.5) circle [radius=0.2];
\fill [white] (-2.2,-0.9) circle [radius=0.2];
\fill [white] (-3.8,-0.9) circle [radius=0.2];
\fill [white] (-2.7,-1.5) circle [radius=0.2];
\fill [white] (-3.1,-1.4) circle [radius=0.2];
\draw (-3.5,0.5) to [out=30,in=110] (4.8,-2);
\draw [fill = black] (-3.5,0.5) circle (.3ex);
\draw (0,-2) to [out=150,in=200,looseness=1.8] (-2.3,0.5);
\draw (-2,-2) to [out=150,in=200,looseness=1.8] (-3.5,0.5);
\end{tikzpicture}\\
&=&\omega(c_i,c_j)\!\!\!
\begin{tikzpicture}[thick,baseline=-1.5em, x=1em, y=1em]
\footnotesize
\draw (-2.0,-0.5) to [out=60,in=90] (2.5,-2);
\draw (-2.8,-0.5) to [out=80,in=90,looseness=1] (3.7,-2);
\draw [fill = black] (-2.0,-0.5) circle (.3ex);
\draw [fill = black] (-2.8,-0.5) circle (.3ex);
\draw (-3.3,-2) to [out=90,in=90,looseness=3] (-1.5,-2);
\node[below] at (0.3,-2){$\cdots<c_i<c_j<\cdots<c_t||c_t<\cdots$};
\end{tikzpicture}
-\omega(c_i,c_j)\!\!\!
\begin{tikzpicture}[thick,baseline=-1.5em, x=1em, y=1em]
\footnotesize
\draw (-2.0,-0.5) to [out=60,in=90] (2.5,-2);
\draw (-2.8,-0.5) to [out=80,in=90,looseness=1] (3.7,-2);
\draw [fill = black] (-2.0,-0.5) circle (.3ex);
\draw [fill = black] (-2.8,-0.5) circle (.3ex);
\draw (-3.3,-2) to [out=90,in=90,looseness=3] (-1.5,-2);
\node[below] at (0.3,-2){$\cdots<c_j<c_i<\cdots<c_t||c_t<\cdots$};
\end{tikzpicture}\\
&=&
\omega(c_i,c_j)\begin{tikzpicture}[thick,baseline=-1.5em, x=1em, y=1em]
\footnotesize
\draw (0,0) circle (1 and 0.6);
\draw (0.9,-1.6) to (0.4,-0.55);
\draw (-0.9,-1.6) to (-0.4,-0.55);
\draw [fill = black] (0.4,-0.55) circle (.3ex);
\draw [fill = black] (-0.4,-0.55) circle (.3ex);
\node[below] at (-1,-1.5){$\cdots<\cdots<c_t\,\,||\,\, c_t<\cdots$};
\end{tikzpicture}.
\end{eqnarray*}
Therefore,  we deduce from \eqref{eq:D_E} that $\delta^<(D)=\delta^<(E)$.
\end{itemize}

To conclude the proof of the proposition, 
and deal with \emph{the slide relation},
we consider a Jacobi diagram $D$ with two consecutive external vertices $i<j$,
with colors $s$ and $x$ respectively, 
which are adjacent to the same internal vertex.
Let $E$ be the same Jacobi diagram as $D$, 
but with the vertex $i$ colored by $x$ instead of $s$.
We need to show that $\delta^<(D)= \delta^<(E)$.
(The other version of  {the slide relation} with $i>j$ can be proved
in the same way, so that we shall omit it.)
We can assume that the connected component $V$ of $D$ 
containing $i$ and $j$ is a $Y$-diagram since, otherwise,
the slide relation can be written
as a combination of the special, IHX and STU-like relations
(which we have treated in the previous paragraphs).
Furthermore, having dealt with ``STU-like''
in the previous paragraph,
we can assume that  the third external vertex $k$ of $V$ 
(colored by $y\in P$)
is immediately next to $j$; in other words, we have 
$$
D  = D' \stackrel{< }{\sqcup}   
\underbrace{\begin{tikzpicture}[thick,color=black,baseline=-1.0em, x=1em, y=1em]
\footnotesize
\draw (0,0) -- (1.5,-1);
\draw (0,0) -- (0,-1);
\draw (0,0) -- (-1.5,-1);
\draw [fill = black] (0,0) circle (.3ex);
\node[below=1pt] at (0,-1) {$s < x < y $};
\end{tikzpicture} }_V \stackrel{< }{\sqcup}  D''
\quad \hbox{and} \quad
E  = D' \stackrel{< }{\sqcup}   
\underbrace{\begin{tikzpicture}[thick,color=black,baseline=-1.0em, x=1em, y=1em]
\footnotesize
\draw (0,0) -- (1.5,-1);
\draw (0,0) -- (0,-1);
\draw (0,0) -- (-1.5,-1);
\draw [fill = black] (0,0) circle (.3ex);
\node[below=1pt] at (0,-1) {$x < x < y $};
\end{tikzpicture} }_W \stackrel{< }{\sqcup}  D''.
$$
Of course, the slide relation in the target gives
\begin{equation} \label{eq:DE1}
\delta_l(D)=\delta_l(E) \in \calA^<(P) \otimes \Q/\Z
\quad \hbox{and} \quad
\delta_C(D)=\delta_C(E) \in \calA^<(P) \otimes \Z_2
\end{equation}
for any external vertex $l$ of $D'\sqcup D''$
 and any $Y$-diagram component $C$ of $D'\sqcup D''$.
Using the special and STU-like relations, we obtain
$$
\delta_V(V) = 
\tikz[baseline=5pt]{\footnotesize
  \draw[thick, color=black] (0,0)--(0.5,0.5);
  \draw[thick, color=black] (0.5,0)--(1,0.5);
  \fill [white] (0.75,0.25) circle [radius=0.1];
  \draw[thick, color=black] (0.5,0.5)--(1,0);
  \draw[thick, color=black] (1,0.5)--(1.5,0);
  \draw[black, fill=black] (0.5,0.5) circle (0.25ex);
   \draw[black, fill=black] (1,0.5) circle (0.25ex);
  \draw[thick, color=black] (0.5,0.5) to[out=90, in=90] (1,0.5);
  \draw[color=black] (0.75,-0.25) node {\small $x \parallel x  <y\parallel y$}
}+
\begin{tikzpicture}[thick,color=black,baseline=-1.0em, x=1em, y=1em]
\footnotesize
\draw (0,0) -- (1,-1);
\draw (0,0) -- (0,-1);
\draw (0,0) -- (-1,-1);
\draw [fill = black] (0,0) circle (.3ex);
\node[below=1pt] at (0,-1) {$s < x < y$};
\end{tikzpicture}
\stackrel{< }{\sqcup}   
\begin{tikzpicture}[thick,color=black,baseline=-1.0em, x=1em, y=1em]
\footnotesize
\draw (0,0) -- (1,-1);
\draw (0,0) -- (0,-1);
\draw (0,0) -- (-1,-1);
\draw [fill = black] (0,0) circle (.3ex);
\node[below=1pt] at (0,-1) {$s < x < y$};
\end{tikzpicture}
$$
and, using furthermore the slide relation, we get
$$
\delta_W(W) = \fr(x) \tikz[baseline=-3pt, scale=0.35]{\footnotesize
  \draw[thick] (0,0) circle (0.6);
  \draw[fill=black] (-0.6,0) circle (0.8ex);
  \draw[fill=black] (0.6,0) circle (0.8ex);
  \draw[thick] (-0.6,0) to[out=180,in=90] (-1.2,-1);
  \node at (-1.3,-1.4) {$y$};
  \draw[thick] (0.6,0) to[out=0,in=90] (1.2,-1);
  \node at (1.2,-1.4) {$y$};
  \node at (0,-1.4) {$\parallel$};
}+ 
\begin{tikzpicture}[thick,color=black,baseline=-1.0em, x=1em, y=1em]
\footnotesize
\draw (0,0) -- (1,-1);
\draw (0,0) -- (0,-1);
\draw (0,0) -- (-1,-1);
\draw [fill = black] (0,0) circle (.3ex);
\node[below=1pt] at (0,-1) {$s < x < y$};
\end{tikzpicture}
\stackrel{< }{\sqcup}   
\begin{tikzpicture}[thick,color=black,baseline=-1.0em, x=1em, y=1em]
\footnotesize
\draw (0,0) -- (1,-1);
\draw (0,0) -- (0,-1);
\draw (0,0) -- (-1,-1);
\draw [fill = black] (0,0) circle (.3ex);
\node[below=1pt] at (0,-1) {$s < x < y$};
\end{tikzpicture};
$$
therefore, we have
\begin{eqnarray}
\label{eq:DE2} \delta_{W}(E)-\delta_{V}(D)&=& D'\stackrel{< }{\sqcup}  \Bigg(
\tikz[baseline=5pt]{\footnotesize
  \draw[thick, color=black] (0,0)--(0.5,0.5);
  \draw[thick, color=black] (0.5,0)--(1,0.5);
  \fill [white] (0.75,0.25) circle [radius=0.1];
  \draw[thick, color=black] (0.5,0.5)--(1,0);
  \draw[thick, color=black] (1,0.5)--(1.5,0);
  \draw[black, fill=black] (0.5,0.5) circle (0.25ex);
   \draw[black, fill=black] (1,0.5) circle (0.25ex);
  \draw[thick, color=black] (0.5,0.5) to[out=90, in=90] (1,0.5);
  \draw[color=black] (0.75,-0.25) node {\small $x < x  <y < y$}}+
\fr(y) \tikz[baseline=-3pt, scale=0.35]{\footnotesize
  \draw[thick] (0,0) circle (0.6);
  \draw[fill=black] (-0.6,0) circle (0.8ex);
  \draw[fill=black] (0.6,0) circle (0.8ex);
  \draw[thick] (-0.6,0) to[out=180,in=90] (-1.2,-1);
  \node at (-1.3,-1.4) {$x$};
  \draw[thick] (0.6,0) to[out=0,in=90] (1.2,-1);
  \node at (1.2,-1.4) {$x$};
  \node at (0,-1.4) {$<$};
} \Bigg)\stackrel{< }{\sqcup}  D''\\
\notag &=& E_k^{II}(y,y)+ \fr(y)\, E_k^U 
\in \calA^<(P) \otimes \Z_2.
\end{eqnarray}
On the other hand, we deduce from \eqref{eq:delta_i_bis} 
and the special relation that
\[
\delta_i(D)=\frac{1}{2}D_i^U,\qquad
\delta_j(D)=\delta_k(D)=0.
\]
Furthermore, we  deduce from \eqref{eq:delta_i_bis} 
and the IHX relation that
$$
\delta_k(E)
=\frac{1}{2}E_k^{II}(y,y)+\frac{\fr(y)}{2}E_k^U.
$$
Moreover, a double application of  \eqref{eq:delta_i} gives 
\begin{align*}
\delta_i(E)+\delta_j(E)
&=\big( E_i^{II}(x',x'') +\upsilon(x)E_i^U\big)+\big( E_j^{II}(x',x'')+ \upsilon(x)E_j^U\big)\\
&= E_i^{II}(x',x'') + E_j^{II}(x',x'')
\ = \ \frac{1}{2}D_i^U;
\end{align*}
here, the second equality uses the facts that $E_i^U=E_j^U$ 
and $\upsilon(x)\in \big(\frac{1}{2} \Z\big)/\Z$,
while the third equality is justified by  ``STU-like'' and Lemma~\ref{lem:expansion}.(c).
It follows from the last three equalities that 
\begin{eqnarray} \label{eq:DE3}
&& \big(\delta_i(E)+\delta_j(E)+\delta_k(E)\big) - 
\big((\delta_i(D)+\delta_j(D)+\delta_k(D)\big) \\
\notag & = &\frac{1}{2}E_k^{II}(y,y)+\frac{\fr(y)}{2}E_k^U \ \in \calA^<(P)\otimes \Q/\Z.
\end{eqnarray}
By combining \eqref{eq:DE1}--\eqref{eq:DE3}, we conclude that $\delta^<(D)=\delta^<(E)$.
\end{proof}

The following theorem provides an alternative version and a mild generalization of \cite[Th$.$ 1.1]{NSS22a}. Throughout, we impose the standing assumption that \emph{the associator chosen for the construction of the LMO functor is even}. 
Recall moreover that this construction requires the choice of an arbitrary system of meridians and parallels on the surface $\Sigma$ (as described in \eqref{eq:surface}); such a choice determines a basis {$(\alpha,\beta)$} of the fundamental group $\pi$, and consequently induces a basis {$(a,b)$} of the homology group $H$. 
Let $\theta$ be an even symplectic expansion whose truncation up to degree $\leq 2$ is given by
\begin{equation} \label{eq:even_theta}
\begin{cases}
 \theta(\alpha_i) = 1 + a_i + \frac{1}{2} a_i^2 -\frac{1}{2}[a_i,b_i]
+ (\deg \geq 3),   \\
\theta(\beta_i) = 1 + b_i + \frac{1}{2} b_i^2 -\frac{1}{2}[a_i,b_i]
+ (\deg \geq 3).
\end{cases}
\end{equation}

\begin{theorem}
\label{th:psi_delta_Z}
For every $n\geq 1$,
we have the following commutative diagram:
\begin{equation} \label{eq:AGAA}
\begin{tikzcd}
\mathcal{A}^{<}_{n}(P) \arrow[rr, "\psi"] \arrow[d, "\delta^<"'] 
& &\mathrm{Gr}^{{F}}_n \, \mathbb{Z}[\mathcal{IC}] \arrow[d, "\zeta^{<}_{n+1}"] \\
\mathcal{A}^{<}_{n+1}(P) \otimes \Q/\Z
\arrow[rr,"\simeq","p_*" swap]
& & \mathcal{A}^{<}_{n+1}(H) \otimes \Q/\Z 
\end{tikzcd}
\end{equation}
\end{theorem}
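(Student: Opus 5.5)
Both compositions in \eqref{eq:AGAA} are homomorphisms. By Theorem \ref{th:psi} and Proposition \ref{prop:varphi}, the group $\calA^<_n(P)$ is generated by the elements $\varphi(D\otimes Y^k)$, where $D$ is an $S$-colored Jacobi diagram. So it suffices to check commutativity on these generators. On such generators we import \cite[Th.\ 1.1]{NSS22a}: it expresses $\big(\widetilde{Z}^Y_{n+1}\bmod 1\big)\big([U;C(D)]\big)$ through a map $\delta$ on $\calA_n(S)\otimes\Z[Y]/\langle 2Y\rangle$. The map $\delta$ is built from local modifications at external vertices (doubling a leg, or inserting a $\Yrev$ or $U$ piece with coefficients fixed by the basis $(a,b)$ and the even associator), together with the terms that duplicate $Y$-components. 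Composing with $\varphi^\Q$, which intertwines $\widetilde Z^Y$ and $Z^<$, the theorem reduces to the identity
\[
p_*\circ\delta^<\circ\varphi \;=\; \varphi^{\Q/\Z}\circ\delta
\quad\hbox{on } \calA_n(S)\otimes \Z[Y]/\langle 2Y\rangle .
\]
Here I use that the clasper $C(\varphi(D\otimes Y^k))$ realizes the same graph clasper as the one used in \cite{NSS22a}, up to the sign $(-1)^{\chi(D)}$ and the ordering convention ($a$-legs below $b$-legs).

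Next I check this identity term by term. First I compute $\vartheta$ and $\upsilon$ for the specific expansion \eqref{eq:even_theta}. From $\ell^\theta_2(\alpha_i)=-\tfrac12[a_i,b_i]=\ell^\theta_2(\beta_i)$ and \eqref{eq:vartheta}, we get $\vartheta(a_i)=\tfrac12 a_i^2-\tfrac12[a_i,b_i]$, and similarly for $b_i$. Also $\upsilon(\sigma(a_i))=\varpi\lambda(a_i)=-\tfrac12\omega(a_i,b_i)=\tfrac12$. Since $\sigma(a_i)$ has framing $0$, we have $\fr=0$, and likewise for $b_i$. Plugging these into \eqref{eq:delta_i_bis} gives, at each vertex colored $a_i$ or $b_i$, three terms:
\begin{itemize}
\item $\tfrac12 D^{II}_i(c,c)$, the doubled leg;
\item a $\Yrev$-term with legs $a_i,b_i$;
\item $\tfrac12 D_i^U$.
\end{itemize}
These should match the vertex contributions in the NSS formula once the latter is transported through $\varphi$. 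The reordering needed to put legs into $\varphi$'s order (all $a$ below all $b$) produces STU-like correction terms. These either cancel modulo $1$, by the computations behind \eqref{eq:bubble}, or correspond to the remaining NSS terms. The $Y$-diagram components with $s$-colored legs, i.e.\ the $Y^k$ factor, are handled via $\delta_C/2$ and the parallel notation \eqref{eq:parallel_notation}. The term $\tfrac12\delta_C$ of a $Y$-component should match the NSS term for the square of a $Y$-clasper. For the all-$s$ $Y$-diagram, genus-$0$ input is needed: I use the known value of $Z_2$ (the Casson/theta part) on the $Y$-surgery producing the Poincaré sphere, which should equal $\tfrac12\theta$ modulo $1$.

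The main obstacle is this bookkeeping: reconciling the NSS normalization (their $\delta$ on $\calA(S)$) with $\delta^<$. Doing so requires carefully tracking the reorderings in $\varphi$ and the framing contributions from $\fr$ on $\sigma(a_i),\sigma(b_i)$ and on sums of colors. To keep this manageable I first treat connected $D$ of degree $\geq2$, where \eqref{eq:connected} leaves only the $D^{II}$ terms. Then I treat $Y$-components, and finally disjoint unions. For disjoint unions I use that $\zeta^<_{n+1}\circ\psi$ satisfies a Leibniz-type rule with respect to $\stackrel{<}{\sqcup}$: $Z^<$ is multiplicative and $\psi$ respects products. The sum over components in \eqref{eq:delta} satisfies the same rule, so the general case reduces to connected generators.
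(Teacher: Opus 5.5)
Your outline follows the paper's route:
\begin{enumerate}
\item surjectivity of $\varphi$;
\item \cite[Th.~1.1]{NSS22a} together with \cite[Th.~7.11]{CHM08} for the leading terms;
\item the Poincar\'e sphere contributing half the theta graph for the $Y^k$ factor;
\item the identity \eqref{eq:delta_delta}, namely $\delta^<\circ\varphi=\big(\varphi\otimes\frac12\big)\circ(\delta+\mathbb{Y})$ on $\calA_m(S)$.
\end{enumerate}
Your displayed reduction should be read in this last form: $\mathbb{Y}$ and the factor $\frac12$ are needed, and the NSS map is not defined on the $Y$-factor. Reducing further to connected generators, via the derivation property of $\zeta^<_{n+1}\circ\psi$ and of $\delta^<$ (Proposition~\ref{prop:derivation}), is a harmless variant.

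The problem is the one explicit computation you give in the matching step, which is where the actual content of the proof lies. In \eqref{eq:delta_i_bis} the coefficient of $D_i^U$ is $\fr(c_i)/2$, which you yourself computed to vanish on $\sigma(S)$. The value $\upsilon(c_i)=\frac12$ is the $U$-coefficient in the other formula \eqref{eq:delta_i}. It is absorbed precisely when $D_i^{II}(c_i',c_i'')$ is rewritten, via IHX and STU-like, as $\frac12D_i^{II}(c_i,c_i)+D_i^{\Yrev}(\dot c_i,\ddot c_i)$. Your list of three vertex terms therefore counts $\frac12D_i^U$ twice.

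By \eqref{eq:bubble} this is invisible on connected components of degree $\geq 2$. On a $Y$-component colored by $\sigma(a_1)<\sigma(a_2)<\sigma(a_3)$, however, it adds half the sum of three looped two-leg diagrams, which is nonzero in $\calA^<_2(H)\otimes\Q/\Z$. Nor is there a $U$-piece on the NSS side: their map is $\delta(D)=\sum_i\big(D_i^{II}+D_i^{\Yrev}\big)+\sum_{\{i,j\}}D_{ij}^Y$. Here $D_i^{\Yrev}$ carries the colors $c_i,c_i^*$, and $D_{ij}^Y$ runs over pairs of vertices with $c_i=c_j$. So the matching as you describe it does not close.

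The correct bookkeeping is the paper's \eqref{eq:delta_delta_bis}, for $E=\varphi(D)$.
\begin{enumerate}
\item First, $\frac12E_i^{II}(c_i,c_i)=\frac12\varphi(D_i^{II})$.
\item Next, by \eqref{eq:even_theta}, $E_i^{\Yrev}(\dot c_i,\ddot c_i)=\frac12E_i^{\Yrev}(c_i,c_i^*)$. Move the new $c_i^*$-leg to the position prescribed by $\varphi$: upward past the $a$-colored vertices if $c_i=a_k$, downward past the $b$-colored ones if $c_i=b_k$. By STU-like this produces exactly $\frac12\varphi(D_{ij}^Y)$ for each crossed vertex $j$ with $c_j=c_i$. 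All other crossings contribute $\omega(c_i^*,c_j)=0$, so no appeal to \eqref{eq:bubble} is needed there.
\item Finally, $\frac12\sum_C\delta_C(E)=\frac12\varphi(\mathbb{Y}(D))$, because $\fr$ vanishes on $\sigma(S)$ and the notation \eqref{eq:parallel_notation} then reduces to plain doubling.
\end{enumerate}
With this correction, your argument goes through as in the paper.
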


\begin{proof}
We shall deduce the commutativity of \eqref{eq:AGAA}
from \cite[Th$.$ 1.1]{NSS22a} 
by relating our map $\delta^<$
to the map that is denoted by $\delta+ \mathbb{Y}$ in \cite{NSS22a}.
Let us first recall the definition of the latter.

The module $\calA(S)$ of Jacobi diagrams 
colored by $S=\{a_1,\dots,a_g,b_1\dots,b_g\}$
is defined at \eqref{eq:A(S)}, 
and it corresponds 
to  $\calA^Y (\lfloor g^+\rceil \cup \lfloor g^-\rceil)$
in \cite{CHM08,NSS22a} 
(or, to be exact, it corresponds 
to its quotient by the swap relation).
We denote by $(x\mapsto x^*)$ the involution of $S$
that exchanges $a_k$ with $b_k$ for every $k\in \{1,\dots g\}$.
Then, following \cite{NSS22a}, we consider
the map $\delta:\calA(S) \to \calA(S) \otimes \Z_2$ defined by 
$$
\delta(D) := 
\sum_i \big(  D_i^{II} +  D_i^{\Yrev} \big)
+ \sum_{\{i,j\}}  D_{ij}^Y
$$
on every Jacobi diagram $D$.
Here, the  first sum ranges over external vertices $i$ of $D$
with color $c_i$,
the second sum ranges over pairs $\{i,j\}$ 
of external vertices  sharing the same color $c_i=c_j$,
and we have used the following notations
(which are ``argument-free'' versions of some notations 
from \S \ref{subsec:delta^<}):
\begin{align*}
D = \begin{array}{c} \begin{tikzpicture}[thick,baseline=-3.0em, x=1em, y=1em]
\footnotesize
 \draw ([shift={(0,5)}]-80:6) arc [radius=6, start angle = -80, end angle=-100];
 \draw [dotted]([shift={(0,5)}]-80:6) arc [radius=6, start angle = -80, end angle= -70];
 \draw [dotted]([shift={(0,5)}]-100:6) arc [radius=6, start angle = -100, end angle= -110];
 \draw [fill = black] (0,-1) circle (.3ex);
 \draw (0,-1)-- (0,-2.5) node [below] {$c_i$};
\end{tikzpicture} \end{array}
&\leadsto
\begin{array}{c}  \begin{tikzpicture}[thick,baseline=-3.0em, x=1em, y=1em]
\footnotesize
 \draw ([shift={(0,5)}]-80:6) arc [radius=6, start angle = -80, end angle=-100];
 \draw [dotted]([shift={(0,5)}]-80:6) arc [radius=6, start angle = -80, end angle= -70];
 \draw [dotted]([shift={(0,5)}]-100:6) arc [radius=6, start angle = -100, end angle= -110];
 \draw [fill = black] (-0.8,-1) circle (.3ex);
 \draw [fill = black] (0.8,-1) circle (.3ex);
 \draw (-0.8,-1)-- (-0.8,-2.4);
 \node [below] at (0,-2.4) {$c_i\quad c_i$};
 \draw (0.8,-1)-- (0.8,-2.4);
\end{tikzpicture} \end{array} =:  D_{i}^{II}
\end{align*}
\begin{align*}
D=
\begin{array}{c} \begin{tikzpicture}[thick,baseline=-3.0em, x=1em, y=1em]
\footnotesize
 \draw ([shift={(0,5)}]-80:6) arc [radius=6, start angle = -80, end angle=-100];
 \draw [dotted]([shift={(0,5)}]-80:6) arc [radius=6, start angle = -80, end angle= -70];
 \draw [dotted]([shift={(0,5)}]-100:6) arc [radius=6, start angle = -100, end angle= -110];
 \draw [fill = black] (0,-1) circle (.3ex);
 \draw (0,-1)-- (0,-2.5) node [below] {$c_i$};
\end{tikzpicture} \end{array}
&\leadsto
\begin{array}{c}
\begin{tikzpicture}[thick,baseline=-2.5em, x=1em, y=1em]
\footnotesize]
 \draw ([shift={(0,5)}]-80:6) arc [radius=6, start angle = -80, end angle=-100];
 \draw [dotted]([shift={(0,5)}]-80:6) arc [radius=6, start angle = -80, end angle= -70];
 \draw [dotted]([shift={(0,5)}]-100:6) arc [radius=6, start angle = -100, end angle= -110];
 \draw [fill = black] (0,-1) circle (.3ex);
 \draw [fill = black] (0,-1.5) circle (.3ex);
 \draw (0,-1)-- (0,-1.5);
 \draw (0.4,-2) -- (0,-1.5) -- (-0.4,-2);
\node[below] at (0,-2) {$\quad c_i \quad (c_i)^*$};
\end{tikzpicture} \end{array}  =:   D_i^{\Yrev}
\end{align*}
\begin{align*}
  D= 
\begin{array}{c}
\begin{tikzpicture}[thick,baseline=-3.0em, x=1em, y=1em]
\footnotesize
 \draw ([shift={(0,5)}]-80:6) arc [radius=6, start angle = -80, end angle=-100];
 \draw [dotted]([shift={(0,5)}]-80:6) arc [radius=6, start angle = -80, end angle= -70];
 \draw [dotted]([shift={(0,5)}]-100:6) arc [radius=6, start angle = -100, end angle= -110];
 \draw [fill = black] (0,-1) circle (.3ex);
 \draw (0,-1)-- (0,-2.5) node [below] {$c_i$};
 \begin{scope}[shift={(5,0)}]
 \draw ([shift={(0,5)}]-80:6) arc [radius=6, start angle = -80, end angle=-100];
 \draw [dotted]([shift={(0,5)}]-80:6) arc [radius=6, start angle = -80, end angle= -70];
 \draw [dotted]([shift={(0,5)}]-100:6) arc [radius=6, start angle = -100, end angle= -110];
 \draw [fill = black] (0,-1) circle (.3ex);
  \draw (0,-1)-- (0,-2.5) node [below] {$c_j$};
 \end{scope}
\end{tikzpicture}
\end{array}
&\leadsto
\begin{array}{c}
\begin{tikzpicture}[thick,baseline=-3.0em, x=1em, y=1em]
\footnotesize
 \draw ([shift={(0,5)}]-80:6) arc [radius=6, start angle = -80, end angle=-100];
 \draw [dotted]([shift={(0,5)}]-80:6) arc [radius=6, start angle = -80, end angle= -70];
 \draw [dotted]([shift={(0,5)}]-100:6) arc [radius=6, start angle = -100, end angle= -110];
 \draw [fill = black] (0,-1) circle (.3ex);
 \begin{scope}[shift={(5,0)}]
 \draw ([shift={(0,5)}]-80:6) arc [radius=6, start angle = -80, end angle=-100];
 \draw [dotted]([shift={(0,5)}]-80:6) arc [radius=6, start angle = -80, end angle= -70];
 \draw [dotted]([shift={(0,5)}]-100:6) arc [radius=6, start angle = -100, end angle= -110];
 \draw [fill = black] (0,-1) circle (.3ex);
 \end{scope}
 \draw (0,-1) to [out=-50,in=-130, looseness=0.5] (5,-1);
\draw (2.5,-1.6)-- (2.5,-2.5) node [below] {$c_i$};
\draw [fill = black] (2.5,-1.6) circle (.3ex);
\end{tikzpicture}
\end{array}
=: D_{ij}^Y
\end{align*}
As for the map $\mathbb{Y}:\calA(S) \to \calA(S) \otimes \Z_2$,
it is defined in \cite{NSS22a} 
by transforming any Jacobi diagram $D$
to the sum of all ways of duplicating
a $Y$-diagram in $D$.
(In fact, the swap relation is not considered in \cite{NSS22a}
but it is easily verified that $\delta$ and $\mathbb{Y}$
are compatible with this relation.)

First of all, we claim that, for every $m\geq 1$,
the following diagram is commutative:
\begin{equation} \label{eq:delta_delta}
\xymatrix{
\calA_m(S) \ar[d]_-{\delta + \mathbb{Y}}
\ar[r]^-\varphi & \calA^<_m(P) \ar[d]^-{\delta^<} \\
\calA_{m+1}(S) \otimes \Z_2 \ar[r]_-{\varphi \otimes \frac{1}{2}} 
& \calA^<_{m+1}(P) \otimes \Q/\Z
}
\end{equation}
Here $\varphi$ denotes the restriction
of the map $\varphi: \calA(S) \otimes \Z[Y]/\langle 2Y \rangle \to \calA^<(P)$
given by  Proposition~\ref{prop:varphi}.
Indeed, let $D \in \calA_m(S)$ be a Jacobi diagram:
recall that $E:=\varphi(D)$ is obtained from $D$
by transforming every color $u\in S$ 
to  $\sigma(u) \in P$ and declaring 
that all the vertices colored 
by $\{\sigma(a_1),\dots, \sigma(a_g)\}$
are lower than those colored 
by $\{\sigma(b_1),\dots, \sigma(b_g)\}$.
The identity
$$
\frac{1}{2} \varphi\big(\mathbb{Y}(D)\big) = \frac{1}{2} \sum_C \delta_C(E)
\quad \in \calA_{m+1}^<(P) \otimes {\Q}/{\Z}
$$
(where the sum ranges over 
the $Y$-diagram components $C$ of $E$) 
follows easily from the definitions, 
using that $\fr:P\to \Z_2$
vanishes on the color of any external vertex  of $E$.
(Indeed, 
according to \eqref{eq:fr}, we have $\fr(\sigma(u))=0$
for every $u\in S$.)
Hence, according to \eqref{eq:delta_i_bis},
the commutativity of \eqref{eq:delta_delta}
is equivalent to the identity
\begin{eqnarray} 
\notag && \frac{1}{2} \sum_i E_i^{II}(c_i,c_i) + \sum_i E_i^{\Yrev}(\dot c_i, \ddot c_i) \\
\label{eq:delta_delta_bis} &=&
\frac{1}{2}  \sum_i \big( \varphi(  D_i^{II}) + \varphi(  D_i^{\Yrev}) \big)
+ \frac{1}{2}  \sum_{\{i,j\}} \varphi(   D_{ij}^{Y}) 
\quad \in \calA_{m+1}^<(P) \otimes {\Q}/{\Z}.
\end{eqnarray}
Of course, in  \eqref{eq:delta_delta_bis}, every external vertex $i$ of $D$ (colored by $c_i\in S$)
corresponds to an external vertex $i$ of $E$
(colored by $\sigma(c_i)\in P$ 
with projection $p_*(\sigma(c_i))=c_i\in H$).
Clearly, we  have 
$$
\frac{1}{2} E_i^{II}(c_i,c_i) = \frac{1}{2} \varphi(  D_i^{II}).
$$
Besides, it follows from \eqref{eq:even_theta} 
and the STU-like relation that 
$$
E_i^{\Yrev}(\dot c_i, \ddot c_i)  = 
\frac{1}{2} E_i^{\Yrev}(c_i, (c_i)^*) =   
\frac{1}{2} \varphi\big(  D_i^{\Yrev} \big)
+ \frac{1}{2}  \left\{\begin{array}{ll}
 \sum_{j>i} \varphi\big(   D_{ij}^{Y}\big) & \hbox{if $c_i\in \{a_1,\dots, a_g\}$}, \\[1mm]
 \sum_{j<i} \varphi\big(   D_{ij}^{Y}\big)  & \hbox{if $c_i\in \{b_1,\dots, b_g\}$},
\end{array}\right.
$$
where we use the total order on the set of external vertices of $E=\varphi(D)$.
Thus, we have proved the commutativity of \eqref{eq:delta_delta}.

Since the map $\varphi: \calA(S) \otimes \Z[Y]/\langle 2Y \rangle \to \calA^<(P)$ is surjective,
the  commutativity of \eqref{eq:AGAA} 
will follow from the identity 
\begin{equation} \label{eq:goal}
\zeta^<_{n+1} \psi \varphi(D \otimes Y^k)   
=  \delta^<  \varphi(D \otimes Y^k)
\end{equation}
for any $m,k\geq 0$ with $n=m+k$  and any $D\in \calA_m(S)$, where we have made implicit the isomorphism $p_*$
between $\calA^<(P)\otimes \Q/\Z$ and  $\calA^<(H)\otimes \Q/\Z$.
We compute the left-hand side of \eqref{eq:goal}: we obtain 
\begin{eqnarray*}
&& \zeta^<_{n+1} \psi \varphi(D \otimes Y^k)   \\
&=&  \pm \ \zeta^<_{n+1} \psi \Big(\varphi(D) \stackrel{< }{\sqcup}  
\begin{tikzpicture}[thick,color=black,baseline=-1.0em, x=1em, y=1em]
\footnotesize
\draw (0,0) -- (1,-1);
\draw (0,0) -- (0,-1);
\draw (0,0) -- (-1,-1);
\draw [fill = black] (0,0) circle (.3ex);
\node[below=1pt] at (0,-1) {$s < s < s$};
\end{tikzpicture}
\stackrel{< }{\sqcup}   \cdots \stackrel{< }{\sqcup}  
\begin{tikzpicture}[thick,color=black,baseline=-1.0em, x=1em, y=1em]
\footnotesize
\draw (0,0) -- (1,-1);
\draw (0,0) -- (0,-1);
\draw (0,0) -- (-1,-1);
\draw [fill = black] (0,0) circle (.3ex);
\node[below=1pt] at (0,-1) {$s < s < s$};
\end{tikzpicture}\Big) \\
&=&  \pm \ Z^<_{n+1} \big(
\psi\varphi(D) \circ (U_G-U)^k \big) \!\!\mod 1 \\
&=&  \pm \ \hbox{deg$.$ ($n$+1) part of }
\Big(\varphi^\Q \widetilde{Z}^Y\big(
\psi\varphi(D)\big) \stackrel{< }{\sqcup}  \big(\varphi^\Q \widetilde{Z}^Y (U_G-U) \big)^k\Big) \!\!\mod 1 
\end{eqnarray*}
where $G$ denotes a $Y$-graph with three special leaves,
and the last identity uses Remark \ref{rem:bialgebra}.
Note that $U_G$ is the connected sum of $U$
and the Poincar\'e $3$-sphere, hence
$$
\varphi^\Q \widetilde{Z}^Y (U_G-U)  =   \pm \frac{1}{2} \thetagraph + (\deg \geq 3).
$$
So, we deduce from \cite[Th$.$ 7.11]{CHM08} 
and \cite[Th$.$ 1.1]{NSS22a} that
\begin{equation} \label{eq:lll}
\zeta^<_{n+1} \psi \varphi(D \otimes Y^k)   =
\left\{\begin{array}{ll}
0 & \hbox{if $k\geq 2$,} \\
\pm \frac{1}{2}\varphi (D) \stackrel{<}{\sqcup}   \thetagraph &  \hbox{if $k=1$,}\\
 \pm \frac{1}{2} \varphi \big((\delta+ \mathbb{Y}) (D)\big) &   \hbox{if $k=0$.}
\end{array}\right.
\end{equation}
In particular, since the above values in $\calA^<(P)\otimes \Q$ belong to $\calA^<(P)\otimes \frac{1}{2} \Z$,
the above sign ambiguities are not relevant, and we can remove them. 

We now compute  the right-hand side of \eqref{eq:goal}, 
observing firstly that
$$
\frac{1}{2}
\begin{tikzpicture}[thick,baseline=-1.0em, x=2em, y=2em]
\hspace{-4mm}
\footnotesize
\node[below=1pt] at (0,-1){\quad \quad   $ s \parallel s \quad  < \qquad s \parallel s \qquad  < \quad s \parallel s $};
 \draw [fill = black] (0,0) circle (.3ex);
 \draw (0,0)-- (-2.6,-1);
\draw (0,0)-- (0,-1);
\draw (0,0)-- (2.6,-1);
\fill [white] (0.4,-0.12) circle [radius=0.1];
\fill [white] (0.0,-0.3) circle [radius=0.1];
\fill [white] (0.8,-0.3) circle [radius=0.1];
\draw [fill = black] (0.8,0) circle (.3ex);
\draw (0.8,0)-- (-1.8,-1);
\draw (0.8,0)-- (0.8,-1);
\draw (0.8,0)-- (3.4,-1);
\end{tikzpicture} = \frac{1}{2} \thetagraph
\quad \in \calA^<(P) \otimes \Q/\Z.
$$
Hence, we obtain
\begin{eqnarray}
 \notag  \delta^< \varphi(D \otimes Y^k)   
&=&   \delta^< \Big(\varphi(D) \stackrel{< }{\sqcup}  
\begin{tikzpicture}[thick,color=black,baseline=-1.0em, x=1em, y=1em]
\footnotesize
\draw (0,0) -- (1,-1);
\draw (0,0) -- (0,-1);
\draw (0,0) -- (-1,-1);
\draw [fill = black] (0,0) circle (.3ex);
\node[below=1pt] at (0,-1) {$s < s < s$};
\end{tikzpicture}
\stackrel{< }{\sqcup}   \cdots \stackrel{< }{\sqcup}  
\begin{tikzpicture}[thick,color=black,baseline=-1.0em, x=1em, y=1em]
\footnotesize
\draw (0,0) -- (1,-1);
\draw (0,0) -- (0,-1);
\draw (0,0) -- (-1,-1);
\draw [fill = black] (0,0) circle (.3ex);
\node[below=1pt] at (0,-1) {$s < s < s$};
\end{tikzpicture}\Big) \\
\label{eq:rrr} &=& \left\{\begin{array}{ll}
0 & \hbox{if $k\geq 2$} \\
 \frac{1}{2}  \varphi (D) \stackrel{<}{\sqcup}   \thetagraph  & \hbox{if $k=1$} \\
 \delta^< \varphi(D)  & \hbox{if $k=0$}.
\end{array}\right.
\end{eqnarray}
By comparing \eqref{eq:lll} and \eqref{eq:rrr} using \eqref{eq:delta_delta}, we obtain \eqref{eq:goal}.
\end{proof}

\begin{remark}
The methods developed in \cite{NSS22a}, which are based on explicit computations of the LMO functor, can be extended to determine
$$
\zeta^<_{n+1}([U;C]) \in \calA_{n+1}^<(H)\otimes \Q/\Z
$$
for any graph clasper $C\subset U$.
Using this strategy, one obtains a direct proof of Theorem~\ref{th:psi_delta_Z} and Proposition~\ref{prop:delta}.
In addition, this viewpoint makes the role of the truncated even symplectic expansion \eqref{eq:even_theta} more transparent:
this actually coincides with the truncation of the symplectic expansion produced by the LMO functor \cite{Mas12}, 
once an even associator has been fixed.
(It is likely that a general even symplectic expansion
cannot always be expressed in the specific form \eqref{eq:even_theta}
for an appropriate choice of curves $(\alpha, \beta)$.)
\hfill $\blacksquare$
\end{remark}

\subsection{Properties of the map $\delta^<$}

The first property of $\delta^<$ is easily deduced
from its definition:

\begin{proposition} \label{prop:derivation}
The map $\delta^<$ is a derivation
of the algebra $\calA^<(P)$ 
in the $\calA^<(P)$-module $\calA^<(P)\otimes \Q/\Z$.
\end{proposition}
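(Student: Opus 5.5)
Since $\delta^<$ is already known to be well defined on $\calA^<(P)$ by Proposition~\ref{prop:delta}, it suffices to check the Leibniz identity
$$
\delta^<\big(D \stackrel{<}{\sqcup} E\big) = \delta^<(D)\stackrel{<}{\sqcup} E + D \stackrel{<}{\sqcup} \delta^<(E) \in \calA^<(P)\otimes \Q/\Z
$$
on generators, i.e. for Jacobi diagrams $D,E$. Here the right-hand side uses the obvious $\calA^<(P)$-bimodule structure on $\calA^<(P)\otimes\Q/\Z$ given by ordered disjoint union. Both sides are bilinear, so no further compatibility needs to be checked.

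First we dispose of looped edges. If $D$ or $E$ has a looped edge, then so does $D\stackrel{<}{\sqcup}E$. In that case the left-hand side vanishes by definition, and on the right-hand side one factor of each term vanishes: either $\delta^<(D)=0$ by definition, or $D=0$ by the self-loop relation of Remark~\ref{rem:few}. Note that $\delta^<$ applied to a diagram with a looped edge produces diagrams that still carry the looped edge.

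So we may assume neither diagram has a looped edge, and we use formula~\eqref{eq:delta}. The external vertices of $D\stackrel{<}{\sqcup}E$ are the disjoint union of those of $D$ and of $E$, with the vertices of $D$ below those of $E$. Likewise, its $Y$-diagram components are the disjoint union of those of $D$ and of $E$. Thus the sums in \eqref{eq:delta} split into a $D$-part and an $E$-part, and it remains to see that each summand is local:
$$
\delta_i\big(D\stackrel{<}{\sqcup}E\big)=\delta_i(D)\stackrel{<}{\sqcup}E \quad \text{for } i \text{ a vertex of } D,
$$
$$
\delta_C\big(D\stackrel{<}{\sqcup}E\big)=\delta_C(D)\stackrel{<}{\sqcup}E \quad \text{for } C \text{ a component of } D,
$$
and symmetrically for $E$. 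These hold because each modification ($D_i^{II}(c_i',c_i'')$, $D_i^U$, and the duplication $\delta_C$ with the parallel convention \eqref{eq:parallel_notation}) only alters a neighbourhood of the vertex $i$ or of the component $C$. The new external vertices are inserted at the position of the old ones in the total order, so they stay entirely within the ``$D$-block'' or the ``$E$-block''. The coefficients $\upsilon(c_i)$ and the Sweedler components of $\vartheta(p_*(c_i))$ depend only on the color $c_i$.

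The only delicate point is bookkeeping. The rewritten form \eqref{eq:delta_i_bis} or the bubble identity \eqref{eq:bubble} are not needed here; we work directly with the defining formula \eqref{eq:delta_i}, so there is no interaction between components. All terms have order $2$, so signs (e.g. from orientation conventions) are irrelevant. I expect no real obstacle: the statement is essentially the observation that $\delta^<$ is defined as a sum of local operations over vertices and components.
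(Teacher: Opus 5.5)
Your proposal is correct and follows the paper's proof. Both arguments split the sums in \eqref{eq:delta} for $D\stackrel{<}{\sqcup}E$ into a $D$-part and an $E$-part, and then use that each $\delta_i$ and $\delta_C$ is a local modification, so that $\delta_i(D\stackrel{<}{\sqcup}E)=\delta_i(D)\stackrel{<}{\sqcup}E$ and similarly for $E$. Your explicit handling of the looped-edge case, which the paper leaves implicit, is fine; however, the remark that $\delta^<$ ``produces diagrams that still carry the looped edge'' is unnecessary, since $\delta^<$ vanishes on such diagrams by definition.
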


\begin{proof}
Let $D,E\in \calA^<(P)$ be Jacobi diagrams.
By Proposition \ref{prop:delta}, we have
$$
\delta^<\big(D\stackrel{<}{\sqcup} E\big)
= \sum_{i} \delta_i(D\stackrel{<}{\sqcup}E)
+ \sum_{C} \frac{\delta_C (D\stackrel{<}{\sqcup}E)}{2}   
$$
where $i$ ranges over external vertices of 
$D\stackrel{<}{\sqcup}E$ and 
$C$ ranges over $Y$-components of $D\stackrel{<}{\sqcup}E$.
Clearly, we have
$$
\delta_i(D\stackrel{<}{\sqcup}E)
= \left\{\begin{array}{ll}
\delta_i(D)\stackrel{<}{\sqcup}E  & \hbox{if $i\in D$,}\\
D\stackrel{<}{\sqcup}\delta_i(E)  & \hbox{if $i\in E$,}
\end{array}\right.
$$
and, similarly, we have 
$$
\delta_C(D\stackrel{<}{\sqcup}E)
= \left\{\begin{array}{ll}
\delta_C(D)\stackrel{<}{\sqcup}E  & \hbox{if $C\subset D$,}\\
D\stackrel{<}{\sqcup}\delta_C(E)  & \hbox{if $C\subset E$.}
\end{array}\right.
$$
We deduce that 
$\delta^<\big(D\stackrel{<}{\sqcup} E\big)=
\delta^<(D) \stackrel{<}{\sqcup} E+
D\stackrel{<}{\sqcup} \delta^<(E).
$
\end{proof}

Recall that the map $\delta^<=\delta^<_\theta$
depends on the degree $\leq 2$ truncation 
of an even symplectic expansion $\theta$.
Consequently, $\delta^<: \calA^<(P) \to \calA^<(P) \otimes \Q/\Z$ \emph{is not} $\Sp(H)$-equivariant 
with respect to the natural action of $\Sp(H)$ on $P$ 
(see Lemma~\ref{lem:P}.(c)).
Yet, we can control the defect of $\Sp(H)$-equivariance
in terms of the truncated symplectic expansion.
To state this second property of $\delta^<$,
we  need a homomorphism
$$
\tau=\tau_\theta: \calM \longrightarrow 
 \Lambda^3(H^\Q) \rtimes  \Sp(H)
$$
which depends on (the degree $\leq 2$ truncation of) $\theta$
and encodes the action of the mapping class group $\calM=\calM(\Sigma)$ 
on the second nilpotent quotient $\pi/\Gamma_3 \pi$ of $\pi$.

Indeed recall from \cite{Mas12} that, 
using any symplectic expansion $\theta$,
one can turn the Dehn--Nielsen representation of $\calM$
(i.e., its canonical action on $\pi$)
 into a homomorphism
$$
\varrho^\theta:
\calM \longrightarrow \Aut_\omega(\widehat{\mathfrak{L}}^\Q)
$$
with values in the group of automorphisms
(which fix $\omega\in \mathfrak{L}_2^\Q$) 
of the degree-completed  free Lie algebra 
$\widehat{\mathfrak{L}}^\Q=\widehat{\mathfrak{L}}(H^\Q)$.
Then, for any $f\in \calM$, we set
$$ 
\tau(f) := \big(f_\circledast, f_* \big)
\in  \Lambda^3(H^\Q) \rtimes  \Sp(H)
$$
where $f_*\in \Sp(H)$ is the action induced by $f$
in homology, and $f_\circledast$ is the degree $1$
part of the symplectic derivation
$$
\log\big(\varrho^\theta(f)\circ f_*^{-1}\big)  
\in \Der_\omega\big(\widehat{\mathfrak{L}}^\Q\big),
$$
which is viewed as a trivector 
through the isomorphism \eqref{eq:symp_cond}.

\begin{remark}
The map $\tau$ is a variant of Morita's 
``extension of the Johnson homomorphism''
\cite{Mor93a}, and it  appears in \cite{MS20} 
under the notation $\widetilde{\mathsf{Ab}}^\theta_1$.
The restriction of $\tau$ to $\calI$ is
the first Johnson homomorphism $\tau_1$ recalled at \eqref{eq:tau_k}.
\hfill $\blacksquare$
\end{remark}
In order to formulate the equivariance defect of $\delta^<$, we also introduce the bilinear map
\[
\varsigma : \frac{1}{2}\Lambda^3 H \times \mathcal{A}^<(H) \longrightarrow
\mathcal{A}^<(H) \otimes \mathbb{Q}/\mathbb{Z} \simeq \mathcal{A}^<(P) \otimes \mathbb{Q}/\mathbb{Z}
\]
defined, for arbitrary $x,y,z \in H$ and any Jacobi diagram $D \in \mathcal{A}^<(H)$, by
\[
\varsigma\!\big(\tfrac{1}{2} \, x \wedge y \wedge z ,D\big) = 
\tfrac{1}{2} \sum_i \left(\omega(x,c_i)\, 
D_i^{\Yrev}(y,z) \;+\; \omega(y,c_i)\, D_i^{\Yrev}(z,x)
\;+\; \omega(z,c_i)\, D_i^{\Yrev}(x,y)\right)
\]
where the sum ranges over all external vertices $i$ of $D$ with color $c_i \in H$, and the notation $D_i^{\Yrev}$ is as in \eqref{eq:Yrev}.  
(One verifies without difficulty that $\varsigma$ is well-defined.)

\begin{lemma} \label{lem:equivariance}
Let $\theta$ be an even symplectic expansion of $\pi$.
Then, $\tau$ takes values 
in $ \frac{1}{2} \Lambda^3 H\rtimes \Sp(H)$
and, for any $f\in \calM$ and $D \in \calA^<(P)$, we have
\begin{equation} \label{eq:equivariance}
\delta^<(f_* D) = f_* \delta^<(D) 
+ \varsigma( f_\circledast, f_*p_* D)
\end{equation}
where we have denoted $\tau(f)= ( f_\circledast,f_*)$.
\end{lemma}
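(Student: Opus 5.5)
The plan is to make $\delta^<$ explicitly equivariant by transport of structure. Set $\theta^f := \varrho$-twisted expansion, i.e. the symplectic expansion $\theta' := f_*^{-1}\circ\theta\circ f_\sharp$, where $f_\sharp$ is the action of $f$ on $\pi$ and $f_*$ is extended to $\widehat{T}^\Q$. From the definitions we should have the naturality identity
$$
\delta^<_{\theta}(f_*D) = f_*\,\delta^<_{\theta'}(D)
$$
for every Jacobi diagram $D$. Indeed, each ingredient of \eqref{eq:delta} transforms naturally. The term $\vartheta_\theta(p_*f_*c_i)$ equals $f_*\vartheta_{\theta'}(p_*c_i)$ by construction. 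The term $\upsilon_\theta(f_*c_i)=\upsilon_{\theta'}(c_i)$, using that $\fr\circ f_*=\fr$ by the proof of Lemma \ref{lem:P}.(c) and that $\varpi$ is $\Sp(H)$-invariant. Finally, $\delta_C$ involves only $\fr$ and the STU-like and parallel conventions, which are $\Sp(H)$-equivariant.

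The first claim, that $\tau$ takes values in $\frac12\Lambda^3H\rtimes\Sp(H)$, then amounts to saying that $\theta'$ is again \emph{even}, together with the identification of $\theta'_{\leq 2}-\theta_{\leq 2}$ with $f_\circledast$ via Lemma \ref{lem:expansion}.(a). Here $\theta'$ is symplectic because $f$ fixes $\partial\Sigma$, and it is even because $f_*$ preserves $\frac12 H^{\otimes2}$. By definition of $\varrho^\theta$, the degree-$\leq 2$ difference between $\theta'$ and $\theta$ is given by the degree-one part of $\log(\varrho^\theta(f)\circ f_*^{-1})$, up to conjugation by $f_*$. Since both expansions are even, Lemma \ref{lem:expansion}.(c) forces this difference to lie in $\frac12\Lambda^3H$.

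The main step is to compute $\delta^<_{\theta'}-\delta^<_\theta$ for two even expansions differing by $d\in\frac12\Lambda^3H$, where $d$ is viewed in $H\otimes\mathfrak L_2$ via \eqref{eq:expand}. The $\delta_C$ terms do not depend on $\theta$. Only $\delta_i$ changes, through $\vartheta$ (which shifts by $d$ mod $1$) and through $\upsilon$ (which shifts by $\varpi(d(p_*c_i))$). For $d=\frac12 x\wedge y\wedge z$ we have $d(c)=\frac12(\omega(x,c)[y,z]+\omega(y,c)[z,x]+\omega(z,c)[x,y])$, and $\varpi(d(c))=\frac12\big(\omega(x,c)\omega(y,z)+\cdots\big)$. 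Using \eqref{eq:delta_i_bis}, the shift of $D_i^{II}$ by the bracket term $[u,v]$ becomes $D_i^{\Yrev}(u,v)$ via STU-like, up to a $U$-term $\omega(u,v)D_i^U$. Modulo $1$, this extra $U$-term should be exactly cancelled by the shift in $\upsilon(c_i)D_i^U$. What remains is $\sum_i$ of the $\Yrev$ terms, which is $\varsigma(d,p_*D)$. I expect this sign and cancellation bookkeeping of the $D^U$ terms to be the only delicate point; the rest is formal.

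Combining the naturality identity, this difference formula, and the $\Sp(H)$-equivariance of $\varsigma$ then yields \eqref{eq:equivariance}, once the shift between $\theta$ and $\theta'$ is correctly identified with $f_\circledast$, possibly after applying $f_*$. This identification will be checked against the conventions of \cite{Mas12}.
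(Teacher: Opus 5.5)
Your argument is correct, but it is organized differently from the paper's. The paper makes a single direct comparison. It expands $\delta^<(f_*D)-f_*\delta^<(D)$ using \eqref{eq:delta_i_bis}, and observes that the $\frac12 D_i^{II}(c_i,c_i)$, $\frac{\fr(c_i)}{2}D_i^U$ and $\delta_C$ contributions cancel by equivariance (using that $\fr$ is $\Sp(H)$-invariant). It then reads off the difference of the remaining $D_i^{\Yrev}$-terms from \eqref{eq:ell_ell}, namely $f_*\ell_2^\theta(x)+f_\circledast(f_*[x])=\ell_2^\theta(f(x))$; the same identity is what yields $f_\circledast\in\frac12\Lambda^3H$.

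You instead split the computation into two pieces:
\begin{itemize}
\item a naturality identity $\delta^<_\theta(f_*D)=f_*\delta^<_{\theta'}(D)$ for the transported expansion $\theta'=f_*^{-1}\circ\theta\circ f_\sharp$;
\item a change-of-expansion formula $\delta^<_{\tilde\theta}-\delta^<_\theta=\varsigma(d,p_*(-))$ for even expansions differing by $d\in\frac12\Lambda^3H$.
\end{itemize}
The second piece is a useful by-product: it makes explicit the remark after Proposition~\ref{prop:delta} on how $\delta^<_\theta$ depends on $\theta$. The price is checking that $\theta'$ is even symplectic and that $\vartheta$, $\upsilon$, $\delta_C$ transform naturally, which you do correctly.

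The identification you defer to \cite{Mas12} is immediate from $\varrho^\theta(f)\circ\ell^\theta=\ell^\theta\circ f$. This gives $\ell_2^{\theta'}(x)=\ell_2^\theta(x)+f_*^{-1}f_\circledast(f_*[x])$, i.e.\ $d=f_*^{-1}\cdot f_\circledast$. That conjugation is exactly what turns $f_*\varsigma(d,p_*D)$ into $\varsigma(f_\circledast,f_*p_*D)$ by equivariance of $\varsigma$. Signs are irrelevant, since $\varsigma$ takes $2$-torsion values.

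The $D_i^U$ bookkeeping you worry about disappears if you use \eqref{eq:delta_i_bis} throughout, because there $\theta$ enters only through $\lambda$ in the $D_i^{\Yrev}$-term. If you use \eqref{eq:delta_i} instead, the two $U$-contributions are both $\frac12\omega(x,c_i)\omega(y,z)D_i^U$ (plus cyclic permutations), so they cancel modulo $1$ as you expect. Note, however, that converting $D_i^{II}(u,v)-D_i^{II}(v,u)$ into $D_i^{\Yrev}(u,v)$ requires IHX in addition to STU-like.
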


\begin{proof}
Let $f\in \calM$: we view $f_\circledast \in \Lambda^3 H^\Q$ 
as an element of $\Hom(H^\Q,\frakL_2^\Q)$
via the isomorphism~\eqref{eq:symp_cond}. Then, by definition of $f_\circledast$, we have
$$
\big(\varrho^\theta(f) f_*^{-1}\big) (u) 
= u_1+u_2 + f_\circledast(u_1) + (\deg \geq 3)
$$
for any $u\in\frakL^\Q$ (whose degree $k$ part is denoted by $u_k$).
On the other hand, by definition of $\varrho^\theta(f)$, we have
$$
\varrho^\theta(f)\big(\ell^\theta(x)\big)= \ell^\theta(f(x)) 
= [f(x)] + \ell_2^\theta(f(x)) + (\deg \geq 3)
$$
for all $x\in \pi$. Hence, taking $u := f_* \ell^\theta(x)$ for any $x\in \pi$, we obtain
\begin{equation} \label{eq:ell_ell}
f_* \ell_2^\theta(x) + f_\circledast(f_*([x]))= \ell_2^\theta(f(x)) 
\ \in \frakL_2^\Q.    
\end{equation}
Since $\ell_2^\theta(\pi) \subset \frac{1}{2} \frakL_2$ by our assumption on $\theta$,
we obtain the first statement of the lemma.

We now prove \eqref{eq:equivariance} for any $f\in \calM$ and $D\in \calA_n^<(P)$. 
Using  \eqref{eq:delta} and \eqref{eq:delta_i_bis}, we get
\begin{eqnarray*}
\delta^<(f_* D)- f_* \delta^<(D)    &=&
\sum_i \Big((f_*D)_i^{\Yrev}(\dot e_i, \ddot e_i) -  f_*\big(D_i^{\Yrev}(\dot c_i, \ddot c_i)\big) \Big)\\
&& + \sum_i \frac{\fr(e_i)-\fr(c_i)}{2} f_*(D_i^U) 
+ \sum_C\Big( \frac{\delta_C(f_*D)}{2} - \frac{f_*\delta_C(D) }{2} \Big).
\end{eqnarray*}
The third sum ranges over $Y$-diagram components $C$ of $D$
(or, equivalently, of $f_*D$)
and it is zero because of the $\Sp(H)$-equivariance of $\fr$.
The second sum ranges over external vertices $i$ of $D$
(or, equivalently, of $f_*D$) with color $c_i\in P$.
We have set $e_i:=f_*(c_i) \in P$ 
and, by the $\Sp(H)$-equivariance of $\fr$,
the second sum  is zero too. We now look at the first sum:
here, for every external vertex $i$ of $D$,
we use as before Sweedler's notations:
$$
 \lambda(p_*(c_i))=[\dot c_i , \ddot c_i ] \in \mathfrak{L}_2 \otimes \Q/\Z ,
 \quad  \lambda(p_*(e_i))=[\dot e_i , \ddot e_i ]
\in \mathfrak{L}_2 \otimes \Q/\Z
$$
But, \eqref{eq:ell_ell} implies that 
$f_*([\dot c_i , \ddot c_i ])-[\dot e_i , \ddot e_i ]
= -f_\circledast(p_*(e_i))$. Thus the first sum is equal to 
$$
\sum_i \sum_j  (f_*D)_i^{\Yrev}(u_{i,j},v_{i,j})
\quad \hbox{where } 
f_\circledast(p_*(e_i)) = \sum_j [u_{i,j},v_{i,j}]
$$
which proves property \eqref{eq:equivariance}.
\end{proof}

Lemma \ref{lem:equivariance} implies
that an even symplectic expansion $\theta$ induces a $1$-cocycle
\begin{equation} \label{eq:tau_reduced}
\Sp(H) \simeq \calM/\calI \longrightarrow
\big(\tfrac{1}{2} \Lambda^3 H\big)\big/ \Lambda^3 H, \
\{ f \} \longmapsto \{f_\circledast\}.
\end{equation}
Thus, we get an action of  $\Sp(H)$
on the abelian group 
$\calA^<(P) \oplus \big(\calA^<(P) \otimes \Q/\Z\big)$ 
defined by
\begin{equation} \label{eq:Sp-action}
\{f\} \cdot (D,E) := 
\big(f_*D,  f_* E + \varsigma(f_\circledast,f_*p_*D)\big),
\end{equation}
for any $f\in \calM$,
$D \in \calA^<(P)$ and $E \in \calA^<(P)\otimes \Q/\Z$.

\begin{proposition} \label{prop:equivariance}
Let $\theta$ be an even symplectic expansion.
The homomorphism
$$
\xymatrix{
\calA^<(P) \ar[rr]^-{(\id,\delta^<)} &&
\calA^<(P) \, \oplus \,  \big(\calA^<(P) \otimes \Q/\Z\big)
}
$$
is $\Sp(H)$-equivariant with respect to
the action defined by \eqref{eq:Sp-action}.
\end{proposition}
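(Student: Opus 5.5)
The plan is to reduce the statement to Lemma \ref{lem:equivariance}, once we know that formula \eqref{eq:Sp-action} really defines an action of $\Sp(H)$. The equivariance itself is then immediate. For $f\in\calM$ and $D\in\calA^<(P)$, the element $\{f\}\cdot(D,\delta^<(D))$ is by definition
$$
\big(f_*D,\ f_*\delta^<(D)+\varsigma(f_\circledast,f_*p_*D)\big),
$$
and \eqref{eq:equivariance} says that the second component is exactly $\delta^<(f_*D)$. Hence
$$
\{f\}\cdot(\id,\delta^<)(D)=(\id,\delta^<)(f_*D),
$$
and since $f_*D$ only depends on $\{f\}\in\Sp(H)$ by Lemma \ref{lem:P}.(c), this is the required equivariance.

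The real content is in the two well-definedness checks for \eqref{eq:Sp-action}.

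\emph{Independence of the representative $f$.} For $h\in\calI$, the element $h_\circledast$ is the first Johnson homomorphism of $h$. So $h_\circledast\in\Lambda^3H$, and $\varsigma(h_\circledast,-)=0$, because every term of $\varsigma(\tfrac12 x\wedge y\wedge z,D)$ becomes integral once the $\tfrac12$ is absorbed. Using the cocycle property (next point) for $fh$, the value $\{f_\circledast\}$ in $(\tfrac12\Lambda^3H)/\Lambda^3H$ depends only on $\{f\}$. This is the $1$-cocycle \eqref{eq:tau_reduced}.

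\emph{The action axiom.} Since $\tau$ is a homomorphism into the semidirect product, we have
$$
(fg)_\circledast=f_\circledast+f_*(g_\circledast).
$$
Composing the two actions gives
$$
\{f\}\cdot(\{g\}\cdot(D,E))=\Big(f_*g_*D,\ f_*g_*E+f_*\varsigma(g_\circledast,g_*p_*D)+\varsigma(f_\circledast,f_*g_*p_*D)\Big).
$$
We therefore need the naturality identity
$$
f_*\varsigma(t,D')=\varsigma(f_*t,f_*D').
$$
This follows from the definition of $\varsigma$, because $\omega$ is $\Sp(H)$-invariant and $f_*(D_i^{\Yrev}(y,z))=(f_*D)_i^{\Yrev}(f_*y,f_*z)$. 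Combined with the additivity of $\varsigma$ in its first argument, this gives
$$
f_*\varsigma(g_\circledast,g_*p_*D)+\varsigma(f_\circledast,f_*g_*p_*D)=\varsigma\big((fg)_\circledast,(fg)_*p_*D\big),
$$
which is the action axiom. The identity element acts trivially, since $\id_\circledast=0$.

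I expect the only delicate point to be the first check: confirming that $\varsigma$ vanishes on integral trivectors and that the cocycle is well defined modulo $\Lambda^3H$. Both reduce to Lemma \ref{lem:equivariance} and the integrality of $\tau$ on $\calI$.
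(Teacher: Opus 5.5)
Your proposal is correct and matches the paper, which deduces the proposition directly from \eqref{eq:equivariance} exactly as in your first paragraph. Your extra checks that \eqref{eq:Sp-action} is a well-defined action spell out what the paper only asserts in the paragraph before the statement: integrality of $\tau_1$ on $\calI$, the cocycle identity $(fg)_\circledast=f_\circledast+f_*(g_\circledast)$, and the naturality of $\varsigma$.
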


\begin{proof}
This follows easily from \eqref{eq:equivariance}.
\end{proof}

We conclude this section 
with a direct  consequence of 
Proposition \ref{prop:equivariance}.
Recall that $\bqm\calA^<(H)\eqm$ denotes the natural image of 
$\calA^<(H)$ in $\calA^<(H)\otimes \Q$.
It is easily seen that \eqref{eq:Sp-action} induces an 
action of $\Sp(H)$ on 
$\bqm \calA^<(H) \eqm \oplus\,  
\big(\calA^<(H)\!\otimes\!\Q/\Z\big)$.

\begin{corollary} \label{cor:equivariance}
Let $\theta$ be an even symplectic expansion 
of the type \eqref{eq:even_theta}.
Then, for any $n\geq 1$,  the homomorphism
$$
\xymatrix{
{\displaystyle\frac{F_n\Z[\calIC]}{F_{n+1}\Z[\calIC]} }
\ar[rr]^-{(Z_n^<,\zeta^<_{n+1})}  
&& \hbox{``\!$\calA^<_n(H)$\!''}
\, \oplus \,  \big(\calA_{n+1}^<(H)\!\otimes\!\Q/\Z\big)
}
$$
is $\Sp(H)$-equivariant.
\end{corollary}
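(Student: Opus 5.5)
The plan is to deduce the corollary from Proposition~\ref{prop:equivariance}, transporting its equivariance along the surjection $\psi$. By Theorem~\ref{th:psi}, $\psi:\calA^<_n(P)\to \Gr^F_n\Z[\calIC]$ is surjective and $\Sp(H)$-equivariant. Hence it suffices to show two things. First, the composite $(Z^<_n,\zeta^<_{n+1})\circ\psi$ equals the composite of $(\id,\delta^<)$ with the map
\[
p_*\oplus p_*:\ \calA^<(P)\oplus\big(\calA^<(P)\otimes\Q/\Z\big)\longrightarrow \bqm\calA^<(H)\eqm\oplus\big(\calA^<(H)\otimes\Q/\Z\big),
\]
where the first summand takes the image in $\calA^<(H)\otimes\Q$. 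Second, this map is $\Sp(H)$-equivariant for the actions \eqref{eq:Sp-action} on both sides.

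For the first claim, compare components separately. The first component is $Z^<_n\circ\psi = p_*$ into $\bqm\calA^<_n(H)\eqm$: this is the commutative triangle \eqref{eq:psi_Z}, since the rationalization of $\psi$ only depends on $p_*$ of the colors. The second component is $\zeta^<_{n+1}\circ\psi = p_*\circ\delta^<_\theta$, which is exactly Theorem~\ref{th:psi_delta_Z}. Here we use that $\theta$ is of type \eqref{eq:even_theta}; this is the hypothesis of the corollary, and it is precisely the hypothesis under which Theorem~\ref{th:psi_delta_Z} applies.

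For the second claim, the action \eqref{eq:Sp-action} twists the second component by $\varsigma(f_\circledast,f_*p_*D)$, which depends on $D$ only through $p_*D$. So $p_*\oplus p_*$ intertwines the action on the source with the induced action on $\bqm\calA^<(H)\eqm\oplus(\calA^<(H)\otimes\Q/\Z)$. This uses that $p_*$ is $\Sp(H)$-equivariant and that, on the $\Q/\Z$ part, $p_*$ is the isomorphism of Remark~\ref{rem:Q/Z}.

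Granting these claims, the conclusion follows formally. For $x=\psi(D)$ and $f\in\calM$, Theorem~\ref{th:psi} gives $f_*x=\psi(f_*D)$. Then Proposition~\ref{prop:equivariance} together with the intertwining above yields $(Z^<_n,\zeta^<_{n+1})(f_*x)=\{f\}\cdot(Z^<_n,\zeta^<_{n+1})(x)$. Well-definedness of the induced action on the quotient $\bqm\calA^<(H)\eqm$ was already noted before the statement.

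The only real subtlety is checking that the twisted action descends correctly to the quotient by torsion, i.e., that $\varsigma(f_\circledast,-)$ is defined on $\bqm\calA^<(H)\eqm$. I expect this is the main point to verify. The approach is to note that $\varsigma(\tfrac12 t,D)$ is $\tfrac12$ times an integral expression in $D$. Since the expression is linear in $D$, it kills torsion elements of odd order, and for $2$-power torsion one checks via the given formula. If needed, one can sidestep the issue by stating equivariance before quotienting.
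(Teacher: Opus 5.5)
Your argument is exactly the paper's proof: you obtain the triangle $(Z^<_n,\zeta^<_{n+1})\circ\psi=(p_*,\delta^<)$ from \eqref{eq:psi_Z} and Theorem~\ref{th:psi_delta_Z}, then combine it with Proposition~\ref{prop:equivariance} and the surjectivity and $\Sp(H)$-equivariance of $\psi$. The descent worry in your last paragraph needs no odd/$2$-power case split. By the STU-like relation, $\varsigma(\tfrac12 t,-)$ is $\tfrac12$ times the commutator with the $Y$-diagram representing $t\in\Lambda^3 H$, which is an integral endomorphism of $\calA^<(H)$, and every torsion element of $\calA^<(H)$ becomes zero once tensored with the divisible group $\Q/\Z$.
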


\begin{proof}
According to \eqref{eq:psi_Z}
and \eqref{eq:AGAA}, we have the commutative diagram
$$
\xymatrix{
\calA_n^<(P) \ar[r]^-\psi
\ar[rd]_-{(p_*,\delta^<)}
&\frac{F_n\Z[\calIC]}{F_{n+1}\Z[\calIC]} 
\ar[d]^-{(Z_n^<,\zeta^<_{n+1})}
\\
& \bqm\calA^<_n(H)\eqm
\, \oplus \,  \big(\calA_{n+1}^<(H)\!\otimes\!\Q/\Z\big),
}
$$
where the isomorphism 
$p_*$ between $ \calA^<(H) \otimes\Q/\Z$
and $ \calA^<(P) \otimes\Q/\Z$ is implicit.
Thus, the result follows
from Proposition \ref{prop:equivariance}
and the surjectivity of $\psi$.
\end{proof}

\begin{remark}
Proposition \ref{prop:derivation} for our map $\delta^{<}$ is the analogue of \cite[Prop.~3.3]{NSS22a} for the map $\delta$ defined there. 
By contrast, an analogue of Proposition~\ref{prop:equivariance} seems to be  difficult to obtain for $\delta$; 
this is one of the motivations 
for introducing our map $\delta^{<}$. \hfill $\blacksquare$
\end{remark}

\section{Order two torsion in odd degrees}
\label{sec:odd_degrees}

In this section, we consider the order $2$ torsion
in the Lie algebra of homology cylinders  $\Gr^Y\!\!\calIC$ 
arising from surgeries along symmetric graph claspers.

\subsection{The space of beaded rooted Jacobi diagrams}

A Jacobi diagram $D$
is \emph{beaded}  if it comes with a group homomorphism
$H_1(D) \to \Z_2$. Graphically, the homomorphism
is encoded by decorating some edges of $D$ with beads \bead 
representing the generator of $\Z_2$: the value of the homomorphism on a cycle is the mod-2 sum of the beads encountered along that cycle.
A Jacobi diagram $D$ is \emph{rooted} 
if one external vertex (the \emph{root})
is distinguished from the others (the \emph{leaves}):
graphically, the root is shown with a~$\star$ on figures.

\begin{example}
\hphantom{.}\\
\begin{minipage}{0.2\textwidth}
$D:=\begin{array}{c}
 \tikz[baseline=-2pt, scale=0.8]{
  \draw[thick, color=black] (0,0) circle (0.5);
  \draw[thick, color=black] (-0.5,0) -- (0.5,0);
\draw[thick, color=black] (0,0.5) -- (0,0.85);
\draw [thick] (-0.5*0.707,-0.5*0.707) to [out=-140,in=90] (-0.5,-0.85);
\draw [thick] (0.5*0.707,-0.5*0.707) to [out=-50,in=90] (0.5,-0.85);
  \draw[black, fill=black] (-0.5,0) circle (0.4ex);
  \draw[black, fill=black] (0.5,0) circle (0.4ex);
  \draw[black, fill=black] (0.5*0.707,-0.5*0.707) circle (0.4ex);
  \draw[black, fill=black] (-0.5*0.707,-0.5*0.707) circle (0.4ex);
  \draw[black, fill=black] (0,0.5) circle (0.4ex);
  \draw[black, fill=lightgray] (0,0) circle (0.6ex);
   \draw[black, fill=lightgray] (0,-0.5) circle (0.6ex);
    \draw[black, fill=lightgray] (0.5*0.707,0.5*0.707) circle (0.6ex);
   \draw[black, fill=lightgray] (-0.5*0.707,0.5*0.707) circle (0.6ex);
\node[above =1pt] at (0,0.75) {$\star$};
}
\end{array}$
\end{minipage}
\begin{minipage}{0.785\textwidth}
In this beaded rooted Jacobi diagram $D$,
the homomorphism $H_1(D) \to \Z_2$ is trivial 
on the lower loop, and non-trivial on the upper loop.
\hfill $\blacksquare$
\end{minipage}
\end{example}

Let $P^{(2)} := P\otimes \Z_2 = 
H_1(\operatorname{F}(U);\Z_2)$.
We consider the $\Z_2$-vector space
$$
\calB^{<} \big(P^{(2)}\big) = 
\frac{\Z \left\{\parbox{3.5in}{\centering 
\text{beaded rooted connected Jacobi diagrams with }
\text{$P^{(2)}$-colored external vertices 
and totally-ordered leaves}}\right\}}{
\text{AS, IHX, multilinearity, self-loop, BSTU-like,  group, fully-special}}.
$$ 
Here ``AS'', ``IHX'', ``multilinearity'' and ``self-loop'' 
are as before 
(with the requirement that no bead should appear in the pictures defining locally those relations);
the BSTU-like relation 
is the following variant of the STU-like relation: \\
$\begin{tikzpicture}[baseline=-0.9em, x=2em, y=2em]
\footnotesize
\draw[thick,dotted] (0,0.5) -- (0,0);
\draw [thick] (0,0) -- (0,-1);
\draw[thick,dotted] (1,0.5) -- (1,0);
\draw [thick] (1,0) -- (1,-1);
\node[below=1pt] at (0.5,-1) {$\cdots<x<y<\cdots$};
\end{tikzpicture}
-
\begin{tikzpicture}[baseline=-0.9em, x=2em, y=2em]
\footnotesize
\draw[thick,dotted] (0,0.5) -- (0,0);
\draw[thick,dotted] (1,0.5) -- (1,0);
\draw [thick] (0,0) to [out=-90,in=90] (1,-1);
\fill [white] (0.5,-0.5) circle [radius=0.2];
\draw [thick] (1,0) to [out=-90,in=90] (0,-1);
\node[below=1pt] at (0.5,-1) {$\cdots<y<x<\cdots$};
\end{tikzpicture}
= {\hbox{\footnotesize $\omega\big(p_*(x),p_*(y)\big)$\ }} 
\Bigl(\begin{tikzpicture}[baseline=-0.9em, x=2em, y=2em]
\draw[thick, dotted] (0,0.5) -- (0,0);
\draw[thick, dotted] (1,0.5) -- (1,0);
\draw [thick] (0,0) -- (0,-0.5);
\draw [thick] (1,0) -- (1,-0.5);
\draw [thick] (0,-0.5) to [out=-90,in=-90] (1,-0.5) ;
\node[below=1pt] at (-0.3,-0.8) {$\cdots$};
\node[below=1pt] at (1.4,-0.8) {$\cdots$};
\end{tikzpicture}
+\begin{tikzpicture}[baseline=-0.9em, x=2em, y=2em]
\draw[thick,dotted] (0,0.5) -- (0,0);
\draw[thick, dotted] (1,0.5) -- (1,0);
\draw [thick] (0,0) -- (0,-0.5);
\draw [thick] (1,0) -- (1,-0.5);
\draw [thick] (0,-0.5) to [out=-90,in=-90] (1,-0.5) ;
\node[below=1pt] at (-0.3,-0.8) {$\cdots$};
\node[below=1pt] at (1.4,-0.8) {$\cdots$};
\draw[black, fill=lightgray] (0.5,-0.8) circle (0.6ex);
\end{tikzpicture}\Bigr)$\\
for any $x,y\in P^{(2)}$;
``group'' is the set of relations telling that beads 
encode a group homomorphism in $\Z_2$:\\
$\begin{tikzpicture}[baseline=-0.25em, x=2em, y=2em]
\draw[thick, dotted] (0,0) -- (0.5,0);
\draw [thick] (0.5,0) -- (2,0);
\draw[thick, dotted] (2,0) -- (2.5,0);
\draw[black, fill=lightgray] (1,0) circle (0.6ex);
\draw[black, fill=lightgray] (1.5,0) circle (0.6ex);
\end{tikzpicture}
\quad=\quad
\begin{tikzpicture}[baseline=-0.25em, x=2em, y=2em]
\draw[thick, dotted] (0,0) -- (0.5,0);
\draw [thick] (0.5,0) -- (2,0);
\draw[thick, dotted] (2,0) -- (2.5,0);
\end{tikzpicture}$,\qquad
$\begin{tikzpicture}[baseline=-0.25em, x=2em, y=2em]
\draw[thick, dotted] (0,0) -- (0.5,0);
\draw [thick] (0.5,0) -- (1.5,0);
\node at (1.5,0) {\small $\bullet$};
\draw [thick] (2.1,0.6) -- (1.5,0) -- (2.1,-0.6);
\draw[thick, dotted] (2.1,0.6) -- (2.4,0.9);
\draw[thick, dotted] (2.1,-0.6) -- (2.4,-0.9);
\draw[black, fill=lightgray] (1,0) circle (0.6ex);
\end{tikzpicture}
=
\begin{tikzpicture}[baseline=-0.25em, x=2em, y=2em]
\draw[thick,dotted] (0,0) -- (0.5,0);
\draw [thick] (0.5,0) -- (1.5,0);
\node at (1.5,0) {\small $\bullet$};
\draw [thick] (2.1,0.6) -- (1.5,0) -- (2.1,-0.6);
\draw[thick, dotted] (2.1,0.6) -- (2.4,0.9);
\draw[thick, dotted] (2.1,-0.6) -- (2.4,-0.9);
\draw[black, fill=lightgray] (1.8,0.3) circle (0.6ex);
\draw[black, fill=lightgray] (1.8,-0.3) circle (0.6ex);
\end{tikzpicture}$,\\
$\begin{tikzpicture}[baseline=-0.9em, x=2em, y=2em]
\draw[thick, dotted] (0,0.5) -- (0,0);
\draw [thick] (0,0) -- (0,-1);
\node[below=1pt] at (0,-1) {\footnotesize $\cdots<x<\cdots$};
\draw[black, fill=lightgray] (0,-0.5) circle (0.6ex);
\end{tikzpicture}
=\begin{tikzpicture}[baseline=-0.9em, x=2em, y=2em]
\draw[thick, dotted] (0,0.5) -- (0,0);
\draw [thick] (0,0) -- (0,-1);
\node[below=1pt] at (0,-1) {\footnotesize $\cdots<x<\cdots$};
\end{tikzpicture}$ \ (for $x\in P^{(2)}$), \quad 
$\begin{tikzpicture}[baseline=-0.9em, x=2em, y=2em]
\draw[thick, dotted] (0,-1.5) -- (0,-0.5);
\draw [thick] (0,-1) -- (0,0);
\node[below=1pt] at (0,0.5) {\footnotesize $r \star$};
\draw[black, fill=lightgray] (0,-0.5) circle (0.6ex);
\end{tikzpicture}
\quad = \quad 
\begin{tikzpicture}[baseline=-0.9em, x=2em, y=2em]
\draw[thick, dotted] (0,-1.5) -- (0,-0.5);
\draw [thick] (0,-1) -- (0,0);
\node[below=1pt] at (0,0.5) {\footnotesize $r \star$};
\end{tikzpicture}$ \ (for $r\in P^{(2)}$);\\[0.2cm]
lastly, ``fully-special'' sets to zero any allowable Jacobi diagram
 that has its root or all its leaves 
colored by the special element $s\in P^{(2)}$.

\begin{remark} \label{rem:H_is_enough}
By the fully-special relation, we can assume that 
the root of an allowable Jacobi diagram is colored by $H^{(2)}$
(instead of $P^{(2)}$). \hfill $\blacksquare$
\end{remark}

\subsection{Another surgery map}
\label{subsec:asm}

The following  will be proved by clasper calculus.

\begin{theorem} \label{th:Psi}
Let $n\geq 0$.
There is an $\Sp(H)$-equivariant homomorphism
$$
\Psi: \calB^<_n\big(P^{(2)}\big) 
\longrightarrow 
\Tors_2\Big(\frac{Y_{2n+1} \calIC}{Y_{2n+2}}\Big).
$$
\end{theorem}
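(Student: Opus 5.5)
Given a beaded rooted connected Jacobi diagram $D$ of degree $n$, I will build a graph clasper $\widetilde C(D)$ in $U$ of degree $2n+1$ and set $\Psi(D):=\{U_{\widetilde C(D)}\}$. The clasper is a topological realization of a ``doubled'' diagram. Take the $2$-fold cover $\widetilde D\to D$ determined by the homomorphism $H_1(D)\to\Z_2$ encoded by the beads. Then cut it open at the root: the two lifts of the root edge are joined to a single new trivalent vertex, whose third edge carries a leaf of color given by the root. This yields a connected diagram of degree $2n+1$ with an involution, and its external vertices are the two lifts of each leaf together with the root-leaf.

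I realize this diagram as in \S\ref{subsec:surgery}, with two specific choices. First, the two lifts of each leaf are parallel copies, so their leaves have the same type in $P$, lifting the given color in $P^{(2)}$. Second, the ordering is such that the two parallel copies are adjacent in height, with the order of the pairs following the order of the leaves. A bead on an edge is realized by a half-twist in that edge between the two sheets, or equivalently by crossing the two parallel edge-bands. By construction the resulting clasper is symmetric under the involution, up to a half-twist at the central node.

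The well-definedness has to be checked against the choices of lifts and the relations of $\calB^<_n(P^{(2)})$.

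\begin{itemize}
\item \textbf{Lifts of colors.} The class in $Y_{2n+1}\calIC/Y_{2n+2}$ depends only on $\psi$ of the doubled diagram in $\calA^{<,c}_{2n+1}(P)$, by Theorem~\ref{th:psi_connected}. Changing the lift of a color $x$ by $2y$ changes two parallel leaves simultaneously. By multilinearity and the symmetry this changes the class by twice a symmetric term. That term should vanish because of the AS relation at the central node: swapping the two sheets reverses orientation there, so each symmetric diagram $T$ satisfies $T=-T$.
\item \textbf{The same AS argument} shows directly that the image lies in $\Tors_2$.
\item \textbf{AS, IHX, multilinearity and self-loop} lift to pairs of the corresponding relations in the double, hence follow from the relations in $\calA^{<,c}(P)$.
\item \textbf{Group relations.} These hold because two half-twists cancel, and a half-twist at a trivalent vertex can be pushed through it. A bead next to a leaf can be absorbed by exchanging the two parallel leaves, which I handle by STU-like in the double. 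A bead next to the root is absorbed by the central node.
\item \textbf{Fully-special.} If the root is colored by $s$, the central node carries a special leaf and the result vanishes by the special relation. If all the leaves are colored by $s$, every leaf of the double is special, and clasper calculus (the argument in \cite[Lemma 4.9]{GGP01}) should make it trivial modulo $Y_{2n+2}$.
\end{itemize}

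The main obstacle is the BSTU-like relation. Exchanging two adjacent leaves $x<y$ in $D$ means exchanging two adjacent pairs $x\parallel x$ and $y\parallel y$ in the double. This requires four STU-like moves, producing $4\,\omega(x,y)$ gluing terms. These pair up under the involution into two orbit classes: one gluing a sheet to the same sheet and one gluing across sheets. Modulo $2$ and the symmetry, they reproduce exactly the doubles of the unbeaded and the beaded glued diagrams.

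For this step I will first work with a symplectic basis, via Proposition~\ref{prop:varphi}, and track the signs carefully. I also need to account for the framing correction in the notation \eqref{eq:parallel_notation} for parallel leaves, using $\fr$ and \eqref{eq:fr2}.

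Finally, the $\Sp(H)$-equivariance follows as in Theorem~\ref{th:psi}. A mapping class $f$ acts on the clasper by $f\times\id$, which preserves the parallelism of the leaves and the bead structure, and its action on colors factors through $\Sp(H)$ by Lemma~\ref{lem:P}.(c).
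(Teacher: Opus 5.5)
\textbf{There is a genuine gap: the framing correction on the twin leaves.} Your two ``specific choices'' for the twin leaves are incompatible, and the argument uses each of them where it is convenient.

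Framing-parallel copies $K_i',K_i''$ of a leaf satisfy $\operatorname{Lk}_\pm(K_i',K_i'')=\operatorname{Fr}(K_i)$. So they cannot be placed in disjoint horizontal slices unless $\operatorname{Fr}(K_i)=0$. By Lemma~\ref{lem:generalized_STU-like}, separating them costs $\fr(c_i)$ times the clasper in which the two twins are glued. Hence surgery along your clasper, which is the paper's $G(D;K_0,\dots,K_\ell)$, equals $\psi(E(D))$ with $E(D)=\sum_S\prod_{j\in S}\fr(c_j)\,\widetilde D_+^S$ as in Proposition~\ref{prop:Psi_psi}. It is not $\psi$ of the doubled diagram with order $c_i<c_i$, as your first bullet asserts.

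If you instead separate the twins, so that the class really is $\psi(\widetilde D_+)$, the map is \emph{not} multilinear at the leaves. Write $T(u,v)$ for the double whose lower twin is colored $u$ and upper twin $v$. The defect for a leaf colored $x+y$ is $T(x,y)+T(y,x)$. Deck symmetry plus STU-like only reduce this to $\omega(\bar x,\bar y)\,\widetilde D_+^{\{i\}}$, which is nonzero in general. For instance, take $n=1$ and $D$ the $Y$-diagram rooted at $h$ with leaves $x<y$, splitting the $y$-leaf. Then $\psi(\widetilde D_+^{\{y\}})=\Psi(\{h\otimes(s\wedge x)\})$, which is nonzero for $h,\bar x\neq 0$ by Proposition~\ref{prop:tors3}.

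This defect is exactly cancelled by the framing terms through \eqref{eq:fr2}, namely $\fr(x+y)=\fr(x)+\fr(y)+\omega(\bar x,\bar y)$. So multilinearity at a leaf does not ``lift to a pair of relations in the double''. It is precisely where the framing correction is needed. In the BSTU-like step, by contrast, the $\fr$-terms simply cancel between the two orderings, since a glued twin pair has no position. What is needed there is a second STU-like move, turning each involution orbit of singly glued terms into a doubly glued diagram, whence the factor $\omega^2=\omega$.

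\textbf{The fully-special check has the opposite problem.} Your argument for ``all leaves colored $s$'' is valid only for the separated version. Two framing-parallel copies of a $(-1)$-framed unknot form a Hopf link. So neither twin bounds a disk disjoint from the rest of the clasper, and the special-leaf lemma you invoke does not apply. In terms of $E(D)$, every summand with an $s$-colored vertex vanishes, but the fully glued diagram $\overline D_+$ survives with coefficient $\fr(s)^\ell=1$. It vanishes only because it is connected with a single external vertex (IHX and self-loop); the paper reaches this via leaf-splitting and Habiro's move~2.

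\textbf{How to repair it, and how it compares with the paper.} Fix $\Psi$ as $\psi\circ E$, equivalently surgery along the clasper with framing-parallel twins, and redo these two checks. Your diagrammatic route then works: it amounts to the well-definedness of $E$ proved in Proposition~\ref{prop:Psi_psi}. The paper instead proves Theorem~\ref{th:Psi} by checking each relation directly with clasper calculus (edge-sliding, leaf-splitting plus symmetry, Habiro's moves). It identifies $\Psi=\psi\circ E$ only afterwards.
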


\begin{proof}
We start by describing the  procedure 
to construct order $2$ elements in the module
${Y_{2n+1} \calIC}/{Y_{2n+2}}$ by surgery
along symmetric graph claspers. 
Next, we will analyze how the output surgery  depends 
on the initial data of the procedure.

Let us assume the following initial data:
\begin{itemize} \label{eq:data}
    \item a connected rooted beaded Jacobi diagram $D$
    of degree $n$, with $1+\ell$ external vertices 
    numbered from $0$ to $\ell$, the root corresponding to $0$;
    \item $1+\ell$ framed oriented knots in $U=\Sigma \times [-1,+1]$ denoted by $K_0, K_1, \dots, K_\ell$.
\end{itemize}
The homomorphism $H_1(D)\to \Z_2$ 
defined by the beads
induces a  (possibly disconnected) 
$2$-fold cover $\widetilde{D}$ of $D$.
Thus, the deck transformation 
defines a symmetry of~$\widetilde{D}$,
and every vertex of $D$ corresponds to a pair 
of \emph{twin}
(i.e$.$ symmetric) vertices in $\widetilde{D}$.
We \emph{augment} the Jacobi diagram $\widetilde{D}$
to $\widetilde{D}_+$  by gluing
the twin root vertices of $\widetilde{D}$
to two of the external vertices of a $Y$-diagram;
besides, $\widetilde{D}_+$ is \emph{rooted} 
at the free external vertex of the $Y$-diagram.
(That $\widetilde{D}_+$ is well-defined follows
from the symmetry of $\widetilde{D}$.)

We consider now  the abstract surface $S(\widetilde{D}_+)$
associated to the Jacobi diagram~$\widetilde{D}_+$
following ``Step 1'' of the procedure described 
in \S \ref{subsec:surgery}.
Then, we embed $S(\widetilde{D}_+)$
into the interior of $U=\Sigma \times [-1,+1]$
in such a way that the annulus of $S(\widetilde{D}_+)$
corresponding to the root represents $K_0$ and,
for any $i\in \{1,\dots,\ell\}$,
the two twin annuli of $S(\widetilde{D}_+)$
corresponding to the $i$-th external vertex of  $D$ 
represent two parallel copies $K'_i$ and $K''_i$
of the framed oriented knot $K_i$;
furthermore, we assume that $U$ has been subdivided 
into $1+\ell$ horizontal ``slices'' in which we see
$K_0$, $\{K'_1,K''_1\}$, \dots, $\{K'_\ell,K''_\ell\}$
successively (from bottom to top). Thus, 
we have obtained a graph clasper
\[G(D;K_0, K_1, \dots, K_\ell) \]
in $U=\Sigma \times [-1,+1]$ of degree $2n+1$. 
Here is a schematic picture:
\begin{equation} \label{eq:G}
\begin{tikzpicture}
  \node[anchor=north west] (img) at (0,0) {\includegraphics[scale = 1]{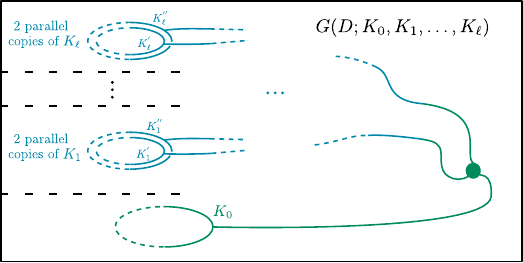}};
  \draw [decorate,decoration={brace,amplitude=10pt,mirror}]
     (-0.3,0) -- (-0.3,-4.6) node [midway,xshift=-1.2cm] {\shortstack{ $\Sigma \times [-1,1]$ \\ cut into \\ $1+l$ slices}};
\end{tikzpicture}
\end{equation}
Of course, $G(D;K_0, K_1, \dots, K_\ell)$ is ambiguously
defined, 
but the ``edge-sliding'' lemma in clasper calculus (see, for example \cite[Lemma A.1]{MM13})
implies that
$$
\Psi({D}; K_0, K_1, \dots, K_\ell)
:= \big\{U_G\big\} \
\in \frac{Y_{2n+1} \mathcal{IC}}{Y_{2n+2}}
\quad \hbox{with } G:= G(D;K_0, K_1, \dots, K_\ell)
$$
only depends on the combinatorics 
of the beaded rooted Jacobi diagram $D$
and on $K_0,K_1,\dots, K_\ell$. 
Furthermore, it is established below, in Lemma \ref{lem:psi_order_two}, that $\Psi({D}; K_0, K_1, \dots, K_\ell)$ 
is an element of order $2$.

We now prove that $\Psi$ induces a well-defined homomorphism on the module $\calB^<_n\big(P^{(2)}\big)$. The multilinearity relation with respect to a root is a standard fact in clasper calculus (via the “leaf-splitting” lemma, as formulated for instance in \cite[Lemma A.3]{MM13}); the multilinearity relation with respect to a leaf is proved below in Lemma \ref{lem:psi_splitting_leaf}. Observe that the multilinearity relation, combined with Lemma~\ref{lem:P}.(a), Habiro’s move 9, and Lemma \ref{lem:psi_order_two}, imply that $\Psi({D}; K_0, K_1, \dots, K_\ell)$ depends only on the types of $K_0, K_1, \dots, K_\ell$ in $P^{(2)}$. Equivalently, the datum of these framed oriented knots is the same as a coloring of the external vertices of $D$ by elements of $P^{(2)}$.

The group relations are clearly satisfied, since they merely encode the ambiguity arising in the description of a  homomorphism $H_1(D) \to \Z_2$ 
in terms of beads, which is equivalent to the specification of the $2$-fold cover $\widetilde{D}$ of $D$. The self-loop relation is also  satisfied, because surgery along a graph clasper of degree $r \geq 1$ that contains at least one looped edge does not alter the $Y_{r+1}$–equivalence class (as a consequence of \cite[Lemma 2.3]{GGP01}).

Next, the AS, IHX and BSTU-like relations are verified below in Lemmas \ref{lem:psi_AS}, \ref{lem:psi_IHX} and \ref{lem:BSTU}, respectively.  To complete the proof of the theorem, it remains to check the fully-special relation, for which we use the following fact: surgery along a graph clasper of degree $r \geq 2$ that has at least one special leaf does not change the $Y_{r+1}$–equivalence class (see, for instance, \cite[Lemma A.5]{MM13}). 
Hence, if $K_0$ is a $(-1)$–framed trivial knot, then we have 
$\Psi({D}; K_0, K_1, \dots, K_\ell)=0$ since the graph clasper $G(D;K_0, K_1, \dots, K_\ell)$ contains a special leaf. Furthermore, assume  that all $K_1, \dots, K_\ell$ are $(-1)$–framed trivial knots. Then, using the “leaf-splitting'' lemma together with Habiro’s move 2, we deduce from the above fact that
$$
\Psi({D}; K_0, K_1, \dots, K_\ell)=\psi(\overline{D}_+)
\in  \frac{Y_{2n+1} \mathcal{IC}}{Y_{2n+2}},
$$
where $\psi$ denotes here the ``usual'' surgery map of Theorem \ref{th:psi_connected}, and $\overline{D}_+$ is obtained from $\widetilde{D}_+$ by pairwise gluing  all its twin external vertices, and by coloring its root by $t(K_0)$. 
It follows from ``IHX'' and ``self-loop'' that 
$\overline{D}_+ =0 \in \calA^{<,c}_{2n+1}(P)$.
Consequently, $\Psi({D}; K_0, K_1, \dots, K_\ell)$ vanishes
in this case too.
\end{proof}

In the remainder of this subsection, we establish the auxiliary lemmas that we required for the proof of Theorem \ref{th:Psi}. Throughout, we continue to use the notation $D$ and $K_0, K_1, \dots, K_\ell$ introduced there for the input to the construction $\Psi(D; K_0, K_1, \dots, K_\ell)$.

\begin{lemma}[2-torsion] \label{lem:psi_order_two}
The element 
$\Psi({D}; K_0, K_1, \dots, K_\ell)$ has order $2$ 
in ${\displaystyle\frac{Y_{2n+1} \mathcal{IC}}{Y_{2n+2}}}$.
\end{lemma}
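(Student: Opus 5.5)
The plan is to show that $2\,\Psi(D;K_0,\dots,K_\ell)=0$ by exhibiting $U_G$ as its own inverse modulo $Y_{2n+2}$, using the deck symmetry of $\widetilde D$. Write $G:=G(D;K_0,\dots,K_\ell)$. Its shape is $\widetilde D_+$: a $Y$-graph, with root leaf $K_0$, whose two other edges are attached to the two twin copies $\widetilde D'$ and $\widetilde D''$ of the cover over the root. (If the beads are nontrivial these copies are glued along edges; in general they are the two lifts.) The deck involution exchanges these two edges at the top node of the $Y$, and it exchanges each pair of twin leaves $K'_i,K''_i$.

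Since the $K'_i,K''_i$ are parallel copies lying in the same horizontal slice, I expect the involution to be realized, up to isotopy in $U$ and edge-sliding (\cite[Lemma A.1]{MM13}), as an ambient symmetry that swaps the two twin sub-claspers. The clasper obtained this way has the same underlying surface. The only difference is that the cyclic order at the top trivalent node of the augmenting $Y$ is reversed, while the cyclic orders at all other nodes are carried to those of their twins and are therefore unchanged as a whole.

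By the AS relation in clasper calculus (a half-twist on an edge, or equivalently reversing a node orientation, inverts the surgery modulo $Y_{\deg+1}$; see \cite[\S 5]{HM12} or the half-twist used in the proof of Theorem~\ref{th:psi_connected}), this gives $\{U_G\}=\{U_G\}^{-1}$ in $Y_{2n+1}\calIC/Y_{2n+2}$. Hence $\Psi$ has order dividing $2$, which is the statement meant by the lemma.

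The main obstacle is justifying that swapping the twin halves is an honest isotopy, up to $Y_{2n+2}$, of the embedded graph clasper. The two halves are interleaved in every slice, so exchanging $K'_i$ and $K''_i$ within slice $i$ requires crossing changes between the parallel copies and between edges. I would handle this with the usual crossing-change moves: changing a crossing between edges or leaves of $G$ only adds claspers of degree $\geq 2n+2$ (Habiro's moves / \cite[Lemma 2.3]{GGP01}-type results), so these are negligible. I would also check that the framing and orientation of $K'_i$ versus $K''_i$ are equal by construction, so that swapping them introduces no change of type. A cleaner alternative, if this becomes messy, is to first cut along the top node with the leaf-splitting lemma. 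Then $\{U_G\}$ is a commutator-type element $[\,\cdot\,]$ of the two halves, each of degree $n$ glued through a degree-$1$ $Y$, and the symmetry shows it equals its own negative by antisymmetry of the bracket (cf.\ the Lie-map computation in Theorem~\ref{th:psi_connected}).
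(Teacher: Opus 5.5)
Your strategy is the paper's. By the AS relation, reversing the cyclic order at the top node of the augmenting $Y$ yields $-\Psi(D;K_0,\dots,K_\ell)$. Via the deck involution of $\widetilde D$, this reversed clasper is again an admissible realization of $G(D;K_0,\dots,K_\ell)$, with each twin pair $K'_i,K''_i$ assigned the other way round, so $\Psi=-\Psi$. The paper disposes of your ``main obstacle'' in one line. The construction of $G(D;K_0,\dots,K_\ell)$ is ambiguous anyway, and by the symmetry of $\widetilde D$ the reversed clasper is one of the allowed choices. Hence the edge-sliding argument behind the well-definedness of $\Psi$ already covers it.

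The justification you sketch for that step, however, rests on a false claim, which would break the argument if you relied on it. A crossing change between two \emph{leaves} of the same connected graph clasper is not negligible modulo $Y_{2n+2}$. By Lemma~\ref{lem:generalized_STU-like}, each leaf/leaf crossing change contributes $\pm U_H$. Here $H$ is obtained by deleting both leaves and gluing the edges, and it still has degree $2n+1$. For the twin leaves one has $\operatorname{Lk}(K'_i,K''_i)=\operatorname{Fr}(K_i)$. The resulting corrections are of the same type as the summands with $i\in S$ in \eqref{eq:E_def}, which are $2$-torsion but nonzero in general; for $n=1$, the terms $\Psi(h\otimes s\wedge y)$ are nonzero.

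So you must never pass $K'_i$ through $K''_i$. Two parallel copies of a framed knot can be interchanged by an isotopy supported near $K_i$ (rotate the normal disc by $\pi$), or you can simply invoke the freedom in the construction. Either way only edges of $G$ get moved, and edge crossing changes do produce claspers of degree $\geq 2n+2$.

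Your fallback via a commutator of ``the two halves'' should be dropped. For nontrivial beads $\widetilde D$ is connected, so there are no two separate sub-claspers to bracket.
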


\begin{proof}
By the usual AS relation in clasper calculus, and with reference to the picture \eqref{eq:G} representing 
\(G(D; K_0, K_1, \dots, K_\ell)\), we observe that
\[
-\Psi(D; K_0, K_1, \dots, K_\ell)
\in \frac{Y_{2n+1} \mathcal{IC}}{Y_{2n+2}}
\]
is precisely the \(Y_{2n+2}\)-equivalence class of the surgery along the graph clasper
\begin{center}
\includegraphics[scale = 1.2]{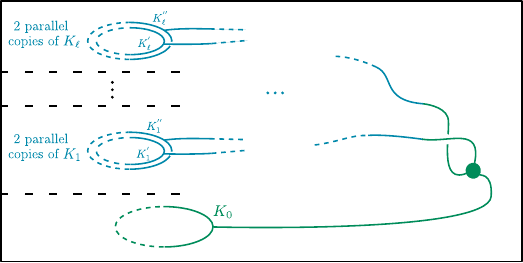}.
\end{center}
Besides,  the same arguments as those used to establish the well-definedness of 
\(\Psi(D; K_0, K_1, \dots, K_\ell)\), 
which relied on the symmetry of the Jacobi diagram  \(\widetilde{D}\), 
show that this graph clasper can also be used 
in the role of  $G(D; K_0, K_1, \dots, K_\ell)$. 
Consequently, \(\Psi(D; K_0, K_1, \dots, K_\ell)\) 
coincides with its additive inverse.
\end{proof}

\begin{lemma}[Multilinearity]  \label{lem:psi_splitting_leaf}
Let $i\in \{1,\dots, \ell\}$
and let $\check K_i$ be another choice of the framed oriented 
knot $K_i$ in $U$ (disjoint from $K_i$). Then, we have 
\begin{eqnarray*}
&&\Psi({D}; K_0, \dots ,K_i \# \check{K_i}, \dots, K_\ell)\\
&=& \Psi({D}; K_0, \dots, K_i, \dots, K_\ell) + \Psi({D}; K_0, \dots,  \check{K_i}, \dots, K_\ell).
\end{eqnarray*}
\end{lemma}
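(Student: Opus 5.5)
The plan is to reduce the statement to the leaf-splitting lemma of clasper calculus, applied twice, together with a commutation argument.

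\textbf{Step 1 (splitting the twin leaves).} In $G:=G(D;K_0,\dots,K_i\#\check K_i,\dots,K_\ell)$, the $i$-th vertex of $D$ gives two twin leaves. These are parallel copies $(K_i\#\check K_i)'$ and $(K_i\#\check K_i)''$, lying in the same horizontal slice. Applying the leaf-splitting lemma \cite[Lemma A.3]{MM13} to the leaf $(K_i\#\check K_i)'$ writes $U_G$, up to $Y_{2n+2}$-equivalence, as surgery along a union $G_1\cup G_2$ of two graph claspers of degree $2n+1$. These differ from $G$ in that the leaf is replaced by $K_i'$ and by $\check K_i'$, respectively. Splitting the twin leaf $(K_i\#\check K_i)''$ in each of $G_1$ and $G_2$ then produces four graph claspers $G_{ab}$, for $a,b\in\{K_i,\check K_i\}$. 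In $G_{ab}$ the two twins carry $a'$ and $b''$. Since everything has degree $2n+1$, the surgeries commute modulo $Y_{2n+2}$ by \eqref{eq:centrality}, and the $Y_{2n+2}$-class becomes a sum of four terms.

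\textbf{Step 2 (identifying the terms).} The diagonal terms $G_{K_iK_i}$ and $G_{\check K_i\check K_i}$ are, up to edge-sliding and isotopy within the slice, exactly $G(D;\dots,K_i,\dots)$ and $G(D;\dots,\check K_i,\dots)$. This uses the same ``edge-sliding'' ambiguity already controlled in the proof of Theorem~\ref{th:Psi}.

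The mixed terms $G_{K_i\check K_i}$ and $G_{\check K_iK_i}$ are no longer symmetric, so they are not of the form $\Psi(\cdots)$. However, the deck involution of $\widetilde D$, extended to $\widetilde D_+$ by swapping the two edges of the extra $Y$-diagram, exchanges them. I will therefore argue, exactly as in Lemma~\ref{lem:psi_order_two}, that the AS relation in clasper calculus converts $G_{\check K_iK_i}$ into the inverse of $G_{K_i\check K_i}$ modulo $Y_{2n+2}$. The realization of the swapped diagram differs from the original by a half-twist at the node of the extra $Y$. Since the two leaves sit in one slice and can be exchanged by an isotopy together with edge-sliding, the two mixed terms cancel.

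\textbf{Main obstacle.} The hardest point is that exchange: the two twin leaves $K_i'$ and $\check K_i''$ cannot in general be swapped by a plain isotopy, because they are linked with other strands. Changing their relative position costs crossing changes between leaves of the same clasper. By clasper calculus (Habiro's moves, the leaf-crossing lemma), each such crossing change contributes a degree-$(2n+2)$ clasper, which is trivial modulo $Y_{2n+2}$. I would first try to place $K_i$ and $\check K_i$ in disjoint sub-slices so that the swap is realized by moving one sub-slice past the other, while controlling the crossings with this degree argument. If that fails, I would instead pair each mixed term with a symmetric clasper representing $0$ via the self-loop or IHX relation, but I expect the AS-plus-symmetry argument to suffice.
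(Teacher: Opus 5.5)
Your Steps 1 and 2 are exactly the paper's argument. Leaf-splitting turns the left-hand side into four terms modulo $Y_{2n+2}$. The two diagonal terms are $\Psi(D;\dots,K_i,\dots)$ and $\Psi(D;\dots,\check K_i,\dots)$. The two mixed terms are exchanged by the deck involution of $\widetilde D$ (extended to $\widetilde D_+$), so they cancel by the AS relation in clasper calculus, as in Lemma~\ref{lem:psi_order_two}.

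Your ``main obstacle'' paragraph should be discarded: it misidentifies the issue and rests on a false claim.
\begin{itemize}
\item Nothing requires exchanging $K_i'$ with $\check K_i''$. After applying the involution to $G_{\check K_i K_i}$, you only need to replace the leaf $K_i''$ by its parallel copy $K_i'$, and $\check K_i'$ by $\check K_i''$. In the other slices you replace $K_j''$ by $K_j'$, which is the same ambiguity already handled in the well-definedness of $\Psi$.
\item Since $K_i$ and $\check K_i$ are disjoint, this is an isotopy supported in thin tubular neighborhoods of the two knots. It only drags edges, and crossing changes involving an edge are indeed free modulo $Y_{2n+2}$.
\item By contrast, a crossing change between two leaves of the same connected clasper of degree $2n+1$ is \emph{not} free modulo $Y_{2n+2}$. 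By Lemma~\ref{lem:generalized_STU-like}, it costs $\pm U_H$, where $H$ is obtained by fusing the two leaves and still has degree $2n+1$. This is precisely the mechanism behind the STU-like and BSTU-like relations.
\end{itemize}
So the fallback you describe would fail if it were actually needed; fortunately, it is not.
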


\begin{proof}
We can assume without loss of generality that $i=1$. 
By using the ``leaf-splitting'' lemma, we get 
\begin{align*}
    &\Psi({D}; K_0, K_1 \# \check{K_1}, \dots, K_\ell) - \Psi({D}; K_0, K_1, \dots, K_\ell) - \Psi({D}; K_0, \check{K_1}, \dots, K_\ell) \\ & \\ 
    =& \hspace{3mm}\raisebox{-8ex}{\includegraphics[scale = 0.6]{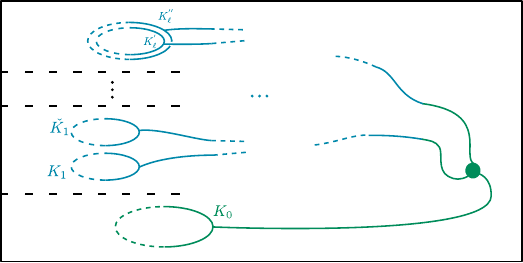}} \hspace{2mm} + \hspace{2mm}\raisebox{-8ex}{\includegraphics[scale = 0.6]{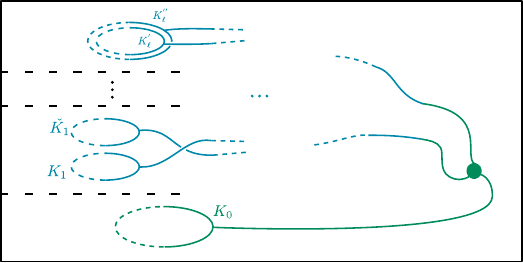}}.\\ 
\end{align*}
It follows from the symmetry of $\widetilde{D}$ 
that the above sum is zero: 
indeed, the symmetry 
swaps pairs of twin external vertices of $\widetilde{D}$,
and we conclude with the usual AS relation in clasper calculus.
\end{proof}

\begin{lemma}[AS]
\label{lem:psi_AS}
If $E$ is a beaded rooted Jacobi diagram differing
from $D$ by the orientation at an internal vertex, then
$$
\Psi({D}; K_0, K_1, \dots, K_\ell)
+ \Psi({E}; K_0, K_1, \dots, K_\ell) =0.
$$
\end{lemma}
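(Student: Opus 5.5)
We would prove the AS relation by comparing the graph claspers $G(D;K_0,\dots,K_\ell)$ and $G(E;K_0,\dots,K_\ell)$ directly, reducing to the usual AS relation of clasper calculus. Let $v$ be the internal vertex of $D$ at which the orientation is reversed. In the $2$-fold cover $\widetilde{D}$, the vertex $v$ has two twin lifts $\tilde v'$ and $\tilde v''$. Then $\widetilde{E}$ is obtained from $\widetilde{D}$ by reversing the cyclic orientation at \emph{both} $\tilde v'$ and $\tilde v''$, and the same holds for the augmented diagrams $\widetilde{D}_+$ and $\widetilde{E}_+$.

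Since the construction of $G(\cdot\,;K_0,\dots,K_\ell)$ depends on the abstract surface only up to the choices already absorbed by the edge-sliding lemma, we may embed $S(\widetilde{E}_+)$ so that it coincides with the embedded $S(\widetilde{D}_+)$ except near the two nodes corresponding to $\tilde v'$ and $\tilde v''$. Near each such node, the change of cyclic order can be realized by inserting a half-twist on one incident edge; this is the standard way the AS relation is realized topologically.

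Step 1 applies the usual AS relation in clasper calculus at $\tilde v'$. Inserting a half-twist on an edge of a connected graph clasper of degree $2n+1$ changes the $Y_{2n+2}$-class of the surgery to its inverse. Hence the surgery on $G$ with reversed orientation at $\tilde v'$ alone represents $-\Psi(D;K_0,\dots,K_\ell)$.

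Step 2 applies the same argument at $\tilde v''$, reversing the sign once more. We conclude that $\Psi(E;K_0,\dots,K_\ell)=(-1)^2\,\Psi(D;K_0,\dots,K_\ell)=\Psi(D;K_0,\dots,K_\ell)$.

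Step 3 invokes Lemma~\ref{lem:psi_order_two}: this element has order $2$, so $\Psi(D)+\Psi(E)=2\Psi(D)=0$, which is the claimed identity.

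One subcase needs separate care: $\tilde v'$ and $\tilde v''$ are distinct vertices of $\widetilde{D}$, but if the bead homomorphism is trivial on a neighbourhood they may lie in different components of $\widetilde{D}$. These components are then joined through the augmenting $Y$-diagram, so $\widetilde{D}_+$ is still connected and the sign argument applies verbatim.

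The main obstacle is justifying that applying the half-twist AS move twice keeps us within the allowed class of symmetric claspers. Alternatively, one can argue that after the two moves the resulting clasper is one of the admissible choices for $G(E;\dots)$; this rests on the same symmetry argument used in the proof of Lemma~\ref{lem:psi_order_two}. One should double-check that each AS move is performed up to $Y_{2n+2}$-equivalence and that the intermediate, non-symmetric clasper still has degree $2n+1$ and is connected, so that the clasper-calculus AS relation applies.
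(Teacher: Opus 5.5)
Your proposal is correct and is essentially the paper's proof: reversing the orientation at $v$ reverses it at both twin lifts in $\widetilde{D}_+$, so two applications of the usual AS relation of clasper calculus give $\Psi(E;K_0,\dots,K_\ell)=(-1)^2\,\Psi(D;K_0,\dots,K_\ell)$, and Lemma~\ref{lem:psi_order_two} then yields the claim. The ``obstacle'' you flag is not a real one: the clasper AS relation holds for any connected graph clasper of degree $2n+1$, symmetric or not, and the clasper obtained after the two moves is itself an admissible realization of $\widetilde{E}_+$.
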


\begin{proof}
A double application of the usual AS relation in clasper calculus
shows that 
$$
\Psi({E}; K_0, K_1, \dots, K_\ell) 
= (-1)^2\, \Psi({D}; K_0, K_1, \dots, K_\ell)
$$
and we conclude with Lemma \ref{lem:psi_order_two}.
\end{proof}

\begin{lemma}[IHX]
\label{lem:psi_IHX}
If $D_1$ and $D_2$ are beaded rooted Jacobi diagrams differing
from~$D$ as follows in a neighborhood of two adjacent internal vertices:
$$
D= \ltritree{}{}{}, \qquad  
D_1= \ltritreebis \quad D_2 =\revltritree,
$$
then we have 
$$
\Psi({D}; K_0, K_1, \dots, K_\ell) =
\Psi({D}_1; K_0, K_1, \dots, K_\ell)
+ \Psi({D}_2; K_0, K_1, \dots, K_\ell).
$$
\end{lemma}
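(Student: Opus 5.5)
The plan is to reduce the statement to the usual IHX relation of clasper calculus, applied twice on the symmetric graph clasper $G(D;K_0,\dots,K_\ell)$. The two adjacent internal vertices of $D$ in the local picture each lift to a pair of twin vertices in the double cover $\widetilde{D}$. Since the local pictures defining the IHX relation carry no beads, the covering $\widetilde{D}\to D$ is trivial over this neighborhood. Hence $\widetilde{D}$ contains two disjoint twin copies of the local picture $\ltritree{}{}{}$, exchanged by the deck transformation. The same holds for $\widetilde{D_1}$ and $\widetilde{D_2}$, and the three covers agree outside these two copies. The augmented diagrams $\widetilde{D}_+,\widetilde{D_1}_+,\widetilde{D_2}_+$ then also agree outside the two copies. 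We may thus choose the three graph claspers to coincide in $U$ away from two small balls, in which they differ exactly as in IHX.

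First step: apply the clasper IHX relation, which holds up to $Y_{2n+2}$-equivalence because $G$ is connected of degree $2n+1$, in the first ball only. This writes $U_G$ as a combination of surgeries on graph claspers $G_1'$ and $G_2'$. These have the local picture of $D_1$ (resp. $D_2$) in the first ball and of $D$ in the second ball. Then apply IHX again in the second ball to each of $G_1'$ and $G_2'$. The result is a sum of four terms in $Y_{2n+1}\calIC/Y_{2n+2}$, indexed by pairs $(a,b)\in\{1,2\}^2$, each term being $\pm\{U_{G_{ab}}\}$. Here $G_{ab}$ has the $D_a$-picture in the first ball and the $D_b$-picture in the second.

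Second step: identify the terms. The diagonal terms $(1,1)$ and $(2,2)$ are symmetric graph claspers. Using the edge-sliding argument from the proof of Theorem~\ref{th:Psi}, they represent $\Psi(D_1;K_0,\dots,K_\ell)$ and $\Psi(D_2;K_0,\dots,K_\ell)$, up to signs that are irrelevant by Lemma~\ref{lem:psi_order_two}. The mixed terms $(1,2)$ and $(2,1)$ are not symmetric. However, the deck involution of the cover, which is realized on the clasper level by exchanging the twin parallel leaves $K_i'$ and $K_i''$ in each slice, carries $G_{12}$ to $G_{21}$. As in the proofs of Lemma~\ref{lem:psi_order_two} and Lemma~\ref{lem:psi_splitting_leaf}, this shows that $\{U_{G_{12}}\}=\{U_{G_{21}}\}$. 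Both mixed terms appear with the same sign coefficient (the product of the two IHX signs), so they contribute $2\{U_{G_{12}}\}$.

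Main obstacle: showing that this doubled mixed contribution vanishes, and carefully tracking signs. A mixed term $G_{12}$ is an honest connected graph clasper of degree $2n+1$, and there is no a priori reason for it to be $2$-torsion. I would therefore first try to arrange the IHX signs and orientations so that the two mixed terms come with opposite signs and cancel outright. To do this, I would orient the twin copies consistently via the deck transformation, so that each application of IHX in one ball is the mirror image of the one in the other ball. If the signs instead turn out equal, the fallback is to identify $G_{12}$, again via edge-sliding, with $-G_{21}$ using the AS relation at the twin vertices, as in Lemma~\ref{lem:psi_AS}. The remaining equality, modulo $2$, then gives $\Psi(D)=\Psi(D_1)+\Psi(D_2)$.
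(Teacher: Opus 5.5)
Your strategy is the paper's. You apply the clasper IHX relation in each of the two twin balls and expand into four terms $\{U_{G_{ab}}\}$. You identify the diagonal terms with $\Psi(D_1;\dots)$ and $\Psi(D_2;\dots)$, and you want to kill the mixed terms using the symmetry of $\widetilde{D}$.

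The decisive step, cancelling the mixed terms, is not established, because of a sign error. The deck involution of $\widetilde{D}$ swaps the twin root vertices. Its extension to $\widetilde{D}_+$ therefore exchanges two of the three half-edges at the node of the augmenting $Y$-diagram. So it is an isomorphism from $\widetilde{D}_+$ onto the diagram with the \emph{opposite} cyclic orientation at that node. The same fact shows $2\widetilde{D}^S_+=0$ in the definition of $E$, produces the sign in Lemma~\ref{lem:psi_order_two}, and gives the cancellation in Lemma~\ref{lem:psi_splitting_leaf}.

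Consequently the involution carries $G_{12}$ to $G_{21}$ with that node reversed. The clasper AS relation then gives $\{U_{G_{12}}\}=-\{U_{G_{21}}\}$, not $\{U_{G_{12}}\}=\{U_{G_{21}}\}$ as you claim; those two lemmas show negation, not equality. Your version leaves $\Psi(D)=\Psi(D_1)+\Psi(D_2)+2\{U_{G_{12}}\}$, and, as you yourself note, $G_{12}$ has no reason to be $2$-torsion. With the correct sign the mixed terms cancel at once, since they carry the same coefficient.

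Neither of your proposed repairs works as stated.
\begin{itemize}
\item Rearranging the IHX signs is impossible. Once the two balls are oriented compatibly via the deck transformation, the same local identity $D=D_1+D_2$ is used in each ball. The four coefficients are then products $c_ac_b$, and both mixed terms receive $c_1c_2$. The cancellation must come from a relation between $G_{12}$ and $G_{21}$, not from bookkeeping.
\item Your fallback states the right relation, $G_{12}\sim -G_{21}$, but attributes it to ``AS at the twin vertices as in Lemma~\ref{lem:psi_AS}''. Reversing orientation at a pair of twin vertices produces the sign $(-1)^2=+1$; that is exactly the computation in Lemma~\ref{lem:psi_AS}. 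So that mechanism yields no sign at all.
\end{itemize}
The single sign comes from the one vertex that has no twin: the node of the augmenting $Y$-graph. Once you insert this observation, your argument coincides with the paper's.
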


\begin{proof}
Depicting the  claspers 
$G({D}_1; K_0, K_1, \dots, K_\ell) $
and $G({D}_2; K_0, K_1, \dots, K_\ell) $
along which we do surgery,
we  write $\Psi({D}_1; K_0, K_1, \dots, K_\ell) 
+ \Psi({D}_2; K_0, K_1, \dots, K_\ell)$ locally  as
\[
\ltritreebis~ \ltritreebis  + 
\revltritree ~ \revltritree.
\]
On the other hand, using twice the IHX relation in clasper calculus,
we see that  $\Psi({D}; K_0, K_1, \dots, K_\ell)$ writes locally as
\[
\ltritree{}{}{}~ \ltritree{}{}{}=\ltritreebis~ \ltritreebis  + 
\revltritree ~ \revltritree + 
\revltritree~\ltritreebis  +
\ltritreebis~ \revltritree.
\]
Once again, we can use the symmetry of $\widetilde{D}$ 
and the AS relation  in clasper calculus 
to deduce that the last two terms cancel out.
\end{proof}

\begin{lemma}[BSTU-like]
\label{lem:BSTU}
Let $i\in \{1,\dots, \ell-1\}$.
If $E$ (respectively, $E_{\bead}$) 
denotes the beaded rooted Jacobi diagram obtained from $D$ 
by gluing the pair of leaves $\{i,i+1\}$
(respectively, gluing $\{i,i+1\}$ 
and adding a bead at the gluing point),
then we have
{\small
\begin{eqnarray*}
&& \Psi({D}; K_0, K_1, \dots,K_i, K_{i+1}, \dots , K_\ell)
- \Psi({D}; K_0, K_1, \dots,K_{i+1}, K_i, \dots,   K_\ell)  \\
&=& \omega(k_i,k_{i+1})\, \big(
\Psi(E; K_0, \dots,\widehat{K_i},
\widehat{K_{i+1}}, \dots , K_\ell) +
\Psi(E_{\bead}; K_0,  \dots,\widehat{K_i},
\widehat{K_{i+1}}, \dots, K_\ell) \big)
\end{eqnarray*}}
where $k_r\in H$ denotes the homology class of $K_r$.
\end{lemma}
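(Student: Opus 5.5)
We will argue by clasper calculus on the graph clasper $G:=G(D;K_0,\dots,K_\ell)$ of \eqref{eq:G}, exactly in the spirit of Lemmas \ref{lem:psi_splitting_leaf} and \ref{lem:psi_IHX}. In $G$, the $i$-th and $(i+1)$-th leaves of $D$ give two horizontal slices: one contains the twin leaves $K'_i,K''_i$ and the next contains $K'_{i+1},K''_{i+1}$. Exchanging the order of the leaves $i$ and $i+1$ means moving the pair $\{K'_{i+1},K''_{i+1}\}$ below the pair $\{K'_i,K''_i\}$. We realise this move as four successive crossing changes between a leaf in $\{K'_i,K''_i\}$ and a leaf in $\{K'_{i+1},K''_{i+1}\}$.

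For each crossing change we apply the standard clasper identity behind the STU-like relation (see Remark \ref{rem:STU-like}). Passing a leaf $A$ through a leaf $B$ changes the surgery, up to $Y_{2n+2}$-equivalence, by the surgery along the clasper obtained by fusing the two edges at $A$ and $B$ into a single edge. The resulting degree-$(2n+2)$ graph clasper has shape $\widetilde D_+$ with these two external vertices glued, and the contribution carries the coefficient $\mathrm{Lk}$-type intersection number. After summing over the crossings between $A$ and $B$, this coefficient is $\omega(k_i,k_{i+1})$, because the leaves are parallel copies of $K_i$ and $K_{i+1}$. The degree-$(2n+2)$ terms commute with everything modulo $Y_{2n+3}$, so in $Y_{2n+1}\calIC/Y_{2n+2}$ they add up. (Here we are implicitly working one degree higher: the correction claspers have degree $2n+2$, hence become trivial in $Y_{2n+1}/Y_{2n+2}$ only if their degree exceeds $2n+1$.)

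So, in the correct grading, the four correction terms are graph claspers whose shape is a symmetric cover of a degree-$n$ diagram. We therefore reorganise them as follows. There are four choices, $(K'_i \text{ or } K''_i)$ glued with $(K'_{i+1} \text{ or } K''_{i+1})$. Gluing $K'_i$ with $K'_{i+1}$ and $K''_i$ with $K''_{i+1}$ corresponds to the symmetric pair giving the $2$-fold cover of $E$. The crossed gluings, $K'_i$ with $K''_{i+1}$ and $K''_i$ with $K'_{i+1}$, give the cover of $E_{\bead}$, since the new cycle through the glued point now swaps sheets. The key step is to show that the four separate corrections pair up into two symmetric graph claspers realising $G(E;\dots)$ and $G(E_{\bead};\dots)$. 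To do this we perform the crossing changes in a symmetric order: first move $K'_{i+1}$ past both $K'_i$ and $K''_i$, then $K''_{i+1}$. We then use the leaf-splitting and edge-sliding lemmas to recombine the two parallel single corrections into the doubled clasper. Any leftover cross terms are handled by the symmetry of $\widetilde D$ together with the AS relation, which cancels them in pairs, as in Lemma \ref{lem:psi_IHX}.

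The main obstacle is this recombination step. A single crossing change produces a clasper that is not symmetric, and the two corrections arising from twin crossings occur at different times, with intermediate configurations in between. We must check that, modulo $Y_{2n+2}$ and using Lemma \ref{lem:psi_order_two} (so that signs are irrelevant), the sum of the two twin corrections equals the surgery along the symmetric clasper. We expect this to follow from the zip construction: we expand the symmetric clasper $G(E;\dots)$ by leaf-splitting at the glued vertex, and compare the result term by term with the crossing-change corrections.
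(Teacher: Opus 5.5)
There is a genuine gap, and it sits exactly in the step you flag as the main obstacle. The bookkeeping before it is also off.

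\textbf{The correction claspers.} A crossing change between a leaf of $\{K'_i,K''_i\}$ and a leaf of $\{K'_{i+1},K''_{i+1}\}$ produces, by Lemma \ref{lem:generalized_STU-like}, a correction clasper of degree $2n+1$, not $2n+2$. Deleting two leaves and joining their edges creates no new node. Had the corrections really had degree $2n+2$, they would vanish in $Y_{2n+1}\calIC/Y_{2n+2}$ and the right-hand side of the lemma would be zero.

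Write $G(r,s)$ for such a correction, with $K_i^{(r)}$ joined to $K_{i+1}^{(s)}$. It still carries the two other leaves $K_i^{(3-r)}$ and $K_{i+1}^{(3-s)}$, in some order. So it is not symmetric, and it does not realize $E$ or $E_{\bead}$. No clasper move (leaf-splitting, edge-sliding, or a ``zip'' expansion) turns a sum of two such singly-joined claspers into a clasper with both pairs joined. The identity you aim for, ``the sum of two twin corrections equals the surgery along the symmetric clasper'', is false as stated: it is missing a factor $\omega(k_i,k_{i+1})$.

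\textbf{What actually happens.} The doubly-joined claspers arise at second order.
\begin{itemize}
\item Let $\check G$ be the clasper with the two pairs of leaves interchanged, and $\check G(r,s)$ its singly-joined versions. Bring twin corrections to a common order of their remaining leaves, e.g.\ $\check G(2,1)$ and $\check G(1,2)$. These are exchanged by the deck involution of $\widetilde D$, so symmetry plus AS gives $U_{\check G(2,1)}=-U_{\check G(1,2)}$; likewise $U_{\check G(2,2)}=-U_{\check G(1,1)}$. To first order, the twin corrections cancel.
\item Reaching that common order (e.g.\ passing from $G(2,1)$ to $\check G(2,1)$) is one more application of the STU identity to the two remaining leaves. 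This produces $\omega(k_i,k_{i+1})\,U_H$, where $H$ has both pairs joined. These $H$ and $J$ are the claspers $G(E;\dots)$ and $G(E_{\bead};\dots)$.
\item Consequently the symmetric claspers appear with coefficient $\omega(k_i,k_{i+1})^2$, not $\omega(k_i,k_{i+1})$.
\item To conclude, you need Lemma \ref{lem:psi_order_two}, applied to $E$ and $E_{\bead}$, to replace $\omega^2$ by $\omega$. This step is absent from your plan.
\end{itemize}
The proof therefore requires replacing your recombination step with this ``cancel to first order, then read off the second-order STU terms'' argument.
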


\begin{proof} 
The graph clasper 
$G:=G({D}; K_0, K_1, \dots,K_i, K_{i+1}, \dots , K_\ell)$ 
inside $U$ can be depicted   as\\[-0.5cm]
\begin{center}
  \includegraphics[scale = 1]{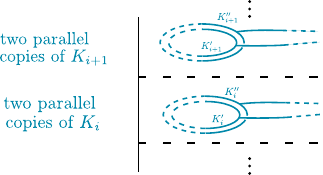} 
\end{center}
in a neighborhood of the $i$-th and $(i+1)$-st slices of $U$:
here, we have denoted by $K'_i,K''_i$ 
(respectively  $K'_{i+1},K''_{i+1}$) 
the two leaves of $G$
that are parallel copies of $K_i$ (respectively, of $K_{i+1}$) 
The graph clasper 
$\check G := G({D}; K_0, K_1, \dots, K_{i+1}, K_{i}, \dots , K_\ell)$
is depicted analogously, 
by interchanging the two pairs of parallel leaves.  
 For any \(r,s \in \{1,2\}\), we denote by \(G(r,s)\) (respectively, by \(\check G(r,s)\)) the graph clasper obtained from \(G\) (respectively, from \(\check G\)) by removing the leaves \(K_i^{(r)}\) and \(K_{i+1}^{(s)}\) and subsequently gluing together 
the resulting free ends of the incident edges.  
By doing some clasper calculus, in a manner entirely analogous 
to the proof of Lemma \ref{lem:generalized_STU-like} below, we obtain that
$$
U_{G}-U_{\check G}
=\omega(k_i,k_{i+1})\, 
\big( U_{G(2,1)}+U_{G(2,2)}+U_{\check G(1,1)}+ U_{\check G(1,2)}\big)
\in \frac{Y_{2n+1} \mathcal{IC}}{Y_{2n+2}}.
$$
Similarly, 
by applying  the usual STU-like relation in clasper calculus
(see Lemma~\ref{lem:generalized_STU-like} below), 
we have
$$
U_{G(2,1)}= U_{\check G(2,1)} + \omega(k_i,k_{i+1})\, U_{H}
\in \frac{Y_{2n+1} \mathcal{IC}}{Y_{2n+2}}
$$
and 
$$
U_{G(2,2)}= U_{\check G(2,2)} + \omega(k_i,k_{i+1})\, U_{J}
\in \frac{Y_{2n+1} \mathcal{IC}}{Y_{2n+2}}
$$
where $H$ (respectively, $J$) is obtained from $G$
by deleting  $K'_i,K''_i,K'_{i+1},K''_{i+1}$
and gluing the resulting extremities of edges as shown below:
$$
H = \raisebox{-12ex}{\includegraphics[scale = 1]{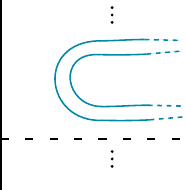}},
\qquad\qquad 
J = \hspace{2mm}\raisebox{-12ex}{\includegraphics[scale = 1]{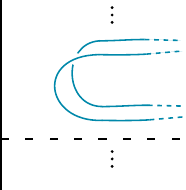}}
$$
But, by symmetry of $\widetilde{D}$ 
and the usual AS relation in clasper calculus, we have 
$$
U_{\check G(2,1)} = - U_{\check G(1,2)} 
\in \frac{Y_{2n+1} \mathcal{IC}}{Y_{2n+2}}
\quad \hbox{and} \quad 
U_{\check G(2,2)} = - U_{\check G(1,1)}
\in \frac{Y_{2n+1} \mathcal{IC}}{Y_{2n+2}}.
$$
Thus, we deduce from the above identities that
$$
U_{G}-U_{\check G} = 
\omega(k_i,k_{i+1})^2\, (U_H+U_J) 
= \omega(k_i,k_{i+1})\, (U_H+U_J) 
\in \frac{Y_{2n+1} \mathcal{IC}}{Y_{2n+2}}
$$
where the second equality follows from
Lemma \ref{lem:psi_order_two}, 
and we conclude by observing that the claspers
$H$ and $J$ can serve as 
$G(E; K_0, K_1, \dots,\widehat{K_i}, 
\widehat{K_{i+1}}, \dots , K_\ell)$
and 
$G(E_{\bead}; K_0, K_1, \dots,\widehat{K_i}, 
\widehat{K_{i+1}}, \dots , K_\ell)$.
\end{proof}

We conclude this subsection
by formulating a generalized version of the STU-like relation, which was employed in the proof of Lemma \ref{lem:BSTU} and will be utilized again in subsequent arguments.
We use here the notions of linking number 
$\operatorname{Lk}_\pm(-,-)$ 
for 2-component oriented links  in \(U = \Sigma \times [-1,+1]\)
as presented in \cite[\S B]{MM13}.

\begin{lemma}[Generalized STU-like] \label{lem:generalized_STU-like}
Let \(G\) be a connected graph clasper of degree~$k$
 in \(U\) such that  a slice 
\(\Sigma \times [t-\varepsilon, t+\varepsilon]\) of $U$ contains precisely two of its leaves, denoted \(K\) and~\(L\).  
Let \(G'\) be a graph clasper that coincides with \(G\) except for the relative positions of these two leaves: the leaf \(K'\) (corresponding to \(K\)) is now contained in the upper slice \(\Sigma \times [t,t+\varepsilon]\), whereas the leaf \(L'\) (corresponding to \(L\)) is now contained in the lower slice 
\(\Sigma \times [t-\varepsilon,t]\).
Then we have 
\begin{equation} \label{eq:generalized_STU-like}
U_{G}-U_{G'} = 
-\operatorname{Lk}_+(K,L)\cdot  U_H \in Y_k \calIC/Y_{k+1}
\end{equation}
where $H$ is the graph clasper obtained from $G$ 
by deleting $K$ and $L$, and gluing the free ends of the incident edges.
\end{lemma}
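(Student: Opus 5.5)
We will prove \eqref{eq:generalized_STU-like} by reducing to the basic clasper move exchanging two leaves.

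\emph{Step 1: reduction to a crossing change.} The passage from $G$ to $G'$ is an isotopy of the leaves supported in the slice $\Sigma\times[t-\varepsilon,t+\varepsilon]$, except at the finitely many moments where $K$ passes through $L$. Put $K$ and $L$ in general position, project them to $\Sigma$, and move $K$ vertically upward past $L$. The crossings between $K$ and $L$ are exactly the double points of the projection, and at each of them the vertical motion forces one crossing change between an edge-neighbourhood of $K$ and one of $L$. Away from these double points the motion is an isotopy of $G$ in $U$, so it does not change $U_G$.

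\emph{Step 2: one crossing change.} At a crossing change between two leaves of a graph clasper of degree $k$, Habiro's move (the leaf crossing change, e.g.\ \cite[Lemma A.2]{MM13} or move 12 / \cite{GGP01}) says the following. Up to $Y_{k+1}$-equivalence, the result differs by surgery along a graph clasper of degree $k$. That clasper is obtained by deleting the two leaves and connecting their edges, with a sign given by the local sign $\epsilon\in\{\pm1\}$ of the crossing.

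We then need that all these new claspers are the same graph clasper $H$ up to $Y_{k+1}$-equivalence. Each one differs from $H$ only by where the connecting edge runs, and the edge-sliding lemma together with the fact that degree-$k$ surgeries are central modulo $Y_{k+1}$ shows the position is irrelevant. The same centrality lets us perform the crossing changes one at a time and add the contributions in the abelian group $Y_k\calIC/Y_{k+1}$. Summing gives
\[
U_G-U_{G'} = \Big(\textstyle\sum_{c}\epsilon_c\Big)\, U_H .
\]

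\emph{Step 3: identifying the coefficient.} It remains to show that $\sum_c\epsilon_c=-\operatorname{Lk}_+(K,L)$, where $\operatorname{Lk}_+$ is defined in \cite[\S B]{MM13}. As I recall it, this is the linking number of $K$ pushed in the positive vertical direction with $L$, computed as a signed count of the crossings where $K$ passes over $L$ in the projection. Equivalently, $\operatorname{Lk}_+(K,L)-\operatorname{Lk}_+(K',L')$ counts exactly the crossings changed in Step 1, and $\operatorname{Lk}_+(K',L')=0$ when $K'$ lies entirely above $L'$.

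\emph{Main obstacle.} The delicate point is the sign. We must match the orientation conventions for leaves (inherited from the clasper orientation as in Step 1 of \S\ref{subsec:surgery}) with the sign in the crossing-change move, and with the convention for $\operatorname{Lk}_+$. I would fix this by checking one model case: a $Y$-graph in genus one with leaves $K=\alpha\times\{t\}$ and $L=\beta\times\{t\}$. There $\operatorname{Lk}_+$ is $\pm\omega(a,b)/2$-type data that we compare with the STU-like relation in $\calA^<(P)$ already established via Theorem \ref{th:psi_connected}, which forces the overall sign $-1$.
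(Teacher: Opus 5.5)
Your skeleton matches the paper's proof. You push $K$ up through $L$ and pay for each obstructing crossing with the leaf/leaf crossing-change lemma (the paper uses \cite[Lemma 2.6.(2.c)]{Mei06}), which costs $\pm U_H$ modulo $Y_{k+1}$. You then identify the signed number of changes with $\operatorname{Lk}_+(K,L)$ through \cite[Lemma B.2.(2)]{MM13}.

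Two inaccuracies in Steps 1 and 3 should be fixed. Not every double point of the projection forces a crossing change. Only the crossings where $K$ lies underneath $L$ (projecting onto $\Sigma\times\{t-\varepsilon\}$) obstruct the upward isotopy, and $\operatorname{Lk}_+(K,L)$ is the signed count of exactly these crossings. Describing it as a count of crossings where $K$ passes \emph{over} $L$ contradicts your own (correct) claim that $\operatorname{Lk}_+(K',L')=0$ when $K'$ lies above $L'$. Also, $\operatorname{Lk}_+$ is integer-valued: when $K$ lies just below $L$ it equals $-\omega([K],[L])$ by \cite[Lemma B.2.(1)]{MM13}, not something of the form $\pm\omega(a,b)/2$.

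The genuine gap is the sign, which is the only non-formal content of the statement, and your calibration does not determine it.

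First, the calibration is circular. In this paper the STU-like relation used for $\psi$ in Theorems \ref{th:psi} and \ref{th:psi_connected} is \emph{deduced} from the present lemma (Remark \ref{rem:STU-like}). So it cannot be invoked to fix the sign of that lemma.

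Second, and independently, your model case is degenerate. If $G$ is a $Y$-graph and $K,L$ are two of its leaves, then deleting $K,L$ and gluing their edges produces a degree-$1$ clasper with a looped edge. Its surgery is trivial modulo $Y_2$ by \cite[Lemma 2.3]{GGP01}, so the model identity reads $0=0$ whichever sign you choose. (Also, $\alpha\times\{t\}$ and $\beta\times\{t\}$ intersect, so the two leaves must be put at different heights.)

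A model-case calibration would need two things. You would need $K$ and $L$ attached to different nodes of a clasper of degree at least $2$. An $H$-graph works: the glued clasper is then a one-loop clasper with two leaves, which is rationally non-trivial in $Y_2\calIC/Y_3$ for suitable leaves. You would also need an evaluation of both sides that does not go through Remark \ref{rem:STU-like}. The paper instead settles the sign directly, by comparing the sign convention of the leaf/leaf crossing-change lemma with the crossing signs in the formula of \cite[Lemma B.2.(2)]{MM13}.
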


\begin{proof}
We consider a projection diagram of the oriented link $(K,L)$ 
onto the surface $\Sigma \times \{t-\epsilon\}$:
every mixed crossing in this diagram with $K$ underneath $L$
is an obstruction to move $K$ by isotopy
into a neighborhood of 
$\Sigma \times \{t+\epsilon\}$ in the knot exterior
\(\big(\Sigma \times [t-\varepsilon, t+\varepsilon]\big) \setminus L\).
Actually, this crossing can be changed 
at the price of a surgery along $H$,
as follows from the ``leaf/leaf'' version 
of the ``crossing-change'' lemma in clasper calculus
(see, for instance, \cite[Lemma 2.6.(2.c)]{Mei06}).
Thus, \eqref{eq:generalized_STU-like} is deduced 
from the diagrammatic formula
$$
\operatorname{Lk}_+(K,L)
=\sharp  \begin{array}{c}
\labellist
\small\hair 2pt
 \pinlabel {$K$} [br] at 1 19
 \pinlabel {$L$} [bl] at 19 20
\endlabellist
\includegraphics[scale=1.0]{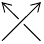}
\end{array}
-  \sharp  \begin{array}{c}
\labellist
\small\hair 2pt
 \pinlabel {$L$} [br] at 1 19
 \pinlabel {$K$} [bl] at 19 20
\endlabellist
\includegraphics[scale=1.0]{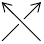}
\end{array}
$$
(see \cite[Lemma B.2.(2)]{MM13})
by a careful inspection of signs.
\end{proof}

\begin{remark} \label{rem:STU-like}
The usual STU-like relation is recovered as follows:
with the notations of Lemma \ref{lem:generalized_STU-like},
assume that $K\subset \Sigma \times [t-\varepsilon,t]$ and 
$L\subset \Sigma \times [t,t+\varepsilon]$.
Then we have $\operatorname{Lk}_-(K,L)$=0 and, so, we get 
$\omega([K],[L])=-\operatorname{Lk}_+(K,L)$
by \cite[Lemma B.2.(1)]{MM13}. 
Thus \eqref{eq:generalized_STU-like} writes
\begin{equation} \label{eq:STU-like}
   U_{G}-U_{G'} = \omega([K],[L])\cdot  U_H \in Y_k \calIC/Y_{k+1}.
\end{equation}

\up
\hfill $\blacksquare$
\end{remark}

\subsection{Relating the surgery maps}
\label{subsec:rsm}

In this subsection, we relate the surgery map 
$
\Psi: \calB^<_n\big(P^{(2)}\big) 
\to {Y_{2n+1}} \calIC/{Y_{2n+2}}
$
of Theorem \ref{th:Psi}
with the ``usual'' surgery map 
$\psi:\calA^{<,c}_{2n+1}(P) \to {Y_{2n+1}} \calIC/{Y_{2n+2}}$
of Theorem \ref{th:psi_connected}.

\begin{remark}
The case $n=0$ is easily handled using the definitions.
Indeed, we have in this case
$\calB^<_0\big(P^{(2)}\big) \simeq
P^{(2)} \otimes P^{(2)}$ 
(via the identification $x\!\star\textsf{---}\,y \mapsto x\otimes y$),
and, using Proposition \ref{prop:before}.(i),
we get the following  commutative diagram:
\begin{equation} \label{eq:E_0}
\xymatrix{
\quad \begin{tikzpicture}[thick,color=black,
baseline=-0.5em, x=0.6em, y=0.6em]
 \draw[thick] (0,0)--(2,0);
 \node[left=0pt] at (0.25,0) {\scriptsize $x\star$};
\node[right=0pt] at (1.75,0) {\scriptsize $y$};
\end{tikzpicture} \hspace{2mm} \ni \ar@{|->}[d]
& \calB^<_0\big(P^{(2)}\big) \ar[r]^-\Psi \ar[d] &
\Tors_2\Big({\displaystyle\frac{\calIC}{Y_2}}\Big) \\
\quad \begin{tikzpicture}[thick,color=black,
baseline=-0.5em, x=0.6em, y=0.6em]
\footnotesize
\node[left=1pt] at (-1,1) {\scriptsize $y$};
\node[right=1pt] at (1,1) {\scriptsize $y$};
\node[below=1pt] at (0,-1.5) {\scriptsize $x$};
\draw (-1,1) -- (0,0) -- (1,1);
\draw (0,0) -- (0,-1.5);
\draw [fill = black] (0,0) circle (.3ex);
\end{tikzpicture} \  \ni & 
\Tors_2\big(\calA_1(P)\big) \ar[ru]_-\psi &  
}  
\end{equation}   

\up \hfill $\blacksquare$
\end{remark}

Therefore, we assume in the sequel that  $n>0$. 
We shall define a homomorphism
$$
E:  \calB^<_n\big(P^{(2)}\big)
\longrightarrow \Tors_2\big(\calA^{<,c}_{2n+1}(P)\big)
$$
by assigning to every rooted beaded Jacobi diagram  $D$
the linear combination 
\begin{equation} \label{eq:E_def}
E(D):=\sum_{S} 
\Big(\prod_{j \in S}   \fr(c_j) \Big) \cdot \widetilde{D}_+^S.
\end{equation}
Here the sum is over the subsets $S$
of the set of leaves of $D$,
the color of a vertex $j\in S$ is denoted by $c_j\in P^{(2)}$,
the Jacobi diagram $\widetilde{D}_+$
is (as in the proof of Theorem~\ref{th:Psi})
the augmentation of the $2$-fold cover $\widetilde{D}$
by a $Y$-diagram,
and its $S$-modification  
$\widetilde{D}_+^S\in \calA^{<,c}(P)$ 
is defined as follows: 
the root of $\widetilde{D}_+$ 
is given the color of the root of~$D$
(after lifting it arbitrarily from $P^{(2)}$ to~$P$)
and is declared to be ``lower'' 
than all other vertices; 
for every leaf $i$ of $D$,
the corresponding twin vertices of $\widetilde{D}_+$
are either glued together if $i\in S$,
or they inherit both the color $c_i$
(lifting this arbitrarily from $P^{(2)}$ to $P$)
and the position of $i$ in the ordering of $D$ if $i\not\in S$.
Note that, for any~$S$,
the deck transformation of $\widetilde{D}$ extends  to an isomorphism between
$\widetilde{D}_+^S$ and the same Jacobi diagram but with the opposite orientation 
on the internal vertex of the extra $Y$-diagram: therefore, by the AS relation, 
$2\widetilde{D}_+^S =0 \in \calA^{<,c}(P)$:
it follows that the right-hand side of \eqref{eq:E_def} 
is well-defined,  and only depends on $D$, i.e$.$
it does not depend on the choices of the lifts from $\Z_2$ to $\Z$,
and from $P^{(2)}$ to $P$.

\begin{remark}
Recall that 
$\calA^{<,c}_{k}(P)\simeq \calA^{<,c}_{k}(H)$ via $p_*$ 
for all $k\geq 2$. Consequently, by identifying the target of the map $E$ with $\Tors_2\big(\calA^{<,c}_{2n+1}(H)\big)$, each summand $\widetilde{D}_+^S$ appearing in \eqref{eq:E_def} 
can be viewed as a Jacobi diagram colored by~$H^{(2)}$.\hfill $\blacksquare$
\end{remark}

\begin{example} \label{ex:E}
We give below the values $E(D)\in \calA^{<,c}_{7}(P)$
for two instances of $D\in \calB^<_3\big(P^{(2)}\big)$.
Each is written using the convention \eqref{eq:lifts}, 
and re-written as a single diagram 
using the notation~\eqref{eq:parallel_notation}:
\begin{align*}
E\Bigg( \begin{tikzpicture}[scale = 0.5,baseline = 0.1cm]
		\node (0) at (1, 0) {};
		\node (1) at (1, -0.75) {};
		\node (2) at (1, 1) {};
		\node (3) at (0.55, 0.775) {};
		\node (4) at (1.45, 0.775) {};
		\node (5) at (0, 1.25) {};
		\node (6) at (2, 1.25) {};
		\draw [thick] (0.center) to (1.center);
        \draw [thick] (0.center) to (1.center);
		\draw [thick] [in=180, out=-180, looseness=1.75] (2.center) to (0.center);
		\draw [thick] [bend left=90, looseness=1.75] (2.center) to (0.center);
		\draw [thick] (5.center) to (3.center);
		\draw [thick] (6.center) to (4.center);
        \draw [fill = black] (0.center) circle  (0.8ex);
        \draw [fill = black] (3.center) circle  (0.8ex);
        \draw [fill = black] (4.center) circle  (0.8ex);
        \node[below=0.5pt] at (1,-0.75) {\scriptsize $h \star$};
        \node[above] at (1,1.2) {\scriptsize $x$ \hspace{1mm} $<$ \hspace{1mm} $y$};
\end{tikzpicture}
 \Bigg) = &  
 \begin{tikzpicture}[scale = 0.4, baseline = 1cm]
	    \node  (0) at (0, 4.5) {};
		\node  (1) at (2.5, 1.55) {};
		\node  (2) at (3, 2.25) {};
		\node  (3) at (3, 2.25) {};
		\node  (4) at (2, 3) {};
		\node  (5) at (4, 3) {};
		\node  (7) at (2, 4) {};
		\node  (8) at (4, 4) {};
		\node  (9) at (1.525, 3.75) {};
		\node  (10) at (2.475, 3.75) {};
		\node  (11) at (1, 4.5) {};
		\node  (12) at (3.525, 3.75) {};
		\node  (13) at (4.475, 3.75) {};
		\node  (14) at (5, 4.5) {};
		\node  (15) at (3.5, 4.5) {};
		\node  (16) at (2.5, 4.5) {};
		\draw [thick, in=180, out=-90, looseness=1.50] (0.center) to (1.center);
		\draw [thick, in=-90, out=0] (1.center) to (2.center);
		\draw [thick] (4.center) to (3.center);
		\draw [thick] (3.center) to (5.center);
		\draw [thick, bend right=90, looseness=1.75] (7.center) to (4.center);
		\draw [thick, bend left=90, looseness=1.75] (7.center) to (4.center);
		\draw [thick, bend left=270, looseness=1.75] (8.center) to (5.center);
		\draw [thick, bend left=90, looseness=1.75] (8.center) to (5.center);
		\draw [thick] (11.center) to (9.center);
		\draw [thick] (13.center) to (14.center);
		\draw [thick] (12.center) to (16.center);
        \draw [white, fill = white] (3,4.15) circle  (1ex);
		\draw [thick] (10.center) to (15.center);
        \draw [fill = black] (3.center) circle  (0.8ex);
        \draw [fill = black] (4.center) circle  (0.8ex);
        \draw [fill = black] (5.center) circle  (0.8ex);
        \draw [fill = black] (9.center) circle  (0.8ex);
        \draw [fill = black] (10.center) circle  (0.8ex);
        \draw [fill = black] (13.center) circle  (0.8ex);
        \draw [fill = black] (12.center) circle  (0.8ex);
        \node[above] at (2.5,4.5) {\scriptsize $h < x < x< y< y$};
\end{tikzpicture} + \fr(x)  \begin{tikzpicture}[scale = 0.4, baseline = 1cm]
        \node  (0) at (0, 4.5) {};
		\node  (1) at (2.5, 1) {};
		\node  (2) at (3, 1.75) {};
		\node  (3) at (3, 1.75) {};
		\node  (4) at (2, 2.5) {};
		\node  (5) at (4, 2.5) {};
		\node  (6) at (2, 3.5) {};
		\node  (7) at (4, 3.5) {};
		\node  (8) at (1.525, 3.25) {};
		\node  (9) at (2.475, 3.25) {};
		\node  (10) at (1.25, 4) {};
		\node  (11) at (3.525, 3.25) {};
		\node  (12) at (4.475, 3.25) {};
		\node  (13) at (5, 4.5) {};
		\node  (14) at (3 .75, 4.5) {};
		\node  (15) at (2.25, 4) {};
		\node  (16) at (1.75, 4.5) {};
        \draw [thick,in=180, out=-90, looseness=1.50] (0.center) to (1.center);
		\draw [thick,in=-90, out=0] (1.center) to (2.center);
		\draw [thick] (4.center) to (3.center);
		\draw [thick] (3.center) to (5.center);
		\draw [thick ,bend right=90, looseness=1.75] (6.center) to (4.center);
		\draw [thick,bend left=90, looseness=1.75] (6.center) to (4.center);
		\draw [thick,bend left=270, looseness=1.75] (7.center) to (5.center);
		\draw [thick,bend left=90, looseness=1.75] (7.center) to (5.center);
		\draw [thick,in=180, out=-90] (10.center) to (8.center);
		\draw [thick,in=-90, out=45, looseness=1.25] (12.center) to (13.center);
		\draw [thick, in=-90, out=90] (11.center) to (15.center);
        \draw [white, fill = white] (2.775, 3.65) circle (1ex); 
		\draw [thick,in=-90, out=75] (9.center) to (14.center);
		\draw [thick,bend left=45] (10.center) to (16.center);
		\draw [thick,bend left=45] (16.center) to (15.center);
        \draw [fill = black] (3.center) circle  (0.8ex);
        \draw [fill = black] (4.center) circle  (0.8ex);
        \draw [fill = black] (5.center) circle  (0.8ex);
        \draw [fill = black] (9.center) circle  (0.8ex);
        \draw [fill = black] (8.center) circle  (0.8ex);
        \draw [fill = black] (11.center) circle  (0.8ex);
        \draw [fill = black] (12.center) circle  (0.8ex);
          \node[above] at (2.5,4.5) {\scriptsize $h \hspace{4.5mm}< \hspace{5 mm} y< y$};
        \end{tikzpicture} \\
        & + \fr(y)\begin{tikzpicture}[scale = 0.4, baseline = 1cm] 
        \node  (0) at (0, 4.5) {};
		\node  (1) at (2.5, 1) {};
		\node  (2) at (3, 1.75) {};
		\node  (3) at (3, 1.75) {};
		\node  (4) at (2, 2.5) {};
		\node  (5) at (4, 2.5) {};
		\node  (6) at (2, 3.5) {};
		\node  (7) at (4, 3.5) {};
		\node  (8) at (1.525, 3.25) {};
		\node  (9) at (2.475, 3.25) {};
		\node  (10) at (1.25, 4.5) {};
		\node  (11) at (3.525, 3.25) {};
		\node  (12) at (4.475, 3.25) {};
		\node  (13) at (4.75, 4) {};
		\node  (14) at (3.75, 4) {};
		\node  (15) at (2.5, 4.5) {};
		\node  (17) at (4.25, 4.5) {};
		\node  (18) at (3, 3.6) {};
        \draw [thick, in=180, out=-90, looseness=1.50] (0.center) to (1.center);
		\draw [thick,in=-90, out=0] (1.center) to (2.center);
		\draw [thick] (4.center) to (3.center);
		\draw [thick] (3.center) to (5.center);
		\draw [ thick,bend right=90, looseness=1.75] (6.center) to (4.center);
		\draw [ thick,bend left=90, looseness=1.75] (6.center) to (4.center);
		\draw [ thick,bend left=270, looseness=1.75] (7.center) to (5.center);
		\draw [ thick,bend left=90, looseness=1.75] (7.center) to (5.center);
		\draw [ thick,in=165, out=-90, looseness=0.75] (10.center) to (8.center);
		\draw [ thick,in=-90, out=45, looseness=1.25] (12.center) to (13.center);
		\draw [ thick,in=-90, out=135, looseness=1.25] (11.center) to (15.center);
        \draw [white, fill = white] (3, 3.5) circle (1.3ex); 
		\draw [ thick,in=-90, out=90, looseness=0.75] (9.center) to (14.center);
		\draw [ thick,in=180, out=90] (14.center) to (17.center);
		\draw [ thick,in=90, out=0] (17.center) to (13.center);
        \draw [fill = black] (3.center) circle  (0.8ex);
        \draw [fill = black] (4.center) circle  (0.8ex);
        \draw [fill = black] (5.center) circle  (0.8ex);
        \draw [fill = black] (9.center) circle  (0.8ex);
        \draw [fill = black] (8.center) circle  (0.8ex);
        \draw [fill = black] (11.center) circle  (0.8ex);
        \draw [fill = black] (12.center) circle  (0.8ex);
         \node[above] at (1.25 ,4.5) {\scriptsize $h < x < x$};
        \end{tikzpicture} + \fr(x)\fr(y)\begin{tikzpicture}[scale = 0.4, baseline = 1cm]
		\node  (0) at (0, 4.5) {};
		\node  (1) at (2.5, 1) {};
		\node  (2) at (3, 1.75) {};
		\node  (3) at (3, 1.75) {};
		\node  (4) at (2, 2.5) {};
		\node  (5) at (4, 2.5) {};
		\node  (6) at (2, 3.5) {};
		\node  (7) at (4, 3.5) {};
		\node  (8) at (1.525, 3.25) {};
		\node  (9) at (2.475, 3.25) {};
		\node  (10) at (1.25, 4) {};
		\node  (11) at (3.525, 3.25) {};
		\node  (12) at (4.475, 3.25) {};
		\node  (13) at (4.75, 4) {};
		\node  (14) at (3.75, 4) {};
		\node  (15) at (2.25, 4) {};
		\node  (16) at (1.75, 4.5) {};
		\node  (17) at (4.25, 4.5) {};
		\draw [thick,in=180, out=-90, looseness=1.50] (0.center) to (1.center);
		\draw [thick,in=-90, out=0] (1.center) to (2.center);
		\draw [thick] (4.center) to (3.center);
		\draw [thick] (3.center) to (5.center);
		\draw [thick,bend right=90, looseness=1.75] (6.center) to (4.center);
		\draw [thick,bend left=90, looseness=1.75] (6.center) to (4.center);
		\draw [thick,bend left=270, looseness=1.75] (7.center) to (5.center);
		\draw [thick,bend left=90, looseness=1.75] (7.center) to (5.center);
		\draw [thick,in=165, out=-90, looseness=0.75] (10.center) to (8.center);
		\draw [thick,in=-90, out=45, looseness=1.25] (12.center) to (13.center);
		\draw [thick,in=-90, out=90, looseness=0.75] (11.center) to (15.center);  
        \draw [white, fill = white] (3, 3.5) circle (1.3ex); 
		\draw [thick,in=-90, out=90, looseness=0.50] (9.center) to (14.center);
		\draw [thick,bend left=45] (10.center) to (16.center);
		\draw [thick,bend left=45] (16.center) to (15.center);
		\draw [thick,in=180, out=90] (14.center) to (17.center);
		\draw [thick,in=90, out=0] (17.center) to (13.center);
        \draw [fill = black] (3.center) circle  (0.8ex);
        \draw [fill = black] (4.center) circle  (0.8ex);
        \draw [fill = black] (5.center) circle  (0.8ex);
        \draw [fill = black] (9.center) circle  (0.8ex);
        \draw [fill = black] (8.center) circle  (0.8ex);
        \draw [fill = black] (11.center) circle  (0.8ex);
        \draw [fill = black] (12.center) circle  (0.8ex);
         \node[above] at (0,4.5) {\scriptsize $h$};
\end{tikzpicture} \\
= &  \begin{tikzpicture}[thick, scale = 0.4, baseline = 1cm]
	    \node (0) at (-0.50, 4.5) {};
		\node  (1) at (2.5, 1.25) {};
		\node  (2) at (3, 2) {};
		\node  (3) at (3, 2) {};
		\node  (4) at (2, 2.75) {};
		\node  (5) at (4, 2.75) {};
		\node  (7) at (2, 3.75) {};
		\node  (8) at (4, 3.75) {};
		\node  (9) at (1.525, 3.5) {};
		\node  (10) at (2.475, 3.5) {};
		\node  (11) at (1, 4.5) {};
		\node  (12) at (3.525, 3.5) {};
		\node  (13) at (4.475, 3.5) {};
		\node  (14) at (5, 4.5) {};
		\node  (15) at (3.75, 4.5) {};
		\node  (16) at (2.25, 4.5) {};
		\draw [in=180, out=-90, looseness=1.50] (0.center) to (1.center);
		\draw [in=-90, out=0] (1.center) to (2.center);
		\draw (4.center) to (3.center);
		\draw (3.center) to (5.center);
		\draw [bend right=90, looseness=1.75] (7.center) to (4.center);
		\draw [bend left=90, looseness=1.75] (7.center) to (4.center);
		\draw [bend left=270, looseness=1.75] (8.center) to (5.center);
		\draw [bend left=90, looseness=1.75] (8.center) to (5.center);
		\draw [in=-180, out=-90] (11.center) to (9.center);
		\draw [in=-90, out=0] (13.center) to (14.center);
		\draw [in=-90, out=90] (12.center) to (16.center);
        \draw [white, fill = white] (3, 3.975) circle (1.3ex);
		\draw [in=-90, out=90] (10.center) to (15.center);
        \draw [fill = black] (3.center) circle  (0.8ex);
        \draw [fill = black] (4.center) circle  (0.8ex);
        \draw [fill = black] (5.center) circle  (0.8ex);
        \draw [fill = black] (9.center) circle  (0.8ex);
        \draw [fill = black] (10.center) circle  (0.8ex);
        \draw [fill = black] (13.center) circle  (0.8ex);
        \draw [fill = black] (12.center) circle  (0.8ex);
        \node[above] at (3,4.5) {\scriptsize ${x} \parallel {x} \hspace{1mm} <\hspace{1mm} {y} \parallel {y}$};
        \node[above] at (-0.2,4.6) {\scriptsize $ h< $};
\end{tikzpicture} 
\end{align*}

\begin{align*}
E\Bigg(\begin{tikzpicture}[scale = 0.5,baseline = 0.1cm]
		\node (0) at (1, 0) {};
		\node (1) at (1, -0.75) {};
		\node (2) at (1, 1) {};
		\node (3) at (0.55, 0.775) {};
		\node (4) at (1.45, 0.775) {};
		\node (5) at (0, 1.25) {};
		\node (6) at (2, 1.25) {};
		\draw [thick] (0.center) to (1.center);
        \draw [thick] (0.center) to (1.center);
		\draw [thick] [in=180, out=-180, looseness=1.75] (2.center) to (0.center);
		\draw [thick] [bend left=90, looseness=1.75] (2.center) to (0.center);
		\draw [thick] (5.center) to (3.center);
		\draw [thick] (6.center) to (4.center);
        \draw [fill = black] (0.center) circle  (0.8ex);
        \draw [fill = black] (3.center) circle  (0.8ex);
        \draw [fill = black] (4.center) circle  (0.8ex);
        \node[below=0.5pt] at (1,-0.75) {\scriptsize $h \star$};
        \node[above] at (1,1.2) {\scriptsize $x$ \hspace{1mm} $<$ \hspace{1mm} $y$};
        \draw[black, fill=lightgray] (1.435,0.225) circle (0.95ex);
\end{tikzpicture}
 \Bigg) = &  
 \begin{tikzpicture}[scale = 0.4, baseline = 1cm]
        \node  (0) at (0, 4.5) {};
		\node  (1) at (2.5, 1) {};
		\node  (2) at (3, 1.75) {};
		\node  (3) at (3, 1.75) {};
		\node  (4) at (2, 2.5) {};
		\node  (5) at (4, 2.5) {};
		\node  (6) at (2, 3.5) {};
		\node  (7) at (4, 3.5) {};
		\node  (8) at (1.525, 3.25) {};
		\node  (9) at (2.475, 3.25) {};
		\node  (10) at (1.25, 4.5) {};
		\node  (11) at (3.525, 3.25) {};
		\node  (12) at (4.475, 3.25) {};
		\node  (13) at (5, 4.5) {};
		\node  (14) at (3.75, 4.5) {};
		\node  (15) at (2.5, 4.5) {};
		\node  (17) at (2.75, 2.75) {};
		\node  (18) at (4.175, 2.625) {};
		\draw [thick,in=180, out=-90, looseness=1.50] (0.center) to (1.center);
		\draw [thick,in=-90, out=0] (1.center) to (2.center);
		\draw [thick] (4.center) to (3.center);
		\draw [thick] (3.center) to (5.center);
		\draw [thick,bend right=90, looseness=1.75] (6.center) to (4.center);
		\draw [thick,in=165, out=-90, looseness=0.75] (10.center) to (8.center);
		\draw [thick,in=-90, out=30, looseness=0.75] (12.center) to (13.center);
		\draw [thick,in=-90, out=135, looseness=1.25] (11.center) to (15.center);
        \draw [white, fill = white] (3.025, 3.65) circle (1.3ex);
		\draw [thick,in=-90, out=45] (9.center) to (14.center);
		\draw [thick,in=105, out=0] (7.center) to (12.center);
		\draw [thick,in=105, out=0] (6.center) to (9.center);
		\draw [thick,bend right=15] (4.center) to (17.center);
		\draw [thick,in=-120, out=45] (17.center) to (12.center);
        \draw [white, fill = white] (2.75, 2.75) circle (1.3ex); 
		\draw [thick,in=135, out=-90] (9.center) to (17.center);
		\draw [thick,in=60, out=-45] (17.center) to (18.center);
		\draw [thick,bend left=15] (18.center) to (5.center);
         \draw [white, fill = white] (3.475, 2.975) circle (1.3ex);
        \draw [white, fill = white] (3.575, 2.675) circle (1.3ex);
        \draw [thick,bend left=270, looseness=1.75] (7.center) to (5.center);
        \node[above] at (2.5,4.5) {\scriptsize $h < x < x< y< y$};
        \draw [fill = black] (3.center) circle  (0.8ex);
        \draw [fill = black] (4.center) circle  (0.8ex);
        \draw [fill = black] (5.center) circle  (0.8ex);
        \draw [fill = black] (9.center) circle  (0.8ex);
        \draw [fill = black] (8.center) circle  (0.8ex);
        \draw [fill = black] (11.center) circle  (0.8ex);
        \draw [fill = black] (12.center) circle  (0.8ex);
\end{tikzpicture} + \fr(x) \begin{tikzpicture}[scale = 0.4, baseline = 1cm]
         \node  (0) at (0, 4.5) {};
		\node  (1) at (2.5, 1) {};
		\node  (2) at (3, 1.75) {};
		\node  (3) at (3, 1.75) {};
		\node  (4) at (2, 2.5) {};
		\node  (5) at (4, 2.5) {};
		\node  (6) at (2, 3.5) {};
		\node  (7) at (4, 3.5) {};
		\node  (8) at (1.525, 3.25) {};
		\node  (9) at (2.475, 3.25) {};
		\node  (10) at (1.25, 4) {};
		\node  (11) at (3.525, 3.25) {};
		\node  (12) at (4.475, 3.25) {};
		\node  (13) at (5, 4.5) {};
		\node  (14) at (3.75, 4.5) {};
		\node  (15) at (2.25, 4) {};
		\node  (19) at (2.75, 2.75) {};
		\node  (20) at (4.175, 2.625) {};
        \node (21) at (1.75, 4.5) {};
        \draw [thick,in=180, out=-90, looseness=1.50] (0.center) to (1.center);
		\draw [thick,in=-90, out=0] (1.center) to (2.center);
		\draw [thick] (4.center) to (3.center);
		\draw [thick] (3.center) to (5.center);
		\draw [thick,bend right=90, looseness=1.75] (6.center) to (4.center);
		\draw [thick,in=165, out=-90, looseness=0.75] (10.center) to (8.center);
		\draw [thick,in=-90, out=45, looseness=1.25] (12.center) to (13.center);
		\draw [thick,in=-90, out=135, looseness=1.25] (11.center) to (15.center);
         \draw [white, fill = white] (2.9, 3.525) circle (1.3ex); 
		\draw [thick,in=-90, out= 45, looseness=1.5] (9.center) to (14.center);
		\draw [thick,in=105, out=0] (7.center) to (12.center);
		\draw [thick,in=105, out=0] (6.center) to (9.center);
		\draw [thick,bend right=15] (4.center) to (19.center);
		\draw [thick,in=-120, out=45] (19.center) to (12.center);
        \draw [white, fill = white] (2.75,2.75) circle (1.3ex);
		\draw [thick, in=135, out=-90] (9.center) to (19.center);
		\draw [thick,in=60, out=-45] (19.center) to (20.center);
		\draw [thick,bend left=15] (20.center) to (5.center);
        \draw [white, fill = white] (3.475, 2.975) circle (1.3ex);
        \draw [white, fill = white] (3.575, 2.675) circle (1.3ex);
        \draw [thick,bend left=270, looseness=1.75] (7.center) to (5.center);
        \draw [thick, in=180, out=90] (10.center) to (21.center);
        \draw [thick, in=90, out=0] (21.center) to (15.center);
        \draw [fill = black] (3.center) circle  (0.8ex);
        \draw [fill = black] (4.center) circle  (0.8ex);
        \draw [fill = black] (5.center) circle  (0.8ex);
        \draw [fill = black] (9.center) circle  (0.8ex);
        \draw [fill = black] (8.center) circle  (0.8ex);
        \draw [fill = black] (11.center) circle (0.8ex);
        \draw [fill = black] (12.center) circle (0.8ex);
            \node[above] at (2.5,4.5) {\scriptsize $h \hspace{4.5mm}< \hspace{5 mm} y< y$};
        \end{tikzpicture} \\
        & + \fr(y)\begin{tikzpicture}[scale = 0.4, baseline = 1cm]
         \node  (0) at (0, 4.5) {};
		\node  (1) at (2.5, 1) {};
		\node  (2) at (3, 1.75) {};
		\node  (3) at (3, 1.75) {};
		\node  (4) at (2, 2.5) {};
		\node  (5) at (4, 2.5) {};
		\node  (6) at (2, 3.5) {};
		\node  (7) at (4, 3.5) {};
		\node  (8) at (1.525, 3.25) {};
		\node  (9) at (2.475, 3.25) {};
		\node  (10) at (1.25, 4.5) {};
		\node  (11) at (3.525, 3.25) {};
		\node  (12) at (4.475, 3.25) {};
		\node  (13) at (4.75, 4) {};
		\node  (14) at (3.75, 4) {};
		\node  (15) at (2.5, 4.5) {};
		\node  (17) at (4.25, 4.5) {};
		\node  (19) at (2.75, 2.75) {};
		\node  (20) at (4.175, 2.625) {};
        \draw [thick,in=180, out=-90, looseness=1.50] (0.center) to (1.center);
		\draw [thick,in=-90, out=0] (1.center) to (2.center);
		\draw [thick] (4.center) to (3.center);
		\draw [thick] (3.center) to (5.center);
		\draw [thick,bend right=90, looseness=1.75] (6.center) to (4.center);
		\draw [thick,in=165, out=-90, looseness=0.75] (10.center) to (8.center);
		\draw [thick,in=-90, out=45, looseness=1.25] (12.center) to (13.center);
		\draw [thick,in=-90, out=135, looseness=1.25] (11.center) to (15.center);
         \draw [white, fill = white] (3.075, 3.575) circle (1.3ex); 
		\draw [thick,in=-90, out=60, looseness=0.75] (9.center) to (14.center);
		\draw [thick,in=180, out=90] (14.center) to (17.center);
		\draw [thick,in=90, out=0] (17.center) to (13.center);
		\draw [thick,in=105, out=0] (7.center) to (12.center);
		\draw [thick,in=105, out=0] (6.center) to (9.center);
		\draw [thick,bend right=15] (4.center) to (19.center);
		\draw [thick,in=-120, out=45] (19.center) to (12.center);
        \draw [white, fill = white] (2.75,2.75) circle (1.3ex);
		\draw [thick, in=135, out=-90] (9.center) to (19.center);
		\draw [thick,in=60, out=-45] (19.center) to (20.center);
		\draw [thick,bend left=15] (20.center) to (5.center);
        \draw [white, fill = white] (3.475, 2.975) circle (1.3ex);
        \draw [white, fill = white] (3.575, 2.675) circle (1.3ex);
        \draw [thick,bend left=270, looseness=1.75] (7.center) to (5.center);
        \draw [fill = black] (3.center) circle  (0.8ex);
        \draw [fill = black] (4.center) circle  (0.8ex);
        \draw [fill = black] (5.center) circle  (0.8ex);
        \draw [fill = black] (9.center) circle  (0.8ex);
        \draw [fill = black] (8.center) circle  (0.8ex);
        \draw [fill = black] (11.center) circle  (0.8ex);
        \draw [fill = black] (12.center) circle  (0.8ex);
         \node[above] at (1.25 ,4.5) {\scriptsize $h < x < x$};
        \end{tikzpicture} + \fr(x)\fr(y)\begin{tikzpicture}[scale = 0.4, baseline = 1cm]
         \node  (0) at (0, 4.5) {};
		\node  (1) at (2.5, 1) {};
		\node  (2) at (3, 1.75) {};
		\node  (3) at (3, 1.75) {};
		\node  (4) at (2, 2.5) {};
		\node  (5) at (4, 2.5) {};
		\node  (6) at (2, 3.5) {};
		\node  (7) at (4, 3.5) {};
		\node  (8) at (1.525, 3.25) {};
		\node  (9) at (2.475, 3.25) {};
		\node  (10) at (1.25, 4) {};
		\node  (11) at (3.525, 3.25) {};
		\node  (12) at (4.475, 3.25) {};
		\node  (13) at (4.75, 4) {};
		\node  (14) at (3.75, 4) {};
		\node  (15) at (2.25, 4) {};
		\node  (17) at (4.25, 4.5) {};
		\node  (19) at (2.75, 2.75) {};
		\node  (20) at (4.175, 2.625) {};
        \node (21) at (1.75, 4.5) {};
        \draw [thick,in=180, out=-90, looseness=1.50] (0.center) to (1.center);
		\draw [thick,in=-90, out=0] (1.center) to (2.center);
		\draw [thick] (4.center) to (3.center);
		\draw [thick] (3.center) to (5.center);
		\draw [thick,bend right=90, looseness=1.75] (6.center) to (4.center);
		\draw [thick,in=165, out=-90, looseness=0.75] (10.center) to (8.center);
		\draw [thick,in=-90, out=45, looseness=1.25] (12.center) to (13.center);
		\draw [thick,in=-90, out=135, looseness=1.25] (11.center) to (15.center);
         \draw [white, fill = white] (2.9, 3.525) circle (1.3ex); 
		\draw [thick,in=-90, out=60, looseness=0.75] (9.center) to (14.center);
		\draw [thick,in=180, out=90] (14.center) to (17.center);
		\draw [thick,in=90, out=0] (17.center) to (13.center);
		\draw [thick,in=105, out=0] (7.center) to (12.center);
		\draw [thick,in=105, out=0] (6.center) to (9.center);
		\draw [thick,bend right=15] (4.center) to (19.center);
		\draw [thick,in=-120, out=45] (19.center) to (12.center);
        \draw [white, fill = white] (2.75,2.75) circle (1.3ex);
		\draw [thick, in=135, out=-90] (9.center) to (19.center);
		\draw [thick,in=60, out=-45] (19.center) to (20.center);
		\draw [thick,bend left=15] (20.center) to (5.center);
        \draw [white, fill = white] (3.475, 2.975) circle (1.3ex);
        \draw [white, fill = white] (3.575, 2.675) circle (1.3ex);
        \draw [thick,bend left=270, looseness=1.75] (7.center) to (5.center);
        \draw [thick, in=180, out=90] (10.center) to (21.center);
        \draw [thick, in=90, out=0] (21.center) to (15.center);
        \draw [fill = black] (3.center) circle  (0.8ex);
        \draw [fill = black] (4.center) circle  (0.8ex);
        \draw [fill = black] (5.center) circle  (0.8ex);
        \draw [fill = black] (9.center) circle  (0.8ex);
        \draw [fill = black] (8.center) circle  (0.8ex);
        \draw [fill = black] (11.center) circle (0.8ex);
        \draw [fill = black] (12.center) circle (0.8ex);
         \node[above] at (0,4.5) {\scriptsize $h$};
        \end{tikzpicture} \\
        & = 
 \begin{tikzpicture}[scale = 0.4, baseline = 1cm]
        \node  (0) at (-0.5, 4.5) {};
		\node  (1) at (2.5, 1) {};
		\node  (2) at (3, 1.75) {};
		\node  (3) at (3, 1.75) {};
		\node  (4) at (2, 2.5) {};
		\node  (5) at (4, 2.5) {};
		\node  (6) at (2, 3.5) {};
		\node  (7) at (4, 3.5) {};
		\node  (8) at (1.525, 3.25) {};
		\node  (9) at (2.475, 3.25) {};
		\node  (10) at (1.25, 4.5) {};
		\node  (11) at (3.525, 3.25) {};
		\node  (12) at (4.475, 3.25) {};
		\node  (13) at (5, 4.5) {};
		\node  (14) at (3.75, 4.5) {};
		\node  (15) at (2.5, 4.5) {};
		\node  (17) at (2.75, 2.75) {};
		\node  (18) at (4.175, 2.625) {};
		\draw [thick,in=180, out=-90, looseness=1.50] (0.center) to (1.center);
		\draw [thick,in=-90, out=0] (1.center) to (2.center);
		\draw [thick] (4.center) to (3.center);
		\draw [thick] (3.center) to (5.center);
		\draw [thick,bend right=90, looseness=1.75] (6.center) to (4.center);
		\draw [thick,in=165, out=-90, looseness=0.75] (10.center) to (8.center);
		\draw [thick,in=-90, out=30, looseness=0.75] (12.center) to (13.center);
		\draw [thick,in=-90, out=135, looseness=1.25] (11.center) to (15.center);
        \draw [white, fill = white] (3.025, 3.65) circle (1.3ex);
		\draw [thick,in=-90, out=45] (9.center) to (14.center);
		\draw [thick,in=105, out=0] (7.center) to (12.center);
		\draw [thick,in=105, out=0] (6.center) to (9.center);
		\draw [thick,bend right=15] (4.center) to (17.center);
		\draw [thick,in=-120, out=45] (17.center) to (12.center);
        \draw [white, fill = white] (2.75, 2.75) circle (1.3ex); 
		\draw [thick,in=135, out=-90] (9.center) to (17.center);
		\draw [thick,in=60, out=-45] (17.center) to (18.center);
		\draw [thick,bend left=15] (18.center) to (5.center);
         \draw [white, fill = white] (3.475, 2.975) circle (1.3ex);
        \draw [white, fill = white] (3.575, 2.675) circle (1.3ex);
        \draw [thick,bend left=270, looseness=1.75] (7.center) to (5.center);
       \draw [fill = black] (3.center) circle  (0.8ex);
        \draw [fill = black] (4.center) circle  (0.8ex);
        \draw [fill = black] (5.center) circle  (0.8ex);
        \draw [fill = black] (9.center) circle  (0.8ex);
        \draw [fill = black] (8.center) circle  (0.8ex);
        \draw [fill = black] (11.center) circle  (0.8ex);
        \draw [fill = black] (12.center) circle  (0.8ex);
        \node[above] at (3.1,4.5) {\scriptsize $ {x} \parallel {x} \hspace{0.3mm} <\hspace{0.3mm} {y} \parallel {y}$};
        \node[above] at (-0.1,4.6) {\scriptsize $ h \hspace{1mm}< $};
\end{tikzpicture}
\end{align*}

\up 
\hfill $\blacksquare$
\end{example}

\begin{proposition} \label{prop:Psi_psi}
Let $n>0$. The homomorphism $E$ 
is well-defined and $\Sp(H)$-equivariant.
Furthermore, 
it fits into the following commutative diagram:
\begin{equation} \label{eq:Psi_psi}
\xymatrix{
\calB^<_n\big(P^{(2)}\big) \ar[r]^-\Psi \ar[d]_-E &
\Tors_2\Big({\displaystyle\frac{Y_{2n+1} \calIC}{Y_{2n+2}}}\Big) \\ 
\Tors_2\big(\calA^{<,c}_{2n+1}(H)\big) \ar[ru]_-\psi &  
}
\end{equation}
\end{proposition}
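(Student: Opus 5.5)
The proof splits into three tasks: show that $E$ respects the defining relations of $\calB^<_n\big(P^{(2)}\big)$, check $\Sp(H)$-equivariance, and establish the commutativity $\psi\circ E=\Psi$ by clasper calculus.

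\emph{Well-definedness.} First, $E(D)$ is already independent of the choice of lifts, as observed after \eqref{eq:E_def}, since every $\widetilde{D}_+^S$ has order $2$. So it remains to check each relation. ``Group'' holds because $\widetilde{D}$, and hence $\widetilde{D}_+^S$, depends only on the homomorphism $H_1(D)\to\Z_2$. Moreover, a bead on a leaf edge or on the root edge only exchanges twins, which is harmless after augmentation. ``AS'' and ``IHX'' at a vertex of $D$ lift to the two twin vertices of $\widetilde{D}$; exactly as in Lemmas \ref{lem:psi_AS} and \ref{lem:psi_IHX}, one gets a product of two relations. The cross terms then cancel in pairs by the deck symmetry combined with AS, since everything is $2$-torsion. ``Self-loop'' lifts to a looped edge or to a bubble, which vanishes or is divisible by $2$ (see \eqref{eq:bubble}). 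For multilinearity at a leaf colored $c+d$, I would expand both twin vertices. The cross terms with colors $(c,d)$ and $(d,c)$ on the two twins cancel by symmetry. The remaining discrepancy, from STU-like reordering of $c<d<c<d$, is $\omega(c,d)$ times the glued diagram, and this matches $\fr(c+d)-\fr(c)-\fr(d)=\omega(c,d)$ by \eqref{eq:fr2}. This is exactly why the $\fr$-weighted glued terms appear. ``Fully-special'' follows from the special relation in $\calA^<(P)$ for the root. For all leaves equal to $s$, the terms with $S$ a proper subset vanish by ``special''. The term with all leaves glued is $\overline{D}_+$, which vanishes by IHX and self-loop, as in the proof of Theorem \ref{th:Psi}. ``BSTU-like'' is the most delicate check. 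It mirrors Lemma \ref{lem:BSTU}: swapping the blocks $c_i\parallel c_i$ and $c_{i+1}\parallel c_{i+1}$ produces four STU terms. Two of them cancel against each other by symmetry, and the surviving two are $E(E)$ and $E(E_{\bead})$. I also have to check the terms of \eqref{eq:E_def} in which $i$ or $i+1$ lies in $S$; here I expect the glued loops to commute with the other block up to terms that cancel by symmetry.

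\emph{Equivariance.} This follows because $\Sp(H)$ acts only on colors, and $\fr$ is $\Sp(H)$-invariant by Lemma \ref{lem:P}.(c), via the isomorphism $\rho$.

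\emph{Commutativity.} Take the clasper $G=G(D;K_0,\dots,K_\ell)$, in which each leaf pair $K'_i,K''_i$ consists of parallel copies of $K_i$. I would make them genuinely slice-separated by moving $K''_i$ above $K'_i$ using Lemma \ref{lem:generalized_STU-like}. The correction term is $-\operatorname{Lk}_+(K'_i,K''_i)$ times the clasper with these two leaves glued. For a parallel copy, $\operatorname{Lk}_+$ is governed by the framing number $\operatorname{Fr}(K_i)$, whose parity is $\fr(t(K_i))$; this uses \eqref{eq:sigma} and \cite[\S B]{MM13}. Doing this for all $i$ and expanding yields $\sum_S \prod_{j\in S}\fr(c_j)\,\psi(\widetilde{D}_+^S)$ modulo $2$, which is $\psi(E(D))$. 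Signs are irrelevant since every term has order $2$. The main obstacle will be this last step. The difficulty is the precise identification of $\operatorname{Lk}_+$ of a framed parallel pair with $\fr(c_i)$ modulo $2$, which must include the $(+1)$-twist convention in $t$, and its compatibility with the parallel-notation convention \eqref{eq:parallel_notation}.
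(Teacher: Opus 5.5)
Apart from the BSTU-like check, your outline coincides with the paper's proof. The relation-by-relation verification is the same: deck involution plus $2$-torsion for AS and IHX, the cocycle \eqref{eq:fr2} for multilinearity, and the special relation together with $\overline{D}_+=0$ for fully-special. The commutativity argument is also the same, through Lemma~\ref{lem:generalized_STU-like} with $\operatorname{Lk}_\pm(K'_i,K''_i)=\operatorname{Fr}(K_i)$, whose parity is $\fr(c_i)$ by the remark following \eqref{eq:fr}. So the $(+1)$-twist issue you worry about is already settled. (In the self-loop check the bubble case never arises, since that relation is imposed only on bead-free loops; divisibility by $2$ alone would not have given vanishing anyway.)

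The BSTU-like step, however, does not work as you describe it. Label the twins of the two consecutive leaves $x_1<x_2$ and $y_1<y_2$. Moving $y_1,y_2$ below $x_1,x_2$ produces four STU terms, all with coefficient $\omega(x,y)$. Each glues one $x_j$ to one $y_k$ and keeps the other two as external vertices:
\[
[x_2{=}y_1](x_1<y_2),\quad [x_1{=}y_1](x_2<y_2),\quad [x_2{=}y_2](y_1<x_1),\quad [x_1{=}y_2](y_1<x_2).
\]
None of these can be a summand of $E(E)$ or $E(E_{\bead})$. Those diagrams are double covers of $D$ with leaves $i,i+1$ glued, so they have \emph{both} pairs of twins glued (in the two possible matchings, according to the bead).

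What actually happens, exactly as in the proof of Lemma~\ref{lem:BSTU}, is the following. The deck involution acts by $-1$ through AS at the extra $Y$-vertex, and it pairs the first term with the fourth and the second with the third. In each pair, however, the two remaining external vertices appear in opposite orders. A second STU-like move is needed to compare them, and this is what creates the double gluings. So all four single-gluing terms cancel in pairs. The difference is $\omega(x,y)^2$ times the sum of the two double gluings, and this becomes $\omega(x,y)\big(E(E)+E(E_{\bead})\big)$ only after using $\omega^2\equiv\omega \pmod 2$.

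Finally, the summands with $i\in S$ or $i+1\in S$ need no commutation at all. Glued twins leave no external vertex, so these summands are literally identical for both orderings and cancel exactly.
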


\begin{proof}
To prove that the homomorphism $E$ is well-defined, 
we need to check that the quantity \eqref{eq:E_def} 
written for a generator $D$ of $\calB^<_n\big(P^{(2)}\big)$
is compatible with each of the defining relations of this module.

Clearly, $E$ is compatible with the group relations,
since they merely encode the ambiguity arising in the description of a group homomorphism $H_1(D) \to \Z_2$ in terms of beads, which is equivalent to the specification of the $2$-fold cover $\widetilde{D}$ of $D$.
The self-loop relation is also obviously respected,
by using the first observation in Remark \ref{rem:few}.
The homomorphism $E$ also respects the AS relation, 
because of the AS relation in $\calA^{<,c}(H)$ 
and the fact that \eqref{eq:E_def} is a $2$-torsion element. 
To prove that $E$ is compatible with the IHX relation,
we use the same combinatorial argument as for Lemma \ref{lem:psi_IHX}
(using ``IHX'' in $\calA^{<,c}(H)$ 
and the symmetry of $\widetilde{D}$).

To deal with the BSTU-like relation,
we consider two consecutive leaves in a Jacobi diagram
$D \in \calB^<_n\big(P^{(2)}\big)$ with colors $x,y\in P^{(2)}$:
$$
D= \begin{tikzpicture}[thick, baseline=-0.9em, x=2em, y=2em]
\footnotesize
\draw[dotted] (0,0.5) -- (0,0);
\draw (0,0) -- (0,-0.8);
\draw[dotted] (0.8,0.5) -- (0.8,0);
\draw (0.8,0) -- (0.8,-0.8);
\node[below=1pt] at (0.4,-0.8) {$\cdots<x<y<\cdots$};
\end{tikzpicture}
$$
The sum defining $E(D)$, indexed by the subsets $S$
of the set of leaves of $D$, 
splits into 4 sums, corresponding to whether or not
each of the two leaves under consideration belongs to $S$ or not.
Thus, each summand of $E(D)$ can be depicted locally 
as one of the following cases:
\begin{center}
\begin{tabular}{c|c}
\setlength{\tabcolsep}{10pt}
  \begin{tikzpicture}[thick, baseline=-0.9em, x=2em, y=2em]
\footnotesize
\draw[dotted] (-0.95,0.5) -- (-0.95,0);
\draw[dotted] (-0.65,0.5) -- (-0.65,0);
\draw (-0.95,0)  to [out=-90,in=90](-1.2,-1);
\draw (-0.65,0)  to [out=-90,in=90](-0.4,-1);
\draw[dotted] (0.95,0.5) -- (0.95,0);
\draw[dotted] (0.65,0.5) -- (0.65,0);
\draw (0.95,0)  to [out=-90,in=90](1.2,-1);
\draw (0.65,0)  to [out=-90,in=90](0.4,-1);
\node[below=1pt] at (0,-1) {$\cdots<{x}<{x}<{y}<{y}<\cdots$};
\end{tikzpicture}&
$\fr(x)\fr(y)$\begin{tikzpicture}[thick, x=2em, y=2em, baseline=(mathaxis)]
\footnotesize
\path[use as bounding box] (-1.6,0.6) rectangle (1.6,-1.4);
\coordinate (mathaxis) at (0,-0.25);
\draw[dotted] (-0.95,0.5) -- (-0.95,0);
\draw[dotted] (-0.65,0.5) -- (-0.65,0);
\draw (-0.95,0)  to [out=-90,in=90](-1.2,-0.8);
\draw (-0.65,0)  to [out=-90,in=90](-0.4,-0.8);
\draw[dotted] (0.95,0.5) -- (0.95,0);
\draw[dotted] (0.65,0.5) -- (0.65,0);
\draw (0.95,0)  to [out=-90,in=90](1.2,-0.8);
\draw (0.65,0)  to [out=-90,in=90](0.4,-0.8);
\draw (-1.2,-0.8) to[out=-90, in=-90, ,looseness = 1.2] (-0.4,-0.8);
\draw (1.2,-0.8) to[out=-90, in=-90,looseness = 1.2] (0.4,-0.8);
\end{tikzpicture}
  \\ \hline
$\fr(y)$ \begin{tikzpicture}[thick, x=2em, y=2em, baseline=(mathaxis)]
\footnotesize
\path[use as bounding box] (-1.6,0.6) rectangle (1.6,-1.4);
\coordinate (mathaxis) at (0,-0.5);
\footnotesize
\draw[dotted] (-0.95,0.5) -- (-0.95,0);
\draw[dotted] (-0.65,0.5) -- (-0.65,0);
\draw (-0.95,0)  to [out=-90,in=90](-1.2,-0.8);
\draw (-0.65,0)  to [out=-90,in=90](-0.4,-0.8);
\draw[dotted] (0.95,0.5) -- (0.95,0);
\draw[dotted] (0.65,0.5) -- (0.65,0);
\draw (0.95,0)  to [out=-90,in=90](1.2,-0.8);
\draw (0.65,0)  to [out=-90,in=90](0.4,-0.8);
\draw (1.2,-0.8) to[out=-90, in=-90] (0.4,-0.8);
\node[below=3pt] at (-0.8,-0.9) {$\cdots<{x}<{x}<\cdots$};
\end{tikzpicture}&
 $\fr(x)$ \begin{tikzpicture}[thick, x=2em, y=2em, baseline=(mathaxis)]
\footnotesize
\path[use as bounding box] (-1.6,0.6) rectangle (1.6,-1.4);
\coordinate (mathaxis) at (0,-0.5);
\footnotesize
\draw[dotted] (-0.95,0.5) -- (-0.95,0);
\draw[dotted] (-0.65,0.5) -- (-0.65,0);
\draw (-0.95,0)  to [out=-90,in=90](-1.2,-0.8);
\draw (-0.65,0)  to [out=-90,in=90](-0.4,-0.8);
\draw[dotted] (0.95,0.5) -- (0.95,0);
\draw[dotted] (0.65,0.5) -- (0.65,0);
\draw (0.95,0)  to [out=-90,in=90](1.2,-0.8);
\draw (0.65,0)  to [out=-90,in=90](0.4,-0.8);
\draw (-1.2,-0.8) to[out=-90, in=-90] (-0.4,-0.8);
\node[below=3pt] at (0.8,-0.9) {$\cdots<{y}<{y}<\cdots$};
\end{tikzpicture}
\end{tabular}
\end{center}
Therefore, we obtain
\begin{align*}
&E\left(
\begin{tikzpicture}[thick, baseline=-0.9em, x=2em, y=2em]
\footnotesize
\draw[dotted] (0,0.5) -- (0,0);
\draw (0,0) -- (0,-0.8);
\draw[dotted] (0.8,0.5) -- (0.8,0);
\draw (0.8,0) -- (0.8,-0.8);
\node[below=1pt] at (0.4,-0.8) {$\cdots<x<y<\cdots$};
\end{tikzpicture} -
\begin{tikzpicture}[thick,baseline=-0.9em, x=2em, y=2em]
\footnotesize
\draw[dotted] (0,0.5) -- (0,0);
\draw[dotted] (1,0.5) -- (1,0);
\draw (0,0) to [out=-90,in=90] (1,-0.8);
\fill [white] (0.5,-0.5) circle [radius=0.2];
\draw (1,0) to [out=-90,in=90] (0,-0.8);
\node[below=1pt] at (0.5,-0.8) {$\cdots<y<x<\cdots$};
\end{tikzpicture}\right)\\
&= \sum
\begin{tikzpicture}[thick,baseline=-0.9em, x=2em, y=2em]
\footnotesize
\draw[dotted] (-0.95,0.5) -- (-0.95,0);
\draw[dotted] (-0.65,0.5) -- (-0.65,0);
\draw (-0.95,0)  to [out=-90,in=90](-1.2,-0.8);
\draw (-0.65,0)  to [out=-90,in=90](-0.4,-0.8);
\draw[dotted] (0.95,0.5) -- (0.95,0);
\draw[dotted] (0.65,0.5) -- (0.65,0);
\draw (0.95,0)  to [out=-90,in=90](1.2,-0.8);
\draw (0.65,0)  to [out=-90,in=90](0.4,-0.8);
\node[below=1pt] at (0,-0.8) {$\cdots<{x}<{x}<{y}<{y}<\cdots$};
\end{tikzpicture}
- \sum
\begin{tikzpicture}[thick,baseline=-0.9em, x=2em, y=2em]
\footnotesize
\draw[dotted] (-0.95,0.8) -- (-0.95,0.3);
\draw[dotted] (-0.65,0.8) -- (-0.65,0.3);
\draw[dotted] (0.95,0.8) -- (0.95,0.3);
\draw[dotted] (0.65,0.8) -- (0.65,0.3);
\draw (-0.65,0.3)  to [out=-90,in=90, looseness = 0.8](1.2,-0.8);
\draw (-0.95,0.3)  to [out=-90,in=90](0.4,-0.8);
\fill [white] (0,-0.35) circle [radius=0.12];
\fill [white] (0,-0.18) circle [radius=0.12];
\fill [white] (0.28,-0.24) circle [radius=0.12];
\fill [white] (-0.28,-0.24) circle [radius=0.12];
\draw (0.65,0.3)  to [out=-90,in=90, looseness = 0.8](-1.2,-0.8);
\draw (0.95,0.3)  to [out=-90,in=90](-0.4,-0.8);
\node[below=1pt] at (0,-0.8) {$\cdots<{y}<{y}<{x}<{x}<\cdots$};
\end{tikzpicture}\\
&=\omega(p_*({x}),p_*({y}))^2\, \left( \sum
\begin{tikzpicture}[thick,baseline=-0.4em, x=2em, y=2em]
\footnotesize
\draw[dotted] (-0.95,0.5) -- (-0.95,0);
\draw[dotted] (-0.55,0.5) -- (-0.55,0);
\draw[dotted] (0.95,0.5) -- (0.95,0);
\draw[dotted] (0.55,0.5) -- (0.55,0);
\draw (-0.95,0)  to [out=-90,in=-90](0.55,0);
\fill [white] (0,-0.4) circle [radius=0.3];
\draw (-0.55,0)  to [out=-90,in=-90](0.95,0);
\node[below=1pt] at (-1,-0.6) {$\cdots$};
\node[below=1pt] at (1.1,-0.6) {$\cdots$};
\end{tikzpicture}
+ \sum
\begin{tikzpicture}[thick,baseline=-0.4em, x=2em, y=2em]
\footnotesize
\draw[dotted] (-0.95,0.5) -- (-0.95,0);
\draw[dotted] (-0.55,0.5) -- (-0.55,0);
\draw[dotted] (0.95,0.5) -- (0.95,0);
\draw[dotted] (0.65,0.5) -- (0.65,0);
\draw (-0.95,0)  to [out=-90,in=-90](0.95,0);
\draw (-0.55,0)  to [out=-90,in=-90](0.55,0);
\node[below=1pt] at (-1,-0.6) {$\cdots$};
\node[below=1pt] at (1.1,-0.6) {$\cdots$};
\end{tikzpicture}
\right)\\
&=\omega(p_*({x}),p_*({y}))\, E\left( 
\begin{tikzpicture}[thick,baseline=-0.9em, x=2em, y=2em]
\footnotesize
\draw[dotted] (0,0.5) -- (0,0);
\draw[dotted] (1,0.5) -- (1,0);
\draw (0,0) -- (0,-0.5);
\draw (1,0) -- (1,-0.5);
\draw (0,-0.5) to [out=-90,in=-90] (1,-0.5) ;
\node[below=1pt] at (-0.3,-0.7) {$\cdots$};
\node[below=1pt] at (1.4,-0.7) {$\cdots$};
\end{tikzpicture}
+  \begin{tikzpicture}[thick,baseline=-0.9em, x=2em, y=2em]
\draw[dotted] (0,0.5) -- (0,0);
\draw[dotted] (1,0.5) -- (1,0);
\draw (0,0) -- (0,-0.5);
\draw (1,0) -- (1,-0.5);
\draw (0,-0.5) to [out=-90,in=-90] (1,-0.5) ;
\node[below=1pt] at (-0.3,-0.7) {$\cdots$};
\node[below=1pt] at (1.4,-0.7) {$\cdots$};
 \draw[thin, black, fill=lightgray] (0.5,-0.8) circle (2pt);
\end{tikzpicture}\right).
\end{align*}
Here, the summations are taken over all subsets of the set of leaves 
of \(D\) that do not contain the two leaves under consideration; 
the second equality follows from the application of the STU-like relation (together with the AS relation) in \(\mathcal{A}^{<,c}(H)\), combined with an analysis of the symmetries of the corresponding Jacobi diagrams.

Next, we prove that  $E$ is compatible
with the multilinearity relation.
At the root of a Jacobi diagram $D$, 
this follows immediately from the definition of $E(D)$
using ``multilinearity'' in \(\mathcal{A}^{<,c}(H)\). 
At a  leaf of a Jacobi diagram $D(x+y)$ colored by $x+y\in P^{(2)}$, 
we can write locally $E(D(x+y))-E(D(x))-E(D(y))$
in the following way
using the notation \eqref{eq:parallel_notation}: 
\begin{eqnarray*}
&&\begin{tikzpicture}[thick,baseline=-1.5em, x=1em, y=1em]
\footnotesize
\draw[dotted] (0.7,-0.8)-- (0.7,0);
\draw[dotted] (-0.7,-0.8)-- (-0.7,0);
\draw (0.7,-2)-- (0.7,-0.8);
\draw (-0.7,-2)-- (-0.7,-0.8);
\node[below] at (0,-2) {$\cdots<x+y \parallel x+y<\cdots$};
\end{tikzpicture}
- \begin{tikzpicture}[thick,baseline=-1.5em, x=1em, y=1em]
\footnotesize
\draw[dotted] (0.7,-0.8)-- (0.7,0);
\draw[dotted] (-0.7,-0.8)-- (-0.7,0);
\draw (0.7,-2)-- (0.7,-0.8);
\draw (-0.7,-2)-- (-0.7,-0.8);
\node[below] at (0,-2) {$\cdots<x \parallel x<\cdots$};
\end{tikzpicture} -
\begin{tikzpicture}[thick,baseline=-1.5em, x=1em, y=1em]
\footnotesize
\draw[dotted] (0.7,-0.8)-- (0.7,0);
\draw[dotted] (-0.7,-0.8)-- (-0.7,0);
\draw (0.7,-2)-- (0.7,-0.8);
\draw (-0.7,-2)-- (-0.7,-0.8);
\node[below] at (0,-2) {$\cdots<y \parallel y<\cdots$};
\end{tikzpicture}\\
&=& 
\Big( \begin{tikzpicture}[thick,baseline=-1.5em, x=1em, y=1em]
\footnotesize
\draw[dotted] (0.7,-0.8)-- (0.7,0);
\draw[dotted] (-0.7,-0.8)-- (-0.7,0);
\draw (0.7,-2)-- (0.7,-0.8);
\draw (-0.7,-2)-- (-0.7,-0.8);
\node[below] at (0,-2) {$\cdots<x+y<x+y<\cdots$};
\end{tikzpicture}
\!\!\!\! + {\fr(x+y)}\quad
\begin{tikzpicture}[thick,baseline=-1.5em, x=1em, y=1em]
\footnotesize]
\draw[dotted] (0.7,-0.8)-- (0.7,0);
\draw[dotted] (-0.7,-0.8)-- (-0.7,0);
 \draw (0.7,-0.8) to [out=-90, in=-90, looseness=3] (-0.7,-0.8);
\end{tikzpicture} \ \Big) 
- \!\!\!\! \begin{tikzpicture}[thick,baseline=-1.5em, x=1em, y=1em]
\footnotesize
\draw[dotted] (0.7,-0.8)-- (0.7,0);
\draw[dotted] (-0.7,-0.8)-- (-0.7,0);
\draw (0.7,-2)-- (0.7,-0.8);
\draw (-0.7,-2)-- (-0.7,-0.8);
\node[below] at (0,-2) {$\cdots<x \parallel x<\cdots$};
\end{tikzpicture} - \!\!\!\!
\begin{tikzpicture}[thick,baseline=-1.5em, x=1em, y=1em]
\footnotesize
\draw[dotted] (0.7,-0.8)-- (0.7,0);
\draw[dotted] (-0.7,-0.8)-- (-0.7,0);
\draw (0.7,-2)-- (0.7,-0.8);
\draw (-0.7,-2)-- (-0.7,-0.8);
\node[below] at (0,-2) {$\cdots<y \parallel y<\cdots$};
\end{tikzpicture}\\
&=&   \begin{tikzpicture}[thick,baseline=-1.5em, x=1em, y=1em]
\footnotesize
\draw[dotted] (0.7,-0.8)-- (0.7,0);
\draw[dotted] (-0.7,-0.8)-- (-0.7,0);
\draw (0.7,-2)-- (0.7,-0.8);
\draw (-0.7,-2)-- (-0.7,-0.8);
\node[below] at (0,-2) {$\cdots<x < y<\cdots$};
\end{tikzpicture}+
\begin{tikzpicture}[thick,baseline=-1.5em, x=1em, y=1em]
\footnotesize
\draw[dotted] (0.7,-0.8)-- (0.7,0);
\draw[dotted] (-0.7,-0.8)-- (-0.7,0);
\draw (0.7,-2)-- (0.7,-0.8);
\draw (-0.7,-2)-- (-0.7,-0.8);
\node[below] at (0,-2) {$\cdots<y < x<\cdots$};
\end{tikzpicture} + {\omega(x,y)}\quad
\begin{tikzpicture}[thick,baseline=-1.5em, x=1em, y=1em]
\footnotesize]
\draw[dotted] (0.7,-0.8)-- (0.7,0);
\draw[dotted] (-0.7,-0.8)-- (-0.7,0);
 \draw (0.7,-0.8) to [out=-90, in=-90, looseness=3] (-0.7,-0.8);
\end{tikzpicture} \ = \ 0.
\end{eqnarray*}
Here the second equality is obtained 
by ``multilinearity'' in $\calA^{<,c}(H)$ using \eqref{eq:fr2};
as for  the third equality, 
it follows from ``STU-like'' 
and ``AS'' in $\calA^{<,c}(H)$
using the symmetry of the Jacobi diagrams.

We now prove that $E$ is compatible
with the fully-special relation.
If $D \in  \calB^<_n\big(P^{(2)}\big)$ is a Jacobi diagram
whose root is colored by $s$, then  $E(D)$ is obviously trivial 
since the diagram in each  summand of \eqref{eq:E_def} 
has a vertex colored by $0\in H$.
If  now $D \in  \calB^<_n\big(P^{(2)}\big)$ is a Jacobi diagram
with all leaves colored by $s$, 
then the formula \eqref{eq:E_def} for $E(D)$
reduces to a Jacobi diagram in 
$\calA^{<,c}_{2n+1}(H)$ with a single external vertex:
this is zero by ``IHX'' and ``self-loop''.
Thus, we have completed the proof of the well-definedness of $E$.

To prove the commutativity of \eqref{eq:Psi_psi},
let $D$ be a generator of $\calB^<_n\big(P^{(2)}\big)$.
Choose an oriented framed knot $K_0$ 
whose type in $P^{(2)}$ is the color of the root of $D$
and, for every $i\in \{1,\dots,\ell\}$, 
choose an oriented framed knot $K_i$ 
whose type in $P^{(2)}$ is the color of the $i$-th leaf of $D$.
Then, the commutativity of \eqref{eq:Psi_psi} says that
\begin{equation} \label{eq:Psi_E}
\psi(E(D)) = \{U_G\}
\in \frac{Y_{2n+1} \calIC}{Y_{2n+2}}
\end{equation}
where
$$
G:= G(D\hbox{\small \ uncolored};K_0, K_1, \dots, K_\ell)
$$
is a graph clasper  constructed as in the proof of Theorem \ref{th:Psi}.
Recall that $G$ is a topological realization 
of the Jacobi diagram $\widetilde{D}_+$
whose leaves are organized by slices: the $0$-th slice of $U$  contains $K_0$
and, for every $i\in \{1,\dots,\ell\}$, 
the $i$-th slice of $U$ contains two parallel copies $K'_i,K''_i$ of $K_i$.
We apply Lemma \ref{lem:generalized_STU-like} for each pair 
of leaves $(K'_i,K''_i)$ of $G$, observing that
$$
\operatorname{Lk}_\pm(K'_i,K''_i)
= \operatorname{Lk}(K'_i,K''_i) = \operatorname{Fr}(K_i);
$$
we deduce that $\{U_G\}$ is 
in ${Y_{2n+1} \calIC}/{Y_{2n+2}}$
the sum of $2^\ell$ elements, namely $\psi(E(D))$,
thus proving \eqref{eq:Psi_E}.
\end{proof}

\subsection{The map $\Delta$}

\label{subsec:lower_bound}

We assume here that $n>0$
(the special case $n=0$ being fully considered 
in \S \ref{sec:deg_1}). 
We define a homomorphism 
$$
\Delta:  \calB^<_n\big(P^{(2)}\big)
\longrightarrow \calA^{<,c}_{2n+2}(H) \otimes \Q/\Z
$$
by assigning to every rooted beaded Jacobi diagram  $D$
the linear combination 
\begin{equation} \label{eq:Delta_def}
\Delta(D):=\sum_{S}  \frac{1}{2} 
\Big(\prod_{j \in S}   \fr(c_j) \Big) 
\cdot \Big(\widetilde{D}_{++}^S + \sum_{i \not\in S } \widetilde{D}_{+}^{S,i} \Big).
\end{equation}
Here the sum is over the subsets $S$
of the set of leaves of $D$,
the inner sum is over the leaves $i$ of $D$ such that $i\not \in S$,
and the inner summands $\widetilde{D}_{+}^{S,i},\widetilde{D}_{++}^S$ 
are defined as follows:
\begin{itemize}
\item $\widetilde{D}_+$
is (as in the proof of Theorem \ref{th:Psi})
the augmentation of the $2$-fold cover $\widetilde{D}$
by a $Y$-diagram,
and its $(S,i)$-modification  
$$\widetilde{D}_+^{S,i}\in \calA^{<,c}_{2n+2}(P)\otimes \Z_2
\simeq \calA^{<,c}_{2n+2}(H)\otimes \Z_2 $$ 
is constructed as follows: 
the root of $\widetilde{D}_+$ 
is given the same color as the root of $D$
(lifting this from $P^{(2)}$ to $P$)
and is declared to be ``lower'' 
than all other vertices; 
for every leaf $v$ of $D$,
the corresponding twin vertices of $\widetilde{D}_+$
are either  glued to a $Y$-diagram 
whose free vertex is colored by (a lift of) $c_i$ 
and is ordered like $i$ in $D$  if $v=i$,
or they both inherit the color $c_v$ 
(lifting this from $P^{(2)}$ to $P$)
and the position of $v$ in $D$ if $v\not\in S\cup\{i\}$,
or they are glued together if $v \in S$;
\item  $\widetilde{D}_{++}$ is the augmentation of $\widetilde{D}$ 
obtained by gluing the twin root vertices of $\widetilde{D}$ to 
the ``top''  external vertices  of a $H$-diagram, 
and its $S$-modification
$$
\widetilde{D}_{++}^S\in \calA^{<,c}_{2n+2}(P)\otimes \Z_2 
\simeq \calA^{<,c}_{2n+2}(H)\otimes \Z_2  
$$ 
is constructed as follows:
the ``bottom'' vertices of the extra $H$-diagram are given 
the same color as the root of $D$ (lifting this from $P^{(2)}$ to $P$)
and  declared to be ``lower'' 
than all other vertices;
for every leaf $v$ of $D$, 
the corresponding twin vertices of $\widetilde{D}_{++}$
are either  glued together if $v\in S$,
or they both inherit the color $c_v$
(lifting this from $P^{(2)}$ to $P$)
and the position of $v$ in $D$ if $v\not\in S$.
\end{itemize}
Alternatively, we can write \eqref{eq:Delta_def} as 
\begin{equation} \label{eq:Delta_def_bis}
\Delta(D) :=   \sum_{S}  \frac{1}{2} 
\Big(\prod_{j \in S}   \fr(c_j) \Big) \cdot \widetilde{D}_{++}^S 
+    \sum_i \sum_{S \not\ni i}
\frac{1}{2} \Big(\prod_{j \in S}   \fr(c_j) \Big) \cdot  \widetilde{D}_{+}^{S,i} 
\end{equation}
where the second sum is indexed by leaves $i$ of $D$,
and its inner sum is over  subsets $S$
of the set of leaves of $D$ not containing $i$.

\begin{example}
Here are the values of $\Delta$
on the two Jacobi diagrams of  Example~\ref{ex:E} 
 using convention \eqref{eq:lifts}
 and notation \eqref{eq:parallel_notation}:
$$ \Delta(\begin{tikzpicture}[scale = 0.5,baseline = 0.1cm]
		\node (0) at (1, 0) {};
		\node (1) at (1, -0.75) {};
		\node (2) at (1, 1) {};
		\node (3) at (0.55, 0.775) {};
		\node (4) at (1.45, 0.775) {};
		\node (5) at (0, 1.25) {};
		\node (6) at (2, 1.25) {};
		\draw [thick] (0.center) to (1.center);
        \draw [thick] (0.center) to (1.center);
		\draw [thick] [in=180, out=-180, looseness=1.75] (2.center) to (0.center);
		\draw [thick] [bend left=90, looseness=1.75] (2.center) to (0.center);
		\draw [thick] (5.center) to (3.center);
		\draw [thick] (6.center) to (4.center);
        \draw [fill = black] (0.center) circle  (0.8ex);
        \draw [fill = black] (3.center) circle  (0.8ex);
        \draw [fill = black] (4.center) circle  (0.8ex);
        \node[below=0.5pt] at (1,-0.75) {\scriptsize $h \star$};
        \node[above] at (1,1.2) {\scriptsize $x$ \hspace{1mm} $<$ \hspace{1mm} $y$};
\end{tikzpicture}) =  \frac{1}{2}\begin{tikzpicture}[thick, scale = 0.4, baseline = 1.2cm]
	    \node (0) at (-0.6, 4.5) {};
	    \node (17) at (-1.85, 4.5) {};
		\node  (1) at (2, 1.35) {};
	    \node (18) at (3, 1) {};
		\node  (2) at (2.25, 2) {};
        \node (19) at (-1.50, 4.5) {};
		\node  (3) at (3.75, 2) {};
		\node  (4) at (2, 2.75) {};
		\node  (5) at (4, 2.75) {};
		\node  (7) at (2, 3.75) {};
		\node  (8) at (4, 3.75) {};
		\node  (9) at (1.525, 3.5) {};
		\node  (10) at (2.475, 3.5) {};
		\node  (11) at (1, 4.5) {};
		\node  (12) at (3.525, 3.5) {};
		\node  (13) at (4.475, 3.5) {};
		\node  (14) at (5, 4.5) {};
		\node  (15) at (3.75, 4.5) {};
		\node  (16) at (2.25, 4.5) {};
		\draw [in=180, out=-90, looseness=1.50] (0.center) to (1.center);		
        \draw [in=180, out=-90, looseness=1.50] (17.center) to (18.center);
		\draw [in=-90, out=0] (1.center) to (2.center);
        \draw [in=-90, out=0] (18.center) to (3.center);
        \draw (2.center) to (3.center);
		\draw (4.center) to (2.center);
		\draw (3.center) to (5.center);
		\draw [bend right=90, looseness=1.75] (7.center) to (4.center);
		\draw [bend left=90, looseness=1.75] (7.center) to (4.center);
		\draw [bend left=270, looseness=1.75] (8.center) to (5.center);
		\draw [bend left=90, looseness=1.75] (8.center) to (5.center);
		\draw [in=-180, out=-90] (11.center) to (9.center);
		\draw [in=-90, out=0] (13.center) to (14.center);
		\draw [in=-90, out=90] (12.center) to (16.center);
        \draw [white, fill = white] (3, 3.975) circle (1.3ex);
		\draw [in=-90, out=90] (10.center) to (15.center);
        \draw [fill = black] (3.center) circle  (0.8ex);
        \draw [fill = black] (4.center) circle  (0.8ex);
        \draw [fill = black] (2.center) circle  (0.8ex);
        \draw [fill = black] (5.center) circle  (0.8ex);
        \draw [fill = black] (9.center) circle  (0.8ex);
        \draw [fill = black] (10.center) circle  (0.8ex);
        \draw [fill = black] (13.center) circle  (0.8ex);
        \draw [fill = black] (12.center) circle  (0.8ex);
        \node[above] at (3,4.5) {\scriptsize ${x} \parallel {x} \hspace{1mm} <\hspace{1mm} {y} \parallel {y}$};
        \node[above] at (-0.9,4.6) {\scriptsize $h < h< $};
\end{tikzpicture} + \frac{1}{2} \begin{tikzpicture}[scale = 0.4, baseline = 1.2cm]
        \node  (0) at (0, 5) {};
		\node  (1) at (2.5, 1) {};
		\node  (2) at (3, 1.75) {};
		\node  (3) at (3, 1.75) {};
		\node  (4) at (2, 2.5) {};
		\node  (5) at (4, 2.5) {};
		\node  (6) at (2, 3.5) {};
		\node  (7) at (4, 3.5) {};
		\node  (8) at (1.525, 3.25) {};
		\node  (9) at (2.475, 3.25) {};
		\node  (10) at (1.25, 4) {};
		\node  (11) at (3.525, 3.25) {};
		\node  (12) at (4.475, 3.25) {};
		\node  (13) at (5, 5) {};
		\node  (14) at (3 .75, 5) {};
		\node  (15) at (2.25, 4) {};
		\node  (16) at (1.75, 4.5) {};
		\node  (17) at (1.75, 4.5) {};
		\node  (18) at (1.75, 5) {};
        \draw [thick] (17.center) to (18.center);
        \draw [thick,in=180, out=-90, looseness=1.50] (0.center) to (1.center);
		\draw [thick,in=-90, out=0] (1.center) to (2.center);
		\draw [thick] (4.center) to (3.center);
		\draw [thick] (3.center) to (5.center);
		\draw [thick ,bend right=90, looseness=1.75] (6.center) to (4.center);
		\draw [thick,bend left=90, looseness=1.75] (6.center) to (4.center);
		\draw [thick,bend left=270, looseness=1.75] (7.center) to (5.center);
		\draw [thick,bend left=90, looseness=1.75] (7.center) to (5.center);
		\draw [thick,in=180, out=-90] (10.center) to (8.center);
		\draw [thick,in=-90, out=45, looseness=1.25] (12.center) to (13.center);
		\draw [thick, in=-90, out=90] (11.center) to (15.center);
        \draw [white, fill = white] (2.675, 3.65) circle (1ex); 
		\draw [thick,in=-90, out=75] (9.center) to (14.center);
		\draw [thick,bend left=45] (10.center) to (16.center);
		\draw [thick,bend left=45] (16.center) to (15.center);
        \draw [fill = black] (3.center) circle  (0.8ex);
        \draw [fill = black] (4.center) circle  (0.8ex);
        \draw [fill = black] (5.center) circle  (0.8ex);
        \draw [fill = black] (9.center) circle  (0.8ex);
        \draw [fill = black] (8.center) circle  (0.8ex);
        \draw [fill = black] (11.center) circle  (0.8ex);
        \draw [fill = black] (12.center) circle  (0.8ex);
         \draw [fill = black] (17.center) circle  (0.8ex);
          \node[above] at (2.4,5) {\scriptsize $h  \hspace{1mm} < \hspace{1mm} x \hspace{1mm} < \hspace{2mm} y\parallel y$};
        \end{tikzpicture} + \frac{1}{2}
        \begin{tikzpicture}[scale = 0.4, baseline = 1.2cm] 
        \node  (0) at (0, 5) {};
		\node  (1) at (2.5, 1) {};
		\node  (2) at (3, 1.75) {};
		\node  (3) at (3, 1.75) {};
		\node  (4) at (2, 2.5) {};
		\node  (5) at (4, 2.5) {};
		\node  (6) at (2, 3.5) {};
		\node  (7) at (4, 3.5) {};
		\node  (8) at (1.525, 3.25) {};
		\node  (9) at (2.475, 3.25) {};
		\node  (10) at (1.25, 5) {};
		\node  (11) at (3.525, 3.25) {};
		\node  (12) at (4.475, 3.25) {};
		\node  (13) at (4.75, 4) {};
		\node  (14) at (3.75, 4) {};
		\node  (15) at (2.25, 5) {};
		\node  (17) at (4.25, 4.5) {};
		\node  (18) at (4.25, 4.5) {};
		\node  (19) at (4.25, 5) {};
        \draw [thick] (18.center) to (19.center);
        \draw [thick, in=180, out=-90, looseness=1.50] (0.center) to (1.center);
		\draw [thick,in=-90, out=0] (1.center) to (2.center);
		\draw [thick] (4.center) to (3.center);
		\draw [thick] (3.center) to (5.center);
		\draw [ thick,bend right=90, looseness=1.75] (6.center) to (4.center);
		\draw [ thick,bend left=90, looseness=1.75] (6.center) to (4.center);
		\draw [ thick,bend left=270, looseness=1.75] (7.center) to (5.center);
		\draw [ thick,bend left=90, looseness=1.75] (7.center) to (5.center);
		\draw [ thick,in=165, out=-90, looseness=0.75] (10.center) to (8.center);
		\draw [ thick,in=-90, out=45, looseness=1.25] (12.center) to (13.center);
		\draw [ thick,in=-90, out=135, looseness=1.25] (11.center) to (15.center);
        \draw [white, fill = white] (3.1, 3.55) circle (1.3ex); 
		\draw [ thick,in=-90, out=90, looseness=0.75] (9.center) to (14.center);
		\draw [ thick,in=180, out=90] (14.center) to (17.center);
		\draw [ thick,in=90, out=0] (17.center) to (13.center);
        \draw [fill = black] (3.center) circle  (0.8ex);
        \draw [fill = black] (4.center) circle  (0.8ex);
        \draw [fill = black] (5.center) circle  (0.8ex);
        \draw [fill = black] (9.center) circle  (0.8ex);
        \draw [fill = black] (8.center) circle  (0.8ex);
        \draw [fill = black] (11.center) circle  (0.8ex);
        \draw [fill = black] (12.center) circle  (0.8ex);
        \draw [fill = black] (18.center) circle  (0.8ex);
         \node[above] at (2 ,5) {\scriptsize $h < x \parallel x \hspace{1mm}< \hspace{1mm}y$};
        \end{tikzpicture}$$

\begin{align*}
\Delta\Bigg(\begin{tikzpicture}[scale = 0.5,baseline = 0.1cm]
		\node (0) at (1, 0) {};
		\node (1) at (1, -0.75) {};
		\node (2) at (1, 1) {};
		\node (3) at (0.55, 0.775) {};
		\node (4) at (1.45, 0.775) {};
		\node (5) at (0, 1.25) {};
		\node (6) at (2, 1.25) {};
		\draw [thick] (0.center) to (1.center);
        \draw [thick] (0.center) to (1.center);
		\draw [thick] [in=180, out=-180, looseness=1.75] (2.center) to (0.center);
		\draw [thick] [bend left=90, looseness=1.75] (2.center) to (0.center);
		\draw [thick] (5.center) to (3.center);
		\draw [thick] (6.center) to (4.center);
        \draw [fill = black] (0.center) circle  (0.8ex);
        \draw [fill = black] (3.center) circle  (0.8ex);
        \draw [fill = black] (4.center) circle  (0.8ex);
        \node[below=0.5pt] at (1,-0.75) {\scriptsize $h \star$};
        \node[above] at (1,1.2) {\scriptsize $x$ \hspace{1mm} $<$ \hspace{1mm} $y$};
        \draw[black, fill=lightgray] (1.435,0.225) circle (0.95ex);
\end{tikzpicture}
 \Bigg) = & \frac{1}{2} 
  \begin{tikzpicture}[scale = 0.4, baseline = 1cm]
        \node  (0) at (-0.5, 4.5) {};
	    \node (19) at (-1.85, 4.5) {};
        \node (20) at (3, 0.5) {};
		\node  (1) at (2, 1) {};
		\node  (2) at (2.25, 1.75) {};
		\node  (3) at (3.75, 1.75) {};
		\node  (4) at (2, 2.5) {};
		\node  (5) at (4, 2.5) {};
		\node  (6) at (2, 3.5) {};
		\node  (7) at (4, 3.5) {};
		\node  (8) at (1.525, 3.25) {};
		\node  (9) at (2.475, 3.25) {};
		\node  (10) at (1.25, 4.5) {};
		\node  (11) at (3.525, 3.25) {};
		\node  (12) at (4.475, 3.25) {};
		\node  (13) at (5, 4.5) {};
		\node  (14) at (3.75, 4.5) {};
		\node  (15) at (2.5, 4.5) {};
		\node  (17) at (2.75, 2.75) {};
		\node  (18) at (4.175, 2.625) {};
		\draw [thick,in=180, out=-90, looseness=1.50] (0.center) to (1.center);
		\draw [thick,in=180, out=-90, looseness=1.50] (19.center) to (20.center);
		\draw [thick,in=-90, out=0] (1.center) to (2.center);
        \draw [thick,in=-90, out=0] (20.center) to (3.center);
		\draw [thick] (2.center) to (4.center);
        \draw [thick] (2.center) to (3.center);
		\draw [thick] (3.center) to (5.center);
		\draw [thick,bend right=90, looseness=1.75] (6.center) to (4.center);
		\draw [thick,in=165, out=-90, looseness=0.75] (10.center) to (8.center);
		\draw [thick,in=-90, out=30, looseness=0.75] (12.center) to (13.center);
		\draw [thick,in=-90, out=135, looseness=1.25] (11.center) to (15.center);
        \draw [white, fill = white] (3.025, 3.65) circle (1.3ex);
		\draw [thick,in=-90, out=45] (9.center) to (14.center);
		\draw [thick,in=105, out=0] (7.center) to (12.center);
		\draw [thick,in=105, out=0] (6.center) to (9.center);
		\draw [thick,bend right=15] (4.center) to (17.center);
		\draw [thick,in=-120, out=45] (17.center) to (12.center);
        \draw [white, fill = white] (2.75, 2.75) circle (1.3ex); 
		\draw [thick,in=135, out=-90] (9.center) to (17.center);
		\draw [thick,in=60, out=-45] (17.center) to (18.center);
		\draw [thick,bend left=15] (18.center) to (5.center);
         \draw [white, fill = white] (3.475, 2.975) circle (1.3ex);
        \draw [white, fill = white] (3.575, 2.675) circle (1.3ex);
        \draw [thick,bend left=270, looseness=1.75] (7.center) to (5.center);     
       \draw [fill = black] (2.center) circle  (0.8ex);
       \draw [fill = black] (3.center) circle  (0.8ex);
        \draw [fill = black] (4.center) circle  (0.8ex);
        \draw [fill = black] (5.center) circle  (0.8ex);
        \draw [fill = black] (9.center) circle  (0.8ex);
        \draw [fill = black] (8.center) circle  (0.8ex);
        \draw [fill = black] (11.center) circle  (0.8ex);
        \draw [fill = black] (12.center) circle  (0.8ex);
        \node[above] at (3.1,4.5) {\scriptsize $ {x} \parallel {x} \hspace{0.3mm} <\hspace{0.3mm} {y} \parallel {y}$};
        \node[above] at (-0.9,4.6) {\scriptsize $h < h< $};
\end{tikzpicture} + \frac{1}{2} \begin{tikzpicture}[scale = 0.4, baseline = 1.2cm]
         \node  (0) at (0, 5) {};
		\node  (1) at (2.5, 1) {};
		\node  (2) at (3, 1.75) {};
		\node  (3) at (3, 1.75) {};
		\node  (4) at (2, 2.5) {};
		\node  (5) at (4, 2.5) {};
		\node  (6) at (2, 3.5) {};
		\node  (7) at (4, 3.5) {};
		\node  (8) at (1.525, 3.25) {};
		\node  (9) at (2.475, 3.25) {};
		\node  (10) at (1.25, 4) {};
		\node  (11) at (3.525, 3.25) {};
		\node  (12) at (4.475, 3.25) {};
		\node  (13) at (5, 5) {};
		\node  (14) at (3.75, 5) {};
		\node  (15) at (2.25, 4) {};
		\node  (19) at (2.75, 2.75) {};
		\node  (20) at (4.175, 2.625) {};
        \node (21) at (1.75, 4.5) {};
        \node (22) at (1.75,4.5) {};
        \node (23) at (1.75,5) {};
        \draw [thick] (22.center) to (23.center);
        \draw [thick,in=180, out=-90, looseness=1.50] (0.center) to (1.center);
		\draw [thick,in=-90, out=0] (1.center) to (2.center);
		\draw [thick] (4.center) to (3.center);
		\draw [thick] (3.center) to (5.center);
		\draw [thick,bend right=90, looseness=1.75] (6.center) to (4.center);
		\draw [thick,in=165, out=-90, looseness=0.75] (10.center) to (8.center);
		\draw [thick,in=-90, out=45, looseness=1.25] (12.center) to (13.center);
		\draw [thick,in=-90, out=135, looseness=1.25] (11.center) to (15.center);
         \draw [white, fill = white] (2.9, 3.525) circle (1.3ex); 
		\draw [thick,in=-90, out= 45, looseness=1.5] (9.center) to (14.center);
		\draw [thick,in=105, out=0] (7.center) to (12.center);
		\draw [thick,in=105, out=0] (6.center) to (9.center);
		\draw [thick,bend right=15] (4.center) to (19.center);
		\draw [thick,in=-120, out=45] (19.center) to (12.center);
        \draw [white, fill = white] (2.75,2.75) circle (1.3ex);
		\draw [thick, in=135, out=-90] (9.center) to (19.center);
		\draw [thick,in=60, out=-45] (19.center) to (20.center);
		\draw [thick,bend left=15] (20.center) to (5.center);
        \draw [white, fill = white] (3.475, 2.975) circle (1.3ex);
        \draw [white, fill = white] (3.575, 2.675) circle (1.3ex);
        \draw [thick,bend left=270, looseness=1.75] (7.center) to (5.center);
        \draw [thick, in=180, out=90] (10.center) to (21.center);
        \draw [thick, in=90, out=0] (21.center) to (15.center);
        \draw [fill = black] (3.center) circle  (0.8ex);
        \draw [fill = black] (4.center) circle  (0.8ex);
        \draw [fill = black] (5.center) circle  (0.8ex);
        \draw [fill = black] (9.center) circle  (0.8ex);
        \draw [fill = black] (8.center) circle  (0.8ex);
        \draw [fill = black] (11.center) circle (0.8ex);
        \draw [fill = black] (12.center) circle (0.8ex);
        \draw [fill = black] (22.center) circle (0.8ex);
            \node[above] at (2.5,5) {\scriptsize $h \hspace{1mm}< \hspace{1mm} x \hspace{1mm} < \hspace{1 mm} y\parallel y$};
        \end{tikzpicture} + \frac{1}{2}\begin{tikzpicture}[scale = 0.4, baseline = 1.2cm]
         \node  (0) at (0, 5) {};
		\node  (1) at (2.5, 1) {};
		\node  (2) at (3, 1.75) {};
		\node  (3) at (3, 1.75) {};
		\node  (4) at (2, 2.5) {};
		\node  (5) at (4, 2.5) {};
		\node  (6) at (2, 3.5) {};
		\node  (7) at (4, 3.5) {};
		\node  (8) at (1.525, 3.25) {};
		\node  (9) at (2.475, 3.25) {};
		\node  (10) at (1.25, 5) {};
		\node  (11) at (3.525, 3.25) {};
		\node  (12) at (4.475, 3.25) {};
		\node  (13) at (4.75, 4) {};
		\node  (14) at (3.75, 4) {};
		\node  (15) at (2.25, 5) {};
		\node  (17) at (4.25, 4.5) {};
		\node  (19) at (2.75, 2.75) {};
		\node  (20) at (4.175, 2.625) {};
		\node  (21) at (4.25, 4.5) {};
		\node  (22) at (4.25, 5) {};
        \draw [thick] (21.center) to (22.center);
        \draw [thick,in=180, out=-90, looseness=1.50] (0.center) to (1.center);
		\draw [thick,in=-90, out=0] (1.center) to (2.center);
		\draw [thick] (4.center) to (3.center);
		\draw [thick] (3.center) to (5.center);
		\draw [thick,bend right=90, looseness=1.75] (6.center) to (4.center);
		\draw [thick,in=165, out=-90, looseness=0.75] (10.center) to (8.center);
		\draw [thick,in=-90, out=45, looseness=1.25] (12.center) to (13.center);
		\draw [thick,in=-90, out=135, looseness=1.25] (11.center) to (15.center);
         \draw [white, fill = white] (3.075, 3.575) circle (1.3ex); 
		\draw [thick,in=-90, out=60, looseness=0.75] (9.center) to (14.center);
		\draw [thick,in=180, out=90] (14.center) to (17.center);
		\draw [thick,in=90, out=0] (17.center) to (13.center);
		\draw [thick,in=105, out=0] (7.center) to (12.center);
		\draw [thick,in=105, out=0] (6.center) to (9.center);
		\draw [thick,bend right=15] (4.center) to (19.center);
		\draw [thick,in=-120, out=45] (19.center) to (12.center);
        \draw [white, fill = white] (2.75,2.75) circle (1.3ex);
		\draw [thick, in=135, out=-90] (9.center) to (19.center);
		\draw [thick,in=60, out=-45] (19.center) to (20.center);
		\draw [thick,bend left=15] (20.center) to (5.center);
        \draw [white, fill = white] (3.475, 2.975) circle (1.3ex);
        \draw [white, fill = white] (3.575, 2.675) circle (1.3ex);
        \draw [thick,bend left=270, looseness=1.75] (7.center) to (5.center);
        \draw [fill = black] (3.center) circle  (0.8ex);
        \draw [fill = black] (4.center) circle  (0.8ex);
        \draw [fill = black] (5.center) circle  (0.8ex);
        \draw [fill = black] (9.center) circle  (0.8ex);
        \draw [fill = black] (8.center) circle  (0.8ex);
        \draw [fill = black] (11.center) circle  (0.8ex);
        \draw [fill = black] (12.center) circle  (0.8ex);
        \draw [fill = black] (21.center) circle (0.8ex);
         \node[above] at (2 ,5) {\scriptsize $h < x \parallel x\hspace{1mm} < \hspace{1mm}  y$};
        \end{tikzpicture}
\end{align*}
\hfill $\blacksquare$
\end{example}

We now compare this new map $\Delta$ with the map $\delta^<$ introduced in Proposition~\ref{prop:delta} (which, let us recall, depends on the choice of an even symplectic expansion $\theta$).

\begin{lemma} \label{lem:Delta}
Let $n>0$.
The homomorphism $\Delta$  is well-defined and $\Sp(H)$-equivariant.
Furthermore, 
it fits into the following commutative diagram:
$$
\xymatrix{
 \calB^<_n\big(P^{(2)}\big) 
 \ar[rd]_-\Delta \ar[r]^-E &
 \Tors_2\big(\calA^{<,c}_{2n+1}(H)\big) \ar[d]^-{\delta^<} \\
&  \Tors_2\big( \calA^{<,c}_{2n+2}(H) \otimes \Q/\Z\big)
}
$$
\end{lemma}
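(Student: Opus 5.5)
The cleanest route avoids checking the relations of $\calB^<_n\big(P^{(2)}\big)$ one by one for $\Delta$. I would first prove the identity $\delta^<\circ E=\Delta$ on generators, and then read off well-definedness and equivariance from it and from properties already established for $E$ and $\delta^<$.

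\emph{Commutativity on generators.} Fix a generator $D$ of $\calB^<_n\big(P^{(2)}\big)$ and a subset $S$ of its leaves. Each summand $\widetilde D_+^S$ of $E(D)$ is a connected Jacobi diagram of degree $2n+1\geq 3$, so by \eqref{eq:connected} we have
\[
\delta^<(\widetilde D_+^S)=\sum_k (\widetilde D_+^S)_k^{II}(c_k',c_k''),
\]
where $k$ runs over the external vertices of $\widetilde D_+^S$. These vertices are the root, colored by (a lift of) $r$, and the pairs of twins $k',k''$ attached to each leaf $v\notin S$.

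\begin{itemize}
\item \emph{Root term.} Since $\theta$ is even, \eqref{eq:vartheta} gives $c'\otimes c''=\tfrac12 r\otimes r+[\dot r,\ddot r]$. The $\tfrac12 r\otimes r$ part doubles the root edge of the extra $Y$-diagram, and this produces exactly $\tfrac12\widetilde D_{++}^S$, with the two bottom vertices colored $r<r$. The $[\dot r,\ddot r]$ part is an integral-looking Lie term. By the AS relation applied through the deck symmetry, the diagram carrying it is a multiple of $2$ times something of order $2$, so it vanishes in $\Q/\Z$.
\item \emph{Twin terms.} For a pair $k',k''$ over a leaf $v\notin S$, the same decomposition applies. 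The $[\dot c,\ddot c]$ parts at $k'$ and at $k''$ are exchanged by the symmetry of $\widetilde D$. Modulo the STU-like relation they are equal, so they cancel in $\Q/\Z$. The $\tfrac12 c_v\otimes c_v$ parts at $k'$ and $k''$ must then be recombined. Using IHX and STU-like, as in the ``Kirchhoff form'' step in the proof of Proposition~\ref{prop:delta}, their sum should equal $\tfrac12\widetilde D_+^{S,v}$ up to correction terms. I expect these corrections to be loop terms carrying a factor $\fr(c_v)$, or terms of the form $\widetilde D_+^{S\cup\{v\}}$ with a $Y$ glued. These corrections are exactly what combines with the coefficients $\prod_{j\in S}\fr(c_j)$ in the next summand.
\end{itemize}

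Summing over $S$, the $\fr$-weighted sums should telescope into formula \eqref{eq:Delta_def_bis}.

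\emph{Main obstacle.} I expect the hardest step to be the twin-pair computation. It requires tracking the ordering of the twin vertices, the notation $\parallel$ from \eqref{eq:parallel_notation}, and the effect of the $\fr(c_v)$ loops on the doubled edges. It also requires showing that all cross terms are twice a $2$-torsion diagram. To do this, I would work with the alternative form \eqref{eq:delta_i_bis} of $\delta_i$, and write $\delta^<$ of a twin pair locally, exactly as in the IHX check of Proposition~\ref{prop:delta}. I would then use the deck involution to pair off the surviving terms.

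\emph{Well-definedness and equivariance.} Granting $\delta^<\circ E=\Delta$ on generators, $\Delta$ factors through $E$, which is well-defined by Proposition~\ref{prop:Psi_psi}. So $\Delta$ is well-defined, and its image is $2$-torsion. For equivariance, note that $E(D)$ is $2$-torsion. Lemma~\ref{lem:equivariance} then gives $\delta^<(f_*E(D))=f_*\delta^<(E(D))+\varsigma(f_\circledast,f_*p_*E(D))$. Since $f_\circledast\in\tfrac12\Lambda^3H$ and $\varsigma$ is bilinear, the correction $\varsigma(f_\circledast,f_*p_*E(D))$ should vanish for $2$-torsion $E(D)$, because it equals $\tfrac12$ times the image of a $2$-torsion element under a map to $\Q/\Z$-coefficients. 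I would check this directly on the twin symmetry: each $\varsigma$ summand at $k'$ cancels with the corresponding one at $k''$, and the root summand is killed by the AS relation. Combined with the $\Sp(H)$-equivariance of $E$, this gives the $\Sp(H)$-equivariance of $\Delta$. Alternatively, equivariance can be seen directly from \eqref{eq:Delta_def}, whose definition is manifestly $\Sp(H)$-invariant because $\fr$ is equivariant.
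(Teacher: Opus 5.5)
Your architecture is the paper's: compute $\delta^<(\widetilde D_+^S)$ for each $S$ via \eqref{eq:connected}, treat the root and the twin pairs separately, and then deduce well-definedness of $\Delta$ from that of $E$. Equivariance is indeed immediate from \eqref{eq:Delta_def}, since $\fr$ is equivariant and the choice of lifts is irrelevant. The gap is the twin-pair computation, which is the whole content of the lemma. You do not carry it out, and the mechanism you predict is the wrong way round.

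Write $c'\otimes c''=\frac12\, c\otimes c+[\dot c,\ddot c]$ at twins $i<j$ of color $c$.
\begin{itemize}
\item The $\frac12\, c\otimes c$ parts cancel exactly. The deck involution carries $(\widetilde D_+^S)_i^{II}(c,c)$ to $(\widetilde D_+^S)_j^{II}(c,c)$, up to two harmless changes. It reverses the orientation at the center of the extra $Y$, which is a sign by AS. It also reorders vertices that all have color $c$, which costs nothing by STU-like because $\omega(c,c)=0$. So these terms sum to $0$, not to $\frac12\widetilde D_+^{S,i}$.
\item It is the Lie parts that survive. Rewrite them as $D^{\Yrev}$-terms as in \eqref{eq:delta_i_bis} and apply the deck involution. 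Comparing the two terms then forces the $c$-colored twin past the legs $\dot c<\ddot c$. The STU-like corrections have total coefficient $\omega(\dot c,c)\,\ddot c-\omega(\ddot c,c)\,\dot c$, which is $c/2$ modulo $H$ by Lemma~\ref{lem:expansion}.(c.iii).
\end{itemize}
This is exactly where the factor $\frac12$ and the color $c$ of the free leg of $\widetilde D_+^{S,i}$ come from. Your plan never invokes (c.iii), so it cannot produce this term. The paper does not even split $\vartheta$. It moves $c_j$ below $c'<c''$ by STU-like, cancels the resulting crossed diagram against $(\widetilde D_+^S)_j^{II}(c',c'')$ by the deck symmetry, applies Lemma~\ref{lem:expansion}.(c) to the two correction terms, and finishes with one IHX.

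Consequently, the structure you anticipate does not exist: there are no correction terms weighted by $\fr(c_v)$ and no telescoping over $S$. Formula \eqref{eq:connected} involves only $\vartheta\circ p_*$, so no $\fr$ can occur in $\delta^<(\widetilde D_+^S)$. In fact $\delta^<(\widetilde D_+^S)=\frac12\widetilde D_{++}^S+\frac12\sum_{i\notin S}\widetilde D_+^{S,i}$ holds for each $S$ separately. The weights $\prod_{j\in S}\fr(c_j)$ are simply inherited from $E(D)$.

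Your root term is also only loosely justified. First, $[\dot r,\ddot r]$ lies in $\frac12\frakL_2(H)$; it is not integral. Second, deck symmetry with AS only identifies $(\widetilde D_+^S)^{II}_{\mathrm{root}}(\dot r,\ddot r)$ with the crossed version of $(\widetilde D_+^S)^{II}_{\mathrm{root}}(\ddot r,\dot r)$. You then need STU-like, whose correction is $\omega(\dot r,\ddot r)$ times a diagram containing a bubble. That diagram is even by \eqref{eq:bubble}, and this is what kills the factor $\frac12$.
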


\begin{proof}
Let $D\in  \calB^<_n\big(P^{(2)}\big)$ 
be a  Jacobi diagram, and recall that $E(D)$ is given 
by the sum~\eqref{eq:E_def} indexed by subsets $S$
of the set of leaves of $D$.
To compute
$$
\delta^< (E(D)) = 
\delta^< \Big( \sum_S \Big(\prod_{j \in S}   \fr(c_j) \Big) 
\cdot \widetilde{D}_+^S \Big),
$$
we need to determine $\delta^<(\widetilde{D}_+^S)$ 
for every $S$.
Since $\widetilde{D}_+^S$ is connected  with degree $>2$,  
formula \eqref{eq:connected} gives
$$
\delta^<(\widetilde{D}_+^S)  
= \sum_i\big(\widetilde{D}_+^S\big)_i^{II}(c_i',c_i'')
$$
where the sum ranges over external vertices $i$ of $\widetilde{D}_+^S$
and we use the notations of \S \ref{subsec:delta^<}:
in particular, $c_i\in H$ is the color of $i$ and we have set 
$\vartheta(c_i)= c_i' \otimes c_i'' \in H^{\otimes 2} \otimes \Q/\Z$
in Sweedler's notation.
Actually, each such vertex $i$ can be of two types:
\begin{itemize}
\item \emph{If $i$ is the root of $\widetilde{D}_+^S$},
then  we deduce from (\ref{eq:vartheta})
--- which, in Sweedler's notation, writes
$\vartheta(c_i)= c_i' \otimes c_i'' = \frac{1}{2} c_i \otimes c_i 
+{[\dot c_i, \ddot c_i]}$ --- 
that
$$
\big(\widetilde{D}_+^S\big)_i^{II}(c_i',c_i'')= \frac{1}{2}\widetilde{D}^S_{++}.
$$
(Indeed, using the symmetry of $\widetilde{D}$ and the AS relation twice, we get
\begin{align*}
\quad \big(\widetilde{D}_+^S\big)_i^{II}(\dot c_i,\ddot c_i)
- \big(\widetilde{D}_+^S\big)_i^{II}(\ddot c_i,\dot c_i) &=\!\!\!\!
\begin{tikzpicture}[baseline=-0.5em, x=1.85em, y=1.85em,thick]
\draw (0.5,-0.2) to[out=0, in=210, out looseness=2](0.366,1/2);
\fill [white] (0,0.3) circle [radius=0.15];
\draw (-0.5,-0.2) to[out=180, in=-30, out looseness=2](-0.366,1/2);
\draw (0.5,-0.2) -- (-0.5,-0.2);
\draw[dotted] ({-0.5+sqrt(3)/2},1/2) -- ({-0.5+sqrt(3)/2*1.5},1.5/2);
\draw[dotted] ({0.5-sqrt(3)/2},1/2) -- ({0.5-sqrt(3)/2*1.5},1.5/2);
\draw (-0.4,-0.2)--(-0.4,-0.8);
\draw (0.4,-0.2)--(0.4,-0.8);
\draw [fill = black] (-0.4,-0.2) circle (.3ex);
\draw [fill = black] (0.4,-0.2) circle (.3ex);
\node at (0.05,-1.1) {\scriptsize $\cdots<\dot c_i <\ddot c_i<\cdots$};
\end{tikzpicture}
\!\!\!\! -\!\!\!\!
\begin{tikzpicture}[baseline=-0.5em, x=1.85em, y=1.85em,thick]
\draw (0.5,-0.2) to[out=0, in=210, out looseness=2](0.366,1/2);
\fill [white] (0,0.3) circle [radius=0.15];
\draw (-0.5,-0.2) to[out=180, in=-30, out looseness=2](-0.366,1/2);
\draw (0.5,-0.2) -- (-0.5,-0.2);
\draw[dotted] ({-0.5+sqrt(3)/2},1/2) -- ({-0.5+sqrt(3)/2*1.5},1.5/2);
\draw[dotted] ({0.5-sqrt(3)/2},1/2) -- ({0.5-sqrt(3)/2*1.5},1.5/2);
\draw (-0.4,-0.2)--(-0.4,-0.8);
\draw (0.4,-0.2)--(0.4,-0.8);
\draw [fill = black] (-0.4,-0.2) circle (.3ex);
\draw [fill = black] (0.4,-0.2) circle (.3ex);
\node at (0.05,-1.1) {\scriptsize$\cdots<\ddot c_i<\dot c_i<\cdots$};
\end{tikzpicture}\\
&= \!\!\!\!\begin{tikzpicture}[baseline=-0.5em, x=1.85em, y=1.85em,thick]
\draw (0.5,-0.2) to[out=0, in=210, out looseness=2](0.366,1/2);
\fill [white] (0,0.3) circle [radius=0.15];
\draw (-0.5,-0.2) to[out=180, in=-30, out looseness=2](-0.366,1/2);
\draw (0.5,-0.2) -- (-0.5,-0.2);
\draw[dotted] ({-0.5+sqrt(3)/2},1/2) -- ({-0.5+sqrt(3)/2*1.5},1.5/2);
\draw[dotted] ({0.5-sqrt(3)/2},1/2) -- ({0.5-sqrt(3)/2*1.5},1.5/2);
\draw (-0.4,-0.2)--(-0.4,-0.8);
\draw (0.4,-0.2)--(0.4,-0.8);
\draw [fill = black] (-0.4,-0.2) circle (.3ex);
\draw [fill = black] (0.4,-0.2) circle (.3ex);
\node at (0.05,-1.1) {\scriptsize$\cdots<\dot c_i <\ddot c_i<\cdots$};
\end{tikzpicture}
\!\!\!\!- \!\!\!\!
\begin{tikzpicture}[baseline=-0.5em, x=1.85em, y=1.85em,thick]
\draw (-0.5,-0.2) to[out=180, in=210, out looseness=2](0.366,1/2);
\fill [white] (0,0.3) circle [radius=0.15];
\draw (0.5,-0.2) to[out=0, in=-30, out looseness=2](-0.366,1/2);
\draw (0.5,-0.2) -- (-0.5,-0.2);
\draw[dotted] ({-0.5+sqrt(3)/2},1/2) -- ({-0.5+sqrt(3)/2*1.5},1.5/2);
\draw[dotted] ({0.5-sqrt(3)/2},1/2) -- ({0.5-sqrt(3)/2*1.5},1.5/2);
\draw (-0.4,-0.2)--(-0.4,-0.8);
\draw (0.4,-0.2)--(0.4,-0.8);
\draw [fill = black] (-0.4,-0.2) circle (.3ex);
\draw [fill = black] (0.4,-0.2) circle (.3ex);
\node at (0.05,-1.1) {\scriptsize$\cdots<\ddot c_i<\dot c_i<\cdots$};
\end{tikzpicture}\\
&= \!\!\!\! \begin{tikzpicture}[baseline=-0.5em, x=1.85em, y=1.85em,thick]
\draw (0.5,-0.2) to[out=0, in=210, out looseness=2](0.366,1/2);
\fill [white] (0,0.3) circle [radius=0.15];
\draw (-0.5,-0.2) to[out=180, in=-30, out looseness=2](-0.366,1/2);
\draw (0.5,-0.2) -- (-0.5,-0.2);
\draw[dotted] ({-0.5+sqrt(3)/2},1/2) -- ({-0.5+sqrt(3)/2*1.5},1.5/2);
\draw[dotted] ({0.5-sqrt(3)/2},1/2) -- ({0.5-sqrt(3)/2*1.5},1.5/2);
\draw (-0.4,-0.2)--(-0.4,-0.8);
\draw (0.4,-0.2)--(0.4,-0.8);
\draw [fill = black] (-0.4,-0.2) circle (.3ex);
\draw [fill = black] (0.4,-0.2) circle (.3ex);
\node at (0.05,-1.1) {\scriptsize$\cdots<\dot c_i <\ddot c_i<\cdots$};
\end{tikzpicture}
\!\!\!\!-\!\!\!\!
\begin{tikzpicture}[baseline=-0.5em, x=1.85em, y=1.85em,thick]
\draw (0.5,-0.2) to[out=0, in=210, out looseness=2](0.366,1/2);
\fill [white] (0,0.3) circle [radius=0.15];
\draw (-0.5,-0.2) to[out=180, in=-30, out looseness=2](-0.366,1/2);
\draw (0.5,-0.2) -- (-0.5,-0.2);
\draw[dotted] ({-0.5+sqrt(3)/2},1/2) -- ({-0.5+sqrt(3)/2*1.5},1.5/2);
\draw[dotted] ({0.5-sqrt(3)/2},1/2) -- ({0.5-sqrt(3)/2*1.5},1.5/2);
\draw (0.4,-0.2)--(-0.4,-0.8);
\fill [white] (0,-0.5) circle [radius=0.15];
\draw (-0.4,-0.2)--(0.4,-0.8);
\draw [fill = black] (-0.4,-0.2) circle (.3ex);
\draw [fill = black] (0.4,-0.2) circle (.3ex);
\node at (0.05,-1.1) {\scriptsize$\cdots<\ddot c_i< \dot c_i<\cdots$};
\end{tikzpicture}, 
\end{align*} 
which is zero by ``STU-like'' and \eqref{eq:bubble}.)
\item \emph{If $i$ is not the root of $\widetilde{D}_+^S$},
then $i$ has a twin $j$ close to it, with the same color $c:=c_i=c_j$:
$$
\begin{tikzpicture}[thick, baseline=-1.6em, x=1.5em, y=1.5em]
\footnotesize
\draw [dotted] (-1.8,0.5) -- (-1.8,0);
\draw [dotted] (-1.3,0.5) -- (-1.3,0);
\draw [dotted] (0.8,0.5) -- (0.8,0);
\draw [dotted] (0.3,0.5) -- (0.3,0);
\draw (-1.8,0) to [out=-90,in=-90] (0.3,0);
\fill [white] (-0.6,-0.55) circle [radius=0.3];
\draw (-1.3,0) to [out=-90,in=-90] (0.8,0);
\draw (-1.15,-0.6) -- (-1.15,-1.5);
\draw (0.15,-0.6) -- (0.15,-1.5);
\draw [fill = black] (-1.15,-0.6) circle (.3ex);
\draw [fill = black] (0.15,-0.6) circle (.3ex);
\node[below=1pt] at (-0.4,-1.5){$\cdots <c_i<c_j<\cdots$};
\end{tikzpicture};
$$
thus, we obtain
\begin{eqnarray*}
&  &
\big(\widetilde{D}_+^S\big)_i^{II}(c',c'') + \big(\widetilde{D}_+^S\big)_j^{II}(c',c'') \\
&=& \begin{tikzpicture}[thick, baseline=-1.6em, x=1.5em, y=1.5em]
\footnotesize
\draw [dotted] (-1.8,0.5) -- (-1.8,0);
\draw [dotted] (-1.1,0.5) -- (-1.1,0);
\draw [dotted] (0.8,0.5) -- (0.8,0);
\draw [dotted] (1.4,0.5) -- (1.4,0);
\draw (-1.8,0) to [out=-90,in=-90] (0.8,0);
\fill [white] (-0.1,-0.55) circle [radius=0.3];
\draw (-1.1,0) to [out=-90,in=-90] (1.4,0);
\draw (-1.7,-0.35) -- (-1.7,-1.5);
\draw (-0.6,-0.75) -- (-0.6,-1.5);
\draw (0.7,-0.65) -- (0.7,-1.5);
\draw [fill = black] (-1.7,-0.35) circle (.3ex);
\draw [fill = black] (-0.6,-0.75) circle (.3ex);
\draw [fill = black] (0.7,-0.65) circle (.3ex);
\node[below=1pt] at (-0.55,-1.5){$\cdots <c'<c''<c< \cdots$};
\end{tikzpicture} + \begin{tikzpicture}[thick, baseline=-1.6em, x=1.5em, y=1.5em]
\footnotesize
\begin{scope}[xscale = -1]
\draw [dotted] (-1.8,0.5) -- (-1.8,0);
\draw [dotted] (-1.1,0.5) -- (-1.1,0);
\draw [dotted] (0.8,0.5) -- (0.8,0);
\draw [dotted] (1.4,0.5) -- (1.4,0);
\draw (-1.1,0) to [out=-90,in=-90] (1.4,0);
\fill [white] (-0.1,-0.55) circle [radius=0.3];
\draw (-1.8,0) to [out=-90,in=-90] (0.8,0);
\draw (-1.7,-0.35) -- (-1.7,-1.5);
\draw (-0.6,-0.75) -- (-0.6,-1.5);
\draw (0.7,-0.65) -- (0.7,-1.5);
\draw [fill = black] (-1.7,-0.35) circle (.3ex);
\draw [fill = black] (-0.6,-0.75) circle (.3ex);
\draw [fill = black] (0.7,-0.65) circle (.3ex);
\end{scope}
\node[below=1pt] at (0.7,-1.5){$\cdots  <c<c'<c''< \cdots$};
\end{tikzpicture}.
\end{eqnarray*}
But, by the STU-like relation, 
the first term of the above sum is equal to
\[ \qquad
\omega(c',c)\!\!\begin{tikzpicture}[thick, baseline=-1.6em, x=1.5em, y=1.5em]
\footnotesize
\draw [dotted] (-1.8,0.5) -- (-1.8,0);
\draw [dotted] (-1.1,0.5) -- (-1.1,0);
\draw [dotted] (0.8,0.5) -- (0.8,0);
\draw [dotted] (1.4,0.5) -- (1.4,0);
\draw (-1.8,0) to [out=-90,in=-90] (0.8,0);
\fill [white] (-0.1,-0.55) circle [radius=0.3];
\draw (-1.1,0) to [out=-90,in=-90] (1.4,0);
\draw (-1.7,-0.35) -- (-1.7,-0.65);
\draw (-0.6,-0.75) -- (-0.6,-1.8);
\draw (0.7,-0.6) -- (0.7,-0.65);
\draw [fill = black] (-1.7,-0.35) circle (.3ex);
\draw [fill = black] (-0.6,-0.75) circle (.3ex);
\draw [fill = black] (0.7,-0.65) circle (.3ex);
\fill [white] (-0.6,-1.35) circle [radius=0.15];
\draw (-1.7,-0.65) to [out=-90,in=-90] (0.7,-0.65);
\node[below=1pt] at (-0.4,-1.8){$\cdots <c'' < \cdots$};
\end{tikzpicture} + \omega(c'',c)\!\! \begin{tikzpicture}[thick, baseline=-1.6em, x=1.5em, y=1.5em]
\footnotesize
\draw [dotted] (-1.8,0.5) -- (-1.8,0);
\draw [dotted] (-1.1,0.5) -- (-1.1,0);
\draw [dotted] (0.8,0.5) -- (0.8,0);
\draw [dotted] (1.4,0.5) -- (1.4,0);
\draw (-1.8,0) to [out=-90,in=-90] (0.8,0);
\fill [white] (-0.1,-0.55) circle [radius=0.3];
\draw (-1.1,0) to [out=-90,in=-90] (1.4,0);
\draw (-1.7,-0.35) -- (-1.7,-1.5);
\draw (-0.6,-0.75) -- (-0.6,-1.05);
\draw (0.7,-0.6) -- (0.7,-1.05);
\draw [fill = black] (-1.7,-0.35) circle (.3ex);
\draw [fill = black] (-0.6,-0.75) circle (.3ex);
\draw [fill = black] (0.7,-0.65) circle (.3ex);
\draw (-0.6,-1.05) to [out=-90,in=-90] (0.7,-1.05);
\node[below=1pt] at (-1.7,-1.5){$\cdots< c'< \cdots$};
\end{tikzpicture} +\!\!\! \begin{tikzpicture}[thick, baseline=-1.6em, x=1.5em, y=1.5em]
\footnotesize
\draw [dotted] (-1.8,0.5) -- (-1.8,0);
\draw [dotted] (-1.1,0.5) -- (-1.1,0);
\draw [dotted] (0.8,0.5) -- (0.8,0);
\draw [dotted] (1.4,0.5) -- (1.4,0);
\draw (-1.8,0) to [out=-90,in=-90] (0.8,0);
\fill [white] (-0.1,-0.55) circle [radius=0.3];
\draw (-1.1,0) to [out=-90,in=-90] (1.4,0);
\draw (-1.7,-0.35) -- (-1.7,-0.65);
\draw (-0.6,-0.75) -- (-0.6,-0.8);
\draw [fill = black] (-1.7,-0.35) circle (.3ex);
\draw [fill = black] (-0.6,-0.75) circle (.3ex);
\draw [fill = black] (0.7,-0.65) circle (.3ex);
\draw (-1.7,-0.65) to [out=-90,in=90] (-0.4,-1.7);
\draw (-0.6,-0.8) to [out=-90,in=90] (0.9,-1.7);
\fill [white] (-0.2,-1.1) circle [radius=0.23];
\fill [white] (-0.95,-1.2) circle [radius=0.2];
\draw (0.7,-0.65) to [out=-90,in=90] (-1.6,-1.7);
\node[below=1pt] at (-0.3,-1.7){$\cdots <c<c'<c''< \cdots$};
\end{tikzpicture},
\] 
and the symmetry of $\widetilde{D}$ implies 
that the very  last term  here is 
 \[\begin{tikzpicture}[thick, baseline=-1.6em, x=1.5em, y=1.5em]
\footnotesize
\draw [dotted] (-1.8,0.5) -- (-1.8,0);
\draw [dotted] (-1.1,0.5) -- (-1.1,0);
\draw [dotted] (0.8,0.5) -- (0.8,0);
\draw [dotted] (1.4,0.5) -- (1.4,0);
\draw (-1.8,0) to [out=-90,in=-90] (0.8,0);
\fill [white] (-0.1,-0.55) circle [radius=0.3];
\draw (-1.1,0) to [out=-90,in=-90] (1.4,0);
\draw (-1.7,-0.35) -- (-1.7,-0.65);
\draw (-0.6,-0.75) -- (-0.6,-0.8);
\draw [fill = black] (-1.7,-0.35) circle (.3ex);
\draw [fill = black] (-0.6,-0.75) circle (.3ex);
\draw [fill = black] (0.7,-0.65) circle (.3ex);
\draw (-1.7,-0.65) to [out=-90,in=90] (-0.4,-1.7);
\draw (-0.6,-0.8) to [out=-90,in=90] (0.9,-1.7);
\fill [white] (-0.2,-1.1) circle [radius=0.23];
\fill [white] (-0.95,-1.2) circle [radius=0.2];
\draw (0.7,-0.65) to [out=-90,in=90] (-1.6,-1.7);
\node[below=1pt] at (-0.3,-1.7){$\cdots <c<c'<c''< \cdots$};
\end{tikzpicture} = -\begin{tikzpicture}[thick, baseline=-1.6em, x=1.5em, y=1.5em]
\footnotesize
\begin{scope}[xscale = -1]
\draw [dotted] (-1.8,0.5) -- (-1.8,0);
\draw [dotted] (-1.1,0.5) -- (-1.1,0);
\draw [dotted] (0.8,0.5) -- (0.8,0);
\draw [dotted] (1.4,0.5) -- (1.4,0);
\draw (-1.1,0) to [out=-90,in=-90] (1.4,0);
\fill [white] (-0.1,-0.55) circle [radius=0.3];
\draw (-1.8,0) to [out=-90,in=-90] (0.8,0);
\draw (-1.7,-0.35) -- (-1.7,-1.5);
\draw (-0.6,-0.75) -- (-0.6,-1.5);
\draw (0.7,-0.65) -- (0.7,-1.5);
\draw [fill = black] (-1.7,-0.35) circle (.3ex);
\draw [fill = black] (-0.6,-0.75) circle (.3ex);
\draw [fill = black] (0.7,-0.65) circle (.3ex);
\end{scope}
\node[below=1pt] at (0.7,-1.5){$\cdots <c< c'<c''< \cdots$};
\end{tikzpicture}.\]
Hence, using Lemma \ref{lem:expansion}.(c.iii), we obtain
\begin{eqnarray*}
&& \big(\widetilde{D}_+^S\big)_i^{II}(c',c'') + \big(\widetilde{D}_+^S\big)_j^{II}(c',c'')  
\\ & = &   
\frac{1}{2} \begin{tikzpicture}[thick, baseline=-1.6em, x=1.5em, y=1.5em]
\footnotesize
\draw [dotted] (-1.8,0.5) -- (-1.8,0);
\draw [dotted] (-1.1,0.5) -- (-1.1,0);
\draw [dotted] (0.8,0.5) -- (0.8,0);
\draw [dotted] (1.4,0.5) -- (1.4,0);
\draw (-1.8,0) to [out=-90,in=-90] (0.8,0);
\fill [white] (-0.1,-0.55) circle [radius=0.3];
\draw (-1.1,0) to [out=-90,in=-90] (1.4,0);
\draw (-1.7,-0.35) -- (-1.7,-0.65);
\draw (-0.6,-0.75) -- (-0.6,-1.8);
\draw (0.7,-0.6) -- (0.7,-0.65);
\draw [fill = black] (-1.7,-0.35) circle (.3ex);
\draw [fill = black] (-0.6,-0.75) circle (.3ex);
\draw [fill = black] (0.7,-0.65) circle (.3ex);
\fill [white] (-0.6,-1.35) circle [radius=0.15];
\draw (-1.7,-0.65) to [out=-90,in=-90] (0.7,-0.65);
\node[below=1pt] at (-0.4,-1.8){$\cdots <c < \cdots$};
\end{tikzpicture} +  \frac{1}{2} \ \begin{tikzpicture}[thick, baseline=-1.6em, x=1.5em, y=1.5em]
\footnotesize
\draw [dotted] (-1.8,0.5) -- (-1.8,0);
\draw [dotted] (-1.1,0.5) -- (-1.1,0);
\draw [dotted] (0.8,0.5) -- (0.8,0);
\draw [dotted] (1.4,0.5) -- (1.4,0);
\draw (-1.8,0) to [out=-90,in=-90] (0.8,0);
\fill [white] (-0.1,-0.55) circle [radius=0.3];
\draw (-1.1,0) to [out=-90,in=-90] (1.4,0);
\draw (-1.7,-0.35) -- (-1.7,-1.5);
\draw (-0.6,-0.75) -- (-0.6,-1.05);
\draw (0.7,-0.6) -- (0.7,-1.05);
\draw [fill = black] (-1.7,-0.35) circle (.3ex);
\draw [fill = black] (-0.6,-0.75) circle (.3ex);
\draw [fill = black] (0.7,-0.65) circle (.3ex);
\draw (-0.6,-1.05) to [out=-90,in=-90] (0.7,-1.05);
\node[below=1pt] at (-1.7,-1.5){$\cdots< c< \cdots$};\end{tikzpicture} \\
&=&  \frac{1}{2} \ \begin{tikzpicture}[thick, baseline=-1.6em, x=1.5em, y=1.5em]
\footnotesize
\draw [dotted] (-1.8,0.5) -- (-1.8,0);
\draw [dotted] (-1.3,0.5) -- (-1.3,0);
\draw [dotted] (0.8,0.5) -- (0.8,0);
\draw [dotted] (0.3,0.5) -- (0.3,0);
\draw (-1.8,0) to [out=-90,in=-90] (0.3,0);
\fill [white] (-0.6,-0.55) circle [radius=0.3];
\draw (-1.3,0) to [out=-90,in=-90] (0.8,0);
\draw (-1.3,-0.55) -- (-1.3,-0.8);
\draw (0.1,-0.6) -- (0.1,-0.8);
\draw (-0.6,-1.2) -- (-0.6,-1.6);
\draw [fill = black] (0.1,-0.6) circle (.3ex);
\draw [fill = black] (-1.3,-0.55) circle (.3ex);
\draw [fill = black] (-0.6,-1.2) circle (.3ex);
\draw (-1.3,-0.8) to [out=-90,in=-90] (0.1,-0.8);
\node[below=1pt] at (-0.6,-1.6){$\cdots< c <\cdots$};
\end{tikzpicture} \ = \ 
 \frac{1}{2} \widetilde{D}_{+}^{S,i}.
\end{eqnarray*}
\end{itemize}
We conclude that $\delta^<( E(D))$ is equal to the right-hand side 
of \eqref{eq:Delta_def}, which shows that the homomorphism $\Delta$ is well-defined
and coincides with $\delta^< \circ E$.
\end{proof}

The principal result of this section is presented below:

\begin{theorem} \label{th:Delta_zeta}
For $n>0$, we have the commutative triangle
\begin{equation} \label{eq:Delta_zeta}
\xymatrix{
 \calB^<_n\big(P^{(2)}\big) 
 \ar[rrd]^-\Delta \ar[d]_-\Psi 
 &&  \\
\Tors_2\Big({\displaystyle\frac{Y_{2n+1} \calIC}{Y_{2n+2}}}\Big) \ar[rr]^-{\zeta^<_{2n+2}} 
&&  \Tors_2\big( \calA^{<,c}_{2n+2}(H) \otimes \Q/\Z\big).
}    
\end{equation}
\end{theorem}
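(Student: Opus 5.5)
The plan is to obtain the triangle \eqref{eq:Delta_zeta} by pasting together three commutative diagrams already established. By Proposition~\ref{prop:Psi_psi}, $\Psi=\psi\circ E$, where $E:\calB^<_n(P^{(2)})\to\Tors_2(\calA^{<,c}_{2n+1}(H))$. Hence, for every generator $D$, we get $\zeta^<_{2n+2}\Psi(D)=\zeta^<_{2n+2}\psi(E(D))$. By Lemma~\ref{lem:Delta}, $\Delta=\delta^<\circ E$. It therefore suffices to prove that $\zeta^<_{2n+2}\circ\psi=\delta^<$ on the image of $E$, after identifying $\calA^{<}(P)\otimes\Q/\Z$ with $\calA^<(H)\otimes\Q/\Z$ via $p_*$. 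Throughout, $\delta^<$ is taken with respect to the even symplectic expansion $\theta$ of type \eqref{eq:even_theta}.

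The required identity is Theorem~\ref{th:psi_delta_Z} in degree $2n+1$, once we pass from the connected Lie setting to the algebra setting. Theorem~\ref{th:psi_delta_Z} is stated for $\psi:\calA^<_{2n+1}(P)\to \Gr^F_{2n+1}\Z[\calIC]$. The map $\zeta^<_{2n+2}$ on $Y_{2n+1}\calIC/Y_{2n+2}$ is the composite of $\iota$ (Theorem~\ref{th:iota}) with $\zeta^<_{2n+2}$ on $\Gr^F_{2n+1}\Z[\calIC]$. This holds because $Z^<$ is multiplicative, so $\zeta^<_{2n+2}(M)=Z^<_{2n+2}(M-U)\bmod 1$ for $M\in Y_{2n+1}\calIC$, and the lower-degree parts of $Z^<(M-U)$ vanish. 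Then the commutative square \eqref{eq:psi_psi} gives $\iota\circ\psi=\psi\circ(\text{inclusion})$ on $\calA^{<,c}_{2n+1}(P)$. Combining the two, $\zeta^<_{2n+2}\circ\psi=\delta^<$ holds on $\calA^{<,c}_{2n+1}(P)$, and in particular on $\Tors_2(\calA^{<,c}_{2n+1}(P))\simeq\Tors_2(\calA^{<,c}_{2n+1}(H))$. Here we use Proposition~\ref{prop:before}(ii), since $2n+1\ge 3$.

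The one point to check carefully is the compatibility of $\zeta^<_{2n+2}$ with $\iota$. Specifically, $Z^<(M)-Z^<(U)$ must have vanishing components in degrees $\le 2n$ when $M\in Y_{2n+1}\calIC$, and its degree $2n+2$ part reduced mod $1$ must be what Theorem~\ref{th:psi_delta_Z} computes on $[U;C(D)]=U_{C(D)}-U$. Both facts follow from the finite-type property of $\widetilde{Z}^Y_{k+1}\bmod 1$ recalled before \eqref{eq:tbc}. Finally, since all the maps involved land in $2$-torsion, the triangle lands in $\Tors_2(\calA^{<,c}_{2n+2}(H)\otimes\Q/\Z)$, as claimed in Theorem~\ref{th:Delta_zeta}.
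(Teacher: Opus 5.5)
Your proof is correct and is the paper's argument: it likewise pastes together $\Psi=\psi\circ E$ (Proposition~\ref{prop:Psi_psi}), $\zeta^<_{2n+2}\circ\psi=\delta^<$ (Theorem~\ref{th:psi_delta_Z}) and $\Delta=\delta^<\circ E$ (Lemma~\ref{lem:Delta}). Your extra check, that $\zeta^<_{2n+2}$ on $Y_{2n+1}\calIC/Y_{2n+2}$ factors through $\iota$ and the square \eqref{eq:psi_psi}, makes explicit the passage from $\Gr^F$ to $\Gr^Y$ that the paper leaves implicit; that step needs only linearity and $Z^<_{2n+2}(U)=0$, not multiplicativity.
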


\begin{proof}
By  Proposition \ref{prop:Psi_psi} 
and Theorem \ref{th:psi_delta_Z}, we have 
the commutative diagram
$$
\xymatrix{
 \calB^<_n\big(P^{(2)}\big) 
 \ar[rr]^-E \ar[d]_-\Psi 
 &&
 \Tors_2\big(\calA^{<,c}_{2n+1}(H)\big) 
 \ar[d]^-{\delta^<} \ar[lld]^-{\psi} \\
\Tors_2\Big({\displaystyle\frac{Y_{2n+1} \calIC}{Y_{2n+2}}}\Big) \ar[rr]^-{\zeta^<_{2n+2}} 
&&  \Tors_2\big( \calA^{<,c}_{2n+2}(H) \otimes \Q/\Z\big).
}
$$
Thus, we conclude with Lemma \ref{lem:Delta}.
\end{proof}

It follows from Theorem \ref{th:Delta_zeta}
that the restriction of the invariant $\zeta^<_{2n+2}$ 
to the image of $\Psi$ \emph{does not} depend on the choices 
inherent to the construction of the LMO functor.
Here is another direct consequence of Theorem \ref{th:Delta_zeta}:

\begin{corollary} \label{cor:lower_bound}
For $n>0$, the $\Sp(H)$-module 
$\Tors_2\big({Y_{2n+1}\calIC}/{Y_{2n+2}}\big)$
surjects onto the $\Sp(H)$-module  
$\Delta\big(\calB^<_n\big(P^{(2)}\big)\big)$.
\end{corollary}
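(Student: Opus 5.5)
The plan is to take the map realizing the surjection to be $\zeta^<_{2n+2}$ itself, restricted to $\Tors_2\big(Y_{2n+1}\calIC/Y_{2n+2}\big)$. The proof then has two ingredients: the commutative triangle \eqref{eq:Delta_zeta} of Theorem \ref{th:Delta_zeta}, and the $\Sp(H)$-equivariance of $\zeta^<_{2n+2}$ on torsion elements.

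\emph{Equivariance on torsion.} $\zeta^<_{2n+2}$ is not $\Sp(H)$-equivariant on all of $Y_{2n+1}\calIC/Y_{2n+2}$. However, Corollary \ref{cor:equivariance}, applied in degree $2n+1$, states that the pair $(Z^<_{2n+1},\zeta^<_{2n+2})$ is equivariant for the twisted action \eqref{eq:Sp-action}. Let $M\in Y_{2n+1}\calIC$ have torsion class in $Y_{2n+1}\calIC/Y_{2n+2}$, hence in $\Gr^F_{2n+1}\Z[\calIC]$ via $\iota$. Its image under $Z^<_{2n+1}$ lies in the torsion-free group $\bqm\calA^<_{2n+1}(H)\eqm$, so it vanishes. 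The correction term $\varsigma(f_\circledast,\cdot)$ in \eqref{eq:Sp-action} is bilinear and takes this zero first component as its argument, so it vanishes too. Hence, for every $f\in\calM$,
$$\zeta^<_{2n+2}(f\cdot M)=f_*\,\zeta^<_{2n+2}(M).$$
Since $\Tors_2\big(Y_{2n+1}\calIC/Y_{2n+2}\big)$ is $\Sp(H)$-stable, the restriction of $\zeta^<_{2n+2}$ to it is an $\Sp(H)$-homomorphism. Its target is $\Tors_2\big(\calA^{<,c}_{2n+2}(H)\otimes\Q/\Z\big)$, with the standard action.

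\emph{Hitting $\Delta(\calB^<_n(P^{(2)}))$.} By Theorem \ref{th:Delta_zeta}, $\zeta^<_{2n+2}\circ\Psi=\Delta$. Here $\Psi$ lands in $\Tors_2$ by Theorem \ref{th:Psi}, and $\Delta$ is $\Sp(H)$-equivariant by Lemma \ref{lem:Delta}. So the image of the restricted $\zeta^<_{2n+2}$ contains the $\Sp(H)$-submodule $\Delta(\calB^<_n(P^{(2)}))$, and every element of it is hit by an element of $\Psi(\calB^<_n(P^{(2)}))$. Define $T:=\Tors_2\big(Y_{2n+1}\calIC/Y_{2n+2}\big)\cap(\zeta^<_{2n+2})^{-1}\big(\Delta(\calB^<_n(P^{(2)}))\big)$. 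This is an $\Sp(H)$-submodule of $\Tors_2$, containing $\Psi(\calB^<_n(P^{(2)}))$, and $\zeta^<_{2n+2}$ maps it equivariantly onto $\Delta(\calB^<_n(P^{(2)}))$.

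\emph{Main obstacle.} The argument above produces the surjection from the submodule $T$, not from the whole of $\Tors_2$. To obtain the corollary exactly as worded, I would try to show that the image of the whole group already equals $\Delta(\calB^<_n(P^{(2)}))$, i.e.
$$\zeta^<_{2n+2}\Big(\Tors_2\Big(\frac{Y_{2n+1}\calIC}{Y_{2n+2}}\Big)\Big)=\Delta\big(\calB^<_n(P^{(2)})\big).$$
Once this holds, $T$ is all of $\Tors_2$. I expect this equality to be the hard step. The route I would try first goes through the surjective map $\psi$: every $2$-torsion class is $\psi(x)$ for some $x\in\calA^{<,c}_{2n+1}(P)$, and Theorem \ref{th:psi_delta_Z} gives $\zeta^<_{2n+2}(\psi(x))=\delta^<(x)$. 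It would then suffice to show that $\delta^<(x)\in\Delta(\calB^<_n(P^{(2)}))$ whenever $\psi(x)$ is $2$-torsion. Since $2\psi(x)=0$ gives $2x\in\Ker\psi$, and $\Ker\psi$ lies in $\Tors\big(\calA^{<,c}_{2n+1}(P)\big)$ (as $Z^<_{2n+1}\circ\psi$ is the identity rationally by \eqref{eq:psi_Z}), the key sub-claim would be that $x$ can be modified by an element of $\Ker\psi$ into $E\big(\calB^<_n(P^{(2)})\big)$ plus a term on which $\delta^<$ vanishes. I expect this to require a structural description of the $2$-torsion of $\calA^{<,c}_{2n+1}(H)$ via symmetric diagrams, which the paper may not supply in general. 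If this sub-claim cannot be established, I would fall back on the formulation via $T$, which is precisely what the two preceding steps prove.
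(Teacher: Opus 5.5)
Your first two steps are exactly the paper's argument: the corollary is read off directly from the triangle $\Delta=\zeta^<_{2n+2}\circ\Psi$ of Theorem~\ref{th:Delta_zeta}. On the submodule $\Psi\big(\calB^<_n(P^{(2)})\big)$, equivariance of $\zeta^<_{2n+2}$ already follows from that of $\Psi$ and $\Delta$, so your appeal to Corollary~\ref{cor:equivariance} is correct but more than is needed. The paper does not prove your ``hard step'' $\zeta^<_{2n+2}(\Tors_2)=\Delta\big(\calB^<_n(P^{(2)})\big)$ for general $n$; it only obtains it for $n=1$, where $\Psi$ is shown to be onto (Theorem~\ref{th:Tors(Y_3IC/Y_4)}). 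So ``surjects onto'' is meant in exactly the sense you establish via $T$, namely an $\Sp(H)$-submodule of $\Tors_2$ maps equivariantly onto $\Delta\big(\calB^<_n(P^{(2)})\big)$, which is all that is used afterwards (e.g.\ in \S\ref{subsec:gr_lower_bound}), and you can drop the speculative third step.
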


\noindent
Due to the relative complexity of $\mathcal{B}^<_n\big(P^{(2)}\big)$,
it may be challenging 
to determine the exact nature 
of the $\Sp(H)$-module
$\Delta\big(\mathcal{B}^<_n\big(P^{(2)}\big)\big)$.
However, in Section~\ref{subsec:gr_lower_bound}, by foregoing the $\Sp(H)$-action, we will obtain a result that is more manageable than Corollary~\ref{cor:lower_bound}
to estimate the cardinality 
of $\Tors_2\big({Y_{2n+1}\calIC}/{Y_{2n+2}}\big)$.

\section{Loop filtrations and their associated graded modules}
\label{sec:loop}

In this section, we study loop filtrations on modules of Jacobi diagrams 
and we compute the map $\Delta$ of \S \ref{subsec:lower_bound} 
on the associated graded modules.

\subsection{The loop filtration on $\calA^{<}(H)$}
\label{subsec:loop}

The \emph{loop degree} of a Jacobi diagram $D$
is defined as $\ell(D):=-\chi(D)+1$:
for instance, if $D$ is connected,
$\ell(D)$ is the first Betti number of $D$.
The module $\calA^{<}(H)$ can be endowed
with the \emph{loop filtration}
$$
\calA^{<}(H) =  L_{-\infty} \calA^{<}(H) \supset
\cdots \supset L_{-1} \calA^{<}(H) \supset L_0 \calA^{<}(H) 
\supset L_1 \calA^{<}(H) \supset \cdots
$$
where, for every $k\geq 0$,
the submodule $L_k \calA^{<}(H)$
is generated by Jacobi diagrams of loop degree at least $k$.

To identify the associated graded $\Gr^L\!\calA^{<}(H)$
of this loop filtration, we consider the module 
$$
\qquad \calA(H) := \frac{\Z\left\{\parbox{2in}{\centering 
\text{allowable Jacobi diagrams   }
\text{with $H$-colored external vertices}}\right\}}{\text{AS, IHX, multilinearity, self-loop, slide}}\, ,
$$
where the relation ``slide'' 
asserts the nullity of any $Y$-diagram showing two vertices
with the same color in $H$.
This  module has the bigrading 
$\calA(H)=\calA_{*,\circ}(H)$
defined by the pair (internal degree, loop degree).

\begin{proposition} \label{prop:Gr^LA^<}
We have a canonical isomorphism
$\Gr^L_\circ\!\calA^{<}_* (H) \simeq \calA_{*,\circ }(H)$.
\end{proposition}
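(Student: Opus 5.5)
We construct mutually inverse maps between $\calA_{k,\ell}(H)$ and $L_\ell\calA^<_k(H)/L_{\ell+1}\calA^<_k(H)$.

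\emph{Forgetting the order.} Define $\Phi:\calA(H)\to\Gr^L\calA^<(H)$ by sending a Jacobi diagram $D$ of loop degree $\ell$ to the class of $D^<$, where $D^<$ is $D$ with an arbitrarily chosen total order on its external vertices.

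We first check that $\Phi$ is independent of the order. Two orders differ by a sequence of transpositions of consecutive vertices. By the STU-like relation, each such transposition changes $D^<$ by $\omega(x,y)$ times the diagram obtained by gluing the two vertices. That glued diagram has the same number of internal vertices and two fewer external vertices and one fewer edge-end pair. Hence its Euler characteristic is $\chi(D)-1$, so its loop degree is $\ell+1$ and it lies in $L_{\ell+1}$.

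Next we check that $\Phi$ respects the defining relations of $\calA(H)$. AS, IHX and multilinearity are homogeneous in loop degree and hold for any order. Self-loop holds in $\calA^<(H)$ by Remark~\ref{rem:few}. For slide, a $Y$-component with two vertices of the same color $x$ can be ordered so that these vertices are consecutive, and then the slide relation of $\calA^<(H)$ applies. Finally, $\Phi$ is surjective, since every generator of $\Gr^L$ is the class of some ordered diagram.

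\emph{Inverse map.} Define $\Theta:\Gr^L_\ell\calA^<(H)\to\calA_{*,\ell}(H)$ on generators of $L_\ell\calA^<(H)$ by $D^<\mapsto D$ if $\ell(D)=\ell$, and $D^<\mapsto 0$ if $\ell(D)>\ell$. We must show this is well defined. The issue is that $L_\ell$ is defined as the span of diagrams of loop degree $\geq\ell$ inside the quotient $\calA^<(H)$, so a relation could a priori involve diagrams of lower loop degree that cancel.

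To handle this, define the map $\Theta$ on the free module of ordered diagrams, filtered by loop degree. Every defining relation of $\calA^<(H)$ is either loop-homogeneous (AS, IHX, multilinearity, slide) or, in the case of STU-like, has a leading part in loop degree $\ell$ plus a term in loop degree $\ell+1$. So the relation module $R$ is spanned by relators whose lowest-degree part maps under the forgetful map into the relations of $\calA(H)$. We need $L_\ell$ of the quotient to equal the image of the free $L_\ell$, and the lowest-degree part of $R\cap(\text{free}\ L_\ell)$, reduced mod $L_{\ell+1}$, to map to zero in $\calA_{*,\ell}(H)$.

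\emph{Main obstacle.} This strictness of the filtration is where the work lies. The cleanest route is to use the order-independent splitting of Proposition~\ref{prop:varphi}: the map $\varphi:\calA(S)\to\calA^<(H)$ (without the $Y$ factor) is an isomorphism, and it multiplies by a sign $(-1)^{\chi}$ that respects loop degree. The inverse $\rho$ of Proposition~\ref{prop:varphi} only connects vertices, and each connection raises the loop degree, so $\rho$ is also filtered. This identifies $\Gr^L\calA^<(H)$ with $\Gr^L\calA(S)$ with its loop grading. Comparing $\calA(S)$ with $\calA(H)$ then amounts to multilinear expansion in the basis $S$, where the swap relation matches slide together with AS, and so $\Gr^L\calA(S)\cong\calA(H)$.

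With this identification, composing with $\Phi$ gives the identity, which shows that $\Phi$ is an isomorphism. It is canonical because $\Phi$ itself involves no choices.
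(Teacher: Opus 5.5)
Your map $\Phi$ and the checks in your first half (independence of the order via STU-like together with the count $\chi\mapsto\chi-1$ under gluing, compatibility with the relations, surjectivity) are correct. $\Phi$ is exactly the map $r$ of the paper. Your strategy for injectivity is also the paper's: compare with an unordered $S$-colored module through a filtered isomorphism whose inverse is filtered.

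\textbf{The gap: you compare with the wrong $S$-colored module.} Proposition~\ref{prop:varphi} is a statement about $\calA^<(P)$, not $\calA^<(H)$. The map $\calA(S)\to\calA^<(H)$, $D\mapsto(-1)^{\chi(D)}D^<$, is \emph{not} an isomorphism.
\begin{itemize}
\item The swap relation of $\calA(S)$ corresponds, in $\calA^<(P)$, to the $P$-version of slide, where $x<x$ may be replaced by $x<s$.
\item In $\calA^<(H)$, by contrast, slide kills any $Y$-component with a repeated color.
\item AS alone only gives $2\,Y(x,x,y)=0$. So the $Y$-diagram with legs colored $a_1,a_1,b_1$ (equal by swap to the one colored $b_1,b_1,a_1$) is a nonzero $2$-torsion element of $\calA_1(S)$.
\item That same diagram vanishes in $\calA^<_1(H)\simeq\Lambda^3H$ and in $\calA_1(H)$.
\end{itemize}
For the same reason, your claim that swap ``matches slide together with AS'', and hence $\Gr^L\calA(S)\cong\calA(H)$, is false. $\calA(S)$ is already loop-graded, and in degree $1$ it has $2$-torsion while $\calA_1(H)\simeq\Lambda^3H$ is torsion-free. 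The two errors cancel in the composite, which is why your conclusion comes out right, but the argument as written passes through two non-isomorphisms.

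\textbf{The repair is the one the paper makes.}
\begin{itemize}
\item Let $\calA'(S)$ be defined like $\calA(S)$, but with swap replaced by slide, i.e.\ the nullity of any $Y$-diagram showing a repeated color of $S$.
\item Then $D\mapsto(-1)^{\chi(D)}D^<$ is an isomorphism $\calA'(S)\to\calA^<(H)$. Its inverse sends $E$ to $(-1)^{\chi(E)}$ times the sum over all ways of connecting some $a_i$-colored vertices of $E$ to lower $b_i$-colored vertices, followed by forgetting the order.
\item Both maps preserve the loop filtrations, since connecting vertices only raises the loop degree. Hence they induce mutually inverse isomorphisms $\calA'(S)=\Gr^L\calA'(S)\cong\Gr^L\calA^<(H)$. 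This is precisely what settles the strictness issue you correctly identified.
\item Finally, compose with the obvious isomorphism $\calA'(S)\to\calA(H)$, $D\mapsto(-1)^{\chi(D)}D$, whose inverse is multilinear expansion in the basis $S$. The result is your $\Phi$.
\end{itemize}
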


\begin{proof}
Let $r: \calA(H) \to \Gr^L\!\calA^{<} (H) $ 
be the homomorphism 
that maps every Jacobi diagram $D$ of loop degree $k$
to the  class of the same Jacobi diagram $D^<\in  \Gr^L_k \calA^{<}_* (H)$
with an arbitrary order on the external vertices.
We claim that $r$ is an isomorphism.

Indeed, 
let $\calA'(S)$ be the module defined as in \eqref{eq:A(S)}
but with the ``swap'' relation replaced by the ``slide'' relation
(which  asserts the nullity of any $Y$-diagram showing 
twice the same color  in $S$).
Similarly to \cite[Lemma~8.4]{CHM08} and Proposition \ref{prop:varphi},
we define an isomorphism
$$
\varphi: \calA'(S) \longrightarrow \calA^<(H), \
D \longmapsto (-1)^{\chi(D)} D^<,
$$
with inverse map given by 
$$
\varphi^{-1}(E):=(-1)^{\chi(E)} \cdot 
\left(\begin{array}{c}
\text{sum of all ways of connecting  } \\ 
\text{some $a_i$-colored vertices of $E$} \\
\text{ to some lower $b_i$-colored vertices}
\end{array}\right)_{
\!\!\!\hbox{\& forget the order}}
$$
for any Jacobi diagram $E$ whose vertices are exclusively
colored by $S \subset H$.
Clearly,  $\varphi$ and $\varphi^{-1}$
both preserve the loop filtrations,
so that the induced map 
$\Gr^L \varphi: \calA'(S) = \Gr^L\! \calA' (S) 
\to  \Gr^L\! \calA^{<}(H)$ is an isomorphism.
We clearly have $\Gr^L \varphi=r$
via the obvious isomorphism between  $\calA'(S)$ and 
$\calA(H)$ mapping any Jacobi diagram~$D$ to $(-1)^{\chi(D)} D$: 
hence $r$ is bijective.
\end{proof}

Besides, recall  the isomorphism
$X: \calA(H)\otimes \Q \to \calA^<(H)\otimes \Q$ 
denoted by $\chi$ in \cite{HM09} and defined by
\begin{equation} \label{eq:X}
X(D) := \frac{1}{e!} \left(\begin{array}{c}
\text{sum of all ways of ordering } \\ 
\text{the external vertices of $D$}
\end{array}\right)    
\end{equation}
for any Jacobi diagram $D$ with $e$ external vertices.
Thus, 
the filtered vector space $\calA^<(H)\otimes \Q$ identifies
to its associated graded $\Gr^L\!\calA^<(H)\otimes \Q$ through $X^{-1}$.

\subsection{Structure of $\calA_{*,0}^c(H)$ and $\calA_{*,1}^c(H)$}
\label{subsec:few_loops}

We review here some known results
on the algebraic structure of the tree part $\calA_{*,0}^c(H)$ 
and $1$-loop part $\calA_{*,1}^c(H)$ of the module  $\calA^c(H)=\calA_*^c(H)$.

In order to investigate  the module of tree Jacobi diagrams $\mathcal{A}_{*,0}^c(H)$, Levine introduces in \cite{Lev06} the free quasi-Lie algebra $\mathfrak{L}'(H)$ generated by $H$ (or, more generally, by an arbitrary free abelian group).
He proves that the canonical map $\frakL'_n(H)\to \frakL_n(H)$ 
is an isomorphism in any odd degree $n$, and satisfies
$$
\xymatrix @!0 @R=0.5cm  @C=2cm  {
0 \ar[r] & \frakL_k(H) \otimes \Z_2 \ar[r] &  \frakL'_{2k}(H) \ar[r] &
 \frakL_{2k}(H) \ar[r] & 0 \\
 & x \otimes 1 \ar@{|->}[r] & [x,x] &  &
}
$$
in any even degree $n=2k$.
The kernels $\mathsf{D}_n(H)$ and $\mathsf{D}'_n(H)$
of the  bracketing maps $H\otimes \frakL_{n+1}(H) \to \frakL_{n+2}(H)$
and $H\otimes \frakL'_{n+1}(H) \to \frakL'_{n+2}(H)$, respectively,
fit into the commutative triangle
\begin{equation} \label{eq:triangle_eta}
\xymatrix{
\calA^c_{n,0}(H) \ar[r]^-\eta \ar[rd]_-\eta & \mathsf{D}_n(H)\\
& \mathsf{D}'_n(H) \ar[u]
}
\end{equation}
for every $n>1$;
here, the vertical map is the canonical homomorphism, 
and $\eta$ assigns to any tree Jacobi diagram $T$ the sum
\begin{equation} \label{eq:eta_formula}
\eta(T) := \sum_v (\hbox{color of $v$})
\otimes (\hbox{Lie bracket defined by $T$ rooted at $v$})
\end{equation}
indexed by the external vertices $v$ of $T$. 
Then, Levine proves that the following two short sequences 
are exact for $k\geq 1$:
\begin{equation}\label{eq:Levine1}
\!\!\begin{tikzcd}[row sep=-0.4em]
0 \arrow[r] 
& H \otimes \frakL_{k+1}(H) \otimes \Z_2 \arrow[r] 
& \mathsf{D}'_{2k+1}(H) \arrow[r] 
& \mathsf{D}_{2k+1}(H) \arrow[r] 
& 0 \\
& h \otimes u \otimes 1 \arrow[r, mapsto] 
& \eta \Big(
\begin{tikzpicture}[thick,baseline=-0.5em,x=0.6em,y=0.6em]
\footnotesize
\node at (-1.6,1.4) {\scriptsize $u$};
\node at (1.6,1.4) {\scriptsize $u$};
\node[below=1pt] at (0,-1.4) {\scriptsize $h$};
\draw (-1,1) -- (0,0) -- (1,1);
\draw (0,0) -- (0,-1.5);
\fill (0,0) circle (.4ex);
\end{tikzpicture}
\Big)
\end{tikzcd}
\end{equation}

 \vspace{-0.3cm}
\begin{equation} \label{eq:Levine2}
\xymatrix  @!0 @R=2pc @C=6.6pc {
0 \ar[r] &  \mathsf{D}'_{2k}(H) \ar[r] & \mathsf{D}_{2k}(H) \ar[r] & \frakL_{k+1}(H)\otimes \Z_2 \ar[r] & 0.\\
&& \frac{1}{2} \eta(\begin{tikzpicture}[thick,color=black,
baseline=-0.25em, x=0.6em, y=0.6em]
 \draw[thick] (0,0)--(2,0);
 \node[left=0pt] at (0.25,0) {$u$};
\node[right=0pt] at (1.75,0) {$u$};
\end{tikzpicture}) \ar@{|->}[r] & u\otimes 1
}    
\end{equation}
Furthermore, Conant, Schneiderman and Teichner prove in \cite{CST12}
that the map $\eta: \calA^c_{n,0}(H) \to \mathsf{D}'_n(H)$ 
is an isomorphism for every $n>1$:
thus, the short exact sequences \eqref{eq:Levine1} and \eqref{eq:Levine2}
relate the module of tree Jacobi diagrams
to the free abelian group $\mathsf{D}(H)$.
In particular, it follows from \eqref{eq:Levine2} that we have an embedding
\begin{equation} \label{eq:Levine_embedding}
\operatorname{Lev}:\frakL_{k+1}(H)\otimes \Z_2 
\longrightarrow 
\frac{\frac{1}{2}\calA^c_{2k,0}(H)}{\calA^c_{2k,0}(H)}
\subset \calA^c_{2k,0}(H) \otimes \Q/\Z, \
u \otimes 1 \longmapsto 
\Big\{\frac{1}{2} u \textsf{-----}{} u\Big\}.
\end{equation}

\begin{remark}
In complete analogy with \eqref{eq:triangle_eta} for $n := 1$, we have the following commutative diagram:
$$
\Lambda^3 H \simeq \xymatrix{
\calA^c_{1,0}(H) \ar[r]^-\eta_-\simeq & \mathsf{D}_1(H)\\
\tilde\calA^c_{1,0}(H) \ar[r]_-\eta^-\simeq  \ar@{->>}[u]
& \mathsf{D}'_1(H); \ar[u]
}
$$
here, the module $\tilde\calA(H)$ is defined like $\calA(H)$ by omitting the slide relation, and the isomorphisms $\eta$ are given by the same expression as in \eqref{eq:triangle_eta}, 
which also writes as in \eqref{eq:expand}. 
So, the short exact sequence
\eqref{eq:Levine1}  holds in the case $k := 0$ too \cite{Lev06}.
\hfill $\blacksquare$
\end{remark}

We  now turn to the  submodule $\calA^c_{*,1}(H)$ of one-loop Jacobi diagrams.
Let $n\geq 2$ and let $\mathcal{D}_{2n}$ be the dihedral group 
(of order $2n$)
generated by $x,y$ with relations $x^n,y^2,(xy)^2$.
Let $\big(H^{\otimes n}\big)_{\mathcal{D}_{2n}}$ 
be the coinvariant quotient of $H^{\otimes n}$ under the action defined by
\begin{eqnarray*}
x\cdot (h_1 \otimes h_2 \otimes \cdots  \otimes h_{n})
&=& h_n \otimes h_1 \otimes \cdots \otimes  h_{n-1}, \\
y\cdot (h_1 \otimes h_2 \otimes \cdots  \otimes h_{n})
&=& (-1)^n\ h_n \otimes h_{n-1} \otimes \cdots \otimes h_1.
\end{eqnarray*}
According to \cite[Prop$.$ 5.1]{NSS22a},  we have an isomorphism:
\begin{equation} \label{eq:wheel}
\big(H^{\otimes n}\big)_{\mathcal{D}_{2n}} \longrightarrow \calA^c_{n,1}(H), \quad
h_1 \otimes h_2 \otimes \cdots  \otimes h_{n} 
\longmapsto \begin{tikzpicture}[scale = 1, baseline = 0cm, thick]
\draw (1,0) circle (0.25 cm);
\draw ({1+0.25*cos(30)},{0+0.25*sin(30)}) -- ({1+0.5*cos(30)},{0+0.5*sin(30)});
\draw ({1-0.25*cos(30)},{0+0.25*sin(30)}) -- ({1-0.5*cos(30)},{0+0.5*sin(30)});
\draw ({1-0.25*cos(60)},{0+0.25*sin(60)}) -- ({1-0.5*cos(60)},{0+0.5*sin(60)});
\node at ({1+0.25*cos(150)},{0+0.25*sin(150)}) {\scriptsize $\bullet$};
\node at ({1+0.25*cos(120)},{0+0.25*sin(120)}) {\scriptsize $\bullet$};
\node at ({1+0.25*cos(30)},{0+0.25*sin(30)}) {\scriptsize $\bullet$};
\node at ({1+0.7*cos(150)},{0+0.7*sin(150)}) {\scriptsize $h_1$};
\node at ({1+0.7*cos(120)},{0+0.7*sin(120)}) {\scriptsize $h_2$};
\node at ({1+0.7*cos(30)},{0+0.7*sin(30)}) {\scriptsize $h_n$};
\node at ({1+0.5*cos(90)},{0+0.5*sin(90)}) {$\cdot$};
\node at ({1+0.5*cos(75)},{0+0.5*sin(75)}) {$\cdot$};
\node at ({1+0.5*cos(60)},{0+0.5*sin(60)}) {$\cdot$};
\end{tikzpicture}.
\end{equation}

Observe that $\big(H^{\otimes n}\big)_{\mathcal{D}_{2n}}$ can be viewed as the module
of \emph{bracelets} (also called \emph{turnover necklaces})
with $n$ beads colored by $H$ (taking the sign $(-1)^n$ during a turnover): 
this allows for an explicit description of  $\calA^c_{n,1}(H)$.
Specifically, $\calA^c_{n,1}(H)$ is torsion-free for $n$ even, 
and there is an isomorphism
\begin{equation} \label{eq:torsion_wheel}
H^{\otimes k} \otimes \Z_2 \longrightarrow    
\operatorname{Tors}  \calA^c_{2k-1,1}(H), \quad
(h_1\otimes h_{2} \otimes \cdots \otimes h_{k}) \otimes 1 
\longmapsto \begin{tikzpicture}[scale = 1, baseline = 0cm, thick]
\draw (1,0) circle (0.25 cm);
\draw ({1+0.25*cos(200)},{0+0.25*sin(200)}) -- ({1+0.5*cos(200)},{0+0.5*sin(200)});
\node at ({1+0.7*cos(200)},{0+0.7*sin(200)}) {\scriptsize $h_1$};
\draw ({1+0.25*cos(170)},{0+0.25*sin(170)}) -- ({1+0.5*cos(170)},{0+0.5*sin(170)});
\node at ({1+0.7*cos(170)},{0+0.7*sin(170)}) 
{\scriptsize $h_2$};
\draw ({1+0.25*cos(90)},{0+0.25*sin(90)}) -- ({1+0.5*cos(90)},{0+0.5*sin(90)});
\node at ({1+0.7*cos(90)},{0+0.7*sin(90)}) {\scriptsize $h_k$};
\draw ({1+0.25*cos(-20)},{0+0.25*sin(-20)}) -- ({1+0.5*cos(-20)},{0+0.5*sin(-20)});
\node at ({1+0.7*cos(-20)},{0+0.7*sin(-20)}) 
{\scriptsize $h_1$};
\draw ({1+0.25*cos(10)},{0+0.25*sin(10)}) -- ({1+0.5*cos(10)},{0+0.5*sin(10)});
\node at ({1+0.7*cos(10)},{0+0.7*sin(10)}) 
{\scriptsize $h_2$};
\node at ({1+0.25*cos(10)},{0+0.25*sin(10)}) {\scriptsize $\bullet$};
\node at ({1+0.25*cos(200)},{0+0.25*sin(200)}) {\scriptsize $\bullet$};
\node at ({1+0.25*cos(170)},{0+0.25*sin(170)}) {\scriptsize $\bullet$};
\node at ({1+0.25*cos(-20)},{0+0.25*sin(-20)}) {\scriptsize $\bullet$};
\node at ({1+0.25*cos(90)},{0+0.25*sin(90)}) {\scriptsize $\bullet$};

\draw ({1+0.25*cos(170)},{0+0.25*sin(170)}) -- ({1+0.5*cos(170)},{0+0.5*sin(170)});
\node at ({1+0.45*cos(65)},{0+0.45*sin(65)}) {$\cdot$};
\node at ({1+0.45*cos(50)},{0+0.45*sin(50)}) {$\cdot$};
\node at ({1+0.45*cos(35)},{0+0.45*sin(35)}) {$\cdot$};
\node at ({1+0.45*cos(180-35)},{0+0.45*sin(180-35)}) {$\cdot$};
\node at ({1+0.45*cos(180-50)},{0+0.45*sin(180-50)}) {$\cdot$};
\node at ({1+0.45*cos(115)},{0+0.45*sin(115)}) {$\cdot$};
\end{tikzpicture}
\end{equation}
in any odd degree $n=2k-1$.
Furthermore, we have
$$
\operatorname{rk}\big(\calA^c_{n,1}(H)\big)=
\left\{ \begin{array}{ll}
B_n(2g)     & \hbox{if $n$ is even,} \\
B_n(2g)  - (2g)^k     & \hbox{if $n=2k-1$ is odd,} 
\end{array} \right.
$$
where, for every $n\geq 2$ and $c\geq 1$, 
the number of bracelets with $n$ beads on $c$ colors is denoted by $B_n(c)$,
and can be derived from P\'olya's enumeration formula.
(We refer to \cite[Prop$.$ 5.2]{NSS22a} for details.)

\subsection{The $s$-loop filtration}
\label{subsec:sloop}

In this subsection, we assume that $n>0$
and we study the behaviour of the map
$\Delta: \calB^<_n\big(P^{(2)}\big)
\to \calA^{<,c}_{2n+2}(H) \otimes \Q/\Z$
(defined in \S \ref{subsec:lower_bound})
with respect to the loop filtrations.

The \emph{$s$-loop degree} of a  beaded rooted Jacobi diagram
in $ \calB^<_n\big(P^{(2)}\big)$
is  twice its first Betti number 
plus the number of $s$-colored vertices. 
Then, for any $n\geq 1$, we obtain  the \emph{$s$-loop filtration} 
$$
\calB^<_n\big(P^{(2)}\big)
= L_0 \calB^<_n\big(P^{(2)}\big) 
\supset L_1 \calB^<_n\big(P^{(2)}\big) 
\supset L_2 \calB^<_n\big(P^{(2)}\big) \supset \cdots
$$
To describe the associated graded 
$\Gr^L\calB^{<}_n\big(P^{(2)}\big)=\Gr^L_\circ\calB^{<}_n\big(P^{(2)}\big)$
of the $s$-loop filtration,
we consider the following module:
$$
\qquad \calB_n\big(H^{(2)};s\big) := \frac{\Z\left\{\parbox{2.8in}{\centering 
\text{beaded rooted connected Jacobi diagrams}
\text{of internal degree $n$} 
\text{with external vertices
 colored by $H^{(2)}\cup \{s\}$}}\right\}}{\text{AS, IHX, multilinearity, self-loop, group, fully-special}}
$$ 
with  grading $\calB_n\big(H^{(2)};s\big)=\calB_{n,\circ}\big(H^{(2)};s\big)$
defined by the $s$-loop degree.

\begin{example} \label{ex:B_1}
The submodule $\calB_{1,i}\big(H^{(2)};s\big)$
is trivial in $s$-loop degree $i\geq 2$, since both diagrams\\[-0.5cm]
$$
 \begin{tikzpicture}[thick,color=black,
baseline=-0.5em, x=0.6em, y=0.6em]
\footnotesize
\node[left=1pt] at (-1,1) {\scriptsize $s$};
\node[right=1pt] at (1,1) {\scriptsize $s$};
\node[below=1pt] at (0,-1.5) {\scriptsize $h \star$};
\draw (-1,1) -- (0,0) -- (1,1);
\draw (0,0) -- (0,-1.5);
\draw [fill = black] (0,0) circle (.3ex);
\end{tikzpicture} \qquad \hbox{and} \qquad
\begin{tikzpicture}[scale = 0.4,baseline = 0.05cm]
		\node (0) at (1, 0) {};
		\node (1) at (1, -0.65) {};
		\node (2) at (1, 1) {};
        \draw [thick] (0.center) to (1.center);
		\draw [thick] [in=180, out=-180, looseness=1.75] (2.center) to (0.center);
		\draw [thick] [bend left=90, looseness=1.75] (2.center) to (0.center);
        \draw [fill = black] (0.center) circle  (0.8ex);
        \draw[black, fill=lightgray] (2.center) circle (1.0ex);
        \node[below=0.2pt] at (1.center) {\scriptsize $h \star$};
\end{tikzpicture}
$$
vanish by the fully-special relation.
So, the module $\calB_1\big(H^{(2)};s\big)$  decomposes as 
$$
\calB_1\big(H^{(2)};s\big) 
= \calB_{1,0} \big(H^{(2)};s\big) \oplus \calB_{1,1} \big(H^{(2)};s\big)
$$
with respect to the $s$-loop degree,
and we have canonical isomorphisms 
$$
H^{(2)} \otimes \frakL'_2\big(H^{(2)}\big) 
\stackrel{\simeq}{\longrightarrow} \calB_{1,0} \big(H^{(2)};s\big), \quad
h \otimes [x,y] \longmapsto
 \begin{tikzpicture}[thick,color=black,
baseline=-0.5em, x=0.6em, y=0.6em]
\footnotesize
\node[left=1pt] at (-1,1) {\scriptsize $x$};
\node[right=1pt] at (1,1) {\scriptsize $y$};
\node[below=1pt] at (0,-1.5) {\scriptsize $h \star$};
\draw (-1,1) -- (0,0) -- (1,1);
\draw (0,0) -- (0,-1.5);
\draw [fill = black] (0,0) circle (.3ex);
\end{tikzpicture},
$$
\vspace{-0.2cm}
$$
H^{(2)} \otimes H^{(2)} \stackrel{\simeq}{\longrightarrow}  \calB_{1,1} \big(H^{(2)};s\big)  ,\quad
h \otimes y \longmapsto
 \begin{tikzpicture}[thick,color=black,
baseline=-0.5em, x=0.6em, y=0.6em]
\footnotesize
\node[left=1pt] at (-1,1) {\scriptsize $s$};
\node[right=1pt] at (1,1) {\scriptsize $y$};
\node[below=1pt] at (0,-1.5) {\scriptsize $h \star$};
\draw (-1,1) -- (0,0) -- (1,1);
\draw (0,0) -- (0,-1.5);
\draw [fill = black] (0,0) circle (.3ex);
\end{tikzpicture}.
$$

\up \hfill $\blacksquare$
\end{example}

\begin{lemma}
We have a canonical isomorphism
$\Gr^L_\circ \calB^{<}_n \big(P^{(2)}\big) 
\simeq \calB_{n,\circ }\big(H^{(2)};s\big)$.
\end{lemma}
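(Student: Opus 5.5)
We prove this in the same way as Proposition \ref{prop:Gr^LA^<}. We build a surjection from $\calB_{n,\circ}\big(H^{(2)};s\big)$ to the associated graded. Then we build an inverse, using a basis-dependent description of $\calB^<_n\big(P^{(2)}\big)$ in which the ordering of leaves is eliminated.

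\emph{The map $r$.} Fix a section $H^{(2)}\to P^{(2)}$; for instance, use the mod $2$ reduction of $\sigma$ from Lemma \ref{lem:P}. This gives $P^{(2)}=\Z_2\, s\oplus \sigma(H^{(2)})$. Define
\[
r:\calB_{n,\circ}\big(H^{(2)};s\big)\longrightarrow \Gr^L_\circ\calB^<_n\big(P^{(2)}\big)
\]
as follows. A generator $D$ of $s$-loop degree $k$ is sent to the class in $L_k/L_{k+1}$ of the same beaded rooted diagram. Its $H^{(2)}$-colors are replaced by their $\sigma$-lifts, and its leaves receive an arbitrary total order.

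We check that this class does not depend on the order. The BSTU-like relation shows that swapping two adjacent leaves changes the diagram by $\omega(\cdot,\cdot)$ times a sum of two diagrams in which the two leaves are glued. Gluing two leaves raises the first Betti number by one, hence raises the $s$-loop degree by $2$ minus the number of $s$-colors lost, which is at most $2$. If one of the glued leaves is $s$-colored, then $\omega$ vanishes. So the correction terms always lie in $L_{k+1}$, and this is exactly why the $s$-loop degree weights loops by $2$.

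The remaining relations (AS, IHX, multilinearity, self-loop, group, fully-special) are homogeneous for the $s$-loop degree, so they pass to the graded level. For multilinearity, the key point is that $\sigma(x+y)=\sigma(x)+\sigma(y)+\omega(x,y)s$ in $P^{(2)}$. The extra $s$-colored term has strictly higher $s$-loop degree, so it vanishes in the graded object. Finally, $r$ is surjective because any $P^{(2)}$-color splits as a $\sigma$-part plus a multiple of $s$, and diagrams generate.

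\emph{The inverse.} Following Proposition \ref{prop:varphi} and the proof of Proposition \ref{prop:Gr^LA^<}, fix the symplectic basis $S$. Let $\calB'(S;s)$ be the module of unordered beaded rooted diagrams with leaves colored by $S\cup\{s\}$, subject to the relations of $\calB_n(H^{(2)};s)$. Define
\[
\varphi:\calB'(S;s)\to\calB^<_n\big(P^{(2)}\big)
\]
by ordering all $a_i$-leaves below all $b_i$-leaves; $s$-leaves may go anywhere, since their order is irrelevant by BSTU-like with $\omega(s,\cdot)=0$. Define $\varphi^{-1}$ by summing over all ways of connecting $a_i$-leaves to lower $b_i$-leaves. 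Each such connection is made in the two ways prescribed by BSTU-like: with no bead and with one bead on the new edge. After that, forget the order.

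We then verify that $\varphi^{-1}$ respects BSTU-like, multilinearity in $P^{(2)}$ (with $s$ absorbing the cocycle), the group relations and fully-special. Both $\varphi$ and $\varphi^{-1}$ preserve the $s$-loop filtration, and $\varphi^{-1}$ equals the identity modulo higher filtration. Hence $\Gr^L\varphi$ is an isomorphism. It agrees with $r$ under the obvious identification $\calB'(S;s)\cong\calB_n(H^{(2)};s)$, and since everything is $2$-torsion there are no signs to track. Therefore $r$ is bijective.

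\emph{Main obstacle.} The hardest step is checking that $\varphi^{-1}$ is well defined, in particular its compatibility with BSTU-like when the two transposed leaves are themselves joined by a connection, and with the group relation at leaves. I would handle this as in the proof of Proposition \ref{prop:varphi}: reduce to adjacent transpositions and compare the connection sums term by term. The bead bookkeeping is checked through the induced $2$-fold covers.
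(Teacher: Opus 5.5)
Your proposal is correct and follows essentially the same route as the paper. The paper defines $r$ via arbitrary leaf orders and arbitrary lifts, identifies the two modules with $\calB^<_n(S\cup\{s\})$ and $\calB_n(S\cup\{s\})$ using the basis $\sigma(S)\cup\{s\}$ of $P^{(2)}$, and shows that $\varphi$ and its inverse $\rho$ (connecting $a_i$-leaves to lower $b_i$-leaves with $0$ or $1$ bead) are filtration-preserving with $\Gr^L\varphi = r$. One small slip: $\sigma(H^{(2)})$ is not a subgroup of $P^{(2)}$, as your own cocycle formula shows, so the splitting should read $P^{(2)}=\Z_2\, s\oplus T$ with $T$ spanned by $\sigma(S)$; this does not affect the argument.
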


\begin{proof} 
Let $r: \calB_{n,\circ }\big(H^{(2)};s\big)
\to  \Gr^L_\circ \calB^{<}_n \big(P^{(2)}\big)$ 
be the homomorphism 
that maps every Jacobi diagram $D\in \calB_{n,k}\big(H^{(2)};s\big)$
to the  class of the same Jacobi diagram 
$D^<\in  \Gr^L_k \calB^{<}_n \big(P^{(2)}\big)$
with an arbitrary order on the leaves
and an arbitrary choice of lift to $P^{(2)}$
for every $H^{(2)}$-colored vertex.
It is easily checked that $r$ is well-defined,
and we claim it to be an isomorphism.

To prove this claim, we introduce the module
$$
\calB^{<} (S \cup\{s\} ) := 
\frac{\Z_2 \left\{\parbox{2.7in}{\centering 
\text{beaded rooted connected Jacobi diagrams}
\text{with external vertices colored by $S \cup\{s\}$}
\text{and with totally-ordered leaves}}\right\}}{
\text{AS, IHX, self-loop,  BSTU-like,  group, fully-special}}.
$$ 
The module $\calB(S \cup\{s\} )$ is defined in a similar  way, 
but  without requiring an order  on leaves 
(and with no  BSTU-like relation).
It follows from~\eqref{eq:primary} 
that the $\Z_2$-vector space  $P^{(2)}$ 
has the basis $\sigma(S)\cup\{s\}$.
Therefore, we can identify 
$\calB^{<}_n (S \cup\{s\} )$ and  $\calB_n(S \cup\{s\} )$ 
to $\calB^{<}_n \big(P^{(2)}\big)$ and $\calB_{n }\big(H^{(2)};s\big)$, 
respectively.

We now proceed in a way similar 
to \cite[Lemma~8.4]{CHM08} and Proposition~\ref{prop:varphi}.
Let $\varphi:\calB_n(S \cup\{s\} ) \to \calB^{<}_n (S \cup\{s\} )$
be the homomorphism assigning to any diagram~$D$
the same diagram $D^<$ where the leaves
are ordered so that  all the leaves colored 
by $\{a_1,\dots, a_g\}$
are lower than the leaves colored 
by $\{b_1,\dots, b_g\}$,
and all $s$-colored leaves are given an arbitrary position.
Furthermore, we define a homomorphism
$\rho: \calB^{<}_n (S \cup\{s\} ) \to  \calB_n(S \cup\{s\} )  $
by assigning to any diagram $E$ the linear combination
$$
\rho(E) := 
\left(\!\!\!\!\begin{array}{c}
\text{\small  sum of all ways of connecting  some  $a_i$-colored leaves  of $E$} \\
\text{\small  to some lower $b_i$-colored leaves, 
inserting $0$ or 1 bead} 
\end{array}\!\!\!\right)_{
\!\!\!\hbox{ \! \& \small forget the order}}.
$$
Clearly, $\rho \circ \varphi$ is 
the identity of $\calB_n(S \cup\{s\} )$, 
and $\varphi$ is surjective as a consequence of ``BSTU-like'':
hence $\varphi$ is an isomorphism with inverse $\rho$.
Since both of them preserve the $s$-loop filtrations,
the induced map
$$
\Gr^L \varphi: \calB_n(S \cup\{s\}) = \Gr^L  \calB_n(S \cup\{s\}) 
\longrightarrow  \Gr^L  \calB^{<}_n (S \cup\{s\} )
$$
is an isomorphism.
Via the above-mentioned identifications,
$\Gr^L \varphi$ corresponds to the map~$r$, which proves our claim.
\end{proof}

\subsection{The map $\Gr^L \!\Delta$}
\label{subsec:gr_lower_bound}

In this subsection, we consider the map $\Gr^L \!\Delta$
induced by the homomorphism $\Delta$ on the graded modules
associated to the loop filtrations. 

\begin{proposition} \label{prop:Gr_Delta}
Let~$n>0$.
The map $\Delta: \calB^<_n\big(P^{(2)}\big)
\to \calA^{<,c}_{2n+2}(H) \otimes \Q/\Z$
sends the $s$-loop filtration to the loop filtration,  
and the induced map at the graded level 
$$
\xymatrix @!0 @R=1cm @C=2cm  {
 \Gr_k^L  \calB^<_n\big(P^{(2)}\big) \ar@{=}[d]
\ar[rr]^-{\Gr^L \!\Delta}&&
\Gr_{k}^L \calA^{<,c}_{2n+2}(H) \otimes \Q/\Z  \ar@{=}[d] \\
 \calB_{n,k}\big(H^{(2)};s\big) &&  \calA^{c}_{2n+2,k}(H)  \otimes \Q/\Z
}
$$
maps any diagram 
$D\in \calB_{n,k}\big(H^{(2)};s\big)$ 
to half the  diagram 
${\widetilde{D}_{++}^s \in \calA^{c}_{2n+2,k}(H)  \otimes \Q/\Z}$.
Here, $\widetilde{D}_{++}^s$ is built from  the $2$-fold cover $\widetilde{D}$ of $D$
by, first, gluing the twin vertices arising from $s$-colored leaves
and, second,
by gluing its twin root vertices to the ``top'' external vertices of an $H$-diagram  
(assigning to the ``bottom'' external vertices of the latter the color of the root of $D$).
\end{proposition}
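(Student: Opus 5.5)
We work directly from the explicit formula \eqref{eq:Delta_def_bis} for $\Delta$ and compute the loop degree of every summand. Let $D$ be a generator of $\calB^<_n\big(P^{(2)}\big)$ with first Betti number $b$ and with $m$ leaves colored by $s$, so that its $s$-loop degree is $k=2b+m$. Since $\widetilde{D}$ is a connected or two-component double cover of $D$, we have $\chi(\widetilde{D})=2\chi(D)$. It follows that $\widetilde{D}_+$ has first Betti number $2b$ after the twin roots are joined through the extra $Y$-diagram: indeed $\chi(\widetilde{D}_+)=2\chi(D)-1$ and $\chi(D)=1-b$, so $\ell(\widetilde{D}_+)=2b$. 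The diagram $\widetilde{D}_{++}$ likewise has loop degree $2b$. Gluing a pair of twin leaves raises the loop degree by one, so $\widetilde{D}_{++}^S$ and $\widetilde{D}_+^{S,i}$ have loop degree $2b+|S|$. The extra $Y$ attached at $i$ in $\widetilde{D}_+^{S,i}$ creates a cycle through the twin edges, hence an additional $+1$.

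The key step is to show that the terms in $S$ with an $s$-colored leaf $j\notin S$ vanish. For such $j$, the twin vertices colored $s$ become two external vertices colored $p_*(s)=0\in H$. This observation is valid by the multilinearity relation once we pass to $H$-colors: these are connected diagrams of degree $2n+2\geq 4$, and $\calA^{<,c}(P)\cong\calA^{<,c}(H)$. Hence every surviving summand has $S\supseteq\{s\text{-colored leaves}\}$ and $i$ non-$s$ (if $i$ is $s$-colored, its $Y$ carries an $s$-color, which gives $0$ as well). So every surviving summand has loop degree at least $2b+m=k$. This proves that $\Delta(L_k)\subset L_k\otimes\Q/\Z$; here one should check that $L_k\big(\calA^{<,c}\big)\otimes\Q/\Z$ is what is meant by the filtration on the target, and we use $\fr(s)=1$.

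Modulo loop degree $k+1$, the only surviving summand is $S=\{s\text{-leaves}\}$ in the $\widetilde{D}_{++}^S$ sum, with coefficient $\tfrac12\prod_{j\in S}\fr(s)=\tfrac12$. All $\widetilde{D}_+^{S,i}$ terms have degree $\geq k+1$, as do all larger $S$. This term is exactly $\tfrac12\widetilde{D}_{++}^s$, and under the identification of Proposition \ref{prop:Gr^LA^<} (which forgets the order) we obtain the stated formula. The subtle point, which I expect to be the main obstacle, is compatibility with the identifications $\Gr^L\calB^<\simeq\calB(H^{(2)};s)$ and $\Gr^L\calA^<\simeq\calA$ after tensoring with $\Q/\Z$. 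Since $\Gr$ does not commute with $\otimes\Q/\Z$ in general, I will check that the classes $\tfrac12(\cdot)$ live in $\tfrac12 L_k/ (L_k+\tfrac12L_{k+1})$, and that this is identified with $\calA^c_{2n+2,k}(H)\otimes\Q/\Z$. The latter follows via the isomorphism $\varphi$ of Proposition \ref{prop:Gr^LA^<}, which splits the filtration over $\Z$. Finally, independence of the lifts and of the order choices follows because order changes only produce STU-like terms of higher loop degree.
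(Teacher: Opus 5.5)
Your proposal is correct and follows the paper's proof. Both use the same Euler-characteristic count, giving loop degree $2b+|S|$ for $\widetilde{D}_{++}^S$ and $2b+|S|+1$ for $\widetilde{D}_{+}^{S,i}$; both kill every summand with an unglued $s$-colored twin (or an $s$-colored extra $Y$) because $p_*(s)=0$; and both isolate the single leading term $S=V_s$ with coefficient $\frac12$ since $\fr(s)=1$. The only additions are your correct remark that the loop filtration on $\calA^{<,c}_{2n+2}(H)$ splits over $\Z$ via $\varphi$, so taking $\Gr$ commutes with $\otimes\Q/\Z$, and two small fixes. Your sentence assigning loop degree $2b+|S|$ to $\widetilde{D}_+^{S,i}$ should read $2b+|S|+1$, as you then correct. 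Diagrams with an $s$-colored root should be dismissed explicitly, since they are already zero by the fully-special relation.
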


\begin{example} \label{ex:Gr_Delta}
Using Proposition \ref{prop:Gr_Delta} and Example \ref{ex:B_1},
we see that $\Gr^L \!\Delta$ is determined in degree $n=1$ 
by the following values:
$$
(\Gr^L \!\Delta)\Big(\begin{tikzpicture}[thick,color=black,
baseline=-0.5em, x=0.6em, y=0.6em]
\footnotesize
\node[left=1pt] at (-1,1) {\scriptsize $x$};
\node[right=1pt] at (1,1) {\scriptsize $y$};
\node[below=1pt] at (0,-1.5) {\scriptsize $h \star$};
\draw (-1,1) -- (0,0) -- (1,1);
\draw (0,0) -- (0,-1.5);
\draw [fill = black] (0,0) circle (.3ex);
\end{tikzpicture}\Big) =  \frac{1}{2}\begin{tikzpicture}[scale = 0.4, baseline = 1.2cm, thick]
		\node (0) at (0.5, 2) {};
		\node (1) at (0.5, 3) {};
		\node (2) at (3.5, 3) {};
		\node (3) at (3.5, 2) {};
		\node (4) at (3.5, 4) {};
		\node (5) at (0.5, 4) {};
		\node (6) at (-0.5, 5) {};
		\node (7) at (0.5, 4) {};
		\node (8) at (1.5, 5) {};
		\node (9) at (2.5, 5) {};
		\node (10) at (4.5, 5) {};
        \draw (0.center) to (7.center);
		\draw (1.center) to (2.center);
		\draw (4.center) to (3.center);
		\draw (6.center) to (7.center);
		\draw (7.center) to (8.center);
		\draw (9.center) to (4.center);
		\draw (4.center) to (10.center);
        \draw [black, fill = black] (1.center)  circle (0.8ex);
        \draw [black, fill = black] (2.center)  circle (0.8ex);
        \draw [black, fill = black] (4.center)  circle (0.8ex);
        \draw [black, fill = black] (5.center)  circle (0.8ex);
        \node[above = 1pt] at (6) {\scriptsize$x$};
        \node[above = 0pt] at (8) {\scriptsize$y$};
        \node[above = 1pt] at (9) {\scriptsize$x$};
        \node[above = 0pt] at (10) {\scriptsize$y$};
        \node[below] at (0) {\scriptsize$h$};
        \node[below] at (3) {\scriptsize$h$};
\end{tikzpicture},
\quad
(\Gr^L \!\Delta)\Big( \begin{tikzpicture}[thick,color=black,
baseline=-0.5em, x=0.6em, y=0.6em]
\footnotesize
\node[left=1pt] at (-1,1) {\scriptsize $s$};
\node[right=1pt] at (1,1) {\scriptsize $y$};
\node[below=1pt] at (0,-1.5) {\scriptsize $h \star$};
\draw (-1,1) -- (0,0) -- (1,1);
\draw (0,0) -- (0,-1.5);
\draw [fill = black] (0,0) circle (.3ex);
\end{tikzpicture}\Big)=  \frac{1}{2}
\begin{tikzpicture}[scale = 0.4, baseline = 1.2cm, thick]
		\node (0) at (0.5, 2) {};
		\node (1) at (0.5, 3) {};
		\node (2) at (3.5, 3) {};
		\node (3) at (3.5, 2) {};
		\node (4) at (3.5, 4) {};
		\node (5) at (0.5, 4) {};
		\node (6) at (-0.25, 4.75) {};
		\node (7) at (0.5, 4) {};
		\node (8) at (2.5, 6) {};
		\node (9) at (2.75, 4.75) {};
		\node (10) at (4.5, 6) {};
        \draw (0.center) to (7.center);
		\draw (1.center) to (2.center);
		\draw (4.center) to (3.center);
		\draw (6.center) to (7.center);
		\draw (7.center) to (8.center);
		\draw (9.center) to (4.center);
		\draw (4.center) to (10.center);
        \fill [white] (1.86,5.35) circle [radius=0.2];
		\draw [in=135, out=135, looseness=1.50] (6.center) to (9.center);
        \draw [black, fill = black] (1.center)  circle (0.8ex);
        \draw [black, fill = black] (2.center)  circle (0.8ex);
        \draw [black, fill = black] (4.center)  circle (0.8ex);
        \draw [black, fill = black] (5.center)  circle (0.8ex);
        \node[above = 0pt] at (8) {\scriptsize$y$};
        \node[above = 0pt] at (10) {\scriptsize$y$};
        \node[below] at (0) {\scriptsize$h$};
        \node[below] at (3) {\scriptsize$h$};
\end{tikzpicture}.
$$
(Recall that we are using the convention \eqref{eq:lifts},
so that $x,y,h\in H$ on the right-hand sides of the above identities denote
arbitrary lifts of  $x,y,h\in H^{(2)}$.)

Moreover, to further illustrate Proposition \ref{prop:Gr_Delta},
we present an example of the computation
of $\Gr^L \!\Delta$ in degree $n=3$:
$$(
\Gr^L \!\Delta) \Big(\begin{tikzpicture}[scale = 0.6,baseline = 0.1cm]
		\node (0) at (1, 0) {};
		\node (1) at (1, -0.75) {};
		\node (2) at (1, 1) {};
		\node (3) at (0.55, 0.775) {};
		\node (4) at (1.45, 0.775) {};
		\node (5) at (0, 1.25) {};
		\node (6) at (2, 1.25) {};
		\draw [thick] (0.center) to (1.center);
        \draw [thick] (0.center) to (1.center);
		\draw [thick] [in=180, out=-180, looseness=1.75] (2.center) to (0.center);
		\draw [thick] [bend left=90, looseness=1.75] (2.center) to (0.center);
		\draw [thick] (5.center) to (3.center);
		\draw [thick] (6.center) to (4.center);
        \draw [fill = black] (0.center) circle  (0.6ex);
        \draw [fill = black] (3.center) circle  (0.6ex);
        \draw [fill = black] (4.center) circle  (0.6ex);
        \node[below=0.5pt] at (1,-0.75) {\scriptsize $r \star$};
        \node[above] at (1,1.2) {\scriptsize $s$ \hspace{10mm} $x$};
        \draw[black, fill=lightgray] (1.435,0.225) circle (0.8ex);
\end{tikzpicture}\Big) =  \frac{1}{2} \begin{tikzpicture}[scale = 0.4, baseline = 1.2cm, thick]
        \node (0) at (0.5, 2) {};
		\node  (1) at (0.5, 3) {};
		\node  (2) at (3.5, 3) {};
		\node  (3) at (3.5, 2) {};
		\node  (4) at (3.5, 4) {};
		\node  (5) at (0.5, 4) {};
		\node  (7) at (0.5, 4) {};
		\node  (8) at (-0.25, 4.75) {};
		\node  (10) at (2.75, 4.75) {};
		\node  (11) at (4, 4.5) {};
		\node  (12) at (0, 5.5) {};
		\node  (13) at (1, 5.5) {};
		\node  (14) at (3, 5.5) {};
		\node  (15) at (4, 5.5) {};
		\node  (16) at (1.5, 6.5) {};
		\node  (17) at (4.5, 6.5) {};
		\node  (18) at (2.25, 4.25) {};
        \draw (0.center) to (7.center);
		\draw (1.center) to (2.center);
		\draw (4.center) to (3.center);
		\draw (7.center) to (8.center);
		\draw [bend left] (14.center) to (15.center);
		\draw (15.center) to (17.center);
		\draw [in=60, out=-30, looseness=0.75] (15.center) to (7.center);
		\draw [bend left=45] (8.center) to (12.center);
		\draw [bend left] (12.center) to (13.center);
		\draw [bend left] (11.center) to (4.center);
        \draw [in=75, out=-30, looseness=1.50] (18.center) to (11.center);
        \draw [in=90, out=120, looseness=0.75] (12.center) to (14.center);
        \fill [white] (1.75,4.70) circle [radius=0.2];
        \fill [white] (3,4.25) circle [radius=0.2];
        \fill [white] (2.75,4.85) circle [radius=0.2];
        \fill [white] (1.37,6.12) circle [radius=0.2];
		\draw [in=150, out=-45] (13.center) to (18.center);
		\draw [bend right=15] (13.center) to (16.center);
		\draw [bend left] (4.center) to (10.center);
        \draw [in=-150, out=105, looseness=0.75] (10.center) to (14.center);
        \draw [black, fill = black] (1.center)  circle (0.8ex);
        \draw [black, fill = black] (2.center)  circle (0.8ex);
        \draw [black, fill = black] (7.center)  circle (0.8ex);
        \draw [black, fill = black] (4.center)  circle (0.8ex);
        \draw [black, fill = black] (12.center)  circle (0.8ex);
        \draw [black, fill = black] (13.center)  circle (0.8ex);
        \draw [black, fill = black] (14.center)  circle (0.8ex);
        \draw [black, fill = black] (15.center)  circle (0.8ex);
        \node[above = 0pt] at (16) {\scriptsize$x$};
        \node[above = 0pt] at (17) {\scriptsize$x$};
        \node[below] at (0) {\scriptsize$r$};
        \node[below] at (3) {\scriptsize$r$};
\end{tikzpicture}
$$

\up\hfill $\blacksquare$
\end{example}

\begin{proof}[Proof of Proposition \ref{prop:Gr_Delta}]
Let $D$ be a generator of $\calB^<_n\big(P^{(2)}\big)$
with $s$-loop degree $k$.
Denote the first Betti number of $D$ by $b$,
and let $V_s$ be the set of external $s$-colored vertices of $D$.
Then, $V_s$ has cardinality $k-2b$ and we can assume 
that it does not contain the root (since, otherwise, $D$ would vanish 
in $\calB^<_n\big(P^{(2)}\big)$ by the ``fully-special'' relation).

The formula \eqref{eq:Delta_def_bis} for $\Delta(D)$  is given 
as a sum of two terms: say, $\Delta_1(D)$ and $\Delta_2(D)$.
The first sum $\Delta_1(D)$ is indexed by the subsets $S$ 
of the set of leaves of $D$.
If $S$ does not contain $V_s$, then its contribution to $\Delta_1(D)$
is trivial since 
$\widetilde{D}_{++}^S=0 \in \calA^{<,c}_{2n+2}(H) \otimes \Q/\Z$ 
by the fact that $p_*(s)=0$.
If $S$  strictly contains $V_s$, 
then we have
\begin{eqnarray*}
\ell(\widetilde{D}^S_{++}) \ = \
-\chi\big(\widetilde{D}_{++}^S\big)+1 &=&
-\big(2 \chi(D) -1 - \vert S \vert\big)+1\\
&=& -\big(2(1-b)-1-\vert S\vert  \big)+1
\ = \  2b +\vert S \vert > k.
\end{eqnarray*}
If we now have $S=V_s$, then the contribution of $S$ to $\Delta_1(D)$
is $\frac{1}{2} \widetilde{D}_{++}^S=\frac{1}{2} \widetilde{D}_{++}^s$.
Thus, to conclude  the proof, it remains
to justify that $\Delta_2(D)$ is in the loop-filtration module $L_{k+1}$
of $\calA^{<,c}_{2n+2}(H) \otimes \Q/\Z$.

Indeed, the second sum  $\Delta_2(D)$ is indexed by leaves $i$ of $D$,
and subsets $S$ of the set of leaves of $D$ not containing $i$.
If~$S$ does not contain $V_s$, then its contribution to $\Delta_2(D)$
is trivial since 
$\widetilde{D}_{+}^{S,i}=0 \in \calA^{<,c}_{2n+2}(H) \otimes \Q/\Z$.
If $S$ contains $V_s$, then a computation of Euler characteristics as above gives
$$
\ell(\widetilde{D}_{+}^{S,i}) = 2b+ \vert S \vert +1 > 2b+ \vert S \vert \geq k;
$$
so, the contribution of $S$ to $\Delta_2(D)$ in this case belongs 
to $L_{k+1 }\calA^{<,c}_{2n+2}(H) \otimes \Q/\Z$.
\end{proof}

\begin{remark} \label{rem:E_Levine}
By the same arguments as in the proof of Proposition \ref{prop:Gr_Delta},
we obtain that $E:  \calB^<_n\big(P^{(2)}\big)\to \calA^{<,c}_{2n+1}(H)$ is filtration-preserving.
So, for every $k\geq 0$, it induces a homomorphism
$\Gr^L\! E:   \calB_{n,k}\big(H^{(2)};s\big)
\to \calA^{c}_{2n+1,k}(H)$ with an explicit and simple formula.
For instance, 
the resulting homomorphism 
$$
\Gr^L\! E:  H \otimes \frakL'_{n+1}(H) \otimes \Z_2 \simeq \calB_{n,0}\big(H^{(2)};s\big)
\longrightarrow \calA^{c}_{2n+1,0}(H) \simeq \mathsf{D}'_{2n+1}(H)
$$
at $k:=0$  factorizes to $ H \otimes \frakL_{n+1}(H) \otimes \Z_2$,
and we recover the injection in Levine's short exact sequence \eqref{eq:Levine1}.
Thus, the map $E$ can be viewed as a far-reaching generalization
of Levine's construction. \hfill $\blacksquare$
\end{remark}

Forgetting the $\Sp(H)$-structure, 
we now obtain a lower bound on the size 
of the $2$-torsion subgroup of ${Y_{2n+1}\calIC}/{Y_{2n+2}}$.

\begin{proposition}     \label{prop:gr_lower_bound}
For $n>0$, we have
${\displaystyle \left\vert 
\Tors_2\!\Big(\frac{Y_{2n+1}\calIC}{Y_{2n+2}}\Big)\right\vert \geq 
\left\vert 
(\Gr^L \!\Delta) \big(\calB_n\big(H^{(2)};s\big)\big) \right\vert}$.
\end{proposition}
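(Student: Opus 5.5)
The idea is to compare cardinalities through the commutative triangle of Theorem~\ref{th:Delta_zeta}, combined with the elementary fact that passing to the associated graded of a finite filtration does not change the order of a finite abelian group.

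By Corollary~\ref{cor:lower_bound}, the group $\Tors_2\big(Y_{2n+1}\calIC/Y_{2n+2}\big)$ surjects onto $\Delta\big(\calB^<_n(P^{(2)})\big)$. Indeed, $\zeta^<_{2n+2}$ restricted to this torsion group has image containing $\zeta^<_{2n+2}\Psi(\calB^<_n) = \Delta(\calB^<_n)$, so its preimage is a subgroup surjecting onto it. Hence
$$
\big|\Tors_2\big(Y_{2n+1}\calIC/Y_{2n+2}\big)\big| \geq \big|\Delta\big(\calB^<_n(P^{(2)})\big)\big| .
$$
All groups involved are finite: $\calB^<_n(P^{(2)})$ is a finitely generated $\Z_2$-vector space, since there are finitely many diagram shapes and colors. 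So it remains to show
$$
\big|\Delta\big(\calB^<_n(P^{(2)})\big)\big| \geq \big|(\Gr^L\Delta)\big(\calB_n(H^{(2)};s)\big)\big| .
$$

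For this, set $V := \Delta(\calB^<_n(P^{(2)}))$ and equip it with the filtration $L_kV := V\cap L_k\big(\calA^{<,c}_{2n+2}(H)\otimes\Q/\Z\big)$. This filtration is finite: in fixed internal degree $2n+2$ the loop degree is bounded, so $L_kV=0$ for $k$ large, while $L_0V=V$ (loop degree of connected diagrams is $\geq 0$). Therefore $|V| = \prod_k |L_kV/L_{k+1}V|$.

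By Proposition~\ref{prop:Gr_Delta}, $\Delta$ sends $L_k\calB^<_n$ into $L_k\calA$, hence into $L_kV$. It therefore induces $\Gr^L_k\calB^<_n \to L_kV/L_{k+1}V \hookrightarrow \Gr^L_k\calA^{<,c}_{2n+2}(H)\otimes\Q/\Z$, and this composite is exactly $\Gr^L\Delta$. Consequently, the image of $\Gr^L_k\Delta$ is isomorphic to a subgroup of $L_kV/L_{k+1}V$, so $|\Im \Gr^L_k\Delta| \leq |L_kV/L_{k+1}V|$. Taking the product over $k$ and using the identification $\Gr^L\calB^<_n(P^{(2)})\simeq \calB_n(H^{(2)};s)$ together with $\Gr^L_k\calA^{<,c}\simeq\calA^c_{*,k}(H)$ from Proposition~\ref{prop:Gr^LA^<} gives the desired inequality.

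There is one subtle point: one must make sure that taking the graded piece of $\calA^{<,c}_{2n+2}(H)\otimes\Q/\Z$ is compatible with the identification $\Gr^L(\calA^{<,c}\otimes\Q/\Z)\simeq\calA^c_{2n+2,\circ}(H)\otimes\Q/\Z$ used in Proposition~\ref{prop:Gr_Delta}. However, the counting argument only needs the target's filtration, which is the image filtration of $L_k\calA\otimes\Q/\Z$. Since the inequality is taken blockwise inside $L_kV/L_{k+1}V$, no exactness of tensoring is actually required, and the argument goes through as stated.
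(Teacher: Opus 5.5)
Your proof is correct and is essentially the paper's argument: the paper pulls the loop filtration back to $\Tors_2(Y_{2n+1}\calIC/Y_{2n+2})$ along $\zeta^<_{2n+2}$ and uses the graded factorization $\Gr^L\!\Delta=(\Gr^L\zeta^<_{2n+2})\circ(\Gr\Psi)$. You instead restrict the loop filtration to the image $\Delta(\calB^<_n(P^{(2)}))$ and invoke Corollary~\ref{cor:lower_bound}, which amounts to the same count of graded pieces. Your closing worry is harmless, since the identification $\Gr^L_k(\calA^{<,c}_{2n+2}(H)\otimes\Q/\Z)\simeq\calA^c_{2n+2,k}(H)\otimes\Q/\Z$ holds: the loop filtration splits via the filtered isomorphism $\varphi$ in the proof of Proposition~\ref{prop:Gr^LA^<}.
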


\begin{proof}
We endow $\Tors_2\big({Y_{2n+1}\calIC}/{Y_{2n+2}}\big)$
with the pull-back by $\zeta_{2n+2}^<$ of the loop filtration
on $\calA^{<,c}_{2n+2}(H) \otimes \Q/\Z$.
Since $\Delta$ is filtration-preserving (Proposition \ref{prop:Gr_Delta})
and satisfies $\Delta = \zeta^<_{2n+2} \circ \Psi$ (Theorem \ref{th:Delta_zeta}),
the map $\Psi$ is filtration-preserving too, and we get the identity
$$
\Gr^L \!\Delta = \big(\Gr^L\! \zeta^<_{2n+2}\big) \circ (\Gr \Psi)
$$
at the graded level. The claimed inequality follows from that.
\end{proof}

Next, the lower bound in Proposition \ref{prop:gr_lower_bound} 
can be expressed more explicitly as
$$
\left\vert 
\Tors_2\Big(\frac{Y_{2n+1}\calIC}{Y_{2n+2}}\Big)\right\vert \;\geq\; 
\sum_{k\geq 0} t_k(n),
\quad \text{where} \quad
t_k(n) := \big\vert 
(\Gr^L \!\Delta)\big(\calB_{n,k}\big(H^{(2)};s\big)\big) \big\vert.
$$
In the remainder of this subsection, 
we  determine the quantities $t_0(n)$ and $t_1(n)$.

\begin{proposition} \label{prop:t_0}
For $n>0$, we have ${\displaystyle t_0(n) =2^{\dim \frakL_{n+2}(H)}}$.
\end{proposition}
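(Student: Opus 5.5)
We first make $\calB_{n,0}\big(H^{(2)};s\big)$ explicit and then compute $\Gr^L\!\Delta$ on it via Proposition~\ref{prop:Gr_Delta}. In $s$-loop degree $0$ a diagram is a rooted tree with no $s$-colored leaves, so beads are irrelevant by the group relations. As in Example~\ref{ex:B_1} and Remark~\ref{rem:E_Levine}, we get an isomorphism
$$
H^{(2)}\otimes \frakL'_{n+1}\big(H^{(2)}\big)\ \simeq\ \big(H\otimes\frakL'_{n+1}(H)\big)\otimes\Z_2 \ \longrightarrow\ \calB_{n,0}\big(H^{(2)};s\big),
$$
which sends $h\otimes u$ to the tree rooted at $h$ whose leaves form the bracket $u$. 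For such a tree $D=h\otimes u$, the cover $\widetilde D$ is two disjoint copies of the tree $u$. Proposition~\ref{prop:Gr_Delta} then gives $(\Gr^L\!\Delta)(D)=\frac12\,\widetilde D^s_{++}$. Here $\widetilde D^s_{++}$ is the tree of degree $2n+2$ obtained by grafting two copies of $u$ onto the top vertices of an $H$-diagram whose bottom vertices are both colored $h$. In bracket terms it is the tree $h\textsf{---}h$ with its middle edge replaced by $[h,u]$ on both sides, that is, the ``doubled'' tree $[h,u]\textsf{---}[h,u]$ (up to AS signs, which are irrelevant mod~$2$). So the value is $\frac12\,(w\textsf{---}w)$ with $w=[h,u]\in\frakL'_{n+2}(H)$.

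Next we identify the image using Levine's embedding \eqref{eq:Levine_embedding}, taken with $k:=n+1$ (so $2k=2n+2$). The map $\operatorname{Lev}:\frakL_{n+2}(H)\otimes\Z_2\to\calA^c_{2n+2,0}(H)\otimes\Q/\Z$, $w\otimes 1\mapsto\{\frac12\,w\textsf{---}w\}$, is injective; this follows from the exact sequence \eqref{eq:Levine2} together with the isomorphism $\eta:\calA^c_{2n+2,0}(H)\simeq\mathsf D'_{2n+2}(H)$ of \cite{CST12}. Hence $\Gr^L\!\Delta$ restricted to loop degree $0$ factors as
$$
H\otimes\frakL'_{n+1}(H)\otimes\Z_2\ \xrightarrow{\ [-,-]\ }\ \frakL_{n+2}(H)\otimes\Z_2\ \xrightarrow{\ \operatorname{Lev}\ }\ \calA^c_{2n+2,0}(H)\otimes\Q/\Z .
$$
One must check that $w\mapsto\frac12\,w\textsf{---}w$ only depends on the image of $w$ in $\frakL_{n+2}(H)$: if $w$ is a square $[v,v]$ (the kernel of $\frakL'\to\frakL$ in even degree), then $w\textsf{---}w$ is divisible by $2$ via IHX, as in \eqref{eq:bubble}. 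One must also check that the doubling is additive mod $2\calA$. The cross terms $w_1\textsf{---}w_2+w_2\textsf{---}w_1=2\,w_1\textsf{---}w_2$ vanish in $\Q/\Z$ after halving, so it is.

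The first map is surjective, since $\frakL_{n+2}(H)$ is spanned by brackets $[h,u]$ with $h\in H$ and $u\in\frakL_{n+1}(H)$, and $\frakL'_{n+1}\to\frakL_{n+1}$ is onto. The second map is injective. So the image has cardinality $|\frakL_{n+2}(H)\otimes\Z_2|=2^{\dim\frakL_{n+2}(H)}$, using that $\frakL_{n+2}(H)$ is free abelian. Finally, under the identification $\Gr^L_0\calA^{<,c}\simeq\calA^c_{*,0}$ of Proposition~\ref{prop:Gr^LA^<}, this image is exactly $(\Gr^L\!\Delta)(\calB_{n,0})$, which gives $t_0(n)=2^{\dim\frakL_{n+2}(H)}$.

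The main obstacle is the second step: verifying carefully that $\widetilde D^s_{++}$ is precisely Levine's doubled tree on $[h,u]$, with the gluing and orientations matching the $H$-diagram convention. One also needs the injectivity of $\operatorname{Lev}$ in the normalization the paper uses. We would handle this by checking it first for $n=1$ against Example~\ref{ex:Gr_Delta}, and then argue by induction on the tree structure of $u$.
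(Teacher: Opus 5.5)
Your proposal is correct and follows the paper's proof. The paper also identifies $\calB_{n,0}\big(H^{(2)};s\big)$ with $H\otimes\frakL'_{n+1}(H)\otimes\Z_2$. It then reads off from Proposition~\ref{prop:Gr_Delta} that $\Gr^L\!\Delta(h\otimes u)$ is half the $H$-diagram doubling $[h,u]$, i.e.\ $\operatorname{Lev}([h,u])$, and concludes from the injectivity of $\operatorname{Lev}$ on $\frakL_{n+2}(H)\otimes\Z_2$. The extra well-definedness and additivity checks you plan are already contained in Levine's exact sequence \eqref{eq:Levine2}, which the paper simply cites.
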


\noindent
Recall that the graded dimension of the free Lie algebra $\frakL(H)$
is known by a classical formula of Witt:
$$
\dim \frakL_k(H) = \frac{1}{k} \sum_{d \vert k} \mu(d) \, (2g)^{k/d},
\quad \hbox{for } k\geq 1,
$$
where $\mu:\mathbb{N}^* \to \{-1,0,+1\}$ is M\"obius' function.

\begin{proof}[Proof of Proposition \ref{prop:t_0}]
We identify $H \otimes \frakL'_{n+1}(H) \otimes \Z_2$ 
with $\calB_{n,0}\big(H^{(2)};s\big)$ by the map
$\big(h\otimes u \mapsto
{\displaystyle\mathop{\textsf{l}}_{h \star}^u}\,\big)$.
By Proposition \ref{prop:Gr_Delta},
$(\Gr^L \!\Delta): \calB_{n,0}\big(H^{(2)};s\big) \to
\calA^c_{2n+2,0}(H)\otimes \Q/\Z$ is given by the formula
$$
 (\Gr^L \!\Delta)(h \otimes u) 
 = \frac{1}{2}
\begin{tikzpicture}[thick,color=black,baseline=-0.2em, x=1.5em, y=1.5em]
\draw (-1/2,0) -- (1/2,0);
\draw (1/2,0)--({sqrt(3)/2},1);
\draw (-1/2,0)--({-sqrt(3)/2},1);
\draw (1/2,0)--({sqrt(3)/2},-1);
\draw (-1/2,0)--({-sqrt(3)/2},-1);
\node [right=1pt] at ({sqrt(3)/2},1) {$u$};
\node [right=1pt] at ({sqrt(3)/2},-1) {$h$};
\node [left=1pt] at (-{sqrt(3)/2},1) {$u$};
\node [left=1pt] at (-{sqrt(3)/2},-1) {$h$};
\draw [fill = black] (1/2,0) circle (.3ex);
\draw [fill = black] (-1/2,0) circle (.3ex);
\end{tikzpicture}.
$$
Hence, it factorizes by Levine's embedding 
\eqref{eq:Levine_embedding} of $\frakL_{n+2}(H)\otimes \Z_2$. 
Therefore, we obtain
$t_0(n) =
\big\vert \operatorname{Lev}(\frakL_{n+2}(H)\otimes \Z_2)\big\vert$. 
\end{proof}

\begin{proposition} \label{prop:t_1}
For $n>0$, we have
$${\displaystyle t_1(n)=\sqrt{2}^{(2g)^{n+1} + (2g)^{r+1}}},$$
where $r=r(n)\in \mathbb{N}$ is such that
$n+1=2^\ell (2r+1)$ for some  $\ell \in \mathbb{N}$.
\end{proposition}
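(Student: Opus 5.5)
The plan is to compute $(\Gr^L\!\Delta)\big(\calB_{n,1}(H^{(2)};s)\big)$ explicitly inside $\calA^c_{2n+2,1}(H)\otimes\Q/\Z$, using Proposition~\ref{prop:Gr_Delta} and the wheel description \eqref{eq:wheel}.

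\textbf{Step 1: the image lands in one-loop wheels mod 2.} A generator $D$ of $s$-loop degree $1$ must have first Betti number $0$ and exactly one $s$-colored leaf. Recall that the root is not $s$-colored, by the fully-special relation. Since $D$ is a tree, its beads are irrelevant by the group relations, and $\widetilde D$ is two disjoint copies of $D$. In $\widetilde D^s_{++}$ we glue the two $s$-twins and attach the two root-twins to the top of an $H$-diagram with bottom legs colored $h$, the root color. The resulting diagram is connected, of loop degree $1$ and degree $2n+2$. Its cycle reads: the $H$-edge, then the path in the first copy from the root to the $s$-leaf, then the same path backwards in the second copy. Subtrees hang off this cycle symmetrically.

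Since $2n+2$ is even, $\calA^c_{2n+2,1}(H)$ is torsion-free. Hence $\tfrac12\calA^c_{2n+2,1}(H)/\calA^c_{2n+2,1}(H)\cong (H^{\otimes 2n+2})_{\mathcal D_{4n+4}}\otimes\Z_2$, which is the $\Z_2$-vector space on bracelets of length $2n+2$ in a basis $S$ of $H$. I will expand the hanging subtrees by IHX into single-leg wheels, modulo~$2$. Because the two copies are identical, each expanded term is the bracelet of a cyclic word $v\,v^{\mathrm{rev}}$ with $v=h\,w$ of length $n+1$. Here $w$ is read along the path, and the two $h$'s sit at the two $H$-vertices. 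The terms come in a sum over the expansions of one copy only.

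\textbf{Step 2: the image is exactly the span of these bracelets.} Conversely, take a caterpillar tree with root $v_1$, leaves $v_2,\dots,v_{n+1}$ along the spine and the $s$-leaf at the end. It produces exactly the single bracelet $[v\,v^{\mathrm{rev}}]$. Therefore the image is the $\Z_2$-span of the classes $[v\,v^{\mathrm{rev}}]$ for $v\in S^{n+1}$. Each such class is a basis bracelet, and it is nonzero because there is no sign issue mod $2$ in even length. Two such classes either coincide or are independent. Thus $t_1(n)=2^{N}$, where $N$ is the number of distinct bracelets of the form $v\,v^{\mathrm{rev}}$.

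\textbf{Step 3: counting, which I expect to be the main obstacle.} Set $c=2g$ and $m=n+1$. The target is $N=\tfrac12\big(c^{m}+c^{r+1}\big)$. I will study the fibers of $v\mapsto[v\,v^{\mathrm{rev}}]$. The word $v^{\mathrm{rev}}$ always lies in the same fiber as $v$, since $v^{\mathrm{rev}}v$ is a rotation of $v\,v^{\mathrm{rev}}$. Other coincidences force the necklace to have two reflection symmetries, hence a nontrivial rotation period. I would analyse this via Burnside's lemma for the group generated by the two reflections (a dihedral group) acting on $S^m$. The expected outcome is that the correction term counts the words fixed under the extra symmetry. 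Iterating the halving $m\mapsto m/2$ is what produces $c^{r+1}$ with $m=2^\ell(2r+1)$. I anticipate that a careful orbit count will confirm that generic fibers have size $2$. I will also check that the degenerate cases contribute exactly $c^{r+1}$ extra, for instance by verifying small cases such as $m=1,2$ against the formula first.
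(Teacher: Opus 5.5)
Your Steps 1 and 2 follow the paper's reduction. The image of $\Gr^L\!\Delta$ on $\calB_{n,1}(H^{(2)};s)$ is the $\Z_2$-span, inside $\frac12\calA^c_{2n+2,1}(H)/\calA^c_{2n+2,1}(H)$, of the bracelets $\frac12\{v\,v^{\mathrm{rev}}\}$ with $v\in S^{n+1}$. Hence $\log_2 t_1(n)$ is the number $N$ of distinct such bracelets. The expansion in the target is unnecessary: caterpillars with the root and the $s$-leaf at the two ends already span $\calB_{n,1}(H^{(2)};s)$ by IHX, and $\Gr^L\!\Delta$ is linear. If you keep the expansion, you must say that the mixed terms pair up into mutually reversed words and so cancel mod $2$.

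The genuine gap is Step~3. It carries the whole content of the formula, and you only announce it. Write $m=n+1$ and $\Phi(v)=[v\,v^{\mathrm{rev}}]$. You need two facts, and neither is established:
\begin{enumerate}
\item[(a)] every fiber of $\Phi$ has at most two elements;
\item[(b)] the singleton fibers are exactly the $v$ for which $v\,v^{\mathrm{rev}}$ is invariant under rotation by $2r+1$ positions, and there are $(2g)^{r+1}$ of them.
\end{enumerate}
Checking $m=1,2$ does not replace this.

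Your heuristic for (a) also fails precisely where the correction term comes from. For a palindrome $v$, the remark that $v^{\mathrm{rev}}$ lies in the fiber gives nothing. Yet the fiber can still have two elements, coming from a different symmetry axis: for $m=4$ and $a\neq b$ one has $\Phi^{-1}\Phi(abba)=\{abba,baab\}$, whereas $\Phi^{-1}\Phi(aaaa)=\{aaaa\}$. Likewise, the fibers are not the orbits of one fixed dihedral group acting on $S^m$, so invoking Burnside for a group generated by two reflections is not yet an argument.

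The paper settles the count in Lemma~\ref{lem:sym_diag}:
\begin{itemize}
\item the words $v$ are the elements of the set $\vec N''$ of necklaces with an arc-to-arc arrow, and $|\vec N''|=(2g)^m$;
\item the involution $\iota$, which rotates the arrow by $\pi/2^{e(x)}$, pairs the two arrow classes of each symmetric bracelet;
\item $\iota$ preserves the arrow type off $\vec N_{\ell+1}$ and exchanges $\vec N'_{\ell+1}$ with $\vec N''_{\ell+1}$;
\item $|\vec N''_{\ell+1}|=(2g)^{r+1}$.
\end{itemize}
Together these give $N=\frac12\big((2g)^m-(2g)^{r+1}\big)+(2g)^{r+1}$.

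A Burnside-type alternative that does work is a weighted orbit count. Let $N_{\mathrm{aa}}$ (resp.\ $N_{\mathrm{mix}}$) count the bracelets whose symmetry axes are all arc-to-arc (resp.\ of both types). Summing the fixed points of the $m$ arc-to-arc reflections of $D_{4m}$ on $S^{2m}$ gives $m(2g)^m=2mN_{\mathrm{aa}}+mN_{\mathrm{mix}}$. Therefore $N=N_{\mathrm{aa}}+N_{\mathrm{mix}}=\frac12\big((2g)^m+N_{\mathrm{mix}}\big)$. Finally, $N_{\mathrm{mix}}=(2g)^{r+1}$, because these bracelets are exactly the $2^{\ell+1}$-fold repetitions of symmetric bracelets of odd length $2r+1$.
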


Let $m\geq 1$.
To prove Proposition \ref{prop:t_1}, we shall consider 
the submodules $\calA^{c,s}_{2m,1}(H)'$ and  $\calA^{c,s}_{2m,1}(H)''$
of  $\calA^{c}_{2m,1}(H)$ that are
generated by the symmetric diagrams
$$
O(a_1,\dots, a_m,a_{m+1}, a_m, \dots, a_2) 
:= \begin{tikzpicture}[scale = 1, baseline = 0cm, thick]
\draw (1,0) circle (0.25 cm);
\draw ({1+0.25*cos(245)},{0+0.25*sin(245)}) -- ({1+0.5*cos(245)},{0+0.5*sin(245)});
\node at ({1+0.25*cos(245)},{0+0.25*sin(245)}) {\scriptsize $\bullet$};
\node at ({1+0.7*cos(245)},{0+0.7*sin(245)}) {\scriptsize $a_1$};
\draw ({1+0.25*cos(200)},{0+0.25*sin(200)}) -- ({1+0.5*cos(200)},{0+0.5*sin(200)});
\node at ({1+0.25*cos(200)},{0+0.25*sin(200)}) {\scriptsize $\bullet$};
\node at ({1+0.7*cos(200)},{0+0.7*sin(200)}) {\scriptsize $a_2$};
\draw ({1+0.25*cos(120)},{0+0.25*sin(120)}) -- ({1+0.5*cos(120)},{0+0.5*sin(120)});
\node at ({1+0.25*cos(120)},{0+0.25*sin(120)}) {\scriptsize $\bullet$};
\node at ({1+0.7*cos(120)},{0+0.7*sin(120)}) {\scriptsize $a_m$};
\draw ({1+0.25*cos(75)},{0+0.25*sin(75)}) -- ({1+0.5*cos(75)},{0+0.5*sin(75)});
\node at ({1+0.25*cos(75)},{0+0.25*sin(75)}) {\scriptsize $\bullet$};
\node at ({1+0.7*cos(75)},{0+0.7*sin(75)}) {\scriptsize $a_{m+1}$};
\draw ({1+0.25*cos(30)},{0+0.25*sin(30)}) -- ({1+0.5*cos(30)},{0+0.5*sin(30)});
\node at ({1+0.25*cos(30)},{0+0.25*sin(30)}) {\scriptsize $\bullet$};
\node at ({1+0.7*cos(30)},{0+0.7*sin(30)}) {\scriptsize $a_{m}$};
\draw ({1+0.25*cos(290)},{0+0.25*sin(290)}) -- ({1+0.5*cos(290)},{0+0.5*sin(290)});
\node at ({1+0.25*cos(290)},{0+0.25*sin(290)}) {\scriptsize $\bullet$};
\node at ({1+0.7*cos(290)},{0+0.7*sin(290)}) {\scriptsize $a_{2}$};
\node at ({1+0.5*cos(160)},{0+0.5*sin(160)}) {$\cdot$};
\node at ({1+0.5*cos(150)},{0+0.5*sin(150)}) {$\cdot$};
\node at ({1+0.5*cos(170)},{0+0.5*sin(170)}) {$\cdot$};
\node at ({1+0.5*cos(0)},{0+0.5*sin(-10)}) {$\cdot$};
\node at ({1+0.5*cos(-10)},{0+0.5*sin(-20)}) {$\cdot$};
\node at ({1+0.5*cos(-20)},{0+0.5*sin(-30)}) {$\cdot$};
\end{tikzpicture}
$$
and 
$$
O(a_1,a_2, \dots, a_m,a_{m}, \dots, a_2,a_1) 
:= \begin{tikzpicture}[scale = 1, baseline = 0cm, thick]
\draw (1,0) circle (0.25 cm);
\draw ({1+0.25*cos(292.5)},{0+0.25*sin(292.5)}) -- ({1+0.5*cos(292.5)},{0+0.5*sin(292.5)});
\node at ({1+0.25*cos(292.5)},{0+0.25*sin(292.5)}) {\scriptsize $\bullet$};
\node at ({1+0.7*cos(292.5)},{0+0.7*sin(292.5)}) {\scriptsize $a_1$};
\draw ({1+0.25*cos(247.5)},{0+0.25*sin(247.5)}) -- ({1+0.5*cos(247.5)},{0+0.5*sin(247.5)});
\node at ({1+0.25*cos(247.5)},{0+0.25*sin(247.5)}) {\scriptsize $\bullet$};
\node at ({1+0.7*cos(247.5)},{0+0.7*sin(247.5)}) {\scriptsize $a_1$};
\draw ({1+0.25*cos(337.5)},{0+0.25*sin(337.5)}) -- ({1+0.5*cos(337.5)},{0+0.5*sin(337.5)});
\node at ({1+0.25*cos(337.5)},{0+0.25*sin(337.5)}) {\scriptsize $\bullet$};
\node at ({1+0.7*cos(337.5)},{0+0.7*sin(337.5)}) {\scriptsize $a_2$};
\draw ({1+0.25*cos(202.5)},{0+0.25*sin(202.5)}) -- ({1+0.5*cos(202.5)},{0+0.5*sin(202.5)});
\node at ({1+0.25*cos(202.5)},{0+0.25*sin(202.5)}) {\scriptsize $\bullet$};
\node at ({1+0.7*cos(202.5)},{0+0.7*sin(202.5)}) {\scriptsize $a_2$};
\draw ({1+0.25*cos(67.5)},{0+0.25*sin(67.5)}) -- ({1+0.5*cos(67.5)},{0+0.5*sin(67.5)});
\node at ({1+0.25*cos(67.5)},{0+0.25*sin(67.5)}) {\scriptsize $\bullet$};
\node at ({1+0.7*cos(67.5)},{0+0.7*sin(67.5)}) {\scriptsize $a_m$};
\draw ({1+0.25*cos(112.5)},{0+0.25*sin(112.5)}) -- ({1+0.5*cos(112.5)},{0+0.5*sin(112.5)});
\node at ({1+0.25*cos(112.5)},{0+0.25*sin(112.5)}) {\scriptsize $\bullet$};
\node at ({1+0.7*cos(112.5)},{0+0.7*sin(112.5)}) {\scriptsize $a_m$};
\node at ({1+0.5*cos(147.5)},{0+0.5*sin(147.5)}) {$\cdot$};
\node at ({1+0.5*cos(157.5)},{0+0.5*sin(157.5)}) {$\cdot$};
\node at ({1+0.5*cos(167.5)},{0+0.5*sin(167.5)}) {$\cdot$};
\node at ({1+0.5*cos(12.5)},{0+0.5*sin(12.5)}) {$\cdot$};
\node at ({1+0.5*cos(22.5)},{0+0.5*sin(22.5)}) {$\cdot$};
\node at ({1+0.5*cos(32.5)},{0+0.5*sin(32.5)}) {$\cdot$};
\end{tikzpicture},
$$
respectively. 
Let also 
$$
\calA^{c,s}_{2m,1}(H) := \calA^{c,s}_{2m,1}(H)'+ \calA^{c,s}_{2m,1}(H)''.
$$
Recall from the proof of \cite[Prop.~5.2]{NSS22a} 
that the rank of $\calA^{c,s}_{2m,1}(H)$ can be computed as follows:
\begin{eqnarray} \label{eq:Acs}
\operatorname{rk}\, \mathcal{A}^{c,s}_{2m,1}(H)
&=& 2 B_{2m}(2g) - N_{2m}(2g) \\
\notag &=& \frac{1}{2m}
\sum_{r \in D_{4m} \setminus \mathbb{Z}_{2m}} \operatorname{Fix}(r)
\  = \   \frac{(2g)^{m+1}}{2}
+\frac{(2g)^m}{2}.
\end{eqnarray}
Here,  for $n\geq 2$ and $c\geq 1$, 
the number of bracelets (respectively, necklaces)
with $n$ beads on $c$ colors is denoted by $B_n(c)$
(respectively, by $N_n(c)$), and 
the first identity follows by estimating the number 
of symmetric bracelets  in terms of symmetric and non-symmetric necklaces;
the second identity is obtained 
by comparing  P\'olya's enumeration formula for the cyclic group
$\mathbb{Z}_{2m}$ acting on the set of $(2g)$-letter words of length $2m$,
with the corresponding formula for the dihedral group
$D_{4m}$ acting on the same set; 
the third identity is easily checked by determining the number 
of fixed points $\operatorname{Fix}(r)$
of any  reflection $r \in D_{4m} \setminus \mathbb{Z}_{2m}$.

We shall refine \eqref{eq:Acs} as  follows.

\begin{lemma} \label{lem:sym_diag}
Decompose $m=2^\ell (2r+1)$ with  $\ell,r \in \mathbb{N}$. Then
\begin{eqnarray*}
\operatorname{rk}\, \calA^{c,s}_{2m,1}(H)' &=& (2g)^{m+1}/2 + (2g)^{r+1}/2,  \\ \operatorname{rk}\, \calA^{c,s}_{2m,1}(H)'' &=& (2g)^{m}/2 + (2g)^{r+1}/2.  
\end{eqnarray*}
\end{lemma}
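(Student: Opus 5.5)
The plan is to reduce both rank computations to counting orbits of words, using the isomorphism \eqref{eq:wheel} and a Burnside-type double count in which reflections are sorted according to the type of their axis.

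\textbf{Reduction to a count of bracelets.} Since $2m$ is even, the turnover sign $(-1)^{2m}$ is $+1$. Hence $\big(H^{\otimes 2m}\big)_{\mathcal{D}_{4m}}$ is the free abelian group on the set $\Omega$ of $\mathcal{D}_{4m}$-orbits of words of length $2m$ in the basis $S$ of $H$. By \eqref{eq:wheel} and multilinearity, $\calA^{c}_{2m,1}(H)$ is therefore free on bracelets. The diagram $O(a_1,\dots,a_m,a_{m+1},a_m,\dots,a_2)$, with all $a_i\in S$, is exactly the class of a word fixed by a reflection whose axis passes through two beads (call it a \emph{vertex reflection}). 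The diagram $O(a_1,\dots,a_m,a_m,\dots,a_1)$ is the class of a word fixed by a reflection whose axis passes between beads (an \emph{edge reflection}). So $\calA^{c,s}_{2m,1}(H)'$ (resp.\ $''$) is the span of a subset of the free basis. Its rank is the number $N'$ (resp.\ $N''$) of bracelets admitting a vertex (resp.\ edge) reflection symmetry.

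\textbf{Double count.} Count the pairs (word $w$, vertex reflection $\rho$ with $\rho w=w$). There are $m$ vertex reflections, each fixing $(2g)^{m+1}$ words, so there are $m(2g)^{m+1}$ pairs. Now organize the same pairs by orbits. Let $w$ have rotational period $p$ with $p\mid 2m$, and set $d=2m/p$. If $w$ has some reflection symmetry, its stabilizer is dihedral of order $2d$, and the orbit of $w$ has size $p$. The $d$ reflections in this stabilizer are obtained from one of them by composing with rotations by multiples of $p$. If $p$ is even, all of them have the same type. If $p$ is odd, the types alternate, so exactly $d/2$ of them are vertex reflections. Such an orbit therefore contributes $pd=2m$ pairs when $p$ is even, and $pd/2=m$ pairs when $p$ is odd. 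This gives
\[
m(2g)^{m+1} = 2m\,N'_{\mathrm{even}} + m\,N'_{\mathrm{odd}}, \qquad\text{hence}\qquad N'=\tfrac12\big((2g)^{m+1}+N'_{\mathrm{odd}}\big).
\]
The same argument with edge reflections, which fix $(2g)^m$ words each, gives $N''=\tfrac12\big((2g)^{m}+N''_{\mathrm{odd}}\big)$. Moreover $N'_{\mathrm{odd}}=N''_{\mathrm{odd}}$, since an odd-period symmetric bracelet has symmetries of both types.

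\textbf{The odd-period count (main point).} A word of length $2m$ has odd period exactly when it is the $2^{\ell+1}$-fold repetition of a word of length $q=2r+1$. Its symmetries correspond to those of this length-$q$ word, so $N'_{\mathrm{odd}}$ equals the number of symmetric bracelets of odd length $q$. Apply the same pair count to length $q$. There are $q$ reflections, each fixing $(2g)^{r+1}$ words. Each symmetric orbit of period $p'\mid q$ contributes $p'\cdot(q/p')=q$ pairs, because in odd length all reflections are counted without any type distinction. Hence this number is $q(2g)^{r+1}/q=(2g)^{r+1}$, which yields both formulas.

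As a consistency check, $N'+N''-(2g)^{r+1}$ recovers \eqref{eq:Acs}. The step requiring most care is the claim about how reflection types are distributed in the stabilizer (even versus odd period). I would verify it directly by labelling positions $0,\dots,2m-1$ and writing reflections as $i\mapsto c-i$, where $c$ even gives a vertex axis and $c$ odd an edge axis.
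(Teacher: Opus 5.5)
Your argument is correct, and it takes a different route from the paper's.

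\textbf{The paper's route.} The paper works with the necklaces with arrows of \cite{NSS22b}. It uses $|\vec{N}'|=(2g)^{m+1}$ and $|\vec{N}''|=(2g)^{m}$, together with the fixed-point-free involution $\iota$ defined through the exponent of periodicity. This $\iota$ preserves the type of the arrow on every stratum except the top one. There it exchanges $\vec{N}'_{\ell+1}$ and $\vec{N}''_{\ell+1}$, each of cardinality $(2g)^{r+1}$. The paper then identifies $\Z\cdot(\vec{N}/\iota)$ with $\calA^{c,s}_{2m,1}(H)$ by combining the obvious surjectivity with the rank formula \eqref{eq:Acs}.

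\textbf{How your count matches it.} Your double count is sorted by the parity of the axis parameter $c$ and by the parity of the period. It sees the same phenomenon directly. Odd period corresponds to the top stratum, where both reflection types coexist. Your reduction to words of length $q=2r+1$ plays the role of the count $|\vec{N}'_{\ell+1}|=(2g)^{r+1}$.

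\textbf{What each route buys.} Your route is self-contained: it needs neither the involution $\iota$ of \cite{NSS22b} nor \eqref{eq:Acs} as input, and in fact recovers \eqref{eq:Acs}. It also makes visible that $\calA^{c,s}_{2m,1}(H)'$ and $\calA^{c,s}_{2m,1}(H)''$ are spanned by subsets of the bracelet basis. The paper's route instead stays inside the combinatorial framework already set up in \cite{NSS22b}.

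\textbf{Two points to state explicitly in a write-up.}
\begin{enumerate}
\item An even-period symmetric orbit whose reflections are all edge reflections contributes no vertex pairs. So your $N'_{\mathrm{even}}$ must mean the even-period bracelets admitting a vertex symmetry, not every even-period symmetric orbit.
\item All words in one orbit have the same number of vertex reflections in their stabilizers. This holds because conjugating $i\mapsto c-i$ by a rotation or by a reflection preserves the parity of $c$.
\end{enumerate}
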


\begin{proof}
We shall use a few notions that have been introduced in \cite[\S 5]{NSS22b}. 
Recall that a \emph{necklace with arrow}
is a symmetric necklace (with $2m$ beads)  colored 
by the $(2g)$-element set $S=\{a_i,b_i \mid i=1,\dots,g\}$,
and endowed with an oriented axis 
showing its symmetry (the \emph{arrow}).
The set $\vec{N}$ of necklaces with arrows 
splits into two disjoint subsets $\vec{N}'$ and $\vec{N}''$,
according to the type of the arrow: 
either the arrow connects a bead to a bead, or
it connects the midpoint of an arc to the midpoint of an arc. 
Every element $x \in \vec{N}$ has an \emph{exponent of periodicity} $e(x) \in \mathbb{N}$, which is defined as follows:
the necklace with arrow obtained from $x$ by rotating its arrow
through the angle $\pi/2^k$ coincides with $x$ for $k = 0,1,\dots,e(x)-1$,
but it differs from $x$ for $k = e(x)$ and 
is then denoted by $\iota(x) \in \vec{N}$.
According to \cite[Lemma 5.4]{NSS22b}, $\iota$ is an involution of  $\vec{N}$
(without fixed point). We denote by 
$p:\vec{N} \to \vec{N}/\iota$ the canonical projection.

Let $\vec{N}_k$ be the subset of $\vec{N}$ 
that consists of necklaces with arrows having exponent of periodicity at least $k$;
observe that $e(x) \leq \ell +1$ for all $x\in \vec{N}$.
Hence we have a filtration 
$$
\vec{N} = \vec{N}_0 \supset  \vec{N}_1 \supset \cdots \supset  
\vec{N}_\ell  \supset \vec{N}_{\ell+1}  
\supset \vec{N}_{\ell+2}=\varnothing ,
$$
 which restricts to filtrations of $\vec{N}'$ and $\vec{N}''$.
The involution $\iota$ preserves each stratum $\vec{N}'_k \setminus  \vec{N}'_{k+1}$
(resp., $\vec{N}''_k \setminus  \vec{N}''_{k+1}$) of $\vec{N}'$ (resp., $\vec{N}''$) 
for every $k=0,\dots,\ell$. However, we have $\iota(\vec{N}'_{\ell+1})= \vec{N}''_{\ell+1}$.
Clearly, we have $\vert \vec{N}' \vert =(2g)^{m+1}$ and  $\vert \vec{N}'' \vert =(2g)^{m}$.
Moreover, we have $\vert \vec{N}'_{\ell +1} \vert = \vert \vec{N}''_{\ell +1} \vert =(2g)^{r+1}$.
Therefore, we have
\begin{eqnarray*}
\big\vert p(\vec{N}') \vert &=&
\frac{1}{2}\big( (2g)^{m+1} - (2g)^{r+1} \big) + (2g)^{r+1} = 
\frac{1}{2}\big( (2g)^{m+1} + (2g)^{r+1} \big),   \\
\big\vert p(\vec{N}'') \vert &=&
\frac{1}{2}\big( (2g)^{m} - (2g)^{r+1} \big) + (2g)^{r+1} = 
\frac{1}{2}\big( (2g)^{m} + (2g)^{r+1} \big).
\end{eqnarray*}
Besides, as observed in \cite[\S 5]{NSS22b}, 
the canonical map
$\Z\cdot(\vec{N}/\iota) \to \calA^{c,s}_{2m,1}(H) $ is an isomorphism:
indeed, it is obviously surjective onto a torsion-free abelian group,
and we have
$$
\operatorname{rk}\, \calA^{c,s}_{2m,1}(H) 
\stackrel{\eqref{eq:Acs}}{=}
\big\vert \vec{N}' \big\vert/2 + \big\vert \vec{N}'' \big\vert/2 =
\big\vert \vec{N} \big\vert/2 =
\big\vert \vec{N}/\iota  \big\vert.
$$
Hence, the restriction of this map
to $\Z\cdot p(\vec{N'})$ is an isomorphism 
onto  $\calA^{c,s}_{2m,1}(H)'$, from which we deduce the rank of 
$\calA^{c,s}_{2m,1}(H)'$. Similarly, we obtain the rank 
of $\calA^{c,s}_{2m,1}(H)''$.
\end{proof}

\begin{proof}[Proof of Proposition \ref{prop:t_1}]
The module $H^{(2)} \otimes \big(H^{(2)}\big)^{\otimes n}$ can be identified
with $\calB_{n,1}\big(H^{(2)};s\big)$ through the isomorphism
$$
u_0 \otimes (u_1 \otimes \cdots \otimes u_n)  \longmapsto
 \begin{tikzpicture}[thick,color=black,
baseline=-0.5em, x=1.2em, y=1.2em]
\footnotesize
\node[above=1pt] at (0,1.5) {\small $s$};
\node[left=1pt] at (-1,-0.6) {\small $u_1$};
\node[left=1pt] at (-1,+0.6) {\small $u_n$};
\node[left=1pt] at (-0.5,0.2) {\scriptsize  $\vdots$};
\node[below=1pt] at (0,-1.5) {\small $u_0 \star$};
\draw (0,-1.5) -- (0,1.5);
\draw (0,-0.6) -- (-1,-0.6);
\draw (0,0.6) -- (-1,0.6);
\draw [fill = black] (0,0.6) circle (.3ex);
\draw [fill = black] (0,-0.6) circle (.3ex);
\end{tikzpicture}.
$$  We also identify $\calA^c_{2n+2,1}(H)$ with 
$\big(H^{\otimes (2n+2)}\big)_{\mathcal{D}_{4n+4}}$
by \eqref{eq:wheel}.
Then, by Proposition \ref{prop:Gr_Delta},
the map
$(\Gr^L \!\Delta): \calB_{n,1}\big(H^{(2)};s\big) \to
\calA^c_{2n+2,1}(H)\otimes \Q/\Z$ is given by 
$$
(\Gr^L \!\Delta)\big(
u_0 \otimes (u_1 \otimes \cdots \otimes u_n) \big)
=
\frac{1}{2} \big\{
u_0 \otimes u_1 \otimes \cdots \otimes u_n 
 \otimes 
 u_n \otimes \cdots \otimes u_1 \otimes u_0 \big\} .
$$
It follows that 
$(\Gr^L \!\Delta) \big(\calB_{n,1}\big(H^{(2)};s\big)\big)$
coincides with the image of 
$\calA^{c,s}_{2n+2,1}(H)''\otimes \Z_2$
in $\calA^c_{2n+2,1}(H)\otimes \Q/\Z$. 
We conclude thanks to Lemma \ref{lem:sym_diag}.
\end{proof}

By combining Proposition \ref{prop:t_0} with Proposition \ref{prop:t_1}, we derive a lower bound on the size of $\Tors_2\big({Y_{2n+1}\calIC}/{Y_{2n+2}}\big)$.
It would be interesting, though considerably more challenging,
to refine this bound by explicitly computing $t_k(n)$ for every $k \geq 2$.

\section{The mod $1$ reduction of the symmetrized LMO homomorphism}
\label{sec:symLMO}

 In this section, we consider
the symmetrized version of the LMO homomorphism 
introduced in \cite{HM09}:
we analyze its ``modulo 1 reduction'' 
and prove some congruence relations for its values.

\subsection{The symmetrized LMO homomorphism $Z$}
\label{subsec:sym_LMO}

The \emph{symmetrized} version of the LMO homomorphism
is defined in \cite{HM09} as
$$ 
Z:= X^{-1} \circ Z^<:
\calIC \longrightarrow \calA(H) \otimes \Q,
$$
where $X: \calA(H)\otimes \Q \to  \calA^<(H)\otimes \Q$
is the isomorphism defined by \eqref{eq:X}.
Thus, according to Proposition \ref{prop:Gr^LA^<},
the passage from $Z^<$ to $Z$ consists in replacing
the target $\calA^<(H)\otimes \Q$ of the latter
by its associated graded with respect to the loop filtration.

Equivalently, we have  $Z= \kappa \circ \widetilde{Z}^Y$
where $\kappa$ is the isomorphism defined 
in terms of the basis $S=\{a_1,\dots,a_g,b_1\dots,b_g\}$ of $H$  by
\begin{equation} \label{eq:kappa}
\kappa(D) := (-1)^{\chi(D)}\cdot 
\left( \! \! \!  \begin{array}{c} \hbox{sum of all ways of $(\times 1/2)$-gluing \emph{some} 
$a_i$-colored}\\
\hbox{vertices of $D$ to \emph{some} $b_i$-colored vertices of $D$} \end{array} \! \! \! 
\right)
\end{equation}
for every Jacobi diagram $D$.
Indeed, we have the commutative diagram  
\begin{equation}
\label{eq:threespaces}
\begin{tikzcd}
   \calA (S) \otimes \Q \arrow[r, "\varphi^\Q" above, "\simeq" below] 
   \arrow[rd, bend right, "\kappa \ " below," \ \simeq" above] & 
   \calA^<(H)\otimes \Q \\
    &\calA(H) \otimes \Q \arrow[u,"X" right, "\simeq" left]
\end{tikzcd}
\end{equation}
where $\varphi^\Q$ denotes the rational version 
of the map $\varphi$ from Proposition~\ref{prop:varphi}.

\begin{remark} \label{rem:star_product}
Recall from \cite[\S 3.1]{HM09} that
$\calA(H)\otimes \Q$ has an $\Sp(H^\Q)$-equivariant
associative  product $\star$ which corresponds 
to the multiplication \eqref{eq:star_product} through $\kappa$.
Hence, $Z:(\calIC,\circ) \to \big(\calA(H)\otimes \Q,\star\big)$ 
is a monoid map.
\hfill $\blacksquare$
\end{remark}

While the version $Z^<$ of the LMO homomorphism is suitable
to the study of its variations under clasper surgery (see \S \ref{sec:Z^<}),
this version $Z$ is more convenient
for comparison with some classical invariants.
In particular, the (leading terms of the) tree reduction and one-loop reduction of $Z$
have the following topological interpretations:
\begin{itemize}
\item Recall from \S \ref{subsec:few_loops} 
that $\mathsf{D}_k(H)$ is defined as  the kernel 
of the bracketing homomorphism
${H\otimes \frakL_{k+1}(H)} \to \frakL_{k+2}(H)$
in the free Lie algebra $ \frakL(H)$.
Let also $\calIC[k]$ be the submonoid of $\calIC$ acting trivially 
on the $k$-th nilpotent quotient $\pi/\Gamma_{k+1} \pi$ of $\pi$;
moreover, let 
\begin{equation} \label{eq:tau_k}
\tau_k: \calIC[k] \longrightarrow \mathsf{D}_k(H)
\end{equation}
be the \emph{$k$-th Johnson homomorphism}, 
encoding the action on the  $(k+1)$-st nilpotent quotient $\pi/\Gamma_{k+2} \pi$ of $\pi$.
It follows from \cite[Th$.$ 8.19]{CHM08} that the following diagram is commutative:
\begin{equation} \label{eq:tau_Z}
\xymatrix{
\calIC[k] \ar[rr]^-{Z_k} \ar[d]_-{\tau_k} 
&& \calA_k(H)\otimes \Q \ar@{->>}[d] \\
\mathsf{D}_k(H) \ar[r] & 
\mathsf{D}_k(H)\otimes\Q  & 
\ar[l]_-\simeq^\eta \calA_{k,0}^c(H)\otimes \Q.
}
\end{equation}
Since $\mathsf{D}_k(H)$ is torsion-free,
we can also view the $k$-th Johnson homomorphism
as a map $\tau_k: \calIC[k] \to  \calA_{k,0}^c(H)\otimes \Q $:
thus, \eqref{eq:tau_Z} says that the map 
$Z_{k,0}:\calIC[k] \to  \calA_{k,0}^c(H)\otimes \Q$
coincides with $\tau_k$.
In particular, when restricted to the submonoid $Y_k \calIC$ of $\calIC[k]$,
\eqref{eq:tau_Z} shows that 
$Z_{k,0}:Y_k\calIC \to \bqm \calA_{k,0}^c(H)\eqm$
(which coincides with the reduction of 
$Z_k^<:Y_k\calIC \to \bqm \calA^{<,c}_k(H)\eqm$ 
modulo $\bqm L_1 \calA^{<,c}_k(H)\eqm$)
is the same as $\tau_k: Y_k\calIC \to  \bqm \calA_{k,0}^c(H)\eqm$.

\item Let $\widehat{\Q[\pi]}$ be the $I$-adic completion
of the group algebra $\Q[\pi]$, and let $\tilde\alpha:\calIC \to K_1\big(\widehat{\Q[\pi]}\big)$
be the non-commutative version of the Reidemeister--Turaev torsion 
that is introduced in  \cite{NSS23}. 
It is shown there 
that the composition $\tilde \alpha_{\leq k-1}$ of $\tilde \alpha$
with the canonical map 
$K_1\big(\widehat{\Q[\pi]}\big) \to K_1\big(\widehat{\Q[\pi]}/\widehat{I}^{k}\big)$
vanishes on $Y_k\calIC$; hence we have a homomorphism
$$
\qquad\qquad \xymatrix{
Y_k \calIC \ar[r]^-{\tilde\alpha_{\leq k}} \ar@/_1cm/[rr]^-{\tilde\alpha_k} & 
\Ker \Big( K_1\big(\widehat{\Q[\pi]}/\widehat{I}^{k+1}\big) 
\to K_1\big(\widehat{\Q[\pi]}/\widehat{I}^k\big) \Big)\simeq \!\!\!\! &
\!\!\!\! \!\!\!\!  \!\!\!\! 
\big(H^{\otimes k}\big)_{\Z_k} \!\!\otimes \Q.
}
$$
Let also $p_+: \big(H^{\otimes k}\big)_{\Z_k} \otimes \Q  
\to \calA^c_{k,1}(H)\otimes \Q$
be the canonical projection 
$\big(H^{\otimes k}\big)_{\Z_k} \otimes \Q 
\to  \big(H^{\otimes k}\big)_{D_{2k}} \otimes \Q$
composed with the isomorphism \eqref{eq:wheel}.
According to \cite[Th$.$ 1.3]{NSS23}, the following diagram is commutative:
\begin{equation} \label{eq:alpha_Z}
\xymatrix{
Y_k\calIC \ar[r]^-{-2Z_k} \ar[d]_-{\tilde{\alpha}_k} 
& \calA_k^c(H)\otimes \Q \ar@{->>}[d] \\
\big(H^{\otimes k}\big)_{\Z_k} \otimes \Q 
\ar[r]_-{p_+} & \calA_{k,1}^c(H)\otimes \Q.
}
\end{equation}
\end{itemize}

\subsection{The modulo $1$ reduction $\zeta$ of $Z$}

We consider now the following reduction of 
the symmetrized LMO homomorphism:
$$
\xymatrix{
\calIC \ar[r]^-Z \ar@/_2pc/@{-->}[rr]_-{{\zeta}}
& \calA(H) \otimes \Q \ar@{->>}[r] &
{\displaystyle\frac{\calA(H) \otimes \Q}{\calR(H)}}
}
\quad \hbox{ where } \calR(H) := X^{-1}\big(\bqm\calA^<(H)\eqm\big)
$$
Note that $\zeta$ corresponds to the mod $1$ reduction 
$\zeta^<$ of $Z^<$  (defined in \S \ref{subsec:LMO})
via an isomorphism induced by $X$.
Thus, with an abuse of terminology,
we shall call 
$\zeta$ the \emph{modulo 1 reduction of $Z$}.

\begin{remark} It follows from the definition that, 
for any $k\geq 1$, we have
\begin{equation} \label{eq:XKZ}
\calR_k(H)=
\kappa\big(\!\bqm\calA_k(S)\eqm\!\big)=
Z_k\big(F_k\Z[\calIC]\big).     
\end{equation}
But, we emphasize that this lattice
of the $\Q$-vector space $\calA_k(H)\otimes \Q$
\emph{does not} coincide with $\bqm\calA_k(H)\eqm$.
Thus, the target  of $\zeta$ \emph{is not} 
the same as $\calA(H)\otimes \Q/\Z$. 
\hfill $\blacksquare$
\end{remark}

The lattice $\calR(H)$
can be described  by generators as follows.
Let $\widetilde{\calA}(H)$ 
be the module  defined as $\calA^<(H)$ in \S \ref{subsec:Jacobi_diagrams}, 
but with the STU-like relation removed.
We consider the endomorphism 
$w$ of $\widetilde{\calA}(H)\otimes \Q$ 
defined  by 
\begin{equation} \label{eq:w}
w(D) :=
\left(\begin{array}{c}
\hbox{sum of all ways of $\omega$-contracting}\\
\hbox{an ordered pair of external vertices in $D$}
\end{array}\right)
\end{equation}
on every  Jacobi diagram $D$.
For instance, we have
\[
w\Big(\tikz[baseline=-4pt, scale=0.3]{
  \draw[fill=black] (-1.3,0.4) circle (0.8ex);
  \draw[fill=black] (1.3,0.4) circle (0.8ex);

  \draw[thick] (-1.3,0.4) -- (1.3,0.4);

  \draw[thick] (-1.3,0.4) -- (-2.1,-1);
  \node at (-2.1,-1.4) {$\scriptstyle x$};
  \node at (-1.4,-1.4) {$\scriptstyle <$};

  \draw[thick] (-1.3,0.4) -- (-0.5,-1);
  \node at (-0.7,-1.4) {$\scriptstyle y$};
  \node at (-0.0,-1.4) {$\scriptstyle <$};

  \draw[thick] (1.3,0.4) -- (0.7,-1);
  \node at (0.7,-1.4) {$\scriptstyle z$};
  \node at (1.4,-1.4) {$\scriptstyle <$};

  \draw[thick] (1.3,0.4) -- (2.1,-1);
  \node at (2.1,-1.4) {$\scriptstyle t$};
}\Big) = 
-\omega(x,z)\!\tikz[baseline=-4pt, scale=0.3]{
  \draw[thick] (0,0) circle (0.6);

  \draw[fill=black] (-0.6,0) circle (0.8ex);
  \draw[fill=black] (0.6,0) circle (0.8ex);

  \draw[thick] (-0.6,0) to[out=180, in=90] (-1.2,-1);
  \node at (-1.3,-1.4) {$\scriptstyle y$};
  
  \draw[thick] (0.6,0) to[out=0, in=90] (1.2,-1);
  \node at (1.2,-1.4) {$\scriptstyle t$};

  \node at (-0.15,-1.4) {$\scriptstyle <$};
}
+\omega(x,t)\!\tikz[baseline=-4pt, scale=0.3]{
  \draw[thick] (0,0) circle (0.6);

  \draw[fill=black] (-0.6,0) circle (0.8ex);
  \draw[fill=black] (0.6,0) circle (0.8ex);

  \draw[thick] (-0.6,0) to[out=180, in=90] (-1.2,-1);
  \node at (-1.3,-1.4) {$\scriptstyle y$};
  
  \draw[thick] (0.6,0) to[out=0, in=90] (1.2,-1);
  \node at (1.2,-1.4) {$\scriptstyle z$};

  \node at (-0.15,-1.4) {$\scriptstyle <$};
}
+\omega(y,z)\!\tikz[baseline=-4pt, scale=0.3]{
  \draw[thick] (0,0) circle (0.6);

  \draw[fill=black] (-0.6,0) circle (0.8ex);
  \draw[fill=black] (0.6,0) circle (0.8ex);

  \draw[thick] (-0.6,0) to[out=180, in=90] (-1.2,-1);
  \node at (-1.3,-1.4) {$\scriptstyle x$};
  
  \draw[thick] (0.6,0) to[out=0, in=90] (1.2,-1);
  \node at (1.2,-1.4) {$\scriptstyle t$};

  \node at (-0.15,-1.4) {$\scriptstyle <$};
}
-\omega(y,t)\!\tikz[baseline=-4pt, scale=0.3]{
  \draw[thick] (0,0) circle (0.6);

  \draw[fill=black] (-0.6,0) circle (0.8ex);
  \draw[fill=black] (0.6,0) circle (0.8ex);

  \draw[thick] (-0.6,0) to[out=180, in=90] (-1.2,-1);
  \node at (-1.3,-1.4) {$\scriptstyle x$};
  
  \draw[thick] (0.6,0) to[out=0, in=90] (1.2,-1);
  \node at (1.2,-1.4) {$\scriptstyle z$};

  \node at (-0.15,-1.4) {$\scriptstyle <$};
}.\]
There is the  projection
$f:\widetilde{\calA}(H)\otimes \Q 
\to {\calA}(H)\otimes \Q $
that forgets the total order on vertices,
and the projection 
$m:\widetilde{\calA}(H)\otimes \Q  \to {\calA}^<(H)\otimes \Q$ 
that takes the quotient modulo ``STU-like''.
It is shown in \cite[Prop$.$ 3.1]{HM09}
that $X^{-1}\circ m = f\circ \exp_\circ(w/2)$.
Thus, we immediately obtain the following: 

\begin{lemma} \label{lem:X_gen}
The module  $\calR_k(H)$
is generated in  $\calA_k(H)\otimes \Q$
by the $\Q$-linear combinations
$$
f\big(\exp_\circ(w/2)(D)\big) =f\Big(
D+ \frac{w(D)}{2}+ \frac{w(w(D))}{2^2\, 2!}  +\,\cdots \Big)
$$ 
where $D \in \bqm \widetilde{\calA}_k(H)\eqm$ 
ranges over a system of generators for
$\bqm {\calA}_k^<(H)\eqm$ under $m$.
\end{lemma}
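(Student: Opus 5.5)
The statement to prove is Lemma \ref{lem:X_gen}. The plan is to combine the definition $\calR_k(H)=X^{-1}\big(\bqm\calA^<_k(H)\eqm\big)$ with the identity $X^{-1}\circ m = f\circ \exp_\circ(w/2)$ from \cite[Prop.~3.1]{HM09}.

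First, I will check the three maps in that identity. The maps $f$, $m$ and $w$ preserve the internal degree, since $\omega$-contracting two external vertices produces no new internal vertex. Hence the identity restricts to degree $k$. Also, $\exp_\circ(w/2)$ is a finite sum on each diagram, because $w$ lowers the number of external vertices by two.

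Next comes the key observation: $m$ maps the integral lattice $\bqm\widetilde{\calA}_k(H)\eqm$ onto $\bqm\calA^<_k(H)\eqm$. This holds because $\calA^<(H)$ is the quotient of $\widetilde{\calA}(H)$ by the STU-like relation. In particular, every Jacobi diagram with totally ordered $H$-colored vertices is a generator of both modules. Therefore, if $D$ ranges over a family in $\bqm\widetilde{\calA}_k(H)\eqm$ whose images $m(D)$ generate $\bqm\calA^<_k(H)\eqm$, then the elements $X^{-1}(m(D))$ generate $X^{-1}\big(\bqm\calA^<_k(H)\eqm\big)=\calR_k(H)$. Here I use that $X^{-1}$ is a $\Q$-linear isomorphism, so it carries a generating set of a subgroup to a generating set of its image.

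Finally, applying the identity $X^{-1}(m(D))=f\big(\exp_\circ(w/2)(D)\big)$ gives the expanded formula $f\big(D+\frac{w(D)}{2}+\frac{w(w(D))}{2^2\,2!}+\cdots\big)$. That formula is just the exponential series written out.

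There is no real obstacle. The only point needing care is that the HM09 identity is stated over $\Q$ on $\widetilde{\calA}(H)\otimes\Q$. Applying it to lattice elements is harmless, since the identity is an equality of $\Q$-linear maps and can be evaluated on any element of the source.
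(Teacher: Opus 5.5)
Your proof is correct and follows the paper's argument. The lemma comes directly from the definition $\calR(H)=X^{-1}\big(\bqm\calA^<(H)\eqm\big)$ together with the identity $X^{-1}\circ m = f\circ\exp_\circ(w/2)$ of \cite[Prop.~3.1]{HM09}, and your extra checks (degree preservation, finiteness of the exponential series, $m$ mapping $\bqm\widetilde{\calA}_k(H)\eqm$ onto $\bqm\calA^<_k(H)\eqm$) are accurate and simply make explicit what the paper leaves implicit.
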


\begin{example}
\label{ex:R2(H)}
In degree $k:=2$,  the module
$ \calR_k(H)$ is generated by 
\begin{align*}
X^{-1}\Big(
\tikz[baseline=-0.5ex, scale=0.5]{
  \draw[thick] (0,0) circle (0.5);
  \draw[thick] (-0.5,0) -- (0.5,0);
  \draw[fill=black] (-0.5,0) circle (0.5ex);
  \draw[fill=black] (0.5,0) circle (0.5ex);
}\Big)
=& \hspace{2mm}
\tikz[baseline=-0.5ex, scale=0.5]{
  \draw[thick] (0,0) circle (0.5);
  \draw[thick] (-0.5,0) -- (0.5,0);
  \draw[fill=black] (-0.5,0) circle (0.5ex);
  \draw[fill=black] (0.5,0) circle (0.5ex);
} \\
X^{-1}\Big(
\tikz[baseline=-1.8ex, scale=0.25]{
  \draw[thick] (0,0) circle (0.6);
  \draw[fill=black] (-0.6,0) circle (0.8ex);
  \draw[fill=black] (0.6,0) circle (0.8ex);
  \draw[thick] (-0.6,0) to[out=180, in=90] (-1.2,-1);
  \node at (-1.3,-1.4) {$\scriptstyle a$};  
  \draw[thick] (0.6,0) to [out=0, in=90] (1.2,-1);
  \node at (1.2,-1.4) {$ \scriptstyle b$};
  \node at (-0.15,-1.4) {$ \scriptstyle <$};
}
\Big) =& \tikz[baseline=-1.8ex, scale=0.3]{
  \draw[thick] (0,0) circle (0.6);
  \draw[fill=black] (-0.6,0) circle (0.8ex);
  \draw[fill=black] (0.6,0) circle (0.8ex);
  \draw[thick] (-0.6,0) to[out=180, in=90] (-1.2,-1);
  \node at (-1.3,-1.4) {$\scriptstyle a$};  
  \draw[thick] (0.6,0) to[out=0, in=90] (1.2,-1);
  \node at (1.2,-1.4) {$ \scriptstyle b$};
} + \frac{1}{2}\omega(a,b)\hspace{1mm}\tikz[baseline=-0.5ex, scale=0.5]{
  \draw[thick] (0,0) circle (0.5);
  \draw[thick] (-0.5,0) -- (0.5,0);
  \draw[fill=black] (-0.5,0) circle (0.5ex);
  \draw[fill=black] (0.5,0) circle (0.5ex);
} \qquad \hbox{(for all $a,b\in H$)}\\
X^{-1}\Big(\tikz[baseline=-1ex, scale=0.3]{
  \node at (-2,-1.45) {$\scriptstyle a$};
  \node at (-0.7,-1.45) {$ \scriptstyle b$};
  \node at (0.7,-1.45) {$ \scriptstyle c$};
  \node at (2,-1.45) {$ \scriptstyle d$};
  \node at (-1.35,-1.45) {$ \scriptstyle <$};
  \node at (0,-1.45) {$ \scriptstyle <$};
  \node at (1.35,-1.45) {$ \scriptstyle <$};
  \draw[fill=black] (0,0) circle (0.8ex);
  \draw[thick] (0,0) -- (-0.7,-1); 
  \draw[thick] (0,0) -- (0.7,-1);  
  \draw[fill=black] (0,1) circle (0.8ex);
  \draw[thick] (0,1) -- (-2,-1); 
  \draw[thick] (0,1) -- (2,-1);  
  \draw[thick] (0,0) -- (0,1);
}\Big) =& \tikz[baseline=-1.8ex, scale=0.3]{
  \draw[fill=black] (-1.3,0.4) circle (0.8ex);
  \draw[fill=black] (1.3,0.4) circle (0.8ex);
  \draw[thick] (-1.3,0.4) -- (1.3,0.4);
  \draw[thick] (-1.3,0.4) -- (-2.1,-1);
  \node at (-2.1,-1.4) {$ \scriptstyle b$};
  \draw[thick] (-1.3,0.4) -- (-0.5,-1);
  \node at (-0.7,-1.4) {$ \scriptstyle c$};
  \draw[thick] (1.3,0.4) -- (0.7,-1);
  \node at (0.7,-1.4) {$ \scriptstyle d$};
  \draw[thick] (1.3,0.4) -- (2.1,-1);
  \node at (2.1,-1.4) {$\scriptstyle a$};
} + \frac{1}{4}\big( \omega(a,b)\omega(c,d) - \omega(a,c)\omega(b,d)\big)\hspace{1mm}\tikz[baseline=-0.5ex, scale=0.5]{
  \draw[thick] (0,0) circle (0.5);
  \draw[thick] (-0.5,0) -- (0.5,0);
  \draw[fill=black] (-0.5,0) circle (0.5ex);
  \draw[fill=black] (0.5,0) circle (0.5ex);
}\\
&+ \frac{\omega(a,b)}{2} \tikz[baseline=-1.8ex, scale=0.25]{
  \draw[thick] (0,0) circle (0.6);
  \draw[fill=black] (-0.6,0) circle (0.8ex);
  \draw[fill=black] (0.6,0) circle (0.8ex);
  \draw[thick] (-0.6,0) to[out=180, in=90] (-1.2,-1);
  \node at (-1.3,-1.4) {$ \scriptstyle c$};  
  \draw[thick] (0.6,0) to[out=0, in=90] (1.2,-1);
  \node at (1.2,-1.4) {$ \scriptstyle d$};
} - \frac{\omega(a,c)}{2} \tikz[baseline=-1.8ex, scale=0.25]{
  \draw[thick] (0,0) circle (0.6);
  \draw[fill=black] (-0.6,0) circle (0.8ex);
  \draw[fill=black] (0.6,0) circle (0.8ex);
  \draw[thick] (-0.6,0) to[out=180, in=90] (-1.2,-1);
  \node at (-1.3,-1.4) {$ \scriptstyle b$};  
  \draw[thick] (0.6,0) to[out=0, in=90] (1.2,-1);
  \node at (1.2,-1.4) {$ \scriptstyle d$};
} \\
&+ \frac{\omega(c,d)}{2} \tikz[baseline=-1.8ex, scale=0.25]{
  \draw[thick] (0,0) circle (0.6);
  \draw[fill=black] (-0.6,0) circle (0.8ex);
  \draw[fill=black] (0.6,0) circle (0.8ex);
  \draw[thick] (-0.6,0) to[out=180, in=90] (-1.2,-1);
  \node at (-1.3,-1.4) {$\scriptstyle a$};  
  \draw[thick] (0.6,0) to[out=0, in=90] (1.2,-1);
  \node at (1.2,-1.4) {$ \scriptstyle b$};
} - \frac{\omega(b,d)}{2} \tikz[baseline=-1.8ex, scale=0.25]{
  \draw[thick] (0,0) circle (0.6);
  \draw[fill=black] (-0.6,0) circle (0.8ex);
  \draw[fill=black] (0.6,0) circle (0.8ex);
  \draw[thick] (-0.6,0) to[out=180, in=90] (-1.2,-1);
  \node at (-1.3,-1.4) {$\scriptstyle a$};  
  \draw[thick] (0.6,0) to[out=0, in=90] (1.2,-1);
  \node at (1.2,-1.4) {$ \scriptstyle c$};
} \quad \hbox{(for all $a,b,c,d\in H$)}. \quad \blacksquare
\end{align*}
\end{example}

The lattice  $\calR(H)$
can also be described  by defining relations.
To state this description, 
we consider the symmetric bilinear pairing 
$\langle-,-\rangle: H \times H \rightarrow \mathbb{Z}$
given in terms of the basis $S$
by $\left\langle a_i, a_j\right\rangle=\left\langle b_i, b_j\right\rangle=0$ and 
$\left\langle a_i, b_j\right\rangle= \delta_{i j}$.
Let $c$ be the endomorphism of $\calA(H) \otimes \Q$ 
transforming any Jacobi diagram $D$ to
\begin{equation} \label{eq:c}
c(D) :=
\left(\begin{array}{c}
\hbox{sum of all ways of $(\langle-,-\rangle/2)$-contracting}\\
\hbox{an unordered pair of external vertices in $D$}
\end{array}\right).
\end{equation}
For instance, we have
\[
c \Big(\tikz[baseline=-3pt, scale=0.35]{
  \draw[thick] (-0.6,0) -- (0.6,0);

  \draw[fill=black] (-0.6,0) circle (0.8ex);
  \draw[fill=black] (0.6,0) circle (0.8ex);

  \draw[thick] (0.6,0) -- ++(60:0.8);
  \draw[thick] (0.6,0) -- ++(-60:0.8);

  \draw[thick] (-0.6,0) -- ++(120:0.8);
  \draw[thick] (-0.6,0) -- ++(-120:0.8);

  \draw (0.6,0) -- ++(60:0.8)  node[above, xshift=1mm] {$\scriptstyle t$};
  \draw (0.6,0) -- ++(-60:0.8) node[below,xshift=1mm] {$\scriptstyle z$};
  \draw (-0.6,0) -- ++(120:0.8)  node[above, xshift=-1mm] {$\scriptstyle x$};
  \draw (-0.6,0) -- ++(-120:0.8) node[below, xshift=-1mm] {$\scriptstyle y$};
}\Big)
= \frac{1}{2}\langle x,t \rangle \! \tikz[baseline=-3pt, scale=0.35]{
  \draw[thick] (0,0) circle (0.6);
  \draw[fill=black] (-0.6,0) circle (0.8ex);
  \draw[fill=black] (0.6,0) circle (0.8ex);

  \draw[thick] (-0.6,0) to[out=180,in=90] (-1.2,-1);
  \node at (-1.3,-1.4) {$\scriptstyle y$};

  \draw[thick] (0.6,0) to[out=0,in=90] (1.2,-1);
  \node at (1.2,-1.4) {$\scriptstyle z$};
}
+ \frac{1}{2}\langle y,z \rangle \! \tikz[baseline=-3pt, scale=0.35]{
  \draw[thick] (0,0) circle (0.6);
  \draw[fill=black] (-0.6,0) circle (0.8ex);
  \draw[fill=black] (0.6,0) circle (0.8ex);

  \draw[thick] (-0.6,0) to[out=180,in=90] (-1.2,-1);
  \node at (-1.3,-1.4) {$\scriptstyle x$};

  \draw[thick] (0.6,0) to[out=0,in=90] (1.2,-1);
  \node at (1.2,-1.4) {$\scriptstyle t$};
}
-\frac{1}{2}\langle x,z \rangle \! \tikz[baseline=-3pt, scale=0.35]{
  \draw[thick] (0,0) circle (0.6);
  \draw[fill=black] (-0.6,0) circle (0.8ex);
  \draw[fill=black] (0.6,0) circle (0.8ex);

  \draw[thick] (-0.6,0) to[out=180,in=90] (-1.2,-1);
  \node at (-1.3,-1.4) {$\scriptstyle y$};

  \draw[thick] (0.6,0) to[out=0,in=90] (1.2,-1);
  \node at (1.2,-1.4) {$\scriptstyle t$};
}
-\frac{1}{2}\langle y,t \rangle \! \tikz[baseline=-3pt, scale=0.35]{
  \draw[thick] (0,0) circle (0.6);
  \draw[fill=black] (-0.6,0) circle (0.8ex);
  \draw[fill=black] (0.6,0) circle (0.8ex);

  \draw[thick] (-0.6,0) to[out=180,in=90] (-1.2,-1);
  \node at (-1.3,-1.4) {$\scriptstyle x$};

  \draw[thick] (0.6,0) to[out=0,in=90] (1.2,-1);
  \node at (1.2,-1.4) {$\scriptstyle z$};
} .
\]
Note that the map $c$ increases the loop degree by one.

\begin{lemma} \label{lem:X_def_rel}
An $x \in \calA_k(H)\otimes \Q$, 
which is decomposed as $x = \sum_{r\geq 1-k} x_{r}$ 
with $x_r \in \calA_{k,r}(H)\otimes \Q$,
belongs to the lattice $\calR_k(H)$ if, and only if,
it satisfies 
\begin{equation}
\label{condition:C}
 \quad x_{(1-k)+i} \equiv \sum_{j=1}^i(-1)^{j+1} \cdot c^{[j]}\left(x_{(1-k)+i-j}\right) \mod  ``\calA_{k,(1-k)+i}(H)"
\end{equation}
 for every $i\geq 0$.
(Here $c^{[j]}= c^j/j!$ denotes the $j$-th  divided power of $c$.)
\end{lemma}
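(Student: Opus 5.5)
We use the description $\calR_k(H)=\kappa\big(\bqm\calA_k(S)\eqm\big)$ from \eqref{eq:XKZ}, where $\kappa$ is the explicit isomorphism \eqref{eq:kappa} built from the basis $S$. The strategy is to express $\kappa$ as an exponential of a contraction operator and then read off the loop-degree components.

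First, $\kappa$ adds $(\times 1/2)$-gluings of some $a_i$-colored vertices to some $b_i$-colored vertices. Each such gluing is an $\langle-,-\rangle/2$-contraction of an unordered pair, since $\langle a_i,b_j\rangle=\delta_{ij}$ and $\langle a,a\rangle=\langle b,b\rangle=0$. Summing over all subsets of disjoint pairs gives $\exp(c)=\sum_j c^{[j]}$, because $c^j$ counts each set of $j$ disjoint pairs exactly $j!$ times. Hence, up to the sign $(-1)^{\chi(D)}$, we expect $\kappa=\exp(c)\circ\iota$, where $\iota:\calA(S)\otimes\Q\to\calA(H)\otimes\Q$ is the identity on diagrams.

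The sign needs care. Each contraction lowers $\chi$ by one, so the sign $(-1)^{\chi(D)}$ differs from $(-1)^{\chi}$ of the output by $(-1)^j$. Let $\sigma$ be the operator multiplying a diagram by $(-1)^{\chi}$, i.e.\ by $\pm(-1)^{\text{loop degree}}$. Then $\sigma c=-c\sigma$, and so $\kappa=\sigma\exp(-c)\iota$ (up to a global convention) or $\kappa=\exp(-c)\,\sigma\iota$. Since $\sigma$ preserves the lattice $\bqm\calA_k(H)\eqm$ and each loop-degree piece, we get
\[
\calR_k(H)=\exp(-c)\big(\bqm\calA_k(S)\eqm\big)=\exp(-c)\big(\bqm\calA_k(H)\eqm\big).
\]
For this we use that $\calA(S)$ and $\calA(H)$ have the same integral lattice: the relations of $\calA(S)$ (AS, IHX, self-loop, swap) match those of $\calA(H)$ via multilinearity. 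We should double check whether slide versus swap affects the image in $\otimes\Q$; it does not, since we only take images in the rational space.

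Now $x\in\calR_k(H)$ if and only if $y:=\exp(c)x\in\bqm\calA_k(H)\eqm$, which holds if and only if each loop component $y_{(1-k)+i}$ is integral. Since $c$ raises the loop degree by one, this component is
\[
y_{(1-k)+i}=\sum_{j=0}^{i}c^{[j]}\big(x_{(1-k)+i-j}\big).
\]
This gives an equivalent triangular system of congruences:
\[
x_{(1-k)+i}\equiv-\sum_{j\ge1}c^{[j]}\big(x_{(1-k)+i-j}\big).
\]
The sign pattern in \eqref{condition:C} is $(-1)^{j+1}$, which corresponds to inverting $\exp(-c)$ rather than $\exp(c)$. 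So the statement amounts to $\calR_k=\exp(c)\big(\bqm\calA_k\eqm\big)$, requiring $\exp(-c)x$ integral. We will fix the sign by carefully tracking $\sigma$. Either way, the two forms are equivalent by induction on $i$ once we know that $c^{[j]}$ preserves the integral lattice. This last point is the main obstacle: $c^{[j]}$ has the factor $1/(2^j j!)$. We will show directly that $c^{[j]}$ of an integral diagram is integral modulo lower-order issues. We will try to pair each contracted diagram with its symmetric counterpart, using the AS and self-loop relations to absorb the factors of $2$. If this fails, we will avoid inverting altogether and derive the congruences by directly computing $\kappa(D)$ for $D\in\calA_k(S)$, then induct on $i$ using the triangularity.
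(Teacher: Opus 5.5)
Your skeleton is the paper's: use $\calR_k(H)=\kappa(\bqm\calA_k(S)\eqm)$ from \eqref{eq:XKZ}, write $\kappa$ as a signed $\exp(c)\circ\iota$, identify $\iota(\bqm\calA_k(S)\eqm)$ with $L:=\bqm\calA_k(H)\eqm$ by multilinearity, and read off loop-degree components using that $c$ raises the loop degree by one. The gap lies in the sign bookkeeping and in the lemma you propose to close it with.

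Of your two formulas, $\kappa=\sigma\exp(-c)\iota$ is correct, but $\kappa=\exp(-c)\sigma\iota$ is not: since $\sigma c=-c\sigma$, the latter equals $\sigma\exp(c)\iota$. From the correct formula you cannot simply drop the outer $\sigma$. Knowing that $\sigma$ preserves $L$ and each loop-degree piece says nothing about the lattice $\exp(-c)(L)$; in fact $\sigma\exp(-c)(L)=\exp(c)\sigma(L)=\exp(c)(L)$. So your derivation of $\calR_k(H)=\exp(-c)(L)$, and hence of the all-minus congruences, is unjustified as written.

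The bridge you then propose is that $c^{[j]}$ preserves the integral lattice, to be proved by pairing symmetric diagrams. This is false and cannot be repaired. If it held, both $\exp(\pm c)$ would preserve $L$, forcing $\calR_k(H)=L$. That contradicts the remark containing \eqref{eq:XKZ} and Example~\ref{ex:R2(H)}. Concretely, $c$ sends the one-loop degree-$2$ diagram with legs colored $a_1,b_1$ to one half of the theta graph, which is not in $L$.

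The missing observation is that the sign in \eqref{eq:kappa} is $(-1)^{\chi(D)}$ of the \emph{input} diagram. Thus $\kappa=\exp(c)\circ\iota\circ\sigma_S$, where $\sigma_S$ is multiplication by $(-1)^{\chi}$ on $\calA(S)\otimes\Q$ and obviously preserves $\bqm\calA_k(S)\eqm$. Hence $\calR_k(H)=\exp(c)(L)$, i.e.\ $x\in\calR_k(H)$ if and only if $\exp(-c)(x)\in L$. The lattice $L$ splits along loop degree, and the loop-degree $(1-k+i)$ component of $\exp(-c)(x)$ is $\sum_{j=0}^{i}(-1)^j c^{[j]}(x_{(1-k)+i-j})$. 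Its integrality is literally \eqref{condition:C} at step $i$ after moving terms across, so neither an induction nor any integrality property of $c^{[j]}$ is needed.

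This is exactly what the paper's proof does in coordinates. It uses $\kappa=(-1)^{\chi(-)}\cdot\exp(c)\circ\iota$ and sets $\iota(\tilde x_{1-k+i}):=(-1)^{k+i}\sum_{j}(-1)^jc^{[j]}(x_{1-k+i-j})$, that is, $\tilde x=\iota^{-1}\sigma\exp(-c)(x)$. It then checks $\kappa(\tilde x)=x$ by a binomial identity, and inverts triangularly for the converse.

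Incidentally, your intermediate claim $\calR_k(H)=\exp(-c)(L)$ is true, but for a different reason. The operator $(2c)^{[j]}$ is a sum over $j$-element sets of disjoint pairs contracted with the $\Z$-valued form $\langle-,-\rangle$, so it is integral. Hence $\exp(\pm 2c)$ are automorphisms of $L$, and $\exp(c)(L)=\exp(-c)(L)$.
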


\begin{remark} \label{rem:powers_2}
It follows from Lemma \ref{lem:X_def_rel}
that the loop-degree $r$ part $x_r$ of 
any $x\in \calR_k(H)$ necessarily belongs to 
$({1}/{2^r}) \bqm\calA_{k,r}(H)\eqm$. 
\hfill $\blacksquare$
\end{remark}

\begin{example}
Lemma \ref{lem:X_def_rel} specializes as follows for $k:=2$.
Recall that $\calA^c_2(H)$ is torsion-free,
so that  $\calA^c_2(H)= \bqm \calA^c_2(H) \eqm$.
Then, an $x=x_0+x_1+x_2\in  \calA^c_2(H)\otimes \Q$ 
belongs to $\calR_2(H)$
if, and only if, we have
$$
\left\{\begin{array}{ll}
x_0 \equiv 0 &\mod  \calA^c_{2,0}(H),\\
x_1 \equiv c(x_0)  &\mod \calA^c_{2,1}(H),\\
x_2 \equiv c(x_1) - {c^2(x_0)}/{2} & \mod \calA^c_{2,2}(H). 
\end{array}\right.
$$

\up
\hfill $\blacksquare$
\end{example}

\begin{proof}[Proof of Lemma \ref{lem:X_def_rel}]
According to \eqref{eq:XKZ}, we are asked to identify
$\kappa(``\calA_k(S)")$  
as a lattice in $\calA_k(H)\otimes \Q$.
We start by observing that, by the definition \eqref{eq:kappa}, we have 
\begin{equation} \label{eq:kappa_alt}
\kappa = (-1)^{\chi(-)}\cdot \exp(c) \circ \iota,
\end{equation}
where  we denote by 
$\iota: \calA(S)\otimes \Q \rightarrow \calA(H)\otimes \Q$ 
the obvious isomorphism defined by the basis $S$ of $H$.

Let $x = \sum_{r\geq 1-k} x_{r}$ 
in  $\calA_{k}(H)\otimes \Q$ satisfying \eqref{condition:C}.
Set $\tilde x := \sum_{r\geq 1-k} \tilde x_{r}$, with
\[
\tilde{x}_{1-k+i} := 
\iota^{-1}\Big( \sum_{j=0}^i (-1)^{j+k+i}c^{[j]}(x_{1-k+i-j})\Big), \
\hbox{for every $i\geq 0$}.
\] 
The condition \eqref{condition:C} on $x$ implies that
$\tilde x \in ``\calA_k(S)"$. Moreover, we have
\[\begin{aligned}
  \sum_{s=0}^i(-1)^{s+k} c^{[i-s]}\left(\iota(\tilde{x}_{1-k+s})\right) =& \sum_{s=0}^i c^{[i-s]}\Big(\sum_{j=0}^s(-1)^j c^{[j]}
  \left(x_{1-k+s-j}\right)\Big) \\
=& \sum_{s=0}^i \sum_{j=0}^s(-1)^j  \frac{(i+j-s)!}{(i-s)!j!} c^{[i+j-s]}\left(x_{1-k+s-j}\right) \\
=& \sum_{t=0}^i \sum_{j=0}^{i-t}(-1)^j  
\binom{j+t}{t}c^{[j+t]}\left(x_{1-k+i-t-j}\right)  \\
=& \sum_{t=0}^i \sum_{u=t}^i(-1)^{u-t} \binom{u}{t}c^{[u]}
\left(x_{1-k+i-u}\right)  \\
=& \sum_{u=0}^i \sum_{t=0}^u(-1)^{u-t} \frac{u!}{(u-t)!t!} c^{[u]}\left(x_{1-k+i-u}\right) \\
=& \sum_{u=0}^i(-1)^u (1+(-1))^u c^{[u]}\left(x_{1-k+i-u}\right) 
= x_{1-k+i}.
\end{aligned} 
\]
Using \eqref{eq:kappa_alt},
we deduce that $\kappa(\tilde{x})=x$: therefore, 
$x$ belongs to $\kappa(`` \calA_k(S)")$.

Conversely, let $\tilde{x} \in \calA_k(S) $ 
and  set $x:= \kappa(\tilde{x})$.
Decompose $\tilde{x} =\sum_{r\geq 1-k} \tilde{x}_r$ 
and  ${x} =\sum_{r\geq 1-k} {x}_r$
with respect to the loop degree.
For every $i\geq 0$, \eqref{eq:kappa_alt} implies 
\[
(-1)^kx_{1-k+i} = 
\sum_{s = 0}^{i}(-1)^s\cdot c^{[i-s]}\iota(\tilde{x}_{1-k+s})
\] 
or, equivalently,
$$
(-1)^{k+i} \iota(\tilde{x}_{1-k+i}) = x_{1-k+i} 
-\sum_{s=0}^{i-1} (-1)^{k+s} c^{[i-s]}\iota(\tilde{x}_{1-k+s}).
$$
Then, by performing an induction on $i \geq 0$
with computations analogous to those in the preceding paragraph, we obtain that
$$
(-1)^{k+i}\iota(\tilde{x}_{1-k+i}) 
= \sum_{j = 0}^{i}(-1)^j\cdot c^{[j]}(x_{1-k+i-j}).
$$
Since the left-hand side of this identity is in $``\calA(H)"$,
 $x$ satisfies \eqref{condition:C}. 
\end{proof}

\subsection{Trace homomorphisms}

We present an alternative formulation 
(in Proposition~\ref{prop:P2} below) 
of the characterization of $\calR_k(H)$ in $\calA_k(H)\otimes \Q$ 
given in Lemma~\ref{lem:X_def_rel}.
Since this will suffice for our subsequent applications, we restrict our attention to connected Jacobi diagrams.
Our  tool is a sequence of ``trace'' homomorphisms
$$
\Tr^{(r)}: \calP_k^{(r)}(H) \longrightarrow
\bqm\calA^c_{k,r+1}(H)\eqm/2^{r+1}\bqm\calA^c_{k,r+1}(H)\eqm
$$
indexed by integers $r\geq 0$.
The source of $\Tr^{(r)}$ is the submodule
$$
\calP^{(r)}_k(H) \subset
\bigoplus_{j=0}^{r} \bqm \calA^c_{k,j}(H) \eqm
$$
of $\bqm \calA^c_{k}(H) \eqm$
defined inductively by 
$\calP^{(0)}_k(H):= \bqm \calA^c_{k,0}(H) \eqm$ and 
$$
\calP^{(r+1)}_k(H):=  \calP^{(r)}_k(H) 
\times_{\bqm\calA^c_{k,r+1}(H)\eqm \otimes \Z_{2^{r+1}}}
\bqm\calA^c_{k,r+1}(H)\eqm
$$
where the fibered product is taken over $\Tr^{(r)}$
and the mod $2^{r+1}$ reduction.
Then, for any $y=\sum_{j=0}^r y_j \in \calP_k^{(r)}(H)$, we 
define $\Tr^{(r)}: \calP_k^{(r)}(H)  \to
\bqm\calA^c_{k,r+1}(H)\eqm \otimes \Z_{2^{r+1}}$~by
\begin{equation} \label{eq:def_Tr}
\Tr^{(r)}(y) := 
\Bigg\{ \sum_{j=0}^r (-1)^{j}\, (2c)^{[j+1]}(y_{r-j})\Bigg\}.
\end{equation}
In a more explicit way, we can equivalently set
$$
\Tr^{(r)}(y) :=  \Bigg\{ \sum_{j=0}^r (-1)^{j}\,
\left(\!\!\begin{array}{c}
\hbox{\small sum of all ways of $\langle-,-\rangle$-contracting}\\
\hbox{\small  $(j+1)$ pairs of external vertices of $y_{r-j}$}
\end{array}\!\!\right) \Bigg\} .
$$ 

The map $\Tr:=\Tr^{(0)}$ on
$\bqm \calA^c_{k,0}(H) \eqm=\calP^{(0)}_k(H)$
is easily computed as follows.
It coincides 
with the ``symmetric trace'' 
$\operatorname{Tr}^S$ considered in \cite{Fae23}
in degree $k:=2$.

\begin{lemma} \label{lem:Tr^(0)}
The  map  $\Tr: \bqm\calA^c_{k,0}(H)\eqm \to
\bqm\calA^c_{k,1}(H)\eqm\otimes \Z_2$ 
is given by 
\begin{equation} \label{eq:Tr_2}
 \Tr(f(\tilde y)):= \big\{f w(\tilde y) \big\}
\end{equation}
for all $\tilde y \in 
\bqm \widetilde{\calA}^c_{k,0}(H)\eqm$,
where the homomorphism $w$ is defined by \eqref{eq:w}
and the map~$f$ forgets the order of vertices.
So, the homomorphism $\Tr$ is $\Sp(H)$-equivariant.
\end{lemma}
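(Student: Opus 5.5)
Since $r=0$, the definition \eqref{eq:def_Tr} reduces to $\Tr(y)=\{(2c)^{[1]}(y)\}=\{2c(y)\}$ for $y\in\bqm\calA^c_{k,0}(H)\eqm$. In other words, we take the sum over all unordered pairs of external vertices of the tree $y$, contracted with $\langle-,-\rangle$, and reduce modulo $2$. The plan is to compare this with $f w(\tilde y)$, where $\tilde y$ is any ordered lift of $y$. Recall that $w$ sums over ordered pairs $(u,v)$ of external vertices, weighted by $\omega(c_u,c_v)$, where ``ordered pair'' means $u$ lies below $v$ in the total order. After forgetting the order, each unordered pair $\{u,v\}$ contributes exactly once to $fw(\tilde y)$, with coefficient $\pm\omega(c_u,c_v)$ and the corresponding contraction. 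Modulo $2$, signs are irrelevant.

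The key step is then the congruence $\omega(x,y)\equiv\langle x,y\rangle \bmod 2$ for all $x,y\in H$. Both forms are bilinear, so it suffices to check this on the basis $S$. We have $\omega(a_i,b_j)=\delta_{ij}=\langle a_i,b_j\rangle$, and $\omega(b_j,a_i)=-\delta_{ij}\equiv\delta_{ij}=\langle b_i,a_j\rangle$. Also $\omega(a_i,a_j)=0=\langle a_i,a_j\rangle$, and similarly for the $b$'s. Hence the coefficients agree modulo $2$ pair by pair.

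We also need that the contracted diagram produced by a pair is the same one-loop diagram in both constructions. Up to orientation, i.e.\ up to sign by ``AS'', it is obtained by joining $u$ and $v$ by an edge. Summing over pairs then gives $\{2c(y)\}=\{fw(\tilde y)\}$ in $\bqm\calA^c_{k,1}(H)\eqm\otimes\Z_2$. The main subtlety is well-definedness: we must verify that the right-hand side does not depend on the choice of lift $\tilde y$. This is immediate, however, since the left-hand side does not involve any choice.

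Equivariance then follows. The map $w$ is built from $\omega$ alone, and $f$ is equivariant, so \eqref{eq:Tr_2} exhibits $\Tr$ as a composite of $\Sp(H)$-equivariant maps, which is independent of the basis. In contrast, $c$ depends on the basis $S$ through $\langle-,-\rangle$, so equivariance is not visible from \eqref{eq:def_Tr} directly.
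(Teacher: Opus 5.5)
Your proof is correct and takes essentially the same approach as the paper. For $r=0$ the definition reduces to $\Tr(y)=\{2c(y)\}$; you compare it pair by pair with $f w(\tilde y)$ via the congruence $\langle u,v\rangle\equiv\omega(u,v)$ mod $2$, and then read off $\Sp(H)$-equivariance from \eqref{eq:Tr_2}, using that $f$ is surjective and that $w$ involves only $\omega$. The paper's (very brief) proof is exactly this argument.
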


\begin{proof}
The $\Sp(H)$-equivariance is obvious from 
\eqref{eq:Tr_2}, and \eqref{eq:Tr_2}
follows from the fact that
$\langle u,v\rangle \equiv \omega(u,v)$
mod 2 for all $u,v\in H$.
\end{proof}

\begin{remark}
The $\Sp(H)$-isomorphism types of the source and target
of $\Tr$ are deduced from \S \ref{subsec:few_loops}:
\begin{itemize}
\item If $k$ is even, then both $\calA^c_{k,0}(H)$ 
and $\calA^c_{k,1}(H)$ are torsion-free;
so, $\Tr$ is in degree $k$ a homomorphism 
$\mathsf{D}'_{k}(H) \to (H^{\otimes k})_{\mathcal{D}_{2k}}\otimes \Z_2$.
\item If $k=2n-1$ is odd, 
then both $\calA^c_{k,0}(H)$ and $\calA^c_{k,1}(H)$ have $2$-torsion, as described in \eqref{eq:Levine1} and \eqref{eq:torsion_wheel};
so, $\Tr$ is in degree $k$ a homomorphism 
$\mathsf{D}_{k}(H) \to 
\big((H^{\otimes k})_{\mathcal{D}_{2k}}\otimes \Z_2\big)
\big/ \big(H^{\otimes n}\otimes \Z_2\big)$.
\hfill $\blacksquare$
\end{itemize}
\end{remark}

Lemma \ref{lem:Tr^(0)} admits the following generalization to arbitrary loop degree $r \geq 0$, yielding an intrinsic formulation
of the homomorphism $\Tr^{(r)}$.

\begin{lemma}  \label{lem:Tr^(r)} 
The map
$\Tr^{(r)}: \calP_k^{(r)}(H)  \to
\bqm\calA^c_{k,r+1}(H)\eqm \otimes \Z_{2^{r+1}}$
is given  by
\begin{equation} \label{eq:Tr^(r)}
\Tr^{(r)}\Big(\sum_{i=0}^r y_i\Big) = 
\left\lbrace f\left(\sum_{i =0}^r 2^{i} 
\frac{w^{r+1-i}}{(r+1-i)!}(\widetilde{z}_i)\right) \right\rbrace
\end{equation}
where $\widetilde z_0\in \bqm \widetilde{\calA}^c_{k,0}(H)\eqm$,
 $\widetilde z_1\in \bqm \widetilde{\calA}^c_{k,1}(H)\eqm$,
\dots, $\widetilde z_r \in \bqm \widetilde{\calA}^c_{k,r}(H)\eqm$
are chosen recursively so that we have
\begin{equation} \label{eq:y_i}
y_i= f\!\left( \sum_{j =0}^{i} 2^j\,\frac{w^{i-j}}{(i-j)!}(\widetilde z_j) \right) \in {\calA}^c_{k,i}(H), 
\end{equation}
for all $i\in \{0,1,\dots, r\}$.
Consequently, the homomorphism $\Tr^{(r)}$ is $\Sp(H)$-equivariant
(and it is independent of the choice $S$ of a basis of $H$).
\end{lemma}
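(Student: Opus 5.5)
The plan is to compare the definition \eqref{eq:def_Tr} with formula \eqref{eq:Tr^(r)} through the identity $c = f\circ(w/2)\circ\tilde f$, valid on diagrams whose vertices are colored by $S$. Here $\tilde f$ denotes any lift that orders the external vertices so that every $a_i$ lies below every $b_i$. To see the identity, note that with such an order, contracting an ordered pair $(u,v)$ by $\omega$ over ordered pairs reproduces the $\langle-,-\rangle$-contraction over unordered pairs, up to the factor coming from the ordered/unordered count. This is precisely the content of the commutative triangle \eqref{eq:threespaces} together with \cite[Prop$.$ 3.1]{HM09}, namely $X^{-1}\circ m=f\circ\exp_\circ(w/2)$, and the factorisation $\kappa=(-1)^{\chi}\exp(c)\circ\iota$ from \eqref{eq:kappa_alt}. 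Comparing these two exponential formulas on the lattice generated by $S$-colored ordered diagrams gives $f\circ\exp_\circ(w/2)\circ\tilde f=\exp(c)\circ f\circ\tilde f$. Rescaling, this yields $f\circ\frac{w^{j}}{j!}\circ\tilde f=(2c)^{[j]}\circ f\circ\tilde f$ on each homogeneous piece. I will record this as the first step.

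Next I construct the $\widetilde z_i$. Given $y=\sum y_i\in\calP^{(r)}_k(H)$, I solve \eqref{eq:y_i} recursively. Choose $\widetilde z_0=\tilde f(y_0)$. Assuming $\widetilde z_0,\dots,\widetilde z_{i-1}$ have been chosen, the difference $y_i-f\big(\sum_{j<i}2^j\frac{w^{i-j}}{(i-j)!}\widetilde z_j\big)$ must be divisible by $2^i$ in $\bqm\calA^c_{k,i}(H)\eqm$. This is exactly the fibered-product condition defining $\calP^{(i)}_k(H)$, namely that $\Tr^{(i-1)}$ of the truncation agrees with $y_i$ mod $2^i$. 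The divisibility requires an induction proving simultaneously that $\Tr^{(i-1)}$ equals the reduction of $f\big(\sum_{j<i}2^j\frac{w^{i-j}}{(i-j)!}\widetilde z_j\big)$. I will do this induction jointly with the main identity, and then set $\widetilde z_i$ to be a lift of the quotient by $2^i$.

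For the main identity, I substitute \eqref{eq:y_i} into \eqref{eq:def_Tr} and use step one to convert $(2c)^{[j+1]}f$ into $f\frac{w^{j+1}}{(j+1)!}$ on $\tilde f$-lifts. Moving the $w$'s past $f$ is legitimate modulo $2^{r+1}$, because changing the order only introduces STU-type terms of higher loop degree multiplied by the appropriate powers of $2$. The double sum $\sum_j(-1)^j\sum_{l}2^l\frac{w^{r-j-l}}{(r-j-l)!}\frac{w^{j+1}}{(j+1)!}\widetilde z_l$ then collapses via the binomial identity $\sum_{j}(-1)^j\binom{m+1}{j+1}=1$, in the same manner as the computation in the proof of Lemma \ref{lem:X_def_rel}. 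This leaves $\sum_l 2^l\frac{w^{r+1-l}}{(r+1-l)!}\widetilde z_l$, which is \eqref{eq:Tr^(r)}.

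Finally, independence of choices and $\Sp(H)$-equivariance follow because the right-hand side of \eqref{eq:Tr^(r)} involves only $w$, which is built from $\omega$, and $f$, both of which are $\Sp(H)$-equivariant and basis-free. Any two admissible choices of $\widetilde z_i$ differ by elements whose contribution is divisible by $2^{r+1}$; this is checked using the same recursion. The main obstacle I expect is the first step: making the comparison of $c$ with $w/2$ precise modulo the correct powers of $2$ integrally rather than rationally. In particular, $c$ uses the non-equivariant pairing $\langle-,-\rangle$ while $w$ uses $\omega$, and they agree only after passing through the $a<b$ ordering. I would handle this by working throughout in $\bqm\widetilde\calA\eqm$ with $S$-colored lifts and then invoking the exact rational identity \eqref{eq:threespaces}.
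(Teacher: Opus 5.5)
Steps 1--3 are correct provided every $\widetilde z_i$ is an $S$-adapted lift, meaning all $a$-colored vertices lie below all $b$-colored ones. In that case $w$ maps adapted diagrams to adapted diagrams, so $(2c)^{[j+1]}\circ f=f\circ\frac{w^{j+1}}{(j+1)!}$ holds exactly on each $\frac{w^{m}}{m!}(\widetilde z_l)$. The binomial identity then yields \eqref{eq:Tr^(r)} on the nose. Your sentence about ``moving the $w$'s past $f$'' modulo $2^{r+1}$ is therefore superfluous. As a termwise claim about non-adapted diagrams it is also false: $f\big(\frac{w^{j+1}}{(j+1)!}(X)\big)-(2c)^{[j+1]}(f(X))$ is in general only divisible by $2$. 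For instance, with $j=0$ and two legs colored $b_1<a_1$, it is $-2$ times the glued diagram, so the $l$-th summand carries an error divisible only by $2^{l+1}$.

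The genuine gap is in your last step. You have proved the formula for one basis-dependent family of lifts, but the ``consequently'' part of the lemma needs it for every admissible choice. For $u\in\Sp(H)$, the lifts $u\cdot\widetilde z_i$ are no longer $S$-adapted. Independence of $S$ likewise means comparing $S$-adapted with $S'$-adapted lifts; note that \eqref{eq:def_Tr} itself uses the $S$-dependent pairing $\langle-,-\rangle$. Your claim that two admissible choices agree modulo $2^{r+1}$ ``by the same recursion'' is exactly the nontrivial point. Step~1 cannot establish it, because that identity holds only on adapted lifts.

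The paper avoids adapted lifts altogether. It uses the order-aware lift $\tilde c$ of $c$, which contracts pairs $u<v$ with weight $\langle u,v\rangle/2$. This $\tilde c$ is defined on all ordered diagrams, commutes with $w$, and satisfies $f\circ\tilde c=c\circ f$. Substituting \eqref{eq:y_i} into \eqref{eq:def_Tr} for an arbitrary admissible choice and resumming gives $f\big(\sum_{i}\frac{2^i}{(r+1-i)!}\big(w^{r+1-i}-(w-2\tilde c)^{r+1-i}\big)(\widetilde z_i)\big)$. Now $w-2\tilde c=2\delta$, where $\delta$ contracts pairs with the $\Z$-valued form $(\omega-\langle-,-\rangle)/2$. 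Hence the correction terms $2^{r+1}\frac{\delta^{r+1-i}}{(r+1-i)!}(\widetilde z_i)$ vanish modulo $2^{r+1}$. Your adapted lifts are precisely those on which $\delta=0$.

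Alternatively, you can close the gap with tools you already invoke. Let $\tilde d=\sum_{i\le r}d_i$ be the difference of two admissible choices, and set $x:=X^{-1}m(\tilde d)\in\calR_k(H)$. The identity $X^{-1}\circ m=f\circ\exp_\circ(w/2)$ of \cite{HM09} shows that $x_i=0$ for $i\le r$, and that the two values of \eqref{eq:Tr^(r)} differ by $2^{r+1}x_{r+1}$. Lemma~\ref{lem:X_def_rel} then gives $x_{r+1}\in\bqm\calA_{k,r+1}(H)\eqm$, so the two values agree modulo $2^{r+1}$.
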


\begin{example}
Setting $\Tr':=\Tr^{(1)}$ and $\calP'_k(H):=\calP^{(1)}_k(H)$,
we deduce from Lemma \ref{lem:Tr^(r)} that the map
$\Tr': \calP_k'(H)  \to
\bqm\calA^c_{k,2}(H)\eqm\otimes \Z_4$
is given  by
\begin{equation} \label{eq:Tr_4}
\Tr'\big(f\big(\tilde y + w(\tilde y)+2 \tilde z\big)\big)
:= \big\{  f \big(w^2(\tilde y)/2  + 2w(\tilde z)\big)\big\} 
\end{equation}
for any $\tilde y \in \bqm \widetilde{\calA}_{k,0}(H)\eqm$
and  $\tilde z \in \bqm \widetilde{\calA}_{k,1}(H)\eqm$.
\hfill $\blacksquare$    
\end{example}

\begin{proof}[Proof of Lemma \ref{lem:Tr^(r)}]
We shall prove formula \eqref{eq:Tr^(r)} 
by an induction on $r\geq 0$,
the case $r=0$ being provided by Lemma \ref{lem:Tr^(0)}.

To prove the induction step,
we consider an arbitrary  $y=\sum_{i=0}^r y_i \in \calP_k^{(r)}(H)$.
Thus, $\sum_{i=0}^{r-1} y_i$ belongs to $\calP_k^{(r-1)}(H)$
and $\Tr^{(r-1)}\big(\sum_{i=0}^{r-1} y_i\big)$ is the class of
$y_r$ modulo $2^r \bqm\calA^c_{k,r}(H)\eqm$.
By the induction hypothesis, we   chose
$\widetilde z_i \in\bqm  \widetilde{\calA}^c_{k,i}(H)\eqm$
satisfying \eqref{eq:y_i} for every $i\in \{0,\dots, r-1\}$.
Besides, 
by the induction hypothesis on \eqref{eq:Tr^(r)}, we  have 
$$
f\left(\sum_{i =0}^{r-1} 2^{i} 
\frac{w^{r-i}}{(r-i)!}(\widetilde{z}_i)\right)
\equiv y_r \mod 2^r \bqm\calA^c_{k,r}(H)\eqm,
$$
so that we can find a $z_r \in \bqm\calA^c_{k,r}(H)\eqm$
satisfying 
$$
y_r = f\left(\sum_{i =0}^{r-1} 2^{i} 
\frac{w^{r-i}}{(r-i)!}(\widetilde{z}_i)\right) +2^r z_r.
$$
Then, choosing any lift 
$\widetilde z_r \in \bqm \widetilde{\calA}^c_{k,r}(H)\eqm$
of $z_r$, 
we obtain \eqref{eq:y_i} for all  $i\in \{0,\dots, r\}$.

In order to prove \eqref{eq:Tr^(r)}
for $y=\sum_{i=0}^r y_i \in \calP_k^{(r)}(H)$,
 we now compute
\begin{align*} \operatorname{Tr}^{(r)}(y) &
\by{eq:def_Tr} \Bigg\{ \sum_{j=0}^{r} (-1)^j(2c)^{[j+1]}(y_{r-j})\Bigg\} \\ 
&=  \Bigg\{ f\!\left( \sum_{j=0}^{r} (-1)^j\frac{(2\tilde c)^{j+1}}{(j+1)!}  \left( \sum_{i=0}^{r-j} 2^i\,\frac{w^{r-j-i}(\widetilde z_i)} {(r-j-i)!} \right) \right)\Bigg\}.
\end{align*}
Here we use the endomorphism  $\tilde c$ 
of  $\widetilde{\calA}(H)\otimes \Q$ 
that is defined similarly to $c$ by
$$
\tilde c(D) :=
\left(\begin{array}{c}
\hbox{sum of all ways of $(\langle-,-\rangle/2)$-contracting}\\
\hbox{an ordered pair of external vertices in $D$}
\end{array}\right);
$$
note that we have $f\circ \tilde c= c \circ f$.
Since $w$ and $\tilde c$ commute, we next  obtain
\begin{align*} \operatorname{Tr}^{(r)}(y)
&=  \Bigg\{ f\!\left( \sum_{i=0}^{r} \frac{2^i}{(r+1-i)!} \sum_{j=0}^{r-i} (-1)^j {r+1-i\choose j+1}(2\tilde{c})^{j+1}  w^{r-i-j}(\widetilde z_i) \right) \Bigg\} \\ 
&=  \Bigg\{  f\!\left( \sum_{i=0}^{r} \frac{2^i}{(r+1-i)!} \Big( w^{r+1-i} - (w-2\tilde c)^{r+1-i} \Big) (\widetilde z_i) \right)\Bigg\}.
\end{align*} 
Setting $\delta(x):=w(x)/2  - \tilde c(x) 
\in \bqm \widetilde{\calA}_{k}(H)\eqm$
for every $x\in \bqm \widetilde{\calA}_{k}(H)\eqm$,
we obtain
$$
 \operatorname{Tr}^{(r)}(y) 
 =  \Bigg\{  f\!\left( \sum_{i=0}^{r} \frac{2^i}{(r+1-i)!} \Big( w^{r+1-i} - (2\delta)^{r+1-i} \Big) (\widetilde z_i) \right)\Bigg\}.
$$
Similarly to $\tilde c$ and $w$,
the operation $\delta$ on Jacobi diagrams consists
in taking the sum  of all ways of contracting an ordered pair of vertices with a given bilinear form, 
namely the form $(\omega(-,-)-\langle-,-\rangle)/2$.
Since this bilinear form takes values in $\Z$, 
we observe that 
$$
\frac{\delta^{r+1-i}}{(r+1-i)!}(\widetilde z_i) 
\in \bqm\widetilde{\calA}_{k}^c(H)\eqm,
$$ 
and we conclude that  
$$
\operatorname{Tr}^{(r)}(y)
= \Bigg\{ f\!\left( \sum_{i=0}^{r} 2^i\frac{ w^{r+1-i} }{(r+1-i)!} (\widetilde z_i) \right) \Bigg\}
\in \bqm\calA^c_{k,r+1}(H)\eqm \otimes \Z_{2^{r+1}}.
$$

Finally, we verify the $\Sp(H)$-equivariance of $\operatorname{Tr}^{(r)}$ by considering an $u\in \Sp(H)$
acting on $y=\sum_{i=0}^r y_i \in \calP_k^{(r)}(H)$.
Choose some $\widetilde{z}_0, \dots, \widetilde{z}_r$
satisfying \eqref{eq:y_i}.
Letting $y'_i:= u\cdot y_i$ and 
$\widetilde{z}'_i:= u\cdot \widetilde{z}_i$, 
we deduce from the $\Sp(H)$-invariance of $\omega$
that $\widetilde{z}'_0, \dots, \widetilde{z}'_r$
satisfy \eqref{eq:y_i} with respect to $y'_0,\dots,y'_r$.
Hence, by the formula \eqref{eq:Tr^(r)}, we get
\begin{eqnarray*}
\Tr^{(r)}(u\cdot y) &=& 
\left\lbrace f\left(\sum_{i =0}^r 2^{i} 
\frac{w^{r+1-i}}{(r+1-i)!}(\widetilde{z}'_i)\right) \right\rbrace\\
&=& u \cdot \left\lbrace f\left(\sum_{i =0}^r 2^{i} 
\frac{w^{r+1-i}}{(r+1-i)!}(\widetilde{z}_i)\right) \right\rbrace
\ = \ u \cdot \Tr^{(r)}(y).
\end{eqnarray*}

\up
\end{proof}

We now  reformulate (in the connected case) 
the characterization of $\calR_k(H)$ in $\calA_k(H)\otimes \Q$
given by Lemma~\ref{lem:X_def_rel}.

\begin{proposition} \label{prop:P2}
An $x \in \calA^c_k(H)\otimes \Q$,
which is decomposed as $x= \sum_{j\geq 0} x_{j}$
with $x_j \in \calA^c_{k,j}(H)\otimes \Q$,
belongs to the lattice  $\calR_k(H)$ if, and only if, we have
$$
\sum_{j=0}^r 2^j x_j \in  \calP_k^{(r)}(H)
\quad \hbox{for every $r\geq 0$}.
$$
\end{proposition}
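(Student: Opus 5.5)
The plan is to translate the congruences \eqref{condition:C} of Lemma~\ref{lem:X_def_rel}, restricted to connected diagrams, into the recursive membership conditions defining $\calP_k^{(r)}(H)$. Loop degrees of connected diagrams are $\geq 0$, so we index by $j\geq 0$ directly; the shift $1-k$ in Lemma~\ref{lem:X_def_rel} only reflects disconnected components, and for connected $x$ the components $x_r$ with $r<0$ vanish. The connected part of $\calR_k(H)$ is then cut out by
\begin{equation*}
x_i \equiv \sum_{j=1}^{i} (-1)^{j+1} c^{[j]}(x_{i-j}) \mod \bqm\calA^c_{k,i}(H)\eqm \quad (i\geq 0).
\end{equation*}
The first thing to check is that the proof of Lemma~\ref{lem:X_def_rel} is compatible with connectedness. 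This holds because $c$ and $\kappa$ preserve connectedness, and $\calR_k(H)\cap (\calA^c_k(H)\otimes\Q) = \kappa(\bqm \calA^c_k(S)\eqm)$. The latter equality follows from splitting by number of components, since $\kappa$ preserves that number.

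Next, set $y_j := 2^j x_j$ and rewrite the condition. Multiplying the $i$-th congruence by $2^i$ and using $2^i c^{[j]}(x_{i-j}) = 2^{j}c^{[j]}(y_{i-j}) = (2c)^{[j]}(y_{i-j})$ turns it into
\begin{equation*}
y_i \equiv \sum_{j=1}^{i} (-1)^{j+1}(2c)^{[j]}(y_{i-j}) \mod 2^i \bqm\calA^c_{k,i}(H)\eqm .
\end{equation*}
Reindexing with $j\mapsto j+1$, the right-hand side is exactly a representative of $\Tr^{(i-1)}(y_0+\cdots+y_{i-1})$ as defined in \eqref{eq:def_Tr}. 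The $i=0$ condition reads $y_0=x_0\in\bqm\calA^c_{k,0}(H)\eqm$.

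The equivalence then follows by induction on $r$, with the claim that the conditions for $i\leq r$ hold if and only if $\sum_{j\le r} y_j\in\calP^{(r)}_k(H)$. The base case $r=0$ is immediate from $\calP^{(0)}_k(H)=\bqm \calA^c_{k,0}(H)\eqm$. For the inductive step, the fibered-product definition of $\calP^{(r+1)}_k(H)$ says precisely that $\sum_{j\le r}y_j\in\calP^{(r)}_k(H)$, that $y_{r+1}\in\bqm\calA^c_{k,r+1}(H)\eqm$, and that its class mod $2^{r+1}$ equals $\Tr^{(r)}(\sum_{j\le r} y_j)$. This matches the $(r+1)$-th congruence together with the integrality of $y_{r+1}$.

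The main subtlety, and the step to check most carefully, is integrality: $y_i=2^i x_i$ must actually lie in $\bqm\calA^c_{k,i}(H)\eqm$, not merely satisfy a congruence. The congruence itself supplies this inductively. Its right-hand side $(2c)^{[j]}(y_{i-j})$ is integral whenever the $y_{i-j}$ are, since $(2c)^{[j]}$ contracts $j$ unordered pairs with the integral form $\langle-,-\rangle$, so dividing by $j!$ is harmless. Hence the congruence forces $y_i$ to be integral; this is consistent with Remark~\ref{rem:powers_2}. Conversely, membership in $\calP^{(r)}_k(H)$ requires integrality by definition, so the two conditions coincide.
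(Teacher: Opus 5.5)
Your proof is correct and is essentially the paper's argument: you rescale the congruences of Lemma~\ref{lem:X_def_rel} by $2^i$, recognize the right-hand side as a representative of $\Tr^{(i-1)}(y_0+\cdots+y_{i-1})$, and induct on $r$, with the integrality of $y_i$ forced by the congruence. One slip in your connectedness check: $\kappa$ does \emph{not} preserve the number of components, since it glues $a_i$- and $b_i$-colored vertices lying in different components. No such claim is needed, though: the defining relations of $\calA(H)$ are homogeneous in the number of components, so $\bqm\calA_{k,i}(H)\eqm\cap\big(\calA^c_{k,i}(H)\otimes\Q\big)=\bqm\calA^c_{k,i}(H)\eqm$, and since $c$ maps connected diagrams to connected ones, Lemma~\ref{lem:X_def_rel} applies verbatim to connected $x$.
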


\begin{proof}
By Lemma \ref{lem:X_def_rel}, 
an $x = \sum_{j\geq 0} x_{j} \in \calA_k^c(H)\otimes \Q$
belongs to $\calR_k(H)$ if, and only if, we have
$$
x_{i} \equiv \sum_{j=1}^i(-1)^{j+1} \cdot 
c^{[j]}\left(x_{i-j}\right) 
\mod  ``\calA_{k,i}(H)"
\quad \hbox{for every $i\geq 0$}
$$
or, equivalently, 
$$
2^i x_{i} \equiv \sum_{j=0}^{i-1}(-1)^{j} \cdot (2c)^{[j+1]}\left(2^{i-1-j} x_{i-1-j}\right) 
\mod  \ 2^i\,``\calA_{k,i}(H)"
\quad \hbox{for every $i\geq 0$}.
$$
Note that, whenever $\sum_{j= 0}^{i-1} 2^jx_{j}$ belongs to $\calP_k^{(i-1)}(H)$,
the right-hand side of the above congruence is exactly 
$\Tr^{(i-1)}(\sum_{j=0}^{i-1} 2^j x_{j})$: 
so, this congruence is equivalent to the condition 
$\sum_{j=0}^{i} 2^j x_{j}\in \calP_k^{(i)}(H)$.
Thus, the proposition is proved by an induction on $r\geq 0$.
\end{proof}

\subsection{Congruence relations}

Assume that $k\geq 2$.
The description of $\calR_k(H)$
given by Proposition~\ref{prop:P2} implies
some congruence relations between the loop-degree pieces~$Z_{k,r}$ 
of the symmetrized LMO homomorphism.

\begin{proposition} \label{prop:ZZ}
For every $r\geq 0$,
we have a surjective  homomorphism
\begin{equation} \label{eq:2^jZkj}
\bigoplus_{j=0}^r 2^j Z_{k,j}: 
\frac{Y_k\calIC}{Y_{k+1}}
\longrightarrow \calP_k^{(r)}(H)    
\end{equation}
which, for $r\geq 1$, decomposes 
into the following diagram of $\Sp(H)$-modules:
\begin{equation} \label{eq:ZZ}
\begin{tikzcd}
Y_k\calIC / Y_{k+1}
\arrow[rr,"{2^{r}Z_{k,r}}"] 
\arrow[d,"{\bigoplus_{j=0}^{r-1} 2^j Z_{k,j}}" left] 
& &\bqm \calA^c_{k,r}(H)\eqm  \arrow[d,"\mod 2^{r}"] \\
\calP_k^{(r-1)}(H) \arrow[rr,"\Tr^{(r-1)}"]&&
\bqm \calA^c_{k,r}(H)\eqm/{2^r}\bqm\calA^c_{k,r}(H)\eqm. 
\end{tikzcd}   
\end{equation}
\end{proposition}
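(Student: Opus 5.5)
The plan is to deduce everything from Proposition~\ref{prop:P2} combined with the identification $\calR_k(H)=Z_k\big(F_k\Z[\calIC]\big)$ from \eqref{eq:XKZ}, restricted to connected diagrams via Theorem~\ref{th:iota} and Theorem~\ref{th:psi_connected}.

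\emph{Landing in the connected part.} By Theorem~\ref{th:psi_connected}, $\psi:\calA^{<,c}_k(P)\to Y_k\calIC/Y_{k+1}$ is surjective. By \eqref{eq:psi_Z} together with \eqref{eq:psi_psi}, for $M\in Y_k\calIC$ the value $Z_k(M)=Z_k(M-U)$ is $X^{-1}$ of the connected diagram $p_*(D)$ whenever $\{M\}=\psi(D)$. Since $X$ preserves connectedness, $Z_k$ restricted to $Y_k\calIC$ takes values in $\calA^c_k(H)\otimes\Q$.

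\emph{Membership in $\calR_k(H)$.} Because $M-U\in F_k\Z[\calIC]$, \eqref{eq:XKZ} gives $Z_k(M)\in\calR_k(H)$. I must also check that $Z_k$ induces a homomorphism on $Y_k\calIC/Y_{k+1}$. It is additive modulo $Y_{k+1}$ because $Z_k$ factors through $F_k/F_{k+1}$ and $\iota$ is linear (Theorem~\ref{th:iota}). Applying Proposition~\ref{prop:P2} to $x=Z_k(M)$ then gives $\sum_{j\le r}2^jZ_{k,j}(M)\in\calP^{(r)}_k(H)$, so the map \eqref{eq:2^jZkj} is well defined. Its $\Sp(H)$-equivariance follows from that of $Z_k$, noted after \eqref{eq:psi_Z}.

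\emph{Commutativity of \eqref{eq:ZZ}.} This is the recursive definition of $\calP^{(r)}_k$ as a fibered product over $\Tr^{(r-1)}$ and reduction mod $2^r$: an element of $\calP^{(r)}_k$ consists of a truncation in $\calP^{(r-1)}_k$ together with a degree-$r$ component whose class mod $2^r$ equals $\Tr^{(r-1)}$ of that truncation. The square \eqref{eq:ZZ} simply records this. The $\Sp(H)$-equivariance of $\Tr^{(r-1)}$ is Lemma~\ref{lem:Tr^(r)}.

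\emph{Surjectivity (the main obstacle).} I would argue in the converse direction. Let $y\in\calP^{(r)}_k(H)$. I expect the fibered-product structure to let me extend $y$ step by step to a full sequence $(y_0,y_1,\dots)$ with $\sum_{j\le s}y_j\in\calP^{(s)}_k$ for all $s$: at each stage choose $y_{s+1}$ to be any lift of $\Tr^{(s)}$. This extension terminates because the loop degree is bounded in fixed internal degree $k$. Setting $x_j:=y_j/2^j$, Proposition~\ref{prop:P2} gives $x\in\calR_k(H)\cap\calA^c_k(H)\otimes\Q$. It then remains to show $x=Z_k(M)$ for some $M\in Y_k\calIC$. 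We have $x=X^{-1}(m)$ with $m\in\bqm\calA^<_k(H)\eqm$ lying in the connected part. Writing $m=p_*(D)$ for some $D\in\calA^{<,c}_k(P)$ (possible by Proposition~\ref{prop:before} for $k\ge2$), the element $M$ realizing $\psi(D)$ satisfies $Z_k(M)=x$ by \eqref{eq:psi_Z}. The delicate point is verifying that the connected part of $X(\calR_k(H))$ is exactly $\bqm\calA^{<,c}_k(H)\eqm$. This holds because $X$ and $X^{-1}$ preserve connectedness, so the disconnected components split off as a direct summand.
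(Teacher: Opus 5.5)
Your proof is essentially the paper's own: well-definedness from $Z_k(M)\in\calR_k(H)$ combined with Proposition~\ref{prop:P2}; commutativity of \eqref{eq:ZZ} as exactly the fibered-product condition defining $\calP^{(r)}_k(H)$; equivariance from that of $Z_k$ and Lemma~\ref{lem:Tr^(r)}; and surjectivity by completing $y$ to a full sequence, applying Proposition~\ref{prop:P2}, and realizing the resulting element through the surjection $Z^<_k\colon Y_k\calIC\to\bqm\calA^{<,c}_k(H)\eqm$. The step you flag as delicate, which the paper uses tacitly, is true but not for the reason you give, because the rational connected/disconnected splitting transported by $X$ does not respect the lattices (for $g\geq 3$, $X^{-1}$ of the ordered product of the $Y$-diagrams colored $a_1,a_2,a_3$ and $b_1,a_2,a_3$ has connected part $\pm\tfrac12$ times an $H$-diagram that is not divisible by $2$, by \eqref{eq:Levine2}); use instead the integral isomorphism $\varphi\colon\calA'(S)\to\calA^<(H)$ from the proof of Proposition~\ref{prop:Gr^LA^<}, which together with its inverse preserves connectedness while the relations of $\calA'(S)$ preserve the number of components, so $\calA^{<,c}_k(H)$ is an integral direct summand of $\calA^<_k(H)$, whence $(\calA^{<,c}_k(H)\otimes\Q)\cap\bqm\calA^<_k(H)\eqm=\bqm\calA^{<,c}_k(H)\eqm$.
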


\begin{proof}
Let $M\in Y_k \calIC$. Then, the element
$x:=Z_k(M) \in \calA^c_k(H) \otimes \Q$ satisfies
$$
x = X^{-1} Z_k^<(M) 
\in X^{-1}(\bqm \calA^{<}_k(H) \eqm) = \calR_k(H).
$$
Hence, by Proposition \ref{prop:P2},
the sum $\sum_{j=0}^{r} 2^j x_j $
belongs to $\calP_k^{(r)}(H)$: this proves that
the homomorphism \eqref{eq:2^jZkj} is well-defined.
To prove its surjectivity,
let  $y=\sum_{j=0}^r y_j \in \calP_k^{(r)}(H)$:
we can find by induction  some elements
$$
y_{r+1} \in \bqm \calA^c_{k,r+1}(H) \eqm, \quad 
y_{r+2} \in \bqm \calA^c_{k,r+2}(H)\eqm, \quad \hbox{etc }\dots
$$
such that $\sum_{j= 0}^s y_j$ belongs 
to $\calP_k^{(s)}(H)$ for every $s\geq 0$.
It follows, by  Proposition~\ref{prop:P2}, that 
$z:=\sum_{j\geq 0} y_j/2^j\in \calA^c_k(H)\otimes \Q$ 
belongs to $\calR_k(H)$.
Recall that, as a consequence of~\eqref{eq:psi_Z},
the homomorphism 
$Z_k^<: Y_k\calIC \to \bqm \calA^{<,c}_k(H) \eqm$
is surjective. Thus,  there exists an
$N\in Y_k\calIC$ such that $Z_k(N) = z$.
In particular, we get
$$
\Big(\bigoplus_{j=0}^r 2^j Z_{k,j}\Big)(N)=
\sum_{j=0}^r 2^j z_j= y.
$$

The well-definedness of the map \eqref{eq:2^jZkj}
at the next level $(r+1)$ gives the commutativity
of the diagram \eqref{eq:ZZ}.
Since $Z^<$ is $\Sp(H)$-equivariant at the graded level,
so is $Z=X^{-1} Z^<$. Hence, the top arrow 
and the left arrow in  \eqref{eq:ZZ} 
are $\Sp(H)$-equivariant.
The  $\Sp(H)$-equivariance of $\Tr^{(r-1)}$
is given by Lemma \ref{lem:Tr^(r)}.
\end{proof}

It is interesting to 
specialize Proposition \ref{prop:ZZ}
to the loop-degree cases $r:=1$ and $r:=2$,
for which we know topological interpretations of $Z_{k,r-1}$.

\begin{corollary} \label{cor:0_1}
We have the following diagram of $\Sp(H)$-modules:
\begin{equation} \label{eq:0_1}
\begin{tikzcd}
Y_k\calIC / Y_{k+1}
\arrow[rr,"p_+ \circ \tilde{\alpha}_k"] 
\arrow[d,"\tau_k" left] 
& &\bqm \calA^c_{k,1}(H)\eqm  \arrow[d,"\mod 2"] \\
\bqm \calA^c_{k,0}(H)\eqm \arrow[rr,"\Tr"]&&
\bqm \calA^c_{k,1}(H)\eqm/2\bqm\calA^c_{k,1}(H)\eqm . 
\end{tikzcd}    
\end{equation}
Therefore, the map $\Tr$ vanishes on $\tau_k(\Gamma_k \calI)$.
\end{corollary}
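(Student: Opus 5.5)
This is the case $r=1$ of Proposition~\ref{prop:ZZ}, with the two maps on the left and top of \eqref{eq:ZZ} replaced by their topological interpretations. For $r=1$, diagram \eqref{eq:ZZ} reads: the map $2Z_{k,1}\colon Y_k\calIC/Y_{k+1}\to \bqm\calA^c_{k,1}(H)\eqm$, reduced mod $2$, equals $\Tr^{(0)}=\Tr$ composed with $Z_{k,0}\colon Y_k\calIC/Y_{k+1}\to \calP^{(0)}_k(H)=\bqm\calA^c_{k,0}(H)\eqm$. This diagram is already known to consist of $\Sp(H)$-modules. So only two identifications are needed.

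First, I identify the left vertical arrow. By \eqref{eq:tau_Z}, restricted to $Y_k\calIC\subset\calIC[k]$, the tree part $Z_{k,0}$ coincides with $\tau_k$ once $\mathsf{D}_k(H)\otimes\Q$ is identified with $\calA^c_{k,0}(H)\otimes\Q$ via $\eta$. This was already noted after \eqref{eq:tau_Z}: as maps $Y_k\calIC\to\bqm\calA^c_{k,0}(H)\eqm$, we have $Z_{k,0}=\tau_k$. Both factor through $Y_k\calIC/Y_{k+1}$.

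Second, I identify the top arrow. By \eqref{eq:alpha_Z}, the one-loop part of $-2Z_k$ on $Y_k\calIC$ equals $p_+\circ\tilde\alpha_k$, so $2Z_{k,1}=-\,p_+\circ\tilde\alpha_k$. Since the right vertical arrow reduces mod $2$, the sign is irrelevant, and $2Z_{k,1}$ may be replaced by $p_+\circ\tilde\alpha_k$. A priori $p_+\circ\tilde\alpha_k$ lands in $\calA^c_{k,1}(H)\otimes\Q$. It in fact lands in $\bqm\calA^c_{k,1}(H)\eqm$, because it equals $\mp 2Z_{k,1}$ and Proposition~\ref{prop:ZZ} (equivalently Remark~\ref{rem:powers_2}) shows that $2Z_{k,1}$ is integral on $Y_k\calIC$. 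Equivariance of this top arrow follows from the equivariance of $2Z_{k,1}$. This proves the commutativity of \eqref{eq:0_1}.

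For the final assertion, I use the mapping cylinder $\mathbf c$, which sends $\Gamma_k\calI$ into $Y_k\calIC$. Here $\tilde\alpha$ of a mapping cylinder is trivial: the torsion of the product cobordism is $1$, and a mapping cylinder is obtained by regluing a product, so its torsion is the image of a trivial class. This is the one point I would double-check against \cite{NSS23}. Granting it, $\tilde\alpha_k$ vanishes on $\mathbf c(\Gamma_k\calI)$. Commutativity of \eqref{eq:0_1} then gives $\Tr(\tau_k(\Gamma_k\calI))=0$ in $\bqm\calA^c_{k,1}(H)\eqm/2$.

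The only real obstacle is this vanishing of $\tilde\alpha$ on mapping cylinders. If it is not directly available, the fallback is the diagrammatic route through Proposition~\ref{prop:bracketing}. By that proposition, $\mathbf c(\Gamma_k\calI)$ modulo $Y_{k+1}$ lies in $\psi_k$ of the image of $B_k$, i.e.\ of iterated brackets of $Y$-diagrams, and these brackets are trees. By \eqref{eq:psi_Z}, such an element $M$ has $Z^<_k(M)$ equal to a tree diagram. To conclude that $Z_{k,1}(M)=0$, I would show that $X^{-1}$ of a bracket of trees in $\calA^<$ has zero one-loop part. This needs a compatibility of the STU-like bracket with the symmetrization $X$, which I expect can be read off from \cite[Prop.~3.1]{HM09}.
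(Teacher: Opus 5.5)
Your proposal is correct and follows the paper's proof: you specialize Proposition~\ref{prop:ZZ} at $r=1$, identify the arrows via \eqref{eq:tau_Z} and \eqref{eq:alpha_Z}, and then kill $\tilde\alpha_k$ on mapping cylinders using \cite{NSS23}. On the point you flagged, the fact actually available (\cite[Remark~3.2]{NSS23}) is only that $\tilde\alpha(\calI)$ lies in the natural image of $H$ in $K_1(\widehat{\Q[\pi]})$, not that it is trivial; this still suffices, because $H$ injects into $K_1(\widehat{\Q[\pi]}/\widehat{I}^{2})$ (via $h\mapsto 1+h$) and $\tilde\alpha_{\le 1}$ vanishes on $Y_k\calIC$ for $k\geq 2$, which forces $\tilde\alpha(\mathbf c(f))$ to be trivial and hence $\tilde\alpha_k(\mathbf c(f))=0$ for all $f\in\Gamma_k\calI$.
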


\begin{proof}
The diagram \eqref{eq:0_1} is the specialization of 
\eqref{eq:ZZ} at $r:=1$, since we have
$$
Z_{k,0}= \tau_k
\quad \hbox{and} \quad
2Z_{k,1}=-p_+ \circ \tilde\alpha_k
$$
by \eqref{eq:tau_Z} and \eqref{eq:alpha_Z}, respectively.
The second statement follows 
from the commutativity of \eqref{eq:0_1},
and the fact that $\tilde \alpha(\calI)$ 
is contained in the natural image of $H$
in $K_1\big(\widehat{\Q[\pi]}\big)$,
as explained in \cite[Remark~3.2]{NSS23}.
\end{proof}

\begin{example} \label{ex:Tr}
In degree $k:=2$, Corollary \ref{cor:0_1} says that
$$
\Tr(\tau_2(M))= \big( \alpha(M) \!\!\!\mod 2\big) \ \in
\calA^c_{2,1}(H) \otimes \Z_2
$$
for every $M\in Y_2\calIC$.
Here $\alpha(M)\in S^2(H) \simeq \calA^c_{2,1}(H) $
is the quadratic part of 
the relative Alexander polynomial 
considered in \cite{MM13},
which is $\tilde{\alpha}_2=p_+\circ \tilde{\alpha}_2$
in the present notations. 
(See \cite[Th$.$ 4.7]{FMS26} in this connection.)
\hfill $\blacksquare$
\end{example}

\begin{corollary} \label{cor:0_1_2}
We have the following diagram of $\Sp(H)$-modules:
\begin{equation} \label{eq:0_1_2}
\begin{tikzcd}
Y_k\calIC / Y_{k+1}
\arrow[rr,"{4Z_{k,2}}"] 
\arrow[d,"{\left(\tau_{k},-p_+\tilde \alpha_k\right)}" left] 
& &\bqm \calA^c_{k,2}(H)\eqm  \arrow[d,"\mod 4"] \\
\calP_k'(H) \arrow[rr,"\Tr'"]&&
\bqm \calA^c_{k,2}(H)\eqm/4\bqm\calA^c_{k,2}(H)\eqm . 
\end{tikzcd}    
\end{equation}
\end{corollary}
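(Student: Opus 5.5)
This is the specialization of Proposition~\ref{prop:ZZ} at $r:=2$, so the plan is to invoke that proposition and identify the left vertical arrow through the topological interpretations of $Z_{k,0}$ and $Z_{k,1}$. Taking $r:=2$ in \eqref{eq:ZZ} gives the commutative square whose top arrow is $4Z_{k,2}$. The left arrow is $Z_{k,0}\oplus 2Z_{k,1}$, and it lands in $\calP_k^{(1)}(H)=\calP'_k(H)$. The bottom arrow is $\Tr^{(1)}=\Tr'$, valued in $\bqm\calA^c_{k,2}(H)\eqm\otimes \Z_4$. The rest of the argument consists of rewriting the left arrow.

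First, I will use \eqref{eq:tau_Z} restricted to $Y_k\calIC\subset \calIC[k]$, which gives $Z_{k,0}=\tau_k$ as maps into $\bqm\calA^c_{k,0}(H)\eqm$. Second, I will use \eqref{eq:alpha_Z}: it says that the loop-degree $1$ part of $-2Z_k$ equals $p_+\circ\tilde\alpha_k$, hence $2Z_{k,1}=-p_+\tilde\alpha_k$. This is exactly the identification already made in the proof of Corollary~\ref{cor:0_1}. Substituting both identities, the left arrow becomes $(\tau_k,-p_+\tilde\alpha_k)$. Proposition~\ref{prop:ZZ} already guarantees that this pair lies in the fibered product $\calP'_k(H)$, and that the square commutes.

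For the $\Sp(H)$-structure I will reuse the argument in Proposition~\ref{prop:ZZ}. The map $Z=X^{-1}Z^<$ is $\Sp(H)$-equivariant at the graded level, so both $4Z_{k,2}$ and the left arrow are equivariant, since the latter is built from $Z_{k,0}$ and $Z_{k,1}$. The map $\Tr'$ is equivariant by Lemma~\ref{lem:Tr^(r)}.

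The only delicate point is a sign convention in the identification via \eqref{eq:alpha_Z}. I need to check that the ``$-2Z_k$'' there matches the component used in \eqref{eq:ZZ}, and that $p_+\tilde\alpha_k$ genuinely takes values in the integral lattice $\bqm\calA^c_{k,1}(H)\eqm$. That integrality is already forced by Proposition~\ref{prop:ZZ}, which says $2Z_{k,1}$ is integral on $Y_k\calIC$, so no new work should be needed.
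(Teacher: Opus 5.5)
Your proposal is correct and matches the paper's proof, which simply specializes Proposition~\ref{prop:ZZ} at $r:=2$. It uses the same identifications $Z_{k,0}=\tau_k$ and $2Z_{k,1}=-p_+\tilde\alpha_k$ from \eqref{eq:tau_Z} and \eqref{eq:alpha_Z}, as already done in Corollary~\ref{cor:0_1}, so your sign and integrality checks are consistent with the paper.
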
 

\begin{proof}
The diagram \eqref{eq:0_1_2} 
is the specialization of  \eqref{eq:ZZ} at $r:=2$.
\end{proof}

\begin{example} \label{ex:Tr'}
In degree $k:=2$, Corollary \ref{cor:0_1_2} says that
$$
\big(4d''(M)\!\!\!\mod 4\big) \cdot \thetagraph
= \Tr'\big(\tau_2(M),-\alpha(M)\big) 
\in \calA^c_{2,2}(H) \otimes \Z_4
$$
for every $M\in Y_2\calIC$.
Here $d''(M)\in (1/4)\Z$ is the invariant 
defined in \cite{MM13}
as the coefficient of the theta graph in $Z_{2,2}(M)$:
it is an  alternative to Morita's
``core of the Casson invariant''.
(See also \cite[Th$.$ 4.7]{FMS26} in this connection.)
\hfill $\blacksquare$
\end{example}

However, it should be recalled 
that there are currently no known topological 
interpretations of the loop-degree $r$ 
part $Z_{k,r}$  for $r\geq 2$.

\subsection{Back to the modulo $1$ reduction of $Z^<$}
\label{subsec:back}

We now focus on the module 
$Y_k \calIC/Y_{k+1}$ in odd degree $k=2n+1$ 
(with $n\geq 0$).
To simplify our notation, 
its $2$-torsion module is denoted by
$$
\calT:= \Tors_2\Big( \frac{Y_{2n+1} \calIC}{Y_{2n+2}}\Big).
$$
We endow $\calT$ with the filtration
\begin{equation} \label{eq:filt_T}
\calT = L_0\calT \supset L_1\calT \supset L_2\calT
\supset L_3\calT \supset \cdots
\end{equation}
that is the pull-back by  $\zeta^<_{2n+2}$
of the loop filtration 
on $\calA_{2n+2}^{<,c}(H)\otimes \Q/\Z$,
which has been defined  in  \S \ref{subsec:loop}.
Thus, for every $r\geq 0$, we obtain an injective map
\begin{equation} \label{eq:Gr_L}
\Gr^L_r \zeta^<_{2n+2}: 
\Gr^L_r \calT \longrightarrow 
\Gr^L_r \calA_{2n+2}^{<,c}(H)\otimes \Q/\Z
\simeq \calA^c_{2n+2,r}(H) \otimes \Q/\Z.
\end{equation}
The next two propositions compute
\eqref{eq:Gr_L} in terms of 
the loop-degree pieces of the symmetrized LMO homomorphism.

We start with the case $r:=0$, 
which deserves a special treatment.
Indeed, 
the corresponding homomorphism on $\calT=L_0\calT$
appears in \cite{CST16}
for the study of homology cylinders 
up to homology cobordisms.

\begin{proposition} \label{prop:R_2n+2}
There is a group homomorphism
$$
R_{2n+2}: \operatorname{Tors}_2(Y_{2n+1} \calIC /Y_{2n+2}) 
\longrightarrow \frakL_{n+2}\big(H^{(2)}\big), \
\{ M \} \longmapsto  \{\tau_{2n+2}(M)\},
$$
whose target is identified to 
$\mathsf{D}_{2n+2}(H)/\mathsf{D}'_{2n+2}(H)$
via the short exact sequence~\eqref{eq:Levine2}.
Furthermore, we have the following
commutative diagram of $\Sp(H)$-modules:
\begin{equation} \label{eq:R_2n+2}
\xymatrix{
\Tors_2(Y_{2n+1} \calIC /Y_{2n+2}) \ar[r]^-{R_{2n+2}}  
\ar[d]_-{\zeta_{2n+2}^<} 
& \frakL_{n+2}\big(H^{(2)}\big) 
\ar[d]^-{\operatorname{Lev}}  \\
 {\displaystyle\frac{L_0 \calA^{<,c}_{2n+2}(H) \otimes \Q/\Z }{L_1 \calA^{<,c}_{2n+2}(H) \otimes \Q/\Z }}
&\ar[l]^-\simeq  \calA^{c}_{2n+2,0}(H)  \otimes \Q/\Z 
}
\end{equation}
\end{proposition}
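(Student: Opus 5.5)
The plan is to deduce everything from the $r:=1$ case of Proposition~\ref{prop:ZZ}, applied in degree $k:=2n+2$, together with the identification $Z_{k,0}=\tau_k$ from \eqref{eq:tau_Z}.

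First, well-definedness. Let $M\in Y_{2n+1}\calIC$ whose class $\{M\}$ has order $2$, so $M\circ M\in Y_{2n+2}\calIC$. Since $Y_{2n+1}\calIC\subset \calIC[2n+1]$, the homomorphism $\tau_{2n+2}$ is defined on elements of $\calIC[2n+2]$. We must check that $M\in\calIC[2n+2]$. The action of $M$ on $\pi/\Gamma_{2n+3}\pi$ is encoded by $\tau_{2n+1}(M)\in\mathsf{D}_{2n+1}(H)$. This group is torsion-free and $2\tau_{2n+1}(M)=\tau_{2n+1}(M\circ M)=0$, so $\tau_{2n+1}(M)=0$ and $M\in\calIC[2n+2]$.

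Next we control the dependence on the representative. If $M'$ is another representative, then $M'=M\circ N$ with $N\in Y_{2n+2}\calIC$, up to the usual group structure in $\calIC/Y_{2n+3}$. Hence $\tau_{2n+2}(M')=\tau_{2n+2}(M)+\tau_{2n+2}(N)$, where $\tau_{2n+2}$ is additive on $\calIC[2n+2]$. By Corollary~\ref{cor:0_1}, or rather by Proposition~\ref{prop:ZZ} at $r=0$, $\tau_{2n+2}(N)=Z_{2n+2,0}(N)$ lies in $\bqm\calA^c_{2n+2,0}(H)\eqm=\eta^{-1}\mathsf{D}'_{2n+2}(H)$, using the isomorphism of \cite{CST12}. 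So the class of $\tau_{2n+2}(M)$ in $\mathsf{D}_{2n+2}/\mathsf{D}'_{2n+2}\simeq \frakL_{n+2}(H^{(2)})$ is well defined. Additivity then gives a homomorphism, and $\Sp(H)$-equivariance follows from the equivariance of the Johnson homomorphisms modulo the correction terms, which all lie in $\mathsf{D}'$.

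For the diagram, we compute $\zeta^<_{2n+2}(M)$ modulo $L_1$. Since $M\circ M\in Y_{2n+2}\calIC$, we have $2Z^<_{2n+2}(M)\equiv Z^<_{2n+2}(M\circ M)$ modulo contributions from lower-degree terms of $Z^<(M)$. These contributions are $Z^<_{2n+1}(M)\star Z^<_{2n+1}(M)$, which sits in degree $4n+2$ and is therefore irrelevant, together with the bracketing terms in degree $2n+2$ coming from $Z^<_{2n+1}(M)$ and the degree-$\le 1$ parts. Since $\{M\}$ is torsion, $Z^<_{2n+1}(M)=0$ rationally. Therefore $Z^<_{2n+2}(M)\in\frac12\bqm\calA^{<,c}_{2n+2}(H)\eqm$.

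Its loop-degree-$0$ part is, via $X^{-1}$ and Proposition~\ref{prop:Gr^LA^<}, exactly $Z_{2n+2,0}(M)=\tau_{2n+2}(M)$ viewed in $\frac12\bqm\calA^c_{2n+2,0}(H)\eqm$ through $\eta$. This uses \eqref{eq:tau_Z}, which remains valid on $\calIC[2n+2]$. Reducing modulo $\bqm\calA^c_{2n+2,0}(H)\eqm$ then gives precisely $\operatorname{Lev}(\{\tau_{2n+2}(M)\})$, by definition \eqref{eq:Levine_embedding} and the description \eqref{eq:Levine2} of $\mathsf{D}_{2n+2}/\mathsf{D}'_{2n+2}$.

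The main obstacle is justifying that the graded identification $\Gr^L_0(\calA^{<,c}\otimes\Q/\Z)\simeq \calA^c_{2n+2,0}\otimes\Q/\Z$ sends $\zeta^<_{2n+2}(M)$ to the tree part of $Z_{2n+2}(M)=X^{-1}Z^<_{2n+2}(M)$ modulo integers. The concern is that $X$ preserves the loop filtration and induces the identity on $\Gr^L$, while $\bqm\calA^<\eqm$ and $\calR$ differ only in higher loop degree. We would check this with Lemma~\ref{lem:X_def_rel} at $i=0$, which gives exactly $x_0\equiv0$ mod $\bqm\calA_{k,0}\eqm$.
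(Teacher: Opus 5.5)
Your proposal is correct and follows the paper's argument: you show $\tau_{2n+1}(M)=\frac12\tau_{2n+1}(M^2)=0$, you get independence of the representative by writing $M'\sim_{Y_{2n+3}} MN$ with $\tau_{2n+2}(N)\in\mathsf{D}'_{2n+2}(H)$, and you get commutativity from the multiplicativity of $Z$ combined with \eqref{eq:tau_Z}. The differences are cosmetic: you apply \eqref{eq:tau_Z} directly to $M\in\calIC[2n+2]$, whereas the paper applies it to $M^2\in Y_{2n+2}\calIC$ and halves via $Z_{2n+2,i}(M)=\frac12 Z_{2n+2,i}(M^2)$; and your opening appeal to the $r=1$ case of Proposition~\ref{prop:ZZ} is never used, since only $r=0$ and \eqref{eq:tau_Z} enter.
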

    
\begin{proof}
Let $\{M\}\in \Tors_2(Y_{2n+1} \calIC /Y_{2n+2})$.
Since $M^2$ is $Y_{2n+2}$-equivalent to the unit cylinder $U$, 
we have 
\begin{equation} \label{eq:k-th_tau}
\tau_{2n+1}(M)= \frac{1}{2} \tau_{2n+1}(M^2) =0 \in \calA^{c}_{2n+1,0}(H) \otimes \Q    
\end{equation}
and we are allowed to consider $\tau_{2n+2}(M)$.
Furthermore, for any $M'$ in the $Y_{2n+2}$-equivalence class of $M$,
there exists an $N\in Y_{2n+2}\calIC$ 
such that $M'$ is $Y_{2n+3}$-equivalent to $MN$: 
hence 
$$
\tau_{2n+2}(M')= \tau_{2n+2}(MN)= \tau_{2n+2}(M)+ \tau_{2n+2}(N) 
\equiv \tau_{2n+2}(M) \!\!\! \mod \mathsf{D}'_{2n+2}(H).
$$
This proves the well-definedness of the map $R_{2n+2}$.

We  now prove the commutativity of   \eqref{eq:R_2n+2}.
Observe that $Z(M^2)=Z(M)\star  Z(M)$ 
by Remark~\ref{rem:star_product};
since $Z(M)$ starts in degree ${2n+1}$
(with $Z_1(M)=0$ in any case), we deduce that
\begin{equation} \label{eq:k-th_Z}
Z_{2n+2,i}(M) = \frac{1}{2} Z_{2n+2,i}(M^2)
\quad \hbox{for every $i\geq 0$}.
\end{equation}
Thanks to  \eqref{eq:k-th_Z} for $i:=0$,
we conclude by applying \eqref{eq:tau_Z} to $M^2$.
\end{proof}

For the sequel, 
it is  convenient to set $R_{2n+2}^{(0)}:=
\operatorname{Lev} \circ R_{2n+2}$, 
viewing this homomorphism 
$$
R_{2n+2}^{(0)} : L_0\calT \longrightarrow
\frac{{2}^{-1} \calA^c_{2n+2,0}(H)}{\calA^c_{2n+2,0}(H)} 
$$
as the initial stage of the following sequence.

\begin{proposition} \label{prop:R^(r)}
For every $r\geq 0$,
there is a  homomorphism
$$
R^{(r)}_{2n+2}: L_r \calT \longrightarrow 
\frac{2^{r-1}\bqm \calA^c_{2n+2,r}(H)\eqm}{2^r 
\bqm \calA^c_{2n+2,r}(H) \eqm}    
$$
with kernel $L_{r+1} \calT$, 
which is defined by the formula
\begin{eqnarray}
\notag R^{(r)}_{2n+2}\big(\{M\}\big) &:=&
-\Tr^{(r-1)}\big(Z_{2n+2,0}(M),
 \dots, 2^{r-1}Z_{2n+2,r-1}(M) \big)\\
\label{eq:R^(r)} && +\big\{ 2^r Z_{2n+2,r}(M)\big\},    
\end{eqnarray}
Furthermore, it fits into 
the following commutative diagram of $\Sp(H)$-modules:
\begin{equation} \label{eq:R(r)_zeta}
\xymatrix{
L_r\calT \ar[r]^-{R^{(r)}_{2n+2}}  \ar[d]_-{\zeta_{2n+2}^<} 
&  {\displaystyle\frac{2^{r-1}\bqm \calA^c_{2n+2,r}(H)\eqm}{2^r \bqm \calA^c_{2n+2,r}(H) \eqm} }   
\ar[d]^-{\frac{1}{2^r}\cdot}  \\
 {\displaystyle\frac{L_r \calA^{<,c}_{2n+2}(H) \otimes \Q/\Z }{L_{r+1} \calA^{<,c}_{2n+2}(H) \otimes \Q/\Z }}
&\ar[l]^-\simeq  
\calA^{c}_{2n+2,r}(H)  \otimes \Q/\Z 
}
\end{equation}
\end{proposition}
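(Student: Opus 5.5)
The argument is an induction on $r\geq 0$, and it rests on two ingredients. The first is the doubling trick from the proof of Proposition~\ref{prop:R_2n+2}. For $\{M\}\in\calT$ we have $Z_{2n+2,i}(M)=\frac12 Z_{2n+2,i}(M^2)$ for every $i$, by \eqref{eq:k-th_Z}. Since $M^2\in Y_{2n+2}\calIC$, Proposition~\ref{prop:ZZ} (applied with $k=2n+2$) tells us that
$$\textstyle\sum_{j\le r}2^jZ_{2n+2,j}(M^2)\in\calP^{(r)}_{2n+2}(H)\quad\hbox{and}\quad \Tr^{(r-1)}\big(\sum_{j<r}2^jZ_{2n+2,j}(M^2)\big)=\{2^rZ_{2n+2,r}(M^2)\}.$$
The second ingredient is that $\zeta^<_{2n+2}(M)$ corresponds, through the isomorphism induced by $X$, to the class of $Z_{2n+2}(M)$ in $(\calA_{2n+2}(H)\otimes\Q)/\calR_{2n+2}(H)$. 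Moreover, $\Gr^L$ of this target in loop degree $r$ is $\calA^c_{2n+2,r}(H)\otimes\Q/\Z$, via Proposition~\ref{prop:Gr^LA^<} and the fact that $X$ induces the identity on associated graded.

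The key step is to characterize membership in $L_r\calT$ in terms of the $Z_{2n+2,j}(M)$. I claim that $\{M\}\in L_r\calT$ if and only if $\sum_{j<r}2^jZ_{2n+2,j}(M)\in\calP^{(r-1)}_{2n+2}(H)$. Both directions use Proposition~\ref{prop:P2}.

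For the "if" direction, suppose the truncation lies in $\calP^{(r-1)}$. I extend it to an element $x\in\calR(H)$ whose components agree with those of $Z(M)$ in loop degrees $<r$; this is possible by the same lifting argument used for surjectivity in Proposition~\ref{prop:ZZ}. Then $Z(M)-x$ lies in $L_r$, so $\zeta^<(M)\in L_r$.

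For the "only if" direction, $\zeta^<(M)\in L_r$ means that $Z(M)\equiv x+y$, where $x\in\calR(H)$ and $y$ is a rational combination of diagrams of loop degree $\geq r$ (taken modulo the lattice). Truncating below loop degree $r$ then shows that $\sum_{j<r}2^jZ_{2n+2,j}(M)=\sum_{j<r}2^jx_j\in\calP^{(r-1)}$.

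With this characterization, formula \eqref{eq:R^(r)} makes sense on $L_r\calT$, because $\Tr^{(r-1)}$ is defined on the truncation. Next I check the target. Halving the identity for $M^2$ gives
$$\{2^rZ_{2n+2,r}(M)\}-\Tr^{(r-1)}(\cdots)\in 2^{r-1}\bqm\calA\eqm/2^r\bqm\calA\eqm.$$
Here $\Tr^{(r-1)}$ is additive, so its value on $M$ is half its value on $M^2$ in the appropriate sense. The subtle point is that $\Tr^{(r-1)}$ is only defined mod $2^r$. To handle this, I work with actual lifts $\widetilde z_i$ as in Lemma~\ref{lem:Tr^(r)}: doubling $M$ doubles the chosen lifts, and formula \eqref{eq:Tr^(r)} is linear in them.

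Several properties then follow directly. Additivity and well-definedness on classes come from the additivity of $Z_{2n+2}$ on $Y_{2n+1}$ modulo $Y_{2n+2}$ contributions, exactly as in Proposition~\ref{prop:R_2n+2}; the case $r=0$ recovers $R^{(0)}_{2n+2}=\operatorname{Lev}\circ R_{2n+2}$. $\Sp(H)$-equivariance follows from Lemma~\ref{lem:Tr^(r)} together with the equivariance of $Z$. Finally, the kernel equals $L_{r+1}\calT$: by the defining fibered product, $R^{(r)}(\{M\})=0$ is precisely the condition that $\sum_{j\le r}2^jZ_{2n+2,j}(M)\in\calP^{(r)}$, which by the characterization above is the condition $\{M\}\in L_{r+1}\calT$.

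It remains to establish the commutativity of \eqref{eq:R(r)_zeta}. The loop-degree-$r$ component of $\zeta^<(M)$ is $Z_{2n+2,r}(M)-x_r$ mod $\Z$, where $x\in\calR(H)$ is the extension constructed above. By Proposition~\ref{prop:P2}, $2^rx_r$ reduces mod $2^r$ to $\Tr^{(r-1)}$ of the truncation. Dividing by $2^r$ gives exactly \eqref{eq:R^(r)}. The independence from the choice of $x$ is where I expect the main obstacle: two extensions can differ by elements of $\calR$ lying in $L_r$, and one has to verify that such elements have integral loop-degree-$r$ component. This follows from Proposition~\ref{prop:P2} at $r$ applied with vanishing lower components, since $\Tr^{(r-1)}(0)=0$.
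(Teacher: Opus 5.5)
Your proof is correct and is essentially the paper's argument. Both extend the truncation $\sum_{j<r}2^jZ_{2n+2,j}(M)$ to an element of $\calR_{2n+2}(H)$ via Proposition~\ref{prop:P2}, apply Proposition~\ref{prop:ZZ} to $M^2$ together with \eqref{eq:k-th_Z}, and read off the loop-degree-$r$ component. Your differences are only reorganizations: you give a direct characterization of $L_r\calT$, and you get the refined target by halving rather than, as the paper does, from the $2$-torsion of $L_r\calT$.

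The one step to tighten is equivariance. $Z$ is only known to be equivariant at the graded level, and $Z_{2n+2}$ is not $\Sp(H)$-equivariant on $Y_{2n+1}\calIC$ in general. On $\calT$, however, it is, because $M^2\in Y_{2n+2}\calIC$ gives $Z_{2n+2}(f\cdot M)=\tfrac12 Z_{2n+2}\big(f\cdot M^2\big)=\tfrac12 f_*Z_{2n+2}(M^2)=f_*Z_{2n+2}(M)$. Alternatively, read equivariance off diagram \eqref{eq:R(r)_zeta}: $\zeta^<_{2n+2}$ is equivariant on $\calT$ by Corollary~\ref{cor:equivariance}, since $Z^<_{2n+1}$ vanishes on torsion.
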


\begin{proof}
We start by proving, thanks to an induction on $r\geq 0$,
that the map $R^{(r)}_{2n+2}$ is well-defined
with values in  the module
${\bqm \calA^c_{2n+2,r}(H)\eqm}/
{2^r \bqm \calA^c_{2n+2,r}(H) \eqm}$.
Let $M\in Y_{2n+1} \calIC$ 
whose $Y_{2n+2}$-class $\{M\}$ 
belongs to $L_r\calT$:
we have to verify that the quantity \eqref{eq:R^(r)}
makes sense in this module.

First of all, we prove that the first term 
in  \eqref{eq:R^(r)} is well-defined.
Since $\{M\}$ belongs to $L_{r-1}\calT$, 
we know from the induction hypothesis and the definition of the trace homomorphisms that
\begin{itemize}
\item[(i)] $2^j Z_{2n+2,j}(M)\in \bqm \calA^c_{2n+2,j}(H)\eqm$
for all $j=0,\dots r-1$,
\item[(ii)] $\sum_{j=0}^{r-2} 2^j Z_{2n+2,j}(M) 
\in \calP_{2n+2}^{(r-2)}(H)$
\end{itemize}
and, since we have $\{M\}\in  L_{r}\calT = \Ker R^{(r-1)}_{2n+2}$, 
we also know that $2^{r-1} Z_{2n+2,r-1}(M)$ represents
the class 
$$
\Tr^{(r-2)}\big(Z_{2n+2,0}(M),
 \dots, 2^{r-2}Z_{2n+2,r-2}(M) \big)
\in 
 \bqm \calA^c_{2n+2,r-1}(H)\eqm \otimes \Z_{2^{r-1}}.
$$
In other words, we have 
$\sum_{j=0}^{r-1} 2^j Z_{2n+2,j}(M)
\in  \calP_{2n+2}^{(r-1)}(H)$, so that the term
$$
\Tr^{(r-1)}\big(Z_{2n+2,0}(M)
, \dots, 2^{r-1}Z_{2n+2,r-1}(M) \big)
\in 
 \bqm \calA^c_{2n+2,r}(H)\eqm \otimes \Z_{2^{r}}
$$ 
in \eqref{eq:R^(r)} is well-defined.

We now check that the second term  
in \eqref{eq:R^(r)} is well-defined.
By applying Proposition \ref{prop:ZZ}
to $M^2 \in Y_{2n+2} \calIC$,
we obtain that 
\begin{eqnarray*}
&& \Tr^{(r-1)}\big(Z_{2n+2,0}(M^2),
 \dots, 2^{r-1}Z_{2n+2,r-1}(M^2) \big)\\
&\equiv & 2^r Z_{2n+2,r}(M^2) \mod 2^r \bqm \calA^{c}_{2n+2,r}(H) \eqm.
\end{eqnarray*}
But, \eqref{eq:k-th_Z} and the fact (i) above imply 
 that $2^jZ_{2n+2,j}(M^2)\in 2\bqm \calA^c_{2n+2,j}(H)\eqm$
for every $j=0,\dots, r-1$.
So we have 
$2^r Z_{2n+2,r}(M^2) \in 
2 \bqm \calA^{c}_{2n+2,r}(H) \eqm$,
i.e. $2^r Z_{2n+2,r}(M) 
\in \bqm \calA^{c}_{2n+2,r}(H) \eqm$.
Consequently, the congruence class 
$\big\{ 2^r Z_{2n+2,r}(M)\big\}$ in
$\bqm \calA^{c}_{2n+2,r}(H)\eqm \otimes \Z_{2^r}$
in \eqref{eq:R^(r)} is well-defined.

Furthermore, 
Proposition \ref{prop:ZZ} also implies that
the quantity \eqref{eq:R^(r)}
assigned to $M$ does not change
under $Y_{2n+2}$-surgery:
so  \eqref{eq:R^(r)}
only depends on $\{M\} \in L_r \calT$.
Therefore, the map $R^{(r)}_{2n+2}:L_r\calT \to
{\bqm \calA^c_{2n+2,r}(H)\eqm}\otimes \Z_{2^r}$ 
is well-defined.

We now prove the commutativity of 
\eqref{eq:R(r)_zeta}. Let $\{M\} \in L_r \calT$ as before.
Set $y_j := 2^jZ_{2n+2,j}(M)$ for all $j=0,\dots,r-1$.
By the above discussion, we know that 
$\sum_{j=0}^{r-1} y_j\in \calP_{2n+2}^{(r-1)}(H)$;
hence, we can find some
$y_j \in  \bqm \calA^c_{2n+2,j}(H)\eqm$
for every $j\geq r$ such that
 $\sum_{j=0}^{s} y_j\in \calP_{2n+2}^{(s)}(H)$
for every $s\geq 0$.
Then, Proposition \ref{prop:P2} implies 
 that $\sum_{j\geq 0} y_j/2^j$ 
belongs to $\calR_{2n+2}(H)$.
Therefore, 
denoting by $\big(L_k\calA^c(H)\big)_{k\geq 0}$ the loop filtration on $\calA^c(H)$,
we get
\begin{eqnarray*}
Z_{2n+2}(M) &\equiv & \sum_{j=0}^{r-1} y_j/2^j
+Z_{2n+2,r}(M)
\mod L_{r+1} \calA^c_{2n+2}(H)\otimes \Q \\
&\equiv & \frac{-y_r +2^rZ_{2n+2,r}(M)}{2^r}
\mod \big( L_{r+1} \calA^c_{2n+2}(H)\otimes \Q
+ \calR_{2n+2}(H) \big).
\end{eqnarray*}
It follows that \eqref{eq:R(r)_zeta}
commutes for $\{M\}  \in L_r \calT$ since
$\zeta_{2n+2}^<=\big(X\circ Z_{2n+2}\bmod 1\big)$ and $y_r\in  \bqm \calA^c_{2n+2,r}(H)\eqm$
has been chosen so that its class modulo
$2^r\bqm \calA^c_{2n+2,r}(H)\eqm$ represents 
$\Tr^{(r-1)}(y_0,\dots,y_{r-1})$.

The  commutativity of 
\eqref{eq:R(r)_zeta} implies that 
$R^{(r)}_{2n+2}$ is a homomorphism 
with kernel  $L_{r+1} \calT$.
Finally, since $L_{r}\calT$ is a $2$-torsion group,
$R^{(r)}_{2n+2}$ takes values in the $2$-torsion part
of 
$\bqm \calA^c_{2n+2,r}(H) \eqm\otimes \Z_{2r}$.
\end{proof}

Proposition \ref{prop:R^(r)} admits the following specializations for $r:=1$ and $r:=2$, thereby yielding (partial) topological interpretations of the homomorphism \eqref{eq:Gr_L} in low loop degrees.

\begin{corollary} \label{cor:R'_2n+2}
The  homomorphism $R'_{2n+2}:= R^{(1)}_{2n+2}$
is given by 
$$
R'_{2n+2}(\{M\}) = \Tr(\tau_{2n+2}(M)) +
\big\{ p_+\tilde\alpha_{2n+2}(M) \big\},
$$
and it fits into the following
commutative diagram of $\Sp(H)$-modules:
\begin{equation} \label{eq:R'_2n+2}
\xymatrix{
\Ker(R_{2n+2}) \ar[r]^-{R'_{2n+2}} 
\ar[d]_-{\zeta_{2n+2}^<} 
& \calA^c_{2n+2,1}(H)\otimes \Z_2 
\ar[d]^-{\frac{1}{2}\cdot}\\
 {\displaystyle\frac{L_1 \calA^{<,c}_{2n+2}(H) \otimes \Q/\Z }{L_2 \calA^{<,c}_{2n+2}(H) \otimes \Q/\Z }}
&\ar[l]^-\simeq  \calA^{c}_{2n+2,1}(H)  \otimes \Q/\Z 
}
\end{equation}
\end{corollary}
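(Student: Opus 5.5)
This corollary is the specialization of Proposition \ref{prop:R^(r)} at $r:=1$, so the plan is to unwind formula \eqref{eq:R^(r)} with $r=1$ and then substitute the known topological interpretations of the loop-degree $0$ and $1$ pieces of $Z$.

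\emph{Domain and target.} By Proposition \ref{prop:R^(r)} at $r=0$, the map $R^{(0)}_{2n+2}=\operatorname{Lev}\circ R_{2n+2}$ has kernel $L_1\calT$. Since $\operatorname{Lev}$ is injective by \eqref{eq:Levine_embedding}, this gives $L_1\calT=\Ker(R_{2n+2})$. For $r=1$, the target of $R^{(1)}_{2n+2}$ is
$\bqm \calA^c_{2n+2,1}(H)\eqm/2\bqm \calA^c_{2n+2,1}(H)\eqm$.
In even degree $2n+2$ the module $\calA^c_{2n+2,1}(H)$ is torsion-free (\S\ref{subsec:few_loops}), so this target is $\calA^c_{2n+2,1}(H)\otimes\Z_2$.

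\emph{Formula.} Formula \eqref{eq:R^(r)} at $r=1$ reads
$$R'_{2n+2}(\{M\})=-\Tr^{(0)}\big(Z_{2n+2,0}(M)\big)+\big\{2Z_{2n+2,1}(M)\big\}.$$
We then substitute the two interpretations used in the proof of Corollary \ref{cor:0_1}.
\begin{itemize}
\item $Z_{2n+2,0}=\tau_{2n+2}$, by \eqref{eq:tau_Z}. This requires $M\in\calIC[2n+2]$, which holds because $\tau_{2n+1}(M)=0$ by \eqref{eq:k-th_tau}.
\item $2Z_{2n+2,1}=-p_+\circ\tilde\alpha_{2n+2}$, by \eqref{eq:alpha_Z}.
\end{itemize}
Both terms now live in a $\Z_2$-module, so signs are irrelevant. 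This yields $\Tr(\tau_{2n+2}(M))+\{p_+\tilde\alpha_{2n+2}(M)\}$.

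\emph{Care point: applying \eqref{eq:alpha_Z} in degree $2n+2$.} That diagram is stated for $Y_k\calIC$ with $k=2n+2$, whereas our $M$ lies only in $Y_{2n+1}\calIC$. I would avoid this by working with $M^2\in Y_{2n+2}\calIC$, exactly as in the proof of Proposition \ref{prop:R_2n+2}. By \eqref{eq:k-th_Z} we have $Z_{2n+2,i}(M)=\tfrac12 Z_{2n+2,i}(M^2)$. On the torsion side, $\tilde\alpha_{\le 2n+1}(M)$ vanishes because $2\tilde\alpha_{\le 2n+1}(M)=\tilde\alpha_{\le 2n+1}(M^2)=0$ in a $\Q$-vector space. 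Hence $\tilde\alpha_{2n+2}(M)$ is defined and satisfies $\tilde\alpha_{2n+2}(M)=\tfrac12\tilde\alpha_{2n+2}(M^2)$, which lets us transfer \eqref{eq:alpha_Z} from $M^2$ to $M$. I expect this compatibility of $\tilde\alpha$ with squaring to be the main point needing care; it should follow from the multiplicativity of $\tilde\alpha$ in $K_1$ and the vanishing of its lower truncations.

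\emph{Commutative diagram and equivariance.} Diagram \eqref{eq:R'_2n+2} is \eqref{eq:R(r)_zeta} at $r=1$. Its $\Sp(H)$-equivariance follows from Proposition \ref{prop:R^(r)}, or equivalently from the equivariance of $\Tr$ (Lemma \ref{lem:Tr^(0)}) together with that of $Z$.
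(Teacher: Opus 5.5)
Your proposal is correct and follows the paper's route: the corollary is Proposition~\ref{prop:R^(r)} specialized at $r=1$, with $Z_{2n+2,0}=\tau_{2n+2}$ and $2Z_{2n+2,1}=-p_+\tilde\alpha_{2n+2}$ substituted exactly as in Corollary~\ref{cor:0_1}, and the diagram is \eqref{eq:R(r)_zeta} at $r=1$. Your passage through $M^2\in Y_{2n+2}\calIC$, used to apply \eqref{eq:alpha_Z} in degree $2n+2$, spells out a detail the paper leaves implicit, and it mirrors the paper's own treatment of $\tau_{2n+2}$ in the proof of Proposition~\ref{prop:R_2n+2}.
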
 

\begin{corollary}  \label{cor:R''_2n+2}
The  homomorphism $R''_{2n+2}:= R^{(2)}_{2n+2}$
is given by     
$$
R''_{2n+2}\big(\{M\}\big) :=
\Tr'\big(-\tau_{2n+2}(M),
p_+\tilde\alpha_{2n+2}(M)\big)
+\big\{ 4 Z_{2n+2,2}(M)\big\},
$$
and it fits into the following
commutative diagram of $\Sp(H)$-modules:
\begin{equation} \label{eq:R''_2n+2}
\xymatrix{
\Ker(R'_{2n+2}) \ar[r]^-{R''_{2n+2}}  \ar[d]_-{\zeta_{2n+2}^<} 
& 2\bqm \calA^c_{2n+2,2}(H) \eqm 
\otimes \Z_4 \ar[d]^-{\frac{1}{4}\cdot }  \\
 {\displaystyle\frac{L_2 \calA^{<,c}_{2n+2}(H) \otimes \Q/\Z }{L_3 \calA^{<,c}_{2n+2}(H) \otimes \Q/\Z }}
&\ar[l]^-\simeq  \ \calA^{c}_{2n+2,2}(H)   \otimes \Q/\Z 
}
\end{equation}
\end{corollary}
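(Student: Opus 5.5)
This is the specialization of Proposition \ref{prop:R^(r)} at $r:=2$, combined with the topological interpretations \eqref{eq:tau_Z} and \eqref{eq:alpha_Z} of the low loop-degree pieces of $Z$, exactly as Corollary \ref{cor:R'_2n+2} is the case $r:=1$.

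First, $L_2\calT=\Ker R^{(1)}_{2n+2}=\Ker(R'_{2n+2})$ by Proposition \ref{prop:R^(r)}, which identifies the source.

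Next, formula \eqref{eq:R^(r)} at $r=2$ gives
$$
R''_{2n+2}(\{M\}) = -\Tr^{(1)}\big(Z_{2n+2,0}(M),\,2Z_{2n+2,1}(M)\big)+\big\{4Z_{2n+2,2}(M)\big\}.
$$
Here $\Tr^{(1)}=\Tr'$. I would substitute $Z_{2n+2,0}(M)=\tau_{2n+2}(M)$ and $2Z_{2n+2,1}(M)=-p_+\tilde\alpha_{2n+2}(M)$. Both substitutions are legitimate for $M\in Y_{2n+1}\calIC$ representing $2$-torsion, by the argument in the proof of Proposition \ref{prop:R_2n+2}:
\begin{itemize}
\item $\tau_{2n+1}(M)=0$, so $M\in\calIC[2n+1]$ and $\tau_{2n+2}(M)$ makes sense;
\item by \eqref{eq:k-th_Z}, $Z_{2n+2,i}(M)=\tfrac12 Z_{2n+2,i}(M^2)$, and applying \eqref{eq:tau_Z} and \eqref{eq:alpha_Z} to $M^2\in Y_{2n+2}\calIC$ gives the two identities, with $\tilde\alpha_{2n+2}(M)$ read as $\tfrac12\tilde\alpha_{2n+2}(M^2)$.
\end{itemize}
Since $\Tr'$ is a homomorphism, the minus sign can be moved inside, which gives $\Tr'(-\tau_{2n+2}(M),\,p_+\tilde\alpha_{2n+2}(M))$, the stated formula.

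The main point needing care is the target. Proposition \ref{prop:R^(r)} places $R^{(2)}_{2n+2}$ in $2\bqm\calA^c_{2n+2,2}(H)\eqm/4\bqm\calA^c_{2n+2,2}(H)\eqm$. This is the image of $2\bqm\calA^c_{2n+2,2}(H)\eqm\otimes\Z_4$ in $\bqm\calA^c_{2n+2,2}(H)\eqm\otimes\Z_4$, which is how the diagram's target should be read.

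Finally, the commutativity of \eqref{eq:R''_2n+2}, the vertical map $\tfrac14\cdot$ and the $\Sp(H)$-equivariance are the case $r=2$ of \eqref{eq:R(r)_zeta}. Equivariance also follows from Lemma \ref{lem:Tr^(r)} and the equivariance of $Z$ at the graded level.
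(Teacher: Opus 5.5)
Your proposal is correct and follows the paper's intended approach: the corollary is obtained by specializing Proposition~\ref{prop:R^(r)} at $r=2$ and substituting $Z_{2n+2,0}=\tau_{2n+2}$ and $2Z_{2n+2,1}=-p_+\tilde\alpha_{2n+2}$ via \eqref{eq:tau_Z}, \eqref{eq:alpha_Z} and \eqref{eq:k-th_Z}. Your reading of the target as $2\bqm\calA^c_{2n+2,2}(H)\eqm/4\bqm\calA^c_{2n+2,2}(H)\eqm$ is also right.

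One small indexing slip: $M\in Y_{2n+1}\calIC$ already lies in $\calIC[2n+1]$, and it is $\tau_{2n+1}(M)=0$ that places $M$ in $\calIC[2n+2]$, which is what makes $\tau_{2n+2}(M)$ defined.
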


\begin{remark}
Unlike Corollary \ref{cor:R'_2n+2},
Corollary \ref{cor:R''_2n+2} is limited by our lack of information (for arbitrary $n$)
on the structure of the module $\calA^{c}_{2n+2,2}(H)$
and on the topological meaning of $Z_{2n+2,2}$.
\hfill $\blacksquare$
\end{remark}

 We end this section by determining the variation  of $R_{2n+2}^{(r)}$ under appropriate  clasper surgery. 
 The statement is  a mere consequence of Theorem \ref{th:Delta_zeta}; it involves the map $\Delta$, introduced in \S \ref{subsec:lower_bound}, 
whose action on the associated graded of the loop filtration 
is given by  Proposition \ref{prop:Gr_Delta}.

\begin{corollary} 
For every $n>0$ and $r\geq 0$, 
we have the following commutative triangle of $\Sp(H)$-modules:
\begin{equation} \label{eq:Gr_Delta_zeta}
\xymatrix{
 \calB_{n,r}\big(H^{(2)};s\big)
 \ar[rrd]^-{\Gr^L \!\Delta} \ar[d]_-{\Gr \Psi} 
 &&  \\
L_{r} \calT / L_{r+1} \calT \ar[rr]^-{\frac{1}{2^{r}} R_{2n+2}^{(r)}} 
&& \calA^{c}_{2n+2,r}(H)   \otimes \Q/\Z 
}    
\end{equation}
\end{corollary}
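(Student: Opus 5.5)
The plan is to read the triangle \eqref{eq:Gr_Delta_zeta} as the loop-graded version of the commutative triangle \eqref{eq:Delta_zeta} of Theorem~\ref{th:Delta_zeta}, namely $\Delta=\zeta^<_{2n+2}\circ\Psi$. I will pass to associated graded objects and then use Proposition~\ref{prop:R^(r)} to rewrite $\Gr^L_r\zeta^<_{2n+2}$ in terms of $R^{(r)}_{2n+2}$.

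\textbf{Step 1: the maps are filtered.} The filtration \eqref{eq:filt_T} on $\calT$ is, by definition, the pull-back by $\zeta^<_{2n+2}$ of the loop filtration on $\calA^{<,c}_{2n+2}(H)\otimes\Q/\Z$. By Proposition~\ref{prop:Gr_Delta}, $\Delta$ sends the $s$-loop filtration into the loop filtration. Combining these two facts with $\Delta=\zeta^<_{2n+2}\circ\Psi$ shows that $\Psi$ maps $L_r\calB^<_n(P^{(2)})$ into $L_r\calT$, exactly as in the proof of Proposition~\ref{prop:gr_lower_bound}. Hence $\Gr\Psi$ is defined on $\Gr^L_r\calB^<_n(P^{(2)})$. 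Using the canonical isomorphism $\Gr^L_r\calB^<_n(P^{(2)})\simeq\calB_{n,r}(H^{(2)};s)$, we view it as a map $\calB_{n,r}(H^{(2)};s)\to L_r\calT/L_{r+1}\calT$.

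\textbf{Step 2: identify $\Gr^L_r\zeta^<_{2n+2}$.} Passing Theorem~\ref{th:Delta_zeta} to the associated graded gives
\[
\Gr^L\Delta=(\Gr^L_r\zeta^<_{2n+2})\circ\Gr\Psi .
\]
Here $\Gr^L_r\zeta^<_{2n+2}$ is the injective map \eqref{eq:Gr_L}, composed with the identification $\Gr^L_r\calA^{<,c}_{2n+2}(H)\otimes\Q/\Z\simeq\calA^c_{2n+2,r}(H)\otimes\Q/\Z$ that comes from Proposition~\ref{prop:Gr^LA^<}. The commutative square \eqref{eq:R(r)_zeta} of Proposition~\ref{prop:R^(r)} says that, on $L_r\calT$, $\zeta^<_{2n+2}$ modulo $L_{r+1}$ coincides with $\frac{1}{2^r}R^{(r)}_{2n+2}$ under that same identification. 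Since $\ker R^{(r)}_{2n+2}=L_{r+1}\calT$, the map $\frac{1}{2^r}R^{(r)}_{2n+2}$ factors through $L_r\calT/L_{r+1}\calT$, and this factorization equals $\Gr^L_r\zeta^<_{2n+2}$. Substituting it into the displayed equality yields the triangle. For $r=0$, the same argument goes through with $R^{(0)}_{2n+2}=\operatorname{Lev}\circ R_{2n+2}$, using diagram \eqref{eq:R_2n+2} in place of \eqref{eq:R(r)_zeta}.

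\textbf{Step 3: equivariance.} $\Sp(H)$-equivariance is inherited from the corresponding properties of $\Psi$ (Theorem~\ref{th:Psi}), of $\Delta$ (Lemma~\ref{lem:Delta}) and of $R^{(r)}_{2n+2}$ (Proposition~\ref{prop:R^(r)}). The loop filtrations are preserved by $\Sp(H)$, so the graded maps are equivariant as well.

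\textbf{Expected obstacle.} The only delicate point is bookkeeping: one must check that the isomorphism $\Gr^L_r\calA^{<,c}\otimes\Q/\Z\simeq\calA^c_{2n+2,r}(H)\otimes\Q/\Z$ used in Proposition~\ref{prop:Gr_Delta} is the same as the one appearing in \eqref{eq:R(r)_zeta}. Both are induced by the map $r$ of Proposition~\ref{prop:Gr^LA^<}, equivalently by $X^{-1}$ on the associated graded, so I expect they agree.
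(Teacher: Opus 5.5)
Your proposal is correct and follows the paper's proof: $\Psi$ is filtration-preserving because $\Delta=\zeta^<_{2n+2}\circ\Psi$ (Theorem~\ref{th:Delta_zeta}) is, since the filtration on $\calT$ is the pull-back by $\zeta^<_{2n+2}$, and Proposition~\ref{prop:R^(r)} then identifies $\Gr^L_r\zeta^<_{2n+2}$ with $\frac{1}{2^r}R^{(r)}_{2n+2}$ on $L_r\calT/L_{r+1}\calT$. Your remarks on the case $r=0$ and on the compatibility of the identifications $\Gr^L_r\calA^{<,c}_{2n+2}(H)\otimes\Q/\Z\simeq\calA^c_{2n+2,r}(H)\otimes\Q/\Z$ are bookkeeping that the paper leaves implicit.
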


\begin{proof}
By commutativity of the diagram \eqref{eq:Delta_zeta} 
and by definition 
of the filtration~$\big(L_k \calT\big)_{k \geq 0}$,
the map $\Psi:\calB^<_n\big(P^{(2)}\big) \to \calT$
is filtration-preserving because $\Delta$~is.
Then, we conclude with Proposition \ref{prop:R^(r)}.
\end{proof}

\section{The degree-one case} 
\label{sec:deg_1}

We illustrate our methodology for studying the torsion of 
$Y_{2n+1}\mathcal{IC} / Y_{2n+2}$ by considering firstly the special case $n = 0$. 
Although the $\Sp(H)$-module $\mathcal{IC} / Y_{2}$ was fully computed in \cite{MM03} 
in connection with Johnson’s computation of the abelianized Torelli group \cite{Joh85}, 
we will show how the LMO functor can be used 
as an alternative 
to the classical invariants that are considered 
in \cite{MM03}.

\subsection{Description of the $\Sp(H)$-module $\mathcal{IC} / Y_{2}$}

We  firstly determine the $\Sp(H)$-module 
$\Tors(\mathcal{IC} / Y_{2})$.
According to the slide relation,
the homomorphism from $P^{(2)} \otimes P^{(2)} =
\calB^<_0\big(P^{(2)}\big)$
to $\Tors\big(\calA_1(P)\big)$
that was considered in  \eqref{eq:E_0} is given by
$$
x \otimes y \longmapsto
\begin{tikzpicture}[thick,color=black,
baseline=-0.5em, x=0.6em, y=0.6em]
\footnotesize
\node[left=1pt] at (-1,1) {\scriptsize $y$};
\node[right=1pt] at (1,1) {\scriptsize $y$};
\node[below=1pt] at (0,-1.5) {\scriptsize $x$};
\draw (-1,1) -- (0,0) -- (1,1);
\draw (0,0) -- (0,-1.5);
\draw [fill = black] (0,0) circle (.3ex);
\end{tikzpicture} =
\begin{tikzpicture}[thick,color=black,
baseline=-0.5em, x=0.6em, y=0.6em]
\footnotesize
\node[left=1pt] at (-1,1) {\scriptsize $y$};
\node[right=1pt] at (1,1) {\scriptsize $s$};
\node[below=1pt] at (0,-1.5) {\scriptsize $x$};
\draw (-1,1) -- (0,0) -- (1,1);
\draw (0,0) -- (0,-1.5);
\draw [fill = black] (0,0) circle (.3ex);
\end{tikzpicture} =
\begin{tikzpicture}[thick,color=black,
baseline=-0.5em, x=0.6em, y=0.6em]
\footnotesize
\node[left=1pt] at (-1,1) {\scriptsize $y$};
\node[right=1pt] at (1,1) {\scriptsize $x$};
\node[below=1pt] at (0,-1.5) {\scriptsize $x$};
\draw (-1,1) -- (0,0) -- (1,1);
\draw (0,0) -- (0,-1.5);
\draw [fill = black] (0,0) circle (.3ex);
\end{tikzpicture}
\quad \hbox{(for all $x,y\in P^{(2)}$)}.
$$
(Recall that we are using here the convention \eqref{eq:lifts}.) 
Hence, it factorizes to a homomorphism $E: S^2 P^{(2)}\to \Tors\big(\calA_1(P)\big)$, 
and so does the surgery map $\Psi$.
Furthermore, $E$ (and so $\Psi$)
factorizes through $S^2 P^{(2)}/H^{(2)}$, 
where $H^{(2)}$ is embedded into $S^2 P^{(2)}$
via the map $\big(p_*(x)\mapsto x \cdot x - x \cdot s\big)$.
Hence, the commutative diagram \eqref{eq:E_0} can be rewritten as 
\begin{equation} \label{eq:E_0_bis}
\xymatrix{
\{x \cdot y\} \ \ni \quad & S^2 P^{(2)}/H^{(2)} \ar[r]^-\Psi \ar[d]_-E &
\Tors\Big({\displaystyle\frac{\calIC}{Y_2}}\Big), \\
\begin{tikzpicture}[thick,color=black,
baseline=-0.5em, x=0.6em, y=0.6em]
\footnotesize
\node[left=1pt] at (-1,1) {\scriptsize $x$};
\node[right=1pt] at (1,1) {\scriptsize $y$};
\node[below=1pt] at (0,-1.5) {\scriptsize $s$};
\draw (-1,1) -- (0,0) -- (1,1);
\draw (0,0) -- (0,-1.5);
\draw [fill = black] (0,0) circle (.3ex);
\end{tikzpicture} \ni \quad  & \Tors\big(\calA_1(P)\big) \ar[ru]_-\psi &  
}    
\end{equation}
and the surgery map $\Psi$ is given by \\[-0.3cm]
$$
\Psi(\{x\cdot  y\}) = \Bigg(\hbox{$U$ surgered along }
\begin{array}{c}\includegraphics[scale=0.7]{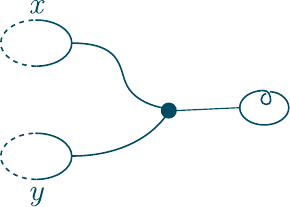}
\end{array}\Bigg) 
$$
where the $Y$-graph has one special leaf
and two arbitrary leaves with modulo 2 types 
$x,y \in P^{(2)}$.

We now define a homomorphism 
$\Delta: S^2 P^{(2)}/H^{(2)} \to \calA_2^{<,c}(H)\otimes \Q/\Z$
by associating to every $x\cdot y \in  S^2 P^{(2)}$ 
the linear combination
\begin{eqnarray}
\notag \Delta(x\cdot y) &:=& 
\frac{1}{2}\tikz[baseline=5pt]{
  \draw[thick, color=black] (0,0)--(0.5,0.5);
  \draw[thick, color=black] (0.5,0)--(1,0.5);
  \fill [white] (0.75,0.25) circle [radius=0.1];
  \draw[thick, color=black] (0.5,0.5)--(1,0);
  \draw[thick, color=black] (1,0.5)--(1.5,0);
  \draw[black, fill=black] (0.5,0.5) circle (0.25ex);
   \draw[black, fill=black] (1,0.5) circle (0.25ex);
  \draw[thick, color=black] (0.5,0.5) to[out=90, in=90] (1,0.5);
  \draw[color=black] (0.75,-0.25) node {\small $x< x  <y<y$}
}+\frac{1}{2} \tikz[baseline=-8pt, scale=0.5]{
  \draw[thick, color=black] (0,0) circle (0.5);

  \draw[black, fill=black] (-0.5,0) circle (0.5ex);
  \draw[black, fill=black] (0.5,0) circle (0.5ex);

  \draw[thick, color=black] (-0.5,0) to[out=180, in=90] (-1.2,-1);
  
  \draw[color=black] (-1.3,-1.4) node {\small $x$};
  
  \draw[thick, color=black] (0.5,0) to[out=0, in=90] (1.2,-1);
  \draw[color=black] (1.2,-1.4) node {\small $y$};
  \draw[color=black] (-0.15,-1.4) node {\small $< $}
} \\
\label{eq:Delta_0} 
&& + \frac{\fr({x})}{2} \tikz[baseline=-8pt, scale=0.5]{
  \draw[thick, color=black] (0,0) circle (0.5);

  \draw[black, fill=black] (-0.5,0) circle (0.5ex);
  \draw[black, fill=black] (0.5,0) circle (0.5ex);

  \draw[thick, color=black] (-0.5,0) to[out=180, in=90] (-1.2,-1);
  
  \draw[color=black] (-1.3,-1.4) node {\small $y$};
  
  \draw[thick, color=black] (0.5,0) to[out=0, in=90] (1.2,-1);
  \draw[color=black] (1.2,-1.4) node {\small $y$};
  \draw[color=black] (-0.15,-1.4) node {\small $< $}
} +\frac{\fr({y })}{2} \tikz[baseline=-8pt, scale=0.5]{
  \draw[thick, color=black] (0,0) circle (0.5);

  \draw[black, fill=black] (-0.5,0) circle (0.5ex);
  \draw[black, fill=black] (0.5,0) circle (0.5ex);

  \draw[thick, color=black] (-0.5,0) to[out=180, in=90] (-1.2,-1);
  
  \draw[color=black] (-1.3,-1.4) node {\small $x$};

  \draw[thick, color=black] (0.5,0) to[out=0, in=90] (1.2,-1);
  \draw[color=black] (1.2,-1.4) node {\small $x$};
  \draw[color=black] (-0.15,-1.4) node {\small $< $}
} + \frac{\fr({x})\cdot \fr({y})}{2}\, 
\tikz[baseline=-2pt, scale=0.5]{
  \draw[thick, color=black] (0,0) circle (0.5);

  \draw[thick, color=black] (-0.5,0) -- (0.5,0);

  \draw[black, fill=black] (-0.5,0) circle (0.5ex);
  \draw[black, fill=black] (0.5,0) circle (0.5ex);
}.
\end{eqnarray}
This is the analogue of the map $\Delta$ 
on $\calB^<_n\big(P^{(2)}\big)$ 
defined in \S \ref{subsec:lower_bound} for $n>0$;
indeed, it is related  to the homomorphism $\delta^<$
of Proposition \ref{prop:delta} as follows:

\begin{lemma} \label{lem:Delta_0}
The homomorphism $\Delta$ 
is well-defined and $\Sp(H)$-equivariant.
Furthermore, 
it fits into the following commutative diagram:
\begin{equation} \label{eq:triangle1}
\xymatrix{
 S^2 P^{(2)}/H^{(2)}
 \ar[rrd]_-\Delta \ar[rr]^-E &&
 \Tors\big(\calA_1(P) \big) \ar[d]^-{\delta^<} \\
&&  \calA^{<,c}_{2}(H) \otimes \Q/\Z
}
\end{equation}
\end{lemma}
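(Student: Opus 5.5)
The plan is to avoid checking well-definedness of $\Delta$ directly. Instead we show that $\delta^<\circ E$, evaluated on a representative $x\cdot y$, equals the right-hand side of \eqref{eq:Delta_0}. Since $E$ is already known to be a well-defined homomorphism on $S^2P^{(2)}/H^{(2)}$, and $\delta^<$ is a well-defined homomorphism by Proposition \ref{prop:delta}, the composite is well defined. So $\Delta=\delta^<\circ E$ is automatically well defined, and the diagram \eqref{eq:triangle1} commutes by construction. This mirrors the proof of Lemma \ref{lem:Delta}.

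\textbf{Computing $\delta^<(E(x\cdot y))$.} The element $E(\{x\cdot y\})$ is the $Y$-diagram $T$ with vertices colored $s<x<y$ (lifts to $P$). By \eqref{eq:delta},
$$\delta^<(T)=\delta_s(T)+\delta_x(T)+\delta_y(T)+\tfrac12\delta_T(T).$$
I will evaluate each term with \eqref{eq:delta_i_bis} and the special relation.
\begin{itemize}
\item For the vertex colored $s$ we have $p_*(s)=0$, so $\delta_s(T)=\frac{\fr(s)}{2}T_s^U=\frac12 T_s^U$. This is the loop diagram with legs $x<y$, which is the second term of \eqref{eq:Delta_0}.
\item For the vertices colored $x$ and $y$, the diagrams $T_x^{II},T_x^{\Yrev},T_x^U$ still contain an $s$-colored leg on a connected component of degree $2$. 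They therefore vanish by the special relation, so $\delta_x(T)=\delta_y(T)=0$. This is exactly as in the slide computation in the proof of Proposition \ref{prop:delta}, where $\delta_j(D)=\delta_k(D)=0$.
\item The term $\frac12\delta_T(T)$ is computed by expanding the three $\parallel$-pairs from \eqref{eq:parallel_notation}. The $s\parallel s$ pair contributes $s<s$ or the cap $\fr(s)=1$. Any term keeping an $s$-colored leg on the resulting connected degree-$2$ diagram dies by the special relation, and products of two separate $Y$'s are handled by the same reasoning. What survives comes from capping the $s$ pair: the $H$-type diagram with $x\parallel x<y\parallel y$. Expanding the remaining two pairs gives the first, third, fourth and fifth terms of \eqref{eq:Delta_0}, weighted by $1,\fr(y),\fr(x),\fr(x)\fr(y)$. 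I will need to check the pairing of coefficients with diagrams carefully: capping the $x$ pair leaves the loop with legs $y<y$ and coefficient $\fr(x)$. This also requires showing that the disconnected contribution $T\sqcup T$ (both with $s$ legs) vanishes in degree $2$.
\end{itemize}

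\textbf{Main obstacle.} I expect the hardest step to be the bookkeeping in $\delta_T(T)$, namely justifying that the only surviving terms are the five listed. The disconnected terms and the terms with an $s$-colored leg on a connected degree-$2$ component are killed by the special relation, but this relation only applies to components of degree $\ge 2$, so the disconnected pieces need separate treatment. The analogous display for $\delta_V(V)$ in the proof of Proposition \ref{prop:delta} provides a template.

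\textbf{Equivariance.} The formula \eqref{eq:Delta_0} is built from $\fr$, which is $\Sp(H)$-invariant (Lemma \ref{lem:P}(c)), and from natural diagram operations, so $\Sp(H)$-equivariance follows directly from it. This holds even though $\delta^<$ itself is not equivariant. Alternatively, one can note that the correction $\varsigma$ of Lemma \ref{lem:equivariance} vanishes on $p_*E(\cdot)$, since every external vertex of $T$ either has color $p_*(s)=0$ or produces a diagram with an $s$-leg.
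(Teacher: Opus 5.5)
Your proposal follows the paper's proof: compute $\delta^<(E(x\cdot y))$ from \eqref{eq:delta}, check that it equals the right-hand side of \eqref{eq:Delta_0}, and deduce both the well-definedness of $\Delta$ and the commutativity of \eqref{eq:triangle1} from the well-definedness of $E$ and $\delta^<$. Your term-by-term bookkeeping is correct:
\begin{itemize}
\item $\delta_s(T)=\frac12 T_s^U$ gives the loop with legs $x<y$;
\item $\delta_x(T)=\delta_y(T)=0$;
\item capping the $s$-pair in $\delta_T(T)$, which carries the coefficient $\fr(s)=1$, produces the remaining four terms.
\end{itemize}
In the last item, $\fr(x)$ goes with the loop with legs $y<y$, as you eventually note. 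Your first listing of the weights has $\fr(x)$ and $\fr(y)$ swapped.

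The one step you leave open, which you call the main obstacle, is the vanishing of the disconnected term $T\sqcup T$. You should not model it on $\delta_V(V)$ in the proof of Proposition \ref{prop:delta}. There the disconnected term $Y\sqcup Y$ is never shown to vanish in $\calA^<(P)\otimes\Z_2$; it only cancels against the same term in $\delta_W(W)$. The special relation cannot help either, since it says nothing about $Y$-components.

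What you need is that every diagram with an $s$-colored external vertex, connected or not, is zero in the target $\calA^<(P)\otimes\Q/\Z$. By multilinearity, $2D(s)=D(2s)=D(0)=0$, so such a $D$ is $2$-torsion in $\calA^<(P)$. Hence $\frac12 D=2D\otimes\frac14=0$ after tensoring with $\Q/\Z$. Equivalently, under the isomorphism $p_*$ of Remark \ref{rem:Q/Z}, the $s$-vertex receives the color $p_*(s)=0$. This is exactly how the paper disposes of these terms (``using $p_*(s)=0$''). It also makes your appeals to the special relation for $\delta_x(T)$, $\delta_y(T)$ and for the connected $s$-legged pieces of $\delta_T(T)$ unnecessary.

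Your equivariance argument is fine in either form: via the $\Sp(H)$-invariance of $\fr$ in \eqref{eq:Delta_0}, or via Lemma \ref{lem:equivariance} together with $p_*E(x\cdot y)=0$.
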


\begin{proof}
Let $x,y \in P^{(2)}$.
Using the definition \eqref{eq:delta} 
of the map $\delta^<$,
the notation~\eqref{eq:parallel_notation},
and the facts that {$p_*(s)=0$}, $\fr(s)=1$,
we obtain 
\begin{eqnarray*}
\delta^< E(x\cdot y) &=& \delta^<\Big( 
\begin{tikzpicture}[thick,color=black,baseline=-1.0em, x=1em, y=1em]
\footnotesize
\draw (0,0) -- (1,-1);
\draw (0,0) -- (0,-1);
\draw (0,0) -- (-1,-1);
\draw [fill = black] (0,0) circle (.3ex);
\node[below=1pt] at (0,-1) {$x< y < s$};
\end{tikzpicture}
\Big)\\
&=& \frac{1}{2} \tikz[baseline=-8pt, scale=0.5]{
  \draw[thick, color=black] (0,0) circle (0.5);
  \draw[black, fill=black] (-0.5,0) circle (0.5ex);
  \draw[black, fill=black] (0.5,0) circle (0.5ex);
  \draw[thick, color=black] (-0.5,0) to[out=180, in=90] (-1.2,-1);
  \draw[color=black] (-1.3,-1.4) node {\small $x$};
  \draw[thick, color=black] (0.5,0) to[out=0, in=90] (1.2,-1);
  \draw[color=black] (1.2,-1.4) node {\small $y$};
  \draw[color=black] (-0.15,-1.4) node {\small $< $}
}+ 
\frac{1}{2}\begin{tikzpicture}[thick,baseline=-1.0em, x=2em, y=2em]
\hspace{-4mm}
\footnotesize
\node[below=1pt] at (0,-1){\quad \quad   $ x \parallel x \quad  < \qquad y \parallel y \qquad  < \quad s \parallel s $};
 \draw [fill = black] (0,0) circle (.3ex);
 \draw (0,0)-- (-2.6,-1);
\draw (0,0)-- (0,-1);
\draw (0,0)-- (2.6,-1);
\fill [white] (0.4,-0.12) circle [radius=0.1];
\fill [white] (0.0,-0.3) circle [radius=0.1];
\fill [white] (0.8,-0.3) circle [radius=0.1];
\draw [fill = black] (0.8,0) circle (.3ex);
\draw (0.8,0)-- (-1.8,-1);
\draw (0.8,0)-- (0.8,-1);
\draw (0.8,0)-- (3.4,-1);
\end{tikzpicture},
\end{eqnarray*}
which is the same as the right-hand side of \eqref{eq:Delta_0}.
This shows that the map $\Delta$ 
is well-defined and coincides
with $\delta^< E$. 
\end{proof}

We  now give a description of the $\Sp(H)$-module 
$\Tors\big({{ \calIC}/{Y_{2}}}\big)$.

\begin{theorem} \label{th:Tors(IC/Y_2)}
We have the commutative diagram
\begin{equation} \label{eq:triangle2}
\xymatrix{
 S^2 P^{(2)}/H^{(2)} \ar[d]_-\Psi
 \ar[rrd]^-\Delta  &&
 \\
\Tors\Big({\displaystyle\frac{ \calIC}{Y_{2}}}\Big) 
\ar[rr]_-{\zeta^<_{2}}  
&&   \calA^{<,c}_{2}(H) \otimes \Q/\Z,
}
\end{equation}
where $\Psi$ is an $\Sp(H)$-isomorphism, and 
$\zeta_2^<$ is an $\Sp(H)$-embedding.
\end{theorem}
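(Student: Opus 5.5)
The argument has three parts: commutativity of \eqref{eq:triangle2} from results already proved, then injectivity of $\Delta$ by a direct computation, then surjectivity of $\Psi$ onto the torsion from the known degree-one structure.

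\emph{Commutativity.} By the commutative triangle \eqref{eq:E_0_bis} we have $\Psi=\psi\circ E$. By Theorem~\ref{th:psi_delta_Z} with $n=1$, restricted to $\calA_1(P)\simeq\calA^<_1(P)$ (Proposition~\ref{prop:before}(i)), we get $\zeta^<_2\circ\psi=\delta^<$ on $\calA^<_1(P)$. Here we use that $\calIC/Y_2$ maps into $\Gr^F_1\Z[\calIC]$ via $\iota$, and that $\zeta_2^<$ is evaluated on $M$, i.e.\ on $M-U$. Combining these with Lemma~\ref{lem:Delta_0} gives $\zeta^<_2\circ\Psi=\delta^<\circ E=\Delta$. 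Equivariance of $\Psi$ is Theorem~\ref{th:Psi}. Equivariance of $\zeta^<_2$ on the torsion follows from Corollary~\ref{cor:equivariance}: on a torsion class $Z_1^<$ vanishes rationally, so the correction term $\varsigma(f_\circledast,f_*p_*D)$ involves a diagram that is zero in $\bqm\calA^<_1(H)\eqm$ and therefore vanishes.

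\emph{Injectivity of $\Delta$.} This is the main computational step. First I would identify $S^2P^{(2)}/H^{(2)}$ explicitly. Using the basis $\sigma(S)\cup\{s\}$ of $P^{(2)}$, it is spanned by the monomials $\sigma(u)\cdot\sigma(v)$ with $u\le v$ in $S$ together with the elements $\sigma(u)\cdot s$, the relation being $\sigma(u)\cdot\sigma(u)=\sigma(u)\cdot s$. Hence its dimension over $\Z_2$ is $\binom{2g}{2}+2g+1$, the last summand coming from $s\cdot s$.

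Next I would pass to the associated graded of the loop filtration on $\calA^{<,c}_2(H)\otimes\Q/\Z\simeq\calA^c_2(H)\otimes\Q/\Z$, where everything is torsion-free with explicit bases. In loop degree $0$, the term $\frac12 H$-diagram $(x,x,y,y)$ corresponds to Levine's embedding $\operatorname{Lev}$ of $[x,y]\in\frakL_2(H^{(2)})=\Lambda^2H^{(2)}$; this detects $x\cdot y$ modulo the part with $x=y$ or one of them equal to $s$. In loop degree $1$, the $\frac12$-one-loop terms are read in $S^2H\otimes\Z_2\simeq\calA^c_{2,1}(H)\otimes\Z_2$ and detect $\bar x\cdot\bar y$, including the squares $\sigma(u)\cdot s\mapsto u^2$. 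In loop degree $2$, the coefficient of $\frac12\thetagraph$ equals $\fr(x)\fr(y)$, which detects $s\cdot s$. A filtered triangular argument then shows $\ker\Delta=0$. The delicate points are to keep track of the order conventions when passing to $\Gr^L$, using Proposition~\ref{prop:Gr^LA^<}, and to check that the lower-loop terms of a kernel element force the higher-loop terms.

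\emph{Conclusion.} Since $\zeta^<_2\circ\Psi=\Delta$ is injective, $\Psi$ is injective. For surjectivity onto $\Tors(\calIC/Y_2)$, I would use that $\psi\colon\calA_1(P)\to\calIC/Y_2$ is surjective (Theorem~\ref{th:psi_connected}) and that $\calA_1(P)$ decomposes as $\Lambda^3H$ (free) plus $\Tors(\calA_1(P))$, the latter spanned by $Y$-diagrams with an $s$-leaf or a repeated color. These torsion generators are exactly the image of $E$. Hence a torsion class of $\calIC/Y_2$ is $\psi(a+t)$ with $a$ free and $t$ torsion, and $Z_1^<$ forces $a=0$ because $Z_1^<\circ\psi=p_*$ is injective on the free part by \eqref{eq:psi_Z}. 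Therefore $\Tors(\calIC/Y_2)=\psi(\Tors\calA_1(P))=\Psi(S^2P^{(2)}/H^{(2)})$, so $\Psi$ is an isomorphism. Injectivity of $\zeta_2^<$ on the torsion then follows from $\zeta^<_2=\Delta\circ\Psi^{-1}$.
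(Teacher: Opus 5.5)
Your proposal is correct and follows the paper's proof. Commutativity comes from \eqref{eq:E_0_bis}, Theorem~\ref{th:psi_delta_Z} and Lemma~\ref{lem:Delta_0}. Surjectivity of $\Psi$ comes from surjectivity of $\psi$, the rationalization property \eqref{eq:psi_Z}, and the fact that $\Tors\big(\calA_1(P)\big)$ is spanned by the image of $E$. Injectivity of $\Delta$ uses the same loop-graded argument: Levine's embedding in loop degree $0$, $\tfrac12$ of the two-legged one-loop diagram in degree $1$, and $\tfrac12\thetagraph$ in degree $2$.

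The differences are only cosmetic. You count $\dim S^2P^{(2)}/H^{(2)}$ from a basis instead of using the exact sequence \eqref{eq:ses}. In loop degree $1$ you only need the classes of $\sigma(u)\cdot s$; the one-loop part of $\Delta(x\cdot y)$ for $x\neq y$ is not defined on $\Gr^L$, but your triangular argument never uses it.
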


\noindent
Note that, although it is derived from the LMO functor,
the map $\zeta_2^<$  on 
$\Tors\big({ \calIC}/{Y_{2}}\big)$
is independent of the choices inherent to the construction of the latter.

\begin{proof}[Proof of Theorem \ref{th:Tors(IC/Y_2)}]
According  to \eqref{eq:E_0_bis}
and Theorem \ref{th:psi_delta_Z}, we have
the commutative diagram
$$
\xymatrix{
 S^2 P^{(2)}/H^{(2)}
 \ar[rr]^-E \ar[d]_-\Psi 
 &&
 \Tors\big(\calA_1(P) \big) 
 \ar[d]^-{\delta^<} \ar[lld]_-{\psi} \\
\Tors\Big({\displaystyle\frac{ \calIC}{Y_{2}}}\Big) \ar[rr]^-{\zeta^<_{2}} 
&& \calA^{<,c}_{2}(H) \otimes \Q/\Z,
}
$$
hence \eqref{eq:triangle1} implies \eqref{eq:triangle2}.
Thus, to prove the theorem, it suffices to show
that $\Psi$ is surjective 
 and that $\Delta$ is injective.

We prove the surjectivity of $\Psi$.
Let $M\in \calIC$
such that $\{M\}\in \calIC/Y_2$ has finite order.
The surjectivity of $\psi:\calA_1(P) \to \calIC/Y_2$
implies  the existence of a $z \in \calA_1(P)$
such that $\psi(z)=\{M\}$. Furthermore,
the homomorphism 
$$
\xymatrix{
\calA_1(P) \ar[r]^-\psi & 
{\displaystyle \frac{\calIC}{Y_2}} \ar[r]^-{\iota}
& {\displaystyle \frac{F_1 \Z[\calIC]}{F_2 \Z[\calIC]}} 
\ar[r]^-{Z^<_1} & 
\calA^<_1(H) \otimes \Q \simeq \calA_1(P)\otimes \Q
}
$$
is, according to \eqref{eq:psi_Z}, 
the  rationalization map:
so, $z$ is a torsion element of $\calA_1(P)$.
But, the torsion submodule of $\calA_1(P)$ is easily derived 
from Proposition \ref{prop:varphi}
(or, alternatively, from \cite[Lemma 4.24]{MM03}):
it is generated by the elements
$$
 \begin{tikzpicture}[thick,color=black,
baseline=-0.5em, x=0.6em, y=0.6em]
\footnotesize
\node[left=1pt] at (-1,1) {\scriptsize $s$};
\node[right=1pt] at (1,1) {\scriptsize $s$};
\node[below=1pt] at (0,-1.5) {\scriptsize $s$};
\draw (-1,1) -- (0,0) -- (1,1);
\draw (0,0) -- (0,-1.5);
\draw [fill = black] (0,0) circle (.3ex);
\end{tikzpicture}
\quad \hbox{and} \quad
\begin{tikzpicture}[thick,color=black,
baseline=-0.5em, x=0.6em, y=0.6em]
\footnotesize
\node[left=1pt] at (-1,1) {\scriptsize $y$};
\node[right=1pt] at (1,1) {\scriptsize $y$};
\node[below=1pt] at (0,-1.5) {\scriptsize $x$};
\draw (-1,1) -- (0,0) -- (1,1);
\draw (0,0) -- (0,-1.5);
\draw [fill = black] (0,0) circle (.3ex);
\end{tikzpicture}
\ \hbox{for all $x,y\in 
\sigma(\{a_i,b_j \mid i,j=1,\dots,g\})$}.
$$
Clearly, all those elements are in the image of 
$E:S^2P^{(2)} \to \calA_1(P)$:
so $\{M\}$ is in the image 
of $\Psi:S^2 P^{(2)} \to \calIC/Y_2$.

To prove now the injectivity of $\Delta$, 
we consider on $\calS :=  {\displaystyle{S^2 P^{(2)}}/{H^{(2)}}}$
the filtration 
\begin{equation} \label{eq:filt_S}
 \calS  = L_0 \calS \ \supset \   L_1 \calS
\ \supset \  L_2\calS  \ \supset \ L_3 \calS =\{0\}    
\end{equation}
where $L_1 \calS$ is generated by the elements $\{s \cdot x \}$
for all $x\in P^{(2)}$,
and $ L_2\calS $ is generated by $\{s \cdot s \}$.
(This is the analogue of the $s$-loop filtration on
$\calB^<_n\big(P^{(2)}\big)$ 
defined in \S \ref{subsec:sloop} for $n>0$.)
Recall that $\calA := \calA^{<,c}_{2}(H) \otimes \Q/\Z$ 
is endowed with the loop filtration $(L_k \calA)_{k \geq 0}$,
and observe in \eqref{eq:Delta_0} that $\Delta$ is filtration-preserving.
We claim that $\Gr^L \!\Delta$ is injective:
since the filtration \eqref{eq:filt_S} is finite, 
this will imply that $\Delta$ is injective.

To prove our claim,  we consider the following sequence  of $\Sp(H)$-modules:
\begin{equation} \label{eq:ses}
\xymatrix{
0 \ar[r]
  & P^{(2)} \ar[r]^-u
  & {\displaystyle \frac{S^2 P^{(2)}}{H^{(2)}}} \ar[r]^-v
  & \Lambda^{2} H^{(2)} \ar[r]
  & 0.
}    
\end{equation}
Here, the maps $u$ and $v$ are given by  $u(x) := \{ s \cdot x\}$
and  $v(\{ x \cdot y\}):= p_*(x) \wedge p_*(y)$.
It follows from \eqref{eq:primary}
that we can work in the $\Z_2$-vector space $P^{(2)}$ with the basis
\begin{equation} \label{eq:basis_P(2)}
\{s\}\cup\{\sigma(a_i), \sigma(b_i) \mid i=1,\dots,g\};
\end{equation}
 it is easily checked in this way that \eqref{eq:ses} is exact.
Since the image of $u$ is $L_1\calS$, we deduce that 
$L_0\calS/L_1\calS \simeq \Lambda^2 H^{(2)}$
and that $L_1\calS/L_2 \calS\simeq P^{(2)}/(\Z_2s) \simeq H^{(2)}$.
Then, we conclude from \eqref{eq:Delta_0} 
that the map $\Gr^L \!\Delta$ is given by
$$
\Lambda^2 H^{(2)}\simeq \Gr_0^L \calS
\longrightarrow \Gr_0^L \calA \simeq \calA_{2,0}^{c} (H) \otimes \Q/\Z  ,  \quad 
x\wedge y \longmapsto \frac{1}{2}
\tikz[baseline=6pt, scale = 0.4]{
\draw [thick,color=black] (0,2-1.25)-- (2,2-1.25);
\draw [thick,color=black] (0,1.5)-- (0,0);
\draw [thick,color=black] (2,1.5)-- (2,0);
\draw [black, fill = black] (0,0.75) circle (.5ex);
\draw [black, fill = black] (2,0.75) circle (.5ex);
\draw[color=black] (0,3-1.25) node[yshift=2pt] {\small $x$};
\draw[color=black] (0,1.1-1.25) node[yshift=-2pt] {\small $y$};
\draw[color=black] (2,1.1-1.25) node[yshift=-2pt] {\small $x$};
\draw[color=black] (2,3-1.25) node[yshift=2pt] {\small $y$};}
$$
$$
 H^{(2)}\simeq \Gr_1^L \calS
\longrightarrow \Gr_1^L \calA \simeq \calA_{2,1}^{c} (H) \otimes \Q/\Z  ,  \quad 
x \longmapsto \frac{1}{2} \tikz[baseline=-8pt, scale=0.4]{
  \draw[thick, color=black] (0,0) circle (0.5);
  \draw[black, fill=black] (-0.5,0) circle (0.5ex);
  \draw[black, fill=black] (0.5,0) circle (0.5ex);
  \draw[thick, color=black] (-0.5,0) to[out=180, in=90] (-1.2,-1);
  \draw[color=black] (-1.3,-1.4) node {\small $x$};
  \draw[thick, color=black] (0.5,0) to[out=0, in=90] (1.2,-1);
  \draw[color=black] (1.2,-1.4) node {\small $x$};} 
$$
$$
 \Z_2 s \simeq \Gr_2^L \calS
\longrightarrow \Gr_2^L \calA \simeq \calA_{2,2}^{c} (H) \otimes \Q/\Z  ,  \quad 
s \longmapsto \frac{1}{2} \tikz[baseline=-2pt, scale=0.4]{
  \draw[thick, color=black] (0,0) circle (0.5);
  \draw[thick, color=black] (-0.5,0) -- (0.5,0);
  \draw[black, fill=black] (-0.5,0) circle (0.5ex);
  \draw[black, fill=black] (0.5,0) circle (0.5ex);}
$$
in degrees 0, 1 and 2, respectively;
in particular, note that $\Gr_0^L \Delta$ is Levine's embedding \eqref{eq:Levine_embedding}.
Consequently, $\Gr^L \!\Delta$ is injective.
\end{proof}

As a consequence, we obtain a characterization
of the $\Sp(H)$-module $\calIC/Y_2$ by means of the LMO functor.

\begin{corollary} \label{cor:IC/Y_2}
We have the commutative diagram of $\Sp(H)$-modules
\begin{equation} \label{eq:ladder_1}
\xymatrix{
0 \ar[r]
  & {\displaystyle {S^2 P^{(2)}}/{H^{(2)}}}  \ar[r]^-E \ar[d]_-\Psi
  &  \calA_1(P) \ar[r]^-{p_*} \ar[d]_-\psi
  & \calA_1^<(H)  \ar[r] \ar@{=}[d]
  & 0\\
0 \ar[r]
  & {\displaystyle \operatorname{Tors}(\calIC/Y_2)}  \ar[r]
  &  \calIC/Y_2  \ar[r]^-{Z_1^<}
  &  \bqm \calA_1^<(H) \eqm  \ar[r]
  & 0,
}    
\end{equation}
with exact rows and vertical isomorphisms.
Furthermore, the map
\begin{equation} \label{eq:big_map}
(Z_1^<,\zeta_2^<): {{ \calIC}/{Y_{2}}} 
\longrightarrow \calA_1^<(H)\, \oplus\, 
\big( \calA^{<,c}_{2}(H) \otimes \Q/\Z\big)
\end{equation}
is an $\Sp(H)$-embedding 
for the $\Sp(H)$-action defined by \eqref{eq:Sp-action}.
\end{corollary}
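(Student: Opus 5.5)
The top row of \eqref{eq:ladder_1} is exact and the left two vertical maps are isomorphisms; from these the bottom row and the embedding \eqref{eq:big_map} follow with little extra work.

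For the top row, I would show that $p_*:\calA_1(P)\to\calA_1^<(H)\simeq\Lambda^3H$ is surjective and that its kernel equals the image of $E$. Surjectivity is immediate. For the kernel, I would use the description of $\calA_1(P)$ coming from Proposition~\ref{prop:varphi}, namely $\calA_1(S)\oplus\Z_2Y$, together with the basis \eqref{eq:primary} of $P$. In these terms, $\calA_1(S)$ is $\Lambda^3H$ plus the $2$-torsion generated by $Y$-diagrams with a repeated colour, and this torsion agrees with the swap relation. Hence $\Ker p_*$ is the torsion of $\calA_1(P)$. In the proof of Theorem~\ref{th:Tors(IC/Y_2)} this torsion was already shown to be generated by elements lying in the image of $E$.

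Injectivity of $E$ is cheap: by Lemma~\ref{lem:Delta_0} we have $\Delta=\delta^<\circ E$, and $\Delta$ is injective by Theorem~\ref{th:Tors(IC/Y_2)}.

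For the bottom row and the vertical maps, the right square commutes by \eqref{eq:psi_Z}, since $Z_1^<\circ\psi$ is the rationalization map, which agrees with $p_*$. The left square commutes by \eqref{eq:E_0_bis}. The map $Z_1^<$ surjects onto $\bqm\calA_1^<(H)\eqm$, and $\calA_1^<(H)\simeq\Lambda^3H$ is torsion-free, so the right vertical map is the identity. The kernel of $Z_1^<$ is torsion, because a class with rational image zero comes from an element of $\Ker p_*=\Tors\calA_1(P)$, using the surjectivity of $\psi$. Conversely, torsion classes map to zero in a torsion-free group. So the bottom row is exact. Since $\Psi$ is an isomorphism (Theorem~\ref{th:Tors(IC/Y_2)}) and the outer vertical maps are isomorphisms, the five lemma shows that $\psi$ is an isomorphism. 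All maps are $\Sp(H)$-equivariant by Theorem~\ref{th:psi_connected}, Proposition~\ref{prop:before} and the equivariance of $E$.

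For the embedding \eqref{eq:big_map}, suppose $\{M\}$ lies in the kernel of $(Z_1^<,\zeta_2^<)$. Then $Z_1^<(M)=0$, so $\{M\}$ is torsion by exactness of the bottom row. Moreover $\zeta_2^<$ is injective on torsion, so $\{M\}=0$. The $\Sp(H)$-equivariance with respect to \eqref{eq:Sp-action} is Corollary~\ref{cor:equivariance} at $n=1$, transported along $\iota$ (Theorem~\ref{th:iota}), noting that the rational image of $\Lambda^3H$ is identified with $\calA_1^<(H)$.

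I expect the main obstacle to be the identification $\Ker p_*=\Tors\calA_1(P)=E(S^2P^{(2)}/H^{(2)})$. This needs a careful bookkeeping of the torsion of $\calA_1(P)$ through the isomorphism of Proposition~\ref{prop:varphi}. In particular, I would check that the $s$-coloured $Y$-diagrams and the $\omega$-twisted slide relations produce no torsion beyond the generators listed in the proof of Theorem~\ref{th:Tors(IC/Y_2)}.
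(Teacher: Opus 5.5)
Your argument is correct and is essentially the paper's proof. The bottom row is exact by \eqref{eq:psi_Z} together with surjectivity of $\psi$; the top row is exact because $E$ maps onto $\Tors\calA_1(P)$, as shown in the proof of Theorem~\ref{th:Tors(IC/Y_2)}; and the embedding follows from injectivity of $\zeta_2^<$ on torsion plus Corollary~\ref{cor:equivariance}. Your use of $\Delta=\delta^<\circ E$ for injectivity of $E$ is equivalent to the paper's, which uses that $\Psi=\psi\circ E$ is an isomorphism.

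The step you flag as the main obstacle is in fact immediate and needs no bookkeeping through Proposition~\ref{prop:varphi}. By Proposition~\ref{prop:before}(ii), $p_*$ is a rational isomorphism onto the torsion-free group $\calA_1^<(H)\simeq\Lambda^3H$, so $\Ker p_*=\Tors\calA_1(P)$.
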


\noindent
Note that the component $\zeta_2^{<}$ of \eqref{eq:big_map}
on the entire group $\mathcal{IC}/Y_2$
\emph{does} depend on the choices 
inherent to the construction of the LMO functor.
On the contrary, according to \eqref{eq:tau_Z},
the component $Z_1^<$ of \eqref{eq:big_map}
coincides with the first Johnson homomorphism  $\tau_1$: 
hence, the invariant  $Z_1^<$ is intrinsic by nature.

\begin{remark}
That $\psi \colon \mathcal{A}_1(P) \to \mathcal{IC}/Y_2$ is an isomorphism
of $\Sp(H)$-modules
has already been established in~\cite{MM03}; 
see \S\ref{subsec:comparison} for a comparison of results. 
The isomorphism type of $\mathcal{IC}/Y_2$ 
as an abelian group was announced in \cite{Hab00}.
\hfill $\blacksquare$
\end{remark}

\begin{proof}[Proof of Corollary \ref{cor:IC/Y_2}]
The commutativity of the diagram follows from \eqref{eq:E_0_bis} 
and  \eqref{eq:psi_Z}.
Besides, it also  follows from \eqref{eq:psi_Z} that
$Z_1^<: \mathcal{IC}/Y_2 \to \calA_1^<(H)$  is rationally an isomorphism: 
hence the second row of \eqref{eq:ladder_1} is exact.

Since $\Psi$ is an isomorphism (by Theorem \ref{th:Tors(IC/Y_2)}),  
$E$ is injective. 
We also observed (in the proof of Theorem \ref{th:Tors(IC/Y_2)})
 that $E$ is surjective 
onto  $\Tors\big(\calA_1(P)\big)$.
Therefore, 
to obtain  the exactness of the first row of \eqref{eq:ladder_1},
it suffices to recall that  
$\calA_1(P)\otimes \Q \simeq \calA_1^<(H)\otimes \Q $ 
(see Proposition \ref{prop:before}),
so that the maximal torsion-free quotient of $\calA_1(P)$ 
is given by $p_*:\calA_1(P) \to \calA_1^<(H)$.

Since $Z_1^<: \mathcal{IC}/Y_2 \to \calA_1^<(H)$
is rationally an isomorphism, and  since 
$\zeta_2^<  : \Tors(\mathcal{IC}/Y_2) \to \calA^{<,c}_{2}(H) \otimes \Q/\Z$ 
is injective (by Theorem \ref{th:Tors(IC/Y_2)}),
the pair $(Z_1^<, \zeta_2^<)$ is injective on the whole of  $\mathcal{IC}/Y_2$.
Corollary \ref{cor:equivariance} gives its $\Sp(H)$-equivariance.
\end{proof}

\subsection{Extracting classical invariants from LMO}
\label{subsec:comparison}

For the sake of completeness,
we shall relate Theorem \ref{th:Tors(IC/Y_2)}
and Corollary \ref{cor:IC/Y_2}
to the description of $\calIC/Y_2$ found in \cite{MM03}.
The latter involves two classical invariants:
the first Johnson homomorphism $\tau_1$ mentioned at \eqref{eq:tau_k},
and  the \emph{Birman--Craggs homomorphism}
$$
\beta: \calIC \longrightarrow \operatorname{Cub}(\Omega)
$$
with values in the space of of cubic functions 
on the $\Z_2$-affine space 
$\Omega=\operatorname{Spin}(\Sigma)$.

Firstly, we relate the two approaches 
(``quantum'' vs ``classical'')
on the torsion parts.
Indeed, the main result of \cite{MM03} implies that
$\beta$ restricts to an isomorphism 
between $\Tors\big(\calIC/Y_2\big)$ 
and the space 
$\operatorname{Quad}(\Omega)$
of quadratic functions. 
To recover this fact with the LMO functor, 
recall from Remark \ref{rem:spin}
the map $e: P \to \operatorname{Aff}(\Omega)$, 
which allows for an identification 
$P^{(2)}\simeq \operatorname{Aff}(\Omega)$.

\begin{proposition} \label{prop:CvsQ}
The following diagram is commutative:
\begin{equation} \label{eq:diag_square}
\xymatrix{
\Tors\Big({\displaystyle\frac{ \calIC}{Y_{2}}}\Big) 
\ar[r]^-\beta \ar[d]_-{\zeta_2^<}& 
\operatorname{Quad}(\Omega) \\
\calA^{<,c}_{2}(H) \otimes \Q/\Z &  
S^2 P^{(2)}/H^{(2)} \ar[lu]_-\Psi
\ar[l]^-\Delta \ar[u]_-{\operatorname{mult}}
}
\end{equation}
where ``$\operatorname{mult}$''  is defined by multiplication of affine functions,
and the map $\beta$ is an isomorphism.
\end{proposition}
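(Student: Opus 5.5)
The left triangle $\zeta_2^< \circ \Psi = \Delta$ is already Theorem \ref{th:Tors(IC/Y_2)}, so only two things remain. First, the square $\beta\circ\Psi = \operatorname{mult}$ must commute. Second, $\beta$ must be an isomorphism.

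Since $\Psi$ is an isomorphism (Theorem \ref{th:Tors(IC/Y_2)}), the second point reduces to showing that $\operatorname{mult}$ is well defined on $S^2P^{(2)}/H^{(2)}$ and bijective onto $\operatorname{Quad}(\Omega)$. Well-definedness holds because $e(x)\,e(x)=e(x)$ for an affine $\Z_2$-valued function. Moreover, $e(s)$ is the constant function $1$, since $\langle y,s\rangle\neq 0$ for every spin structure $y$. Hence $x\cdot x - x\cdot s \mapsto e(x)-e(x)=0$, so the subspace $H^{(2)}$ is killed. Surjectivity is clear, because products of at most two affine functions span $\operatorname{Quad}(\Omega)$, and the constants and affine functions are products with $e(s)=1$. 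For injectivity I would compare dimensions. The exact sequence \eqref{eq:ses} gives $\dim S^2P^{(2)}/H^{(2)} = (2g+1)+\binom{2g}{2}$. Writing elements of $\operatorname{Quad}(\Omega)$ as polynomials of degree $\le 2$ in $2g$ affine coordinates modulo $t^2=t$ gives $\dim\operatorname{Quad}(\Omega)=1+2g+\binom{2g}{2}$. These agree, so $\operatorname{mult}$ is an isomorphism, and the commutativity of the square then yields $\beta=\operatorname{mult}\circ\Psi^{-1}$, an isomorphism.

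For the commutativity $\beta(\Psi(\{x\cdot y\}))=e(x)e(y)$, I would rely on the known value of the Birman--Craggs homomorphism on $Y$-surgeries from \cite{MM03}. Surgery along a $Y$-graph with leaves of types $x_1,x_2,x_3$ has $\beta$-value given by the cubic function $e(x_1)e(x_2)e(x_3)$; this is the formula underlying the isomorphism $\calA_1(P)\simeq \calIC/Y_2$ there. Here $\Psi(\{x\cdot y\})=\psi(E(x\cdot y))$ is surgery along a $Y$-graph with one special leaf of type $s$ and two leaves of types $x,y$, by \eqref{eq:E_0_bis}. Its $\beta$-value is therefore $e(s)e(x)e(y)=e(x)e(y)$.

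The main obstacle is matching conventions between the $Y$-graph formula of \cite{MM03} and the present notion of type $t$ from Lemma \ref{lem:P}. This includes the extra $(+1)$-twist and the identification $e$ of Remark \ref{rem:spin}. If the formula is instead stated via Rokhlin invariants, I would recompute directly: $\beta(M)(\sigma)$ is the Rokhlin invariant of the closure of $M$ with spin structure $\sigma$. For a $Y$-graph whose leaves bound surfaces disjoint from the rest, this Rokhlin invariant is the Rokhlin invariant of the Borromean surgery, which equals the product of the spin evaluations on the three leaves. Checking that this evaluation is exactly $e(t(K_i))$ is the delicate sign/twist bookkeeping.
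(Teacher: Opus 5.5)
Your proposal is correct and follows essentially the same route as the paper. The lower triangle is Theorem~\ref{th:Tors(IC/Y_2)}, the upper triangle is the Rochlin/Birman--Craggs variation formula under $Y$-graph surgery \cite[Lemma 3.16]{MM03} (using $e(s)=1$), and $\beta=\operatorname{mult}\circ\Psi^{-1}$ is an isomorphism; your dimension count simply makes explicit the paper's ``easily checked, working with the basis'' verification that $\operatorname{mult}$ is bijective.
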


\begin{proof}
That the upper inner triangle is commutative follows
from the variation formula for the Rochlin invariant
under $Y$-graph surgery \cite[Lemma 3.16]{MM03}.
The lower inner triangle is  commutative 
by \eqref{eq:triangle2}. 

It is easily checked,
working with the basis \eqref{eq:basis_P(2)} of $P^{(2)}$, that ``mult'' is an isomorphism:
since $\Psi$ is an isomorphism (by Theorem~\ref{th:Tors(IC/Y_2)}),
$\beta$ is an isomorphism.
\end{proof}

\begin{remark}
By Proposition \ref{prop:CvsQ},
$\operatorname{Quad}(\Omega)$
embeds  into $\calA^{<,c}_{2}(H) \otimes \Q/\Z$ in an $\Sp(H)$-equivariant way.
Setting $\overline{x}:=e(\sigma(x))\in \operatorname{Aff}(\Omega)$
for any $x\in H$, 
and denoting by $\overline{1}:=e(s)\in \operatorname{Aff}(\Omega)$
the non-zero constant function, 
we deduce from  \eqref{eq:Delta_0} that the embedding is given by
\begin{eqnarray*}
\overline{x} \cdot \overline{y} &\longmapsto &
\frac{1}{2}\tikz[baseline=5pt]{
  \draw[thick, color=black] (0,0)--(0.5,0.5);
  \draw[thick, color=black] (0.5,0)--(1,0.5);
  \fill [white] (0.75,0.25) circle [radius=0.1];
  \draw[thick, color=black] (0.5,0.5)--(1,0);
  \draw[thick, color=black] (1,0.5)--(1.5,0);
  \draw[black, fill=black] (0.5,0.5) circle (0.25ex);
   \draw[black, fill=black] (1,0.5) circle (0.25ex);
  \draw[thick, color=black] (0.5,0.5) to[out=90, in=90] (1,0.5);
  \draw[color=black] (0.75,-0.25) node {\small $x< x  <y<y$}
}+\frac{1}{2} \tikz[baseline=-8pt, scale=0.5]{
  \draw[thick, color=black] (0,0) circle (0.5);

  \draw[black, fill=black] (-0.5,0) circle (0.5ex);
  \draw[black, fill=black] (0.5,0) circle (0.5ex);

  \draw[thick, color=black] (-0.5,0) to[out=180, in=90] (-1.2,-1);
  
  \draw[color=black] (-1.3,-1.4) node {\small $x$};
  
  \draw[thick, color=black] (0.5,0) to[out=0, in=90] (1.2,-1);
  \draw[color=black] (1.2,-1.4) node {\small $y$};
  \draw[color=black] (-0.15,-1.4) node {\small $< $}
},\\
\overline{x}  & \longmapsto&
 \frac{1}{2} \tikz[baseline=-8pt, scale=0.5]{
  \draw[thick, color=black] (0,0) circle (0.5);

  \draw[black, fill=black] (-0.5,0) circle (0.5ex);
  \draw[black, fill=black] (0.5,0) circle (0.5ex);

  \draw[thick, color=black] (-0.5,0) to[out=180, in=90] (-1.2,-1);
  
  \draw[color=black] (-1.3,-1.4) node {\small $x$};

  \draw[thick, color=black] (0.5,0) to[out=0, in=90] (1.2,-1);
  \draw[color=black] (1.2,-1.4) node {\small $x$};
  \draw[color=black] (-0.15,-1.4) node {\small $< $}
}, \qquad 
 \overline{1}  \ \longmapsto \   \frac{1}{2}\, 
\tikz[baseline=-2pt, scale=0.5]{
  \draw[thick, color=black] (0,0) circle (0.5);

  \draw[thick, color=black] (-0.5,0) -- (0.5,0);

  \draw[black, fill=black] (-0.5,0) circle (0.5ex);
  \draw[black, fill=black] (0.5,0) circle (0.5ex);
}.  
\end{eqnarray*} 

\up
\hfill $\blacksquare$
\end{remark}

\begin{remark} \label{rem:R_BC}
Diagram \eqref{eq:diag_square} exists in the category of filtered $\Sp(H)$-modules: indeed, 
$\calT := \operatorname{Tors}(\calIC/Y_2)$
 has the filtration \eqref{eq:filt_T}, 
$\calS := S^2 P^{(2)}/H^{(2)}$
 has the filtration \eqref{eq:filt_S}, and
$\calA := \calA_2^{<,c}(H) \otimes \Q/\Z$
 has the loop filtration.
Furthermore, by taking the complement to $2$, the degree-filtration on
$\calQ := \operatorname{Quad}(\Omega)$
becomes a decreasing filtration whose associated graded is isomorphic to
$\bigoplus_{k=0}^2 \Lambda^{2-k} H^{(2)}$
via formal differentiation.
Then, according to the results of~\S\ref{subsec:back}, 
$\Gr \zeta_2^<$ is given by the homomorphisms
\begin{align*}
R_2: \calT &\longrightarrow \frakL_2\big(H^{(2)}\big), \quad \{M\} \longmapsto \{\tau_2(M)\}, \\
R_2': L_1\calT &\longrightarrow S^2 H^{(2)}, \quad \{M\} \longmapsto \operatorname{Tr}(\tau_2(M)) + \{\alpha(M)\}, \\
R_2'': L_2\calT &\longrightarrow 2\Z_4, \quad \{M\} \longmapsto 
\operatorname{Tr}'(-\tau_2(M),\alpha(M)) + \{4d''(M)\},
\end{align*}
where the invariants $\alpha:\calIC \to S^2 H$ and $d'':\calIC[2] \to \frac{1}{8}\Z$ 
are introduced in \cite{MM13} (and  mentioned in Examples \ref{ex:Tr} \& \ref{ex:Tr'}). 
It follows from \eqref{eq:diag_square} 
that the homomorphisms $R_2$, $R'_2$ and $R''_2$ also describe $\Gr \beta$; 
this description generalizes \cite[Lemmas 3.18 \& 3.19]{MM13}.
\hfill $\blacksquare$ 
\end{remark}

We now consider the entire groups.
According to \cite{MM03}, 
the pair $(\tau_1,\beta)$
induces an isomorphism between
${{ \calIC}/{Y_{2}}}$
and the fibered product 
$\Lambda^3H \times_{\Lambda^3H^{(2)}} \operatorname{Cub}(\Omega) $,
where the homomorphism  $\Lambda^3 H \to \Lambda^3 H^{(2)}$
is the modulo $2$ reduction and the map
$\operatorname{Cub}(\Omega) \to\Lambda^3 H^{(2)}$
is the formal third differentiation.
We recover this with the LMO functor as follows.

\begin{proposition} \label{prop:deg_1}
The following diagram is commutative:
\begin{equation} \label{eq:diag_square_full}
\xymatrix{
{\displaystyle \frac{\calIC}{Y_2}} \ar[r]^-{(\tau_1,\beta)} 
\ar[d]_-{\big(Z_1^<,\zeta_2^<\big)} &  
\Lambda^3 H \times_{\Lambda^3 H^{(2)}} 
\operatorname{Cub}(\Omega) 
\\
\big(\calA^<_1(H)\otimes\Q\big) \oplus 
\big(\calA^{<,c}_{2}(H) \otimes \Q/\Z\big) & 
\calA_1(P) \ar[lu]^-\psi \ar[l]^-{(p_*, \delta^<)}
\ar[u]_-{(p_*,\operatorname{mult})}
}
\end{equation}
where ``mult'' is defined by multiplication 
of affine functions,
and the map $(\tau_1,\beta)$  is an isomorphism.
\end{proposition}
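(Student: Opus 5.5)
The diagram \eqref{eq:diag_square_full} splits into two triangles, which I will handle separately and then combine. The left inner triangle, $(Z_1^<,\zeta_2^<)\circ\psi=(p_*,\delta^<)$ on $\calA_1(P)$, already holds. Its first component is \eqref{eq:psi_Z} in degree $1$, together with the identification $\calA_1^<(H)\simeq\calA_1(P)/\Tors$ via $p_*$. Its second component is Theorem \ref{th:psi_delta_Z} for $n=1$, composed with the inclusion $\calA^{<,c}_1(P)\subset\calA^<_1(P)$ and with $\iota$; commutativity of \eqref{eq:psi_psi} lets us pass between the Lie and associative versions.

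The upper triangle says that $(\tau_1,\beta)\circ\psi=(p_*,\operatorname{mult})$, where $\operatorname{mult}$ sends a $Y$-diagram with colors $x,y,z\in P$ to the cubic function $e(x)e(y)e(z)$. It is enough to check this on generators $D$, i.e.\ on surgery $U_{C(D)}$ along a single $Y$-graph. For the first component I use that $Z_1^<=\tau_1$ by \eqref{eq:tau_Z}, and combine this with \eqref{eq:psi_Z}. For the second I use the variation formula of the Rochlin invariant under $Y$-graph surgery \cite[Lemma 3.16]{MM03}, which is the same input already used in Proposition \ref{prop:CvsQ}. For a $Y$-graph whose leaves have types $x,y,z$, this formula gives $\beta(U_{C(D)})=e(x)e(y)e(z)$. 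I also need $(p_*,\operatorname{mult})$ to land in the fibered product. This holds because the formal third differential of $e(x)e(y)e(z)$ is the mod $2$ reduction of $p_*(x)\wedge p_*(y)\wedge p_*(z)$, by the pull-back square of Remark \ref{rem:spin}.

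Next I show that $(\tau_1,\beta)$ is an isomorphism. Since $\psi$ is an isomorphism (Corollary \ref{cor:IC/Y_2}), it suffices to prove that $(p_*,\operatorname{mult}):\calA_1(P)\to\Lambda^3H\times_{\Lambda^3H^{(2)}}\operatorname{Cub}(\Omega)$ is bijective. I would compare two short exact sequences. On the source side I use the top row of \eqref{eq:ladder_1}, which is $0\to S^2P^{(2)}/H^{(2)}\xrightarrow{E}\calA_1(P)\xrightarrow{p_*}\Lambda^3H\to0$. On the target side I use the sequence $0\to\operatorname{Quad}(\Omega)\to\Lambda^3H\times_{\Lambda^3H^{(2)}}\operatorname{Cub}(\Omega)\to\Lambda^3H\to0$. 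Exactness of the target sequence holds because $\operatorname{Quad}(\Omega)$ is the kernel of third differentiation and because $\operatorname{Cub}(\Omega)\to\Lambda^3H^{(2)}$ is onto. The map between the kernels is $\operatorname{mult}\circ E$; for $x\cdot y$ it gives $e(x)e(y)e(s)=e(x)e(y)$, since $e(s)=\overline1$. This is exactly the isomorphism $\operatorname{mult}$ of Proposition \ref{prop:CvsQ}. The map between the quotients is the identity, so the five lemma concludes.

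The main obstacle I expect is the Rochlin variation formula in the upper triangle, specifically matching conventions. The framing shift built into the type $t$ in Lemma \ref{lem:P}.(a), and the identification $P^{(2)}\simeq\operatorname{Aff}(\Omega)$ via $e$, must agree with the normalization of $\beta$ in \cite{MM03}. I would settle this by testing on the basis \eqref{eq:basis_P(2)}, including special leaves colored by $s$.
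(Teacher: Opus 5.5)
Your proof is correct. The two inner triangles are handled as in the paper, with one small variation. For the $\tau_1$-component of the upper triangle, you combine $Z_1^<=\tau_1$ from \eqref{eq:tau_Z} with \eqref{eq:psi_Z}, whereas the paper quotes the variation formula for $\tau_1$ under $Y$-graph surgery \cite[Lemma 4.22]{MM03}; both work.

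The genuine difference is in proving that $(\tau_1,\beta)$ is an isomorphism. The paper cites \cite[Lemma 4.24]{MM03}, which shows directly, using the primary decomposition \eqref{eq:primary}, that $(p_*,\operatorname{mult})\colon\calA_1(P)\to\Lambda^3H\times_{\Lambda^3H^{(2)}}\operatorname{Cub}(\Omega)$ is bijective, and then transports this through $\psi$. You instead get bijectivity of $(p_*,\operatorname{mult})$ from the five lemma. You compare the top row of \eqref{eq:ladder_1} with the extension $0\to\operatorname{Quad}(\Omega)\to\Lambda^3H\times_{\Lambda^3H^{(2)}}\operatorname{Cub}(\Omega)\to\Lambda^3H\to 0$, and use the elementary isomorphism $\operatorname{mult}\colon S^2P^{(2)}/H^{(2)}\to\operatorname{Quad}(\Omega)$ of Proposition \ref{prop:CvsQ} on the kernels. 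Your checks that $(p_*,\operatorname{mult})\circ E$ equals $\operatorname{mult}$ followed by inclusion (because $p_*(s)=0$ and $e(s)=\overline{1}$), and that the target sequence is exact, are correct.

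The paper's route is shorter and keeps this combinatorial step independent of the LMO machinery. Yours stays inside the paper by reusing the exact sequence of Corollary \ref{cor:IC/Y_2}. It also shows that the fibered product is precisely the torsion/torsion-free extension of $\calIC/Y_2$, which matches the section's aim of recovering \cite{MM03} from the LMO functor.

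Note that injectivity of $E$ in that row could also be read off from $\operatorname{mult}\circ E=\operatorname{mult}$. The equality $\ker p_*=\Im E$, however, still rests on the computation of $\Tors\calA_1(P)$ via Proposition \ref{prop:varphi}, so the primary decomposition enters your argument as well.
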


\begin{proof}
That the upper inner triangle is commutative follows
from the variation formula for $\tau_1$
under $Y$-graph surgery \cite[Lemma 4.22]{MM03}
(and the analogous formula for $\beta$).
The lower inner triangle is commutative 
by \eqref{eq:psi_Z} and \eqref{eq:AGAA}. 
It is proved, using the primary decomposition \eqref{eq:primary} of $P$, 
that $(p_*,\operatorname{mult})$ is an isomorphism 
(see \cite[Lemma 4.24]{MM03}).
Since $\psi$ is an isomorphism (by Corollary \ref{cor:IC/Y_2}), 
we deduce that $(\tau_1,\beta)$ is an isomorphism too.
\end{proof}

\begin{remark}
The commutative diagrams
\eqref{eq:diag_square} and \eqref{eq:diag_square_full} 
could also be deduced 
from the results of \cite[\S 5.2]{MM13}. \hfill $\blacksquare$
\end{remark}

\section{The degree-three case}

\label{sec:deg_3}

In this last section, we apply the results of  \S\ref{sec:odd_degrees}--\S\ref{sec:symLMO}
to the case of degree 3. Specifically, we give an intrinsic description of the quotient $Y_3 \calIC/Y_4$ as an $\Sp(H)$-module, and we derive several consequences.

\subsection{Description of the $\Sp(H)$-module $Y_3 \mathcal{IC} / Y_{4}$}

First of all, we make explicit the surgery map $\Psi$ 
introduced in \S \ref{subsec:asm},
the map $E$ introduced in \S \ref{subsec:rsm},
as well as the map $\Delta$ of \S \ref{subsec:lower_bound}.
These are the contents of the next two lemmas
(where we use the convention \eqref{eq:lifts}
to write formulas for those maps).

\begin{lemma} We have the commutative diagram
\begin{equation} \label{eq:E_psi's}
\xymatrix{
H^{(2)} \otimes \Lambda^2 P^{(2)}
\ar[r]^-\Psi \ar[d]_-E &
\Tors\Big({\displaystyle\frac{Y_{3} \calIC}{Y_{4}}}\Big), \\ 
\Tors\big(\calA^{<,c}_{3}(H)\big) \ar[ru]_-\psi &  
}
\end{equation}
where, for all $h\in H^{(2)}$  and for all  $x,y\in P^{(2)}$, we have
$$
E(h\otimes x\wedge y) =
\tikz[baseline = 1.75cm]{
\draw[thick, color = black](4.5,1.75) .. controls (4.5,2) .. (5,2);
\draw[thick, color = black](4.5,1.75) .. controls (4.5,1.5) .. (5,1.5);
\draw[thick, color = black](5,2) -- (5.4,2);
\draw[thick, color = black](5,1.5) -- (5.5,1.5);
\draw[thick, color = black](5,2) -- (5,2.25);
\draw[thick, color = black](5.5,1.5) -- (5.5,2.25);
\draw[thick, color = black](5.5,1.5) .. controls (6.5,1.5) .. (6.5,2.25);
\draw[thick, color = black](5.6,2) .. controls (6,2) .. (6,2.25);
\draw[thick, color = black](4.5,1.75) .. controls (4.25,1.75) .. (4.25,2.25);
\draw [black, fill = black] (4.5,1.75) circle (.4ex);
\draw [black, fill = black] (5,2) circle (.4ex);
\draw [black, fill = black] (5.5,1.5) circle (.4ex);
\draw[color=black] (4.25,2.45) node {\small $  h$};
\draw[color=black] (5,2.45) node {\small $  x$};
\draw[color=black] (5.5,2.45) node {\small $  x$};
\draw[color=black] (6,2.45) node {\small $  y$};
\draw[color=black] (6.5,2.45) node {\small $  y$};
\draw[color=black] (5.75,2.425) node {\scriptsize $<$};
\draw[color=black] (5.25,2.425) node {\scriptsize $\parallel$};
\draw[color=black] (6.25,2.425) node {\scriptsize $\parallel$};
\draw[color=black] (4.6,2.425) node {\scriptsize $<$};
}
=
 \forktree{  h}{  x}{  x}{  y}{  y} 
+\fr(y) \Odiag{  h}{  x}{  x} + \fr(x)\Odiag{h}{  y}{  y}  
$$
and\\[-0.5cm]
$$
\Psi(h\otimes x\wedge y) 
= \Bigg( \hbox{$U$ surgered along 
$\begin{array}{c}
\includegraphics[scale=0.85]{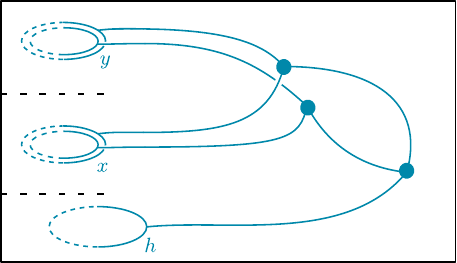}
\end{array}$} \Bigg).
$$
(Here the graph clasper has 3 layers of leaves:
there is one leaf realizing  $h\in H^{(2)}$ in the lower slice of $U$,  two parallel leaves realizing $x\in P^{(2)}$
in the middle slice, and two parallel leaves realizing $y\in P^{(2)}$ in the upper slice.)
\end{lemma}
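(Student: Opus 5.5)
The plan is to specialize the general constructions of \S\ref{sec:odd_degrees} to $n:=1$. First, I would identify $\calB^<_1\big(P^{(2)}\big)$ with $H^{(2)}\otimes\Lambda^2P^{(2)}$. A connected rooted beaded Jacobi diagram of internal degree $1$ is a $Y$-diagram with root $h$ and two ordered leaves $x<y$. A bead on a tree edge is trivial by the group relations, so no beads survive and $\widetilde D$ is two disjoint copies of $D$. By Remark~\ref{rem:H_is_enough} the root may be taken in $H^{(2)}$. The AS relation (mod $2$) together with the BSTU-like relation allows the leaves to be swapped: the two correction terms are a diagram with a looped edge, which is zero by ``self-loop'', and the same diagram with a bead, which vanishes for the same reason. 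Hence the symbol is symmetric in $(x,y)$, and multilinearity gives bilinearity. For the alternating property I would set $x=y$ and check that the diagram with two equal leaves vanishes. Since the target of $\Psi$ and $E$ is $2$-torsion, it is enough to produce a homomorphism out of $H^{(2)}\otimes\Lambda^2P^{(2)}$ that factors through $\calB^<_1$; well-definedness is inherited from Theorem~\ref{th:Psi} and Proposition~\ref{prop:Psi_psi}.

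Second, I would read off the formulas. For $\Psi$, the recipe in the proof of Theorem~\ref{th:Psi} gives $\widetilde D_+$: a central $Y$ whose two upper ends are glued to the roots of two copies of the $Y$-diagram $D$. Its root leaf realizes $h$ in the bottom slice, and the twin leaves realize parallel copies of $x$ and of $y$ in the middle and upper slices, which is exactly the pictured clasper. For $E$, I would apply \eqref{eq:E_def} with $S\subset\{x,y\}$. The term $S=\varnothing$ is the fork tree with leaves $x\parallel x<y\parallel y$, read via notation \eqref{eq:parallel_notation}. The terms $S=\{x\}$ and $S=\{y\}$ give the one-loop diagrams $\fr(x)\Odiag{h}{y}{y}$ and $\fr(y)\Odiag{h}{x}{x}$. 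The term $S=\{x,y\}$ yields a diagram with a single external vertex, which vanishes by IHX and self-loop, as in the fully-special check. Expanding the $\parallel$ notation then gives the stated three-term formula. Commutativity of \eqref{eq:E_psi's} is Proposition~\ref{prop:Psi_psi} with $n=1$.

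The step I expect to be the main obstacle is the alternating property, and I would handle it as follows. For $x=y$, $E(h\otimes x\wedge x)$ is a sum of $2$-torsion diagrams of degree $3$. I would try to show it vanishes using the symmetry of $\widetilde D$ exchanging the two copies, combined with AS and STU-like. Directly at the clasper level, I would instead slide the two pairs of parallel $x$-leaves into a common slice, in the style of Lemma~\ref{lem:BSTU}, and invoke Lemma~\ref{lem:psi_order_two}. If this proves too delicate, a fallback is to note that the symmetric part is killed by composing with the injective $\zeta_4^<$ via $\Delta$. However, that injectivity is proved only later, so I would prefer the diagrammatic argument.
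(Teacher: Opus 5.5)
Your skeleton is the paper's: specialize Proposition~\ref{prop:Psi_psi} at $n=1$, read $E$ off \eqref{eq:E_def}, then descend to $H^{(2)}\otimes\Lambda^2P^{(2)}$. Your three-term formula for $E$ is also right. The gap is in the descent, which you correctly single out as the crux.

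\textbf{First problem.} $\calB^<_1(P^{(2)})$ is \emph{not} $H^{(2)}\otimes\Lambda^2P^{(2)}$. You cannot show that the $Y$-diagram $D(h;x<x)$ (root $h$, two leaves colored $x$) vanishes there, because no defining relation kills it. BSTU-like and AS only make $D(h;x<y)$ symmetric in $x,y$. (Incidentally, the beaded correction term in BSTU-like dies by the fully-special relation, vacuously, as in Example~\ref{ex:B_1}. It does not die by self-loop, which is imposed only on bead-free pictures.) So $\calB^<_1(P^{(2)})$ surjects onto, but is strictly larger than, $H^{(2)}\otimes\Lambda^2P^{(2)}$.

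What must be shown is $E(D(h;x<x))=0$. Once that holds, $\Psi=\psi\circ E$ kills $D(h;x<x)$ automatically. Neither the clasper-level sliding nor the (circular) fallback through injectivity of $\zeta^<_4$ is needed.

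\textbf{Second problem.} The tools you propose for $E(D(h;x<x))=0$ cannot do the job. For $x=y$, the two $\fr$-terms in your formula are the same $2$-torsion one-loop diagram, so they cancel. Thus $E(D(h;x<x))$ is the tree $T$ whose root $h$ sits on a node joined to two nodes, each carrying two leaves colored $x$.

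All four leaves have color $x$ and $\omega(x,x)=0$, so STU-like lets you reorder them freely. The swap of the two copies, or of the two leaves of one copy, then only yields $T=-T$, i.e.\ $2T=0$, which you already knew. AS plus symmetries never give more.

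The missing ingredient is IHX, which is what the paper uses. Apply IHX to the edge joining the central node to one side node. This writes $T$ as a sum of two trees that differ by exchanging the two $x$-leaves of that node. After reordering, these two trees cancel by AS. In bracket form, with $w=[x,x]$, this is $[[x,x],w]=[x,[x,w]]+[x,[w,x]]=0$.

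Hence $T=0$, and $E$, and therefore $\Psi$, factors through $H^{(2)}\otimes\Lambda^2P^{(2)}$.
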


\begin{proof}
Specializing Proposition \ref{prop:Psi_psi} 
to the case $n:=1$, we obtain the commutative diagram
\begin{equation} \label{eq:inter}
\xymatrix{
\calB^<_1\big(P^{(2)}\big) \ar[r]^-\Psi \ar[d]_-E &
\Tors\Big({\displaystyle\frac{Y_{3} \calIC}{Y_{4}}}\Big) \\ 
\Tors\big(\calA^{<,c}_{3}(H)\big) \ar[ru]_-\psi &  
}
\end{equation}
where (using Remark \ref{rem:H_is_enough}) 
the map $E$ is given by 
\begin{equation} \label{eq:Ehxy}
E\Big( \begin{tikzpicture}[baseline=-1ex, scale=0.2]
\draw[thick] (0,-0.5) -- (1,1);
\draw[thick] (0,-0.5) -- (-1,1);
\draw[thick] (0,-0.5) -- (0,-2);
\fill (0,-0.4) circle (10pt);
\node[below] at (0,-1.8) {$h \star$};
\node[above] at (0,1) {$x$ \scriptsize ~$<$ \normalsize $y$};
\end{tikzpicture} \Big) =
\tikz[baseline = 1.75cm]{
\draw[thick, color = black](4.5,1.75) .. controls (4.5,2) .. (5,2);
\draw[thick, color = black](4.5,1.75) .. controls (4.5,1.5) .. (5,1.5);
\draw[thick, color = black](5,2) -- (5.4,2);
\draw[thick, color = black](5,1.5) -- (5.5,1.5);
\draw[thick, color = black](5,2) -- (5,2.25);
\draw[thick, color = black](5.5,1.5) -- (5.5,2.25);
\draw[thick, color = black](5.5,1.5) .. controls (6.5,1.5) .. (6.5,2.25);
\draw[thick, color = black](5.6,2) .. controls (6,2) .. (6,2.25);
\draw[thick, color = black](4.5,1.75) .. controls (4.25,1.75) .. (4.25,2.25);
\draw [black, fill = black] (4.5,1.75) circle (.4ex);
\draw [black, fill = black] (5,2) circle (.4ex);
\draw [black, fill = black] (5.5,1.5) circle (.4ex);
\draw[color=black] (4.25,2.45) node {\small $  h$};
\draw[color=black] (5,2.45) node {\small $  x$};
\draw[color=black] (5.5,2.45) node {\small $  x$};
\draw[color=black] (6,2.45) node {\small $  y$};
\draw[color=black] (6.5,2.45) node {\small $  y$};
\draw[color=black] (5.75,2.425) node {\scriptsize $<$};
\draw[color=black] (5.25,2.425) node {\scriptsize $\parallel$};
\draw[color=black] (6.25,2.425) node {\scriptsize $\parallel$};
\draw[color=black] (4.6,2.425) node {\scriptsize $<$};
}  
\end{equation}
for all $h\in H^{(2)}$ and $x,y\in P^{(2)}$.
It easily seen that  \eqref{eq:Ehxy} is zero when $x=y$
(as~a consequence of ``IHX'' and ``AS'').
Hence, the map $E$ in \eqref{eq:inter} factorizes
through $H^{(2)} \otimes \Lambda^2 P^{(2)}$, 
and so does the surgery map $\Psi$.
\end{proof}

\begin{lemma}\label{lem:Delta_3}
We have the commutative diagram
$$
\xymatrix{
H^{(2)} \otimes \Lambda^2 P^{(2)}
 \ar[rd]_-\Delta \ar[r]^-E &
 \Tors\big(\calA^{<,c}_{3}(H)\big) \ar[d]^-{\delta^<} \\
&  \calA^{<,c}_{4}(H) \otimes \Q/\Z
}
$$
where the $\Sp(H)$-equivariant map $\Delta$ is given by 
\begin{equation} \label{eq:Delta_3}
\Delta(h \otimes (x \wedge y)) = \frac{1}{2}
\begin{tikzpicture}[scale = 0.4, baseline = 1.2cm]
        \node  (0) at (0.5, 4.5) {};
		\node  (1) at (1.75, 1.75) {};
		\node  (2) at (2.5, 2.25) {};
		\node  (3) at (3.5, 2.25) {};
		\node  (4) at (-0.5, 4.5) {};
		\node  (5) at (2, 1.25) {};
		\node  (6) at (2.5, 3) {};
		\node  (7) at (3.5, 3) {};
		\node  (8) at (1.5, 4.5) {};
		\node  (9) at (2.5, 4.5) {};
		\node  (10) at (3.5, 4.5) {};
		\node  (11) at (4.5, 4.5) {};
		\draw [thick, in=180, out=-90, looseness=1.25] (0.center) to (1.center);
		\draw [thick,in=-90, out=0] (1.center) to (2.center);
		\draw [thick,in=180, out=-90, looseness=1.50] (4.center) to (5.center);
		\draw [thick, in=-90, out=0, looseness=1.25] (5.center) to (3.center);
		\draw [thick] (6.center) to (2.center);
		\draw [thick] (7.center) to (3.center);
		\draw [thick] (2.center) to (3.center);
		\draw [thick] (6.center) to (8.center);
        \draw [thick] (9.center) to (7.center);
        \draw [white, fill = white] (3,3.75) circle  (1ex);
        \draw [thick] (6.center) to (10.center);
		\draw [thick] (7.center) to (11.center);
        \draw [black, fill = black] (2.center)  circle (0.8ex);
         \draw [black, fill = black] (3.center)  circle (0.8ex);
         \draw [black, fill = black] (6.center)  circle (0.8ex);
         \draw [black, fill = black] (7.center)  circle (0.8ex);
         \node[above] at (2.1,4.5) 
         {\resizebox{!}{.5\baselineskip}{$  h<  h<  x \parallel   x<   y\parallel   y$}};
\end{tikzpicture} + \frac{1}{2} \begin{tikzpicture}[scale = 0.4, baseline = 1.25cm]
		\node (0) at (0.5, 4.5) {};
		\node (1) at (1.75, 1.5) {};
		\node (2) at (2.5, 2.25) {};
		\node (3) at (2, 3) {};
		\node (4) at (3, 3) {};
		\node (5) at (1.5, 4.5) {};
		\node (6) at (2.5, 4.5) {};
		\node (7) at (3.5, 4.5) {};
		\node (8) at (1.5, 3.75) {};
		\draw [thick, in=180, out=-90] (0.center) to (1.center);
		\draw [thick, in=-90, out=0, looseness=1.25] (1.center) to (2.center);
		\draw [thick] (2.center) to (3.center);
		\draw [thick] (2.center) to (4.center);
		\draw [thick, in=-90, out=150, looseness=0.75] (3.center) to (8.center);
		\draw [thick] (8.center) to (5.center);
		\draw [thick, in=270, out=45] (4.center) to (7.center);
		\draw [thick, in=0, out=135] (4.center) to (8.center);
        \draw [white, fill = white] (2.375, 3.525) circle  (1ex); 
		\draw [thick, in=270, out=45] (3.center) to (6.center);
        \draw [black, fill = black] (2.center)  circle (0.8ex); \draw [black, fill = black] (3.center)  circle (0.8ex); \draw [black, fill = black] (4.center)  circle (0.8ex); \draw [black, fill = black] (8.center)  circle (0.8ex);
        \node[above] at (2,4.5) {\resizebox{!}{.5\baselineskip}{$ {h}< {x}<  {y} \parallel  {y}$}};
\end{tikzpicture} +  \frac{1}{2}
 \begin{tikzpicture}[scale = 0.4, baseline = 1.25cm]
		\node (0) at (0.5, 4.5) {};
		\node (1) at (1.75, 1.5) {};
		\node (2) at (2.5, 2.25) {};
		\node (3) at (2, 3) {};
		\node (4) at (3, 3) {};
		\node (5) at (1.5, 4.5) {};
		\node (6) at (2.5, 4.5) {};
		\node (7) at (3.5, 4.5) {};
		\node (8) at (1.5, 3.75) {};
		\node (9) at (3.5, 3.75) {};
		\draw [thick, in=180, out=-90] (0.center) to (1.center);
		\draw [thick, in=-90, out=0, looseness=1.25] (1.center) to (2.center);
		\draw [thick] (2.center) to (3.center);
		\draw [thick] (2.center) to (4.center);
		\draw [thick, in=-90, out=150, looseness=0.75] (3.center) to (8.center);
		\draw [thick](8.center) to (5.center);
		\draw [thick] (7.center) to (9.center);
		\draw [thick, in=-90, out=30, looseness=0.75] (4.center) to (9.center);
		\draw [thick, in=-90, out=150] (4.center) to (6.center);
        \draw [white, fill = white] (2.55, 3.55) circle  (1ex); 
		\draw [thick, in=180, out=60] (3.center) to (9.center);
        \draw [black, fill = black] (2.center)  circle (0.8ex); \draw [black, fill = black] (3.center)  circle (0.8ex); \draw [black, fill = black] (4.center)  circle (0.8ex); \draw [black, fill = black] (9.center)  circle (0.8ex);
        \node[above] at (2,4.5) {\resizebox{!}{.5\baselineskip}{$ {h}< {x} \parallel  {x}< {y}$}};
\end{tikzpicture},
\end{equation}
for all $h\in H^{(2)}$ and   $x,y\in P^{(2)}$.
\end{lemma}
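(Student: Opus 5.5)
The plan is to specialize Lemma \ref{lem:Delta} to $n:=1$ and read off the formula. Lemma \ref{lem:Delta} already gives the commutativity $\Delta=\delta^<\circ E$ on $\calB^<_1\big(P^{(2)}\big)$, together with the $\Sp(H)$-equivariance of $\Delta$. In the previous lemma, $E$ and $\Psi$ were shown to factor through the quotient $\calB^<_1\big(P^{(2)}\big)\to H^{(2)}\otimes\Lambda^2P^{(2)}$, sending the $Y$-shaped rooted diagram with root $h$ and leaves $x<y$ to $h\otimes x\wedge y$. Since $\Delta=\delta^<E$, the map $\Delta$ factors through this quotient as well, and the triangle commutes. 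Equivariance passes to the quotient because the quotient map is equivariant. So the real content is the explicit formula \eqref{eq:Delta_3}.

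To get the formula, I would evaluate \eqref{eq:Delta_def} on $D$, the $Y$-diagram with root $h$ and leaves $x<y$ (no beads, $n=1$). There are four subsets $S\subset\{x,y\}$, and each contributes two kinds of terms: the $\widetilde D^S_{++}$ terms and the $\widetilde D^{S,i}_+$ terms with $i\notin S$.

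\emph{The $\widetilde D_{++}$ terms.} Here $\widetilde D$ is two disjoint $Y$'s whose roots are glued to the top of an $H$-diagram. Both bottom vertices are coloured $h$, and the leaf twins are either kept (coloured $x,x$ or $y,y$) or glued when the leaf lies in $S$. Weighting by $\fr$ and using the notation \eqref{eq:parallel_notation}, these four terms assemble into exactly the first diagram $\tfrac12(h<h<x\parallel x<y\parallel y)$, just as in Example~\ref{ex:E}.

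\emph{The $\widetilde D_+^{S,i}$ terms.} For $i=x$, the subsets to consider are $S=\varnothing$ and $S=\{y\}$. Each yields the augmented diagram whose $x$-twins are capped by a $Y$ with free vertex $x$, and whose $y$-twins are either kept or glued with weight $\fr(y)$. Together these assemble into $y\parallel y$, giving the second diagram. Symmetrically, $i=y$ gives the third diagram.

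The main obstacle is checking that these assembled diagrams have precisely the shapes drawn in \eqref{eq:Delta_3}. This requires tracking vertex orientations and the ordering of the new $Y$-vertex relative to the others. Signs are irrelevant here because everything is $2$-torsion, but I would still have to verify via AS/IHX that the capped-twin configuration equals the drawn crossing diagram modulo $\calA^{<,c}_4(H)$.

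Finally, I would check that the formula is compatible with the factorization. For $x=y$ the right-hand side should vanish, which follows automatically from $\Delta=\delta^<E$ and $E(h\otimes x\wedge x)=0$. Linearity in $h$ and in $x$, $y$ is likewise inherited from $E$ and $\delta^<$.
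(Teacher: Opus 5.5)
Your proposal is correct and follows the paper's own argument. The paper's proof consists precisely in specializing Lemma~\ref{lem:Delta} and formula~\eqref{eq:Delta_def_bis} at $n:=1$, and your grouping of the $\widetilde{D}^{S,i}_+$ terms by the leaf $i$ is exactly the passage from \eqref{eq:Delta_def} to \eqref{eq:Delta_def_bis}; since the drawn diagrams in \eqref{eq:Delta_3} are literally the $\widetilde{D}^S_{++}$ and $\widetilde{D}^{S,i}_+$ combined via the notation \eqref{eq:parallel_notation}, no AS/IHX manipulation is needed beyond reading off the pictures.
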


\begin{proof}
This is obtained by specializing Lemma \ref{lem:Delta}
and formula \eqref{eq:Delta_def_bis} at $n:=1$.
\end{proof}

Furthermore, we will need the next two lemmas, 
whose proofs are postponed to the end of this section.

\begin{lemma} \label{lem:E_inj}
The map 
$E:H^{(2)} \otimes \Lambda^2 P^{(2)}
 \to \Tors \big(\calA^{<,c}_{3}(H)\big)$ 
 is an isomorphism of $\Sp(H)$-modules.
\end{lemma}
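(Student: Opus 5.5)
We must show that $E:H^{(2)}\otimes\Lambda^2P^{(2)}\to\Tors\big(\calA^{<,c}_3(H)\big)$ is bijective; the $\Sp(H)$-equivariance is already given by Proposition~\ref{prop:Psi_psi}. The plan is to compare filtrations. On the source we use the $s$-loop-type filtration
\[
H^{(2)}\otimes\Lambda^2P^{(2)}\supset H^{(2)}\otimes \big(s\wedge P^{(2)}\big).
\]
Its associated graded is $H^{(2)}\otimes\Lambda^2H^{(2)}$ in degree $0$ and $H^{(2)}\otimes H^{(2)}$ in degree $1$; this is exact because $P^{(2)}$ has the basis \eqref{eq:basis_P(2)}. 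On the target we use the loop filtration on $\calA^{c}_{3}(H)$, i.e.\ $\Gr^L$ via Proposition~\ref{prop:Gr^LA^<}. By Remark~\ref{rem:E_Levine}, $E$ is filtration-preserving, so it suffices to prove that $\Gr^L E$ is an isomorphism onto $\Gr^L\Tors$ in each loop degree.

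In loop degree $0$, $\Tors\calA^c_{3,0}(H)$ is the kernel $H\otimes\frakL_2(H)\otimes\Z_2\simeq H^{(2)}\otimes\Lambda^2H^{(2)}$ in Levine's sequence \eqref{eq:Levine1} with $k=1$. This uses the Conant--Schneiderman--Teichner isomorphism $\eta$ and the fact that $\mathsf{D}_3(H)$ is torsion-free. Remark~\ref{rem:E_Levine} says that $\Gr_0^LE$ is exactly Levine's injection, which is therefore an isomorphism onto this torsion. In loop degree $1$, \eqref{eq:torsion_wheel} with $k=2$ gives $\Tors\calA^c_{3,1}(H)\simeq H^{(2)}\otimes H^{(2)}$ via the symmetric wheels $O(h_1,h_2,h_1,h_2)$. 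The formula for $E$ shows that $h\otimes s\wedge y\mapsto \Odiag{h}{y}{y}$ modulo higher loop degree, which is this wheel up to sign. In loop degree $2$ and beyond, a degree-$3$ connected diagram has at most one external vertex there, so the corresponding pieces vanish by self-loop/IHX, and $E$ has no component in these degrees.

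It then remains to check that $\Tors\calA^{<,c}_3(H)$ is filtered compatibly, namely that $\Gr^L$ of the torsion equals the torsion of $\Gr^L$. I would argue this through the isomorphism $\varphi:\calA'(S)\to\calA^<(H)$ from the proof of Proposition~\ref{prop:Gr^LA^<}. Since $\varphi$ is loop-filtration-preserving with filtration-preserving inverse, $\calA^{<,c}_3(H)\simeq\calA^c_3(H)=\calA^c_{3,0}\oplus\calA^c_{3,1}$ as filtered groups, so the torsion splits accordingly. Surjectivity and injectivity of $E$ then follow from the graded statements, since the filtration has finite length.

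The main obstacle is the loop-degree-$1$ identification: we must confirm that the image of $h\otimes s\wedge y$ under $\Gr^L_1E$ is exactly the generator $O(h,y,h,y)$ of \eqref{eq:torsion_wheel}, and not twice it or zero. We must also confirm that these images span the full $(2g)^2$-dimensional space, which requires tracking the gluing of the twin $s$-vertices in $E(D)$. I would first compute this directly on basis elements $\sigma(a_i),\sigma(b_j)$.
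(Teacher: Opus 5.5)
Your proof is correct and follows the paper's approach. Both arguments use that $E$ is filtration-preserving, that $\Gr^L_0E$ is Levine's injection \eqref{eq:Levine1} onto $\Tors\calA^c_{3,0}(H)$, and that $\Gr^L_1E$ lands on the torsion wheels \eqref{eq:torsion_wheel}. The only difference is the last step: the paper proves injectivity this way and then compares cardinalities using $\calA^{<,c}_3(H)\simeq\calA^c_{3,0}(H)\oplus\calA^c_{3,1}(H)$, while you use that same splitting to get $\Gr^L$ of the torsion equal to the torsion of $\Gr^L$, and hence surjectivity.

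One correction. In degree $3$ the torsion wheels have three legs colored $(h_1,h_2,h_1)$ (the case $k=2$ of \eqref{eq:torsion_wheel}), not four legs as in $O(h_1,h_2,h_1,h_2)$. The step you call the main obstacle is also immediate: setting $x=s$ in \eqref{eq:Ehxy} kills the tree term and the $\fr(y)$ term. What remains is exactly the one-loop wheel with legs $h,y,y$, which is the image of $y\otimes h$ under \eqref{eq:torsion_wheel}, so $\Gr^L_1E$ is an isomorphism onto $\Tors\calA^c_{3,1}(H)$.
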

 
 \begin{lemma} \label{lem:cyclic}
 The linear map 
 $C :  \Lambda^3 P^{(2)} \rightarrow H^{(2)} \otimes \Lambda^2 P^{(2)}$  defined by 
 $$
C(x,y,z) := 
 p_*(x) \otimes (y \wedge z) + p_*(y) \otimes (z \wedge x) +p_*(z) \otimes (x \wedge y)
 $$ 
 is injective, 
 and  $\Psi: H^{(2)} \otimes \Lambda^2 P^{(2)} 
 \to \Tors(Y_3 \calIC/Y_4)$ vanishes on 
 $\Im(C)\simeq  \Lambda^3 P^{(2)}$.  
 \end{lemma}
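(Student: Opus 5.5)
Two claims need proof: injectivity of $C$, and vanishing of $\Psi$ on $\Im(C)$.

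\emph{Injectivity of $C$.} We work in the basis \eqref{eq:basis_P(2)} of $P^{(2)}$, namely $\{s\}\cup\sigma(S)$, and use the decomposition $P^{(2)}=\Z_2 s\oplus \sigma(H^{(2)})$. This splits $\Lambda^3P^{(2)}\simeq \Lambda^3 H^{(2)}\oplus \big(s\wedge\Lambda^2H^{(2)}\big)$. On the second summand, $p_*(s)=0$ gives
\[
C(s,y,z)=p_*(y)\otimes(z\wedge s)+p_*(z)\otimes(s\wedge y).
\]
Its component in $H^{(2)}\otimes(s\wedge H^{(2)})\simeq H^{(2)}\otimes H^{(2)}$ is $y\otimes z+z\otimes y$ (signs are irrelevant mod $2$). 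This is the antisymmetrization $\Lambda^2H^{(2)}\to H^{(2)}\otimes H^{(2)}$, which is injective in characteristic $2$ on basis elements $y\neq z$. The image of the first summand lies in $H^{(2)}\otimes\Lambda^2H^{(2)}$, where $C$ is the standard map $\Lambda^3\to H\otimes\Lambda^2$ from \eqref{eq:expand}. That map is injective over $\Z_2$: on a basis element $a\wedge b\wedge c$ with distinct basis vectors, the three terms are distinct basis tensors. Since the two images lie in complementary summands, $C$ is injective.

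\emph{Vanishing of $\Psi\circ C$.} By \eqref{eq:E_psi's} we have $\Psi=\psi\circ E$, so it suffices to show $E\circ C=0$ in $\calA^{<,c}_3(H)$. I would compute $E(C(x,y,z))$ with the formula of the previous lemma: three fork diagrams plus $\fr$-weighted $\Odiag{}{}{}$ terms. Then I would show they cancel using IHX applied to the central edges of the forks, STU-like to reorder leaves, and the symmetry/AS argument that makes every $\widetilde D_+^S$ of order $2$.

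Concretely, for basis colors (so $\fr=0$ on $\sigma(S)$), $\sum_{\mathrm{cyc}}\forktree{x}{y}{y}{z}{z}$ is the image under $\eta^{-1}$ of the Levine-type element $\eta$(Y-diagram with doubled leaves). Reordering via STU-like produces $\omega$-terms, which are 1-loop diagrams $\Odiag{}{}{}$ that pair up mod $2$. For the $s$-colored case one must check that $\fr(s)=1$ and $p_*(s)=0$ produce exactly the remaining $\Odiag{}{}{}$ terms needed to cancel.

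If the pure diagrammatic cancellation proves messy, a fallback is to argue topologically: $\Psi(C(x,y,z))$ is realized by a clasper whose tree part is a $Y$ with each leaf doubled, and a three-fold application of the IHX relation in clasper calculus (as in Lemma~\ref{lem:psi_IHX}) shows the cyclic sum is a boundary. Another fallback uses injectivity of $\zeta_4^<\circ\Psi$ on the complement, but that is circular. I therefore commit to the diagrammatic route.

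The main obstacle is this cancellation in $\calA^{<,c}_3(H)$: tracking the 1-loop correction terms from STU-like reorderings and the $\fr$ terms, especially when one of $x,y,z$ equals $s$. A useful check is to reduce to the associated graded for the $s$-loop filtration first. On loop degree $0$ the vanishing reduces to $\eta(\text{Jacobi identity})=0$ in $\mathsf D'_3$. Then handle loop degree $1$ separately using \eqref{eq:torsion_wheel}.
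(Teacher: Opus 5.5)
Your proof that $C$ is injective is correct and is the paper's argument. With $T$ the span of $\sigma(S)$, one has $P^{(2)}=T\oplus\Z_2 s$. Then $C$ splits as the cyclic map $\Lambda^3T\to T\otimes\Lambda^2T$ plus the symmetrization $\Lambda^2T\to T\otimes T$, and both are injective.

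The vanishing part has a fatal gap: reducing to $E\circ C=0$ in $\calA^{<,c}_3(H)$ asks for something false. By Lemma~\ref{lem:E_inj} (whose proof does not use the present lemma), $E$ is injective on $H^{(2)}\otimes\Lambda^2P^{(2)}$, and you have just shown that $C$ is injective. Hence $E\circ C$ is injective, and $E(C(x\wedge y\wedge z))\neq 0$ whenever $x\wedge y\wedge z\neq 0$.

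Your loop-degree-$0$ check already breaks down. For $x,y,z\in\sigma(S)$, the class of $E(C(x\wedge y\wedge z))$ modulo $L_1$ is the image of $p_*(x)\otimes[p_*(y),p_*(z)]$ plus its cyclic permutations under Levine's injection $H^{(2)}\otimes\frakL_2(H^{(2)})\hookrightarrow\calA^c_{3,0}(H)$ from \eqref{eq:Levine1} (cf.\ Remark~\ref{rem:E_Levine}). That element is nonzero in $H^{(2)}\otimes\frakL_2(H^{(2)})$; the Jacobi identity only kills it after bracketing into $\frakL_3$, and that bracketing does not occur here. So no combination of AS, IHX, STU-like and the twin symmetry can produce the cancellation you want. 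Your clasper-IHX fallback fails for the same reason: clasper IHX is what underlies the IHX relation in $\calA^{<,c}(P)$, which $\psi$ already respects.

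The lemma really says that $\psi_3$ kills the nonzero subgroup $E(C(\Lambda^3P^{(2)}))$, which turns out to be its whole kernel (see \eqref{eq:ladder_3}). Proving it therefore needs a topological relation with no counterpart among the defining relations of $\calA^{<,c}(P)$. The paper imports one from the proof of \cite[Lemma 6.11 (2)]{NSS22a}. In $Y_3\calIC/Y_4$, the surgery $\Psi(p_*(y)\otimes(z\wedge x))$ equals a product of two surgeries along tree claspers with leaves $x,y,z$ and one twisted leaf. Summing over cyclic permutations gives six such factors, which cancel in pairs by a local move obtained from a double application of \cite[Lemma 2.3]{Mei06}.
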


\begin{remark}
The composite map $E\circ C:  \Lambda^3 P^{(2)} \to \calA^{<,c}_{3}(H)$
is a refinement of some constructions appearing
in  \cite[Def$.$ 40]{CST16} and \cite[Def$.$ 3.4]{NSS22a} for arbitrary degrees.
\hfill $\blacksquare$
\end{remark}

With the above lemmas, we are now able to prove the following description of the
$\Sp(H)$-module $\Tors\big(Y_3\mathcal{IC}/Y_4\big)$.

\begin{theorem} \label{th:Tors(Y_3IC/Y_4)}
We have the commutative diagram
\begin{equation} \label{eq:triangle3}
\xymatrix{
 \big(H^{(2)} \otimes \Lambda^2 P^{(2)}\big)\big/\Lambda^3 P^{(2)}  \ar[d]_-\Psi \ar[drr]^-\Delta
 &&
 \\
\Tors\Big({\displaystyle\frac{ Y_3\calIC}{Y_{4}}}\Big)
\ar[rr]_-{\zeta^<_{4}} 
&&   \calA^{<,c}_{4}(H) \otimes \Q/\Z 
}
\end{equation}
where $\Psi$ is an $\Sp(H)$-isomorphism,
and  $\zeta_4^< $  is an $\Sp(H)$-embedding.
\end{theorem}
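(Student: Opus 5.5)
The plan follows the degree-one argument (Theorem \ref{th:Tors(IC/Y_2)}) closely. First I will assemble the commutative triangle. Lemma \ref{lem:cyclic} says that $\Psi$ vanishes on $\Im(C)$, so it descends to the quotient $\big(H^{(2)}\otimes\Lambda^2P^{(2)}\big)/\Lambda^3P^{(2)}$. For $\Delta$ I use Theorem \ref{th:Delta_zeta} at $n=1$, which gives $\Delta=\zeta^<_4\circ\Psi$ on $H^{(2)}\otimes\Lambda^2P^{(2)}$. Hence $\Delta$ also vanishes on $\Im(C)$, the triangle \eqref{eq:triangle3} commutes, and all maps are $\Sp(H)$-equivariant: $\zeta^<_4$ restricted to the torsion equals $\Delta\circ\Psi^{-1}$ once surjectivity is known, and this does not depend on choices. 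Once I show that $\Psi$ is surjective onto $\Tors(Y_3\calIC/Y_4)$ and that $\Delta$ is injective on the quotient, both remaining claims follow: $\Psi$ is injective because $\Delta$ is, and $\zeta^<_4$ is injective on the torsion.

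\textbf{Surjectivity of $\Psi$.} Let $\{M\}$ be a torsion class. By Theorem \ref{th:psi_connected}, $\{M\}=\psi(z)$ for some $z\in\calA^{<,c}_3(H)$. Composing with $Z^<_3$ and using \eqref{eq:psi_Z}, I get the rationalization map, so $z$ is torsion. By Lemma \ref{lem:E_inj} every torsion element is $E(w)$, and by \eqref{eq:E_psi's} we have $\psi E=\Psi$. Hence $\{M\}=\Psi(w)$, exactly as in degree one.

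\textbf{Injectivity of $\Delta$ on the quotient (main obstacle).} I filter $\calS:=\big(H^{(2)}\otimes\Lambda^2P^{(2)}\big)/\Lambda^3P^{(2)}$ by the number of $s$ among $x,y$; this is the image of the $s$-loop filtration of $\calB^<_1(P^{(2)})$. I then check injectivity of $\Gr^L\Delta$, which Proposition \ref{prop:Gr_Delta} and Example \ref{ex:Gr_Delta} compute explicitly, and conclude because the filtration is finite. The graded pieces have the following form.
\begin{itemize}
\item[(i)] Degree $0$ is $\big(H^{(2)}\otimes\Lambda^2H^{(2)}\big)/\Lambda^3H^{(2)}\simeq\frakL_3(H^{(2)})$. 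Here $\Gr\Delta$ is Levine's embedding \eqref{eq:Levine_embedding} of $\frakL_3(H)\otimes\Z_2$, as in Proposition \ref{prop:t_0}, hence injective.
\item[(ii)] Degree $1$ is $\big(H^{(2)}\otimes H^{(2)}\big)/(\text{image of }\Lambda^2H^{(2)}\text{ from }C(s,\cdot,\cdot))\simeq S^2H^{(2)}$, or a similar quotient. Here $\Gr\Delta$ sends $h\otimes y$ to half the symmetric one-loop wheel $O(h,y,y,h)$. By the bracelet description \eqref{eq:wheel} and Lemma \ref{lem:sym_diag}, these wheels are linearly independent mod 2 exactly modulo the symmetry $h\leftrightarrow y$, which matches the quotient.
\item[(iii)] Degree $2$ would come from $s\wedge s=0$, so it is absent.
\end{itemize}
The delicate point is to identify precisely what $\Lambda^3P^{(2)}$ kills in each graded piece, i.e.\ that the graded of $\Im(C)$ is the full kernel of $\Gr\Delta$. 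I will verify this with the basis \eqref{eq:basis_P(2)}, by comparing dimensions: $\dim\frakL_3$ versus $2g\binom{2g}{2}-\binom{2g}{3}$, and the symmetric square count in loop degree one.
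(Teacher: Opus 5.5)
Your proposal is correct and follows essentially the same route as the paper. You factor $\Psi$ and $\Delta$ through the quotient via Lemma~\ref{lem:cyclic} and Theorem~\ref{th:Delta_zeta}, get surjectivity of $\Psi$ from Lemma~\ref{lem:E_inj} together with the rationalization argument, and get injectivity of $\Delta$ from the two-step $s$-filtration, whose graded pieces $\frakL_3(H^{(2)})$ and $S^2H^{(2)}$ are sent injectively by Levine's embedding and by $h\cdot y\mapsto\frac12\{hhyy\}$. The only cosmetic difference is that the paper identifies $L_1\calS\simeq S^2H^{(2)}$ through the exact sequence \eqref{eq:ses_3_tor} and a cardinality count, whereas you read the graded pieces off the splitting of $C$; in fact, injectivity of the composites out of $\frakL_3(H^{(2)})$ and $S^2H^{(2)}$ already makes your planned dimension comparison unnecessary.
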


\noindent
Note that, although it comes from the LMO functor,
the map $\zeta_4^<$  on 
$\Tors\big({ Y_3\calIC}/{Y_{4}}\big)$
is independent of the choices inherent to the construction of the latter.

\begin{proof}[Proof of Theorem \ref{th:Tors(Y_3IC/Y_4)}]
By Lemma \ref{lem:cyclic}, the homomorphism $\Psi$ 
factorizes through 
$\big(H^{(2)} \otimes \Lambda^2 P^{(2)}\big)\big/\Lambda^3 P^{(2)}$.
Thus, the diagram \eqref{eq:triangle3}
follows directly from Theorem~\ref{th:Delta_zeta} 
specialized to $n:=1$.

Since $\psi: \calA^{<,c}_{3}(H) \to Y_3\calIC/Y_4$ is surjective 
and is rationally an isomorphism,
its  restriction $\psi: \Tors\big( \calA^{<,c}_{3}(H) \big) 
\to \Tors\big(Y_3\calIC/Y_4\big)$  is surjective too.
Since $E$ is an isomorphism (by Lemma \ref{lem:E_inj}),
we deduce from \eqref{eq:E_psi's} that $\Psi$ is surjective.
Therefore, we are reduced to show that $\Delta$ is injective.

The $s$-loop filtration on
$\calB^<_n\big(P^{(2)}\big)$ 
defined in \S \ref{subsec:sloop}  for $n:=1$ 
induces a filtration
$$
\mathcal{S} = L_0 \mathcal{S} \supset  L_1 \mathcal{S} 
\supset L_2 \mathcal{S} =\{0\}
$$
on its quotient
$\mathcal{S}:= \big(H^{(2)} \otimes \Lambda^2 P^{(2)}\big)/\Lambda^3 P^{(2)}$: 
here $L_1\mathcal{S}$ denotes the submodule generated 
by the elements $\{h \otimes (s \wedge y)\}$
for all $h\in H^{(2)}$ and $y\in P^{(2)}$.
We also endow $\calA := \calA^{<,c}_{4}(H) \otimes \Q/\Z$ 
with the loop filtration $(L_k \calA)_{k \geq 0}$, and we know
from Proposition \ref{prop:Gr_Delta} that $\Delta:\calS \to \calA$ is filtration-preserving. 
To prove that $\Delta$ is injective,
it suffices to prove that $\Gr^L \!\Delta$ is injective.

For this purpose, 
we claim the exactness of following  sequence:
\begin{equation} \label{eq:ses_3_tor}
0 \longrightarrow S^2 H^{(2)}
\stackrel{w}{\longrightarrow}
\frac{H^{(2)} \otimes \Lambda^2 P^{(2)}}{\Lambda^3 P^{(2)}} 
\stackrel{z}{\longrightarrow}
\frakL_3\big(H^{(2)}\big) \longrightarrow 0.   
\end{equation}
Here, $w$ is defined by
$w(h \cdot p_*(y)) := \{ h \otimes( s \wedge y)\}$
and $z$ is defined by $z(\{h\otimes x \wedge y\}):=[h,[p_*(x),p_*(y)]]$,
for any $h\in H^{(2)}$ and $x,y\in P^{(2)}$.
The surjectivity of $z$ and the inclusion of $\Im(w)$ in $\Ker(z)$ are obvious.
Furthermore, we deduce from \eqref{eq:Delta_3} that
\begin{eqnarray}
 \label{eq:Delta_w} \big(\Delta(w(h\cdot y)) \!\! \mod L_2\calA \big)& =& 
 \frac{1}{2} \loopedfour{ {h}}{ {h}}{ {y}}{ {y}} 
\in  \Gr_1^L \calA \simeq \calA^{c}_{4,1}(H) \otimes \Q/\Z  \\
\notag &=& \frac{1}{2} \big\{  h   h   y   y\big\}
\in \big(H^{\otimes 4}\big)_{\mathfrak{D}_8} \otimes \Q/\Z
\end{eqnarray}
for any $h,y \in H^{(2)}$, using the identification \eqref{eq:wheel}.
It follows that $w$ is injective. Hence, using that $\big( {H^{(2)} \otimes \Lambda^2 H^{(2)}}\big)/\Lambda^3 H^{(2)} \simeq \frakL_3\big(H^{(2)}\big)$, we deduce that
\begin{eqnarray*}
\big\vert S^2 H^{(2)} \big\vert 
\ = \ \vert \Im(w) \vert  
& \leq & \vert \Ker(z) \vert     \\
&=&  \big\vert \big( H^{(2)} \otimes \Lambda^2 P^{(2)} \big)/{\Lambda^3 P^{(2)}} \big\vert
 - \big\vert \big( {H^{(2)} \otimes \Lambda^2 H^{(2)}}\big)/{\Lambda^3 H^{(2)}} \big\vert\\
 &=& \big\vert H^{(2)} \otimes \big( \Lambda^2 P^{(2)}/ \Lambda^2 H^{(2)} \big) \big\vert - 
 \big\vert \Lambda^3 P^{(2)} / \Lambda^3 H^{(2)} \big\vert \\
 &=&  \big\vert H^{(2)} \otimes H^{(2)}\big\vert 
 - \big\vert \Lambda^2 H^{(2)} \big\vert
 \ = \ \big\vert  S^2 H^{(2)} \big\vert;
\end{eqnarray*} 
hence we have $\Im(w)=\Ker(z)$, 
which proves the above claim.

Since the image of $w$ is $L_1\calS$, the sequence \eqref{eq:ses_3_tor}
shows that 
$L_0\calS/L_1\calS \simeq \frakL_3\big(H^{(2)}\big)$
and that $L_1\calS/L_2 \calS  \simeq S^2H^{(2)}$.
In addition, another application of \eqref{eq:Delta_3} shows that
\begin{equation} \label{eq:DzL}
\Delta(\{h\otimes x\wedge y\}) 
=\frac{1}{2}
\begin{array}{c}
\begin{tikzpicture}[scale = 0.4]
		\node (0)[label={[xshift=4pt]left:{$\scriptstyle   y$}}] at (0, 5) {};
		\node (1) at (1, 4.5) {};
		\node (2)[label={[xshift=4pt]left:{$\scriptstyle   x$}}] at (0, 4) {};
		\node (3) at (2, 4) {};
		\node (4)[label={[xshift=4pt]left:{$\scriptstyle   h$}}] at (0, 3) {};
		\node (5) at (3.5, 4) {};
		\node (6)[label={[xshift=-4pt]right:{$\scriptstyle   h$}}] at (5.5, 5) {};
		\node (7) at (4.5, 3.5) {};
		\node (8)[label={[xshift=-4pt]right:{$\scriptstyle   x$}}] at (5.5, 4) {};
		\node (9)[label={[xshift=-4pt]right:{$\scriptstyle   y$}}] at (5.5, 3) {};
		\draw [thick](0.center) to (1.center);
		\draw [thick] (2.center) to (1.center);
		\draw [thick] (4.center) to (3.center);
		\draw [thick] (1.center) to (3.center);
		\draw [thick] (3.center) to (5.center);
		\draw [thick] (5.center) to (6.center);
		\draw [thick] (5.center) to (7.center);
		\draw [thick] (7.center) to (8.center);
		\draw [thick] (7.center) to (9.center);
        \draw [black, fill = black] (1.center)  circle (0.8ex);
        \draw [black, fill = black] (3.center)  circle (0.8ex);
        \draw [black, fill = black] (5.center)  circle (0.8ex);
        \draw [black, fill = black] (7.center)  circle (0.8ex);
\end{tikzpicture}
\end{array} = \operatorname{Lev}([h,[x,y]]) 
\in  \calA^{c}_{4,0}(H)  \otimes \Q/\Z,
\end{equation}
where we use Levine's embedding \eqref{eq:Levine_embedding}.
So, to sum up, we have proved that $\Gr^L \!\Delta$ is given by 
$$
\frakL_3(H^{(2)})\simeq \Gr_0^L \calS
\longrightarrow \Gr_0^L \calA \simeq \calA_{4,0}^{c} (H) \otimes \Q/\Z  ,  \quad 
[h,[x,y]] \longmapsto \operatorname{Lev}([h,[x,y]]) 
$$
$$
S^2 H^{(2)}\simeq \Gr_1^L \calS
\longrightarrow \Gr_1^L \calA \simeq \calA_{4,1}^{c} (H) \otimes \Q/\Z  ,  \quad 
h \cdot y \longmapsto \frac{1}{2}   \loopedfour{ {h}}{ {h}}{ {y}}{ {y}} 
$$
in degrees 0 and $1$, respectively. 
We conclude that $\Gr^L \!\Delta$ is injective.
\end{proof}

As a consequence, we obtain a characterization
of the $\Sp(H)$-module $Y_3\calIC/Y_4$.

\begin{corollary} \label{cor:Y_3(IC)/Y_4}
We have the commutative diagram of $\Sp(H)$-modules
\begin{equation} \label{eq:ladder_3}
\xymatrix{
0 \ar[r]
  &  {\displaystyle\frac{H^{(2)} \otimes \Lambda^2 P^{(2)}}{\Lambda^3 P^{(2)}} } \ar[r]^-E \ar[d]_-\Psi
  & {\displaystyle \frac{\calA_3^{<,c}(H)}{E(\Lambda^3 P^{(2)})} }\ar[r] \ar[d]_-\psi
  & {\displaystyle \frac{\calA_3^{<,c}(H)}{\Tors \calA_3^{<,c}(H)}} \ar[r] \ar[d]
  & 0\\
0 \ar[r]
  & {\displaystyle \operatorname{Tors}(Y_3 \calIC/Y_4)}  \ar[r]
  &  Y_3\calIC/Y_4  \ar[r]^-{Z_3^<}
  & \bqm \calA_3^{<,c}(H)\eqm \ar[r]  & 0,
}    
\end{equation}
with exact rows and vertical isomorphisms.
Furthermore, the map
\begin{equation} \label{eq:big_map_3}
(Z_3^<,\zeta_4^<): {{ Y_3\calIC}/{Y_{4}}} 
\longrightarrow  \bqm \calA_3^{<,c}(H)\eqm \, \oplus\, 
\big( \calA^{<,c}_{4}(H) \otimes \Q/\Z\big)
\end{equation}
is an $\Sp(H)$-embedding 
for the $\Sp(H)$-action defined by \eqref{eq:Sp-action}.
\end{corollary}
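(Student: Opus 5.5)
The plan is to mimic the proof of Corollary \ref{cor:IC/Y_2}, now using Theorem \ref{th:Tors(Y_3IC/Y_4)} together with Lemmas \ref{lem:E_inj} and \ref{lem:cyclic}. First, I make the top row of \eqref{eq:ladder_3} precise. By Lemma \ref{lem:E_inj}, $E$ identifies $H^{(2)}\otimes\Lambda^2P^{(2)}$ with $\Tors\big(\calA^{<,c}_3(H)\big)$. Passing to the quotient by $\Lambda^3P^{(2)}\simeq\Im(C)$ (Lemma \ref{lem:cyclic}), $E$ therefore induces an isomorphism
$$
\big(H^{(2)}\otimes\Lambda^2P^{(2)}\big)/\Lambda^3P^{(2)} \simeq \Tors\big(\calA^{<,c}_3(H)\big)/E(\Lambda^3P^{(2)}) = \Tors\Big(\calA^{<,c}_3(H)/E(\Lambda^3P^{(2)})\Big).
$$
The last equality holds because $E(\Lambda^3P^{(2)})$ is a torsion subgroup. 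The quotient of $\calA^{<,c}_3(H)/E(\Lambda^3P^{(2)})$ by its torsion is $\calA^{<,c}_3(H)/\Tors$, so the top row is exact.

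Next I construct the vertical maps. By Lemma \ref{lem:cyclic}, $\Psi$ vanishes on $\Im(C)$. By \eqref{eq:E_psi's} this means $\psi$ vanishes on $E(\Im C)$, so $\psi$ descends to $\calA^{<,c}_3(H)/E(\Lambda^3P^{(2)})$. The left square then commutes by \eqref{eq:E_psi's}. The right vertical map is induced by rationalization, and the right square commutes by \eqref{eq:psi_Z}: the composite $Z^<_3\circ\psi$ is the natural map $\calA^{<,c}_3(H)\to\bqm\calA^{<,c}_3(H)\eqm$, which is an isomorphism modulo torsion. This also gives exactness of the bottom row. Indeed, $Z_3^<$ is rationally injective, so its kernel is torsion. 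Conversely, torsion elements map to zero in a torsion-free group. Finally, $Z^<_3$ is surjective because $\psi$ is.

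The vertical isomorphisms now follow from the five lemma. The left map $\Psi$ is an isomorphism by Theorem \ref{th:Tors(Y_3IC/Y_4)}, and the right one is the identity-like isomorphism just described. Hence the middle map $\psi$ is an isomorphism. $\Sp(H)$-equivariance of all the maps comes from Theorem \ref{th:psi_connected}, Proposition \ref{prop:Psi_psi}, and the equivariance of $Z_3^<$ at the graded level.

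For the embedding \eqref{eq:big_map_3}, suppose $Z_3^<(M)=0$ and $\zeta^<_4(M)=0$. The first condition places $\{M\}$ in $\Tors(Y_3\calIC/Y_4)$, by exactness of the bottom row. On that subgroup $\zeta_4^<$ is injective by Theorem \ref{th:Tors(Y_3IC/Y_4)}, so $\{M\}=0$. Equivariance for the twisted action \eqref{eq:Sp-action} is Corollary \ref{cor:equivariance} with $n=3$, noting that $F_3\Z[\calIC]/F_4$ receives $Y_3\calIC/Y_4$ via $\iota$. The only delicate point I expect is the bookkeeping identifying $\Tors$ of the quotient with the quotient of $\Tors$, which is immediate since the subgroup being factored out is torsion.
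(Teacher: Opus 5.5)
Your proof is correct and follows the paper's route: Lemma~\ref{lem:E_inj} gives exactness of the top row, \eqref{eq:E_psi's} and \eqref{eq:psi_Z} give commutativity and exactness of the bottom row, Theorem~\ref{th:Tors(Y_3IC/Y_4)} with the five lemma makes $\psi$ an isomorphism, and the injectivity and equivariance of $(Z_3^<,\zeta_4^<)$ come from the rational injectivity of $Z_3^<$, the injectivity of $\zeta_4^<$ on torsion, and Corollary~\ref{cor:equivariance}. You also spell out two points the paper leaves implicit: that $\psi$ factors through $E(\Lambda^3 P^{(2)})$ by Lemma~\ref{lem:cyclic}, and that quotienting by a torsion subgroup commutes with taking torsion.
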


\begin{proof}
The commutativity of the diagram follows from \eqref{eq:E_psi's}
and  \eqref{eq:psi_Z}.
The exactness of the first row is a consequence of Lemma \ref{lem:E_inj},
while the exactness of the second row follows from \eqref{eq:psi_Z}.
The third vertical arrow is induced by the canonical map 
$\calA_3^{<,c}(H) \to \calA_3^{<,c}(H) \otimes \Q$, so that it is obviously an isomorphism.
Since $\Psi$ is an isomorphism (by Theorem~\ref{th:Tors(Y_3IC/Y_4)}),  so is $\psi$. 

Since $Z_3^<: Y_3\mathcal{IC}/Y_4 \to \bqm\calA_3^{<,c}(H)\eqm$
is rationally an isomorphism,
and since $\zeta_4^<  : \Tors(Y_3\mathcal{IC}/Y_4) \to \calA^{<,c}_{4}(H) \otimes \Q/\Z$ 
is injective (by Theorem \ref{th:Tors(Y_3IC/Y_4)}),
the map \eqref{eq:big_map_3} is injective. 
It is also $\Sp(H)$-equivariant by  Corollary \ref{cor:equivariance}.
\end{proof}

\subsection{Extracting classical invariants from LMO}

\noindent
We  start by analyzing the component $Z_3^<$ of \eqref{eq:big_map_3}.
For this, we consider the following classical invariants:
\begin{itemize}
\item the third Johnson homomorphism $\tau_3$
with values in $\mathsf{D}_3(H) \simeq \bqm \calA^c_{3,0}(H) \eqm$;
\item the degree $3$ part $p_+ \tilde\alpha_3$ of the non-commutative 
Reidemeister--Turaev torsion, 
whose target $\bqm \calA^c_{3,1}(H) \eqm$ is identified 
with $\Lambda^3 H$ through the canonical isomorphism\\[-0.7cm]
$$ 
\tikz[baseline=8pt ,scale = 0.5]{
\draw [thick,color=black] (1,0.75) circle (0.5);
\draw [thick,color=black] (1,0.75+0.5)-- (1,0.75+1);
\draw [thick,color=black] (1+0.707*0.5,0.75 + 0.707*0.5)-- (1+0.707,0.75 + 0.707);
\draw [thick,color=black] (1-0.707*0.5,0.75 + 0.707*0.5)-- (1-0.707,0.75 + 0.707);
\draw [black, fill = black] (1,0.75+0.5) circle (.5ex);
\draw [black, fill = black] (1-0.707*0.5,0.75 + 0.707*0.5) circle (.5ex);
\draw [black, fill = black] (1+0.707*0.5,0.75 + 0.707*0.5) circle (.5ex);
\draw[color=black] (1-0.707*1.3,0.75 + 0.707*1.3) node {\small $x$};
\draw[color=black] (1,0.75+1.3) node {\small $y$};
\draw[color=black] (1+0.707*1.3,0.75 + 0.707*1.3) node {\small $z$};
} \longmapsto x\wedge y \wedge z.
$$
\end{itemize}
According to \eqref{eq:tau_Z} and \eqref{eq:alpha_Z},
they correspond  respectively to the $0$-loop part $Z_{3,0}$
and $1$-loop part $Z_{3,1}$ of $Z_3 = X^{-1} \circ Z_3^<$:
\begin{equation} \label{eq:Z_0_Z_1}
\xymatrix{
Y_3 \calIC /Y_4 \ar[r]^-{\tau_3}  \ar[dr]_-{Z_{3,0}} 
& \mathsf{D}_3(H)\\
& \bqm \calA^c_{3,0}(H) \eqm \ar[u]^-\simeq_-\eta
} \qquad
\xymatrix{
Y_3 \calIC /Y_4 \ar[r]^-{-p_+ \tilde\alpha_3}  \ar[dr]_-{2Z_{3,1}} 
& \Lambda^3 H\\
& \bqm \calA^c_{3,1}(H) \eqm \ar[u]^-\simeq
} 
\end{equation}

To express their mutual dependency, let $\Tr:\mathsf{D}_3(H) \to \Lambda^3 H^{(2)}$
be the map given by Lemma \ref{lem:Tr^(0)}
via the above mentioned isomorphisms: specifically, we have
\begin{eqnarray*}
\Tr\Big(\fivetree{v}{w}{x}{y}{z}\Big)
& =&\omega(v,z)\, w\wedge x \wedge y + \omega(v,y)\, w\wedge x\wedge z \\
&& +\,  \omega(w,z)\, v\wedge x \wedge y + \omega(w,y)\, v\wedge x\wedge z.     
\end{eqnarray*}
Let  also $\mathsf{D}_3(H) \times_{\Lambda^3H^{(2)}} \Lambda^3H$
be the fibered product  over $\Tr$ 
and the mod $2$ reduction  $\Lambda^3H \to \Lambda^3H^{(2)}$.

\begin{proposition} \label{prop:tf3}
We have the short exact sequence of $\Sp(H)$-modules
\[ 
\xymatrix{
0 \ar[r] & \Tors(Y_3 \calIC /Y_4)  \ar[r] &
Y_3 \calIC /Y_4 \ar[rr]^-{(\tau_3,\, p_+\circ\tilde{\alpha}_3)\ }&& 
\mathsf{D}_3(H) \times_{\Lambda^3H^{(2)}} \Lambda^3H \ar[r] & 0.
}
\]
\end{proposition}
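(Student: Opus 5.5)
The plan is to read Proposition~\ref{prop:ZZ} at $k=3$, $r=1$. That result gives a surjective homomorphism
$$
Z_{3,0}\oplus 2Z_{3,1}: Y_3\calIC/Y_4 \longrightarrow \calP^{(1)}_3(H).
$$
By definition, $\calP^{(1)}_3(H)$ is the fibered product of $\bqm\calA^c_{3,0}(H)\eqm$ and $\bqm\calA^c_{3,1}(H)\eqm$ over $\Tr=\Tr^{(0)}$ and the reduction modulo $2$. Via \eqref{eq:Z_0_Z_1}, the component $Z_{3,0}$ is identified with $\tau_3$ through $\eta$, and $2Z_{3,1}$ is identified with $-p_+\tilde\alpha_3$ under $\bqm\calA^c_{3,1}(H)\eqm\simeq\Lambda^3H$. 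The sign is harmless: $(a,b)\mapsto(a,-b)$ is an automorphism of the fibered product over a mod~$2$ reduction. It remains to identify $\Tr$ with the explicit map $\mathsf{D}_3(H)\to\Lambda^3H^{(2)}$ displayed before the statement. To do this, compute with Lemma~\ref{lem:Tr^(0)}: for the $H$-shaped tree $\fivetree{v}{w}{x}{y}{z}$, sum the $\omega$-contractions of pairs of leaves, keep only those producing a wheel with three legs, and translate each into a trivector. The result is the four terms listed. Since $\bqm\calA^c_{3,1}(H)\eqm\otimes\Z_2\simeq\Lambda^3H^{(2)}$, this gives an identification of $\calP^{(1)}_3(H)$ with $\mathsf{D}_3(H)\times_{\Lambda^3H^{(2)}}\Lambda^3H$, and the map $(\tau_3,p_+\tilde\alpha_3)$ is then surjective onto it. Equivariance under $\Sp(H)$ follows from Proposition~\ref{prop:ZZ}.

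Exactness in the middle means the kernel equals the torsion. The torsion lies in the kernel because the target is torsion-free. For the converse, I would show that $Z_{3,0}\oplus Z_{3,1}$ already determines $Z_3$ rationally. Here $\calA^c_3(H)$ has loop degrees $0$ and $1$ only, since a connected diagram of internal degree $3$ with a loop of degree $\geq2$ has at most one external vertex and vanishes by IHX and self-loop. Hence $Z_3=Z_{3,0}+Z_{3,1}$. By \eqref{eq:ladder_3}, the kernel of $Z_3^<=X\circ Z_3$ on $Y_3\calIC/Y_4$ is exactly $\Tors(Y_3\calIC/Y_4)$. Therefore the kernel of $(\tau_3,p_+\tilde\alpha_3)$ is the torsion.

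The main obstacle I expect is the careful identification of the trace. There are two parts to it. The first is matching $\calA^c_{3,1}(H)$ with $\Lambda^3H$, which needs the sign and dihedral conventions of \eqref{eq:wheel} together with the torsion \eqref{eq:torsion_wheel} in odd degree. The second is checking that the mod~$2$ target of $\Tr$ is really $\Lambda^3H^{(2)}$, rather than a quotient in which the wheels with repeated legs are killed. For the latter, note that in degree $3$ the $\mathcal{D}_6$-coinvariants of $H^{\otimes3}$, with the sign $-1$ under turnover, give $\Lambda^3H$ modulo torsion. The torsion part is $H\otimes\Z_2$, coming from wheels of the form $(h,h,h)$ up to the relations. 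Tensoring the torsion-free quotient with $\Z_2$ then gives $\Lambda^3H^{(2)}$, which is the required target.
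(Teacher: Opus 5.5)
Your proposal is correct and follows the paper's route: the paper's proof just specializes Proposition~\ref{prop:ZZ} at $k=3$, $r=1$ and translates the components via \eqref{eq:Z_0_Z_1}. Your argument for exactness in the middle fills in a step the paper leaves implicit: $\calA^c_3(H)$ only has loop degrees $0$ and $1$, so $(\tau_3,p_+\tilde\alpha_3)$ has the same kernel as $Z_3^<$, and that kernel is the torsion by \eqref{eq:ladder_3}. One harmless slip: by \eqref{eq:torsion_wheel} the torsion of $\calA^c_{3,1}(H)$ is $H^{(2)}\otimes H^{(2)}$, spanned by wheels with legs $h,k,h$, not $H\otimes\Z_2$; this does not matter, since only the torsion-free quotient $\Lambda^3H$ enters your identification of the target of $\Tr$.
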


\begin{proof}
This is deduced by specializing Proposition \ref{prop:ZZ}
at $r:=1$ and $k:=3$, as in  Corollary \ref{cor:0_1}.
\end{proof}

We now analyze the component $\zeta_4^<$ of \eqref{eq:big_map_3}.
Since the maximal torsion-free quotient of  $Y_3 \calIC /Y_4$ 
is characterized 
in Proposition \ref{prop:tf3}, we restrict ourselves 
to $\operatorname{Tors}(Y_3 \calIC /Y_4)$
and use the results of \S \ref{subsec:back}. Specifically,
we need the homomorphisms
$$
R_4: \operatorname{Tors}(Y_3 \calIC /Y_4) 
\longrightarrow \frakL_3\big(H^{(2)}\big), \
\{ M \} \longmapsto  \{\tau_4(M)\},
$$
and 
$$
R'_4: \Ker(R_4) \longrightarrow \calA^c_{4,1}(H)\otimes \Z_2,
 \ \{ M \} \longmapsto
 \Tr(\tau_4(M)) + \big\{ p_+\tilde\alpha_4(M) \big\}
$$
given by Proposition \ref{prop:R_2n+2}
and Corollary \ref{cor:R'_2n+2}, respectively.
Recall that $R_4=R_4^{(0)}$ and $R'_4=R_4^{(1)}$ 
determine, respectively,
the degree 0 part and degree 1 part of 
$$
\Gr^L \zeta^<_4: \Gr^L \operatorname{Tors}(Y_3 \calIC /Y_4)
\longrightarrow \Gr^L \calA_4^{<,c}(H) \otimes \Q/\Z
\simeq  \calA_4^{c}(H) \otimes \Q/\Z
$$
where the module $\operatorname{Tors}(Y_3 \calIC /Y_4)$ 
is endowed with the filtration \eqref{eq:filt_T}.

\begin{proposition} \label{prop:tors3}
We have the following diagram of $\Sp(H)$-modules
\begin{equation} \label{eq:ladder}
\begin{tikzcd}
0 \arrow[r] & S^2 H^{(2)} \arrow[r,"w"] \arrow[d,"\Psi\circ  w "]
& {\displaystyle \frac{H^{(2)} \otimes \Lambda^2 P^{(2)}}{\Lambda^3 P^{(2)}} } 
\arrow[r,"z"] \arrow[d,"\Psi"]& \frakL_3\big(H^{(2)}\big)\arrow[r]
\arrow[d,double, no head] 
& 0 \\
0 \arrow[r] & \Ker (R_4)\arrow[r]  & 
\Tors(Y_3 \calIC /Y_4)  \arrow[r,"R_4"]& 
\frakL_3\big(H^{(2)}\big) \arrow[r] & 0
\end{tikzcd}
\end{equation}
with exact rows and vertical isomorphisms,
where the maps $w$ and $z$ are given by 
$w(h \cdot p_*(y)) := \{ h \otimes( s \wedge y)\}$ and 
$z(\{h\otimes x \wedge y\}):=[h,[p_*(x),p_*(y)]]$, respectively.
Furthermore, we have the following commutative triangle:
\begin{equation} \label{eq:triangle}
\xymatrix{
h\cdot y \ \in   \ar@{|->}[rd]
&S^2 H^{(2)} \ar[rd] \ar[r]^-{\Psi \circ w}_-\simeq
& \Ker(R_4) \ar[d]^-{R'_4} \\
& \loopedfour{h}{h}{y}{y} \ \in
 & \calA^c_{4,1}(H) \otimes \Z_2
}    
\end{equation}
\end{proposition}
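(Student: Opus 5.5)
The plan is to transport the exact sequence \eqref{eq:ses_3_tor} from the proof of Theorem~\ref{th:Tors(Y_3IC/Y_4)} along the isomorphism $\Psi$, and to identify the resulting filtration pieces with $\Ker(R_4)$ and with the image of $R_4$ using the diagram \eqref{eq:R_2n+2} of Proposition~\ref{prop:R_2n+2}.

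\textbf{Exactness.} The top row of \eqref{eq:ladder} is exactly \eqref{eq:ses_3_tor}, whose exactness was established in the proof of Theorem~\ref{th:Tors(Y_3IC/Y_4)}. The middle vertical map $\Psi$ is an isomorphism by that same theorem.

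\textbf{Right square.} We must show $R_4\circ\Psi = z$. Since $\operatorname{Lev}$ is injective, it is enough to prove $\operatorname{Lev}\circ R_4\circ \Psi=\operatorname{Lev}\circ z$. By \eqref{eq:R_2n+2} with $n=1$, the map $\operatorname{Lev}\circ R_4$ coincides with $\zeta_4^<$ followed by the projection onto $\Gr^L_0 \calA^{<,c}_4(H)\otimes\Q/\Z\simeq \calA^c_{4,0}(H)\otimes\Q/\Z$. By \eqref{eq:triangle3}, $\zeta_4^<\circ\Psi=\Delta$. By \eqref{eq:DzL}, the loop-degree-$0$ part of $\Delta(\{h\otimes x\wedge y\})$ equals $\operatorname{Lev}([h,[x,y]])=\operatorname{Lev}(z(\{h\otimes x\wedge y\}))$. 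This gives the right square. It also shows that $R_4$ is surjective, since $z$ is.

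\textbf{Left square and bottom row.} Commutativity of the right square gives $\Psi(\Ker z)\subset \Ker R_4$. Conversely, if $R_4(t)=0$, write $t=\Psi(u)$; then $z(u)=R_4(\Psi(u))=0$, so $u\in\Ker z$. Hence $\Psi$ maps $\Ker z=\Im(w)$ bijectively onto $\Ker R_4$. Because $w$ is injective, $\Psi\circ w$ is an isomorphism $S^2H^{(2)}\to\Ker R_4$. The bottom row is then exact by construction: it is a kernel sequence, and $R_4$ is surjective.

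\textbf{Triangle \eqref{eq:triangle}.} Let $t=\Psi(w(h\cdot y))\in\Ker(R_4)=L_1\calT$, where $\calT$ is filtered as in \eqref{eq:filt_T}. Corollary~\ref{cor:R'_2n+2}, diagram \eqref{eq:R'_2n+2}, says that $\tfrac12 R'_4(t)$ equals the class of $\zeta_4^<(t)$ in $\Gr^L_1\calA$, that is, $\Delta(w(h\cdot y))$ modulo $L_2\calA$. By \eqref{eq:Delta_w}, this is $\tfrac12\loopedfour{h}{h}{y}{y}$. Since multiplication by $\tfrac12$ from $\calA^c_{4,1}(H)\otimes\Z_2$ into $\calA^c_{4,1}(H)\otimes\Q/\Z$ is injective ($\calA^c_{4,1}(H)$ is torsion-free in even degree), we conclude $R'_4(t)=\loopedfour{h}{h}{y}{y}$.

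\textbf{Main obstacle.} The delicate point is the identification of $\Ker R_4$ with the filtration level $L_1\calT$. It relies on $\operatorname{Lev}$ being injective and on $\calA^c_{4,1}(H)$ being torsion-free. The remaining $\Sp(H)$-equivariance of all the maps is already known from the earlier results.
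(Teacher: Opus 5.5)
Your proof is correct and follows essentially the same route as the paper: the top row is \eqref{eq:ses_3_tor}, $\Psi$ is an isomorphism by Theorem~\ref{th:Tors(Y_3IC/Y_4)}, the right square comes from \eqref{eq:R_2n+2}, \eqref{eq:triangle3} and \eqref{eq:DzL}, and the triangle comes from \eqref{eq:R'_2n+2}, \eqref{eq:triangle3} and \eqref{eq:Delta_w}. You additionally write out two steps the paper leaves implicit: the diagram chase that yields the left square and the exactness of the bottom row, and the injectivity of $\tfrac12\cdot$ on $\calA^c_{4,1}(H)\otimes\Z_2$.
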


\begin{proof}
The first exact row in \eqref{eq:ladder} already appeared in \eqref{eq:ses_3_tor}, and $\Psi$ is an isomorphism by Theorem~\ref{th:Tors(Y_3IC/Y_4)}. Therefore, the first claim of the proposition reduces to the commutativity of the right-hand square. This follows from \eqref{eq:triangle3}, which shows that $\zeta_4^<\circ \Psi=\Delta$, from \eqref{eq:R_2n+2}, which connects $R_4$ with $\Gr_0^L \zeta_4^<$, and from \eqref{eq:DzL}, where $\Delta$ was computed at the tree level.

As for the commutativity of \eqref{eq:triangle}, this follows from \eqref{eq:R'_2n+2}, which relates $R'_4$ to $\Gr_1^L \zeta_4^<$, from diagram \eqref{eq:triangle3}, which gives $\zeta_4^<\circ \Psi=\Delta$, and from \eqref{eq:Delta_w}, which computes $\Delta \circ w$.
\end{proof}

We conclude this subsection with a characterization 
by classical invariants
of the  $Y_4$-triviality for homology cylinders.
Recall firstly from \cite[Th$.$ A]{MM13} 
that an $M\in \calIC$ is 
$Y_3$-equivalent to the unit cylinder $U$ 
if, and only if, the following conditions are satisfied:
\begin{itemize}
\item $\tau_1(M)=0\in \mathsf{D}_1(H)$
and, furthermore, $\tau_2(M)=0\in \mathsf{D}_2(H)$;
\item $\alpha(M)=0 \in S^2 H$, where
$\alpha$ is the quadratic part of 
the relative Alexander polynomial
(which is $\tilde{\alpha}_2=p_+ \tilde{\alpha}_2$  
in the present notations);
\item $\lambda(M)=0\in \Z$, where $\lambda$
denotes the variation of the Casson invariant
for a fixed Heegaard embedding of $\Sigma$ in $S^3$.
\end{itemize}
Then,  the following is deduced from
Proposition \ref{prop:tf3} and Proposition \ref{prop:tors3}.

\begin{corollary} \label{cor:Y_4}
An $M \in \calIC$ is 
$Y_4$-equivalent to the unit cylinder $U$ 
if, and only if, we have $\lambda(M)=0$
and the following conditions are successively satisfied:
\begin{itemize}
\item  $\tau_1(M)=0$, $\tau_2(M)=0$ and $\tau_3(M)=0$, 
i.e$.$ $M$ acts trivially on $\pi/\Gamma_5 \pi$;
\item  $\tilde \alpha_2(M)=0$
and $\tilde \alpha_3(M)=0$, 
i.e$.$ the $I^4$-reduction of $\tilde{\alpha}$ is trivial;
\item $R_4(M)=0$, 
i.e$.$ $\tau_4(M)$ is in 
$\mathsf{D}'_4(H)\subset \mathsf{D}_4(H)$;
\item $R'_4(M)=0$, i.e$.$
$\Tr(\tau_4(M)) = 
p_+\big(\tilde{\alpha}_4(M)\big) 
\in \calA_{4,1}^c(H)\otimes \Z_2 $.
\end{itemize}
\end{corollary}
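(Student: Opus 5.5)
We read off Corollary \ref{cor:Y_4} by combining the description of $Y_3\calIC/Y_4$ from Corollary \ref{cor:Y_3(IC)/Y_4} with Propositions \ref{prop:tf3} and \ref{prop:tors3}, once \cite[Th$.$ A]{MM13} has reduced us to $Y_3\calIC$.

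The ``only if'' direction is immediate. If $M$ is $Y_4$-equivalent to $U$, then it is $Y_3$-equivalent to $U$, so $\tau_1,\tau_2,\alpha=\tilde\alpha_2,\lambda$ vanish by \cite[Th$.$ A]{MM13}. Moreover, $\{M\}=0$ in $Y_3\calIC/Y_4$, so all homomorphisms defined on this group vanish on $\{M\}$. These include $\tau_3$ and $p_+\tilde\alpha_3$ (Proposition \ref{prop:tf3}), $R_4$ on the torsion, and $R'_4$ on $\Ker(R_4)$. Here $\tau_4(M)$ makes sense because $\tau_3(M)=0$.

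For the ``if'' direction, first apply \cite[Th$.$ A]{MM13}: the conditions $\lambda(M)=0$, $\tau_1(M)=\tau_2(M)=0$ and $\tilde\alpha_2(M)=0$ give $M\in Y_3\calIC$. Next, $\tau_3(M)=0$ and $\tilde\alpha_3(M)=0$ give $p_+\tilde\alpha_3(M)=0$, so by the exact sequence of Proposition \ref{prop:tf3} the class $\{M\}$ lies in $\Tors(Y_3\calIC/Y_4)$. In that group we have the filtration $\Tors = L_0\supset L_1=\Ker(R_4)\supset L_2$. By Theorem \ref{th:Tors(Y_3IC/Y_4)}, $\zeta_4^<$ is injective, and by Proposition \ref{prop:tors3} the first stage is $R_4$. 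Thus $R_4(M)=0$ places $\{M\}\in\Ker(R_4)$, and there $R'_4$ is injective, because diagram \eqref{eq:triangle} shows that $R'_4\circ(\Psi\circ w)$ is the map $h\cdot y\mapsto$ the one-loop diagram, which was shown injective via \eqref{eq:Delta_w} and the injectivity of $\Gr^L\Delta$. Hence $R'_4(M)=0$ forces $\{M\}=0$, that is, $M\sim_{Y_4}U$.

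The translation of $R'_4(M)=0$ into the displayed identity $\Tr(\tau_4(M))=p_+\tilde\alpha_4(M)$ mod $2$ comes from Corollary \ref{cor:R'_2n+2}, since signs are irrelevant mod $2$. Likewise, $R_4(M)=0$ is by definition $\tau_4(M)\in\mathsf{D}'_4(H)$. The main point needing care is that the torsion filtration in Proposition \ref{prop:tors3} has length two, so $L_2\calT=0$. This holds because $\Psi$ is an isomorphism and the $s$-loop filtration on $\calS$ stops at $L_2\calS=0$; with this, the pair $(R_4,R'_4)$ detects the whole torsion.
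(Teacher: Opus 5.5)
Your proof is correct and follows essentially the paper's own route. You reduce to $Y_3\calIC$ via \cite[Th.~A]{MM13}, use the exact sequence of Proposition~\ref{prop:tf3} to land in the torsion, and then use Proposition~\ref{prop:tors3}, with the triangle \eqref{eq:triangle}, to see that $R_4$ and $R'_4$ together detect $\Tors(Y_3\calIC/Y_4)$. One small point to tighten: in the ``only if'' direction the statement requires $\tilde\alpha_3(M)=0$ itself, not merely $p_+\tilde\alpha_3(M)=0$. This follows directly from the vanishing of $\tilde\alpha_{\leq 3}$ on $Y_4\calIC$ recalled from \cite{NSS23}.
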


\subsection{The Torelli Lie algebra in degree $3$}

Recall that the mapping cylinder construction
\eqref{eq:mcc} induces a Lie algebra map 
$\Gr \mathbf{c}: \Gr^\Gamma \calI \to \Gr^Y\!\!\calIC$.

\begin{theorem}
\label{th:onto}
The homomorphism $\operatorname{Gr}_3 \mathbf{c}$ 
restricted to the torsion subgroups
$$
\operatorname{Gr}_3 \mathbf{c}: 
\Tors(\Gamma_3 \calI /\Gamma_4 \calI)
\longrightarrow \Tors(Y_3 \calIC / Y_4)
$$
is surjective in genus $g\ge4$.
Therefore, 
the $\Sp(H)$-module $\Tors(\Gamma_3 \calI /\Gamma_4 \calI)$
surjects onto 
${\displaystyle \big({H^{(2)} \otimes \Lambda^2 P^{(2)}\big)}
/{\Lambda^3 P^{(2)}} }$.
\end{theorem}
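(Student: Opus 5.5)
Since $\Psi\colon \big(H^{(2)}\otimes\Lambda^2P^{(2)}\big)/\Lambda^3P^{(2)}\to\Tors(Y_3\calIC/Y_4)$ is an isomorphism (Theorem~\ref{th:Tors(Y_3IC/Y_4)}), it suffices to show that every generator $\Psi(h\otimes x\wedge y)$ lies in $\Gr_3\mathbf c\big(\Tors(\Gamma_3\calI/\Gamma_4\calI)\big)$. Proposition~\ref{prop:bracketing} gives $\psi_3\circ B_3=\Gr_3\mathbf c\circ J_3$ on $\Lie_3(\Lambda^3H)$, and more generally the square with $\Lie(\calA_1(P))$ and $\calA^{<,c}(P)$. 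Using \eqref{eq:E_psi's}, $\Psi(h\otimes x\wedge y)=\psi_3(E(h\otimes x\wedge y))$. So the plan is to realize each $E(h\otimes x\wedge y)\in\Tors\calA^{<,c}_3(H)$, modulo $\Ker\psi_3$, as a bracket of degree-one elements of order at most $2$ in the image of $B_3$, produced from a \emph{torsion} element of $\Lie_3$.

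\emph{Step 1 (use of the ladder \eqref{eq:ladder}).} Split the target as in Proposition~\ref{prop:tors3}: it suffices to hit a set of elements whose $R_4$-images generate $\frakL_3(H^{(2)})$, together with the subgroup $\Psi w(S^2H^{(2)})=\Ker R_4$. For the tree part, for $h,u,v\in H$ take the $Y$-diagrams $Y_1=(h,a,b)$, $Y_2=(c,u,v)$ with $\omega$-pairings, and consider double brackets of the form $[Y,[Y',Y'']]$ in $\Lie_3(\Lambda^3H)$; by \eqref{eq:bracket} these give explicit degree-3 trees. The key device will be to use elements $t\in\Lie_3(\Lambda^3H)$ with $2t\in\Ker B_3+\Ker(J_3\text{ mod torsion})$, i.e.\ constructed from Levine-type symmetric trees $[T,T]$ glued: concretely I expect $[Y,[Y,Y']]$-type expressions and the $\Z_2$-classes $x\otimes u\otimes 1$ of \eqref{eq:Levine1} to appear as $B_3$ of elements whose $J_3$-image is torsion since $\tau_3\circ \Gr_3\mathbf c$ kills twice them. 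Then check via $\zeta_4^<$ (equivalently $R_4$, using $\tau_4$ on such commutators) that the images generate $\frakL_3(H^{(2)})$; here $g\ge4$ gives enough disjoint symplectic pairs to make the auxiliary colors orthogonal to $h,x,y$.

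\emph{Step 2 (the one-loop part $\Ker R_4\cong S^2H^{(2)}$).} Here one needs elements whose $E$-image involves $s$, i.e.\ a framing contribution, coming from $\calA_1(P)$ rather than $\Lambda^3H$. Use the full square with $\Lie(\calA_1(P))\to\calA^{<,c}(P)$: brackets of $Y$-diagrams with $s$-colored legs (which are $2$-torsion Birman--Craggs-type classes in $\calI_{ab}$) with degree-one classes, e.g.\ $[[Y(s,h,a),Y(b,h',c)],Y(\dots)]$, producing one-loop diagrams $\frac12$-detected by $R'_4$ as $\loopedfour{h}{h}{y}{y}$ via \eqref{eq:triangle}. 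These brackets are automatically torsion in $\Gamma_3\calI/\Gamma_4\calI$ since one factor is $2$-torsion in $\calI_{ab}$.

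\emph{Main obstacle.} The hardest point is Step~1: showing the tree part $\frakL_3(H^{(2)})$ is hit by \emph{torsion} classes of $\Gr_3\calI$, since brackets of genuine $\Lambda^3H$ classes are typically of infinite order. I would first try taking $t=[Y,[Y',Y'']]-[\,\text{symmetric rearrangement}]$ so that $\tau_3$ of $J_3(t)$ vanishes (check via $\eta$ and \eqref{eq:triangle_eta}) while $\tau_4$ mod $\mathsf D'_4$ is $[h,[x,y]]$; if that fails, fall back on brackets involving a $2$-torsion Birman--Craggs class with a degree-two element, which are automatically torsion, and compute their $R_4$.
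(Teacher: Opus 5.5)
There is a genuine gap. It sits in your Step~2, and in the mechanism you rely on to certify that a class is torsion.

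\textbf{Step 2 produces zero.} A bracket in $\Lie_3(\calA_1(P))$ with a factor $Y(s,h,a)$ lies in the kernel of $\Lie_3(p_*)$, because $p_*(s)=0$. As recalled just before Proposition~\ref{prop:bracketing}, $J$ factors through $\Lie_k(p_*)$ for $k\ge 2$ (\cite[Th.~A]{FMS26}), so $J$ kills such brackets. The underlying reason is that a $2$-torsion class of $\calI_{\operatorname{ab}}$ brackets to a $2$-torsion class of the torsion-free group $\Gamma_2\calI/\Gamma_3\calI$, hence to zero. The same vanishing is visible on the diagram side: bracketing $Y(s,h,a)$ with anything gives a connected diagram of degree $\ge 2$ with an $s$-colored leg, which is zero since $\calA^{<,c}_n(P)\cong\calA^{<,c}_n(H)$ for $n\ge 2$. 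Your fall-back in Step~1 fails for the same reason. Note also that $E(h\otimes s\wedge y)$ is the one-loop diagram with legs $h,y,y$, with no $s$ left, so it has to come from brackets of genuine $\Lambda^3H$-classes.

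The idea you are missing is $\Sp(H)$-equivariance. $\Sp(H)$ acts on $P^{(2)}$ through the section $\sigma$, whose cocycle is $\omega$; for example, the transvection $a_2\mapsto a_2+b_2$ sends $\tilde a_2$ to $\tilde a_2+\tilde b_2+s$. Because of this, $\big(H^{(2)}\otimes\Lambda^2P^{(2)}\big)/\Lambda^3P^{(2)}$ is a cyclic $\Sp(H)$-module, generated by the tree-level class $a_1\otimes\tilde b_1\wedge\tilde a_2$ (Lemma~\ref{lem:Sp_gen}). Since $\Gr_3\mathbf c$ is equivariant and $\Tors(\Gamma_3\calI/\Gamma_4\calI)$ is a submodule, it suffices to hit this one element. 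The one-loop part $\Ker R_4$ then comes for free, and the splitting along \eqref{eq:ladder} is unnecessary.

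\textbf{The torsion mechanism.} Your inference ``$\tau_3$ kills $2J_3(t)$, hence $J_3(t)$ is torsion'' silently assumes that $\tau_3\otimes\Q$ is injective on $(\Gamma_3\calI/\Gamma_4\calI)\otimes\Q$. The same is true of the condition $2t\in\Ker B_3+\Ker(J_3\bmod\mathrm{torsion})$, which is also circular. This injectivity cannot be checked via $\eta$ and \eqref{eq:triangle_eta}. It is a deep input from Hain's mixed Hodge theory (Lemma~\ref{lem:injectivity_tau_3}, valid for $g\ge 3$).

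Once you grant it, the ``symmetric rearrangement'' is superfluous. For any $\ell\in\Lie_3(\Lambda^3H)$ with $B_3(\ell)$ torsion, $\Gr_3\mathbf c(J_3(\ell))=\psi_3(B_3(\ell))$ is torsion. Since $\tau_3\otimes\Q$ factors through $\Gr_3\mathbf c\otimes\Q$, it follows that $J_3(\ell)$ is torsion. What remains is a concrete identity expressing the $2$-torsion tree $E(a_1\otimes\tilde a_2\wedge\tilde b_1)$ as $B_3$ of double brackets of $Y$-diagrams, with auxiliary pairs $(a_3,b_3)$ and $(a_4,b_4)$; this is where $g\ge 4$ enters (Lemma~\ref{lem:image_br}). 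Your Step~1 only states what you expect to find and does not supply such an identity.
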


The proof of Theorem \ref{th:onto} is based 
on the following three lemmas, where
we use the  notation $\widetilde{x}:=\sigma(x)\in P^{(2)}$
for every $x\in H$
and the basis \eqref{eq:basis_P(2)} of $P^{(2)}$.

\begin{lemma} \label{lem:Sp_gen}
The $\Sp(H)$-module $\big({H^{(2)} \otimes \Lambda^2 P^{(2)}}\big)/{\Lambda^3 P^{(2)}}$ is generated by the class of $a_{1} \otimes \widetilde{b}_{1} \wedge \widetilde{a}_{2}$, in genus $g\geq 3$.
\end{lemma}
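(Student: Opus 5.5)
The plan is to exploit the filtration of the quotient module $\calS:=\big(H^{(2)}\otimes\Lambda^2P^{(2)}\big)/\Lambda^3P^{(2)}$ by the short exact sequence \eqref{eq:ses_3_tor},
$$0\to S^2H^{(2)}\xrightarrow{w}\calS\xrightarrow{z}\frakL_3\big(H^{(2)}\big)\to 0.$$
Let $N\subset\calS$ be the $\Sp(H)$-submodule generated by $g_0:=\{a_1\otimes\widetilde b_1\wedge\widetilde a_2\}$. It suffices to prove two things: that $z(N)=\frakL_3(H^{(2)})$, and that $N\supset w(S^2H^{(2)})$.

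\textbf{Surjectivity onto $\frakL_3(H^{(2)})$.} We have $z(g_0)=[a_1,[b_1,a_2]]$. Since $\frakL_3(H^{(2)})$ is spanned by brackets $[h,[x,y]]$ of basis vectors, it is enough to reach each such bracket, up to elements already known to lie in $N$. First I would use the subgroup of $\Sp(H)$ permuting the symplectic pairs, together with the swaps $a_i\mapsto b_i,\ b_i\mapsto -a_i$. These move $g_0$ to the elements $\{x\otimes\widetilde x^*\wedge\widetilde y\}$, where $y$ lies in a pair different from the one containing $x$. Next I would apply transvections $T_v\colon u\mapsto u+\omega(v,u)v$ to generate differences. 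For instance, $T_{a_1+a_3}$ applied to $g_0$ produces, after subtracting known terms, elements like $a_3\otimes\widetilde b_1\wedge\widetilde a_2$, using the identity $\sigma(u+v)=\sigma(u)+\sigma(v)+\omega(u,v)s$ from Lemma~\ref{lem:P}.(b). Continuing in this way, with three pairs available ($g\geq 3$), every $[h,[x,y]]$ with $h,x,y$ in arbitrary pairs becomes reachable modulo $\Lambda^3P^{(2)}$ and $w(S^2H^{(2)})$.

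\textbf{Containment of the kernel.} The key observation is that transvections produce $s$-terms. Take $T_{b_1}$, which sends $a_1\mapsto a_1+b_1$ (up to sign) and fixes $b_1,a_2$. Applying it to $g_0$ and subtracting $g_0$ yields $\{b_1\otimes\widetilde b_1\wedge\widetilde a_2\}$, together with correction terms $\{a_1\otimes s\wedge\widetilde a_2\}$ coming from the $\sigma$-cocycle. A more careful choice should isolate an element of the form $w(h\cdot y)=\{h\otimes s\wedge\widetilde y\}$ with $h\cdot y\neq 0$. A candidate is $g_0-T_{a_2}(g_0)$ or its analogue with $T_{b_2}$: a transvection along $a_2$ changes $\widetilde b_2$, which is not present in $g_0$, so instead I would first move to an element containing $\widetilde b_2$ and then apply $T_{a_2}$.

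Once one nonzero element of $w(S^2H^{(2)})$ lies in $N$, I would use the fact that $S^2H^{(2)}$ is generated as an $\Sp(H)$-module by $h\cdot h$ together with one $h\cdot y$ with $h\neq y$ (a standard mod-$2$ computation), combined with the injectivity of $w$, to obtain all of $w(S^2H^{(2)})$. If a single orbit does not suffice, I would produce both $a_1\cdot a_1$ and $a_1\cdot a_2$ by separate transvection differences.

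\textbf{Main obstacle.} I expect the hardest part to be the bookkeeping of the $\omega$-cocycle terms in $\sigma$ under transvections. This bookkeeping is needed to certify that some difference lands exactly on a nonzero $w$-element, and not on something that vanishes modulo $\Lambda^3P^{(2)}$ via the relation $C(x,y,z)$ of Lemma~\ref{lem:cyclic}.
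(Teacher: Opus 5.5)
Your reduction through the sequence \eqref{eq:ses_3_tor} is a legitimate framing. Exactness in the middle, which is all you use, is in fact elementary. It differs from the paper's proof, which works upstairs in $H^{(2)}\otimes\Lambda^2P^{(2)}$ with the two generators $a_1\otimes\widetilde b_1\wedge\widetilde a_2$ and $a_2\otimes\widetilde a_1\wedge\widetilde b_1$. The problem is that the whole content of the lemma lies in the explicit orbit computations. You do not carry these out, and the two you sketch are wrong.

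First, the $\sigma$-cocycle enters only through the $P^{(2)}$-factors. Since $T_{b_1}$ fixes $b_1$ and $a_2$, you get $T_{b_1}(g_0)-g_0=b_1\otimes\widetilde b_1\wedge\widetilde a_2$ exactly, with no $s$-term.

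Second, $T_{a_1+a_3}$ fixes $a_1$ and $a_2$, so $T_{a_1+a_3}(g_0)-g_0=a_1\otimes(\widetilde a_1+\widetilde a_3+s)\wedge\widetilde a_2$. This never yields $a_3\otimes\widetilde b_1\wedge\widetilde a_2$. To move the first tensor factor you need a transvection along a vector pairing nontrivially with $a_1$, such as $b_1+b_3$.

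Consequently, your claim $z(N)=\frakL_3(H^{(2)})$ is asserted rather than shown.

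The second step contains an actual error. The map $h\cdot y\mapsto\omega(h,y)\bmod 2$ is a well-defined $\Sp(H)$-invariant linear form on $S^2H^{(2)}$, because $\omega$ is alternating. It vanishes on $a_1\cdot a_1$ and on $a_1\cdot a_2$. So your fallback of producing these two elements stays inside its kernel, which has index $2$. Likewise, ``$h\cdot h$ together with one $h\cdot y$ with $h\neq y$'' suffices only when $\omega(h,y)$ is odd.

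The element you actually need is $w(a_1\cdot b_1)$, and it comes directly from the move you set aside. Let $T$ be the symplectic map with $a_2\mapsto a_2+b_2$, fixing the other basis vectors. It sends $\widetilde a_2$ to $\widetilde a_2+\widetilde b_2+s$, so $T(g_0)-g_0=a_1\otimes\widetilde b_1\wedge\widetilde b_2+a_1\otimes\widetilde b_1\wedge s$. The first term lies in the orbit of $g_0$ (swap the second pair), hence $\{a_1\otimes s\wedge\widetilde b_1\}=w(a_1\cdot b_1)\in N$.

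This element does generate $S^2H^{(2)}$. The map $b_1\mapsto b_1+a_1$ produces $a_1\cdot a_1$, and the map $b_1\mapsto b_1+a_2$, $b_2\mapsto b_2+a_1$ produces $a_1\cdot a_2$; the rest follows by transitivity. With this, together with a genuine verification that $[a_1,[b_1,a_2]]$ generates $\frakL_3(H^{(2)})$ as an $\Sp(H)$-module, your two-step argument would close. As written it does not.
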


\begin{proof}
We claim that the $\Sp(H)$-module 
${H^{(2)} \otimes \Lambda^2 P^{(2)}}$
is generated  by the elements 
$x:=a_{1} \otimes \widetilde{b}_{1} \wedge \widetilde{a}_{2}$ 
and $y:=a_{2} \otimes \widetilde{a}_{1} \wedge \widetilde{b}_{1}$.
Moreover, we observe that 
$$
y \equiv
x + b_1 \otimes \widetilde{a_1} \wedge \widetilde{a_2},
\mod \Lambda^3 P^{(2)},
$$ 
and that $b_1 \otimes \widetilde{a}_1 \wedge \widetilde{a}_2$
can be obtained  from $x$
via the symplectic transformation sending $(a_1,b_1)$ to $(-b_1,a_1)$.
(Here, and below, we define elements of $\Sp(H)$ 
by specifying only the elements of the basis $S$ that are modified;
besides, we use the description \eqref{eq:action_sigma}
of the $\Sp(H)$-action on $P^{(2)}$.)
We deduce that  the quotient $\big({H^{(2)} \otimes \Lambda^2 P^{(2)}}\big)/{\Lambda^3 P^{(2)}}$ 
is $\Sp(H)$-generated by the single element $x$.

We now prove the above claim.
As an abelian group, ${H^{(2)} \otimes \Lambda^2 P^{(2)}}$ is generated
by the elements $u\otimes \widetilde{v} \wedge \widetilde{w}$  
for all $u\in S$ and $v,w \in S \cup \{s\}$.
Firstly, using the symplectic transformation that exchanges $(a_i,b_i)$
with $(a_j,b_j)$ for every $i\neq j$,
and the symplectic transformation that maps $(a_i,b_i)$ to $(-b_i,a_i)$ for every $i$,
we easily see that the $\Sp(H)$-module  ${H^{(2)} \otimes \Lambda^2 P^{(2)}}$
is generated by the following elements:
$$
a_1\otimes \widetilde{a}_1 \wedge  \widetilde{b}_2, \
a_1\otimes \widetilde{a}_1 \wedge  \widetilde{b}_1,  \
x, \ y, \ a_1\otimes \widetilde{a}_2 \wedge  \widetilde{a}_3,  \
a_1\otimes s \wedge  \widetilde{b}_1,  \  a_1\otimes s \wedge  \widetilde{a}_2, \
a_1\otimes s \wedge  \widetilde{a}_1.
$$
Next, it remains to show that the above six elements
(other than $x$ and $y$) belong to
the $\Sp(H)$-submodule $V$ of ${H^{(2)} \otimes \Lambda^2 P^{(2)}}$ 
generated by $x$ and $y$:
\begin{itemize}
\item[(i)] The element of $\Sp(H)$ mapping $a_1$ to $a_1+b_1$   transforms 
$x$ to $x+b_1 \otimes \widetilde{b}_1 \wedge \widetilde{a}_2$;
hence we have $b_1 \otimes \widetilde{b}_1 \wedge \widetilde{a}_2 \in V$
and, so, $a_1\otimes \widetilde{a}_1 \wedge  \widetilde{b}_2 \in V$.
\item[(ii)] The element of $\Sp(H)$ mapping $(a_2,b_1)$ to $(a_1+a_2,b_1-b_2)$  transforms 
$y$ to $a_1 \otimes \widetilde{a}_1 \wedge  \widetilde{b}_1- a_1 \otimes \widetilde{a}_1 \wedge  \widetilde{b}_2+ y - a_2 \otimes \widetilde{a}_1 \wedge  \widetilde{b}_2$.
Clearly, $a_2 \otimes \widetilde{a}_1 \wedge  \widetilde{b}_2$ 
is in the $\Sp(H)$-orbit of  $x$, hence we deduce from (i) 
that $a_1 \otimes \widetilde{a}_1 \wedge  \widetilde{b}_1\in V$.
\item[(iii)] The element of $\Sp(H)$ mapping $(b_1,b_3)$ to $(b_1+a_3,b_3+a_1)$  transforms $x$ to
$x+ a_1 \otimes \widetilde{a}_3 \wedge \widetilde{a}_2 $; therefore,
we have $a_1\otimes \widetilde{a}_2 \wedge  \widetilde{a}_3 \in V$.
\item[(iv)]  The element of $\Sp(H)$ mapping $a_2$ to $a_2+b_2$  maps
$x$ to $x+ a_1 \otimes \widetilde{b}_1 \wedge  \widetilde{b}_2 +a_1 \otimes \widetilde{b}_1 \wedge s$.
Since $a_1 \otimes \widetilde{b}_1 \wedge  \widetilde{b}_2$ is in the $\Sp(H)$-orbit of $x$,
we deduce that $a_1\otimes s \wedge  \widetilde{b}_1\in V$.
\item[(v)]  The element of $\Sp(H)$ mapping $a_1$ to $a_1+b_1$   transforms 
$y$ to $y + a_2 \otimes s \wedge \widetilde{b}_1$; hence, we have 
$ a_2 \otimes s \wedge \widetilde{b}_1 \in V$
and, so, $ a_1\otimes s \wedge  \widetilde{a}_2\in V$.
\item[(vi)] The element of $\Sp(H)$ mapping $(a_2,b_1)$ to $(a_1+a_2,b_1-b_2)$  transforms 
 $ a_1\otimes s \wedge  \widetilde{a}_2 $ to  $ a_1\otimes s \wedge  \widetilde{a}_2 
 +   a_1\otimes s \wedge  \widetilde{a}_1 $ hence, by (v), 
$a_1\otimes s \wedge  \widetilde{a}_1$ is in  $V$.
\end{itemize}
This concludes the proof of the lemma.
\end{proof}

\begin{lemma} \label{lem:image_br}
In genus $g\geq 4$, the Jacobi diagram
$$
E\big(a_1 \otimes \widetilde{a_2}\wedge\widetilde{b_1}\big)
=  \forktree{a_1}{a_2}{a_2}{b_1}{b_1}
\in \Tors\big(\calA_3^{<,c}(H)\big)
$$
belongs  to the image of the bracketing map 
$B_3:\Lie_3(\Lambda^3H) \to \calA_3^{<,c}(H)$.
\end{lemma}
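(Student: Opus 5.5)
The idea is to realise the fork-shaped tree $F:=E\big(a_1\otimes\widetilde{a_2}\wedge\widetilde{b_1}\big)$ as an iterated bracket of three $Y$-diagrams, using formula \eqref{eq:bracket} for the Lie bracket on $\calA^{<,c}(H)$. The genus assumption $g\geq 4$ provides the auxiliary pairs $(a_3,b_3)$ and $(a_4,b_4)$ that carry the gluings. (By the computation of $E$ in the previous lemma, the terms with $\fr$ vanish since $\fr(\sigma(u))=0$; note that $F$ is $2$-torsion by the AS relation.) As an abstract tree, $F$ has a central node adjacent to the leaf $a_1$ and to two nodes, each of which carries one leaf $a_2$ and one leaf $b_1$. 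It is therefore obtained by gluing the three $Y$-diagrams
$$Y_C=(a_1,b_3,b_4),\qquad Y_A=(a_3,a_2,b_1),\qquad Y_B=(a_4,a_2,b_1)$$
along $b_3=a_3$ and $b_4=a_4$. So I would compute $B_3\big([[Y_C,Y_A],Y_B]\big)\in\calA^{<,c}_3(H)$ with external vertices ordered as in the definition of $B_3$. Only the leading gluings should matter here, because STU-like corrections only ever produce diagrams with more loops.

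In $[Y_C,Y_A]$ the contractions allowed by \eqref{eq:bracket} are $b_3$--$a_3$, which is the wanted one, and $a_1$--$b_1$, which is parasitic (all other colour pairs have $\omega=0$). The wanted term is the $H$-diagram $(a_1,b_4\,|\,a_2,b_1)$. Bracketing it with $Y_B$ contracts $b_4$--$a_4$, which gives $F$ up to sign and up to the ordering of leaves; it also contracts $a_1$--$b_1$ a second time, which gives another parasitic tree. After fixing orders via STU-like, the wanted contribution is $\pm F$, and the sign is irrelevant since $2F=0$. The parasitic contributions are explicit trees with leaves among $a_2,a_3,b_1,b_3,\dots$, for instance $T$ with centre leaf $b_3$, one branch $(a_3,a_2)$ and one branch $(a_2,b_1)$, coming from the first $a_1$--$b_1$ contraction.

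The main obstacle is getting rid of these parasitic terms. My first attempt would be to show that each one is itself $B_3$ of an explicit bracket, so that it can be subtracted. For example $T$ arises as the $b_4$--$a_4$ gluing in $[[Y(b_3,b_4,\cdot),Y_A'],Y_B]$, where $Y_A'$ is chosen with colours making every other $\omega$ vanish. If that is not available directly, I would apply symplectic substitutions (for instance replacing $a_1$ by $a_1+a_j$ and taking differences), or the IHX and AS relations, to move the parasitic trees into a form where the cross pairing $\omega(a_1,b_1)$ no longer occurs, using the spare indices in genus $\geq 4$. Either way, the goal is an explicit element $\Xi\in\Lie_3(\Lambda^3H)$ with $B_3(\Xi)=F$. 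A final check would verify the leaf orderings, so that the identity holds in $\calA^{<,c}_3(H)$ itself and not only rationally; this matters because $F$ is torsion.
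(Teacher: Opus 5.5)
Your set-up is sound, but it stops exactly where the content of the lemma begins. Write $Y(x,y,z)$ for the $Y$-diagram with leaves $x,y,z$, and $H(x,y\,|\,z,t)$ for the $H$-diagram with leaves $x,y$ on one node and $z,t$ on the other. Up to sign and the symplectic automorphism exchanging $a_j$ and $b_j$ for $j=3,4$, your bracket $[[Y_C,Y_A],Y_B]$ is the first term $T_1$ of the paper's proof. It yields $F$ plus the two parasitic trees you found: $P_1$ (centre leaf $b_4$, branches $(a_4,a_2)$ and $(a_2,b_1)$) and $P_2$ (centre leaf $b_3$, branches $(a_3,a_2)$ and $(a_2,b_1)$). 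These have infinite order: project to $\calA^c_{3,0}(H)$ and apply $\eta$ of \S\ref{subsec:few_loops}; the $b_4\otimes\frakL_4(H)$-component of the image of $P_1+P_2$ is $\pm b_4\otimes[[a_4,a_2],[a_2,b_1]]\neq 0$. So they must be cancelled \emph{exactly} by other elements of $\Im(B_3)$, and nothing in your proposal does this. Your concrete suggestion cannot work. In any way of writing $P_2$ as an iterated bracket of three $Y$-diagrams, the $Y$ carrying the centre leaf $b_3$ is bracketed, directly or through the inner bracket, with the $Y$ carrying $a_3$. The contraction $b_3$--$a_3$, with $\omega(b_3,a_3)\neq 0$, therefore always occurs and creates a new parasitic tree. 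Colours ``making every other $\omega$ vanish'' do not exist, for the same reason that $F$ itself produced parasitic terms ($a_1$ sits at the centre, $b_1$ on the branches). Re-indexing does not help either: since $[Y_A,Y_B]=0$, the Jacobi identity gives $[[Y_C,Y_A],Y_B]=[[Y_C,Y_B],Y_A]$, which is what the swap $3\leftrightarrow 4$ produces. The appeal to ``symplectic substitutions'' is too vague to count as an argument.

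The missing idea is a second bracket whose \emph{only} nonzero pairing is $a_1$--$b_1$, so that it outputs the parasitic trees and nothing else. In the paper's normalisation, $T_1=[Y(b_1,a_2,b_4),[Y(b_1,a_2,b_3),Y(a_1,a_3,a_4)]]$ equals the fork plus the trees with centre $a_4$ (resp.\ $a_3$) and branches $(a_2,b_4)$, $(b_1,a_2)$ (resp.\ $(a_2,b_3)$, $(b_1,a_2)$). One then adds
\[
T_2=\big[\,Y(b_1,a_1,a_2)\,,\,[Y(b_3,a_2,a_4),Y(a_3,b_4,b_1)]-[Y(b_4,a_2,a_4),Y(a_4,b_4,b_1)]\,\big].
\]
In the inner difference the common term $H(a_2,a_4\,|\,b_4,b_1)$ cancels, leaving $H(b_4,a_2\,|\,b_1,a_4)-H(b_3,a_2\,|\,b_1,a_3)$. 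Bracketing this with $Y(b_1,a_1,a_2)$ gives exactly minus the parasitic part of $T_1$, so $T_1+T_2$ is the fork tree, whose image under $\varphi$ is $F$. Separately, dismissing STU-like corrections ``because they have more loops'' is not valid. One-loop diagrams in $\calA^{<,c}_3(H)$ are nonzero and include torsion ($\Tors\,\calA^c_{3,1}(H)\simeq H^{\otimes 2}\otimes\Z_2$), and the statement is purely integral, since rationally $F=0$. The paper avoids ordering issues by computing with the $\star$-commutator in $\calA^c(S)$ through $\varphi$ (Remark~\ref{rem:bialgebra}). There such terms appear explicitly, e.g.\ the bubble diagram with legs $a_2,a_4$ from the double contraction in $[Y(b_1,a_2,b_3),Y(a_1,a_3,a_4)]$, and they are shown to vanish via IHX after the outer bracket.
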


\begin{proof}
The Lie algebra isomorphism
$\varphi: \calA^c(S) \oplus \Z_2\cdot Y \to \calA^{<,c}(P)$
given in Remark~\ref{rem:bialgebra} maps
$$
T:=\fivetree{a_2}{b_1}{a_1}{b_1}{a_2}
\quad \hbox{to} \quad
\forktree{a_1}{a_2}{a_2}{b_1}{b_1}.
$$
Hence, it suffices to show that $T$ decomposes
into Lie brackets of length $3$ 
in the Lie subalgebra $\calA^c(S)$.
We compute 
$$
T_1:=\left[ \threetree{b_1}{a_2}{b_4} , \fourtree{a_1}{a_4}{a_2}{b_1} +  \fourtree{b_3}{a_2}{a_4}{a_3} \right] 
= \fivetree{a_2}{b_1}{a_1}{b_1}{a_2} + \fivetree{a_2}{b_4}{a_4}{b_1}{a_2} - \fivetree{a_2}{b_3}{a_3}{b_1}{a_2}
$$
(where we have used the IHX relation for the vanishing
of a one-loop diagram), and
$$
T_2:= \left[ \threetree{b_1}{a_1}{a_2} , 
\fourtree{b_4}{a_2}{b_1}{a_4}
-\fourtree{b_3}{a_2}{b_1}{a_3}\right] 
= - \fivetree{a_2}{b_4}{a_4}{b_1}{a_2} 
+ \fivetree{a_2}{b_3}{a_3}{b_1}{a_2}.
$$
Note that $T_1+T_2=T$.
Moreover, we have
$$
\left[\threetree{b_1}{a_2}{b_3}, \threetree{a_1}{a_3}{a_4}\right]    = \fourtree{b_1}{a_2}{a_4}{a_1} + \fourtree{a_2}{b_3}{a_3}{a_4} + \loopedtwo{a_2}{a_4},
$$
so that (using ``IHX'' again) we get
$$
T_1 = \left[\threetree{b_1}{a_2}{b_4},\left[\threetree{b_1}{a_2}{b_3}, \threetree{a_1}{a_3}{a_4}\right]   \right].
$$
Moreover, we have 
\begin{eqnarray*}
&&\left[ \threetree{b_3}{a_2}{a_4} , \threetree{a_3}{b_4}{b_1}\right] -\left[ \threetree{b_4}{a_2}{a_4} , \threetree{a_4}{b_4}{b_1}\right] \\
&=&\left( - \fourtree{b_3}{a_2}{b_1}{a_3} + \fourtree{a_2}{a_4}{b_4}{b_1}\right)
-\left( - \fourtree{b_4}{a_2}{b_1}{a_4} + \fourtree{a_2}{a_4}{b_4}{b_1}\right)\\
& = &
\fourtree{b_4}{a_2}{b_1}{a_4} - \fourtree{b_3}{a_2}{b_1}{a_3}
\end{eqnarray*}
so that we obtain 
$$
T_2= \left[ \threetree{b_1}{a_1}{a_2} ,
\left[ \threetree{b_3}{a_2}{a_4} , \threetree{a_3}{b_4}{b_1}\right] -\left[ \threetree{b_4}{a_2}{a_4} , \threetree{a_4}{b_4}{b_1}\right] \right].
$$

\up
\end{proof}

\begin{lemma} \label{lem:injectivity_tau_3}
In genus $g\geq 3$, 
$\tau_3\otimes \Q: (\Gamma_3 \calI /\Gamma_4 \calI)\otimes \Q
\to \mathsf{D}_3(H)\otimes \Q$ is injective.
\end{lemma}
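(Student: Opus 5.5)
The plan is to compute the rational Torelli Lie algebra in degree $3$ via Hain's presentation, and then compare it with the rational Johnson image. Rationally, the bracketing map $J_3:\Lie_3(\Lambda^3 H)\to \Gamma_3\calI/\Gamma_4\calI$ is surjective. So $(\Gamma_3\calI/\Gamma_4\calI)\otimes\Q$ is a quotient of $\Lie_3(\Lambda^3 H^\Q)$ by the image of Hain's quadratic relations, bracketed once more with $\Lambda^3 H^\Q$. By Hain's theorem (valid for $g\geq 3$ in degrees $\leq 3$, where the relations are quadratic modulo the extra cubic relation in genus $3$), this quotient is $\mathfrak{t}_3\otimes\Q$, the degree-$3$ part of the Lie algebra $\mathfrak{t}$ given by Hain's presentation. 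I would use that the associated graded of the Malcev Lie algebra of $\calI$ agrees with $\Gr^\Gamma\calI\otimes\Q$, by Hain's purity/weight argument.

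Next, I would identify $\tau_3\otimes\Q$ with the degree-$3$ part of the Lie map $\mathfrak{t}\to \Der_\omega(\frakL(H^\Q))$, whose image in degree $3$ lies in $\mathsf{D}_3(H)\otimes\Q$. This map is induced by $\tau_1$ on generators. Injectivity of $\tau_3\otimes\Q$ then amounts to injectivity of $\mathfrak{t}_3\to \mathsf{D}_3(H^\Q)$, which is an $\Sp$-representation-theoretic statement.

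To prove it, I would decompose both sides into irreducible $\Sp(H^\Q)$-modules. On the target, $\mathsf{D}_3\otimes\Q\simeq \calA^c_{3,0}(H)\otimes\Q$ is known to be $[3,1^2]\oplus[2,1]$ for $g$ large, and the image of $\tau_3$ is all of it. On the source, Hain computed $\mathfrak{t}_3$ in the stable range; equivalently, one can quote Morita's/Hain's result that $\mathfrak{t}_3\cong \Im\tau_3\otimes\Q$, i.e.\ the kernel of $\mathfrak{t}\to\mathfrak{h}$ vanishes in degree $3$, the first nonzero kernel being in degree $2$ for the trivial summand, which is killed in $\mathfrak{t}$ since $\mathfrak t_2\cong \mathfrak h_2$ modulo the Casson part. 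Comparing dimensions of isotypic components, or invoking the known computation of $\mathfrak{t}_3$ (Hain, Morita, and also \cite{KRW23,NW26} in the stable range), would give injectivity.

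The main obstacle is the genus-$3$ case and the precise control of the degree-$3$ piece of the kernel of $\mathfrak{t}\to\Der$. If the stable-range citations do not cover $g=3$, I would instead use the strategy suggested by Sakasai: bound $\dim \Lie_3(\Lambda^3H^\Q)$ modulo the relations from above, by exhibiting explicit brackets spanning each isotypic component and using Hain's quadratic relations. This would show that $\dim\mathfrak{t}_3\leq\dim \mathsf D_3\otimes\Q$. Surjectivity of $\tau_3\otimes\Q$ onto $\mathsf{D}_3\otimes\Q$ (from Proposition~\ref{prop:tf3} and surjectivity of $\psi$, or from Morita) then forces equality and hence injectivity.
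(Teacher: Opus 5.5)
Your first step agrees with the paper. For $g\geq 3$, Hain's weight filtration identifies the Malcev Lie algebra of $\calI$ with the completion of $(\Gr^\Gamma\calI)\otimes\Q$. The lemma is therefore equivalent to injectivity in degree $3$ of $\Gr\,\mathfrak{t}_{g,1}\to\mathfrak{m}_{g,1}$, that is, of your map $\mathfrak t_3\to\mathsf D_3(H)\otimes\Q$. The gap is that this injectivity is never actually established for all $g\geq 3$:
\begin{itemize}
\item The stable-range results you invoke do not reach low genus. Naef--Willwacher need $g\geq 3k$, i.e.\ $g\geq 9$ in degree $3$.
\item ``Morita's/Hain's result that $\mathfrak t_3\cong\Im\tau_3\otimes\Q$'' is just the lemma itself. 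You quote it without a source valid for a surface with one boundary component.
\item Your account of degree $2$ is wrong. $\tau_2\otimes\Q$ is \emph{not} injective on $(\Gamma_2\calI/\Gamma_3\calI)\otimes\Q$: its kernel is a one-dimensional trivial summand detected by the Casson invariant, and it is not ``killed in $\mathfrak t$''.
\end{itemize}
This degree-$2$ kernel is precisely the danger in degree $3$. If $c$ spans it, then $[c,\mathfrak t_1]$ is an equivariant image of $\Lambda^3H^\Q\cong[1^3]\oplus[1]$, and it lies in $\ker(\mathfrak t_3\to\mathsf D_3(H)\otimes\Q)$. Since $\mathsf D_3(H)\otimes\Q\cong[31^2]\oplus[21]$ contains no such summand, injectivity forces $[c,\mathfrak t_1]=0$ in $\mathfrak t_3$. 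Comparing isotypic components of the \emph{target} cannot decide this; you have to actually know $\mathfrak t_3$.

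Your fallback has the right logical shape: bound $\dim\mathfrak t_3$ from above via Hain's presentation, then use rational surjectivity of $\tau_3$. But that bound \emph{is} the whole content of the lemma, and you leave it as an unexecuted plan. Carrying it out would need Hain's explicit degree-$2$ relations, plus the cubic relations his presentation requires in low genus, checked for every $g\geq 3$. Also, Proposition~\ref{prop:tf3} concerns $Y_3\calIC/Y_4$ and says nothing about surjectivity on $\Gamma_3\calI$.

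The paper avoids any such computation. Following Morita--Sakasai--Suzuki, it compares two short exact sequences:
\begin{itemize}
\item Hain's sequence $0\to\Gr\,\mathfrak p_{g,1}\to\Gr\,\mathfrak t_{g,1}\to\Gr\,\mathfrak t_g\to 0$;
\item the sequence $0\to\Gr\,\mathfrak p_{g,1}\to\mathfrak m_{g,1}\to\mathfrak m_g\to 0$, exact by Asada--Kaneko and Hain's central $\Q(1)$-extension.
\end{itemize}
Since the left vertical map is the identity, the kernels of $\Gr\,\mathfrak t_{g,1}\to\mathfrak m_{g,1}$ and of $\Gr\,\mathfrak t_g\to\mathfrak m_g$ coincide. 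The paper then quotes Hain's degree-$3$ injectivity for closed surfaces: \cite[Cor.~7.6]{Hai15} for $g\geq 4$ and \cite[Prop.~9.3]{Hai15} for $g=3$. This boundary-to-closed reduction, together with that closed-surface input, is what your proposal is missing.
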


\noindent
This result is well-known to specialists, but we could not locate a formulation that explicitly states the genus and applies to the particular surface $\Sigma$ in question (which has a single boundary component). For this reason, we provide a proof.

\begin{proof}[Proof of Lemma \ref{lem:injectivity_tau_3}]
Since we will make extensive use of Hain’s results, we shall largely adopt his notation.  
For any $g,n,r \ge 0$, denote by $\mathfrak{t}_{g,r}^n$ the \emph{Malcev Lie algebra} of the Torelli group $T_{g,r}^n$ associated with a compact oriented
surface of genus $g$ having $r$ boundary components and $n$ interior marked points.  
By \cite[Th$.$ 1.4]{Hai97}, the Lie algebra $\mathfrak{t}_{g,r}^n$ carries for $g\geq 3$ 
a $\Q$-mixed Hodge structure whose weight filtration agrees with its (completed) lower central series. Consequently, $\mathfrak{t}_{g,r}^n$ is isomorphic to the (degree-completion of) its associated graded Lie algebra $\Gr \mathfrak{t}_{g,r}^n$.  
Moreover, this graded Lie algebra is canonically identified with the associated graded $(\Gr^\Gamma T_{g,r}^n)\otimes \Q$ of the lower central series of $T_{g,r}^n$, after tensoring with $\Q$.

In our current setting, we focus on 
$T_{g,1}:=T_{g,1}^0$ (which corresponds to $\calI$ in our notation)
and on
$\mathfrak{t}_{g,1}:= \mathfrak{t}_{g,1}^0$
(whose associated graded object is 
$(\Gr^\Gamma \calI)\otimes \Q$
in our notation).
We also work with the \emph{Johnson filtration}
$$
T_{g,1} = T_{g,1}[1] \supset \cdots 
\supset T_{g,1}[k] \supset T_{g,1}[k+1] \supset \cdots 
$$
which is defined as the sequence of kernels of the actions of $T_{g,1}$
on the successive nilpotent quotients of the fundamental group
$\pi_1(\Sigma_{g,1},\star)$,
where the base point $\star$ lies in $\partial\Sigma_{g,1}$.
Its associated graded (with rational coefficients) is denoted by $\mathfrak{m}_{g,1}$.
Thus, the lemma requires us to show that the canonical map
$\Gr \mathfrak{t}_{g,1}\to \mathfrak{m}_{g,1}$
is injective in degree 3 for $g\geq 3$.

For this purpose, we also introduce the \emph{Johnson filtration} on $T_g^1$, defined in the same way as for $T_{g,1}$, by looking at the action on $\pi_1(\Sigma_g,\star)$, where $\star$ denotes the marked point of $\Sigma_g^1$. 
The \emph{Johnson filtration} on $T_g$ is then defined as the image of the latter under the canonical projection $T_g^1 \to T_g$. 
Denote by $\mathfrak{m}_g^1$ and $\mathfrak{m}_g$, respectively, the associated graded objects (with rational coefficients) of these two filtrations.

Following Morita, Sakasai and Suzuki \cite[Prop$.$~3.1]{MSS20}, we claim that the maps $\Gr \mathfrak{t}_{g,1}\to \mathfrak{m}_{g,1}$ and $\Gr \mathfrak{t}_{g}\to \mathfrak{m}_{g}$ have kernels that are naturally isomorphic. Their proof relies on the commutative diagram
\begin{equation} \label{eq:MSS}
\xymatrix{
0 \ar[r] & \Gr \mathfrak{p}_{g,1} \ar@{=}[d] \ar[r]
&  \Gr \mathfrak{t}_{g,1} \ar[r] \ar[d]& 
\Gr \mathfrak{t}_{g} \ar[d] \ar[r] & 0 \\
0 \ar[r] & \Gr \mathfrak{p}_{g,1} \ar[r]
&   \mathfrak{m}_{g,1} \ar[r] & 
 \mathfrak{m}_{g}  \ar[r] & 0.
}
\end{equation}
Here, $\mathfrak{p}_{g,1}$ denotes the Malcev Lie algebra of the fundamental group of the unit tangent bundle of $\Sigma_g$. This Lie algebra carries a mixed Hodge structure whose weight filtration is given by its lower central series \cite[Prop$.$ 4.4]{Hai97}. The top row of \eqref{eq:MSS} is obtained by applying the functor $\Gr(-)$ to the short exact sequence from \cite[Prop$.$ 3.6]{Hai97} in the category of Lie algebras endowed with mixed Hodge structures (as carried out at the bottom of \cite[p$.$~628]{Hai97}); in particular, the top row is exact.
The exactness of the bottom row of \eqref{eq:MSS} requires additional explanation. By \cite[Prop$.$ 2]{AK87}, there is a short exact sequence
$$
\xymatrix{
0\ar[r]&\Gr\mathfrak{p}_g^1\ar[r]&\frakm_g^1\ar[r]
&\frakm_g \ar[r]&0,
}
$$
where $\mathfrak{p}_{g}^1$ is the Malcev Lie algebra of the fundamental group of $\Sigma_g$. Moreover, in answer to a question posed by Asada and Nakamura \cite{AN95}, Hain \cite[\S 14.4]{Hai97} established the exactness of
$$
\xymatrix{
0\ar[r]& \Q(1) \ar[r]&\frakm_{g,1}\ar[r]
&\frakm_g^1 \ar[r]&0.
}
$$
All these statements are valid under the assumption $g\geq 3$. 
Thus, with this hypothesis, we have established the exactness of the bottom row of \eqref{eq:MSS}, 
which completes the proof of the preceding claim.

To conclude the proof of the lemma, it remains 
to justify that the canonical map
$\Gr \mathfrak{t}_{g}\to \mathfrak{m}_{g}$
is injective in degree 3 for $g\geq 3$.
This is implied by the last statement 
of \cite[Cor. 7.6]{Hai15} for $g\geq 4$,
combined with \cite[Prop$.$ 9.3]{Hai15} for $g=3$.
\end{proof}

\begin{remark}
Although this result is not required here, we note the recent preprint \cite{NW26}, in which Naef and Willwacher show the injectivity of
$
\tau_k\otimes \Q:\; (\Gamma_k \calI /\Gamma_{k+1} \calI)\otimes \Q 
\to \mathsf{D}_k(H)\otimes \Q
$
for any $k\geq 3$ provided that $g\geq 3k$.
\hfill $\blacksquare$
\end{remark}

\begin{proof}[Proof of Theorem \ref{th:onto}]
By Lemma \ref{lem:image_br}, there exists an element $\ell \in \Lie_3(\Lambda^3H)$ such that
$
B_3(\ell) = E\big(a_1 \otimes \widetilde{a_2}\wedge\widetilde{b_1}\big).
$
Proposition \ref{prop:bracketing} gives the commutative diagram
\begin{equation} \label{eq:carre}
\xymatrix{
\Lie_3\big(\Lambda^3 H\big)\ar[d]_{B_3}  \ar@{->>}[r]^-{J_3} &
\Gamma_3 \calI/\Gamma_{4} \calI \ar[d]^-{\Gr_3\!\mathbf{c}}\\
\calA_3^{<,c}(H) \ar@{->>}[r]_-{\psi_3} &
Y_3 \calIC/ Y_{4},
}    
\end{equation}
from which we deduce that
$$
(\Gr_3\!\mathbf{c})(J_3(\ell)) =
\psi\big(E\big(a_1 \otimes \widetilde{a_2}\wedge\widetilde{b_1}\big)\big) = 
\Psi\big(a_1 \otimes \widetilde{a_2}\wedge\widetilde{b_1}\big).
$$
So, since $\Psi$ is an $\Sp(H)$-isomorphism (by Theorem \ref{th:Tors(Y_3IC/Y_4)}), 
we deduce from Lemma~\ref{lem:Sp_gen} 
that the $\Sp(H)$-module 
$\Tors\big(Y_3\calIC/Y_4\big)$ is generated 
by $(\Gr_3\!\mathbf{c})(J_3(\ell))$.

Moreover, the homomorphism
$$
(\Gr_3 \mathbf{c})\otimes \Q: 
(\Gamma_3 \calI /\Gamma_4 \calI) \otimes \Q
\longrightarrow (Y_3\calIC/Y_4) \otimes \Q
$$
is injective, because the map $\tau_3\otimes \Q$ from Lemma \ref{lem:injectivity_tau_3} factors through 
$(\Gr_3 \mathbf{c})\otimes \Q$. Consequently, $J_3(\ell)$ lies in the torsion subgroup of $\Gamma_3 \calI /\Gamma_4 \calI$.
We conclude that the map
$\Gr_3 \mathbf{c}:\Tors(\Gamma_3 \calI /\Gamma_4 \calI)
\to \Tors(Y_3 \calIC / Y_4)$
is surjective. The second assertion then follows from Theorem \ref{th:Tors(Y_3IC/Y_4)}.
\end{proof}

Theorem \ref{th:onto} does not address whether 
$\operatorname{Gr}_3 \mathbf{c}: 
\Tors(\Gamma_3 \calI /\Gamma_4 \calI)
\to \Tors(Y_3 \calIC / Y_4)$
is injective.
(As shown in \cite{MM03} and \cite{FMS26}, 
the map $\operatorname{Gr}_k \mathbf{c}$ is injective for $k=1$ and $k=2$, respectively.)
We conclude with a characterization of this injectivity.

\begin{proposition} \label{prop:injectivity}
In genus $g\geq 4$, 
the following assertions are equivalent:
\begin{enumerate}[(i)]
\item $\Gamma_3 \calI / \Gamma_4 \calI$ 
embeds into $Y_3 \calIC / Y_4$ 
by $\Gr_3 \mathbf{c}$.
\item $\Tors(\Gamma_3 \calI / \Gamma_4 \calI)$ is isomorphic to $\Tors(Y_3 \calIC / Y_4)$
by $\Gr_3 \mathbf{c}$.
\item We have 
$\Gamma_4 \calI = \big\{ f \in \Gamma_3 \calI :
\tau_3(f)=0, \tau_4(f) \in \mathsf{D}'_4(H), \Tr(\tau_4(f))= 0 \big\}.$
\item $B_3^{-1}(\Ker(\psi_3)) \subset \Ker(J_3).$
\end{enumerate}
\end{proposition}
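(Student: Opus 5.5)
We prove the cycle of implications (i)$\Leftrightarrow$(ii), (i)$\Leftrightarrow$(iv), and (i)$\Leftrightarrow$(iii), using the commutative square \eqref{eq:carre} together with Theorem \ref{th:onto} and Lemma \ref{lem:injectivity_tau_3}.

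\emph{Step 1: (i)$\Leftrightarrow$(ii).} By Lemma \ref{lem:injectivity_tau_3}, $\tau_3\otimes\Q$ is injective on $(\Gamma_3\calI/\Gamma_4\calI)\otimes\Q$. Since $\tau_3$ factors through $\Gr_3\mathbf{c}$, the kernel of $\Gr_3\mathbf{c}$ is contained in $\Tors(\Gamma_3\calI/\Gamma_4\calI)$. Hence $\Gr_3\mathbf{c}$ is injective if and only if its restriction to torsion is injective. By Theorem \ref{th:onto} this restriction is always surjective onto $\Tors(Y_3\calIC/Y_4)$, so injectivity on torsion is the same as being an isomorphism there.

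\emph{Step 2: (i)$\Leftrightarrow$(iv).} By \eqref{eq:carre}, $J_3$ is surjective, so $\Gr_3\mathbf{c}$ is injective if and only if $\Ker(\Gr_3\mathbf{c}\circ J_3)\subset\Ker J_3$. Commutativity gives $\Gr_3\mathbf{c}\circ J_3=\psi_3\circ B_3$, and $\Ker(\psi_3\circ B_3)=B_3^{-1}(\Ker\psi_3)$. This is exactly (iv).

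\emph{Step 3: (i)$\Leftrightarrow$(iii).} Let $N$ denote the set on the right-hand side of (iii). The key input is that, for $f\in\Gamma_3\calI$, the mapping cylinder $\mathbf{c}(f)$ lies in $Y_4\calIC$ if and only if $f\in N$. To show this, we apply Corollary \ref{cor:Y_4}, using that $f$ is Torelli with $f\in\Gamma_3$, so the remaining conditions there hold automatically.
\begin{itemize}
\item[(a)] The Casson variation $\lambda$ vanishes on $\Gamma_3\calI\subset\Gamma_2\calI$. This holds because $\lambda$ is a homomorphism on the Johnson kernel, and on $Y_3\calIC$ it is additive and killed by commutators; more simply, $\mathbf{c}(f)\in Y_3\calIC$ already forces $\lambda=0$ by \cite[Th.~A]{MM13}.
\item[(b)] The classes $\tau_1,\tau_2$ vanish on $\Gamma_3\calI$.
\item[(c)] $\tilde\alpha$ restricted to $\calI$ lands in the image of $H$ (as recalled in the proof of Corollary \ref{cor:0_1}), so $\tilde\alpha_2(f)=\tilde\alpha_3(f)=0$ and $p_+\tilde\alpha_4(f)=0$.
\end{itemize}
Given (a)--(c), the conditions of Corollary \ref{cor:Y_4} reduce to $\tau_3(f)=0$, $\tau_4(f)\in\mathsf{D}_4'(H)$ and $\Tr(\tau_4(f))=0$, that is, to $f\in N$. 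We therefore have $N=\{f\in\Gamma_3\calI:\mathbf{c}(f)\in Y_4\calIC\}$, which is the preimage of $\Ker\Gr_3\mathbf{c}$ in $\Gamma_3\calI$. Consequently $N=\Gamma_4\calI$ if and only if $\Gr_3\mathbf{c}$ is injective.

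The main point of care is Step 3. We must make sure that $\tau_4(f)$ is defined, that $R_4$ and $R_4'$ make sense for all $f\in\Gamma_3\calI$ (not only on torsion elements), and that Corollary \ref{cor:Y_4} is applied correctly. The torsion issue is handled by noting that $\tau_3(f)=0$ already places $\mathbf{c}(f)$ in $\calIC[4]$, and that the proof of Corollary \ref{cor:Y_4} uses only this together with Propositions \ref{prop:tf3} and \ref{prop:tors3}.
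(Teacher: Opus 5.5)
Your proposal is correct and follows the paper's own proof. It uses the same three equivalences: (i)$\Leftrightarrow$(ii) via Lemma~\ref{lem:injectivity_tau_3} (so $\Ker \Gr_3\mathbf{c}$ is torsion) and Theorem~\ref{th:onto}, (i)$\Leftrightarrow$(iv) via the square~\eqref{eq:carre} with $J_3$ surjective, and (i)$\Leftrightarrow$(iii) via Corollary~\ref{cor:Y_4} together with the fact that $\tilde\alpha(\calI)$ lies in the image of $H$. The only loose spot is your first justification of $\lambda=0$ in (a); you can drop it, since $\mathbf{c}(f)\in Y_3\calIC$ together with \cite[Th.~A]{MM13} already gives $\lambda(\mathbf{c}(f))=0$.
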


\begin{proof}
According to Theorem \ref{th:onto},  
statement (ii) is equivalent to
\begin{quote}
\emph{(ii')  $\Tors(\Gamma_3 \calI / \Gamma_4 \calI)$ 
embeds into $\Tors(Y_3 \calIC / Y_4)$}
via $\Gr_3 \mathbf{c}$.
\end{quote}
In the proof of Theorem \ref{th:onto}, we noted that 
$(\Gr_3 \mathbf{c})\otimes \Q$ is injective; hence (ii') is also
equivalent to (i).  
Moreover, the commutative diagram \eqref{eq:carre} implies that (i) is
equivalent to (iv).  
Finally, since $\tilde \alpha(\calI)$ lies in the natural image of $H$
inside $K_1\big(\widehat{\Q[\pi]}\big)$, 
Corollary \ref{cor:Y_4} yields that (i) 
is equivalent to (iii) as well.
\end{proof}

\subsection{Proof of some lemmas}

\begin{proof}[Proof of Lemma \ref{lem:E_inj}]
By \cite[Lemma 8.4]{CHM08} 
(see also Proposition \ref{prop:varphi}), 
we have the following  isomorphisms of abelian groups:
$$
\calA^{<,c}_3(H) \simeq \calA^c_3(S) \simeq  
\calA^c_3(H) =
\calA^c_{3,0}(H) \oplus  \calA^c_{3,1}(H)
$$
Besides, it follows from \eqref{eq:Levine1} that
$$
\Tors \calA^c_{3,0}(H) 
\simeq H^{(2)} \otimes \Lambda^2 H^{(2)},
$$
and it follows from \eqref{eq:torsion_wheel} that
$$
\Tors \calA^c_{3,1}(H) 
\simeq H^{(2)} \otimes H^{(2)}.
$$
Therefore, the finite abelian groups
$\Tors \calA^{<,c}_{3}(H) $ and 
$ H^{(2)} \otimes \Lambda^2 P^{(2)}$ have the same cardinality.
Thus, it is enough to prove the injectivity of $E$. 

To prove that $E$ is injective, we start 
with the commutative diagram
$$
\begin{tikzcd}[column sep=large, row sep=large]
H^{(2)} \otimes \Lambda^2 P^{(2)}
  \arrow[r, "E"]
  \arrow[d, "\mathrm{id} \otimes \Lambda^2 p_*"]
& \mathcal{A}_3^{<,c}(H) 
  \arrow[d, two heads]  & \\
H^{(2)} \otimes \Lambda^2 H^{(2)}
\arrow[r]
&    {\displaystyle\frac{L_0 \calA^{<,c}_3(H)}{L_1 \calA^{<,c}_{3}(H)}}
\simeq \calA^{c}_{3,0}(H)   
\end{tikzcd}
$$
where, as observed in Remark \ref{rem:E_Levine}, 
the bottom arrow coincides with the injection in Levine's short exact sequence \eqref{eq:Levine1}.
We deduce that the kernel 
of $E$ is included in 
$$
\Ker\big(\id \otimes \Lambda^2 p_*\big)
= H^{(2)} \otimes \Ker\big(\Lambda^2 p_*\big)
= \Im(k)
$$
where  $k$ is the homomorphism
$$
k: H^{(2)} \otimes H^{(2)} \longrightarrow
H^{(2)} \otimes \Lambda^2 P^{(2)}, \quad
h \otimes y \longmapsto h \otimes \big(s \wedge \sigma(y)\big).
$$
But, according to \eqref{eq:Ehxy}
and still using the convention \eqref{eq:lifts}, we have
$$
\big(\, E(k(h\otimes y))\!\!\! \mod L_2 \,\big)
= \tikz[baseline=8pt ,scale = 0.5]{
\draw [thick,color=black] (1,0.75) circle (0.5);
\draw [thick,color=black] (1,0.75+0.5)-- (1,0.75+1);
\draw [thick,color=black] (1+0.707*0.5,0.75 + 0.707*0.5)-- (1+0.707,0.75 + 0.707);
\draw [thick,color=black] (1-0.707*0.5,0.75 + 0.707*0.5)-- (1-0.707,0.75 + 0.707);
\draw [black, fill = black] (1,0.75+0.5) circle (.5ex);
\draw [black, fill = black] (1-0.707*0.5,0.75 + 0.707*0.5) circle (.5ex);
\draw [black, fill = black] (1+0.707*0.5,0.75 + 0.707*0.5) circle (.5ex);
\draw[color=black] (1-0.707*1.3,0.75 + 0.707*1.3) node {\small $h$};
\draw[color=black] (1,0.75+1.3) node {\small $y$};
\draw[color=black] (1+0.707*1.3,0.75 + 0.707*1.3) node {\small $y$};
} \in 
{\displaystyle \frac{L_1 \mathcal{A}_3^{<,c}(H)}{L_2 \mathcal{A}_3^{<,c}(H) }}
\simeq \mathcal{A}_{3,1}^{c}(H).
$$
Thus, using the isomorphism \eqref{eq:wheel}, 
we obtain that the homomorphism
$$
E\circ k : H^{(2)} \otimes H^{(2)}  
\longrightarrow  \mathcal{A}_{3,1}^{c}(H) \simeq 
\big(H^{\otimes 3}\big)_{\mathfrak{D}_6}, \quad
h\otimes y \longmapsto \{ h  y  y\}
$$ 
is injective. We conclude that $E$ is injective.
\end{proof}

\begin{proof}[Proof of Lemma \ref{lem:cyclic}]
Let $T$ be the $\Z_2$-vector subspace of $P^{(2)}$
generated by the  elements of $\sigma(S)$ 
for a choice of symplectic basis
 $S=(a_1,\dots,a_g,b_1,\dots,b_g)$ of~$H$. 
 Then, we have $P^{(2)}=T\oplus \Z_2\, s$ 
and $T \simeq H^{(2)}$ via $p_*$.
Thus, we have the decompositions 
$$
\Lambda^3 P^{(2)} = \Lambda^3 T\oplus (s \wedge \Lambda^2T)
\quad \hbox{and} \quad 
\Lambda^2 P^{(2)}  = \Lambda^2 T \oplus (s \wedge T).
$$
Moreover, the map $C$ is the direct sum of the homomorphism
$$
\Lambda^3 T \longrightarrow H^{(2)} \otimes  \Lambda^2 T 
\simeq T \otimes  \Lambda^2 T, \ 
x \wedge y \wedge z \longmapsto x \otimes y \wedge z+y \otimes z \wedge x +z \otimes x \wedge y
$$
with the homomorphism
$$
\Lambda^2 T\simeq \big( s\wedge \Lambda^2 T \big) \longrightarrow H^{(2)} \otimes (s\wedge T)
\simeq T \otimes T, \ 
y\wedge z \longmapsto y\otimes z + z \otimes y.
$$
Since those two components are injective, the map $C$ is injective.

We now prove that $\Psi(C(x\wedge y \wedge z))=0$
for any $x,y,z\in P^{(2)}$ using clasper calculus.  From the proof of \cite[Lemma 6.11 (2)]{NSS22a}, we get the following  identity in the abelian group $Y_3 \mathcal{IC}/Y_4$
(denoted here multiplicatively):
\tikzset{
  symdash/.style={
    dash pattern=on 2pt off 2pt,
    dash phase=1pt
  }
}
$$\begin{tikzpicture}[scale = 0.4, baseline = 1cm, thick]

\node (3)  at (-2.5,2) {};
\node (5)  at (-1.5,2) {};
\node (6)  at (0,2.25) {};
\node (8)  at (0.8,2) {};
\node (9)  at (-0.75,3.5) {};
\node (10) at (-0.75,3) {};

\node (13) at (-2.3,2.25) {};

\node (14) at (1,2.25) {};

\draw[in=90,out=0] (10.center) to (6.center);
\draw (9.center) -- (10.center);


\draw[symdash] (1.5,2.125) ++(90:0.7)
  arc[start angle=90,end angle=-90,radius=0.7];
\draw (1.5,2.125) ++(90:0.7)
  arc[start angle=90,end angle=270,radius=0.7];
  
\draw[white,fill=white] (0.8,2.25) circle (0.5ex);

\draw (13.center) -- (14.center);

\draw[white,fill=white] (-1.5,2.25) circle (1ex);

\draw[in=180,out=90] (5.center) to (10.center);

\draw (-3,2.125) ++(90:0.5)
  arc[start angle=90,end angle=-90,radius=0.5];
\draw[symdash] (-3,2.125) ++(90:0.5)
  arc[start angle=90,end angle=270,radius=0.5];

\draw (-3,2.125) ++(90:0.7)
  arc[start angle=90,end angle=-90,radius=0.7];
\draw[symdash] (-3,2.125) ++(90:0.7)
  arc[start angle=90,end angle=270,radius=0.7];


\draw (1.5,2.125) ++(90:0.5)
  arc[start angle=90,end angle=270,radius=0.5];
\draw[symdash] (1.5,2.125) ++(-90:0.5)
  arc[start angle=-90,end angle=90,radius=0.5];

\draw (-0.75,4) ++(180:0.5)
  arc[start angle=180,end angle=360,radius=0.5];
\draw[symdash] (-0.75,4) ++(0:0.5)
  arc[start angle=0,end angle=180,radius=0.5];

\draw[white,fill=white] (-2.3,2) circle (0.5ex);
\draw (3.center) -- (8.center);

\node at (-3,2.125)   {\scriptsize $x$};
\node at (1.5,2.125)  {\scriptsize $z$};
\node at (-0.75,4)    {\scriptsize $y$};

\end{tikzpicture}
= \begin{tikzpicture}[scale = 0.4, baseline = 1cm, thick]
		\node  (0) at (1, 2) {};
		\node  (1) at (2, 2) {};
		\node  (2) at (3, 2) {};
		\node  (3) at (4, 2) {};
		\node  (4) at (2, 3) {};
		\node  (5) at (3, 3) {};		
        \draw (0.center) to (3.center);
		\draw (4.center) to (1.center);
		\draw (5.center) to (2.center);
        \draw (0.5,2) ++(90:0.5)
      arc[start angle=90,end angle=-90,radius=0.5];
        \draw[symdash] (0.5,2) ++(90:0.5)
      arc[start angle=90,end angle=270,radius=0.5];
        \draw (3,3.5) ++(180:0.5)
      arc[start angle=180,end angle=360,radius=0.5];
        \draw[symdash] (3,3.5) ++(0:0.5)
      arc[start angle=0,end angle=180,radius=0.5];
        \draw (4.5,2) ++(90:0.5)
      arc[start angle=90,end angle=270,radius=0.5];
        \draw[symdash] (4.5,2) ++(-90:0.5)
      arc[start angle=-90,end angle=90,radius=0.5];
        \node at (0.5,2) {\scriptsize $x$};
        \node at (4.5,2) {\scriptsize $z$};
        \node at (3,3.5) {\scriptsize $y$};
		\node  (6) at (1.75, 3.25) {};
		\node  (7) at (2, 3.5) {};
		\node  (8) at (2.25, 3.25) {};
		\node  (9) at (1.75, 3.75) {};
		\node  (10) at (2, 4) {};
		\node  (11) at (2.25, 3.75) {};
        \draw [bend right=45] (4.center) to (8.center);
		\draw [bend left=45] (4.center) to (6.center);
		\draw [bend right=45] (10.center) to (9.center);
		\draw [bend left=45] (10.center) to (11.center);
		\draw [in=150, out=-90, looseness=0.75] (9.center) to (7.center);
		\draw [in=90, out=-45] (7.center) to (8.center);
        \draw [white, fill = white] (2,3.5) circle  (1ex);
        \draw [in=30, out=-90, looseness=0.75] (11.center) to (7.center);
		\draw [in=90, out=-135, looseness=0.75] (7.center) to (6.center);
\end{tikzpicture} \circ \Bigg(\begin{tikzpicture}[scale = 0.4, baseline = 1cm, thick]
		\node  (0) at (1, 2) {};
		\node  (1) at (2, 2) {};
		\node  (2) at (3, 2) {};
		\node  (3) at (4, 2) {};
		\node  (4) at (2, 3) {};
		\node  (5) at (3, 3) {};		
        \draw (0.center) to (3.center);
		\draw (4.center) to (1.center);
		\draw (5.center) to (2.center);
        \draw (0.5,2) ++(90:0.5)
      arc[start angle=90,end angle=-90,radius=0.5];
        \draw[symdash] (0.5,2) ++(90:0.5)
      arc[start angle=90,end angle=270,radius=0.5];
        \draw (2,3.5) ++(180:0.5)
      arc[start angle=180,end angle=360,radius=0.5];
        \draw[symdash] (2,3.5) ++(0:0.5)
      arc[start angle=0,end angle=180,radius=0.5];
        \draw (4.5,2) ++(90:0.5)
      arc[start angle=90,end angle=270,radius=0.5];
        \draw[symdash] (4.5,2) ++(-90:0.5)
      arc[start angle=-90,end angle=90,radius=0.5];
        \node at (0.5,2) {\scriptsize $x$};
        \node at (4.5,2) {\scriptsize $z$};
        \node at (2,3.5) {\scriptsize $y$};
		\node (6) at (2.75, 3.25) {};
		\node (7) at (3, 3.5) {};
		\node (8) at (3.25, 3.25) {};
		\node (9) at (2.75, 3.75) {};
		\node (10) at (3, 4) {};
		\node (11) at (3.25, 3.75) {};
        \draw [bend right=45] (5.center) to (8.center);
		\draw [bend left=45] (5.center) to (6.center);
		\draw [bend right=45] (10.center) to (9.center);
		\draw [bend left=45] (10.center) to (11.center);
		\draw [in=150, out=-90, looseness=0.75] (9.center) to (7.center);
		\draw [in=90, out=-45] (7.center) to (8.center);
        \draw [white, fill = white] (3,3.5) circle  (1ex);
        \draw [in=30, out=-90, looseness=0.75] (11.center) to (7.center);
		\draw [in=90, out=-135, looseness=0.75] (7.center) to (6.center);
\end{tikzpicture}\Bigg)^{-1} 
$$
Note that the left-hand side corresponds through $\Psi$ to the term $p_*(y) \otimes (z \wedge x)$ in $C(x,y,z)$. 
We get the other two terms similarly 
by turning around $x,y,z$ cyclically:
thus, $\Psi(C(x,y,z))$ is written as a product of six terms, 
which turn out to cancel by pairs. 
For example, the first term of the right-hand side 
of the above identity cancels with the term 
$$
\Bigg(\begin{tikzpicture}[scale = 0.4, baseline = 0.8cm, thick]
		\node  (0) at (1, 2) {};
		\node  (1) at (2, 2) {};
		\node  (2) at (3, 2) {};
		\node  (3) at (4, 2) {};
		\node  (4) at (2, 3-3) {};
		\node  (5) at (3, 3) {};
        \node  (20) at (2, 1) {};
        \draw (0.center) to (3.center);
		\draw (20.center) to (1.center);
		\draw (5.center) to (2.center);
\draw (0.5,2) ++(90:0.5)
      arc[start angle=90,end angle=-90,radius=0.5];
\draw[symdash] (0.5,2) ++(90:0.5)
      arc[start angle=90,end angle=270,radius=0.5];
\draw (3,3.5) ++(180:0.5)
      arc[start angle=180,end angle=360,radius=0.5];
\draw[symdash] (3,3.5) ++(0:0.5)
      arc[start angle=0,end angle=180,radius=0.5];
\draw (4.5,2) ++(90:0.5)
      arc[start angle=90,end angle=270,radius=0.5];
\draw[symdash] (4.5,2) ++(-90:0.5)
      arc[start angle=-90,end angle=90,radius=0.5];
        \node at (0.5,2) {\scriptsize $x$};
        \node at (4.5,2) {\scriptsize $z$};
        \node at (3,3.5) {\scriptsize $y$};
		\node  (6) at (1.75, 3.25-3) {};
		\node  (7) at (2, 3.5-3) {};
		\node  (8) at (2.25, 3.25-3) {};
		\node  (9) at (1.75, 3.75-3) {};
		\node  (10) at (2, 4-3) {};
		\node  (11) at (2.25, 3.75-3) {};
        \draw [bend right=45] (4.center) to (8.center);
		\draw [bend left=45] (4.center) to (6.center);
		\draw [bend right=45] (10.center) to (9.center);
		\draw [bend left=45] (10.center) to (11.center);
		\draw [in=150, out=-90, looseness=0.75] (9.center) to (7.center);
		\draw [in=90, out=-45] (7.center) to (8.center);
        \draw [white, fill = white] (2,0.5) circle  (1ex);
        \draw [in=30, out=-90, looseness=0.75] (11.center) to (7.center);
		\draw [in=90, out=-135, looseness=0.75] (7.center) to (6.center);
\end{tikzpicture}\Bigg)^{-1}$$ arising from $p_*(z) \otimes (x \wedge y)$; 
indeed, we have  the local move 
$$
\begin{tikzpicture}[scale = 0.4, baseline = 0.7cm, thick]
		\node  (0) at (1, 2) {};
		\node  (1) at (2, 2) {};
		\node  (2) at (3, 2) {};
		\node  (4) at (2, 3-3) {};
        \node  (20) at (2, 1) {};
        \draw (0.center) to (2.center);
		\draw (20.center) to (1.center);
		\node  (6) at (1.75, 3.25-3) {};
		\node  (7) at (2, 3.5-3) {};
		\node  (8) at (2.25, 3.25-3) {};
		\node  (9) at (1.75, 3.75-3) {};
		\node  (10) at (2, 4-3) {};
		\node  (11) at (2.25, 3.75-3) {};
        \draw [bend right=45] (4.center) to (8.center);
		\draw [bend left=45] (4.center) to (6.center);
		\draw [bend right=45] (10.center) to (9.center);
		\draw [bend left=45] (10.center) to (11.center);
		\draw [in=150, out=-90, looseness=0.75] (9.center) to (7.center);
		\draw [in=90, out=-45] (7.center) to (8.center);
        \draw [white, fill = white] (2,0.5) circle  (1ex);
        \draw [in=30, out=-90, looseness=0.75] (11.center) to (7.center);
		\draw [in=90, out=-135, looseness=0.75] (7.center) to (6.center);
\end{tikzpicture} =  \begin{tikzpicture}[scale = 0.4, baseline = 0.7cm, thick]
		\node  (0) at (1, 2) {};
		\node  (1) at (2, 2) {};
		\node  (2) at (3, 2) {};
		\node  (4) at (2, 3) {};
		\draw (4.center) to (1.center);
        \draw (0.center) to (2.center);
		\node  (6) at (1.75, 3.25) {};
		\node  (7) at (2, 3.5) {};
		\node  (8) at (2.25, 3.25) {};
		\node  (9) at (1.75, 3.75) {};
		\node  (10) at (2, 4) {};
		\node  (11) at (2.25, 3.75) {};
        \draw [bend right=45] (4.center) to (8.center);
		\draw [bend left=45] (4.center) to (6.center);
		\draw [bend right=45] (10.center) to (9.center);
		\draw [bend left=45] (10.center) to (11.center);
		\draw [in=150, out=-90, looseness=0.75] (9.center) to (7.center);
		\draw [in=90, out=-45] (7.center) to (8.center);
        \draw [white, fill = white] (2,3.5) circle  (1ex);
        \draw [in=30, out=-90, looseness=0.75] (11.center) to (7.center);
		\draw [in=90, out=-135, looseness=0.75] (7.center) to (6.center);
\end{tikzpicture}
$$ 
which follows 
from  a double application of \cite[Lemma 2.3]{Mei06}.
\end{proof}

\bibliographystyle{alpha}
\bibliography{Sp_Gr}

\end{document}